\documentclass[%
  12pt, % font size
  noamsfonts, % not use ams fonts as already used newtx fonts
]{amsart}
  \usepackage{geometry} % Load early to set page dimensions
  \usepackage[T1]{fontenc} % T1 font
  \usepackage[varg]{newtx} % Times New Roman font with math support; % varg = distinctive g,v,w,y
  \usepackage[
      scr=boondox, 
      cal=euler, 
      bb=ncmbbr
  ]{mathalpha}
  \usepackage{mathtools} % Loads amsmath automatically
  \usepackage{mathcommand} % define math macros
  \usepackage{xcolor} % Load early to avoid clashes

  \usepackage[extdef]{delimset} % delimset handles sizing delimiters and paired delimiters with ease
  \usepackage{interval} %Format mathematical intervals, ensuring proper spacing
  \usepackage{fixdif} % differential operators
  \usepackage{xfrac} % slanted fractions
  \usepackage{leftindex} % better left superscript 
  \usepackage{accents} % Multiple mathematical accents
  \usepackage{aligned-overset} % Custom overset/underset with alignment

  \usepackage{tikz} 
  \usepackage{quiver} % commutative diagrams with tikz-cd, https://q.uiver.app/
  \usepackage{simpler-wick}
  \usepackage{qworld} %  drawing string diagrams in monoidal categories and quantum theory

  \usepackage[shortlabels,inline]{enumitem} % Enumerate and itemize lists
  \usepackage{aliascnt} % alias of counters
  \usepackage{keytheorems} % An expl3-implementation of a key-value interface to amsthm
  \usepackage{caption} % Caption

  \usepackage[pagebackref]{hyperref}
  \usepackage{zref-clever} % MUST load after hyperref
  \numberwithin{equation}{section}
  \mathtoolsset{showonlyrefs}
  \let\realItem\item % save a copy of the original item
  \makeatletter
  \NewDocumentCommand\myItem{ o }{%
    \IfNoValueTF{#1}%
        {\realItem}% add an item
        {\realItem[#1{\MakeLinkTarget[item]{}}]\def\@currentlabel{#1}}% add an item and update label
  }
  \makeatother

  \setlist[description,1]{% 1st level of description
    labelindent = 1.5\parindent,
    style=multiline
  }

  \zcsetup{%
      cap,
      abbrev,
      nameinlink=false,
  }
  \hypersetup{%
      bookmarksnumbered=true,
      colorlinks=true,
      linkcolor=blue,
      citecolor=cyan,
      pdfstartview=FitBH,
  }
  \renewcommand*{\backrefalt}[4]{%
      \ifcase #1 %
        No citations.%
      \or
        $\uparrow$ #2%
      \else
        $\uparrow$ #2%
      \fi
  }

  \makeatletter
  \newcommand{\mathcm}[1]{%
    \mbox{\usefont{OMS}{cmsy}{m}{n}#1}% use the OMS (Old Math Symbols) encoding
  }
  \makeatother

  \providecommand{\noopsort}[1]{} % fix for bib order
  \newkeytheorem{mainthm}[%
      name=Theorem,%
      counter-format=\Alph{mainthm},%
  ]% "letter-numbered" theorems

  \newkeytheorem{%
      theorem,%
      proposition,%
      lemma,%
      corollary,%
      conjecture,%
  }[sharenumber=equation]

  \newkeytheorem{nothm}[name={Theorem},numbered=false]

  \newkeytheorem{nocoro}[name={Corollary},numbered=false]

  \newkeytheorem{%
      definition,%
      remark,%
      notation,%
      example,%
      warning,%
      construction,%
      convention,%
      recollection,%
  }[sharenumber=equation,style=remark]

  \newkeytheorem{egrmk}[style=remark,name={Example/Remark},sharenumber=equation]

  \newkeytheoremstyle{para}{%
      bodyfont=\normalfont,%
      headfont=\itshape,%
      notebraces={}{},%
      notefont={},%
      noteseparator={},
  }
  \newkeytheorem{para}[name={},numbered=false,style=para]

  \NewDocumentCommand \ListInThm { m O{\textup} O{(\roman{enumi})} O{.} }
    {
      \AtBeginEnvironment{#1}
        {
          \zcsetup{ countertype={enumi=#1} }
          \setlist[enumerate,1]
            {
              label={#2{#3}},
              ref={\csname the#1\endcsname#4{#3}}
            }%
        }
    }

  \ListInThm{mainthm}
  \ListInThm{theorem}
  \ListInThm{proposition}
  \ListInThm{lemma}
  \ListInThm{corollary}

  \NewDocumentCommand{\aststitle}{m}{
    \begin{center}
      $\ast\ast\ast$ 
      {#1}
      $\ast\ast\ast$ 
    \end{center}
  }
  \usetikzlibrary{%cd,%
      arrows.meta,% <-- Modern replacement for 'arrows'
      backgrounds,%
      calc,%
      decorations,%
      decorations.markings,% <-- Highly recommended for string/physics diagrams
      decorations.pathmorphing,%
      shapes,%
      tikzmark,%
  }
  \tikzcdset{%
      scale cd/.style = %
          {%
            every label/.append style = {scale = #1},%
            cells = {nodes = {scale = #1}}%
          }%
  }
  \NewDocumentCommand \concept {m} {\textbf{#1}}
  \NewDocumentCommand \nota {m} {{\color{orange}#1}}
  
  \NewDocumentCommand \nocolor {m} {{\color{black}#1}}
  \NewDocumentMathCommand\txand%
      { O{\quad} }{#1\text{and}#1}
  \NewDocumentMathCommand\txforall%
      { O{\quad} }{#1\text{for all}#1}
  \NewDocumentMathCommand\txforsome%
      { O{\quad} }{#1\text{for some}#1}
  \NewDocumentCommand\placeholder{}{\:\cdot\:} % modefy as needed
  \newmathcommand{\dbrack}{\delim{\lbrack\mkern-4mu\lbrack}{\rbrack\mkern-4mu\rbrack}}
  \newmathcommand{\dbrace}{\delim{\lbrace\mkern-4mu\lbrace}{\rbrace\mkern-4mu\rbrace}}
  \newmathcommand{\dparen}{\delim{\lparen\mkern-4mu\lparen}{\rparen\mkern-4mu\rparen}}
  \newmathcommand{\dangle}{\delim{\langle\mkern-4mu\langle}{\rangle\mkern-4mu\rangle}}

  \newmathcommand{\ceil}{\delim\lceil\rceil}
  \newmathcommand{\floor}{\delim\lfloor\rfloor}

  \newmathcommand{\Set}{\brk[c]} 
  \newmathcommand{\GSet}{\brk[a]}
  \newmathcommand{\PSet}{\brk[r]}
  \newmathcommand{\BSet}{\brk[s]}
  \newmathcommand{\gens}{\brk[s]*} 
  \newmathcommand{\rels}{\brk[r]*}

  \newmathcommand{\SetCond}{\delimpair\lbrace{[i]\vert}\rbrace}
  \newmathcommand{\aglsetcond}{\delimpair\langle{[i]\vert}\rangle}
  \newmathcommand{\prnsetcond}{\delimpair\lparen{[i]\vert}\rparen}
  \newmathcommand{\brksetcond}{\delimpair\lbrack{[i]\vert}\rbrack}
  \newmathcommand{\gensPair}{\brksetcond*}
  \newmathcommand{\relsPair}{\prnsetcond*}

  \newmathcommand{\textSet}[1]{\Set*{\text{#1}}}
  \providecommand\given{\vert} % Fallback if used elsewhere

  \newmathcommand{\pairing}{\delimpair\langle{*,}\rangle} % <..,..>
  \newmathcommand{\bracket}{\delimtriple\langle\vert\vert\rangle} % <..|..|..>

  \NewDocumentCommand \fun { m e{^_} d() }
      {%
        \operatorname{#1}%
        \IfValueT{#2}{\sp{#2}}%
        \IfValueT{#3}{\sb{#3}}%
        \IfNoValueTF{#4}{}{\brk*{#4}}%
      }
  \LoopCommands{%
      NZQRCKkWM%
  }[#1]{\declaremathcommand#2{\mathbb#1}}
  
  \LoopCommands{%
      {Aut}{an}{Coker}{Cotor}{coker}{Der}{End}{Ext}{Gal}{Gr}{gl}{gr}{Ho}{Hom}{Id}{id}{Ind}{Ker}{Mor}{Ob}{Pic}{Proj}{pr}{Res}{Sq}{Spc}{Spec}{Spm}{supp}{Tor}{tr}{wt}%
  }[#1]{\DeclareMathOperator{#2}{#1}}

  \renewmathcommand\le\leqslant
  \renewmathcommand\ge\geqslant
  \declaremathcommand\unit{\mathbb{1}}
  \declaremathcommand\one{{\scriptstyle\mathfrak{I}}}
  \declaremathcommand\iu{\mathtt{i}}
  \declaremathcommand\e{\mathsf{e}}
  \newmathcommand\bPhi{\mathbf{\Phi}}
  \NewDocumentMathCommand\vect{m}{\boldsymbol{#1}}

  \declaremathcommand\Rf{\operatorname{\mathbf{R}}}
  \declaremathcommand{\H}{\operatorname{H}}
  \declaremathcommand\SG{\mathfrak{S}}
  
  \NewDocumentMathCommand\cat{m}{\operatorname{\mathsf{#1}}}
  \NewDocumentMathCommand\grp{m}{\operatorname{\mathsf{#1}}}
  \NewDocumentCommand\Mod{so}{%
      \cat{\IfNoValueTF{#2}{}{{}^{\mathtt{#2}}}\IfBooleanTF{#1}{mod}{Mod}}%
  }
  
  \NewDocumentMathCommand\opp{m}{\IfBlankTF{#1}{(\:\cdot\:)}{#1}^{\mathrm{op}}}
  \NewDocumentMathCommand\dual{m}{\IfBlankTF{#1}{(\:\cdot\:)}{#1}^{\ast}}
  \NewDocumentMathCommand\rldual{m}{\IfBlankTF{#1}{(\:\cdot\:)}{#1}^{\dagger}}
  \NewDocumentMathCommand\invo{m}{\prescript{\theta}{}{#1}}

  \DeclareFontFamily{U}{dmjhira}{}
  \DeclareFontShape{U}{dmjhira}{m}{n}{ <-> dmjhira }{}
  \DeclareRobustCommand{\yo}{\text{\usefont{U}{dmjhira}{m}{n}\symbol{"48}}}

  \declaremathcommand\O{\mathscr{O}}
  \declaremathcommand\shHom{\operatorname{\mathscr{Hom}}}
  \declaremathcommand\et{\text{\'et}}
  \declaremathcommand\Moduli{\mathcm{M}}
  \declaremathcommand\Gm{\mathbb{G}_{\mathtt{m}}}
  \declaremathcommand\Ga{\mathbb{G}_{\mathtt{a}}}
  \declaremathcommand\pt{\mathsf{p}}
  \declaremathcommand\tpt{\tilde{\mathsf{p}}}
  \declaremathcommand\qt{\mathsf{q}}

  \newmathcommand\spn{\operatorname{span}}
  \newmathcommand\ord{\operatorname{ord}}
  \newmathcommand\Image{\operatorname{Im}}
  \newmathcommand\Lie\comm
  \newmathcommand\ctensor{\mathbin{\mathop{\widehat{\otimes}}}}
  \newmathcommand\rtensor{\mathbin{\mathop{\vec{\otimes}}}}
  
  \makeatletter
    \newmathcommand{\ostar}{\mathbin{\mathpalette\make@circled\star}}
    \newcommand{\make@circled}[2]{%
      \ooalign{$\m@th#1\smallbigcirc{#1}$\cr\hidewidth$\m@th#1#2$\hidewidth\cr}%
    }
    \newcommand{\smallbigcirc}[1]{\vcenter{\hbox{\scalebox{0.7}{$\m@th#1\bigcirc$}}}}
  \makeatother

  \NewDocumentMathCommand\odv{m}{\frac{\d}{\d{#1}}}
  \NewDocumentMathCommand\pdv{m}{\frac{\partial}{\partial{#1}}}

  \declaremathcommand\vac{\mathbf{1}}
  \declaremathcommand\cch{\mathscr{c}}
  \declaremathcommand\cfv{\upomega}
  \declaremathcommand\qbar{\mathscr{q}}
  
  \declaremathcommand\hollowcolon{{}^{\circ}_{\circ}}
  \NewDocumentMathCommand\normord{m}{\mathopen{\hollowcolon}\mathinner{#1}\mathclose{\hollowcolon}}

  \NewDocumentMathCommand\vo{mm}{#1_{(#2)}}
  \NewDocumentMathCommand\lo{mm}{#1_{[#2]}}
  \NewDocumentMathCommand\VL{m}{L_{#1}}
  
  \NewDocumentMathCommand\NL{O{\bullet}}{\mathsf{N}_{\mathsf{L}}^{#1}}
  \NewDocumentMathCommand\NR{O{\bullet}}{\mathsf{N}_{\mathsf{R}}^{#1}}
  \NewDocumentMathCommand\NLR{O{\bullet}}{\mathsf{N}^{#1}}
  \NewDocumentMathCommand\cNL{O{\bullet}}{\prescript{\mathrm{c}}{}{\mathsf{N}}_{\mathsf{L}}^{#1}}
  \NewDocumentMathCommand\cNR{O{\bullet}}{\prescript{\mathrm{c}}{}{\mathsf{N}}_{\mathsf{R}}^{#1}}
  \NewDocumentMathCommand\cNLR{O{\bullet}}{\prescript{\mathrm{c}}{}{\mathsf{N}}^{#1}}

  \newcommand{\SIE}{\hyperref[def:SIE]{\textsf{SIE}}}
  \newcommand{\SIC}{\hyperref[def:SIC]{\textsf{SIC}}}

\begin{document}%%-----------------------------------------
%%%% Document Metadata 
% region
  \title{On strong identities of almost-canonically seminormed rings}
  
  {
    \author[X.~Gao]{Xu Gao}
    \address{Xu Gao \newline  \indent  School of Mathematical Sciences, Tongji University,  Shanghai, 200092, China}
    \email{gausyu@tongji.edu.cn}
  }

  {
    \author[J.~Liu]{Jianqi Liu}
    \address{Jianqi Liu \newline  
        \indent  Department of Mathematics, University of Pennsylvania,  Phil, PA 08904, USA}
    \email{jliu230@sas.upenn.edu}
  }
% endregion

%%-----------------------------------------

\begin{abstract}
  We investigate the strong identity condition (SIC) for almost-canonically seminormed rings, a class of topological graded rings that includes enveloping algebras of vertex operator algebras. This condition was introduced in the algebro-geometric theory of conformal blocks, where it governs the smoothing of nodal curves.

  To understand the representation-theoretic meaning of SIC, we develop the representation theory of almost-canonically seminormed rings, including Zhu-type algebras, induced modules, rationality conditions, tensor product compatibility, and an end formula for the mode transition algebra.
  Our main result characterizes the strong identity condition in terms of orthogonal expansions, projectivity of canonical modules, and Morita-type equivalences induced by Zhu-type algebras.

  As an application, we show that for vertex operator algebras of CFT type, the smoothing property is equivalent to the Zhu algebra inducing a Morita-type equivalence with the category of admissible modules. Consequently, the strong identity condition identifies the precise representation-theoretic obstruction to extending algebraic smoothing beyond the semisimple setting.
  We further illustrate the theory through explicit examples, including the Weyl algebra and several irrational vertex operator algebras where the strong identity condition fails.
\end{abstract}

\maketitle

%%-----------------------------------------

\tableofcontents

\section*{Introduction}
% --- Purpose of this paper and main results ---

  Conformal blocks are fundamental objects in the study of two-dimensional conformal field theories and the representation theory of vertex operator algebras (VOAs).
  They form logarithmic $\mathcal{D}$-modules on moduli spaces of curves and encode the tensor category structure of VOA modules.
  A central pillar of their theory is the ability to reconstruct conformal blocks on general curves from those on simpler ones. 
  In the analytic setting (eg. \cite{Huang97,HLZ-I,GZ-III}), this is achieved robustly by \emph{sewing} along gluing tubes. Algebraically, the counterpart is the \emph{smoothing} of nodal curves.
  To formalize it, the \emph{mode transition algebra} $\mathfrak{A}$ was introduced in \cite{DGK23}. However, this purely algebraic approach hinges entirely on the \emph{strong identity condition} (\textbf{SIC}): the existence of well-behaved diagonal identity elements in $\mathfrak{A}$. Because it requires this condition, algebraic smoothing fails for important irrational VOAs, such as the triplet algebra $\mathcal{W}(p)$, where analytic sewing succeeds.
  This raises a fundamental question: \emph{what exactly does SIC measure?}

\subsection*{Results and applications} 
  A main purpose of this paper is to determine the representation-theoretic nature of this algebraic barrier. To isolate the intrinsic algebraic phenomena, we work within the framework of \emph{almost-canonically seminormed rings} $U$, abstracting the enveloping algebra $\mathscr{U}(\mathbb{V})$ of a VOA $V$. Our main result provides a topological, homological, and categorical characterization of SIC, revealing its role in governing \emph{Morita-type equivalences}\footnote{Here and throughout the paper, ``Morita-type'' refers to equivalence phenomena realized by induction functors, in analogy with classical Morita theory.}.

  \begin{mainthm}\label{thm:IntroSIC}
    Let $U$ be an almost-canonically seminormed ring.
    Then, the \emph{strong identity condition} is equivalent to the following:
    \begin{enumerate}
    \item \textbf{Topological:} $U$ admits a \emph{strong identity expansion} (an orthogonal series expansion of $1_{U}$ satisfying specific conditions).
    \item \textbf{Homological:} The canonical modules $\mathfrak{L}^n$ and $\mathfrak{R}^n$ are projective in the category of exhaustive $U$-modules.
    \item \textbf{Categorical:} For all $n \ge 0$, the \emph{thick} induction functors establish Morita-type equivalences for the \emph{thick Zhu algebras} $\mathscr{A}_n$.
    \end{enumerate}
    Furthermore, under appropriate assumptions, these conditions are equivalent to 
    \begin{enumerate}
      \item[\textup{(iii')}] \textbf{Categorical:} the induction functors establish a Morita-type equivalence for the level-zero \emph{Zhu algebra} $\mathsf{A}_{0}$.
    \end{enumerate}
  \end{mainthm}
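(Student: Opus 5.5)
The plan is to pass through a single intermediate notion: a family of orthogonal idempotent-like elements $\{e_n\}$ in the completion of $U$. In the mode transition algebra $\mathfrak{A}$ these become the diagonal identity elements $\mathscr{1}_n \in \mathfrak{A}_{n,-n}$. Each of (i)--(iii) will then be shown equivalent to SIC by comparing it with this family.

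\textbf{SIC $\Leftrightarrow$ (i).} Write $\mathfrak{A}=\bigoplus_{n}\mathfrak{L}^n\otimes_{\mathscr{A}_0}\mathfrak{R}^n$-type pieces, using the end formula for the mode transition algebra. Given strong identity elements $\mathscr{1}_n$, I will take their images $e_n$ in the degree-zero part $U_0$ (or its completion). Multiplicativity and the identity property in $\mathfrak{A}$ give orthogonality $e_ne_m=\delta_{nm}e_n$. The seminorm axioms (almost-canonical: $U_{d}\cdot U_{\le -N}\subset$ small neighbourhoods) should give convergence of $\sum_n e_n$ to $1_U$, since $1-\sum_{n\le N}e_n$ acts trivially on the part of the canonical modules of level $\le N$. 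Conversely, I will truncate a strong identity expansion $1_U=\sum e_n$ to recover elements of $\mathfrak{A}_n$ acting as identities on $\mathfrak{L}^n$ and $\mathfrak{R}^n$; the extra "specific conditions" in the expansion are exactly what is needed for the left and right actions to agree.

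\textbf{(i) $\Rightarrow$ (ii) $\Rightarrow$ (iii).} With an expansion in hand, $\mathfrak{L}^n$ is exhibited as a direct summand $Ue_n$-type of a completed free module. This gives projectivity in exhaustive modules, because $\operatorname{Hom}(\mathfrak{L}^n,M)\cong e_nM$ (the level-$n$ part), which is exact on exhaustive $M$. For (ii) $\Rightarrow$ (iii), I will use the thick induction functor $\mathfrak{L}^n\otimes_{\mathscr{A}_n}-$ with right adjoint $\Omega_n$ (level-$n$ lowest space). Projectivity of $\mathfrak{L}^n$ makes $\Omega_n$ exact and faithful on modules generated in level $\le n$, and then a Gabriel–Morita argument identifies $\mathscr{A}_n\cong\operatorname{End}(\mathfrak{L}^n)^{\mathrm{op}}$ and yields the equivalence.

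\textbf{(iii) $\Rightarrow$ SIC.} The equivalence forces the counit $\mathfrak{L}^n\otimes_{\mathscr{A}_n}\Omega_n(M)\to M$ to be an isomorphism, in particular for $M=\mathfrak{L}^m$. From this, $\mathfrak{R}^n\otimes_U\mathfrak{L}^n\cong\mathscr{A}_n$ holds. The preimage of $1\in\mathscr{A}_n$ gives $\mathscr{1}_n$, and compatibility across $n$ follows from naturality.

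For (iii') I will show that under rationality/finiteness assumptions (e.g.\ that $\mathscr{A}_n$ is Morita-reducible to $\mathsf{A}_0$ via $\mathfrak{L}^n$ truncations, as in the CFT-type setting) the level-$0$ equivalence propagates to all levels. The route is to show $\mathscr{A}_n\cong\operatorname{End}_{\mathsf{A}_0}(\text{level}\le n\text{ part of induced module})$. I expect the main obstacle to be \textbf{(iii) $\Rightarrow$ SIC}, and especially this (iii') upgrade: extracting genuinely convergent, two-sided compatible identity elements from a purely categorical equivalence requires controlling the topology (exhaustiveness versus completion), and I would first attempt it by applying the equivalence to the regular pieces $\mathfrak{L}^n$ themselves.
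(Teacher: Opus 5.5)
Your forward chain SIC $\Rightarrow$ (i) $\Rightarrow$ (ii) $\Rightarrow$ (iii) can be repaired. The step meant to close the loop, (iii) $\Rightarrow$ SIC, does not work as written, so nothing in the proposal leads from (ii) or (iii) back to SIC.

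\textbf{The gap in (iii) $\Rightarrow$ SIC.} You assert $\mathfrak{R}^n\otimes_U\mathfrak{L}^n\cong\mathscr{A}_n$ but never derive it. The counit $\mathfrak{L}^n\otimes_{\mathscr{A}_n}\Omega_n(M)\to M$ is a statement about left modules. It gives no handle on $\mathfrak{R}^n\otimes_U\mathfrak{L}^n$, which is simply the quotient $U/(\NR[n+1]U+\NL[n+1]U)$. Even granting the isomorphism, the ``preimage of $1$'' is the class of $1_U$ in a quotient of $U$. There is no map from that space to $\mathfrak{A}_{n,-n}=\mathfrak{L}^0_n\otimes_{\mathsf{A}_0}\mathfrak{R}^0_{-n}$. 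The real content of SIC is to produce an element that factors through $U_n\otimes U_{-n}$ and acts as the identity from both sides, and your sketch never confronts this. Note also that the block description $\prod_{p,q\le n}\mathfrak{A}_{p,-q}$ of $\mathscr{A}_n$, where $\one_n$ would be visible, already presupposes SIC.

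\textbf{Problems in SIC $\Leftrightarrow$ (i).}
\begin{itemize}
\item You cannot simply ``take images'' of $\one_n$ in $\widehat{U_0}=\varprojlim\mathsf{A}_m$. The assignment $[\alpha]^{\mathsf L}_0\otimes[\beta]^{\mathsf R}_0\mapsto[\alpha\beta]_m$ is not well defined for $m>n$. You need injectivity of $\mu\colon\mathfrak{A}_n\to\mathsf{A}_n$ and the ring splitting $\mathsf{A}_n\cong\mathfrak{A}_n\times\mathsf{A}_{n-1}$.
\item Conversely, ``truncating'' an expansion does not land in $\mathfrak{A}_n$. You must use $e_{\ge n}\in\NLR[n]\widehat{U_0}$ and density of the canonical seminorm to write $e_{\ge n}\equiv\sum_i e^+_n(i)e^-_n(i)\bmod\NLR[n+1]U_0$ with $e^\pm_n(i)\in U_{\pm n}$. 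Then set $\one_n=\sum_i[e^+_n(i)]^{\mathsf L}_0\otimes[e^-_n(i)]^{\mathsf R}_0$, and use $U_{-n}e_{<n}=0=e_{<n}U_n$ to get the two-sided identity property.
\item The end formula plays no role here; $\mathfrak{A}=\mathfrak{L}^0\otimes_{\mathsf{A}_0}\mathfrak{R}^0$ by definition.
\end{itemize}

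\textbf{The missing idea.} The paper closes the loop by (iii) $\Rightarrow$ (ii) $\Rightarrow$ (i).
\begin{itemize}
\item An equivalence is exact and $\Omega^{\mathsf L}_n=\Hom_U(\mathfrak{L}^n,-)$, so (iii) with $R=\k$ gives projectivity.
\item Projectivity of $\mathfrak{L}^{n-1}$ splits the graded surjection $\pi_n\colon\mathfrak{L}^n\to\mathfrak{L}^{n-1}$. Take the degree-zero part of a lift of $1_{n-1}$, which lies in the graded subspace $\Omega_{n-1}(\mathfrak{L}^n)$.
\item Trace $1_n=\mathfrak{e}_n+s(1_{n-1})$ through $\widehat{U}=\mathfrak{L}^n\oplus\NL[n+1]\widehat{U}$ and its right analogue. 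This produces orthogonal $e_n\in\NLR[n]\widehat{U_0}$ summing to $1$, that is, an SIE.
\end{itemize}

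\textbf{Remaining pieces.} With that in place, your other steps are reasonable alternatives to the paper.
\begin{itemize}
\item In (i) $\Rightarrow$ (ii), the correct formula is $\Hom_U(\mathfrak{L}^n,M)=\Omega^{\mathsf L}_n(M)=e_{\le n}M$, not $e_nM$, and $\mathfrak{L}^n\cong\widehat{U}e_{\le n}$. With this correction, exactness of an idempotent projection is a more direct proof than the paper's, which deduces projectivity from the equivalence.
\item Your Gabriel--Mitchell argument for (ii) $\Rightarrow$ (iii) can replace the paper's ``unit iso plus counit epi'' check. It needs $\mathfrak{L}^n$ to generate \emph{all} exhaustive modules, not only those generated in level $\le n$. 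This holds because every nonzero exhaustive $M$ has $\Omega_0(M)\neq0$: if $w\in\Omega_k\setminus\Omega_{k-1}$, pick homogeneous $b\in U_{-j}$, $j\ge k$, with $bw\neq0$; then $U_{\le -1}b\subset U_{\le -k-1}\subset\NL[k+1]U$ annihilates $w$, so $bw\in\Omega_0(M)$. Also, $\mathfrak{L}^n$ is cyclic and $\End(\mathfrak{L}^n)^{\mathrm{op}}\cong\mathscr{A}^{\mathsf L}_n$.
\item For (iii'), the paper does not propagate to higher levels. From the equivalence with $R=\mathsf{A}_0$ it shows that the graded dual of an induced module is the induced module of the dual, so $\mathfrak{R}^0\cong(\mathfrak{L}^0)^{\dagger|\mathsf{A}_0}$. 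The quasi-rigidity (or Frobenius, or augmentation) hypothesis then gives $\mathfrak{A}_n\cong\End_{|\mathsf{A}_0}(\mathfrak{L}^0_n)$, with $\one_n$ the identity endomorphism. Your sketch names neither the needed hypothesis nor the source of $\one_n$.
\end{itemize}
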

  The precise definitions of these conditions, along with commutative diagrams outlining their logical interdependencies, are detailed at the beginning of \zcref{part2}.

\aststitle{Vertex operator algebras and algebraic smoothing}

  The significance of \zcref{thm:IntroSIC} is that it identifies the strong identity condition as the precise mechanism through which Zhu-type representation theory governs the surrounding module theory.
  Returning to the motivating setting of conformal blocks, this perspective explains why this condition appears naturally in algebraic smoothing. The mode transition algebra method realizes smoothing through induced modules over the Zhu algebra, and thus the validity of smoothing depends fundamentally on whether these induced modules exhaust the relevant representation theory.

  In the VOA setting, this principle admits the following concrete form.
  \begin{mainthm}[cf. \zcref{sec:VOA}]\label{thm:IntroVOA}
    For a VOA $\mathbb{V}$ of CFT type, the algebraic smoothing via the mode transition algebra $\mathfrak{A}$ succeeds if and only if the Zhu algebra $A_0(\mathbb{V})$ establishes a Morita-type equivalence for the categories of admissible $\mathbb{V}$-modules.
  \end{mainthm}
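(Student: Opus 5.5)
The plan is to deduce \zcref{thm:IntroVOA} from \zcref{thm:IntroSIC}, applied to the almost-canonically seminormed ring $U=\mathscr{U}(\mathbb{V})$, the enveloping algebra of $\mathbb{V}$. The argument has three steps.

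First, I will check that $\mathscr{U}(\mathbb{V})$ satisfies the axioms of an almost-canonically seminormed ring. The grading is by degree of modes, and the seminorms come from the standard neighbourhoods of zero $\mathscr{U}\cdot\mathscr{U}_{\le -N}$ and $\mathscr{U}_{\ge N}\cdot\mathscr{U}$. Under this identification I will record the dictionary:
\begin{itemize}
\item the exhaustive $U$-modules are exactly the admissible ($\N$-gradable) $\mathbb{V}$-modules;
\item the level-$n$ Zhu-type algebra $\mathsf{A}_n$ attached to $U$ is Zhu's algebra $A_n(\mathbb{V})$, in particular $\mathsf{A}_0=A_0(\mathbb{V})$;
\item the induction functors are the generalized Verma module functors $A_n(\mathbb{V})\text{-}\Mod\to$ admissible $\mathbb{V}$-modules;
\item the mode transition algebra of the ring coincides with the algebra $\mathfrak{A}$ of \cite{DGK23}.
\end{itemize}
The identification of the Zhu algebras is the standard description $A_n(\mathbb{V})\cong U_0/\overline{\sum_{k>n}U_{k}U_{-k}}$ (Frenkel--Zhu, He), so this step is bookkeeping.

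Second, I will invoke the result of \cite{DGK23}: algebraic smoothing via $\mathfrak{A}$ holds precisely when $\mathfrak{A}$ (equivalently each $\mathfrak{A}_d$) admits strong identity elements. That is the strong identity condition for $U=\mathscr{U}(\mathbb{V})$. The forward direction (SIC implies smoothing) is the DGK theorem itself. For the converse, I will argue that smoothing at a nodal family forces the sewing map to be an isomorphism for the canonical bimodule. Evaluating it on the identity gives the required diagonal elements. If the paper already defines "smoothing succeeds" as SIC, this step is immediate.

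Third, I will apply the equivalence SIC $\Leftrightarrow$ (iii') from \zcref{thm:IntroSIC}. This requires verifying the "appropriate assumptions" in the VOA setting, and I expect that to be the main obstacle. Presumably these are a finiteness or noetherian-type hypothesis on $\mathsf{A}_0$ relative to the higher $\mathsf{A}_n$, ensuring that a Morita-type equivalence at level zero propagates to all levels $n$. My approach is to use CFT type: $V_0=\C\vac$ and $V_n=0$ for $n<0$. With this I would show that every admissible module is generated by its lowest-weight components. I would also show that $A_n(\mathbb{V})$ is controlled by $A_0(\mathbb{V})$ together with the surjections $A_n\to A_{n-1}$, through the Dong--Jiang bimodule decomposition $A_n=A_{n-1}\oplus(\text{bimodule part})$. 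These facts should let me deduce the level-$n$ equivalence (iii) from the level-$0$ one by induction on $n$. If that induction fails in general, the fallback is to check directly whichever hypothesis the paper imposes, for example that the canonical map from the thick Zhu algebra to $A_n$ is an isomorphism, which holds when $\mathbb{V}$ is of CFT type via the positive grading. Combining the three steps gives: smoothing $\Leftrightarrow$ SIC $\Leftrightarrow$ $A_0(\mathbb{V})$ induces a Morita-type equivalence onto admissible modules.
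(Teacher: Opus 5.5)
The gap is in your third step, the implication from the level-zero Morita-type equivalence to \SIC{}. Your first two steps are fine; note that \cite[Theorem 5.0.3]{DGK23} is already an equivalence, so nothing about smoothing needs re-deriving.

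You have misread the ``appropriate assumptions'' of \zcref{thm:IntroSIC}. They are not hypotheses that let the level-zero equivalence propagate to higher levels. They are the duality hypotheses of \zcref{sec:Morita_to_SIC}: quasi-rigidity, Frobenius, or augmentation. What CFT type actually provides is that $\mathsf{A}_0$ is augmented, via its action $\mathsf{A}_0\to\End_\k(\mathbb{V}_0)=\k$. The paper then applies \zcref{thm:Adjeqk-SIC}. There, \zcref{lem:induced-dual} turns the equivalence into the statement that the induced module of $\k$ on one side is the graded dual of the induced module of $\k$ on the other side. Consequently $\mathfrak{L}^0_n\otimes_{\mathsf{A}_0}\k\otimes_{\mathsf{A}_0}\mathfrak{R}^0_{-m}$ becomes a space of $\k$-linear maps between graded pieces of an induced module of $\k$; this is where quasi-rigidity of that module is used. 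The identity maps, lifted along the splitting of the augmentation, are the strong identity elements. No higher level is involved. The thick equivalences at all levels come out afterwards (\zcref{thm:bPhiOmega}); they are not an input.

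Your induction cannot replace this argument, for three reasons.
\begin{enumerate}
\item The ring splitting $\mathsf{A}_n\cong\mathfrak{A}_n\times\mathsf{A}_{n-1}$, the Dong--Jiang-type decomposition you want to use, is itself a consequence of \SIC{} (\zcref{prop:decomposition_of_fA}). Unconditionally you only have the exact sequence $\mathfrak{A}_n\to\mathsf{A}_n\to\mathsf{A}_{n-1}\to 0$, with $\mu$ in general neither injective nor split. So invoking the splitting is circular.
\item Your fallback is false. Under \SIE{} the thick Zhu algebra is the full block algebra $\prod_{p,q=0}^{n}\mathfrak{A}_{p,-q}$ (\zcref{lem:thickZhu}). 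It contains off-diagonal blocks that $\mathsf{A}_n$ does not. Moreover, \zcref{rem:noPhiOmegan} shows that induction from $\mathsf{A}_n$ is generally not an equivalence for $n>0$.
\item CFT type alone does not make admissible modules generated by their degree-zero parts; a direct sum with a degree-shifted summand is a counterexample. If you meant generation by $\Omega_0$, that is just surjectivity of the counit, which is already part of your hypothesis and gives nothing new toward higher levels.
\end{enumerate}

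A smaller point about your dictionary: exhaustive modules are a priori only the weakly truncated ones. That every exhaustive module is admissible is a consequence of \SIC{} (last line of \zcref{thm:SIC-CFT}), not something you can assume at the outset.
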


  Consequently, the discrepancy between algebraic smoothing and analytic sewing admits the following representation-theoretic characterization.
  \begin{nocoro}[\zcref{coro:CFT-VOA}]
    For a VOA $\mathbb{V}$ of CFT type, the following conditions are equivalent:
    \begin{enumerate}[label=\textup{(\roman*)}]
      \item $\mathbb{V}$ satisfies smoothing (cf. \cite[Definition 5.0.2]{DGK23}).
      \item The equivalent conditions in \zcref{thm:SIC-CFT} hold. In particular, all admissible $\mathbb{V}$-modules are induced from $\mathsf{A}_{0}$-modules.
      \item Any sheaf of coinvariants of $\mathbb{V}$-modules, assuming coherent, is locally free on any of the moduli spaces $\widehat{\overline{\Moduli}}_{g,n}$.
      \item The mode transition algebra $\mathfrak{A}$ equals to 
      \begin{equation}
        \mathfrak{A}(\mathscr{U}(\mathbb{V})) = \int_{W} W \otimes_{\k} W^{\prime}
      \end{equation}
      where the end is taken over all grading-restricted generalized $\mathbb{V}$-modules.
    \end{enumerate}
  \end{nocoro}

\aststitle{Further structure of almost-canonically seminormed rings}

  The preceding application explains the role of the strong identity condition in the original setting of conformal blocks. However, the framework of almost-canonically seminormed rings is not merely a device for reformulating the VOA smoothing problem. It also provides a natural setting in which several finiteness, rationality, and functoriality phenomena arising in VOA representation theory can be formulated and studied uniformly. 
  The following results illustrate the broader structural scope of the theory.
  
  First, motivated by the contragredient duality formalism in VOA theory, we introduce notions of quasi-rigidity and rationality for almost-canonically seminormed rings. These notions provide a natural framework for relating semisimplicity to SIC:
  \begin{mainthm}[cf. \zcref{sec:rationality}]
    \label{thm:IntroRationality}
    If an almost-canonically seminormed ring $U$ is $\k$-rational, then:
    \begin{enumerate}
      \item $U$ is $\mathsf{A}_{0}$-rational, the Zhu algebras $\mathsf{A}_n$ of all levels are semisimple, and the strong identity condition holds.
      \item Every exhaustive $U$-module is gradable, and the category of exhaustive modules is itself semisimple.
    \end{enumerate}
  \end{mainthm}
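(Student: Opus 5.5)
The plan is to reduce everything to the degree-zero piece and use semisimplicity to split the relevant exact sequences. I take $\k$-rationality to mean that every exhaustive $U$-module is a sum of simple modules, each generated by a finite-dimensional lowest-degree subspace, with quasi-rigidity supplying contragredient duals. The argument has three parts.

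\emph{Semisimplicity of the Zhu algebras.} The level-zero Zhu algebra $\mathsf{A}_0$ acts on the degree-zero part $\Omega_0(M)$ of any exhaustive $U$-module $M$. Conversely, every $\mathsf{A}_0$-module $E$ induces a module $\mathrm{Ind}(E)$, and this has a unique largest graded submodule meeting degree zero trivially. Rationality makes $\mathrm{Ind}(E)$ completely reducible, so $E = \Omega_0(\mathrm{Ind}(E))$ decomposes as a sum of lowest-weight spaces of simple summands. Each of these is a simple $\mathsf{A}_0$-module, because a simple $U$-module has simple top. Hence every $\mathsf{A}_0$-module is semisimple. For level $n$, I would run the same argument with $\Omega_n$ in place of $\Omega_0$, together with the level-$n$ induction $\mathrm{Ind}_n$. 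Every $\mathsf{A}_n$-module then sits inside $\Omega_n$ of a semisimple $U$-module, and each $\Omega_n(L)$ with $L$ simple is a simple $\mathsf{A}_n$-module (standard Zhu-type density). This yields semisimplicity of $\mathsf{A}_n$ and also $\mathsf{A}_0$-rationality.

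\emph{The strong identity condition.} I would verify the homological form (ii) of \zcref{thm:IntroSIC}, namely that the canonical modules $\mathfrak{L}^n$ and $\mathfrak{R}^n$ are projective in the category of exhaustive modules. Since that category is semisimple (shown below), every object is projective, so (ii) holds and SIC follows from \zcref{thm:IntroSIC}. To avoid circularity, I would establish semisimplicity of the exhaustive category before this step. Alternatively, one can obtain (iii') directly: with $\mathsf{A}_0$ semisimple and all simple $U$-modules induced from simple $\mathsf{A}_0$-modules, induction is an equivalence.

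\emph{Part (ii).} Rationality gives that every exhaustive module $M$ is a sum of simple modules $L_i$. Each $L_i$ is generated by its lowest piece, so it carries the grading transported from the induced module $\mathrm{Ind}(\Omega_0(L_i))$. A direct sum of graded modules is graded, which shows that $M$ is gradable. Semisimplicity of the category then means that every exhaustive module is a sum of simples. This is in fact the definition of $\k$-rationality, possibly after checking that subquotients of exhaustive modules remain exhaustive.

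The main obstacle is the precise definition of $\k$-rationality used in \zcref{sec:rationality}. If it only asserts complete reducibility of \emph{gradable} modules, as for admissible VOA modules, then gradability of arbitrary exhaustive modules becomes the hard step. In that case I would try the following: for each homogeneous-looking vector $v$, use exhaustiveness to show that $Uv$ is annihilated by a high-degree ideal. Then $Uv$ is a quotient of $\mathrm{Ind}_n$ applied to the finitely generated $\mathsf{A}_n$-module $\Omega_n(Uv)$. Since $\mathsf{A}_n$ is semisimple, this module is semisimple, so $Uv$ is a quotient of a graded semisimple module, hence graded. Summing over $v$ and using quasi-rigidity to control the sums would finish the argument.
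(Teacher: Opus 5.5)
Your proof rests on a misreading of the hypothesis. By \zcref{def:rational}, $\k$-rationality only says that every \emph{positively-graded} left $U$-module is a direct sum of quasi-rigid graded-simple modules. It says nothing about exhaustive modules.

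Under your reading, part (ii) is assumed rather than proved; as you note, it becomes ``the definition''. Your route to SIC also rests on that same unproved input, since it goes through projectivity of $\mathfrak{L}^{n}$ and $\mathfrak{R}^{n}$ deduced from semisimplicity of the exhaustive category. With the actual definition, the passage from graded to exhaustive modules is the real content of (ii).

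The paper therefore runs the argument in the opposite order: it extracts SIC from graded information alone and only then reaches exhaustive modules.
First it obtains semisimplicity of the $\mathsf{A}_{n}$ and $\mathsf{A}_{0}$-rationality. You only assert the latter; it needs \zcref{thm:rationality-semisimplicity}, which shows that components of graded simples are reflexive, hence finitely generated projective, over the semisimple ring $\mathsf{A}_{0}$.
Then \zcref{thm:rational} builds the strong identities by hand. It uses the Wedderburn--Artin decomposition $\mathsf{A}_{0}=\prod_{i}B_{i}$ and the duality $\Phi^{\mathsf{L}}_{0}(B_{i})^{\dagger|\mathsf{A}_{0}}\cong\Phi^{\mathsf{R}}_{0}(B_{i})$. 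These give $\mathfrak{A}_{n}\cong\bigoplus_{i}\End_{\mathsf{A}_{0}}(\Phi^{\mathsf{L}}_{0}(B_{i})_{n})$, and $\one_{n}$ is taken to be the sum of the identities.
Only after that does gradability follow from the $\Omega$-splitting in \zcref{lem:splitOmega}. Semisimplicity of exhaustive modules follows from the equivalence of \zcref{thm:PhiOmega0} with modules over the semisimple ring $\mathsf{A}_{0}$.

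Several further steps need repair.
In the Zhu-algebra step, $E=\Omega_{0}(\Phi^{\mathsf{L}}_{0}(E))$ is the unit isomorphism. It is not available at that point and fails in general (cf.\ the affine examples). Use instead $E=\Phi^{\mathsf{L}}_{0}(E)_{0}$, together with \zcref{lem:S-component}: every homogeneous component of a graded-simple module is a simple $U_{0}$-module.
Likewise, for simple $L$ the space $\Omega_{n}(L)$ is $\bigoplus_{k\le n}L_{k}$, a sum of simples, not a simple $\mathsf{A}_{n}$-module.

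In your fallback, ``a quotient of a graded semisimple module is graded'' is unjustified as stated, because the kernel of $\mathfrak{L}^{n}\to Uv$ need not be graded. The missing lemma is that a bounded-below graded-simple $S$ is simple as an ungraded module. Indeed, let $d$ be the lowest degree of $S$ and let $w\neq0$ have top homogeneous component $w_{k}$. Then $U_{d-k}w=U_{d-k}w_{k}=S_{d}\neq0$, so $Uw=S$.
With this lemma, $\mathfrak{L}^{n}$ is an ungraded direct sum of simples. Each $Uv$ is a quotient of $\mathfrak{L}^{n}$, because $\NL[n+1]U\cdot v=0$, so it is a direct sum of graded simples. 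Part (ii) then follows without SIC, which is a genuinely different and more direct route than the paper's.

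Even so, two gaps remain on the SIC side:
\begin{itemize}
\item Deducing SIC via \zcref{thm:Proj-SIE} also requires $\mathfrak{R}^{n}$ to be projective among exhaustive \emph{right} modules. A hypothesis on left modules does not by itself supply this.
\item Your alternative through (iii') needs the extra hypotheses of \zcref{thm:Adjeq-SIC}, \zcref{thm:AdjeqR-SIC} or \zcref{thm:Adjeqk-SIC}, which you do not check.
\end{itemize}
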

  This result for $\k$-rationality is complemented by a further observation regarding $\mathsf{A}_{0}$-rationality: the semisimplicity of the category of positively-graded $(U|\mathsf{A}_{0})$-bimodules is sufficient to ensure that $U$ is $\mathsf{A}_{0}$-rational and the strong identity condition holds. This provides a powerful criterion for verifying SIC in practice.

  Next, we show that the framework of almost-canonically seminormed rings behaves well under tensor products of rings. This compatibility is systematic, extending from the underlying algebraic invariants to the associated representation categories and SIC:
  \begin{mainthm}[cf. \zcref{sec:tensor}]\label{thm:IntroTensor}
    Let $U^{\tt I}$ and $U^{\tt II}$ be almost-canonically seminormed rings. Then, their tensor product $U = U^{\tt I} \otimes U^{\tt II}$ is an almost-canonically seminormed ring. 
    The mode transition algebra and the category of exhaustive modules exhibit natural factorizations:
    \[
      \mathfrak{A}(U) \cong \mathfrak{A}(U^{\tt I}) \otimes \mathfrak{A}(U^{\tt II}), \txand
      \Mod*[Ex](U) \simeq \Mod*[Ex](U^{\tt I}) \boxtimes \Mod*[Ex](U^{\tt II}).
    \]
    Furthermore, under suitable finiteness assumptions, the strong identity condition holds for $U$ if and only if it holds for both factors $U^{\tt I}$ and $U^{\tt II}$.
  \end{mainthm}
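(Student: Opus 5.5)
The plan is to treat the three assertions of the statement in turn, reducing each to a degree-by-degree statement about the two factors, and using the definitions of the later sections as black boxes.

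\textbf{Step 1: the ring structure.} Give $U = U^{\tt I}\otimes U^{\tt II}$ the tensor-product grading $U_d=\bigoplus_{a+b=d}U^{\tt I}_a\otimes U^{\tt II}_b$. Equip it with the seminorm whose neighbourhoods of zero in degree $d$ are spanned by $N^{\tt I}_{-n}\otimes U^{\tt II}+U^{\tt I}\otimes N^{\tt II}_{-m}$. Then take the completion degreewise. Since the left ideals $\NL[n]$ (and dually $\NR[n]$) are generated by elements of sufficiently negative degree, the almost-canonical condition follows. The key identity is
\[
  \NL[n](U)=\sum_{a+b=n}\NL[a](U^{\tt I})\ctensor \NL[b](U^{\tt II}),
\]
up to closure. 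This should be checked first on generators and then extended by continuity.

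\textbf{Step 2: the mode transition algebra.} The mode transition algebra $\mathfrak{A}(U)$ is built from the quotients $U/\NL[1]$ and $U/\NR[1]$ tensored over the zero-degree Zhu algebra $\mathsf{A}_0$. From the identity of Step 1,
\[
  \mathsf{A}_n(U)\cong\bigoplus_{a+b=n}\mathsf{A}_a(U^{\tt I})\otimes\mathsf{A}_b(U^{\tt II}),
\]
which in degree zero gives $\mathsf{A}_0(U)=\mathsf{A}_0(U^{\tt I})\otimes\mathsf{A}_0(U^{\tt II})$. Likewise the canonical modules factor as $\mathfrak{L}(U)=\mathfrak{L}^{\tt I}\otimes\mathfrak{L}^{\tt II}$. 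The interchange isomorphism $(M\otimes M')\otimes_{A\otimes A'}(N\otimes N')\cong(M\otimes_A N)\otimes(M'\otimes_{A'}N')$ then gives $\mathfrak{A}(U)\cong\mathfrak{A}(U^{\tt I})\otimes\mathfrak{A}(U^{\tt II})$ as graded algebras. This requires checking that the star product of the mode transition algebra is defined componentwise.

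\textbf{Step 3: exhaustive modules.} For the categorical statement, the Deligne-type product $\boxtimes$ is formed inside exhaustive modules, with exhaustive $U^{\tt I}$-modules $M$ and exhaustive $U^{\tt II}$-modules $M'$ giving $M\otimes M'$. To show that every exhaustive $U$-module arises as a colimit of such products, use the generators $U/\NL[n](U)$. These decompose by Step 1 into sums of $U^{\tt I}/\NL[a]\otimes U^{\tt II}/\NL[b]$, and these objects are compact projective-type generators of the respective categories. I would then apply a standard recognition criterion for $\boxtimes$ of module-like categories: a category is identified with $\mathcal{C}\boxtimes\mathcal{D}$ if it is generated by the tensor products of generators of $\mathcal{C}$ and $\mathcal{D}$, and the Hom spaces between these factor as tensor products. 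Hom factorization $\Hom(P\otimes P',Q\otimes Q')=\Hom(P,Q)\otimes\Hom(P',Q')$ holds here because the generators are finitely presented.

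\textbf{Step 4: the strong identity condition.} For the statement about the strong identity condition, use the topological characterisation from \zcref{thm:IntroSIC}. If both factors admit strong identity expansions $1=\sum_i e^{\tt I}_i$ and $1=\sum_j e^{\tt II}_j$, then their product $\sum_{i,j}e^{\tt I}_i\otimes e^{\tt II}_j$ is an orthogonal expansion of $1_U$ that converges in the tensor seminorm. Conversely, suppose $\mathfrak{L}^n(U)$ is projective among exhaustive $U$-modules. Under the finiteness assumptions (for example finite-dimensional $\mathsf{A}_0$ or degreewise finite canonical modules), $\mathfrak{L}^n(U)$ contains $\mathfrak{L}^n(U^{\tt I})\otimes\mathfrak{L}^0(U^{\tt II})$ as a direct summand. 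One can then restrict along $U^{\tt I}\to U$ and use the adjunction with $-\otimes\mathfrak{L}^0(U^{\tt II})$ to deduce that $\mathfrak{L}^n(U^{\tt I})$ is projective. This direction is where the finiteness hypotheses enter, and I expect it to be the main obstacle: the functor $-\otimes\mathfrak{L}^0(U^{\tt II})$ must be faithful and exact with a right adjoint preserving epimorphisms, so that projectivity descends. The argument also needs $\mathfrak{L}^0(U^{\tt II})\neq0$, in the form of a nonzero module over $\mathsf{A}_0(U^{\tt II})$ of finite type.
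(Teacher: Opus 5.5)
\paragraph{Steps 1, 2 and the forward half of Step 4.} These are sound and match the paper.
\begin{itemize}
\item Step~1 comes down to checking that the canonical seminorm of $U$ in level $n$ is the sum over $i+j=n$ of the tensor products of the canonical seminorms of the factors. No completion is needed.
\item Step~2 is $\mathfrak{L}^0(U)=\mathfrak{L}^0(U^{\tt I})\otimes\mathfrak{L}^0(U^{\tt II})$ together with the interchange isomorphism.
\item Grouping $e_n=\sum_{i+j=n}e^{\tt I}_i\otimes e^{\tt II}_j$ is the expansion-level version of the paper's $\one_n=\sum_{p+q=n}\one^{\tt I}_p\otimes\one^{\tt II}_q$. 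For (\texttt{si3}), note that $a+b=-n$ and $i+j<n$ force $-a>i$ or $-b>j$.
\end{itemize}
There is one minor error in Step~2: $\mathsf{A}_n(U)\not\cong\bigoplus_{a+b=n}\mathsf{A}_a(U^{\tt I})\otimes\mathsf{A}_b(U^{\tt II})$ for $n\ge1$. For $\mathcal{D}_{\mathbb{A}^1}\otimes\mathcal{D}_{\mathbb{A}^1}$ over $\mathbb{Q}$ one has $\dim\mathsf{A}_1=5$, not $4$. You only use the case $n=0$, which is correct.

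\paragraph{First gap: Step 3.} The recognition criterion you invoke (tensor products of generators generate, and Hom-spaces between them factor) identifies a Deligne--Kelly product only for compact \emph{projective} generators, where each category is a module category over its generators. For non-projective generators the category is a nontrivial localization of that module category, and Hom-spaces between the generators do not determine it.

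Your generators are not projective in general. By Part~II, projectivity of all $\mathfrak{L}^n$ and $\mathfrak{R}^n$ among exhaustive modules is \emph{equivalent} to SIC, which the factorization does not assume. Already for $U=\k[x]$ with $\deg x=-1$, the exhaustive modules are the locally $x$-nilpotent ones, and $\mathfrak{L}^n=\k[x]/(x^{n+1})$ is not projective among them.

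Moreover, $\mathfrak{L}^n(U)$ is not a direct sum of the $\mathfrak{L}^a(U^{\tt I})\otimes\mathfrak{L}^b(U^{\tt II})$ with $a+b=n$; it only surjects onto each of them.

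The paper avoids both problems. It starts from $\Mod(U^{\tt I}\otimes U^{\tt II})\simeq\Mod(U^{\tt I})\boxtimes\Mod(U^{\tt II})$, where the free modules are honest compact projective generators. It then cuts down to exhaustive objects using that $M\otimes N$ is exhaustive if and only if $M$ and $N$ are, which follows from Proposition~\ref{prop:seminorm-on-tensor} (cf.\ Corollary~\ref{coro:Omega-of-tensor}).

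\paragraph{Second gap: the converse half of Step 4.} The claim that $\mathfrak{L}^n(U^{\tt I})\otimes\mathfrak{L}^0(U^{\tt II})$ is a direct summand of $\mathfrak{L}^n(U)$ carries all the content, and you give no argument for it. There is a canonical surjection onto it, but splitting it amounts to decomposing the strong identity elements of $U$ along the bigrading, which is exactly what must be shown.

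The descent you single out as the main obstacle is in fact easy over a field. If $\phi$ lifts $f\otimes\mathrm{id}_Q$ along $M\otimes Q\to M''\otimes Q$, then $p\mapsto(\mathrm{id}_M\otimes\xi)\,\phi(p\otimes q_0)$ lifts $f$, for any $q_0\in Q$ and any functional $\xi$ with $\xi(q_0)=1$.

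The missing idea is the block analysis in the proof of Theorem~\ref{thm:SIC-tensor}.
\begin{enumerate}
\item Write $\mathfrak{A}_{n,-n}(U)=\bigoplus_{p,q}\mathfrak{M}^n_{p,q}$ with $\mathfrak{M}^n_{p,q}=\mathfrak{A}_{p,-q}(U^{\tt I})\otimes\mathfrak{A}_{n-p,q-n}(U^{\tt II})$.
\item The identities $\one_n\star\mathfrak{x}=\mathfrak{x}=\mathfrak{x}\star\one_n$ kill the off-diagonal components of $\one_n$ and make each diagonal component the unit of $\mathfrak{M}^n_{p,p}$.
\item A unital $A\otimes B$ with $A,B$ nonzero and rigid forces $A$ and $B$ to be unital; this produces $\one^{\tt I}_p$ and $\one^{\tt II}_{n-p}$.
\item Injectivity of $\mathfrak{a}\mapsto\mathfrak{a}\otimes 1_{\mathsf{A}_0(U^{\tt II})}$ transfers the strong identity property to the factors.
\end{enumerate}
Corollary~\ref{coro:SIC-tensor} gives a second route. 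Split the Morita-type equivalence for $\Phi^{\mathsf{L}}_{U}=\Phi^{\mathsf{L}}_{U^{\tt I}}\boxtimes\Phi^{\mathsf{L}}_{U^{\tt II}}$ using Theorem~\ref{thm:tensor-of-equivalence}, then return to SIC via the quasi-rigidity or Frobenius hypothesis.

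Such block idempotents would also justify your direct-summand claim, but as written the converse is unproved.
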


  To bridge the gap between analytic sewing via dual module pairs and the algebraic approach via the mode transition algebra, we investigate the categorical behavior of $\mathfrak{A}$. We show that $\mathfrak{A}$ is realized as an \concept{end}, a construction that recovers the ``sum over dual pairs'' perspective within the algebraic framework. 
  This formula identifies SIC as the precise condition under which these two reconstruction mechanisms coincide.
  \begin{mainthm}[{\zcref{thm:EndMode,coro:EndMode}}]\label{thm:IntroEnd}
    Under suitable rigidity or finiteness assumptions, the mode transition algebra $\mathfrak{A}$ of an almost-canonically seminormed ring $U$ is expressed as an end:
    \[
      \mathfrak{A} \cong \int_{M \in \Mod*(\mathsf{A}_{0})} \Phi^{\mathsf{L}}_{0}(M) \otimes \Phi^{\mathsf{R}}_{0}(M^{\vee}),
    \]
    which realizes $\mathfrak{A}$ via the pairing of left and right induced modules.

    If the strong identity condition holds, this formula reduces to 
    \[
      \int_{W \in \Mod*[Ex](U)} W \otimes W^{\prime},
    \]
    namely the end of quasi-rigid exhaustive left $U$-modules tensoring with their graded duals.
  \end{mainthm}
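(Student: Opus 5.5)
The plan is to reduce the end formula to a universal property of $\mathfrak{A}$ as a $(U,U)$-bimodule built from the canonical modules, and then compare it with the ends of induced modules. First, I will identify the mode transition algebra, at least in degree zero, with the tensor product of the left and right canonical modules:
\[
  \mathfrak{A}\cong \mathfrak{L}^0\otimes_{\mathsf{A}_0}\mathfrak{R}^0 .
\]
Here $\mathfrak{L}^0=\Phi^{\mathsf{L}}_0(\mathsf{A}_0)$ and $\mathfrak{R}^0=\Phi^{\mathsf{R}}_0(\mathsf{A}_0)$ are the induced modules of the regular $\mathsf{A}_0$-module. This fits the construction of \cite{DGK23}.

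Second, for each $M\in\Mod*(\mathsf{A}_0)$ I will construct a wedge $\mathfrak{A}\to\Phi^{\mathsf{L}}_0(M)\otimes\Phi^{\mathsf{R}}_0(M^{\vee})$. I will use $\Phi^{\mathsf{L}}_0(M)=\mathfrak{L}^0\otimes_{\mathsf{A}_0}M$ and $\Phi^{\mathsf{R}}_0(M^\vee)=M^\vee\otimes_{\mathsf{A}_0}\mathfrak{R}^0$, and insert the coevaluation $\mathsf{A}_0\to M\otimes M^\vee$, which exists under the rigidity or finiteness hypothesis (for instance, $M$ finite projective over $\mathsf{A}_0$, or the category being finite with enough rigid objects). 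Dinaturality in $M$ follows from the naturality of coevaluation.

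Third, I will show this wedge is universal. The standard Yoneda/co-Yoneda argument gives
\[
  \int_M F(M)\otimes G(M^\vee)\cong \operatorname{Nat}(\text{id},\,-)
\]
type formulas. Concretely, under the finiteness assumption the end
\[
  \int_M \Phi^{\mathsf{L}}_0(M)\otimes\Phi^{\mathsf{R}}_0(M^\vee)
\]
is computed as $\int_M \mathfrak{L}^0\otimes_{\mathsf{A}_0}(M\otimes M^\vee)\otimes_{\mathsf{A}_0}\mathfrak{R}^0$, and $\int_M M\otimes M^\vee\cong \mathsf{A}_0$ as a bimodule by Yoneda (evaluate at the free module). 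The tensor functors $\mathfrak{L}^0\otimes_{\mathsf{A}_0}-$ and $-\otimes_{\mathsf{A}_0}\mathfrak{R}^0$ must commute with this end, which is a limit. This is the main obstacle, since tensor products preserve colimits, not limits. I would first try to use the rigidity or finiteness assumption to rewrite the end as a finite limit over a projective generator, i.e.\ an equalizer involving only $\mathsf{A}_0$. I would then use flatness of $\mathfrak{L}^0,\mathfrak{R}^0$ over $\mathsf{A}_0$ (graded pieces finite, or projectivity) so the equalizer is preserved. Degreewise finiteness should also make the completed tensor products of the seminormed topology agree with the algebraic ones.

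For the second formula, assume SIC. By \zcref{thm:IntroSIC}(iii) the induction functors give an equivalence between $\Mod*(\mathsf{A}_0)$ (or the thick Zhu algebras) and $\Mod*[Ex](U)$, with $W=\Phi^{\mathsf{L}}_0(M)$. Under this equivalence I will check that $\Phi^{\mathsf{R}}_0(M^\vee)\cong W'$, the graded dual, using quasi-rigidity: the degree-zero part of $W'$ is $M^\vee$, and projectivity of $\mathfrak{R}^n$ (condition (ii)) ensures $W'$ is generated by it. Transporting the end along this equivalence then yields $\int_{W}W\otimes W'$.
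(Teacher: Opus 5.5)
Your outline has the same architecture as the paper's proof: Yoneda for the inner end, an induction functor pulled out of the end, and, under {\SIC}, the Morita equivalence plus a duality between left and right induced modules. However, the step you call the main obstacle is left open, and the remedy you sketch does not close it. First, you never need to move $\mathfrak{L}^{0}\otimes_{\mathsf{A}_{0}}-$ past the end. \zcref{prop:PW} gives $\int_{M}G(M)\otimes_{R}M^{\vee|R}\cong G(\mathsf{A}_{0})$ for an \emph{arbitrary} functor $G$, because $G(M)\otimes_{R}M^{\vee|R}=\Hom_{R}(M,G(M))$ and the forgetful functor is corepresented by $\mathsf{A}_{0}$. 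Taking $G=\Phi^{\mathsf{L}}_{0}$ gives $\mathfrak{L}^{0}$, so only $-\otimes_{\mathsf{A}_{0}}\mathfrak{R}^{0}$ has to commute with the end. (The rigidity used here is that of $M$ and of $\mathsf{A}_{0}$ over $R$, i.e.\ the hypothesis $\mathsf{A}_{0}\in\Mod*(\mathsf{A}_{0}|{}_{R})$, not projectivity over $\mathsf{A}_{0}$. Also, the wedge comes from the action map $\mathsf{A}_{0}\to\End_{R}(M)\cong M\otimes_{R}M^{\vee|R}$, not from a coevaluation out of $\mathsf{A}_{0}$.) Second, your mechanism for the commutation is unjustified. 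Yoneda computes $\int_{M}M\otimes_{R}M^{\vee|R}$, but nothing shows that the end of the tensored integrand collapses to a finite equalizer at the generator; it is still an equalizer of products over all objects and morphisms of $\Mod*(\mathsf{A}_{0}|{}_{R})$, and flatness only buys commutation with finite limits. What does work is degreewise rigidity. If each $\mathfrak{R}^{0}_{-q}$ is finitely generated projective over $\mathsf{A}_{0}$, then $-\otimes_{\mathsf{A}_{0}}\mathfrak{R}^{0}_{-q}\cong\Hom_{|\mathsf{A}_{0}}\bigl((\mathfrak{R}^{0}_{-q})^{\mathsf{A}_{0}|\vee},-\bigr)$ is a right adjoint, hence commutes with all limits, and limits of graded modules are computed degreewise. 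The paper disposes of this step by calling $\Phi^{\mathsf{R}}_{0}$ a right adjoint. Since \zcref{prop:adj:Phideg,prop:adj:PhiOmega} exhibit it as a \emph{left} adjoint, that line really rests on such rigidity of $\mathfrak{R}^{0}$ (available under the quasi-rigidity hypothesis of \zcref{coro:EndMode}). So your suspicion is well founded, but your two-sided version would need the hypothesis for $\mathfrak{L}^{0}$ as well.

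For the second formula, the reason you give for $\Phi^{\mathsf{R}}_{0}(M^{\vee})\cong W'$ is not the operative one. Knowing $W'_{0}=M^{\vee}$ together with projectivity of $\mathfrak{R}^{n}$ does not show that $W'$ is generated by $W'_{0}$. Under {\SIC} the counit of $\Phi^{\mathsf{R}}_{0}\dashv\Omega^{\mathsf{R}}_{0}$ is an isomorphism (\zcref{thm:PhiOmega0}). So what must be proved is $\Omega^{\mathsf{R}}_{0}(W^{\dagger})=M^{\vee}$ for $W=\Phi^{\mathsf{L}}_{0}(M)$, i.e.\ that no functional supported in lower degrees is killed by $\NR[1]U$. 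This follows from \zcref{lem:OmegaDual} and the isomorphism $\mathfrak{R}^{0}\otimes_{U}\mathfrak{L}^{0}\cong\mathsf{A}_{0}$ of \zcref{lem:def:ostar}: $\Omega^{\mathsf{R}}_{0}(W^{\vee})=(\mathfrak{R}^{0}\otimes_{U}\mathfrak{L}^{0}\otimes_{\mathsf{A}_{0}}M)^{\vee}=M^{\vee}$, and $M^{\vee}=(W_{0})^{\vee}\subset W^{\dagger}\subset W^{\vee}$. This is the content of \zcref{lem:induced-dual}. Moreover, $W'$ is only defined after choosing a grading on the exhaustive module $W$. The paper uses the $\gr\Omega$-grading, which is functorial under {\SIC} and agrees with the induced grading on $\Phi^{\mathsf{L}}_{0}(M)$ by \zcref{lem:grOmegaPhi}. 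Finally, the equivalence you transport along must be the one between quasi-rigid categories, \zcref{coro:PhiOmega0-ord}, which needs quasi-rigidity or weak quasi-finiteness. \zcref{thm:IntroSIC}(iii) does not supply it, since it concerns thick Zhu algebras on the full categories.
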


\aststitle{Examples and counterexamples}

  We conclude with several examples illustrating both the scope and the limitations of the theory.

  On the positive side, the Weyl algebra (cf. \zcref{sec:Weyl}) provides an explicit and tractable model in which SIC can be verified directly. 
  Under the Fuchsian grading, the canonical seminorm recovers the natural filtrations by differential order and polynomial degree, allowing explicit descriptions of the associated Zhu algebras, canonical modules, and exhaustive modules. In particular, exhaustive modules are identified with vector bundles equipped with trivial connections.

  In contrast, we show that several irrational vertex operator algebras fail SIC, including the cyclic orbifold $M(1)^+$ and affine vertex operator algebras $V^k(\mathfrak{g})$ and $L_k(\mathfrak{g})$ (cf. \zcref{sec:Examples}) at generic or positive integral levels. These examples illustrate concretely the limitations of algebraic smoothing via mode transition algebras and Zhu-type induction.

\subsection*{History}

  Having outlined these results and applications, we now explain the historical developments leading to the present framework. The strong identity condition arises at the intersection of several threads: enveloping algebras of vertex operator algebras, Zhu-type reconstruction theory, and algebraic approaches to smoothing and sewing of conformal blocks. The present framework may be viewed as the point where these developments converge.

\aststitle{From enveloping algebras to almost-canonically seminormed rings}

  One source of the present framework comes from the theory of \emph{enveloping algebras} of vertex operator algebras. 
  The notion of enveloping algebras originates in the representation theory of Lie algebras.
  The idea is straightforward: although the Lie bracket is non-associative, elements of a Lie algebra act as linear operators on its representations, and those operators compose associatively. 
  The enveloping algebra is then the associative algebra that captures all the possible operators occurring in this way. 
  Consequently, representations of a Lie algebra are equivalently described by its enveloping algebra:
  \[
    \Set*{\text{representations of a Lie algebra}} 
    \simeq
    \Set*{\text{modules of its enveloping algebra}}
  \]

  In the context of vertex operator algebras (VOAs), the situation is vastly more subtle. The notion of \emph{enveloping algebras} was first introduced by Igor Frenkel and Yongchang Zhu in \cite[\S 1.3]{FZ92}. 
  Elements of such an enveloping algebra $\mathscr{U}(\mathbb{V})$ of a VOA $\mathbb{V}$ are possibly infinite linear combinations of compositions of vertex modes:
  \[
    J_{n_1}(a^1) \cdots J_{n_k}(a^k),\qquad
    \text{where }a^i\in \mathbb{V}, n_i\in\mathbb{Z}.
  \]
  Unlike the Lie-theoretic situation, infinite sums are unavoidable in the VOA setting. Consequently, the construction of enveloping algebras necessarily involves a compatible topology. The topology on $\mathscr{U}(\mathbb{V})$ arises in a very specific way. 
  Here, we present a sketch of its construction following \cite{DGK23}. 
  First, the free algebra of modes is graded: where $\deg J_{n}(a) = -n$ and the degree of a composition of modes is just the sum of degrees of individual modes. Then, one can consider the seminorm whose system of neighborhoods of $0$ is given by 
  \[
    \cNL[n]:=\fun{span}\SetCond*{
      J_{n_1}(a^1) \cdots J_{n_k}(a^k) 
    }{ 
      n_{k_0}+\cdots+n_k \ge n\text{ for some }k_0
    }.
  \]
  Such a seminorm is called the \emph{canonical seminorm} in \cite{DGK23}. 
  Then, one can complete this algebra with respect to the canonical seminorm to allow infinite sums. Then, the enveloping algebra $\mathscr{U}(\mathbb{V})$ is obtained as the quotient of this completion by the homogeneous two-sided ideal generated by \emph{vacuum property}, \emph{Jacobi identity}, and \emph{Virasoro relations}. 
  One can check that the enveloping algebra $\mathscr{U}(\mathbb{V})$ is also graded and seminormed and its seminorm behaves similarly to a canonical seminorm; indeed, the canonical seminorm is dense in the former. Such a seminorm is thus called an \emph{almost-canonical seminorm}. 

  Following \cite{FZ92}, several related constructions have appeared in the literature from different motivations, including \cite{FBZ04,NT,Fre07,MNT10,Han20}. 
  All these constructions begin with a graded or split-filtered algebra, complete it with respect to its canonical seminorm, and then take the quotient by a two-sided ideal generated by homogeneous relations.
  This process is axiomatized in \cite[\S A.6]{DGK23} in terms of \emph{almost-canonically seminormed rings}.
  In \cite{DGK23}, building on \cite{MNT10}, some foundations of the general theory are established. Many aspects of this theory, however, remain open for further investigation and advancement.

\aststitle{Zhu's algebras and representation theory}

  While enveloping algebras encode the full representation-theoretic structure of a VOA, they are typically too large and intricate to be used directly in practice. 
  Instead, a much smaller associative algebra $A(\mathbb{V})$, introduced in Zhu's thesis \cite{ZhuThesis} and now called the (zeroth) \emph{Zhu algebra}, is ubiquitously used. 
  In fact, irreducible modules of $A(\mathbb{V})$ are in one-to-one correspondence with irreducible admissible modules of $\mathbb{V}$. Hence, for \emph{rational} VOAs, where all admissible modules are semisimple, the representation theory of $\mathbb{V}$ is completely determined by $A(\mathbb{V})$.

  However, without the assumption of rationality, there may exist admissible $\mathbb{V}$-modules not induced from $A(\mathbb{V})$-modules. To study such modules, and most importantly, to determine when $\mathbb{V}$ is rational, higher-level generalizations of $A(\mathbb{V})$, called the \emph{higher Zhu algebras} $A_{n}(\mathbb{V})$, were introduced by Chongying Dong, Haisheng Li, and Geoffrey Mason in \cite{DLM2}. Relatedly, a family of bimodules $A_{m,n}(\mathbb{V})$ was introduced in \cite{DJmz} to study degree components of modules induced from $A_{n}(\mathbb{V})$. 
  The algebras $A_n(\mathbb{V})$ and the bimodules $A_{m,n}(\mathbb{V})$ are defined as quotients of $\mathbb{V}$ by certain subspaces $O_n(\mathbb{V})$ and $O_{m,n}(\mathbb{V})$ respectively. It was shown in \cite{FZ92,NT} that $A(\mathbb{V})$ can be realized as a subquotient of $\mathscr{U}(\mathbb{V})$; the same was proved for $A_n(\mathbb{V})$ in \cite{He17} and for $A_{m,n}(\mathbb{V})$ in \cite{Han22}. 
  These constructions can be unified in the framework of \emph{almost-canonically seminormed rings and mode transition algebras}; see \zcref{sec:Zhu-mode}. 
  In addition, mode transition algebras provide a finer control on components of admissible $\mathbb{V}$-modules, see e.g. \cite{DGK23,DGK24-Morita}.

  Nevertheless, there remain several questions concerning the algebras $A_n(\mathbb{V})$ and the bimodules $A_{m,n}(\mathbb{V})$. For instance, 
  \begin{enumerate}
    \item For rational VOAs, there is an equivalence between the categories of $A(\mathbb{V})$-modules and of admissible $\mathbb{V}$-modules. It is not yet known what the precise conditions are under which such a Morita-type equivalence holds.
    \item For $n>0$, the higher Zhu algebra $A_n(\mathbb{V})$ never yields a Morita-type equivalence as above (see \zcref{rem:noPhiOmegan}). Rather, a certain subcategory of $A_n(\mathbb{V})$-modules is used in practice. One thus wonders if there is a \emph{thick} version of $A_n(\mathbb{V})$ that does yield a Morita-type equivalence.
  \end{enumerate}
  These questions ultimately lead to the representation-theoretic role of mode transition algebras and the strong identity condition studied in the present work.

\aststitle{Conformal blocks on smoothings via mode transition algebras}

  The preceding developments converge naturally in the algebraic study of conformal blocks and smoothing. The main topic of \cite{DGK23} is the introduction of \emph{mode transition algebras} to study algebraic structures on moduli of stable curves together with the representation theory of the underlying VOAs.
  The mode transition algebra $\mathfrak{A}$ of an almost-canonically seminormed ring\footnote{In the VOA-context considered in \cite{DGK23}, $U$ is the enveloping algebra of the VOA $\mathbb{V}$.} $U$, by its construction, is a bigraded bimodule that has a non-unital associative multiplication. 
  More concretely, it is the double induced bimodule of the \emph{Zhu algebra}\footnote{In the VOA-context, $\mathsf{A}$ is isomorphic to the Zhu algebra $A(\mathbb{V})$.} $\mathsf{A}$:
  \[
    \mathfrak{A} := U/\NL[1]U\otimes_{U_{0}}\mathsf{A}\otimes_{U_{0}}U/\NR[1]U.
  \]

  The key application of the mode transition algebra $\mathfrak{A}$ in \cite{DGK23} is the celebrated characterization of the \emph{smoothing} property in terms of SIC.
  \begin{nothm}[{\cite[Theorem 5.0.3]{DGK23}}]
    For a vertex operator algebra $\mathbb{V}$ of CFT-type (i.e. $\mathbb{V}$ is $\N$-graded with $\mathbb{V}_0=\mathbb{C}\vac$), the following two conditions are equivalent:
    \begin{enumerate}
      \item The mode transition algebra $\mathfrak{A}$ has \emph{strong identity elements}. Namely, elements $\one_{n}\in\mathfrak{A}_{n,-n}$ ($n\in\N$) satisfying the identities
      \begin{equation}
        \one_{n}\star\mathfrak{a}=\mathfrak{a}=\mathfrak{a}\star\one_{m}
        \txforall\mathfrak{a}\in\mathfrak{A}_{n,-m} \,(n,m\in\N).
      \end{equation}
      \item The VOA $\mathbb{V}$ satisfies \emph{smoothing}. 
      That is to say, for any $n$-tuple of admissible $\mathbb{V}$-modules, there exists an element $\one(q)=\sum_{n}\one_{n}q^n\in\mathfrak{A}\dbrack{q}$, such that the map 
      \[
        \alpha\colon W^{\bullet}\dbrack{q} 
        \longrightarrow
        (W^{\bullet}\otimes\mathfrak{A})\dbrack{q},
        \qquad
        u\longmapsto u\otimes \one(q)
      \]
      is compatible with the actions of the chiral Lie algebra $\mathcal{L}_{\mathscr{C}\setminus \pt_{\bullet}}(\mathbb{V})$ for a \emph{formal smoothing family}\footnote{Namely, $\mathscr{C}$ is a family over the formal disk $\Spec(\mathbb{C}\dbrack{q})$ with nodal special fiber and smooth generic fiber, and $\pt_{\bullet}=(\pt_1,\cdots,\pt_n)$ is an $n$-tuple of smooth marked sections of $\mathscr{C}$} $(\mathscr{C},\pt_\bullet)$ 
      and extends the map 
      \[
        \alpha_{0}\colon W^{\bullet}
        \longrightarrow
        W^{\bullet}\otimes\mathsf{A},
        \qquad
        u\longmapsto u\otimes 1_{\mathsf{A}}.
      \]
    \end{enumerate}
  \end{nothm}

  When $\mathbb{V}$ is \emph{$C_1$-cofinite}, it is shown in \cite{DGK22} that $\alpha_0$ induces an isomorphism between spaces of coinvariants
  \[
    \brk[s]{W^{\bullet}}_{(\mathscr{C}_{0}, \pt_{\bullet},  t_{\bullet})}
    \cong
    \brk[s]{W^{\bullet}\otimes\mathfrak{A}}_{(\widetilde{\mathscr{C}_{0}}, \pt_{\bullet} \sqcup \qt_{\pm},  t_{\bullet} \sqcup s_{\pm})},
  \]
  where $\widetilde{\mathscr{C}_{0}}$ is the normalization of the nodal curve $\mathscr{C}_{0}$ with $\qt_{\pm}$ being the two preimages of the node, and $t_{\bullet}, s_{\pm}$ are chosen formal coordinates at the marked points $\pt_{\bullet}$ and at $\qt_{\pm}$.
  If this is the case and assume the sheaves 
  \[
    \brk[s]{W^{\bullet}}_{(\mathscr{C}, \pt_{\bullet},  t_{\bullet})}
    \txand
    \brk[s]{W^{\bullet}\otimes\mathfrak{A}}_{(\widetilde{\mathscr{C}}, \pt_{\bullet} \sqcup \qt_{\pm},  t_{\bullet} \sqcup s_{\pm})},
  \]
  where $\widetilde{\mathscr{C}}$ is the trivial extension of $\widetilde{\mathscr{C}_{0}}$ over the formal disk $\Spec(\mathbb{C}\dbrack{q})$, are coherent. Then, $\alpha$ induces an isomorphism $[\alpha]$ (the \emph{formal sewing map}) of these two sheaves on $\Spec(\mathbb{C}\dbrack{q})$. 
  In this way, using formal gluing (c.f. \cite{Pries00}), one relates the spaces of coinvariants associated to nodal curves with those associated to smooth ones. Hence, if we have assumed  all the involved sheaves of coinvariants are coherent, then the construction of sheaves of coinvariants $\brk[s]{W^{\bullet}}_{(C,\pt_{\bullet},t_{\bullet})}$ gives rise to a vector bundle $\brk[s]{W^{\bullet}}$ on the moduli stack of coordinated stable curves $\widehat{\overline{\Moduli}}_{g,n}$ (cf. \cite[Corollary 5.2.6]{DGK23}).

  Before proceeding, it is worth emphasizing that the significance of the algebraic approach developed in \cite{DGK23} lies not only in extending smoothing constructions to arbitrary genera and non-semisimple settings, but also in providing a fundamentally algebraic approach to sewing and factorization. In particular, many of the constructions remain valid over arbitrary base fields, where complex-analytic methods are unavailable.

\aststitle{Concerns on the strong identity condition}

  Despite these developments, from the perspective of analytic sewing and vertex tensor category theory, several conceptual tensions remain:
  \begin{enumerate}
    \item The smoothing property in \cite{DGK23} is \emph{equivalent} to the strong identity condition. However, this condition does not hold in many situations (e.g. the triplet vertex operator algebra $\mathcal{W}(p)$, see \cite[\S 9.1]{DGK23}) where the sewing is known to be valid in the analytic setting.
    \item In the algebraic approach of \cite{DGT-factorization,DGK23}, the sewing is realized by assigning the \emph{mode transition algebra $\mathfrak{A}$} to the preimages of the \emph{node} in the normalization of a nodal curve, and the sewing isomorphism follows from the isomorphism $\alpha_0$ on the central fibers.
    In contrast, the analytic approach assigns \emph{pairs of dual (contragredient) modules} to the \emph{gluing tubes} on Riemann surfaces, and the sewing is given in terms of product and iterate intertwining operators. The convergence of such formal series is central in these theories.
    \item Since the mode transition algebra is a double induced bimodule of the Zhu algebra, tensor products of modules constructed via algebraic sewing must be \emph{induced modules} (also known as \emph{generalized Verma modules}). 
    However, in non-semisimple cases, this is often not the case.
  \end{enumerate}

  Addressing these concerns is one of the central motivations of this work. 
  The results above may be summarized conceptually as follows:
  \begin{quote}
    The strong identity condition characterizes precisely when algebraic sewing admits the same dual-pair formalism as in the analytic sewing. Moreover, in this case, every admissible $\mathbb{V}$-module is induced from the Zhu algebra.
  \end{quote}

\subsection*{Outline of the paper}

  This paper is organized into four parts, whose logical dependencies are summarized in the following diagram.
  \begin{center}
    \begin{tikzpicture}[
      box/.style={draw, rounded corners, align=center, minimum width=3.2cm, minimum height=1.0cm},
      arrow/.style={->, thick}
    ]

    % Nodes
    \node[box] (I) {Part I\\Foundations};
    \node[box, below left=of I] (II) {Part II\\Characterizations of SIC};
    \node[box, below right=of I] (III) {Part III\\Structural Properties};
    \node[box, below=3cm of I] (IV) {Part IV\\Applications and examples};

    % Arrows
    \draw[arrow] (I) -- (II);
    \draw[arrow] (I) -- (III);
    \draw[arrow] (II) -- (IV);
    \draw[arrow] (III) -- (IV);

    % cross dependency (important logical nuance)
    \draw[arrow, dashed] (II) -- (III);

    \end{tikzpicture}
  \end{center}

  \zcref{part1} develops the basic framework of almost-canonically seminormed rings and their representation theory. 
  \zcref{sec:ACSR} introduces almost-canonically seminormed rings, Zhu algebras, canonical modules, and mode transition algebras, together with the definition of the strong identity condition. 
  \zcref{sec:modules,sec:InducedMod} establish the relevant module categories (such as exhaustive, positively-filtered, graded, and quasi-rigid modules)  and construct the induction functors.

  \zcref{part2} forms the technical core of the paper and is devoted to the characterization of SIC (\zcref{thm:IntroSIC}). 
  \zcref{sec:SIE} introduces strong identity expansions, while \zcref{sec:splitOmega} relates them to the splitting of the $\Omega$-filtration. 
  The subsequent sections, namely \zcref{sec:Morita,sec:Morita_to_SIC,sec:HigherMorita}, establish the equivalence between SIC, projectivity of canonical modules, and Morita-type equivalences arising from ordinary and thick Zhu algebras. 

  \zcref{part3} develops broader structural aspects of our framework whose interaction with SIC becomes transparent through the characterizations established in \zcref{part2}.  
  \zcref{sec:rationality} introduces a generalized notion of rationality and relates it to semisimplicity and SIC (\zcref{thm:IntroRationality}). 
  \zcref{sec:tensor} proves that the framework of almost-canonically seminormed rings is compatible with tensor products, both algebraically and categorically (\zcref{thm:IntroTensor}). 
  Finally, \zcref{sec:End} establishes the end formula for the mode transition algebra (\zcref{thm:IntroEnd}), thereby connecting algebraic smoothing to the dual-pair formalism underlying analytic sewing.
  
  Finally, \zcref{part4} returns to concrete algebraic and VOA-theoretic settings. 
  \zcref{sec:Weyl} studies the Weyl algebra as an explicit prototype, verifying SIC and describing its exhaustive modules in concrete terms. 
  \zcref{sec:VOA} applies the general theory to vertex operator algebras and identifies the precise representation-theoretic obstruction governing algebraic smoothing (\zcref{thm:IntroVOA}). 
  The final section, \zcref{sec:Examples}, concludes with examples of irrational vertex operator algebras for which SIC fails.

% \clearpage
\part{Basics notions}\label{part1}
  In this part, we set up the foundational algebraic and categorical framework.

\section*{Conventions}
  Throughout this paper, we work over a fixed commutative ring $\nota{\k}$: 
  by \textbf{spaces}, we mean $\k$-modules, not necessarily free; 
  by \textbf{algebras}, we mean $\k$-modules equipped with an associative $\k$-bilinear multiplication, not necessarily unital; 
  and by \textbf{rings}, we mean unital algebras.
  All tensor products $\otimes$ are over $\k$ unless otherwise specified. 

  We will consider graded algebras and graded modules. Without specification, all gradings are by $\Z$. 
  For each $n\in\Z$, we write $\nota{[n]}$ for the graded space concentrated in degree $n$ with value $\k$, and for each graded space $M$, we write $\nota{M[n]}$ for $M\otimes[n]$.

\subsection*{Duals and rigidity}
  Let $R$ be an algebra. 
  We write $\nota{(-)^{\vee|R}}$ (resp. $\nota{(-)^{R|\vee}}$) for the functor sending a right (resp. left) $R$-module $M$ to its dual left (resp. right) $R$-module $\Hom_{R}(M,R)$. 
  When $R$ is implied, we omit it from the notation. 
  When the side of the module structure is clear, we also omit the vertical bar. 
  When $R=\k$, we simply write $\nota{M^{\ast}}$ for $M^{\vee|\k}$. 

  An $R$-module $M$ is called \textbf{rigid} if $M^{\vee} \otimes_R (-) \to \Hom_R(M,-)$ is a natural isomorphism (equivalently, the canonical map $M^{\vee} \otimes_R M \to \End_R(M)$ is an isomorphism). Recall that, if $R$ is unital, then an $R$-module $M$ is rigid precisely when it is a finitely generated and projective.

\subsection*{Categories of bimodules}
  For any algebras $R$ and $S$, we will use the following notations:
  \begin{description}[leftmargin = 7.5\parindent]
    \item[\nota{$\Mod(R|S)$}] 
        the category of all $(R|S)$-bimodules, i.e. spaces equipped a compatible left $R$-action and right $S$-action.
    \item[\nota{$\Mod({}_R|S)$}] 
        same as $\Mod(R|S)$, but we emphasize $R$ as the base.
    \item[\nota{$\Mod(R|{}_S)$}] 
        same as $\Mod(R|S)$, but we emphasize $S$ as the base.
    \item[\nota{$\Mod(R|)$}] 
        the category of all left $R$-modules.
    \item[\nota{$\Mod(|R)$}] 
        the category of all right $R$-modules.
    \item[\nota{$\Mod*(R|{}_S)$}] 
        the full subcategory of $\Mod(R|{}_S)$ consisting of all bimodules that are rigid as right $S$-modules.
    \item[\nota{$\Mod*({}_R|S)$}] 
        the full subcategory of $\Mod({}_R|S)$ consisting of all bimodules that are rigid as left $R$-modules.
    \item[\nota{$\Mod*(R|)$}] 
        the full subcategory of $\Mod(R|)$ consisting of all left $R$-modules that are rigid as $\k$-modules.
    \item[\nota{$\Mod*(|R)$}] 
        the full subcategory of $\Mod(|R)$ consisting of all right $R$-modules that are rigid as $\k$-modules.
  \end{description}
  The capital categories $\Mod(\cdots)$ are Grothendieck abelian categories and the last four are full exact subcategories of them. 

\subsection*{Categorical language}
  Results in this paper can also be formulated and proved in the following generality. \emph{Readers who are not interested in generality can ignore this remark and similar ones occasionally appearing.}

  Instead of working over a commutative ring $\k$, one may also work in an abelian category $\mathbb{K}$ with a symmetric monoidal structure $(\otimes,\unit)$ satisfying the following conditions:
  \begin{enumerate}
    \item The abelian category $\mathbb{K}$ is a \emph{Grothendieck abelian category}, i.e., it has a generating set, all colimits exist, and all filtered colimits are exact.
    \item The tensor product $\otimes$ is additive and cocontinuous in each variable.
    \item The monoidal structure $(\otimes,\unit)$ is closed, i.e., for any $x\in\mathbb{K}$, the tensor product functor $-\otimes x$ has a right adjoint $[x,-]$.
    \item There is a generating set $\mathcal{G}$ of $\mathbb{K}$ consists of \emph{rigid} objects: i.e. $[\grp{g},\unit]\otimes(-) \cong [\grp{g},-]$ for all $\grp{g} \in \mathcal{G}$.
  \end{enumerate}
  In practice, one may be interested in the following examples: the category of modules over $\k$; the category of $\Gamma$-graded (where $\Gamma$ is an abelian group) modules over $\k$; the category of super modules over $\k$; the category of differential graded modules over $\k$.

\section{Almost-canonically seminormed rings and strong identities}\label{sec:ACSR}
\subsection{Almost-canonical seminorms} 
  The main object under consideration in this paper is a graded ring (i.e. a unital algebra) $U$ equipped with an {almost-canonical (left) seminorm}: 
  \begin{definition}[{\cite[\S A.6]{DGK23}\footnote{In \cite{DGK23}, almost-canonical seminorms are defined for \emph{split-filtered rings}. But for the purpose of this paper, there is no difference to work with the associated graded algebra of a split-filtered algebra. See \cite[\S A.8]{DGK23}}}]
    An \concept{almost-canonical (left) seminorm} on a graded algebra $\nota{U} = \bigoplus_{n \in \Z} U_{n}$ is a system of neighborhoods $\nota{\NL{U}}$ of $0$ in $U$ verifying the following conditions:
    \begin{enumerate}[label=(\texttt{A}\arabic*)]
      \item\label{A1} Each neighborhood $\NL[n]U$ is a graded subspace of $U$.
      \item\label{A2} The \concept{canonical seminorm} $\nota{\cNL[\bullet]U}:=UU_{\le-\bullet}$ is contained in and degreewise dense in $\NL[\bullet]U$.
      \item\label{A3} $(\NL[n]U_{p})U_{q}\subset\NL[n-q]U_{p+q}$ and $U_{p}(\NL[n]U_{q})\subset\NL[n]U_{p+q}$ for all $n,p,q\in\Z$.
    \end{enumerate}
    Note that, by \zcref[noname]{A3}, each $\NL[n]U$ is a graded left ideal of $U$. Hence, we also call such a system $\NL{U}$ a system of \concept{left neighborhoods}.
  \end{definition}
  \begin{remark}
    In the categorical language, the notion of an almost-canonical seminorm in the above sense is interpreted as an inductive system $\NL{U}$ of \emph{subobjects} of $U$ in the category of graded objects satisfying condition \zcref[noname]{A3} and the following variant of condition \zcref[noname]{A2}:
    \begin{enumerate}[label=(\texttt{A}\arabic*'),start=2]
      \item $\NL[n]U_{p} = \cNL[n]U_{p}+\NL[n+1]U_{p}$ for all $n,p\in\Z$.
      \label{A2p}
    \end{enumerate}
    Indeed, one can show that these two variants are equivalent. The advantage of \zcref[noname]{A2} is that it is more suitable for the topological language that is used throughtout this paper.
  \end{remark}
  \begin{notation}
    The system of \concept{right neighborhoods} associated to $\NL$ is defined by 
    \[
      \nota{\NR[\bullet]U_{\Box}} := \NL[\bullet-\Box]U_{\Box}.
    \]
    From this definition, one can verify that $\NR$ satisfies properties analogous to those of $\NL$ with a changing of sides. 
    For instance, each $\NR[n]U$ is a graded right ideal of $U$.

    Note that, the left and right neighborhoods $\NL$ and $\NR$ coincide on the subalgebra $U_{0}$; that is, $\NL[\bullet]U_{0} = \NR[\bullet]U_{0}$. So we omit the subscripts and simply denote them by $\nota{\NLR[\bullet]U_{0}}$.
  \end{notation}
  \begin{remark}\label{rem:opposite}
    Let $U$ be an almost-canonically seminormed algebra. 
    We can view its opposite algebra $\nota{\opp{U}}$ as a graded algebra equipped with the reversed grading: $\opp{U}_{\Box} = U_{-\Box}$. 
    Then, the seminorm on $U$ induces one on $\opp{U}$ defined by 
    \[
      \NL[\bullet]\opp{U}_{\Box} := \NR[\bullet]U_{-\Box}.
    \]
    This is another way to understand the system of right neighborhoods.
  \end{remark}

  \begin{example}
    The primary motivating example of an almost-canonically seminormed ring is the \emph{enveloping algebra} $\mathscr{U}(\mathbb{V})$ of a vertex operator algebra $\mathbb{V}$ over $\k$. 
    We omit the detailed construction here and leave the discussion to \zcref{sec:VOA}. 
    For further background, see \cite{FZ92,FBZ04,Fre07,NT,MNT10}, especially \cite[\S 2]{DGK23}.
  \end{example}

  We do not expect readers to have backgrounds in vertex algebras. Nevertheless, it is helpful to present some concrete examples out of the VOA-realm to illustrate the general theory. Below, we provide several such examples:

  \begin{example}
    Let $U = \k[x^{\pm 1}]$ be the ring of \emph{Laurent polynomials}, graded by $\deg x = 1$ and $\deg x^{-1} = -1$. In this case, the canonical seminorm is trivial: $\cNL[n]U = \cNR[n]U = U$ for all $n$. Thus, $U$ does not admit any nontrivial almost-canonical seminorm.

    More generally, consider the \emph{Laurent tensor algebra} $U = \bigoplus_{n \in \Z} M^{\otimes n}$ of a space $M$. Here, $M^{\otimes n}$ denotes the $n$-th tensor power of $M$ for $n \geq 0$, and for $n < 0$, we set $M^{\otimes n} := (M^{\ast})^{\otimes -n}$, where $M^{\ast}$ is the dual space of $M$. Multiplication is given by concatenation of tensors, together with the evaluation map $M \otimes M^{\ast} \to \k$. Then, the canonical seminorms are given by $\cNL[n]U = U M^{\otimes -n}$ and $\cNR[n]U = M^{\otimes n} U$. If the evaluation map $M \otimes M^{\ast} \to \k$ is surjective, then we again have trivial seminorms: $\cNL[n]U = \cNR[n]U = U$ for all $n$.
  \end{example}

  \begin{example}\label{eg:Weyl}
    Let $U = \mathcal{D} = \k\brk[a]{x,\partial_{x}}/(\partial_{x}x-x\partial_{x}-1)$ be the \emph{rank-one Weyl algebra}. It is the \emph{noncommutative} ring generated by $x$ and $\partial_{x}$ subject to the relation $\partial_{x}x-x\partial_{x}=1$. 
    There are various useful filtrations on $\mathcal{D}$.
    Here, we consider the \emph{Fuchsian grading}\footnote{The terminology comes from the Fuchsian condition at infinity: a differential operator $P\in\mathcal{D}$ is said to be \emph{Fuchsian at $\infty$}, or to have a \emph{regular singularity at $\infty$}, if the Fuchsian degree is increasing with respect to the orders of monomials in $P=\sum_{i}f_{i}(x)\partial^{i}$: that is, $\deg f_{i} - i > \deg f_{j} - j$ whenever $i>j$. See \cite[\S 5.1.2]{HTTDmodules}}: 
    \[
      \deg x=1
      \txand
      \deg\partial_{x}=-1. 
    \]
    Then, the canonical seminorm can be characterized as follows: for each $n\in\N$,
    \begin{align*}
      \cNL[n]\mathcal{D} &= \mathcal{D}\partial_{x}^{\ge n} = \Set{f\in\mathcal{D}\given\text{each monomial of $f$ contains at least $n$ powers of $\partial_{x}$}},\\
      \cNR[n]\mathcal{D} &= x^{\ge n}\mathcal{D} = \Set{f\in\mathcal{D}\given\text{each monomial of $f$ contains at least $n$ powers of $x$}}.
    \end{align*}
    We see that the left and right neighborhoods are precisely the filtration by \emph{orders of differential operators} and the filtration by \emph{degrees of polynomials} respectively.
    For more details and further discussion, see \zcref{sec:Weyl}.
  \end{example}

  \begin{example}\label{eg:degreewiseHom}
    A ($\k$-linear) \emph{$\Z$-algebra} (cf. \cite{OW11,TensorGrothCats}) is a $\k$-linear category with the set of objects isomorphic to $\Z$. Given a $\Z$-algebra $\mathcal{A}$, its path algebra
    \[
      U:=\bigoplus_{p,q\in\Z}\Hom_{\mathcal{A}}(p,q)
    \]
    is a graded ring with the grading $\deg\Hom_{\mathcal{A}}(p,q) = p-q$. 
    Conversely, any graded ring $U$ gives rise to a $\Z$-algebra $\mathcal{A}$ which is the full subcategory of left graded $U$-modules with objects $\Set{ U[p] }_{p \in \Z}$. 
    Note that, $\Hom_{\mathcal{A}}(U[p],U[q]) = U_{p-q}$.

    More generally, let $\mathcal{A}$ be a $\k$-linear semicategory (i.e. morphisms are composable but not necessarily contain identities) with the set of objects isomorphic to $\Z$. 
    Then, the canonical seminorm on its path algebra $U$ is given by
    \begin{align*}
      \cNL[n]U &= 
        \bigoplus_{p,q\in\Z}
          \Set{ f \colon p \to q \given f \text{ factors through some } r, \text{ where }q-r\ge n },\\
      \cNR[n]U &= 
        \bigoplus_{p,q\in\Z}
          \Set{ f \colon p \to q \given f \text{ factors through some } r, \text{ where }p-r\ge n }.
    \end{align*}
    This example is closely related to the structure of a mode transition algebra.
  \end{example}

  The following example shows why one needs the notion of \emph{almost-canonical seminorms} instead of just considering \emph{canonical seminorms}.
  \begin{example}[{\cite[Lemma A.6.13]{DGK23}}]\label{eg:completion}
    Let $U$ be an almost-canonically seminormed algebra and consider the \emph{degreewise completion} $\widehat{U}$ of $U$ with respect to the almost-canonical seminorm\footnote{Note that, since $\NL[n]U_{\bullet}=\NR[n+\bullet]U_{\bullet}$, replacing $\NL$ by $\NR$ does not change the limit.}:
    \[
      \widehat{U}_{\bullet} := \varprojlim_{n\in\N} U_{\bullet}/\NL[n]U_{\bullet}.
    \]
    Then, the induced seminorm on $\widehat{U}$ is an almost-canonical seminorm. 
    
    Note that, even if we start with the \emph{canonical seminorm} $\cNL[\bullet]U$, the completion $\widehat{U}$ is not necessarily canonically seminormed. That's why we need to generalize the notion of canonical seminorms to \emph{almost-canonical seminorms}.
  \end{example}

\subsection{Zhu algebras and the mode transition algebra}\label{sec:Zhu-mode}
  For each $n\in\N$, define 
  \begin{equation}\label{def:ALR}
    \begin{gathered}
      \nota{\mathfrak{L}^{n}_{\bullet}}:=(U/\mathsf{N}_{\mathsf{L}}^{n+1}U)_{\bullet},\qquad
      \nota{\mathfrak{R}^{n}_{\bullet}}:=(U/\mathsf{N}_{\mathsf{R}}^{n+1}U)_{\bullet},\\
      \txand
      \nota{\mathsf{A}_{n}} := \mathfrak{L}^{n}_{0} = \mathfrak{R}^{n}_{0} = U_0/\mathsf{N}^{n+1}U_0.
    \end{gathered}
  \end{equation}
  For any element $u\in U$, its images in $\mathfrak{L}^{n}$, $\mathfrak{R}^{n}$, and $\mathsf{A}_{n}$ are denoted by $\nota{[u]^{\mathsf{L}}_{n}}$, $\nota{[u]^{\mathsf{R}}_{n}}$, and $\nota{[u]_{n}}$ respectively. 
  When $U$ is unital, we further denote the image of $1\in U$ in them simply by $\nota{1_{n}}$. 
  \begin{definition}\label{def:Zhu}
    The algebra $\mathsf{A}_{n}$ is called the \concept{$n$-th Zhu algebra} of $U$.
  \end{definition}

  Note that each $\mathfrak{L}^{n}$ (resp. $\mathfrak{R}^{n}$) is a discrete $(U|\mathsf{A}_{n})$-bimodule (resp. $(\mathsf{A}_{n}|U)$-bimodule). Hence, the space $\nota{\mathsf{A}_{m,n}}:=\mathfrak{L}^{n}_{m-n}=\mathfrak{R}^{m}_{m-n}$ is a $(\mathsf{A}_{m}|\mathsf{A}_{n})$-bimodule.

  \begin{remark}
    Using these notations, $\widehat{U}$ is both the projective limit of the graded $U$-modules $(\mathfrak{L}^{n}_{\bullet})_{n\in\N}$ and $(\mathfrak{R}^{n}_{\bullet})_{n\in\N}$. This provides a more categorical way to consider the topological ring $U$.
  \end{remark}

  \begin{para}[Relation to the literature]
    The notion of the \emph{(zeroth) Zhu algebra} $A(\mathbb{V})$ of a vertex operator algebra $\mathbb{V}$ was introduced in \cite{ZhuThesis} as the quotient of $\mathbb{V}$ by a suitable subspace $O(\mathbb{V})$. It was shown in \cite{FZ92,NT} that $A(\mathbb{V})$ is isomorphic to an appropriate quotient of the degree-zero part of the enveloping algebra $\mathscr{U}(\mathbb{V})$. In our language, that quotient is precisely $\mathsf{A}_{0}(\mathscr{U}(\mathbb{V}))$.
    \emph{Higher Zhu algebras} $\mathsf{A}_{n}$ were defined in \cite{DLM2} as quotients of $\mathbb{V}$ by subspaces $O_{n}(\mathbb{V})$. In \cite{He17}, it was shown that $A_{n}(\mathbb{V})$ is isomorphic to our $n$-th Zhu algebra $\mathsf{A}_{n}$ of the almost-canonically seminormed ring $\mathscr{U}(\mathbb{V})$.
    Other realizations of $A_{n}(\mathbb{V})$ can be found in \cite{H05b,vEH19}.
    In \cite{DJmz}, a family of $(A_{m}(\mathbb{V})|A_{n}(\mathbb{V}))$-bimodules $A_{m,n}(\mathbb{V})$ was introduced to describe the transition of admissible modules from level $m$ to level $n$. It was shown in \cite{Han22} that the bimodule $A_{m,n}(\mathbb{V})$ can be interpreted as the $(\mathsf{A}_{m}|\mathsf{A}_{n})$-bimodule $\mathsf{A}_{m,n}$.

    These constructions extend to other contexts. For example, in orbifold theory, namely the study of twisted representations (see \cite{DLM1,DJtams}), for the \emph{$g$-twisted} enveloping algebra $\mathscr{U}_{g}(\mathbb{V})$, its Zhu algebras $\mathsf{A}_{n}$ are isomorphic to the \emph{$g$-twisted Zhu algebras} $A_{g,n}(\mathbb{V})$ of $\mathbb{V}$, and the $(\mathsf{A}_{m}|\mathsf{A}_{n})$-bimodules $\mathsf{A}_{m,n}$ are isomorphic to the $(A_{g,m}(\mathbb{V})|A_{g,n}(\mathbb{V}))$-bimodules $A_{g,m,n}(\mathbb{V})$; see \cite{Han20,Han25}.
  \end{para}

  \begin{notation}
    We will also denote the images of $\NL[k]U_{\bullet}$, $\NR[k]U_{\bullet}$, and $\NLR[k]U_{0}$ in the discrete spaces $\mathfrak{L}^{n}_{\bullet}$, $\mathfrak{R}^{n}_{\bullet}$, and $\mathsf{A}_{n}$ by $\nota{\NL[k]\mathfrak{L}^{n}_{\bullet}}$, $\nota{\NR[k]\mathfrak{R}^{n}_{\bullet}}$, and $\nota{\NLR[k]\mathsf{A}_{n}}$ respectively. Be aware that these are not seminorms, but merely finite-length filtrations.
  \end{notation}

  A key observation towards the mode transition algebra is the following perfect pairing:
  \begin{lemma}[{\cite[B.2.1]{DGK23}}]\label{lem:def:ostar}
    If $U$ is unital, the following pairing is an isomorphism:
    \begin{align*}
      \mathfrak{R}^{0}\otimes_{U}\mathfrak{L}^{0}&\overset{\ostar}{\longrightarrow}\mathsf{A}_{0}\\
      [\alpha]^{\mathsf{R}}_{0}\otimes[\beta]^{\mathsf{L}}_{0}&\longmapsto
      \begin{dcases*}
        [\alpha\beta]_{0} & if $\deg(\alpha)+\deg(\beta)=0$,\\
        0 & otherwise.
      \end{dcases*}
    \end{align*}
  \end{lemma}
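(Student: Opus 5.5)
The strategy is to reduce the pairing to the multiplication map on $U$ and check it degreewise, using the ideal properties of the neighborhoods recorded in \zcref[noname]{A3}. We regard $\mathfrak{R}^{0}=U/\NR[1]U$ as a graded $(\mathsf{A}_{0}|U)$-bimodule and $\mathfrak{L}^{0}=U/\NL[1]U$ as a graded $(U|\mathsf{A}_{0})$-bimodule. Since $U$ is unital, $U\otimes_U U\cong U$ via multiplication. Right exactness of $\otimes_U$ then gives
\[
  \mathfrak{R}^{0}\otimes_{U}\mathfrak{L}^{0}\cong U/(\NR[1]U+\NL[1]U),
\]
the quotient of $U$ by the sum of a right ideal and a left ideal. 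This isomorphism is graded, and it sends $[\alpha]^{\mathsf{R}}_0\otimes[\beta]^{\mathsf{L}}_0$ to the class of $\alpha\beta$. The statement therefore reduces to computing $U_p/(\NR[1]U_p+\NL[1]U_p)$ for each degree $p$.

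First consider $p\neq 0$. By definition, $\NR[1]U_p=\NL[1-p]U_p$.
\begin{itemize}
  \item If $p>0$, then $1-p\le 0$. By \zcref[noname]{A2}, $\NL[1-p]U_p\supseteq\cNL[1-p]U_p=(UU_{\le p-1})_p$, and this contains $U_p=U_p\cdot 1$ because $1\in U_0$ and $0\le p-1$. So the quotient in degree $p$ vanishes.
  \item If $p<0$, then $\NL[1]U_p\supseteq\cNL[1]U_p=(UU_{\le -1})_p\ni 1\cdot u$ for every $u\in U_p$. So the quotient again vanishes.
\end{itemize}

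In degree $0$ we have $\NR[1]U_0=\NL[1]U_0=\NLR[1]U_0$, so the quotient is $U_0/\NLR[1]U_0=\mathsf{A}_0$. Hence the composite sends $[\alpha]\otimes[\beta]$ to $[\alpha\beta]_0$ when $\deg\alpha+\deg\beta=0$, and to $0$ otherwise, since that component vanishes. This is exactly $\ostar$, which is therefore well defined and an isomorphism.

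The only delicate step is the identification $\mathfrak{R}^0\otimes_U\mathfrak{L}^0\cong U/(\NR[1]U+\NL[1]U)$. To prove it, tensor the exact sequence $\NR[1]U\to U\to\mathfrak{R}^0\to0$ with $\mathfrak{L}^0$, use $U\otimes_U\mathfrak{L}^0=\mathfrak{L}^0$, and note that the image of $\NR[1]U\otimes_U\mathfrak{L}^0$ in $\mathfrak{L}^0$ is $(\NR[1]U+\NL[1]U)/\NL[1]U$, because $\NR[1]U$ is a right ideal. No completeness or density beyond the containments in \zcref[noname]{A2} is needed.
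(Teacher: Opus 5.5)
Your proof is correct and follows the same route as the paper. The paper also identifies $\mathfrak{R}^{0}\otimes_{U}\mathfrak{L}^{0}$ with $U/(\NL[1]U+\NR[1]U)$, takes the inverse to be the quotient map, and relies on the containments $U_{\le -1}\subset\NL[1]U$ and $U_{\ge 1}\subset\NR[1]U$ (which it singles out right after the lemma) so that this quotient collapses to its degree-zero part $\mathsf{A}_{0}$; you simply supply the details the paper omits.
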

  \begin{proof}
    The inverse of $\ostar$ is given by the quotient map $U\to U/(\NL[1]U+\NR[1]U)$, where the latter is identified with $\mathfrak{R}^{0}\otimes_{U}\mathfrak{L}^{0}$. We omit the details here.
  \end{proof}

  In the proof of the above lemma, what we need from the assumption of unitality is that $U_{\le -1} \subset \NL[1]U$ and $U_{\ge 1} \subset \NR[1]U$. Hence, we introduce the following weaker notion:
  \begin{definition}\label{def:weakunital}
    A graded algebra $U$ is \concept{weakly unital} if for all $n\in\Z$, we have 
    \[
      U_{0}U_{n} = U_{n} = U_{n}U_{0}.
    \]
  \end{definition}
  Then, the assumption in \zcref{lem:def:ostar} can be weakened to the weak unitality of $U$. 
  Note that, under this assumption, $\mathfrak{L}^{0}$ has only nonnegative degrees, and $\mathfrak{R}^{0}$ has only nonpositive degrees.

  \begin{definition}\label{def:mode-transition-algebra}
    Let $U$ be a weakly unital almost-canonically seminormed algebra. 
    Then, the \concept{mode transition algebra} of $U$ is the bigraded $(U|U)$-bimodule
    \[
      \nota{\mathfrak{A}_{\bullet,-\circ}}:=\mathfrak{L}^{0}_{\bullet}\otimes_{\mathsf{A}_{0}}\mathfrak{R}^{0}_{-\circ}.
    \]
    Its multiplication $\star$ is induced by the isomorphism $\ostar$.
    Under this multiplication, each diagonal block $\mathfrak{A}_{n,-n}$ is a subalgebra, denoted by $\mathfrak{A}_{n}$.
  \end{definition}
  \begin{remark}\label{rem:multiplication-of-fA}
    By \zcref{lem:def:ostar}, for any $a,b,c,d$, we have
    \[
      \mathfrak{A}_{a,-b} \star \mathfrak{A}_{c,-d} \subset 
      \begin{dcases*}
        \mathfrak{A}_{a,-d} & if $b=c$,\\
        0 & otherwise.
      \end{dcases*}
    \]
  \end{remark}

  The notion of the mode transition algebra is introduced in \cite{DGK23} to generalize the sewing isomorphism of coinvariants from \cite[\S 8]{DGT-factorization}. The essential condition for the existence of such an isomorphism is the following (cf. \cite[Theorem 5.0.3]{DGK23}):
  \begin{definition}\label{def:SIC}
    We say that the \concept{strong identity condition} (\nota{\textsf{SIC}} for short) holds (for $U$) if there are \concept{strong identity elements} $\nota{\one_{n}}\in\mathfrak{A}_{n}$ ($n\in\N$) satisfying the identities
    \begin{equation}\label{eq:SIC}
      \one_{n}\star\mathfrak{a}=\mathfrak{a}=\mathfrak{a}\star\one_{m}
      \txforall\mathfrak{a}\in\mathfrak{A}_{n,-m} \,(n,m\in\N).
    \end{equation}

    Note that the isomorphism $\ostar$ induces a left action of $\mathfrak{A}$ on $\mathfrak{L}^{0}$ and a right action of $\mathfrak{A}$ on $\mathfrak{R}^{0}$. We will denote them by $\star$ again. Under these actions, the condition \eqref{eq:SIC} is equivalent to
    \begin{equation}\label{eq:SIC:action}
      \one_{n}\star[\alpha]^{\mathsf{L}}_{0} = [\alpha]^{\mathsf{L}}_{0}
      \txand{} 
      [\beta]^{\mathsf{R}}_{0}\star\one_{m} = [\beta]^{\mathsf{R}}_{0}
      \txforall 
      \alpha\in U_{n}, \beta\in U_{-m} \,(n,m\in\N).
    \end{equation}
  \end{definition}

  \begin{example}
    Let $U=\mathcal{D}$, the rank-one Weyl algebra (cf. \zcref{eg:Weyl}). 
    Then, we have 
    \begin{enumerate}
      \item $\mathfrak{A}_{p,-q} \cong \k x^{p}\otimes \partial^{q}$ (cf. \zcref{prop:modeAofWeyl});
      \item When the rational number field $\Q$ is contained in $\k$, the strong identity condition holds (cf. \zcref{prop:SICegWeyl}): for each $n\in\N$, we can take 
      \[
        \one_{n} = \frac{1}{n!}[x^{n}]^{\mathsf{L}}_{0}\otimes[\partial^{n}]^{\mathsf{R}}_{0}\in\mathfrak{A}_{n}.
      \]
      \item (\zcref{rem:SICnonegWeyl}) On the other hand, if there is any integer $n\in\N$ vanishing in $\k$, then $U$ fails the strong identity condition.
    \end{enumerate}
  \end{example}

  \begin{example}
    One may want to interpret the mode transition algebra $\mathfrak{A}$ as a $\Z$-algebra (cf. \zcref{eg:degreewiseHom}) where $\Hom(p,q) = \mathfrak{A}_{p,-q}$. This is the case precisely when the strong identity condition holds. 
    Nevertheless, we can always grade $\mathfrak{A}$ by setting $\deg\mathfrak{A}_{p,-q} = p-q$.
    With the grading, $\mathfrak{A}$ is weakly unital even when the strong identity condition fails. 

    Now, the canonical seminorm on this graded algebra is given by:
    \[
      \cNL[n]\mathfrak{A} = \bigoplus_{p\in\N, q\ge n}\mathfrak{A}_{p,-q}
      \txand 
      \cNR[n]\mathfrak{A} = \bigoplus_{p\ge n, q\in\N}\mathfrak{A}_{p,-q}.
    \]
    Indeed, for any $p,q \in \N$, by \zcref{rem:multiplication-of-fA}, we have
    \[
      \mathfrak{A}\star\mathfrak{A}_{p,-q} \subset \bigoplus_{r\in\N}\mathfrak{A}_{r,-q}
      \txand 
      \mathfrak{A}_{p,-q}\star\mathfrak{A} \subset \bigoplus_{s\in\N}\mathfrak{A}_{p,-s}.
    \]
    This shows $\cNL[n]\mathfrak{A} \subset \bigoplus_{p\in\N, q\ge n}\mathfrak{A}_{p,-q}$ and $\cNR[n]\mathfrak{A} \subset \bigoplus_{p\ge n, q\in\N}\mathfrak{A}_{p,-q}$.
    On the other hand, for all $p,q\in\N$, we have 
    \[
      \mathfrak{A}_{p,0}\star\mathfrak{A}_{0,-q} = \mathfrak{A}_{p,-q}.
    \] 
    In particular, $\mathfrak{A}_{p,-q} \subset \cNL[n]\mathfrak{A}$ when $q \ge n$ and $\mathfrak{A}_{p,-q} \subset \cNR[n]\mathfrak{A}$ when $p \ge n$.
    
    From this description, the following is evident:
    \begin{enumerate}
      \item $\mathsf{A}_{n}(\mathfrak{A}) = \bigoplus_{p\le n}\mathfrak{A}_{p,-p}$ for all $n\in\N$.
      \item $\mathfrak{L}^{n}(\mathfrak{A}) = \bigoplus_{p\in\N, q\le n}\mathfrak{A}_{p,-q}$ and $\mathfrak{R}^{n}(\mathfrak{A}) = \bigoplus_{p\le n, q\in\N}\mathfrak{A}_{p,-q}$ for all $n\in\N$.
      \item $\mathfrak{A}_{p,-q}(\mathfrak{A}) = \mathfrak{A}_{p,-q}$ for all $p,q\in\N$.
    \end{enumerate}
    Indeed, the first two follow from previous description. The last one follows from the previous two since $\mathfrak{A}_{p,0}\otimes_{\mathfrak{A}_{0,0}}\mathfrak{A}_{0,-q} = \mathfrak{A}_{p,-q}$.

    In particular, if $\mathfrak{A}$ admits strong identity elements $\one_{n}\in\mathfrak{A}_{n}$ ($n\in\N$), then they also give rise to strong identity elements for the mode transition algebra of $\mathfrak{A}$ itself.
  \end{example}

  \begin{egrmk}\label{eg:completions}
    One can also play with the (separated) completion $\widehat{U}$ of an almost-canonically seminormed ring $U$. 
    However, since the quotients $\mathfrak{L}^{n}$, $\mathfrak{R}^{n}$, and $\mathsf{A}_{n}$ are all discrete spaces (cf. \cite[\S A.8]{DGK23}). Therefore, for the purpose of discussing the strong identity condition, there is no difference between working with the completion $\widehat{U}$ and with $U$.
  \end{egrmk}

\section{Module categories}\label{sec:modules}
  In this section, we will introduce several categories of modules that will be used in this paper. 
  Throughout this section, let $U$ be a \emph{weakly unital} almost-canonically seminormed algebra (cf. \zcref{def:weakunital}) and $R$ is a ring\footnote{In practice, $R$ is either $\k$ (our working ground) or $\mathsf{A}_{0}$ (the actual base of $U$).}. 
  We thus have the category $\Mod(U|{}_R)$ of $(U|{}_R)$-bimodules and the category $\Mod({}_R|U)$ of $({}_R|U)$-bimodules. 

\subsection{Exhaustive modules}
  We will mainly focus on certain type of $U$-modules.

  \begin{definition}\label{def:ExMod}
    A left $U$-module $W$ is said to be \concept{exhaustive} if it is the union of the subspaces
    \[
      \nota{\Omega^{\mathsf{L}}_{n}(\nocolor{W})}:=\Set*{w\in W\given \NL[n+1]U\cdot w=0}.
    \] 
    Likewise, a right $U$-module $W$ is said to be \concept{exhaustive} if it is the union of 
    \[
      \nota{\Omega^{\mathsf{R}}_{n}(\nocolor{W})}:=\Set*{w\in W\given w\cdot\NR[n+1]U=0}.
    \]
    We will mainly focus on the following categories:
    \begin{description}[leftmargin = 9\parindent]
      \item[\nota{$\Mod[Ex](U|{}_R)$}] 
          the full subcategory of $\Mod(U|{}_R)$ consisting of all bimodules whose underlying left $U$-modules are exhaustive.
      \item[\nota{$\Mod[Ex]({}_R|U)$}] 
          the full subcategory of $\Mod({}_R|U)$ consisting of all bimodules whose underlying right $U$-modules are exhaustive.
    \end{description}
  \end{definition}
  \begin{proposition}[{Exhaustive = discrete continuous}]\label{prop:ExModDiscCont}
    For a left $U$-module $W$, the following are equivalent:
    \begin{enumerate}
      \item $W$ is exhaustive;
      \item the action map $U\otimes W\to W$ is continuous with respect to the seminorm $\NL$ on $U$ and the discrete topology on $W$.
    \end{enumerate}
    In particular, for such a left $U$-module, we have
    \[
      \Omega^{\mathsf{L}}_{n}(W) = \Set{ w \in W \given U_{\le -n-1} \cdot w = 0 }.
    \]
    Similar assertions hold for right $U$-modules.
  \end{proposition}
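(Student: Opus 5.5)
We first show (i)$\Rightarrow$(ii). Topologize $U\otimes W$ by the product of the seminorm $\NL$ on $U$ and the discrete topology on $W$. Continuity of the action at a pair $(u,w)$ then asks for an open neighborhood $u+\NL[k]U$ such that $(u+\NL[k]U)\cdot w = u\cdot w$, that is, $\NL[k]U\cdot w=0$ for some $k$. If $W$ is exhaustive, then $w\in\Omega^{\mathsf{L}}_{n}(W)$ for some $n$, and $k=n+1$ works. Since the action is bilinear, this is exactly continuity with respect to the given topologies.

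For (ii)$\Rightarrow$(i), apply continuity at $(0,w)$. The preimage of the open set $\Set{0}$ contains a neighborhood $\NL[k]U\times\Set{w}$, so $\NL[k]U\cdot w=0$ for some $k$. Hence $w\in\Omega^{\mathsf{L}}_{k-1}(W)$. If $k\le0$, we use that the neighborhoods decrease: $\NL[k]U\supseteq\NL[1]U$, so $w\in\Omega^{\mathsf{L}}_{0}(W)$. Therefore $W$ is the union of the $\Omega^{\mathsf{L}}_{n}(W)$.

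For the formula, put $\Omega'_n:=\Set{w\given U_{\le-n-1}w=0}$. The inclusion $\Omega^{\mathsf{L}}_{n}(W)\subseteq\Omega'_n$ holds because $U_{\le -n-1}\subseteq UU_{\le-n-1}=\cNL[n+1]U\subseteq\NL[n+1]U$ by \zcref[noname]{A2} and weak unitality. (Weak unitality gives $U_{m}=U_0U_m$, so $U_{\le-n-1}\subseteq UU_{\le -n-1}$.)

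The converse inclusion is the main step. Take $w\in\Omega'_n$. Then $\cNL[n+1]U\cdot w=U\cdot(U_{\le-n-1}w)=0$, so $w$ is killed by the canonical neighborhood. We must pass from $\cNL$ to $\NL$, and this is where exhaustiveness and density enter. Fix a homogeneous $x\in\NL[n+1]U_p$. Since $W$ is exhaustive, $w\in\Omega^{\mathsf{L}}_{N}(W)$ for some $N\ge n$, so $\NL[N+1]U\cdot w=0$. By degreewise density \zcref[noname]{A2} (equivalently \zcref[noname]{A2p}, iterated from level $n+1$ to level $N+1$), we have
\[
\NL[n+1]U_p=\cNL[n+1]U_p+\NL[N+1]U_p .
\]
So we can write $x=c+y$ with $c\in\cNL[n+1]U_p$ and $y\in\NL[N+1]U_p$. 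Then $xw=cw+yw=0+0=0$. Since the neighborhoods are graded by \zcref[noname]{A1}, this gives $\NL[n+1]U\cdot w=0$.

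The only subtle point is reading density correctly. It means that every $x\in\NL[n+1]U_p$ is congruent to an element of $\cNL[n+1]U_p$ modulo every deeper neighborhood $\NL[N+1]U_p$, which is precisely the decomposition used above. The right-module statements follow by the same argument applied to $\opp{U}$ with the induced seminorm of \zcref{rem:opposite}, using $\NR$ and $U_{\ge n+1}$ in place of $\NL$ and $U_{\le -n-1}$.
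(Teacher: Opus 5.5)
Your proof is correct and follows essentially the same route as the paper. One inclusion comes from $U_{\le -n-1}=U_0U_{\le -n-1}\subset \cNL[n+1]U\subseteq \NL[n+1]U$, and the other comes from density of $\cNL[n+1]U$ in $\NL[n+1]U$ combined with continuity, i.e.\ exhaustiveness. Your explicit decomposition $\NL[n+1]U_p=\cNL[n+1]U_p+\NL[N+1]U_p$ is the unpacked form of the paper's one-line ``density plus continuity'' step.
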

  \begin{proof}
    The equivalence of the two conditions is straightforward from the definitions. 
    For the last assertion, note that $U_{\le -n-1} = U_{0}U_{\le -n-1} \subset \NL[n+1]U$. Hence, if $w \in \Omega^{\mathsf{L}}_{n}(W)$, then $U_{\le -n-1}\cdot w = 0$. 
    Conversely, if $w\in W$ satisfies $U_{\le -n-1}\cdot w = 0$, then, $\cNL[n+1]U\cdot w = 0$. Since $\cNL[n+1]U$ is dense in $\NL[n+1]U$, we have $\NL[n+1]U\cdot w = 0$ by the continuity of the action map.
  \end{proof}

  \begin{proposition}\label{prop:ExModGrothendieck}
    The categories $\Mod[Ex](U|{}_R)$, $\Mod[Ex](U_{0}|{}_R)$, $\Mod[Ex]({}_R|U)$, and $\Mod[Ex]({}_R|U_{0})$ are Grothendieck abelian categories. 
  \end{proposition}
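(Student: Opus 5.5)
The plan is to show that $\Mod[Ex](U|{}_R)$ is a full subcategory of the Grothendieck abelian category $\Mod(U|{}_R)$ that is closed under subobjects, quotients and arbitrary direct sums (hence all colimits), and that it has a generator. A full subcategory closed under subobjects, quotients and coproducts is abelian, with kernels, cokernels and colimits computed in the ambient category. Filtered colimits then remain exact, since they are exact in $\Mod(U|{}_R)$. The other three cases follow by the same argument. For the right-module versions, one either repeats the argument or passes to opposite algebras via \zcref{rem:opposite}. The $U_{0}$ versions are the special case of the algebra $U_{0}$ with the induced neighborhoods $\NLR[\bullet]U_{0}$; the proof only uses that each $\NL[n]U$ is a left ideal and that the neighborhoods decrease in $n$, both of which hold for $U_0$.

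For the closure properties, I will use the description $W=\bigcup_n\Omega^{\mathsf{L}}_{n}(W)$. Because the system $\NL[\bullet]U$ decreases, the $\Omega^{\mathsf{L}}_{n}(W)$ increase with $n$, and each is an $R$-submodule since $R$ acts on the right and commutes with $U$.
\begin{itemize}
\item \emph{Submodules.} If $W'\subset W$, then $\Omega^{\mathsf{L}}_{n}(W')=\Omega^{\mathsf{L}}_{n}(W)\cap W'$, so $W'$ is exhaustive.
\item \emph{Quotients.} If $W\to W''$ is surjective, each $w\in\Omega^{\mathsf{L}}_{n}(W)$ maps into $\Omega^{\mathsf{L}}_{n}(W'')$, so $W''$ is exhaustive.
\item \emph{Direct sums.} An element of $\bigoplus_iW_i$ has finitely many nonzero components, each lying in some $\Omega^{\mathsf{L}}_{n_i}$. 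The element then lies in $\Omega^{\mathsf{L}}_{\max n_i}$, again by monotonicity.
\end{itemize}
It follows that arbitrary colimits, being quotients of direct sums, stay exhaustive.

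The main obstacle is producing a generating set. The key fact is that for $w\in\Omega^{\mathsf{L}}_{n}(W)$, the cyclic bimodule $UwR$ is a quotient of $U/\NL[n+1]U\otimes R$, via $u\otimes r\mapsto uwr$. Two checks are needed here.
\begin{itemize}
\item \emph{Well-definedness.} The map is well defined because $\NL[n+1]U\cdot w=0$.
\item \emph{Unit element.} Since $U$ is only weakly unital, $w$ itself might not lie in $UwR$. I will handle this by taking as generators the bimodules $G_{n}:=\mathfrak{L}^{n}\otimes R$ and $\k\otimes R$-type pieces. More cleanly, I will use $(U/\NL[n+1]U)\oplus\k$ with $U$ acting on $\k$ by zero, tensored with $R$. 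This object is exhaustive, since $\NL[n+1]U$ kills it.
\end{itemize}
The map $(u,\lambda)\otimes r\mapsto uwr+\lambda wr$ then hits $w$. Every exhaustive $W$ is therefore a quotient of a direct sum of copies of objects from the set $\Set{(\mathfrak{L}^{n}\oplus\k)\otimes R}_{n\in\N}$, so this set is a generating set. In the unital case $\mathfrak{L}^{n}\otimes R$ alone suffices.
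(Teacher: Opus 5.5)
Your closure argument is correct. It is in fact more complete than the paper's, which checks only closure under colimits and leaves closure under subobjects (needed for kernels) implicit. The genuine gap is in the generator step, which is where you depart from the paper: your weakly unital patch does not produce bimodule maps. If $U$ acts on the extra summand $\k$ by zero, then $U$-linearity of $(u,\lambda)\otimes r\mapsto uwr+\lambda wr$ requires $v\cdot(\lambda wr)=0$ for all $v\in U$, i.e.\ $Uw=0$. Consequently every map out of a sum of your objects lands in $UWR+\operatorname{Ann}_{W}(U)R$, and this can be a proper submodule of an exhaustive $W$.

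For example, let $A$ be the monoid algebra of $\Z[1/2]_{\ge0}$, let $U=U_{0}$ be the span of the monomials $x^{s}$ with $s>0$, give $U$ its canonical seminorm, and take $R=\k$. Then $U=U_{0}U_{0}$, and $\NL[n]U=0$ for $n\ge1$, so every module is exhaustive. Yet for $W=A$ one has $\operatorname{Ann}_{W}(U)=0$ while $UW=U\neq W$. So the family $(\mathfrak{L}^{n}\oplus\k)\otimes R$ does not generate in the weakly unital setting of the proposition.

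The repair is to use $U^{+}/\NL[n+1]U$ instead, where $U^{+}=U\oplus\k$ is the unitalization acting on itself by $v\cdot(u,\lambda)=(vu+\lambda v,0)$. Then $\NL[n+1]U$ is a left ideal of $U^{+}$, the class of $(0,1)$ lies in $\Omega^{\mathsf{L}}_{n}$, and $(u,\lambda)\otimes r\mapsto uwr+\lambda wr$ is a well-defined bimodule map sending $(0,1)\otimes 1$ to $w$.

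There is also a wrong justification along the way: $\NL[n+1]U$ does not kill $U/\NL[n+1]U$. It is only a left ideal, and (\texttt{A}3) gives only $\NL[n+1]U\cdot U_{q}\subset\NL[n+1-q]U$. For instance, in the Weyl algebra $\partial\in\NL[1]\mathcal{D}$, but $\partial\cdot[x]^{\mathsf{L}}_{0}=[1]^{\mathsf{L}}_{0}\neq0$ in $\mathfrak{L}^{0}$. Exhaustiveness still holds, but via (\texttt{A}3): for homogeneous $u\in U_{q}$ one has $\NL[n+1+q]U\cdot u\subset\NL[n+1]U$, so $[u]^{\mathsf{L}}_{n}\in\Omega^{\mathsf{L}}_{n+\max(q,0)}(\mathfrak{L}^{n})$. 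The same argument applies to $U^{+}/\NL[n+1]U$. In particular, your remark that only ``decreasing left ideals'' is used is inaccurate.

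For comparison, the paper avoids all of this. It takes an arbitrary generator $\mathsf{g}$ of $\Mod(U|{}_R)$ and notes that an exhaustive $W$ is the sum of the images of maps $\mathsf{g}\to W$, each of which is an exhaustive quotient of $\mathsf{g}$. It then uses the coproduct of the set of exhaustive quotients of $\mathsf{g}$ as the generator. That argument needs no unitality and no analysis of $\mathfrak{L}^{n}$. Your route, once repaired, has the advantage of giving explicit generators, namely $\mathfrak{L}^{n}\otimes R$ in the unital case, as in the remark after the proposition.
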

  \begin{proof}
    We only prove for $\Mod[Ex](U|{}_R)$, the others being similar.

    We refer to \cite[079A]{stacks-project} for Grothendieck's AB conditions. 
    As a standard example, the module category $\Mod(U|{}_R)$ is a Grothendieck abelian category. 
    Since $\Mod[Ex](U|{}_R)$ is a full subcategory of it, we are leading to show:
    \begin{enumerate}
      \item $\Mod[Ex](U|{}_R)$ is closed under taking colimits; and
      \item there is a generator of $\Mod[Ex](U|{}_R)$.
    \end{enumerate}

    We first show that \textbf{$\Mod[Ex](U|{}_R)$ is closed under taking colimits.}
    For this, note that, for an arbitary morphism $f\colon W\to W'$ in $\Mod[Ex](U|{}_R)$, we have
    \begin{equation}
      \label{eq:pf:ExModGrothendieck}\tag{$\divideontimes$}
      f(\Omega^{\mathsf{L}}_{n}(W))\subset\Omega^{\mathsf{L}}_{n}(W').
    \end{equation}
    Now, consider an arbitrary system of exhaustive $(U|{}_R)$-bimodules $(W^i,f_{ij})_{i,j\in I}$ and let $W$ be their colimit in $\Mod(U|{}_R)$ with canonical maps $f_i \colon W^i \to W$. 
    Then, we have $\bigcup_{i\in I}f_i(W^i) = W$. 
    For each $n\in\N$, by \zcref{eq:pf:ExModGrothendieck}, we have 
    \[
      \bigcup_{i\in I}f_i(\Omega^{\mathsf{L}}_{n}(W^i)) \subset \Omega^{\mathsf{L}}_{n}(W).
    \]
    Since each $W^i$ is exhaustive, we must thus have $W=\bigcup_{n\in\N}\Omega^{\mathsf{L}}_{n}(W)$ as desired.

    Next, we give \textbf{a generator of $\Mod[Ex](U|{}_R)$}. Let $\mathsf{g}$ be a generator of $\Mod(U|{}_R)$. 
    Of course, it has not to be exhaustive. 
    But we can modify it as follows:
    note that, for any exhaustive module $W$, there is an epimorphism $\mathsf{g}^{\bigoplus I} \to W$. Hence, $W$ is the union of the images (which are exhaustive submodules of $W$) of each individual composition $\mathsf{g} \hookrightarrow \mathsf{g}^{\bigoplus I} \to W$. 
    From this observation, we see that the coproduct of all exhaustive quotients of $\mathsf{g}$ is a generator of $\Mod[Ex](U|{}_R)$. 
  \end{proof}
  \begin{remark}%\label{rem:UasRalgebra}
    If the set $\Set{\mathfrak{L}^{n}}_{n\in\N}$ belongs to $\Mod(U|{}_R)$, then it is straightforward to check that it generates $\Mod[Ex](U|{}_R)$ from the definition.
    Similar argument holds for $\Mod[Ex]({}_R|U)$.
  \end{remark}

  Note that each subspace $\Omega^{\mathsf{L}}_{n}(W)$ has a natural left action of $\mathsf{A}_{n}$ on it, given by restricting the left action of $U$ to $U_{0}$. The same is true for right modules. 
  We thus have functors
  \begin{equation}\label{eq:OmegaLR}
    \nota{\Omega^{\mathsf{L}}_{n}}\colon\Mod[Ex](U|{}_R)\to\Mod(\mathsf{A}_{n}|{}_R)\txand
    \nota{\Omega^{\mathsf{R}}_{n}}\colon\Mod[Ex]({}_R|U)\to\Mod({}_R|\mathsf{A}_{n}).
  \end{equation}

\subsection{Positively-filtered modules}
  The subspaces $\Omega^{\mathsf{L}}_{n}$ (resp. $\Omega^{\mathsf{R}}_{n}$) form a positive filtration.
  \begin{definition} 
    A \concept{positive filtration} on a left $U$-module $W$ is an increasing sequence of subspaces $W_{\le\bullet}$ of $W$ such that $U_{p} W_{\le n}\subset W_{\le p+n}$ for all $p,n\in\Z$ and $W_{\le n} = 0$ if $n \ll 0$. 
    Likewise\footnote{Note that, the grading on $\opp{U}$ is reversed: $\opp{U}_{\Box} = U_{-\Box}$. Hence, a positive filtration on a right $U$-module $W$ is an increasing sequence of subspaces $W_{\le\bullet}$ of $W$ such that $W_{\le n}U_{p}\subset W_{\le n-p}$ for all $p,n\in\Z$ and $W_{\le n} = 0$ if $n \ll 0$.}, a \concept{positive filtration} on a right $U$-module $W$ is a positive filtration on the left $\opp{U}$-module $W$. 
    These notions are generalized to $(U|{}_R)$-bimodules (resp. $({}_R|U)$-bimodules) in the obvious way: instead of sequences of subspaces, one considers sequences of $R$-submodules.

    By a \concept{positively-filtered} $(U|{}_R)$-bimodule (resp. $({}_R|U)$-bimodule), we mean a $(U|{}_R)$-bimodule (resp. $({}_R|U)$-bimodule) $W$ together \textbf{with} a positive filtration $W_{\le\bullet}$ on it.
    We are thus given the following categories:
    \begin{description}[leftmargin = 9.5\parindent]
      \item[\nota{$\Mod[Fil](U|{}_R)$}] 
          the category of all positively-filtered $(U|{}_R)$-bimodules.
      \item[\nota{$\Mod[Fil]({}_R|U)$}] 
          the category of all positively-filtered $({}_R|U)$-bimodules.
      \item[\nota{$\Mod[Fil]_{0}(U|{}_R)$}]  
          the subcategory consisting of those satisfying $W_{\le n} = 0$ if $n < 0$.
      \item[\nota{$\Mod[Fil]_{0}({}_R|U)$}]  
          the subcategory consisting of those satisfying $W_{\le n} = 0$ if $n < 0$.
    \end{description}
    Note that the morphisms in these categories are required to preserve the filtrations.
  \end{definition}

  Forgetting the filtration gives functors from the categories of positively-filtered modules to those of modules. 
  \begin{proposition}\label{prop:FilModtoExMod}
    The essential image of the forgetting functor from $\Mod[Fil](U|{}_R)$ to $\Mod(U|{}_R)$ is $\Mod[Ex](U|{}_R)$. 
    Furthermore, the composition 
    \[
      \begin{tikzcd}
         \Mod[Fil]_{0}(U|{}_R) \arrow[r, hook] & \Mod[Fil](U|{}_R) \arrow[r, "\fun{forget}"] & \Mod(U|{}_R)
      \end{tikzcd}
    \]
    has a right adjoint given by taking the $\Omega$-filtration. 
    The same holds for $({}_R|U)$-bimodules. 
  \end{proposition}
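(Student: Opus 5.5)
The proof has three parts. First we identify the essential image of the forgetful functor. Then we construct the $\Omega$-filtration functor and check that it lands in $\Mod[Fil]_{0}(U|{}_R)$. Finally we verify the adjunction by exhibiting the unit and counit, or equivalently a natural bijection of Hom-sets.

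\emph{Essential image.} Let $(W,W_{\le\bullet})$ be positively filtered. We show by induction on $n$ that $W_{\le n}\subset\Omega^{\mathsf{L}}_{n-n_0}(W)$, where $n_0$ is chosen with $W_{\le n_0-1}=0$. Indeed, $U_{p}W_{\le n}\subset W_{\le n+p}=0$ whenever $p\le n_0-1-n$, so $U_{\le -(n-n_0)-1}\cdot W_{\le n}=0$. To conclude that $\NL[n-n_0+1]U\cdot W_{\le n}=0$, we argue as follows. By \zcref{A2} and weak unitality, $\cNL[k]U=UU_{\le -k}$ annihilates $W_{\le n}$ for $k=n-n_0+1$. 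Since the filtration is exhaustive (it is an increasing sequence whose union is $W$, which we take to be part of the definition), the filtration exhibits $W$ as an exhaustive module. I expect the density step to be the only subtle point. It must be handled exactly as in \zcref{prop:ExModDiscCont}, but without already knowing continuity. The key is to use \zcref{A2p}, $\NL[k]U_p=\cNL[k]U_p+\NL[k+1]U_p$, iterated. For $u\in\NL[k]U_p$ and $w\in W_{\le n}$, write $u=c+u'$ with $c\in\cNL[k]U_p$ and $u'\in\NL[k']U_p$ for $k'$ arbitrarily large. Since $u'$ lies in a graded left ideal contained in a high neighborhood, we need $u'w=0$. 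For this, decompose $w$ and note that $\NL[k']U_p\cdot W_{\le n}\subset W_{\le n+p}$, which is the degree bound. This alone does not force $u'w=0$, so instead I would use that $\Omega^{\mathsf{L}}$ on a filtered module can be reached by the converse direction. Conversely, an exhaustive module carries the positive filtration $W_{\le n}:=\Omega^{\mathsf{L}}_{n}(W)$ for $n\ge 0$ and $0$ for $n<0$. This is positive because, by \zcref{A3}, $u\in U_p$ and $w\in\Omega^{\mathsf{L}}_n$ give $\NL[n+p+1]U\cdot u\subset\NL[n+1]U$, after adjusting by the right-multiplication rule $(\NL[m]U)U_p\subset\NL[m-p]U$. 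Hence $uw\in\Omega^{\mathsf{L}}_{n+p}$. So every exhaustive module lies in the image. For the other inclusion, I would handle a general filtered $W$ by noting that each $w$ lies in some finite stage and that the $U$-submodule it generates is a quotient of some $\mathfrak{L}^{m}$. This reduction to $\mathfrak{L}^m$ is the main obstacle. I would try to show $w\in\Omega^{\mathsf{L}}_m$ with $m=n-n_0$ by proving that $\mathrm{Ann}(w)$ is a closed left ideal containing $\cNL[m+1]U$. The aim is to obtain closedness from the graded bound via \zcref{A2} degreewise density.

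\emph{Right adjoint.} Define $\Omega(W):=(W,\Omega^{\mathsf{L}}_{\bullet}(W))\in\Mod[Fil]_{0}(U|{}_R)$. The pieces are $R$-submodules since the right $R$-action commutes with $U$, and functoriality is \eqref{eq:pf:ExModGrothendieck}, which holds for all $U$-linear maps. Here $W$ is a general module; we take the union $\bigcup_n\Omega^{\mathsf{L}}_n(W)$, which is a submodule by the positivity computation above.

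\emph{Adjunction.} For $V\in\Mod[Fil]_{0}(U|{}_R)$ and $W\in\Mod(U|{}_R)$, we need $\Hom(V,W)=\Hom_{\mathrm{Fil}}(V,\Omega W)$. Any $f$ sends $V_{\le n}\subset\Omega^{\mathsf{L}}_n(V)$ (first step, with $n_0=0$) into $\Omega^{\mathsf{L}}_n(W)$. So every module map is automatically filtered, and the bijection is the identity on underlying maps, which is clearly natural. The right-module case follows by passing to $\opp{U}$ via \zcref{rem:opposite}.
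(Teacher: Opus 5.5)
There is a genuine gap, and it is at the step that carries all the weight: the inclusion $W_{\le n}\subset\Omega^{\mathsf{L}}_{n-n_0}(W)$. This says that the \emph{full} neighbourhood $\NL[n-n_0+1]U$ annihilates $W_{\le n}$. It is the whole content of the essential-image claim, and you use it again in the adjunction when you say that every module map out of $V$ is automatically filtered. The filtration axioms only show that the canonical part $\cNL[n-n_0+1]U=UU_{\le -(n-n_0+1)}$ kills $W_{\le n}$. Passing to its degreewise closure needs continuity of the action, and a bare positive filtration does not supply that. You notice this, but neither repair you sketch works. Saying that the submodule generated by $w$ is a quotient of some $\mathfrak{L}^{m}$ is circular: since $\Omega^{\mathsf{L}}_{m}(W)=\Hom_{U|}(\mathfrak{L}^{m},W)$, it is literally the assertion $w\in\Omega^{\mathsf{L}}_{m}(W)$. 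And nothing forces $\operatorname{Ann}(w)$ to be closed.

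In fact this step cannot be proved from (\texttt{A1})--(\texttt{A3}) alone. Suppose $U$ is unital and $\cNL[k]U_p\subsetneq\NL[k]U_p$ for some $k\ge 1$ and some $p$. Then $W=U/\cNL[k]U$ is positively graded, because for $q\le -k$ we have $U_q\subset UU_{\le -k}$. Hence $W$ is positively filtered. Yet the class of $1$ lies in no $\Omega^{\mathsf{L}}_{m}(W)$: that would mean $\NL[m+1]U\subset\cNL[k]U$, and density in (\texttt{A2}) would then force $\NL[k]U_p\subset\cNL[k]U_p+\NL[m+1]U_p=\cNL[k]U_p$.

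Such seminorms exist. Take $U=\k[x]$ placed in degree $0$, with $\NL[n]U=U$ for $n\le 0$ and $\NL[n]U=xU$ for $n\ge 1$; this satisfies (\texttt{A1})--(\texttt{A3}). Filter $U$ by $W_{\le n}=U$ for $n\ge 0$. This is a positive filtration, but $\Omega^{\mathsf{L}}_{n}(U)=0$ for all $n$. So $U$ is not exhaustive, and $\id_U$ has no filtered counterpart into the $\Omega$-filtration, so the adjunction fails as well. Separated, even complete, examples come from completions as in \zcref{eg:completion}.

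So the proposition needs an extra hypothesis. One option is that the seminorm is canonical ($\NL=\cNL$); then your first step is already complete. Another is that filtered modules carry a continuous action; then your closed-annihilator idea becomes valid, since open subgroups are closed. With either hypothesis, the rest of your proposal is correct:
\begin{itemize}
\item the $\Omega$-filtration on exhaustive modules via (\texttt{A3});
\item the right adjoint $W\mapsto(\bigcup_n\Omega^{\mathsf{L}}_{n}(W),\Omega^{\mathsf{L}}_{\bullet}(W))$, with counit the inclusion;
\item the reduction to $\opp{U}$.
\end{itemize}
For comparison, the paper's proof takes exactly your naive route and simply asserts $W_{\le n}\subset\Omega^{\mathsf{L}}_{n}(W)$ ``up to a shift'' without addressing this density issue. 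Its statement that the counit is the identity is only right for exhaustive $W$, so your treatment of the counit is the more careful one.
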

  \begin{proof}
    If a left $U$-module $W$ admits a positive filtration $W_{\le\bullet}$, we must have $W_{\le n}\subset\Omega_{n}^{\mathsf{L}}(W)$ up to a shift of degrees, and thus it is exhaustive. 
    Conversely, any exhaustive left $U$-module admits a positive filtration where $W_{\le n}:=\Omega_{n}^{\mathsf{L}}(W)$. 
    The unit for the adjunction is the canonical morphism $W_{\le \bullet} \to \Omega_{\bullet}^{\mathsf{L}}(W)$ and the counit is the identity.
    Similar arguments apply to right modules.
  \end{proof}
  \begin{warning}
    Unlike $\Mod[Ex](U|{}_R)$, the category $\Mod[Fil](U|{}_R)$ is not abelian. This can be seen from the fact that one can shift the filtration while keep the underlying bimodule structure.
  \end{warning}

\subsection{Graded modules}
  As we are considering a graded algebra $U$, it is natural to also consider graded modules over it.
  \begin{definition}\label{def:gMod}
    A \concept{grading} on a left (resp. right) $U$-module $W$ is a family of subspaces $W_{\bullet}$ of $W$ such that $U_{p} W_{n}\subset W_{p+n}$ (resp. $W_{n} U_{p}\subset W_{p+n}$) for all $p,n\in\Z$. 
    Such a grading is called \concept{positive} (resp. \concept{negative}\footnote{Note that the conventions on ``positive'' and ``negative'' differ between filtrations and gradings, as filtrations are conventionally required to be increasing.}) if $W_{n}=0$ for all $n \ll 0$ (resp. $W_{n}=0$ for all $n \gg 0$). 
    A \concept{positively-graded} left $U$-module (resp. \concept{negatively-graded} right $U$-module) is a left (resp. right) $U$-module $W$ together \textbf{with} a positive (resp. negative) grading on it. 

    These notions are generalized to $(U|{}_R)$-bimodules (resp. $({}_R|U)$-bimodules) in the obvious way: instead of families of subspaces, one considers families of $R$-submodules.
    We have the following categories:
    \begin{description}[leftmargin = 9\parindent]
      \item[\nota{$\Mod[Gr](U|{}_R)$}]  
          the category of all positively-graded $(U|{}_R)$-bimodules.% with homogeneous morphisms of degree zero.
      \item[\nota{$\Mod[Gr]({}_R|U)$}]  
          the category of all negatively-graded $({}_R|U)$-bimodules.% with homogeneous morphisms of degree zero.
      \item[\nota{$\Mod[Gr]_{d}(U|{}_R)$}]  
          the subcategory consisting of those satisfying\footnote{The meaning of the subscript $d$ is explained by \zcref{eq:degree-0}.} $W_{n} = 0$ if $n < -d$.
      \item[\nota{$\Mod[Gr]_{d}({}_R|U)$}]  
          the subcategory consisting of those satisfying $W_{n} = 0$ if $n > d$.
    \end{description}
    Note that the morphisms in these categories are required to preserve the gradings.
  \end{definition}
  \begin{remark}
    A \emph{negatively-graded} right $U$-module $W$ is equivalent to a \emph{positively-graded} left $\opp{U}$-module $\nota{\opp{W}}$, whose grading is given by $\opp{W}_{\bullet}:=W_{-\bullet}$. 
  \end{remark}

  \begin{example}\label{eg:LR-GrMod}
    For each $n\in\N$, the $(U|\mathsf{A}_{n})$-bimodule $\mathfrak{L}^{n}$ (see \zcref{def:ALR}) is positively-graded with its inherited grading from $U$. Indeed, $\mathfrak{L}^{n}_{p} = 0$ if $p<-n$. Similarly, the $(\mathsf{A}_{n}|U)$-bimodule $\mathfrak{R}^{n}$ is negatively-graded. 
    Note that the canonical projections $\pi\colon \mathfrak{L}^{n} \to \mathfrak{L}^{n-1}$ and $\pi\colon \mathfrak{R}^{n} \to \mathfrak{R}^{n-1}$ preserve the natural gradings.
  \end{example}

  For a graded $U$-module, its each individual component is a $U_{0}$-module. Thus, we have the following \emph{taking degree-$n$} functors:
  \begin{equation}\label{eq:degree-n}
    \nota{(-)_{n}}\colon\Mod[Gr](U|{}_R)\to\Mod(U_{0}|{}_R)\txand
    \nota{(-)_{n}}\colon\Mod[Gr]({}_R|U)\to\Mod({}_R|U_{0}).
  \end{equation}
  In particular, the following \emph{taking degree-zero} functors are of importance:
  \begin{equation}\label{eq:degree-0}
    \nota{(-)_{0}}\colon\Mod[Gr]_{d}(U|{}_R)\to\Mod(\mathsf{A}_{d}|{}_R)\txand
    \nota{(-)_{0}}\colon\Mod[Gr]_{d}({}_R|U)\to\Mod({}_R|\mathsf{A}_{d}).
  \end{equation}

  \begin{notation}
    Any positively-graded $(U|{}_R)$-bimodule $(W,W_{\bullet})$ admits a positive filtration $W_{\le \bullet}$ given by 
    \[
      W_{\nota{\le\bullet}}:=\bigoplus_{n\le\bullet}W_{n}.
    \]
    Conversely, any positively-filtered $(U|{}_R)$-bimodule $(W,W_{\le\bullet})$ induces a positively-graded bimodule $\gr W$ given by
    \[
      \nota{\gr{\nocolor{W}}_{\bullet}} :=
      W_{\le \bullet}/W_{\le \bullet-1}.
    \]

    Similar construction applies to $({}_R|U)$-bimodules. 
    But note that the positive filtration associated to a negatively-grading $W_{\bullet}$ is given by
    \[
      \opp{W}_{\le\bullet} := \bigoplus_{n\le\bullet}\opp{W}_{n}=\bigoplus_{n\le\bullet}W_{-n}.
    \]
    And conversely, from a positively-filtered $({}_R|U)$-bimodule $(W,W_{\le\bullet})$, one gets a positively-graded $(\opp{U}|{}_{\opp{R}})$-bimodule $\gr W$ and hence a negatively-graded $({}_R|U)$-bimodule $\opp{\gr W}$.
  \end{notation}

  \begin{notation}\label{nota:grOmega}
    Recall that any exhaustive $U$-module $W$ admits an $\Omega$-filtration, we call the associated grading (here and what follows, $\Omega_{\bullet}$ stands for either $\Omega^{\mathsf{L}}_{\bullet}$ or $\Omega^{\mathsf{R}}_{\bullet}$, depends on the context)
    \begin{equation}
      \nota{\gr\Omega_{\bullet}(\nocolor{W})}:=\Omega_{\bullet}(W)/\Omega_{\bullet-1}(W)
    \end{equation}
    the \concept{$\gr\Omega$-grading} of $W$.
  \end{notation} 

  \begin{proposition}\label{prop:FilMod-GrMod}
    The functor $(-)_{\le\bullet}\colon\Mod[Gr](U|{}_R)\to\Mod[Fil](U|{}_R)$ is a right inverse of the functor $\gr(-)_{\bullet}\colon\Mod[Fil](U|{}_R)\to\Mod[Gr](U|{}_R)$.
    The same holds for $({}_R|U)$-bimodules.
  \end{proposition}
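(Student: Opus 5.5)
The plan is to check directly that the composite $\gr\circ(-)_{\le\bullet}$ is naturally isomorphic to the identity functor of $\Mod[Gr](U|{}_R)$. The verification has three steps.

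First, we confirm that $(-)_{\le\bullet}$ is a well-defined functor landing in $\Mod[Fil](U|{}_R)$. Let $(W,W_\bullet)$ be a positively-graded $(U|{}_R)$-bimodule.
\begin{enumerate}
  \item Each $W_{\le n}=\bigoplus_{k\le n}W_k$ is an $R$-submodule, since each $W_k$ is.
  \item The sequence $W_{\le\bullet}$ is increasing.
  \item We have $W_{\le n}=0$ for $n\ll 0$, because $W_k=0$ for $k\ll0$.
  \item We have $U_pW_{\le n}=\sum_{k\le n}U_pW_k\subset\bigoplus_{k\le n}W_{p+k}=W_{\le p+n}$.
\end{enumerate}
A degree-preserving morphism $f$ satisfies $f(W_k)\subset W'_k$, hence $f(W_{\le n})\subset W'_{\le n}$. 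So $f$ is filtration-preserving, and functoriality is clear.

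Second, we check that $\gr$ is well-defined on $\Mod[Fil](U|{}_R)$. For a positive filtration, the action $U_p\otimes W_{\le n}\to W_{\le p+n}$ sends $U_p\otimes W_{\le n-1}$ into $W_{\le p+n-1}$. It therefore descends to a map $U_p\otimes\gr W_n\to\gr W_{p+n}$. The associativity and the right $R$-action are inherited. Since $W_{\le n}=0$ for $n\ll0$, the resulting grading is positive.

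Third, we construct the natural isomorphism. For each $n$, the composite
\[
  W_n\hookrightarrow W_{\le n}\twoheadrightarrow W_{\le n}/W_{\le n-1}
\]
is an isomorphism of $R$-modules, because $W_{\le n}=W_{\le n-1}\oplus W_n$. Call this map $\eta_n$.
\begin{enumerate}
  \item $\eta$ is $U$-linear in the graded sense. For $u\in U_p$ and $w\in W_n$, the class of $uw\in W_{p+n}$ in $\gr W_{p+n}$ is by definition $u\cdot[w]$.
  \item $\eta$ is natural in $W$, since both routes send $w$ to the class of $f(w)$.
\end{enumerate}
Hence $\gr\circ(-)_{\le\bullet}\cong\id$.

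For $({}_R|U)$-bimodules, we apply the same argument to the opposite algebra, using $\opp{W}_{\bullet}=W_{-\bullet}$ and the reversed grading on $\opp{U}$ (\zcref{rem:opposite}).

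No step is a real obstacle. The only point needing care is the sign and side bookkeeping in the right-module case, and the passage to $\opp{U}$ handles it.
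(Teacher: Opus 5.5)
Your proposal is correct. The three checks are exactly what is needed: well-definedness of $(-)_{\le\bullet}$, well-definedness of $\gr$, and the natural isomorphism $\eta_n\colon W_n\to W_{\le n}/W_{\le n-1}$, which rests on $W=\bigoplus_n W_n$. Passing to $\opp{U}$ and $\opp{W}$ correctly handles the right-module case. The paper gives no written proof of this proposition and treats it as the routine verification you carry out, so your argument matches the intended one.
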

  \begin{remark}
    Due to well-known facts on graded modules, the categories $\Mod[Gr](U|{}_R)$ and $\Mod[Gr]({}_R|U)$ are abelian categories. 
  \end{remark}

  Combining \zcref{prop:FilModtoExMod,prop:FilMod-GrMod}, we obtain a forgetting functor from positively-graded modules to exhaustive modules. We summarize the relations between various module categories in \zcref{fig:module_categories}.

  \begin{corollary}\label{coro:Omega-vs-grading}
    The underlying module of a positively-graded $(U|{}_R)$-bimodule $W$ is exhaustive. 
    Furthermore, we have 
    \[
      \Omega^{\mathsf{L}}_{n}(W) = \bigoplus_{k\in\Z}\left( \Omega^{\mathsf{L}}_{n}(W) \cap W_{k} \right).
    \]
    The same holds for negatively-graded $({}_R|U)$-bimodules.
  \end{corollary}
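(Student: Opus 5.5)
The first assertion needs no new work. By \zcref{prop:FilMod-GrMod}, a positively-graded $(U|{}_R)$-bimodule $W$ carries the positive filtration $W_{\le\bullet}=\bigoplus_{n\le\bullet}W_{n}$. Indeed, $U_{p}W_{n}\subset W_{p+n}$ gives $U_{p}W_{\le n}\subset W_{\le p+n}$, and positivity of the grading gives $W_{\le n}=0$ for $n\ll0$. Hence \zcref{prop:FilModtoExMod} applies and shows that the underlying bimodule is exhaustive.

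For the decomposition, the inclusion $\supseteq$ is trivial, so the content is that $\Omega^{\mathsf{L}}_{n}(W)$ is a graded subspace. I will use the description from \zcref{prop:ExModDiscCont}, valid now that $W$ is exhaustive:
\[
  \Omega^{\mathsf{L}}_{n}(W)=\Set{w\in W\given U_{\le -n-1}\cdot w=0}.
\]
Take $w\in\Omega^{\mathsf{L}}_{n}(W)$ and write it as a finite sum $w=\sum_{k}w_{k}$ with $w_{k}\in W_{k}$. Fix a homogeneous $u\in U_{p}$ with $p\le -n-1$. Then $0=u\cdot w=\sum_{k}u\,w_{k}$, and $u\,w_{k}\in W_{p+k}$. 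These summands lie in distinct homogeneous components, so each $u\,w_{k}$ vanishes separately.

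Since $U_{\le -n-1}=\bigoplus_{p\le -n-1}U_{p}$ is spanned by homogeneous elements, it follows that $U_{\le -n-1}\cdot w_{k}=0$ for every $k$. Hence each $w_{k}\in\Omega^{\mathsf{L}}_{n}(W)\cap W_{k}$, which gives the equality.

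The only point requiring care is the passage from the annihilator by all of $\NL[n+1]U$ to the annihilator by homogeneous elements of $U_{\le -n-1}$. Directly, $\NL[n+1]U$ is graded by \zcref[noname]{A1}, but it is not obvious that elements of this topologically defined ideal act degreewise. That is precisely what \zcref{prop:ExModDiscCont} supplies, using density of the canonical seminorm and continuity, and it is why exhaustiveness had to be established first.

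The statement for negatively-graded $({}_R|U)$-bimodules follows by applying the left-module result to $\opp{W}$ as a positively-graded left $\opp{U}$-module, via \zcref{rem:opposite}. Alternatively, one can repeat the argument with $U_{\ge n+1}$ acting on the right.
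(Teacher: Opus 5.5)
Your proof is correct. The first part is exactly the paper's argument. For the decomposition, the paper takes a shorter route that makes the detour unnecessary.

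By \zcref[noname]{A1}, $\NL[n+1]U$ is a graded subspace of $U=\bigoplus_{p}U_{p}$. It is therefore spanned by homogeneous elements $u\in\NL[n+1]U_{p}$. Your componentwise argument ($u w=\sum_k u w_k$ with $u w_k\in W_{p+k}$ lying in distinct components) applies to these elements directly.

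So the concern you raise is unfounded. The topology only lives inside each $U_{p}$, and every element of $U$ is still a finite sum of homogeneous pieces. A homogeneous $u\in U_{p}$ sends $W_{k}$ into $W_{p+k}$ by the definition of a graded module, whatever its topological origin.

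Your route through \zcref{prop:ExModDiscCont} (density of $\cNL[n+1]U$ plus continuity of the action) is valid. It does, however, make the second claim depend on the first. The paper's argument instead shows that $\Omega^{\mathsf{L}}_{n}(W)$ is a graded subspace of any graded $U$-module, with no appeal to exhaustiveness or density.

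Your version also records that membership in $\Omega^{\mathsf{L}}_{n}(W)$ can be tested against the canonical generators $U_{\le -n-1}$ alone. That is true but not needed here.
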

  \begin{proof}
    The first follows from \zcref{prop:FilModtoExMod,prop:FilMod-GrMod}. The second is due to the fact that the neighborhood $\NL[n+1]U$ is homogeneous.
  \end{proof}

  \begin{warning}
    Despite the above corollary, the $\gr\Omega$-grading of a positively-graded $U$-module $W$ may differ significantly from its original grading.
  \end{warning}

  \begin{definition}\label{def:gradable}
    An exhaustive $U$-module is said to be \concept{gradable} if it is isomorphic to the underlying module of a (positively or negatively) graded $U$-module.
  \end{definition}
  \begin{remark}
    From the definition, it is not clear whether ``exhaustive'' = ``gradable'' or not.
  \end{remark}

  \begin{figure}[htbp]
    \centering
    \begin{tikzcd}
      \Mod[Fil]_{0}(U|{}_R) 
          \arrow[r, hook] &
      \Mod[Fil](U|{}_R) 
          \arrow[r, shift left, "\gr(-)_\bullet"] 
          \arrow[d, "\text{forget}"] &
      \Mod[Gr](U|{}_R) 
          \arrow[l, shift left, "(-)_{\le\bullet}"]
          \arrow[dl, "\text{forget}", dotted] &
      \Mod[Gr]_d(U|{}_R) 
          \arrow[l, hook']
          \arrow[d, "(-)_{0}"] \\
      &
      \Mod[Ex](U|{}_R) 
          \arrow[ul,"\Omega_{\bullet}"] &&
      \Mod(\mathsf{A}_d|{}_R) \\
      && 
      \Mod*[Gr](U|{}_R) 
          \arrow[dl, "\text{forget}", dotted] &
      \Mod*[Gr]_d(U|{}_R)
          \arrow[l, hook'] 
          \arrow[d, "(-)_{0}"] \\
      &
      \Mod*[Ex](U|{}_R) 
          \arrow[uu, phantom, "\bigcup" description] &&
      \Mod*(\mathsf{A}_d|{}_R) 
    \end{tikzcd}
    \caption{Relations between various module categories introduced in \zcref{sec:modules}. The bottom ones are categories of quasi-rigid modules. Similar relations hold for right module categories.}
    \label{fig:module_categories}
  \end{figure}

\subsection{Quasi-rigid modules and duality}\label{sec:ordinary}
  In the context of almost-canonically seminormed algebras, we give the following definitions mimicing objects in $\Mod*(-)$'s.
  \begin{definition}
    \label{def:ordinary}
    A positively-graded $(U|{}_R)$-bimodule $W$ is said to be \concept{quasi-rigid} if each right $R$-submodule $W_{n}$ is rigid. 
    An exhaustive $(U|{}_R)$-bimodule $W$ is said to be \concept{quasi-rigid} if it is the underlying module of a quasi-rigid graded $(U|{}_R)$-bimodule. 
    Similar definitions apply to $({}_R|U)$-bimodules.
    We have the following full subcategories
    \[
      \nota{\Mod*[Ex](U|{}_R)},\,
      \nota{\Mod*[Ex]({}_R|U)},\,
      \nota{\Mod*[Gr](U|{}_R)},\,
      \nota{\Mod*[Gr]({}_R|U)},\,
      \nota{\Mod*[Gr]_{d}(U|{}_R)},\,
      \nota{\Mod*[Gr]_{d}({}_R|U)}
    \]
    consisting of all quasi-rigid bimodules in the corresponding module categories.
  \end{definition}

  The functors in \zcref[noname]{eq:degree-n,eq:degree-0} restrict to the quasi-rigid module categories:
  \begin{align}
    (-)_{n}\colon\Mod*[Gr](U|{}_R)\to\Mod*(U_{0}|{}_R)
    &\qquad
    (-)_{n}\colon\Mod*[Gr]({}_R|U)\to\Mod*({}_R|U_{0}) \label{eq:degree-n-ordinary}\\
    (-)_{0}\colon\Mod*[Gr]_{d}(U|{}_R)\to\Mod*(\mathsf{A}_{d}|{}_R)
    &\qquad
    (-)_{0}\colon\Mod*[Gr]_{d}({}_R|U)\to\Mod*({}_R|\mathsf{A}_{d}) \label{eq:degree-0-ordinary}
  \end{align}
  \begin{remark}
    The functors $\Omega^{\mathsf{L}}_{n}$ and $\Omega^{\mathsf{R}}_{n}$ in \zcref[noname]{eq:OmegaLR} do not automatically restrict to the quasi-rigid module categories $\Mod*[Ex](U|{}_R)\to\Mod*(\mathsf{A}_{n}|{}_R)$ and $\Mod*[Ex]({}_R|U)\to\Mod*({}_R|\mathsf{A}_{n})$.
  \end{remark}

  The following lemma is useful.
  \begin{lemma}\label{lem:rigidmodule}
    Let $A$ and $R$ be two rings. 
    For any right $A$-module $M$ and any $(A|R)$-bimodule $N$, if $M$ is rigid over $A$ and $N$ are rigid over $R$, then $M\otimes_{A}N$ is rigid over $R$.
  \end{lemma}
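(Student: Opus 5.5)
We show that $M\otimes_{A}N$ is a finitely generated projective right $R$-module, which is equivalent to rigidity over a unital ring (as recalled in the conventions).

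First, since $A$ is unital and $M$ is rigid over $A$, $M$ is finitely generated projective. Hence $M$ is a direct summand of a finite free module: there are a right $A$-module $M'$ and an integer $k$ with $M\oplus M'\cong A^{k}$ as right $A$-modules.

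Next, tensor this decomposition over $A$ with the $(A|R)$-bimodule $N$. Since $-\otimes_{A}N$ is additive, we obtain an isomorphism of right $R$-modules
\[
  (M\otimes_{A}N)\oplus(M'\otimes_{A}N)\cong A^{k}\otimes_{A}N\cong N^{k}.
\]
The module $N$ is rigid over $R$, so it is finitely generated projective over $R$, and therefore so is $N^{k}$. A direct summand of a finitely generated projective module is again finitely generated and projective. Thus $M\otimes_{A}N$ is finitely generated projective over $R$, hence rigid.

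If one prefers to avoid the characterization by finite generation and projectivity (for instance in the categorical setting of the conventions), one can argue directly with dual bases instead. Take a dual basis $(m_i,\varphi_i)_{i=1}^{k}$ for $M$ over $A$ and $(n_j,\psi_j)_{j=1}^{l}$ for $N$ over $R$. Then the elements $m_i\otimes n_j$, paired with the functionals $m\otimes n\mapsto \psi_j(\varphi_i(m)n)$, form a dual basis of $M\otimes_{A}N$ over $R$. Checking this is a routine one-line computation, and it shows that the coevaluation map exists, which is exactly rigidity.

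The only point that needs care is the well-definedness of these functionals over $\otimes_{A}$. It holds because $\varphi_i$ is right $A$-linear and $\psi_j$ is right $R$-linear, while $N$ is a left $A$-module. I expect this balancedness check to be the main, though minor, obstacle; everything else is formal.
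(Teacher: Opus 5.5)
Your proof is correct, but your main route differs from the paper's. The paper argues purely with duals. It observes $(M\otimes_{A}N)^{\vee|R}\cong N^{\vee|R}\otimes_{A}M^{\vee|A}$: rigid bimodules are closed under $\otimes_{A}$, and the dual of a composite is the composite of the duals in reverse order.

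Your primary argument instead uses the characterization ``rigid $=$ finitely generated projective'' over unital rings. It then exhibits $M\otimes_{A}N$ as a direct summand of $N^{k}$.

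Your fallback dual-basis argument is exactly the paper's route made explicit. The functional $m\otimes n\mapsto\psi_j(\varphi_i(m)n)$ is the image of $\psi_j\otimes\varphi_i\in N^{\vee|R}\otimes_{A}M^{\vee|A}$. The check $\sum_{i,j}(m_i\otimes n_j)\,\psi_j(\varphi_i(m)n)=\sum_i m_i\otimes\varphi_i(m)n=m\otimes n$ is the zigzag identity for the resulting coevaluation.

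The summand argument is the quickest for modules over unital rings, but it relies on a characterization specific to that setting. The duality argument is formal, so it works verbatim in the categorical generality of the conventions, as you anticipated. It also identifies the dual of $M\otimes_{A}N$ explicitly.
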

  \begin{proof}
    Indeed, we have $(M\otimes_{A}N)^{\vee|R} = N^{\vee|R}\otimes_{A}M^{\vee|A}$.
  \end{proof}

% \subsection{Duals of graded modules}
  \begin{definition}\label{def:degreewise-dual} 
    Let $W$ be a graded $(U|{}_R)$-bimodule (resp. $({}_R|U)$-bimodule). 
    Then, its \concept{graded dual module} is the graded left (resp. right) $R$-module $W^{\dagger|R}$ (resp. $W^{R|\dagger}$) with components
    \[
      \nota{W^{\dagger|R}_{\bullet}} := 
      (W_{-\bullet})^{\vee|R}
      \qquad
      \left(
        \text{resp. }
        \nota{W^{R|\dagger}_{\bullet}} :=
        (W_{-\bullet})^{R|\vee}
      \right).
    \]
    When $R=\k$, we simply denote it by $\nota{W'}$. 
  \end{definition}
  \begin{lemma}
    The space $W^{\dagger|R}$ is an $({}_R|U)$-submodule of $W^{\vee|R}$. Likewise, the space $W^{R|\dagger}$ is a $(U|{}_R)$-submodule of $W^{R|\vee}$.
  \end{lemma}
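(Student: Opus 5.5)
The claim is that $W^{\dagger|R}$ is an $({}_R|U)$-submodule of $W^{\vee|R}$, and symmetrically for $W^{R|\dagger}$. The plan is to first fix the right $U$-action on $W^{\vee|R}=\Hom_{R}(W,R)$ (maps of right $R$-modules). Since $W$ is a left $U$-module whose $U$-action commutes with the right $R$-action, precomposition gives
\[
  (\phi\cdot u)(w) := \phi(u w), \qquad \phi\in W^{\vee|R},\ u\in U,\ w\in W.
\]
This makes $W^{\vee|R}$ a right $U$-module, with $(\phi\cdot u)\cdot u' = \phi\cdot(uu')$. The left $R$-action is $(r\phi)(w)=r\phi(w)$, and it visibly commutes with the $U$-action.

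Next I would embed $W^{\dagger|R}$ into $W^{\vee|R}$. Since $W=\bigoplus_n W_n$ as right $R$-modules, each $\psi\in (W_{-n})^{\vee|R}$ extends to $W$ by zero on the other components $W_m$, $m\ne -n$. The sum $\bigoplus_n (W_{-n})^{\vee|R}\to \Hom_R(W,R)=\prod_m (W_m)^{\vee|R}$ is injective, so $W^{\dagger|R}$ is identified with the $R$-submodule of functionals supported on finitely many degrees. This identification is compatible with the left $R$-action, so $W^{\dagger|R}$ is a left $R$-submodule.

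The key step is closure under the $U$-action, checked degree by degree. Take $\phi\in W^{\dagger|R}_{n}$, so $\phi$ vanishes outside $W_{-n}$, and take $u\in U_p$. For $w\in W_m$ we have $uw\in W_{m+p}$, so $(\phi\cdot u)(w)=\phi(uw)$ is nonzero only if $m+p=-n$, that is, $m=-(n+p)$. Hence $\phi\cdot u$ is supported in the single degree $-(n+p)$, so $\phi\cdot u\in W^{\dagger|R}_{n+p}$. This gives closure together with the expected grading rule $W^{\dagger|R}_{n}U_p\subset W^{\dagger|R}_{n+p}$.

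The symmetric statement for $W^{R|\dagger}$ follows by passing to $\opp{U}$ and $\opp{R}$, or by the same computation with $(u\phi)(w)=\phi(wu)$. I expect no real obstacle here; the only points needing care are the side conventions, namely that $\Hom_R$ is taken over right $R$-modules so that $U$ acts through the left, and the sign in the degree $(W_{-\bullet})$.
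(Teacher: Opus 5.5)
Your proposal is correct and follows essentially the paper's argument: the paper also views $W^{\dagger|R}$ as the functionals vanishing on all but finitely many components $W_n$, and checks for $\alpha\in U_p$ that $(\varphi\cdot\alpha)(W_n)=\varphi(\alpha\cdot W_n)\subset\varphi(W_{p+n})$, so finite support is preserved. Your degree-by-degree version also makes explicit the grading rule $W^{\dagger|R}_{n}U_p\subset W^{\dagger|R}_{n+p}$ and the compatibility of the $R$- and $U$-actions, both of which the paper leaves implicit.
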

  \begin{proof}
    We need to show that for any $\alpha\in U$ and $\varphi\in W^{\dagger|R}$, the $R$-linear functional $\varphi\cdot\alpha \colon W \to R$ vanishes on all but finitely many components $W_{n}$. We may assume $\alpha\in U_{p}$. Then, we have 
    \[ 
      (\varphi\cdot\alpha)(W_{n})=\varphi(\alpha \cdot W_{n})\subset\varphi(W_{p+n}).
    \]
    Since $\varphi\in W^{\dagger|R}$, the statement follows. 
  \end{proof}

  \begin{recollection}
    Let $M$ be a graded right $R$-module and $N$ be a graded left $R$-module. 
    Their \concept{complete graded tensor product} is the graded space
    \[
      \nota{M \ctensor_{R} N} := 
      \bigoplus_{n\in\Z} \left( \prod_{p+q=n} M_{p} \otimes_{R} N_{q} \right).
    \]
    This can be thought of as the degreewise completion of the algebraic tensor product $M \otimes_{R} N$.

    Note that, when $M$ is positively-graded and $N$ is negatively-graded, the above completion can also be obtained via the following \emph{canonical seminorms}: 
    \[
      \NL[n](M \otimes_{R} N) = \bigoplus_{q \le -n} (M_{p} \otimes_{R} N_{q})
      \txand
      \NR[n](M \otimes_{R} N) = \bigoplus_{p \ge n} (M_{p} \otimes_{R} N_{q}).
    \]
    This is similar to what we did in \zcref{sec:ACSR}.
  \end{recollection}
  \begin{recollection}
    Let $M$ and $N$ be two positively-graded right $R$-modules. 
    Then, the graded $R$-module of homogeneous $R$-linear homomorphisms from $M$ to $N$ is given by
    \[
      \nota{\Hom^{\tt Gr}_{|R}(M,N)} := 
      \bigoplus_{n\in\Z} \Hom_{\Mod[Gr](|R)}(M[n],N).
    \]
  \end{recollection}

  The meaning of \emph{quasi-rigid modules} is justified by the following proposition:
  \begin{proposition}
    \label{prop:ordinary}
    Let $W$ be a positively-graded $(U|{}_R)$-module. 
    Then, $W$ is quasi-rigid if and only if the canonical evaluation map 
    \[
      (-)\ctensor_{R}W^{\dagger|R}
      \longrightarrow
      \Hom^{\tt Gr}_{|R}(W,-)
    \]
    is a natural isomorphism on $\Mod[Gr](U|{}_R)$.
    Similar holds for negatively-graded $({}_R|U)$-modules.
  \end{proposition}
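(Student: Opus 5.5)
The plan is to reduce both directions to the classical fact recalled in the Conventions: over a unital ring $R$, a right $R$-module $M$ is rigid iff $M^{\vee}\otimes_R(-)\to\Hom_R(M,-)$ is an isomorphism. The whole statement should then be the degreewise version of that fact.

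\textbf{Step 1: unwind the evaluation map.} For $N\in\Mod[Gr](U|{}_R)$, compute the degree-$n$ component of the source:
\[
  (N\ctensor_R W^{\dagger|R})_n=\prod_{p+q=n}N_p\otimes_R (W_{-q})^{\vee|R}=\prod_{k}N_{n+k}\otimes_R W_k^{\vee|R}.
\]
The degree-$n$ component of the target is
\[
  \Hom^{\tt Gr}_{|R}(W,N)_n=\Hom_{\Mod[Gr](|R)}(W[n],N)=\prod_k\Hom_{|R}(W_k,N_{n+k}),
\]
since a graded right $R$-linear map is just a family of componentwise maps. The evaluation map is then the product over $k$ of the classical maps
\[
  \mathrm{ev}_k\colon N_{n+k}\otimes_R W_k^{\vee}\to\Hom_{|R}(W_k,N_{n+k}).
\]
The key points to check here are that the grading conventions match (the shift $[n]$ against $W^{\dagger}_\bullet=(W_{-\bullet})^\vee$), and that the map is well defined on the completed product. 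The latter is automatic because both sides are products indexed by $k$.

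\textbf{Step 2: the direction ``quasi-rigid $\Rightarrow$ isomorphism''.} Since each $W_k$ is rigid, each $\mathrm{ev}_k$ is an isomorphism. A product of isomorphisms is an isomorphism, and naturality in $N$ is clear. Note that we never use $U$-linearity of $N$ here, only its underlying graded $R$-module.

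\textbf{Step 3: the converse.} This is the main obstacle, because we may only test on objects of $\Mod[Gr](U|{}_R)$, not on arbitrary $R$-modules. To test rigidity of a fixed $W_k$, we need, for each right $R$-module $X$, some graded $(U|{}_R)$-bimodule $N$ with $N_{n+k}=X$ and with the other components harmless.
\begin{itemize}
\item The first attempt is a module concentrated in a single degree $d$, with $U_p$ acting by zero for $p\ne0$. This requires $U_0$ to act through a quotient, and a $(U|{}_R)$-structure with $U$ acting trivially is not available unless $U$ has an augmentation.
\item Instead, take $N=W\otimes_R X'$, or more simply $N=W\otimes_{\k}X$ with the $R$-action on $X$, for a right $R$-module $X$. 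Then $N_{m}=W_m\otimes X$.
\item The cleanest choice is $N=W$ itself with $n=0$. Here the isomorphism on the $k$-th factor gives $W_k\otimes_R W_k^\vee\cong\End_R(W_k)$, which by the Conventions is equivalent to rigidity of $W_k$. Because the evaluation map is a product of the $\mathrm{ev}_k$, bijectivity of the whole map forces each factor to be bijective.
\end{itemize}
So the converse reduces to the characterization in the Conventions that $M$ is rigid iff $M^\vee\otimes_R M\to\End_R(M)$ is an isomorphism, applied to each $W_k$.

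\textbf{Step 4: the right-module case.} The statement for negatively-graded $({}_R|U)$-modules follows by passing to $\opp{U}$ and $\opp{W}$ (\zcref{rem:opposite}).
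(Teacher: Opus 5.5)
Your proposal is correct and follows the same route as the paper: both unwind the evaluation map degreewise into the product $\prod_k N_{n+k}\otimes_R W_k^{\vee|R}\to\prod_k\Hom_{|R}(W_k,N_{n+k})$ and reduce the statement to rigidity of each component $W_k$. For the converse, the paper only writes ``we see''; your choice of the test object $N=W$ in degree $0$, combined with the fact that a product of maps is bijective iff each factor is, is exactly what makes that step rigorous, so the exploratory bullets preceding it can simply be dropped.
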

  \begin{proof}
    Since
    \begin{align*}
      (- \ctensor_{R} W^{\dagger|R})_{n} 
      &= \prod_{k\in\Z} \left( (-)_{k} \otimes_{R} W^{\dagger|R}_{n-k} \right) = \prod_{k\in\Z} \left( (-)_{k} \otimes_{R} (W_{k-n})^{\vee|R} \right),\\
      \Hom_{\Mod[Gr](|R)}(W[n],-) 
      &= \prod_{k\in\Z} \Hom_{|R}(W_{k-n},(-)_{k}).
    \end{align*}
    We see that the two are isomorphic if and only if $W$ is quasi-rigid.
  \end{proof}

% \subsection{Dual of exhaustive modules}
  The dual of exhaustive modules is more subtle: the full dual module is usually not exhaustive.
  Despite this, we have the following useful description.
  \begin{lemma}\label{lem:OmegaDual}
    $\Omega^{\mathsf{R}}_{n}(W^{\vee|R}) = (\mathfrak{R}^{n} \otimes_{U} W)^{\vee|R}$ and $\Omega^{\mathsf{L}}_{n}(W^{R|\vee}) = (W \otimes_{U} \mathfrak{L}^{n})^{R|\vee}$.
  \end{lemma}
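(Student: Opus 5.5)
The plan is to unwind both sides of the first identity and observe that they describe the same subspace of $W^{\vee|R}$. Here $W$ is a $(U|{}_R)$-bimodule and $W^{\vee|R}=\Hom_{|R}(W,R)$ is an $({}_R|U)$-bimodule via $(\varphi\cdot\alpha)(w)=\varphi(\alpha w)$. The second identity is the mirror statement; one can obtain it by passing to $\opp{U}$ as in \zcref{rem:opposite}, or simply repeat the argument with sides exchanged.

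First, I identify the right-hand side. Since $\mathfrak{R}^{n}=U/\NR[n+1]U$ and $\NR[n+1]U$ is a right ideal, right exactness of $-\otimes_{U}W$ gives
\[
  \mathfrak{R}^{n}\otimes_{U}W \cong W/(\NR[n+1]U\cdot W).
\]
This is an isomorphism of right $R$-modules, because the right $R$-action on $W$ commutes with the left $U$-action. Weak unitality of $U$ is used here to identify $U\otimes_{U}W$ with $W$. Alternatively, one can use $U_0 U_n = U_n$ and note that $W$ may be replaced by $UW$; I would check that this causes no discrepancy. Consequently,
\[
  (\mathfrak{R}^{n}\otimes_{U}W)^{\vee|R}=\Set{\varphi\in\Hom_{|R}(W,R)\given \varphi(\NR[n+1]U\cdot W)=0}.
\]

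Second, I identify the left-hand side. By definition, $\Omega^{\mathsf{R}}_{n}(W^{\vee|R})$ is the set of $\varphi$ with $\varphi\cdot\NR[n+1]U=0$, that is, $\varphi(\alpha w)=0$ for all $\alpha\in\NR[n+1]U$ and all $w\in W$. This is exactly the condition $\varphi(\NR[n+1]U\cdot W)=0$, so the two subspaces coincide. Both are also compatible with the left $R$-action on $W^{\vee|R}$. Note that no continuity or exhaustiveness assumption is needed, since we use the definition of $\Omega^{\mathsf{R}}_{n}$ directly rather than \zcref{prop:ExModDiscCont}.

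The only point needing care is the identification $U\otimes_{U}W\cong W$ when $U$ is merely weakly unital rather than unital. If $W$ is not unital, I would interpret $\mathfrak{R}^{n}\otimes_{U}W$ as the cokernel of $\NR[n+1]U\otimes_{U}W\to U\otimes_{U}W$. I would then verify that a functional on $U\otimes_U W$ corresponds to its pullback along the multiplication map. Since $U$ is unital in the main cases, I expect this to be routine. The second identity follows by symmetry, using $\mathfrak{L}^{n}=U/\NL[n+1]U$ and $W\otimes_{U}\mathfrak{L}^{n}\cong W/(W\cdot\NL[n+1]U)$.
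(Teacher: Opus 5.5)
Your proposal is correct and follows the paper's own proof: both sides are identified with the $R$-linear functionals on $W$ that vanish on $\NR[n+1]U\cdot W$, using $\mathfrak{R}^{n}\otimes_{U}W\cong W/(\NR[n+1]U\cdot W)$. One correction to your side remark: weak unitality of $U$ does not by itself give $U\otimes_{U}W\cong W$. For example, if $U$ acts on $W$ by zero, then $U\otimes_{U}W=0$ because $U\cdot U=U$. So this identification, in your proof as in the paper's, tacitly requires $U\otimes_{U}W\to W$ to be an isomorphism, which is a condition on $W$ (automatic for unital modules over unital $U$).
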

  \begin{proof}
    We prove the first equality; the second one is similar.
    By the right $U$-action on $W^{\vee|R}$, we see that $\Omega^{\mathsf{R}}_{n}(W^{\vee|R})$ consists of those $R$-linear functionals on $W$ which vanish on $\NR[n+1]U\cdot W$. 
    Hence, it is isomorphic to the space of $R$-linear functionals on the quotient $W/(\NR[n+1]U\cdot W)$. This is precisely $(\mathfrak{R}^{n} \otimes_{U} W)^{\vee|R}$.
  \end{proof}

\section{Induced modules}\label{sec:InducedMod}
  The functors $\Omega^{\mathsf{L}}_{n}$ and $\Omega^{\mathsf{R}}_{n}$ (cf. \zcref{def:ExMod,eq:OmegaLR}) admit left adjoints. 
  To make the construction more transparent, we assume $U$ is unital. For the weakly unital case, one can work with its \emph{unitalization}.

  To state the adjointness, we first consider the following construction on $U_{0}$-modules.
  \begin{definition}\label{def:Phi0}
    Let $M$ be a $(U_{0}|{}_R)$-bimodule. The \concept{induced (bi)module generated by} $M$ is the following $(U|{}_R)$-bimodule 
    \[
      \nota{\Phi^{\mathsf{L}}_{n}(\nocolor{M})}:=\mathfrak{L}^{n}\otimes_{U_{0}}M.
    \]
    We view $\Phi^{\mathsf{L}}_{n}(M)$ as a positively-graded bimodule via the grading on $\mathfrak{L}^{n}$.
    Likewise, for an $({}_R|U_{0})$-bimodule $M$, the \concept{induced (bi)module generated by} $M$ is the following $({}_R|U)$-bimodule
    \[
      \nota{\Phi^{\mathsf{R}}_{n}(\nocolor{M})}:=M\otimes_{U_{0}}\mathfrak{R}^{n}.
    \]
    We view $\Phi^{\mathsf{R}}_{n}(M)$ as a negatively-graded bimodule via the grading on $\mathfrak{R}^{n}$.
  \end{definition}

  By abuse of notations, we will also use $\Phi^{\mathsf{L}}_{n}$ (resp. $\Phi^{\mathsf{R}}_{n}$) to denote the composition of it with the restriction functor from $\Mod(U_{0}|{}_R)$ to $\Mod(\mathsf{A}_{n}|{}_R)$ (resp. from $\Mod({}_R|U_{0})$ to $\Mod({}_R|\mathsf{A}_{n})$).
  \begin{proposition}\label{prop:adj:Phideg}
    The following functors form an adjoint pair:
    \[
      \begin{tikzcd}
        \Phi^{\mathsf{L}}_{n}\colon \Mod(\mathsf{A}_{n}|{}_R) & 
        \Mod[Gr]_{n}(U|{}_R) \colon(-)_{0}.
        \ar[from=1-1, to=1-2, phantom, "\scriptstyle\bot" description]
        \ar[from=1-1, to=1-2, shift left=1ex] 
        \ar[from=1-2, to=1-1, shift left=1ex] 
      \end{tikzcd}
    \]
    The similar holds for the pair $\Phi^{\mathsf{R}}_{n}$ and $(-)_{0}$. 
  \end{proposition}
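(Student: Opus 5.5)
The plan is to exhibit the unit and counit explicitly and check the triangle identities, reducing everything to two facts: (a) $\Phi^{\mathsf{L}}_{n}(M)$ really lies in $\Mod[Gr]_{n}(U|{}_R)$ with degree-zero part $M$, and (b) for $W\in\Mod[Gr]_{n}(U|{}_R)$ the action map $U\otimes_{U_0}W_0\to W$ factors through $\mathfrak{L}^{n}\otimes_{U_0}W_0$.

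For (a), since $\mathfrak{L}^{n}$ is graded with $\mathfrak{L}^{n}_{p}=0$ for $p<-n$ (\zcref{eg:LR-GrMod}) and $\otimes_{U_0}$ preserves the grading, $\Phi^{\mathsf{L}}_{n}(M)$ is positively graded with $\Phi^{\mathsf{L}}_{n}(M)_{p}=0$ for $p<-n$. This places it in $\Mod[Gr]_{n}$. Its degree-zero component is
\[
  \Phi^{\mathsf{L}}_{n}(M)_{0}=\mathfrak{L}^{n}_{0}\otimes_{U_0}M=\mathsf{A}_{n}\otimes_{U_0}M\cong M,
\]
where the last isomorphism uses that $M$ is an $\mathsf{A}_n$-module, so the $U_0$-action factors through the quotient $U_0\to\mathsf{A}_n$. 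This isomorphism is the unit $\eta_M\colon M\to(\Phi^{\mathsf{L}}_{n}M)_0$, $m\mapsto 1_n\otimes m$. It is natural and compatible with the right $R$-actions.

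For (b), let $W\in\Mod[Gr]_{n}(U|{}_R)$, so that $W_p=0$ for $p<-n$. First, $W_0$ is an $\mathsf{A}_n$-module, i.e.\ $\NLR[n+1]U_0$ kills $W_0$: elements of $\cNL[n+1]U_0=UU_{\le -n-1}$ send $W_0$ into $U\cdot W_{\le -n-1}=0$. Density (\zcref{A2}) together with exhaustiveness and continuity (\zcref{coro:Omega-vs-grading}, \zcref{prop:ExModDiscCont}) then upgrades this to all of $\NL[n+1]U$. The same argument shows $\NL[n+1]U\cdot W_0=0$, since $W_0\subset\Omega^{\mathsf{L}}_{n}(W)$ by \zcref{prop:ExModDiscCont}. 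Hence the action $U\otimes_{U_0}W_0\to W$ descends to a graded $(U|{}_R)$-map
\[
  \varepsilon_W\colon \mathfrak{L}^{n}\otimes_{U_0}W_0\to W,
\]
which is the counit.

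It remains to check the triangle identities, which are routine. The composite $(-)_0$ applied to $\varepsilon_W$, precomposed with $\eta_{W_0}$, sends $w\mapsto 1_n\otimes w\mapsto w$. The composite $\varepsilon_{\Phi M}\circ\Phi(\eta_M)$ sends $[u]\otimes m\mapsto[u]\otimes(1_n\otimes m)\mapsto [u]\cdot(1_n\otimes m)=[u]\otimes m$. The right-handed statement follows by applying the left case to $\opp{U}$ (\zcref{rem:opposite}). The main obstacle is the density step in (b): the argument must establish that the almost-canonical neighborhood $\NL[n+1]U$, and not just its canonical part, kills $W_0$. This is exactly where \zcref{prop:ExModDiscCont} is needed.
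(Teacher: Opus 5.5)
Your proposal is correct and is the paper's argument: the unit $m\mapsto 1_n\otimes m$ identifies $M$ with $\Phi^{\mathsf{L}}_{n}(M)_0=\mathsf{A}_n\otimes_{U_0}M$, and the counit is the action map $[\alpha]^{\mathsf{L}}_{n}\otimes w\mapsto\alpha w$, which is homogeneous since $U_pW_0\subset W_p$. Your extra checks, namely $\NL[n+1]U\cdot W_0=0$ and the triangle identities, spell out what the paper leaves implicit when it asserts in \eqref{eq:degree-0} that $(-)_0$ lands in $\mathsf{A}_n$-modules and that the counit is well defined; both arguments rest on the same input, that positively graded modules are exhaustive (\zcref{coro:Omega-vs-grading}).
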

  \begin{proof}
    The unit map $\eta_{M}\colon M\to(\Phi^{\mathsf{L}}_{0}(M))_{0}$ is given by $m\mapsto [1_{n}]^{\mathsf{L}}_{n}\otimes m$, which is clearly an isomorphism.
    The counit map $\Phi^{\mathsf{L}}_{n}(W_{0})\to W$ is given by the $U$-action map $[\alpha]^{\mathsf{L}}_{n}\otimes w\mapsto \alpha w$. Note that this map is homogeneous since $U_{\bullet}\cdot W_{0}\subset W_{\bullet}$.  
  \end{proof}

  By abuse of notation, we will also use $\Phi^{\mathsf{L}}_{n}$ (resp. $\Phi^{\mathsf{R}}_{n}$) to denote the composition of it with the forgetful functor from $\Mod[Gr](U|{}_R)$ to $\Mod[Ex](U|{}_R)$ (resp. from $\Mod[Gr]({}_R|U)$ to $\Mod[Ex]({}_R|U)$).
  \begin{proposition}\label{prop:adj:PhiOmega}
    The following functors form an adjoint pair:
    \[
      \begin{tikzcd}
        \Phi^{\mathsf{L}}_{n}\colon \Mod(\mathsf{A}_{n}|{}_R) & 
        \Mod[Ex](U|{}_R)
        \colon\Omega^{\mathsf{L}}_{n}.
        \ar[from=1-1, to=1-2, phantom, "\scriptstyle\bot" description]
        \ar[from=1-1, to=1-2, shift left=1ex] 
        \ar[from=1-2, to=1-1, shift left=1ex] 
      \end{tikzcd}
    \]
    The similar holds for the pair $\Phi^{\mathsf{R}}_{n}$ and $\Omega^{\mathsf{R}}_{n}$. 
  \end{proposition}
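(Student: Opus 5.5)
We will prove the adjunction directly by exhibiting a natural bijection
\[
  \Hom_{\Mod[Ex](U|{}_R)}\bigl(\Phi^{\mathsf{L}}_{n}(M),W\bigr)\cong\Hom_{\Mod(\mathsf{A}_{n}|{}_R)}\bigl(M,\Omega^{\mathsf{L}}_{n}(W)\bigr)
\]
for $M\in\Mod(\mathsf{A}_{n}|{}_R)$ and $W\in\Mod[Ex](U|{}_R)$. First, $\Phi^{\mathsf{L}}_{n}(M)=\mathfrak{L}^{n}\otimes_{U_{0}}M$ is exhaustive: it is positively graded, since $\mathfrak{L}^{n}_{p}=0$ for $p<-n$ (\zcref{eg:LR-GrMod}), and by \zcref{coro:Omega-vs-grading} the underlying module is exhaustive. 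Moreover, the elements $[1_n]^{\mathsf{L}}_{n}\otimes m$ lie in $\Omega^{\mathsf{L}}_{n}$, because $\NL[n+1]U\cdot[1]^{\mathsf{L}}_{n}=[\NL[n+1]U]^{\mathsf{L}}_{n}=0$.

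We then take the unit to be $\eta_M\colon M\to\Omega^{\mathsf{L}}_{n}(\Phi^{\mathsf{L}}_{n}(M))$, $m\mapsto[1]^{\mathsf{L}}_{n}\otimes m$. It is $\mathsf{A}_n$-linear and right $R$-linear, since $u_0\cdot([1]\otimes m)=[u_0]\otimes m=[1]\otimes u_0m$ for $u_0\in U_0$.

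For the counit, given exhaustive $W$, we define $\varepsilon_W\colon\mathfrak{L}^{n}\otimes_{U_0}\Omega^{\mathsf{L}}_{n}(W)\to W$ by $[\alpha]^{\mathsf{L}}_{n}\otimes w\mapsto\alpha w$. This is the main point to check. The map $U\otimes_{U_0}\Omega^{\mathsf{L}}_n(W)\to W$ is clearly well defined, and it factors through $\mathfrak{L}^n=U/\NL[n+1]U$ precisely because $\NL[n+1]U\cdot w=0$ for $w\in\Omega^{\mathsf{L}}_n(W)$. Here $\mathfrak{L}^n\otimes_{U_0}(-)$ agrees with $\mathfrak{L}^n\otimes_{\mathsf{A}_n}(-)$ on $\mathsf{A}_n$-modules, since $\NLR[n+1]U_0$ acts as zero on $\mathfrak{L}^n$ from the right by \zcref[noname]{A3}. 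The map $\varepsilon_W$ is $U$-linear and $R$-linear.

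It remains to verify the triangle identities. The composite $\Phi(M)\to\Phi\Omega\Phi(M)\to\Phi(M)$ sends $[\alpha]\otimes m\mapsto[\alpha]\otimes([1]\otimes m)\mapsto\alpha\cdot([1]\otimes m)=[\alpha]\otimes m$. The composite $\Omega(W)\to\Omega\Phi\Omega(W)\to\Omega(W)$ sends $w\mapsto[1]\otimes w\mapsto 1\cdot w=w$, using unitality of $U$ (assumed in this section). Naturality in both variables is immediate from the formulas. Equivalently, a $U$-map $f\colon\Phi(M)\to W$ corresponds to $m\mapsto f([1]\otimes m)$, which lands in $\Omega^{\mathsf{L}}_n(W)$ by the observation above, and its inverse is $g\mapsto([\alpha]\otimes m\mapsto\alpha g(m))$.

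The right-module statement for $\Phi^{\mathsf{R}}_{n}$ and $\Omega^{\mathsf{R}}_{n}$ follows by applying the left-module case to $\opp{U}$ with the seminorm of \zcref{rem:opposite}. The only real obstacle is the well-definedness of $\varepsilon_W$ modulo $\NL[n+1]U$, which is exactly the defining property of $\Omega^{\mathsf{L}}_n$.
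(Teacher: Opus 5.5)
Your proof is correct and takes the same route as the paper. The paper refers back to the explicit unit $m\mapsto[1]^{\mathsf{L}}_{n}\otimes m$ and counit $[\alpha]^{\mathsf{L}}_{n}\otimes w\mapsto\alpha w$ of \zcref{prop:adj:Phideg}, which is what you construct. It also notes the alternative argument via $\Omega^{\mathsf{L}}_{n}(W)=\Hom_{U|}(\mathfrak{L}^{n},W)$ and the tensor--hom adjunction, and this is exactly your closing reformulation of the hom-set bijection.
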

  \begin{proof}
    The proof is similar to that of \zcref{prop:adj:Phideg}. 
    We also provide another way to see this adjointness. The key observation is that $\Omega^{\mathsf{L}}_{n}(W) = \Hom_{U|}(\mathfrak{L}^{n}, W)$. Then, the adjunction is a consequence of the Tensor-Hom adjunction for the $(U|\mathsf{A}_{n})$-bimodule $\mathfrak{L}^{n}$.
  \end{proof}
  \begin{remark}\label{rem:OmegaHomLR}
    From the proof, we see that 
    \[
      \Omega^{\mathsf{L}}_{n}(-) = \Hom_{U|}(\mathfrak{L}^{n}, -)
      \txand 
      \Omega^{\mathsf{R}}_{n}(-) = \Hom_{|U}(\mathfrak{R}^{n}, -)
    \]
    as functors on $\Mod(U|{}_R)$ and $\Mod({}_R|U)$ respectively.
  \end{remark}

  The induction functors restrict to the quasi-rigid module categories. 
  \begin{proposition}\label{prop:adj:ordinary}
    Let $n\in\N$. Suppose either 
    \begin{enumerate}
      \item $\mathfrak{L}^{n} \in \Mod*[Gr](U|{}_{\mathsf{A}_{n}})$ and $\mathfrak{R}^{n} \in \Mod*[Gr]({}_{\mathsf{A}_{n}}|U)$; or 
      \item $R$ is semisimple and components of $\mathfrak{L}^{n},\mathfrak{R}^{n}$ are finite over $\mathsf{A}_{n}$.
    \end{enumerate}
    Then, the adjoint pairs in \zcref{prop:adj:Phideg} restrict to those between the quasi-rigid module categories:
    \[
      \begin{tikzcd}[row sep=tiny]
        \Phi^{\mathsf{L}}_{n}\colon \Mod*(\mathsf{A}_{n}|{}_R) & 
        \Mod*[Gr]_{n}(U|{}_R)  \colon(-)_{0}.
        \ar[from=1-1, to=1-2, phantom, "\scriptstyle\bot" description]
        \ar[from=1-1, to=1-2, shift left=1ex] 
        \ar[from=1-2, to=1-1, shift left=1ex] 
      \end{tikzcd}
    \]
    The similar holds for the pair $({}_R|U)$. 
  \end{proposition}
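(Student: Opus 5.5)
Since \zcref{prop:adj:Phideg} already supplies the adjunction $\Phi^{\mathsf{L}}_{n}\dashv(-)_{0}$ between $\Mod(\mathsf{A}_{n}|{}_R)$ and $\Mod[Gr]_{n}(U|{}_R)$, and the quasi-rigid categories are full subcategories, it suffices to check that both functors preserve quasi-rigidity. The adjunction then restricts automatically, with the same unit and counit. For $(-)_{0}$ this is already recorded in \eqref{eq:degree-0-ordinary}: if $W\in\Mod*[Gr]_{n}(U|{}_R)$, then each $W_{k}$ is rigid over $R$, in particular $W_{0}$, and $W_{0}$ is an $\mathsf{A}_{n}$-module because $\NLR[n+1]U_{0}$ acts by zero on $W_{0}$ (by degree reasons, as $W_{k}=0$ for $k<-n$). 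So the whole content is that $\Phi^{\mathsf{L}}_{n}(M)=\mathfrak{L}^{n}\otimes_{\mathsf{A}_{n}}M$ is quasi-rigid whenever $M\in\Mod*(\mathsf{A}_{n}|{}_R)$, i.e.\ $M$ is rigid over $R$.

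The grading of $\Phi^{\mathsf{L}}_{n}(M)$ comes from $\mathfrak{L}^{n}$, so its degree-$p$ component is $\mathfrak{L}^{n}_{p}\otimes_{\mathsf{A}_{n}}M$, which vanishes for $p<-n$. Hence $\Phi^{\mathsf{L}}_{n}(M)$ lies in $\Mod[Gr]_{n}(U|{}_R)$, and we must show that each component is rigid as a right $R$-module. We first note that the tensor over $U_{0}$ in \zcref{def:Phi0} agrees with the tensor over $\mathsf{A}_{n}$, since $\NLR[n+1]U_{0}$ annihilates both $\mathfrak{L}^{n}$ on the right and $M$.

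In case (i), $\mathfrak{L}^{n}\in\Mod*[Gr](U|{}_{\mathsf{A}_{n}})$ means that each $\mathfrak{L}^{n}_{p}$ is rigid as a right $\mathsf{A}_{n}$-module. Since $M$ is an $(\mathsf{A}_{n}|R)$-bimodule rigid over $R$, \zcref{lem:rigidmodule} (with $A=\mathsf{A}_{n}$) shows that $\mathfrak{L}^{n}_{p}\otimes_{\mathsf{A}_{n}}M$ is rigid over $R$, with dual $M^{\vee|R}\otimes_{\mathsf{A}_{n}}(\mathfrak{L}^{n}_{p})^{\vee|\mathsf{A}_{n}}$.

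In case (ii), $R$ is semisimple, so rigid means finitely generated over $R$. Each $\mathfrak{L}^{n}_{p}$ is finitely generated over $\mathsf{A}_{n}$, so there is a surjection $\mathsf{A}_{n}^{\oplus k}\to\mathfrak{L}^{n}_{p}$, and by right exactness of the tensor product we get a surjection $M^{\oplus k}\to\mathfrak{L}^{n}_{p}\otimes_{\mathsf{A}_{n}}M$. A quotient of a finitely generated module over a semisimple ring is finitely generated and projective, hence rigid.

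The right-hand statement for $({}_R|U)$-bimodules follows by the symmetric argument using $\mathfrak{R}^{n}$ and $\Phi^{\mathsf{R}}_{n}$, or by passing to $\opp{U}$ as in \zcref{rem:opposite}. The only delicate point I expect is the bookkeeping of sides in \zcref{lem:rigidmodule}: $\mathfrak{L}^{n}_{p}$ must be rigid as a right $\mathsf{A}_{n}$-module, which is exactly what hypothesis (i) provides. Beyond that the argument is formal.
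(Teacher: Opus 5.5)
Your proposal is correct and follows the paper's proof. You reduce to showing that $\Phi^{\mathsf{L}}_{n}$ sends $R$-rigid bimodules to quasi-rigid graded ones, then handle case (i) componentwise via \zcref{lem:rigidmodule} and case (ii) via finite generation over the semisimple ring $R$. You simply make explicit what the paper leaves implicit: the restriction of $(-)_{0}$, the identification of $\otimes_{U_{0}}$ with $\otimes_{\mathsf{A}_{n}}$, and the surjection $M^{\oplus k}\to\mathfrak{L}^{n}_{p}\otimes_{\mathsf{A}_{n}}M$.
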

  \begin{proof}
    We need to show: if an $(\mathsf{A}_{n}|{}_R)$-bimodule $M$ is rigid over $R$, then the graded bimodule $\Phi^{\mathsf{L}}_{n}(M)$ is quasi-rigid. 
    In case (i), this is due to \zcref{lem:rigidmodule}.
    In case (ii), since $R$ is semisimple, being rigid over $R$ is equivalent to be finite over $R$. Then, the finiteness of $\Phi^{\mathsf{L}}_{n}(M)$ follows from the finiteness of $\mathfrak{L}^{n}$.
  \end{proof}

  \begin{definition}\label{def:quasi}
    Mimicking the notion of quasi-finiteness in \cite{MNT10}, we propose to call $U$ \concept{quasi-rigid} if the assumption (i) of \zcref{prop:adj:ordinary} is satisfied, and \concept{weakly quasi-finite} if (ii) holds.
    We call it \emph{weak} since being finite over $\mathsf{A}_{n}$ is weaker than being finite over $\k$. 
  \end{definition}

\subsection{Thick Zhu algebras and thick induced modules}\label{sec:ThickZhu}
  The following construction may be thought of as a \emph{thick} variant of the Zhu algebra.
  \begin{definition}\label{def:thickZhu}
    The $n$-th left and right \concept{thick Zhu algebras} are 
    \[
      \nota{\mathscr{A}^{\mathsf{L}}_{n}} := \Omega^{\mathsf{L}}_{n}(\mathfrak{L}^{n})
      \txand 
      \nota{\mathscr{A}^{\mathsf{R}}_{n}} := \Omega^{\mathsf{R}}_{n}(\mathfrak{R}^{n}).
    \]
  \end{definition}
  \begin{lemma}\label{lem:ThickZhu}
    The aboves are rings and 
    $\mathfrak{L}^{n}$ is a $(U|\mathscr{A}^{\mathsf{L}}_{n})$-bimodule, $\mathfrak{R}^{n}$ is an $(\mathscr{A}^{\mathsf{R}}_{n}|U)$-bimodule.
  \end{lemma}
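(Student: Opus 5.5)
The plan is to identify $\mathscr{A}^{\mathsf{L}}_{n}$ with an endomorphism ring and read off both assertions from that identification. By \zcref{rem:OmegaHomLR}, $\Omega^{\mathsf{L}}_{n}(-)=\Hom_{U|}(\mathfrak{L}^{n},-)$ as functors on left $U$-modules. Applying this to $\mathfrak{L}^{n}$ itself gives a canonical identification
\[
  \mathscr{A}^{\mathsf{L}}_{n}=\Omega^{\mathsf{L}}_{n}(\mathfrak{L}^{n})\;\cong\;\End_{U|}(\mathfrak{L}^{n})^{\mathrm{op}}.
\]
Concretely, an element $w\in\mathfrak{L}^{n}$ killed by $\NL[n+1]U$ corresponds to the $U$-linear map $[\alpha]^{\mathsf{L}}_{n}\mapsto \alpha w$. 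This map is well defined precisely because $\NL[n+1]U\cdot w=0$, and it is inverted by evaluation at $1_{n}$, using that $U$ is unital.

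Transporting composition of endomorphisms through this identification gives the ring structure. For $a,b\in\mathscr{A}^{\mathsf{L}}_{n}$, I would choose a lift $\tilde a\in U$ with $a=[\tilde a]^{\mathsf{L}}_{n}$ and set $a\cdot b:=\tilde a\, b$. This is the left $U$-action of $\tilde a$ on $b\in\mathfrak{L}^{n}$.

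The key steps, in order, are the following.
\begin{enumerate}
  \item Independence of the lift: if $\tilde a\in\NL[n+1]U$, then $\tilde a\,b=0$ because $b\in\Omega^{\mathsf{L}}_{n}(\mathfrak{L}^{n})$.
  \item Closure: $\tilde a\,b$ again lies in $\Omega^{\mathsf{L}}_{n}(\mathfrak{L}^{n})$. Indeed $\NL[n+1]U\,\tilde a\subset\NL[n+1]U$, since $\NL[n+1]U$ is a left ideal; this is the point to check against \zcref{A3}. Hence $\NL[n+1]U\,\tilde a\, b=0$.
  \item Ring axioms: associativity is inherited from $U$. The unit is $1_{n}$, which lies in $\mathscr{A}^{\mathsf{L}}_{n}$ because $\NL[n+1]U\cdot 1_{n}=0$ in $U/\NL[n+1]U$.
  \item Right action on $\mathfrak{L}^{n}$: define $[\alpha]^{\mathsf{L}}_{n}\cdot a:=\alpha\tilde a$, i.e.\ the image of $[\alpha]^{\mathsf{L}}_{n}$ under the endomorphism attached to $a$. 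Equivalently, it is $\alpha\cdot a$ computed in $\mathfrak{L}^{n}$. It is well defined in $\alpha$ because $\NL[n+1]U\cdot a=0$, and well defined in $\tilde a$ since $\alpha\NL[n+1]U\subset \NL[n+1]U$.
  \item Compatibility: the right action commutes with the left $U$-action, by associativity in $U$. It is a right module action because $([\alpha]\cdot a)\cdot b=\alpha\tilde a\,b=[\alpha]\cdot(a\cdot b)$.
\end{enumerate}

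The right-sided statement for $\mathscr{A}^{\mathsf{R}}_{n}$ and $\mathfrak{R}^{n}$ follows by applying the above to $\opp{U}$ via \zcref{rem:opposite}, or by the mirror argument with right ideals $\NR[n+1]U$.

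The only subtle point is bookkeeping of sides and gradings. The identification with $\End$ produces the opposite ring, which is why $\mathscr{A}^{\mathsf{L}}_{n}$ acts on the right. Also, $\mathscr{A}^{\mathsf{L}}_{n}$ need not be concentrated in degree $0$: it may contain components in degrees down to $-n$, since $\mathfrak{L}^{n}_{p}=0$ only for $p<-n$. The multiplication is nonetheless well defined degreewise, and the images $[u]^{\mathsf{L}}_{n}$ of $u\in U_{0}$ recover $\mathsf{A}_{n}$ as a subring. I expect no step to be a real obstacle; the main care is in step 1, where the lift of $a$ must act through left multiplication and not through any right multiplication.
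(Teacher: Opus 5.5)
Your argument is essentially the paper's. Both treat $\mathscr{A}^{\mathsf{L}}_{n}$ as the idealizer quotient $\{\,u\in U : \NL[n+1]U\,u\subset\NL[n+1]U\,\}/\NL[n+1]U$, check that $[u][v]=[uv]$ and $[\alpha]\cdot[u]=[\alpha u]$ do not depend on representatives, and take $1_{n}$ as the unit. Your $\End_{U|}(\mathfrak{L}^{n})^{\mathrm{op}}$ framing is a repackaging of the same construction.

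One justification is wrong, in your step 2. The inclusion $\NL[n+1]U\,\tilde a\subset\NL[n+1]U$ does not follow from $\NL[n+1]U$ being a left ideal, which only gives stability under multiplication on the left. Nor does (\texttt{A}3) give it in general: since $(\NL[n+1]U)\,U_{q}\subset\NL[n+1-q]U$, it yields the inclusion only when $\deg\tilde a\le 0$.

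This matters because $\mathscr{A}^{\mathsf{L}}_{n}$ does have positive-degree elements. Take the rank-one Weyl algebra, where $\NL[2]\mathcal{D}=\mathcal{D}\partial_{x}^{2}$. There $\partial_{x}^{2}x=x\partial_{x}^{2}+2\partial_{x}\notin\mathcal{D}\partial_{x}^{2}$, yet $\partial_{x}^{2}(x-x^{2}\partial_{x})=-3x\partial_{x}^{2}-x^{2}\partial_{x}^{3}\in\mathcal{D}\partial_{x}^{2}$. So $[x-x^{2}\partial_{x}]^{\mathsf{L}}_{1}$ is a nonzero degree-one element of $\mathscr{A}^{\mathsf{L}}_{1}$.

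The correct reason is that $\NL[n+1]U\,\tilde a\subset\NL[n+1]U$ is a restatement of $\NL[n+1]U\cdot a=0$, i.e.\ of $a\in\Omega^{\mathsf{L}}_{n}(\mathfrak{L}^{n})$. You use exactly this fact correctly in step 4. It then gives $\NL[n+1]U\cdot(\tilde a\,b)\subset\NL[n+1]U\cdot b=0$. Alternatively, closure is automatic from your $\End^{\mathrm{op}}$ identification, since composites of $U$-linear maps are $U$-linear.

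With this repair the proof is complete. It also makes explicit the closure of the product, which the paper's proof leaves implicit.
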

  \begin{proof}
    First, we show that $1_{n} \in \Omega^{\mathsf{L}}_{n}(\mathfrak{L}^{n})$. 
    Indeed, for any $u \in \NL[n+1]U$, we have $u\cdot 1 = u \in \NL[n+1]U$. Hence, $u\cdot 1_{n} = [u]^{\mathsf{L}}_{n} = 0$.

    Next, we show that
    the multiplication of $\mathscr{A}^{\mathsf{L}}_{n}$ and 
    the right action of $\mathscr{A}^{\mathsf{L}}_{n}$ on $\mathfrak{L}^{n}$ given by the multiplication of $U$ is well-defined. 
    Note that $\mathscr{A}^{\mathsf{L}}_{n} = \Set*{ u\in U \given \NL[n+1]\cdot u \subset \NL[n+1]U }/\NL[n+1]U$. 
    For any $[u]^{\mathsf{L}}_{n}, [v]^{\mathsf{L}}_{n} \in \mathscr{A}^{\mathsf{L}}_{n}$ and $[\alpha]^{\mathsf{L}}_{n} \in \mathfrak{L}^{n}$, we have
    \begin{itemize}
      \item When $u \in \NL[n+1]U$, we have $v\cdot u \in \NL[n+1]U$ and $\alpha \cdot u \in \NL[n+1]U$ since $\NL[n+1]U$ is a left ideal.
      \item When $v \in \NL[n+1]U$, we have $v\cdot u \in \NL[n+1]U$ since $u$ satisfies $\NL[n+1]\cdot u \subset \NL[n+1]U$.
      \item When $\alpha \in \NL[n+1]U$, we have $\alpha \cdot u \in \NL[n+1]U$ since $u$ satisfies $\NL[n+1]\cdot u \subset \NL[n+1]U$.
    \end{itemize}
    This finishes verifying the well-definedness. The associativity of the multiplication of $\mathscr{A}^{\mathsf{L}}_{n}$ and the compatibility of the right action of $\mathscr{A}^{\mathsf{L}}_{n}$ on $\mathfrak{L}^{n}$ with its left $U$-action follow from the associativity of the multiplication of $U$.

    Same arguments apply to $\mathscr{A}^{\mathsf{R}}_{n}$ and $\mathfrak{R}^{n}$.
  \end{proof}
  \begin{lemma}
    For any left $U$-module $W$, the subspace $\Omega^{\mathsf{L}}_{n}(W)$ is naturally a left $\mathscr{A}^{\mathsf{L}}_{n}$-module.
    Similarly for right modules.
  \end{lemma}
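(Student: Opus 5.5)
The natural route is through the identification $\Omega^{\mathsf{L}}_{n}(W) = \Hom_{U|}(\mathfrak{L}^{n}, W)$ from \zcref{rem:OmegaHomLR}. By \zcref{lem:ThickZhu}, $\mathfrak{L}^{n}$ is a $(U|\mathscr{A}^{\mathsf{L}}_{n})$-bimodule. For any bimodule ${}_U P_S$, the space $\Hom_{U|}(P,W)$ carries a left $S$-action by precomposition with right multiplication: $(a\cdot f)(x) := f(x a)$. Associativity of the right action makes this a left action, and $1_n$ acts trivially. Applying this with $P = \mathfrak{L}^{n}$ and $S = \mathscr{A}^{\mathsf{L}}_{n}$ gives the desired left $\mathscr{A}^{\mathsf{L}}_{n}$-module structure on $\Omega^{\mathsf{L}}_{n}(W)$, and it is functorial in $W$.

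To make the action explicit, send $w \in \Omega^{\mathsf{L}}_{n}(W)$ to $f_w \colon [\alpha]^{\mathsf{L}}_{n} \mapsto \alpha w$. This is well defined because $\NL[n+1]U \cdot w = 0$. The action then reads
\[
  [u]^{\mathsf{L}}_{n} \cdot w := u w, \qquad \text{for } u \text{ with } \NL[n+1]U\cdot u \subset \NL[n+1]U.
\]
I would verify directly the two points that matter.

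\emph{Well-definedness.} If $u \in \NL[n+1]U$, then $uw = 0$ by the definition of $\Omega^{\mathsf{L}}_{n}(W)$.

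\emph{Landing in $\Omega^{\mathsf{L}}_{n}(W)$.} For $v \in \NL[n+1]U$ we have $v(uw) = (vu)w$, and $vu \in \NL[n+1]U$ by the defining property of $u$. Hence $v(uw) = 0$.

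Associativity and unitality are then inherited from $U$. Compatibility with the right $R$-action, when $W$ is a bimodule, is automatic because the $U$-action commutes with it.

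The right-module case is symmetric. One uses $\Omega^{\mathsf{R}}_{n}(W) = \Hom_{|U}(\mathfrak{R}^{n}, W)$ and the left action of $\mathscr{A}^{\mathsf{R}}_{n}$ on $\mathfrak{R}^{n}$, which yields a right $\mathscr{A}^{\mathsf{R}}_{n}$-action $w \cdot [u]^{\mathsf{R}}_{n} := w u$.

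I expect no real obstacle. The only point needing care is making sure the description of $\mathscr{A}^{\mathsf{L}}_{n}$ as an idealizer quotient, $\Set*{ u \given \NL[n+1]U\cdot u \subset \NL[n+1]U }/\NL[n+1]U$, is used consistently. This is exactly the description stated in the proof of \zcref{lem:ThickZhu}, and it also shows that the action restricts to the usual $\mathsf{A}_{n}$-action through the natural map $\mathsf{A}_{n} \to \mathscr{A}^{\mathsf{L}}_{n}$.
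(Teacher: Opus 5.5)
Your proof is correct and is essentially the paper's argument. The paper's one-line proof just points to the idealizer description $\mathscr{A}^{\mathsf{L}}_{n} = \{u \in U : \NL[n+1]U\cdot u \subset \NL[n+1]U\}/\NL[n+1]U$, and your two checks are exactly what that sentence leaves implicit: the action does not depend on the representative, and $\Omega^{\mathsf{L}}_{n}(W)$ is stable under $w\mapsto uw$. Your framing through $\Omega^{\mathsf{L}}_{n}(W)=\Hom_{U|}(\mathfrak{L}^{n},W)$ from \zcref{rem:OmegaHomLR} is an equivalent repackaging that also makes naturality in $W$ transparent.
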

  \begin{proof}
    This follows from $\mathscr{A}^{\mathsf{L}}_{n} = \Set*{ u\in U \given \NL[n+1]\cdot u \subset \NL[n+1]U }/\NL[n+1]U$.
  \end{proof}

  So, we have functors similar to \zcref[noname]{eq:OmegaLR}:
  \begin{equation}
    \nota{\Omega^{\mathsf{L}}_{n}}\colon\Mod[Ex](U|{}_R)\to\Mod(\mathscr{A}^{\mathsf{L}}_{n}|{}_R)\txand
    \nota{\Omega^{\mathsf{R}}_{n}}\colon\Mod[Ex]({}_R|U)\to\Mod({}_R|\mathscr{A}^{\mathsf{R}}_{n}).
  \end{equation}
  As before, these functors admit left adjoints given by the following \emph{thick induced module} construction.
  \begin{definition}\label{def:thickPhi}
    Let $M$ be a $(\mathscr{A}^{\mathsf{L}}_{n}|{}_R)$-bimodule. The \concept{thick induced (bi)module generated by} $M$ is the following $(U|{}_R)$-bimodule 
    \[
      \nota{\mathbf{\Phi}^{\mathsf{L}}_{n}(\nocolor{M})}:=\mathfrak{L}^{n}\otimes_{\mathscr{A}^{\mathsf{L}}_{n}}M.
    \]
    Likewise, for an $({}_R|\mathscr{A}^{\mathsf{R}}_{n})$-bimodule $M$, the \concept{thick induced (bi)module generated by} $M$ is the following $({}_R|U)$-bimodule
    \[
      \nota{\mathbf{\Phi}^{\mathsf{R}}_{n}(\nocolor{M})}:=M\otimes_{\mathscr{A}^{\mathsf{R}}_{n}}\mathfrak{R}^{n}.
    \]
  \end{definition}

  \begin{proposition}\label{prop:adj:bPhiOmega}
    The following functors form an adjoint pair:
    \[
      \begin{tikzcd}
        \mathbf{\Phi}^{\mathsf{L}}_{n}\colon \Mod(\mathscr{A}^{\mathsf{L}}_{n}|{}_R) & 
        \Mod[Ex](U|{}_R) \colon\Omega^{\mathsf{L}}_{n}.
        \ar[from=1-1, to=1-2, phantom, "\scriptstyle\bot" description]
        \ar[from=1-1, to=1-2, shift left=1ex] 
        \ar[from=1-2, to=1-1, shift left=1ex] 
      \end{tikzcd}
    \]
    The similar holds for the pair $\mathbf{\Phi}^{\mathsf{R}}_{n}$ and $\Omega^{\mathsf{R}}_{n}$. 
  \end{proposition}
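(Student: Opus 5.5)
The adjunction will come from the Tensor--Hom adjunction for the $(U|\mathscr{A}^{\mathsf{L}}_{n})$-bimodule $\mathfrak{L}^{n}$ (\zcref{lem:ThickZhu}), exactly as in the second proof of \zcref{prop:adj:PhiOmega}. Three points need checking. First, the left adjoint lands in $\Mod[Ex](U|{}_R)$. Second, the right adjoint $\Hom_{U|}(\mathfrak{L}^{n},-)$ agrees with $\Omega^{\mathsf{L}}_{n}$ as a functor into $\Mod(\mathscr{A}^{\mathsf{L}}_{n}|{}_R)$, including the module structures. Third, the adjunction bijection restricts correctly to the full subcategory of exhaustive modules.

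\textbf{Step 1 (exhaustiveness).} Every element of $\mathbf{\Phi}^{\mathsf{L}}_{n}(M)=\mathfrak{L}^{n}\otimes_{\mathscr{A}^{\mathsf{L}}_{n}}M$ is a finite sum of tensors $[\alpha]^{\mathsf{L}}_{n}\otimes m$. We may take $\alpha\in U_{p}$ homogeneous. By \zcref{eg:LR-GrMod} and \zcref{coro:Omega-vs-grading}, $\mathfrak{L}^{n}$ is positively graded and hence exhaustive, so $[\alpha]^{\mathsf{L}}_{n}\in\Omega^{\mathsf{L}}_{k}(\mathfrak{L}^{n})$ for some $k$. The left $U$-action on the tensor product is through the first factor, so $\NL[k+1]U$ kills $[\alpha]^{\mathsf{L}}_{n}\otimes m$. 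Therefore $\mathbf{\Phi}^{\mathsf{L}}_{n}(M)$ is the union of its $\Omega^{\mathsf{L}}_{k}$'s. Alternatively, it is a quotient of a direct sum of copies of $\mathfrak{L}^{n}$, and \zcref{prop:ExModGrothendieck} shows exhaustive modules are closed under colimits.

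\textbf{Step 2 (identifying the right adjoint).} As in \zcref{rem:OmegaHomLR}, the map $f\mapsto f(1_{n})$ gives an isomorphism $\Hom_{U|}(\mathfrak{L}^{n},W)\cong\Omega^{\mathsf{L}}_{n}(W)$. The key facts are that $\mathfrak{L}^{n}$ is cyclic, generated by $1_{n}$, with annihilator $\NL[n+1]U$, and that $1_{n}\in\Omega^{\mathsf{L}}_{n}(\mathfrak{L}^{n})$ by the proof of \zcref{lem:ThickZhu}.

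The step needing care is that the left $\mathscr{A}^{\mathsf{L}}_{n}$-module structure on the Hom space, namely $(a\cdot f)(x)=f(x\cdot a)$, matches the action on $\Omega^{\mathsf{L}}_{n}(W)$. Take $a=[u]^{\mathsf{L}}_{n}$ with $\NL[n+1]U\cdot u\subset\NL[n+1]U$, and let $w=f(1_{n})$. Then
\[(a\cdot f)(1_{n})=f(1_{n}\cdot a)=f([u]^{\mathsf{L}}_{n})=f(u\cdot 1_{n})=u\cdot w.\]
This is exactly the action of the previous lemma, which is well defined on $\Omega^{\mathsf{L}}_{n}(W)$ because $u\cdot w$ is again killed by $\NL[n+1]U$. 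The right $R$-actions clearly agree, and naturality in $W$ is evident.

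\textbf{Step 3 (conclusion).} The Tensor--Hom adjunction gives natural bijections
\[\Hom_{U|R}(\mathfrak{L}^{n}\otimes_{\mathscr{A}^{\mathsf{L}}_{n}}M,W)\cong\Hom_{\mathscr{A}^{\mathsf{L}}_{n}|R}(M,\Hom_{U|}(\mathfrak{L}^{n},W))\]
for all $W\in\Mod(U|{}_R)$. Restricting $W$ to the full subcategory $\Mod[Ex](U|{}_R)$ and using Steps 1 and 2 yields the asserted adjunction. Explicitly, the unit is $m\mapsto 1_{n}\otimes m$ and the counit is $[\alpha]^{\mathsf{L}}_{n}\otimes w\mapsto\alpha w$.

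The right-hand version, for $\mathbf{\Phi}^{\mathsf{R}}_{n}$ and $\Omega^{\mathsf{R}}_{n}$, follows by the same argument, or by passing to $\opp{U}$ via \zcref{rem:opposite}.

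The only place where thickness matters, and hence the main point to verify, is the compatibility of module structures in Step 2. It relies precisely on the description $\mathscr{A}^{\mathsf{L}}_{n}=\Set{u\given\NL[n+1]U\cdot u\subset\NL[n+1]U}/\NL[n+1]U$.
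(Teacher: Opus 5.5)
Your proposal is correct and follows the paper's route. The paper's proof says only that the argument of \zcref{prop:adj:PhiOmega} carries over: identify $\Omega^{\mathsf{L}}_{n}(-)=\Hom_{U|}(\mathfrak{L}^{n},-)$ and apply the Tensor--Hom adjunction to the $(U|\mathscr{A}^{\mathsf{L}}_{n})$-bimodule $\mathfrak{L}^{n}$. Your Steps 1 and 2 correctly fill in the checks the paper leaves implicit, namely that $\mathbf{\Phi}^{\mathsf{L}}_{n}$ lands in exhaustive modules and that the two $\mathscr{A}^{\mathsf{L}}_{n}$-actions agree.
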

  \begin{proof}
    This follows from the same argument as in \zcref{prop:adj:PhiOmega}.
  \end{proof}

  We summarize the adjoint pairs we have constructed in this section in \zcref{fig:adjoints}.

  \begin{figure}[htbp]
    \centering
    \begin{tikzcd}[sep = large]
      \Mod(\mathscr{A}^{\mathsf{L}}_{n}|{}_R) &
      \Mod[Ex](U|{}_R) \\
      \Mod(\mathsf{A}_{n}|{}_R) &
      \Mod[Gr]_{n}(U|{}_R) \\
      \Mod*(\mathsf{A}_{n}|{}_R) &
      \Mod*[Gr]_{n}(U|{}_R)
      %%%% Arrows %%%%
      \arrow[from=1-1, to=1-2, phantom, "\scriptstyle\bot" description]
      \arrow[from=1-1, to=1-2, shift left=1ex, "\mathbf{\Phi}^{\mathsf{L}}_{n}"]
      \arrow[from=1-2, to=1-1, shift left=1ex, "\Omega^{\mathsf{L}}_{n}"]
      \arrow[from=2-1, to=2-2, phantom, "\scriptstyle\bot" description]
      \arrow[from=2-1, to=2-2, shift left=1ex, "\Phi^{\mathsf{L}}_{n}"]
      \arrow[from=2-2, to=2-1, shift left=1ex, "(-)_{0}"]
      \arrow[from=3-1, to=3-2, phantom, "\scriptstyle\bot" description]
      \arrow[from=3-1, to=3-2, shift left=1ex, dashed, "\Phi^{\mathsf{L}}_{n}"]
      \arrow[from=3-2, to=3-1, shift left=1ex, dashed, "(-)_{0}"]
      \arrow[from=1-1, to=2-1, "\text{forget}" description]
      \arrow[from=2-2, to=1-2, "\text{forget}" description]
      \arrow[from=2-1, to=1-2, phantom, sloped, "\scriptstyle\top" description]
      \arrow[from=2-1, to=1-2, between={0.05}{0.95}, shift left=1ex, curve={height=25pt,pos=0.5}, dotted]
      \arrow[from=1-2, to=2-1, between={0.05}{0.95}, shift left=1ex, curve={height=25pt,pos=0.5}, dotted]
      \arrow[from=3-1, to=2-1, hook]
      \arrow[from=3-2, to=2-2, hook]
    \end{tikzcd}
    \caption{Adjoint pairs between module categories. The dotted one is given by compositions with forgetting functors. The dashed one only exists under the assumptions in \zcref{prop:adj:ordinary}. A similar diagram holds for the right module categories.} 
    \label{fig:adjoints}
  \end{figure}

\part{Characterizations of the strong identity condition}\label{part2}
  In this part, we study various characterizations of the \emph{strong identity condition} (\SIC).
  Here, we give an outline. 

  For an almost-canonically seminormed algebra $U$, we will consider the following conditions:
  \begin{enumerate}[leftmargin=5\parindent]
    \item[\textup{(\textbf{SIC})}] 
        Its mode transition algebra $\mathfrak{A}$ (see \zcref{def:mode-transition-algebra}) has a family of diagonal elements $\one_{n}\in\mathfrak{A}_{n,-n}$ verifying the \emph{strong identity condition} (see \zcref{def:SIC}):
        \[
          \one_{n}\star\mathfrak{a}=\mathfrak{a}=\mathfrak{a}\star\one_{m}
          \txforall\mathfrak{a}\in\mathfrak{A}_{n,-m} \,(n,m\in\N).
        \]
    \item[\textup{(\textbf{SIE})}] 
        The identity $1_{U}$ can be expressed as an orthogonal series 
        \[
          1_{U} = \sum_{n=0}^{\infty}e_{n} \in \widehat{U_{0}},
        \] 
        called a \emph{strong identity expansion} (see \zcref{def:SIE}).
    \item[\textup{(\textbf{$\Omega$split})}] 
        The \emph{$\Omega$-filtrations} on \emph{exhaustive} $U$-modules (see \zcref{def:ExMod}) split and the canonical projections 
        \begin{align*}
          \cdots \longrightarrow 
          \mathfrak{L}^{n} \overset{\pi_{n}}{\longrightarrow} 
          \mathfrak{L}^{n-1} \longrightarrow 
          \cdots \longrightarrow 
          \mathfrak{L}^{0} \longrightarrow 
          0 \\
          \cdots \longrightarrow 
          \mathfrak{R}^{n} \overset{\pi_{n}}{\longrightarrow} 
          \mathfrak{R}^{n-1} \longrightarrow 
          \cdots \longrightarrow 
          \mathfrak{R}^{0} \longrightarrow 
          0
        \end{align*} 
        (see \zcref{def:ALR} for the definition) respect the splittings in the sense that they preserve the $\gr\Omega$-gradings.
    \item[\textup{(\textbf{$\mathfrak{L}$Proj})}] 
        The left $U$-modules $\mathfrak{L}^{n}$ are projective in the category of exhaustive $U$-modules.
    \item[\textup{(\textbf{$\mathfrak{R}$Proj})}] 
        The right $U$-modules $\mathfrak{R}^{n}$ are projective in the category of exhaustive $U$-modules.
    \item[\textup{(\textbf{$\Phi\Omega\mathsf{L}_{R}$})}]
        The adjoint pair 
        \[
          \begin{tikzcd}
            \Phi^{\mathsf{L}}_{0}\colon \Mod(\mathsf{A}_{0}|{}_R) & 
            \Mod[Ex](U|{}_R)
            \colon\Omega^{\mathsf{L}}_{0}.
            \ar[from=1-1, to=1-2, phantom, "\scriptstyle\bot" description]
            \ar[from=1-1, to=1-2, shift left=1ex] 
            \ar[from=1-2, to=1-1, shift left=1ex] 
          \end{tikzcd}
        \]
        (see \zcref{def:Phi0} for the \emph{induced module} functor $\Phi^{\mathsf{L}}_{0}$)
        is an equivalence. 
        Here, the letter $R$ in the subscript refers to a ring.
    \item[\textup{(\textbf{$\Phi\Omega\mathsf{R}_{R}$})}]
        The adjoint pair 
        \[
          \begin{tikzcd}
            \Phi^{\mathsf{R}}_{0}\colon \Mod({}_R|\mathsf{A}_{0}) & 
            \Mod[Ex]({}_R|U)
            \colon\Omega^{\mathsf{R}}_{0}.
            \ar[from=1-1, to=1-2, phantom, "\scriptstyle\bot" description]
            \ar[from=1-1, to=1-2, shift left=1ex] 
            \ar[from=1-2, to=1-1, shift left=1ex] 
          \end{tikzcd}
        \]
        is an equivalence.
    \item[\textup{(\textbf{$\mathbf{\Phi}\Omega\mathsf{L}^{n}_{R}$})}]
        The adjoint pair 
        \[
          \begin{tikzcd}
            \mathbf{\Phi}^{\mathsf{L}}_{n}\colon \Mod(\mathscr{A}^{\mathsf{L}}_{n}|{}_R) & 
            \Mod[Ex](U|{}_R) \colon\Omega^{\mathsf{L}}_{n}.
            \ar[from=1-1, to=1-2, phantom, "\scriptstyle\bot" description]
            \ar[from=1-1, to=1-2, shift left=1ex] 
            \ar[from=1-2, to=1-1, shift left=1ex] 
          \end{tikzcd}
        \] 
        (see \zcref{def:thickZhu} for the \emph{thick Zhu algebra} $\mathscr{A}^{\mathsf{L}}_{n}$ and \zcref{def:thickPhi} for the \emph{thick induced module} functor $\mathbf{\Phi}^{\mathsf{L}}_{n}$)
        is an equivalence. 
    \item[\textup{(\textbf{$\mathbf{\Phi}\Omega\mathsf{R}^{n}_{R}$})}]
        The adjoint pair 
        \[
          \begin{tikzcd}
            \mathbf{\Phi}^{\mathsf{R}}_{n}\colon \Mod({}_R|\mathscr{A}^{\mathsf{R}}_{n}) & 
            \Mod[Ex]({}_R|U) \colon\Omega^{\mathsf{R}}_{n}.
            \ar[from=1-1, to=1-2, phantom, "\scriptstyle\bot" description]
            \ar[from=1-1, to=1-2, shift left=1ex] 
            \ar[from=1-2, to=1-1, shift left=1ex] 
          \end{tikzcd}
        \] 
        is an equivalence.
  \end{enumerate}
  Then, we have the following unconditional implications:
  \[
    \begin{tikzcd}[row sep=large]
      (\mathbf{SIC}) & & 
      (\Omega\mathbf{split}) \\
      &
      (\mathbf{SIE}) 
      & \\
      &
      \forall n, R: 
      (\mathbf{\Phi}\Omega\mathsf{L}^{n}_{R}) + (\mathbf{\Phi}\Omega\mathsf{R}^{n}_{R})
      & \\
      (\Phi\Omega\mathsf{L}_{R}) + 
      (\Phi\Omega\mathsf{R}_{R})
      &
      \forall n: 
      (\mathbf{\Phi}\Omega\mathsf{L}^{n}_{\k}) + (\mathbf{\Phi}\Omega\mathsf{R}^{n}_{\k})
      &
      (\mathfrak{L}\mathbf{Proj}) + 
      (\mathfrak{R}\mathbf{Proj})
      \ar[from=1-1, to=2-2, Leftrightarrow, "{\zcref{thm:SIC-SIE}}"description]
      \ar[from=2-2, to=1-3, Leftrightarrow, "{\zcref{thm:SIE-Omega}}"description]
      \ar[from=2-2, to=4-1, Rightarrow, bend right, "{\zcref{thm:PhiOmega0}}"description]
      \ar[from=2-2, to=3-2, Rightarrow, "{\zcref{thm:bPhiOmega}}"description]
      \ar[from=4-2, to=4-3, Rightarrow, "{\zcref{lem:bPhiOmega-Proj}}"]
      \ar[from=4-3, to=2-2, Rightarrow, bend right, "{\zcref{thm:Proj-SIE}}"description]
      \ar[from=3-2, to=4-2, Rightarrow]
    \end{tikzcd}
  \]
  and the following conditional implications:
  \[
    \begin{tikzcd}[column sep = 4cm]
      \underset{+\text{quasi-rigid}}{(\Phi\Omega\mathsf{L}_{\mathsf{A}_{0}})} & & 
      \underset{+\text{quasi-rigid}}{(\Phi\Omega\mathsf{R}_{\mathsf{A}_{0}})} \\ 
      \underset{+\text{Frobenius}}{(\Phi\Omega\mathsf{L}_{R})} & 
      (\mathbf{SIC}) &
      \underset{+\text{Frobenius}}{(\Phi\Omega\mathsf{R}_{R})} \\
      \underset{+\text{augmented}}{(\Phi\Omega\mathsf{L}_{\k})} & & 
      \underset{+\text{augmented}}{(\Phi\Omega\mathsf{R}_{\k})}
      \ar[from=1-1, to=2-2, Rightarrow, "{\zcref{thm:Adjeq-SIC}}"description, bend left]
      \ar[from=1-3, to=2-2, Rightarrow, "{\zcref{thm:Adjeq-SIC}}"description, bend right]
      \ar[from=2-1, to=2-2, Rightarrow, "{\zcref{thm:AdjeqR-SIC}}"description]
      \ar[from=2-3, to=2-2, Rightarrow, "{\zcref{thm:AdjeqR-SIC}}"description]
      \ar[from=3-1, to=2-2, Rightarrow, "{\zcref{thm:Adjeqk-SIC}}"description, bend right]
      \ar[from=3-3, to=2-2, Rightarrow, "{\zcref{thm:Adjeqk-SIC}}"description, bend left]
    \end{tikzcd}
  \]

\section{Strong identity expansion}\label{sec:SIE}
  In this section, we assume $U$ is unital. 
\subsection{Decomposition via the mode transition algebra}
  The following proposition can be found in \cite[\S B.3]{DGK23}.
  \begin{proposition}\label{prop:decomposition_of_fA}
    Let $\mu$ denote the map $\mathfrak{A}_{n}\to\mathsf{A}_{n}\colon[\alpha]^{\mathsf{L}}_{0}\otimes[\beta]^{\mathsf{R}}_{0}\mapsto[\alpha\beta]_{n}$. Then, the sequence
    \begin{equation}\label{eq:ES:fA}
      \mathfrak{A}_{n}\overset{\mu}{\longrightarrow}\mathsf{A}_{n}\overset{\pi}{\longrightarrow}\mathsf{A}_{n-1}\longrightarrow0
    \end{equation}
    is exact. Furthermore, if the strong identity condition is verified, then $\mu$ is injective and the sequence splits, giving a ring product $\mathsf{A}_{n}\cong\mathfrak{A}_{n}\times\mathsf{A}_{n-1}$.
  \end{proposition}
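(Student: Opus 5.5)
The plan is to identify $\mathfrak{A}_{n}=\mathfrak{L}^{0}_{n}\otimes_{\mathsf{A}_{0}}\mathfrak{R}^{0}_{-n}$ with a quotient of $U_{n}\otimes U_{-n}$, and to compute the kernel of $\pi\colon\mathsf{A}_{n}\to\mathsf{A}_{n-1}$ directly.

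\emph{Well-definedness of $\mu$.} Take $\alpha\in U_{n}$ and $\beta\in U_{-n}$. We must show $[\alpha\beta]_{n}$ is unchanged when $\alpha$ is altered by $\NL[1]U_{n}$ or $\beta$ by $\NR[1]U_{-n}$, and that $\mu$ is balanced over $U_{0}$. Balancing is associativity. If $\alpha\in\NL[1]U_{n}$, then \zcref[noname]{A3} gives $\alpha\beta\in\NL[1+n]U_{0}=\NLR[n+1]U_{0}$. Symmetrically, if $\beta\in\NR[1]U_{-n}=\NL[1+n]U_{-n}$, then $\alpha\beta\in\NL[n+1]U_{0}$ because $\NL[n+1]U$ is a left ideal. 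So $\mu$ is well defined.

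\emph{Exactness.} Surjectivity of $\pi$ is clear, since $\NLR[n+1]U_{0}\subset\NLR[n]U_{0}$. Also $\pi\circ\mu=0$, because $\alpha\beta\in U_{n}U_{-n}\subset\cNL[n]U_{0}\subset\NLR[n]U_{0}$. For the reverse inclusion, $\ker\pi$ is the image of $\NLR[n]U_{0}$ in $\mathsf{A}_{n}$. By \zcref[noname]{A2p}, $\NL[n]U_{0}=\cNL[n]U_{0}+\NL[n+1]U_{0}$, so $\ker\pi$ is the image of $(UU_{\le-n})_{0}=\sum_{k\ge n}U_{k}U_{-k}$. For $k>n$, rewrite $U_{-k}=U_{-k}U_{0}$ by weak unitality; then $U_{k}U_{-k}\subset\cNL[k]U_{0}$, and \zcref[noname]{A2p} applied iteratively shows these terms die modulo $\NL[n+1]$, since $k\ge n+1$. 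Hence $\ker\pi$ is spanned by the classes $[\alpha\beta]_{n}$ with $\alpha\in U_{n}$, $\beta\in U_{-n}$, which is exactly $\Image\mu$.

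\emph{The SIC case.} This is the step needing care. Set $e:=\mu(\one_{n})\in\mathsf{A}_{n}$, and first check that $\mu$ is multiplicative, using $\star$ defined via $\ostar$: if $[\beta]^{\mathsf{R}}_{0}\ostar[\alpha']^{\mathsf{L}}_{0}=[\beta\alpha']_{0}$, then $\alpha\beta\alpha'\beta'\equiv\alpha(\beta\alpha')\beta'$ modulo $\NLR[n+1]$. This again uses \zcref[noname]{A3}: changing $\beta\alpha'$ by $\NLR[1]U_{0}$ changes $\alpha(\beta\alpha')\beta'$ by an element of $\NL[n+1]U_{0}$. Next, show $e$ is central and idempotent with $e\,\mu(\mathfrak{a})=\mu(\mathfrak{a})=\mu(\mathfrak{a})\,e$. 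For $u\in U_{0}$, the relations $u\star\one_{n}=u\one_{n}$ and $\one_{n}\star\mathfrak{a}=\mathfrak{a}$ (in the action form \eqref{eq:SIC:action}, applied to $u\alpha$) give $ue\equiv\mu(\ldots)$, and likewise on the other side. The key identity to prove is
\[
  [u]_{n}\,e=\mu\bigl((u\cdot\one_{n})\bigr)=\mu\bigl((\one_{n}\cdot u)\bigr)=e\,[u]_{n}.
\]
Here $u\cdot\one_{n}=\one_{n}\star(u\cdot\one_{n})$, and associativity of the bimodule structure of $\mathfrak{A}$ lets the $U_{0}$-action pass through.

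\emph{Injectivity and splitting.} If $\mu(\mathfrak{a})=0$ with $\mathfrak{a}=\sum[\alpha_i]\otimes[\beta_i]$, then
\[
  \mathfrak{a}=\one_{n}\star\mathfrak{a}=\sum(\one_{n}\star[\alpha_i]^{\mathsf{L}}_{0})\otimes[\beta_i]^{\mathsf{R}}_{0},
\]
which can be rewritten as $\sum_j[a_j]\otimes([b_j]^{\mathsf{R}}_{0}\ostar$-action$)$ with $\one_{n}=\sum_j[a_j]\otimes[b_j]$. This equals $\sum_j[a_j]\otimes[b_j\alpha_i\beta_i]$, where $[b_j\,\mu(\mathfrak{a})]$ is well defined via $\mathfrak{R}^{0}_{-n}$ being an $\mathsf{A}_{n}$-module, since $U_{-n}\NLR[n+1]U_{0}\subset\NR[1]U$. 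So $\mathfrak{a}=0$. The central idempotent $e$ then decomposes $\mathsf{A}_{n}=e\mathsf{A}_{n}\times(1-e)\mathsf{A}_{n}$, where $e\mathsf{A}_{n}=\ker\pi=\Image\mu\cong\mathfrak{A}_{n}$ and $(1-e)\mathsf{A}_{n}\cong\mathsf{A}_{n-1}$. The main obstacle I expect is making the module structure of $\mathfrak{R}^{0}_{-n}$ over $\mathsf{A}_{n}$ precise.
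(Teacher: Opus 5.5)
Your proof is correct. The exactness and injectivity arguments are the same as the paper's. The paper proves these in the more general \zcref{thm:decomposition_of_fL}, whose case $\bullet=0$ is this statement. There it shows $\mathfrak{a}=\mathfrak{a}\star\one_{n}=0$, using that the $\mathfrak{A}$-action on induced modules factors through $\mu$ (\zcref{lem:Act_of_fA_via_A}). You run the mirror image $\mathfrak{a}=\one_{n}\star\mathfrak{a}$ through the right $\mathsf{A}_{n}$-module structure on $\mathfrak{R}^{0}_{-n}$, which is the same idea.

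For the splitting, the paper gives no argument of its own and simply cites \cite[\S B.3]{DGK23}. Your central-idempotent argument with $e=\mu(\one_{n})$ fills this in, and it only uses that $\one_{n}$ is a two-sided identity of $\mathfrak{A}_{n}$.

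Two simplifications are available. First, the centrality step is clearest written as the chain $u\cdot\one_{n}=\one_{n}\star(u\cdot\one_{n})=(\one_{n}\cdot u)\star\one_{n}=\one_{n}\cdot u$. It can even be skipped: since $e$ is a two-sided identity of the two-sided ideal $\ker\pi$, you get $xe=e(xe)=exe=(ex)e=ex$ for every $x\in\mathsf{A}_{n}$. Second, weak unitality is not needed in the exactness step, because $U_{k}U_{-k}\subset\cNL[k]U_{0}\subset\NL[n+1]U_{0}$ holds directly for $k>n$.
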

  Repeating the above ring decomposition, we have 
  \begin{equation}\label{eq:decomposition-fA}
    \widehat{U_{0}}=\varprojlim \mathsf{A}_{n} \cong \prod_{n\in\N}\mathfrak{A}_{n},
  \end{equation}
  with the product topology. 
  That is to say, an element of $\widehat{U_{0}}$ can be expressed as a sequence $(\alpha_{n})_{n\in\N}$, where $\alpha_{n}\in\mathfrak{A}_{n}$ for each $n\in\N$.
  In particular, considering the sequences whoses $n$-th component is the strong identity element $\one_{n}$ for a fixed $n\in\N$, and all the other components are zero, we find elements $e_{n}\in\widehat{U_{0}}$ such that
  \begin{enumerate}[label=(\texttt{si}\arabic*)]
    \item $\sum\limits_{n=0}^{\infty}e_{n}$ converges to $1\in\widehat{U_{0}}$.
    \item $\Set{e_{n}}_{n\in\N}$ are orthogonal to each other.   
  \end{enumerate}
  \begin{remark}\label{rem:Nn}
    Suppose we are given a sequence $(e_{n})_{n\in\N}$ of elements in $\widehat{U_{0}}$ satisfying the above two conditions. Then, the convergence of $\sum\limits_{n=0}^{\infty}e_{n}$ implies, in particular, that for every $n\in\N$, the tail sum $\sum\limits_{k=N}^{\infty}e_{k}$ belongs to the $n$-th neighborhood $\NLR[n]\widehat{U_{0}}$ for all sufficiently large $N$. We denote by $N_{n}$ the minimal such $N$. Clearly, $N_{n}\ge n$ for all $n\in\N$.
  \end{remark}
  \begin{remark}\label{rem:GTC_hatU0}
    In categorical language, $\widehat{U_{0}}$ is a \emph{pro-object} of algebras in the underlying category. The series $\sum\limits_{n=0}^{\infty}e_{n}$ should be understood as the sequence $(e_{\le n}:=\sum\limits_{k=0}^{n}e_{k})_{n\in\N}$ of partial sums.
  \end{remark}

  \zcref{prop:decomposition_of_fA} has the following generalization (recall definitions of the $(U|\mathsf{A}_{n})$-bimodule $\mathfrak{L}^{n}$ and the $(\mathsf{A}_{n}|U)$-bimodule $\mathfrak{R}^{n}$ in \eqref{def:ALR}).
  \begin{theorem}\label{thm:decomposition_of_fL}
    Let $\mu$ denote either the map $\mathfrak{A}_{\bullet+n,-n}\to\mathfrak{L}^{n}_{\bullet}$ or the map $\mathfrak{A}_{n,\bullet-n}\to\mathfrak{R}^{n}_{\bullet}$ given by the formula $[\alpha]^{\mathsf{L}}_{0}\otimes[\beta]^{\mathsf{R}}_{0}\mapsto[\alpha\beta]_{n}$. Then, the sequences
    \begin{equation}\label{eq:ES:fL}
      \mathfrak{A}_{\bullet+n,-n}\overset{\mu}{\longrightarrow}\mathfrak{L}^{n}_{\bullet}\overset{\pi}{\longrightarrow}\mathfrak{L}^{n-1}_{\bullet}\longrightarrow0
      \txand
      \mathfrak{A}_{n,\bullet-n}\overset{\mu}{\longrightarrow}\mathfrak{R}^{n}_{\bullet}\overset{\pi}{\longrightarrow}\mathfrak{R}^{n-1}_{\bullet}\longrightarrow0
    \end{equation}
    are exact. Furthermore, if the strong identity condition is verified, then $\mu$ are injective and the sequences split, giving module products $\mathfrak{L}^{n}_{\bullet}\cong\mathfrak{A}_{\bullet+n,-n}\times\mathfrak{L}^{n-1}_{\bullet}$ and $\mathfrak{R}^{n}_{\bullet}\cong\mathfrak{A}_{n,\bullet-n}\times\mathfrak{R}^{n-1}_{\bullet}$.
  \end{theorem}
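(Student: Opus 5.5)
We argue for $\mathfrak{L}^{n}$ only; the statement for $\mathfrak{R}^{n}$ then follows by applying the same argument to $\opp{U}$ with the reversed grading (\zcref{rem:opposite}), which exchanges $\NL$ and $\NR$. The plan is to mimic the proof of \zcref{prop:decomposition_of_fA}, carried out degree by degree in $\bullet$.

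\textbf{Well-definedness of $\mu$.} Take $\alpha\in U_{\bullet+n}$ and $\beta\in U_{-n}$. Then $\alpha\beta\in U_{\bullet}$ and we set $\mu([\alpha]^{\mathsf{L}}_{0}\otimes[\beta]^{\mathsf{R}}_{0}) := [\alpha\beta]^{\mathsf{L}}_{n}$. We must check that the image does not depend on the choice of representatives.
\begin{itemize}
\item If $\alpha\in\NL[1]U_{\bullet+n}$, then \zcref[noname]{A3} with $q=-n$ gives $\alpha\beta\in\NL[1+n]U_{\bullet}$, which is zero in $\mathfrak{L}^{n}$.
\item If $\beta\in\NR[1]U_{-n}=\NL[1+n]U_{-n}$, then $\alpha\beta\in\NL[n+1]U$, because $\NL[n+1]U$ is a left ideal.
\item Balancedness over $U_{0}$ is immediate from associativity.
\end{itemize}

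\textbf{Exactness.} Surjectivity of $\pi$ is clear. Clearly $\pi\circ\mu=0$: the element $\alpha\beta$ lies in $UU_{\le -n}=\cNL[n]U\subset\NL[n]U$.

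The substantive point is $\ker\pi\subset\Image\mu$. The kernel is $\NL[n]U_{\bullet}/\NL[n+1]U_{\bullet}$. By the variant \zcref[noname]{A2p} of \zcref[noname]{A2}, we have $\NL[n]U_{\bullet}=\cNL[n]U_{\bullet}+\NL[n+1]U_{\bullet}$, so the kernel is spanned by the classes of products $\gamma\delta$ with $\delta\in U_{-m}$ for some $m\ge n$.

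We reduce such products to the case $m=n$ in two steps.
\begin{itemize}
\item If $m\ge n+1$, then $\gamma\delta\in\cNL[n+1]U$, so its class is already zero.
\item If $m=n$, then $\gamma\in U_{\bullet+n}$, and $[\gamma\delta]^{\mathsf{L}}_{n}=\mu([\gamma]^{\mathsf{L}}_{0}\otimes[\delta]^{\mathsf{R}}_{0})$.
\end{itemize}
Hence the kernel lies in the image of $\mu$, and this proves exactness. The only delicate point here is making sure that density in \zcref[noname]{A2} really gives \zcref[noname]{A2p} degreewise. The paper asserts this equivalence, and we will use it.

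\textbf{Splitting under \SIC.} We use the strong identities $\one_{n}$. Define
\[
\sigma\colon \mathfrak{L}^{n}_{\bullet}\to\mathfrak{A}_{\bullet+n,-n},\qquad [u]^{\mathsf{L}}_{n}\mapsto [u]^{\mathsf{L}}_{0}\star\one_{n},
\]
using the right $\mathfrak{A}$-action on $\mathfrak{L}^{0}$, or equivalently right multiplication by $\one_{n}$ viewed through $\mathfrak{A}_{\bullet,0}\star\mathfrak{A}_{n,-n}$. Concretely, if $\one_{n}=\sum_i[a_i]^{\mathsf{L}}_{0}\otimes[b_i]^{\mathsf{R}}_{0}$, then $\sigma[u]=\sum_i[u a_i]^{\mathsf{L}}_{0}\otimes[b_i]^{\mathsf{R}}_{0}$, where $u a_i$ is read in the appropriate degree.

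\textbf{Main obstacle.} The hardest step is showing that $\sigma$ is well defined, that is, that it kills $\NL[n+1]U$. Via \zcref[noname]{A2p} this reduces to showing $\sigma$ kills $UU_{\le-n-1}$. That in turn reduces to an orthogonality statement: $[\delta]^{\mathsf{R}}\star\one_{n}=0$ for $\delta$ of degree $<-n$. This follows from \zcref{rem:multiplication-of-fA}, since $\mathfrak{A}_{0,-m}\star\mathfrak{A}_{n,-n}=0$ for $m\ne n$.

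Once $\sigma$ is well defined, the two compositions are handled by the strong identity property.
\begin{itemize}
\item $\sigma\mu=\mathrm{id}$: apply \eqref{eq:SIC} to $\mathfrak{a}\star\one_{n}=\mathfrak{a}$.
\item $\mu\sigma$ restricts to the identity on $\ker\pi$: by the description of $\ker\pi$ above, it suffices to check this on the generators $[\gamma\delta]$, where it follows from \eqref{eq:SIC:action}.
\end{itemize}
Therefore $\mu$ is injective and $\sigma$ is a retraction, which gives the splitting $\mathfrak{L}^{n}_{\bullet}\cong\mathfrak{A}_{\bullet+n,-n}\times\mathfrak{L}^{n-1}_{\bullet}$. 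Both maps are $U$-linear on the left, so this is a decomposition of modules.
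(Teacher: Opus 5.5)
Your exactness argument is the paper's. Your identity $\sigma\mu=\mathrm{id}$ is also the paper's injectivity argument in another guise: $\sigma\mu(\mathfrak{a})=\mu(\mathfrak{a})\cdot\one_{n}=\mathfrak{a}\star\one_{n}$ is exactly \zcref{lem:Act_of_fA_via_A} applied to $M=\mathfrak{R}^{0}$, so that $\Phi^{\mathsf{L}}_{0}(M)=\mathfrak{A}$.

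Where you genuinely differ is the splitting. The paper lets the right $\mathsf{A}_{n}$-action on $\mathfrak{L}^{n}$ inherit the ring decomposition $\mathsf{A}_{n}\cong\mathfrak{A}_{n}\times\mathsf{A}_{n-1}$ of \zcref{prop:decomposition_of_fA}, which is quoted from \cite{DGK23} rather than proved. You instead write down the retraction $\sigma(u)=u\cdot\one_{n}$ and use only \SIC{}. The two splittings coincide, because $\mu\sigma$ is right multiplication by the central idempotent $\mu(\one_{n})=[\sum_i a_ib_i]_{n}\in\mathsf{A}_{n}$. Your route is self-contained. The paper's route gives for free that the decomposition respects the right $\mathsf{A}_{n}$-action, which the later $\widehat{U_{0}}$-compatible product decompositions rely on. You can get this too: under \SIC{} one checks $v\cdot\one_{n}=\one_{n}\star(v\cdot\one_{n})=\one_{n}\cdot v$ for $v\in U_{0}$, so $\sigma$ is also right $\mathsf{A}_{n}$-linear.

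Three local corrections follow.
\begin{enumerate}
\item Your verbal description of $\sigma$ does not match your formula. No right $\mathfrak{A}$-action on $\mathfrak{L}^{0}$ is available, and $\mathfrak{A}_{\bullet,0}\star\mathfrak{A}_{n,-n}=0$ for $n>0$ by \zcref{rem:multiplication-of-fA}. The map your concrete formula defines, and the one you need, is the left $U$-action $u\mapsto u\cdot\one_{n}$ on the first tensor factor.
\item Your ``main obstacle'' is not one, and your argument for it does not close as written. \textup{(\texttt{A2'})} only yields $\NL[n+1]U_{p}=\cNL[n+1]U_{p}+\NL[n+2]U_{p}$, so killing the canonical part alone does not kill $\NL[n+1]U$ without some continuity input. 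Use \textup{(\texttt{A3})} directly, exactly as in your check for $\mu$: if $u\in\NL[n+1]U_{\bullet}$ and $a_i\in U_{n}$, then $ua_i\in\NL[1]U_{\bullet+n}$, hence $[ua_i]^{\mathsf{L}}_{0}=0$.
\item Your check that $\mu\sigma$ is the identity on $\ker\pi$ is redundant. The identity $\sigma\mu=\mathrm{id}$ together with $\ker\pi=\Image\mu$ already gives the splitting.
\end{enumerate}
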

  \begin{remark}
    In viewing of \zcref{eq:ModeDecomposition}, the above theorem can be thought of a generalization of \cite[Theorem 4.16]{DJmz}. A more faithful generalization is given by combining with \zcref{eq:Mode-End-Phi}.
  \end{remark}
  \begin{proof}
    The statements for $\mathfrak{L}$ and $\mathfrak{R}$ are dual. We only focus on $\mathfrak{L}$. 

    We first verify that the map $\mu$ is well-defined. This blows down to verify:
    \begin{enumerate}
      \item $\NL[1]U_{\bullet+n}\cdot U_{-n}\subset\NL[n+1]U_{\bullet}$.
      \item $U_{\bullet+n}\cdot \NR[1]U_{-n}\subset\NR[\bullet+n+1]U_{\bullet}=\NL[n+1]U_{\bullet}$.
      \item For all $u\in U_{0}$, $\alpha\in U_{\bullet+n}$, and $\beta\in U_{-n}$, we have 
      \[
        \mu([\alpha u]^{\mathsf{L}}_{0}\otimes[\beta]^{\mathsf{R}}_{0}) = \mu([\alpha]^{\mathsf{L}}_{0}\otimes[u\beta]^{\mathsf{R}}_{0}).
      \]
    \end{enumerate}
    All of them are evident.

    For the exactness, note that the image of $\mu$ is spanned by $[\alpha\beta]^{\mathsf{L}}_{n}$ with $\deg\alpha=\bullet+n$ and $\deg\beta=-n$. Hence, the image of $\mu$ consists of $\cNL[n]U_{\bullet}\mod\NL[n+1]U$. 
    On the other hand, the kernel of $\pi$ is $\NL[n]U_{\bullet}\mod\NL[n+1]U$. Since $\NL[n]U_{\bullet}=\cNL[n]U_{\bullet}+\NL[n+1]U_{\bullet}$, the sequence is exact.

    Before moving on, we need a lemma. 
    \begin{lemma}\label{lem:Act_of_fA_via_A}
      For any $U_0$-module $M$, the action map $\mathfrak{A}_{\bullet+n,-n}\times\Phi^{\mathsf{L}}_{0}(M)_{n}\to\Phi^{\mathsf{L}}_{0}(M)_{\bullet+n}$ induced from the action of $\mathfrak{A}$, factors through the action map $\mathfrak{L}^{n}_{\bullet}\times\Phi^{\mathsf{L}}_{0}(M)_{n}\to\Phi^{\mathsf{L}}_{0}(M)_{\bullet+n}$ induced from the action of $U$, via $\mu$.
    \end{lemma}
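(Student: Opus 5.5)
The plan is to unwind both actions explicitly and compare them on generators. The left action of $\mathfrak{A}$ on $\Phi^{\mathsf{L}}_{0}(M)=\mathfrak{L}^{0}\otimes_{U_{0}}M$ is induced from its left action on $\mathfrak{L}^{0}$ through the isomorphism $\ostar$ of \zcref{lem:def:ostar}. Concretely, for $\alpha\in U_{\bullet+n}$, $\beta\in U_{-n}$ and a generator $[\gamma]^{\mathsf{L}}_{0}\otimes m$ of $\Phi^{\mathsf{L}}_{0}(M)_{n}$ with $\gamma\in U_{n}$, the formula is
\[
  ([\alpha]^{\mathsf{L}}_{0}\otimes[\beta]^{\mathsf{R}}_{0})\star([\gamma]^{\mathsf{L}}_{0}\otimes m)
  = [\alpha]^{\mathsf{L}}_{0}\cdot([\beta]^{\mathsf{R}}_{0}\ostar[\gamma]^{\mathsf{L}}_{0})\otimes m
  = [\alpha\,\beta\gamma]^{\mathsf{L}}_{0}\otimes m .
\]
Here $[\beta\gamma]_{0}\in\mathsf{A}_{0}$ acts on the right of $\mathfrak{L}^{0}$, or equivalently is moved across the tensor onto $M$. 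On the other hand, the $U$-action sends $(\alpha\beta)\cdot([\gamma]^{\mathsf{L}}_{0}\otimes m)=[\alpha\beta\gamma]^{\mathsf{L}}_{0}\otimes m$, which agrees by associativity in $U$. So on representatives the $\mathfrak{A}$-action is the $U$-action of $\alpha\beta$, and $\mu$ sends $[\alpha]^{\mathsf{L}}_{0}\otimes[\beta]^{\mathsf{R}}_{0}$ to the class of $\alpha\beta$.

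It remains to show that the $U$-action on degree-$n$ elements of $\Phi^{\mathsf{L}}_{0}(M)$ descends to $\mathfrak{L}^{n}_{\bullet}=U_{\bullet}/\NL[n+1]U_{\bullet}$, giving a well-defined pairing $\mathfrak{L}^{n}_{\bullet}\times\Phi^{\mathsf{L}}_{0}(M)_{n}\to\Phi^{\mathsf{L}}_{0}(M)_{\bullet+n}$. For this, I would show $\Phi^{\mathsf{L}}_{0}(M)_{n}\subset\Omega^{\mathsf{L}}_{n}(\Phi^{\mathsf{L}}_{0}(M))$. Take $u\in\NL[n+1]U_{p}$ and $\gamma\in U_{n}$. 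By \zcref[noname]{A3}, $u\gamma\in\NL[n+1-n]U_{p+n}=\NL[1]U_{p+n}$, so $[u\gamma]^{\mathsf{L}}_{0}=0$. Hence $\NL[n+1]U$ annihilates every generator $[\gamma]^{\mathsf{L}}_{0}\otimes m$ of degree $n$, and therefore annihilates all of $\Phi^{\mathsf{L}}_{0}(M)_{n}$. This is precisely the statement that $\Phi^{\mathsf{L}}_{0}(M)_{n}$ is killed by $\NL[n+1]U$, so the $U$-action factors through $\mathfrak{L}^{n}$. Equivalently, by \zcref{rem:OmegaHomLR}, each $w\in\Phi^{\mathsf{L}}_{0}(M)_{n}$ determines a map $\mathfrak{L}^{n}\to\Phi^{\mathsf{L}}_{0}(M)$.

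The main obstacle is making the first step rigorous rather than formal. One must check that the left $\mathfrak{A}$-action on $\mathfrak{L}^{0}$ is really given by $\mathfrak{a}\star[\gamma]^{\mathsf{L}}_{0}=[\alpha]^{\mathsf{L}}_{0}\cdot([\beta]^{\mathsf{R}}_{0}\ostar[\gamma]^{\mathsf{L}}_{0})$. One must also check that it is compatible with the $U_{0}$-balancing in $\mathfrak{L}^{0}\otimes_{U_{0}}M$, which uses that $\ostar$ is $U_{0}$-bilinear. Finally, one must handle the degree bookkeeping: $\ostar$ vanishes unless $\deg\beta+\deg\gamma=0$, and this holds exactly for the degree-$n$ component. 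Once these are in place, the identity $\mathfrak{a}\star w=\mu(\mathfrak{a})\cdot w$ holds on generators and extends bilinearly, which proves the factorization.
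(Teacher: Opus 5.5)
Your proposal is correct and follows essentially the same route as the paper. You compute the $\mathfrak{A}$-action on generators $[\gamma]^{\mathsf{L}}_{0}\otimes m$ as $[\alpha\beta\gamma]^{\mathsf{L}}_{0}\otimes m$, use (\texttt{A3}) to get $\NL[n+1]U_{\bullet}\cdot U_{n}\subset\NL[1]U_{\bullet+n}$ so that the $U$-action on the degree-$n$ part descends to $\mathfrak{L}^{n}_{\bullet}$, and check that $\mu(\mathfrak{a})$ acts by the same formula; the extra checks you flag ($U_{0}$-balancing and the degree condition for $\ostar$) are ones the paper takes for granted when it recalls the formula for the $\mathfrak{A}$-action.
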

    \begin{proof}
      Recall that the action of $\mathfrak{A}$ on $\Phi^{\mathsf{L}}_{0}(M)$ is given by 
      \[
        ([\alpha]^{\mathsf{L}}_{0}\otimes[\alpha']^{\mathsf{R}}_{0})\star([\beta]^{\mathsf{L}}_{0}\otimes m)=[\alpha]^{\mathsf{L}}_{0}\otimes ([\alpha']^{\mathsf{R}}_{0}\ostar[\beta]^{\mathsf{L}}_{0})m.
      \]
      This applies specially to $\mathfrak{A}_{\bullet+n,-n}\times\Phi^{\mathsf{L}}_{0}(M)_{n}\to\Phi^{\mathsf{L}}_{0}(M)_{\bullet+n}$ yielding
      \[
        [\alpha]^{\mathsf{L}}_{0}\otimes ([\alpha']^{\mathsf{R}}_{0}\ostar[\beta]^{\mathsf{L}}_{0})m = [\alpha]^{\mathsf{L}}_{0}\otimes ([\alpha'\beta]_{0})m = [\alpha\alpha'\beta]^{\mathsf{L}}_{0}\otimes m.
      \]
    
      On the other hand, the action of $U$ on $\Phi^{\mathsf{L}}_{0}(M)$ induces a map $\mathfrak{L}^{n}_{\bullet}\times\Phi^{\mathsf{L}}_{0}(M)_{n}\to\Phi^{\mathsf{L}}_{0}(M)_{\bullet+n}$ since $\NL[n+1]U_n\cdot U_{n}\subset\NL[1]U_{\bullet+n}$. Under this map, we have
      \[
        \mu([\alpha]^{\mathsf{L}}_{0}\otimes[\alpha']^{\mathsf{R}}_{0})\star([\beta]^{\mathsf{L}}_{0}\otimes m)=[\alpha\alpha']^{\mathsf{L}}_{n}\star([\beta]^{\mathsf{L}}_{0}\otimes m)=[\alpha\alpha'\beta]^{\mathsf{L}}_{0}\otimes m
      \]
      Hence, the statement follows.
    \end{proof}

    Now, back to the proof of the theorem and suppose that the strong identity condition holds. By the above lemma, the action of $\mathfrak{a}\in\ker{\mu}\subset\mathfrak{A}_{\bullet+n,-n}$ on any $\Phi^{\mathsf{L}}_{0}(M)_{n}$ factors through $\mu$, so it should be trivial. In particular, for $\mathfrak{A}=\Phi^{\mathsf{L}}_{0}(\Phi^{\mathsf{R}}_{0}(\mathsf{A}_{0}))$, we have $\mathfrak{a}=\mathfrak{a}\star\mathbf{1}_{n}=0$. This shows the injectivity of $\mu$. 

    Finally, we show that the sequence splits. 
    Note that each term of the sequence \eqref{eq:ES:fL} is equipped with the right action of the corresponding term in \eqref{eq:ES:fA}, and these actions are compatible with the maps $\mu$ and $\pi$. Since the sequence \eqref{eq:ES:fA} splits, so does \eqref{eq:ES:fL}.
  \end{proof}

  Repeating the above module decomposition, we have (recall \zcref{eg:completion})
  \begin{equation}\label{eq:decomposition-fL}
    \widehat{U}_{\bullet}=\varprojlim \mathfrak{L}^{n}_{\bullet} \cong \prod_{n\in\N}\mathfrak{A}_{\bullet+n,-n}
    \txand 
    \widehat{U}_{-\bullet}=\varprojlim \mathfrak{R}^{n}_{-\bullet} \cong \prod_{n\in\N}\mathfrak{A}_{n,\bullet-n},
  \end{equation}
  with the product topology and the continuous actions\footnote{Recall that, each $\mathfrak{L}^{n}$ is a $(U|\mathsf{A}_{n})$-bimodule and each $\mathfrak{R}^{n}$ is a $(\mathsf{A}_{n}|U)$-bimodule.} of $\widehat{U_{0}}$ on the right and on the left respectively. In particular, by \eqref{eq:SIC:action}, there is a sequence $(e_{n})_{n\in\N}$ in $\widehat{U_{0}}$ (for instance, the sequence in the paragraph below \zcref[noname]{eq:decomposition-fA}) satisfying \zcref[noname]{si1,si2} and the following: 
  \begin{enumerate}[label=(\texttt{si}\arabic*),resume]
    \item For any $n\in\N$, we have:
    \[
      \widehat{U}_{-n}\cdot e_{<N_{n}}=0
      \txand
      e_{<N_{n}}\cdot\widehat{U}_{n}=0,
    \]
    where $e_{<N_{n}}=\sum\limits_{k=0}^{N_{n}-1}e_{k}$ is the partial sum and $N_n$ is the smallest integer such that the remainder $e_{\ge N_n}:=\sum\limits_{k=n}^{\infty}e_{k}$ belongs to $\NLR[n]\widehat{U_{0}}$ (cf. \zcref{rem:Nn}).
  \end{enumerate}

\subsection{Strong identity expansion}
  We summarize the above into the following definition.
  \begin{definition}\label{def:SIE}
    A \concept{strong identity expansion} (\nota{\textsf{SIE}} for short) for a unital $U$ is a sequence $(e_{n})_{n\in\N}$ in $\widehat{U_{0}}$ satisfying the conditions 
    \begin{enumerate}[label=(\texttt{si}\arabic*)]
      \item\label{si1} $\sum\limits_{n=0}^{\infty}e_{n}$ converges to $1\in\widehat{U_{0}}$.
      \item\label{si2} $\Set{e_{n}}_{n\in\N}$ are orthogonal to each other.  
      \item\label{si3} For any $n\in\N$, we have:
      \[
        \widehat{U}_{-n}\cdot e_{<N_{n}}=0
        \txand
        e_{<N_{n}}\cdot\widehat{U}_{n}=0,
      \]
      where $e_{<N_{n}}=\sum\limits_{k=0}^{N_{n}-1}e_{k}$ is the partial sum and $N_n$ is the smallest integer (cf. \zcref{rem:Nn}) such that the remainder $e_{\ge N_n}:=\sum\limits_{k=n}^{\infty}e_{k}$ belongs to $\NLR[n]\widehat{U_{0}}$.
    \end{enumerate}
  \end{definition}

  Now, assume $U$ admits a strong identity expansion, saying $(e_{n})_{n\in\N}$. Let $f_n$ be the sum $e_{N_n}+\cdots+e_{N_{n+1}-1}$. We see that $(f_{n})_{n\in\N}$ also satisfy \zcref[noname]{si1,si2,si3}. Hence, we may assume:
  \begin{enumerate}[label=(\texttt{si}\arabic*),resume]
    \item\label{si4} 
    For each $n\in\N$, we have $e_{n}\in\NLR[n]\widehat{U_{0}}$. 
  \end{enumerate}
  \emph{This convention will be insisted in the rest of the paper.}
  \begin{remark}
    The sequence in the paragraph below \zcref[noname]{eq:decomposition-fA} already satisfies \zcref[noname]{si1,si2,si3,si4}.
  \end{remark}

  \begin{theorem}\label{thm:SIC-SIE}
    The strong identity condition (\SIC) is verified for a unital $U$ if and only if $U$ admits a strong identity expansion (\SIE).
  \end{theorem}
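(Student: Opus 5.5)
The plan is to treat the two implications separately. The forward direction (\SIC{} $\Rightarrow$ \SIE) has essentially been carried out in the discussion preceding \zcref{def:SIE}. Assuming \SIC, \zcref{prop:decomposition_of_fA} gives the ring isomorphism $\widehat{U_{0}}\cong\prod_{n}\mathfrak{A}_{n}$ of \eqref{eq:decomposition-fA}. We will then let $e_{n}$ be the element with $n$-th coordinate $\one_{n}$ and all other coordinates zero.

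\begin{enumerate}
  \item Property \zcref[noname]{si2} is immediate from the product decomposition.
  \item Property \zcref[noname]{si1} holds because the tuple $(\one_{n})_{n}$ is the identity of $\prod_{n}\mathfrak{A}_{n}$, and the topology is the product topology.
  \item For \zcref[noname]{si3}, we use \zcref{thm:decomposition_of_fL} to write $\widehat{U}_{-n}\cong\prod_{k}\mathfrak{A}_{k,-n-k}$, with $\widehat{U_{0}}$ acting coordinatewise on the left.
\end{enumerate}

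In this description, $e_{k}$ acts on $\mathfrak{A}_{k,-n-k}$ as $\one_{k}$, which is the identity by \eqref{eq:SIC}, and it acts on the other coordinates by zero. So $e_{<N}\cdot \widehat{U}_{n}$ and $\widehat{U}_{-n}\cdot e_{<N}$ only see finitely many coordinates. We will match these with the vanishing forced by $U_{-n}\subset\NL[n]U$ and by the choice of $N_{n}$. Concretely, we check that $\widehat{U}_{-n}\cdot e_{<N_{n}}$ lies in the coordinates of level $<N_{n}$ that are killed in $\mathfrak{L}^{N_n-1}$, using $e_{\ge N_{n}}\in\NLR[n]\widehat{U_{0}}$. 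This bookkeeping of indices is routine but must be done carefully.

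For the converse, start from an \SIE{} $(e_{n})$ normalized as in \zcref[noname]{si4}. Define
\[
  \one_{n}:=\text{the image of } e_{n} \text{ in } \mathfrak{A}_{n}.
\]
Producing this requires a preimage of $e_{n}$ under $\mu\colon\mathfrak{A}_{n}\to\mathsf{A}_{n}$. By \zcref{prop:decomposition_of_fA}, the image of $\mu$ is $\ker(\mathsf{A}_{n}\to\mathsf{A}_{n-1})=\NLR[n]\mathsf{A}_{n}$, and this contains $[e_{n}]_{n}$ by \zcref[noname]{si4}. Since $\mu$ need not be injective a priori, the choice of preimage is not canonical. We would try to fix it as follows: write $e_{n}=e_{n}^{2}$, and use \zcref[noname]{si3} to express $e_{n}$ as a finite sum $\sum\alpha_{i}\beta_{i}$ with $\alpha_{i}\in U_{n}$ and $\beta_{i}\in U_{-n}$, modulo $\NLR[n+1]$. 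Then set $\one_{n}:=\sum[\alpha_{i}]^{\mathsf{L}}_{0}\otimes[\beta_{i}]^{\mathsf{R}}_{0}$.

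It then suffices to verify the action form \eqref{eq:SIC:action}, namely $\one_{n}\star[\alpha]^{\mathsf{L}}_{0}=[\alpha]^{\mathsf{L}}_{0}$ for $\alpha\in U_{n}$, together with the symmetric statement for $\mathfrak{R}^{0}$. The left side equals $[\sum\alpha_{i}\beta_{i}\alpha]^{\mathsf{L}}_{0}=[e_{n}\alpha]^{\mathsf{L}}_{0}$ up to terms in $\NL[1]$. We then use
\[
  \alpha=1\cdot\alpha=\sum_{k} e_{k}\alpha .
\]
The terms with $k>n$ lie in $\NL[n+1]$ applied to degree $n$, which vanishes in $\mathfrak{L}^{0}$. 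The terms with $k<n$ are killed by \zcref[noname]{si3}, since $e_{<n}\cdot\widehat{U}_{n}=0$ when $N_{n}=n$ under \zcref[noname]{si4}. Hence $[e_{n}\alpha]^{\mathsf{L}}_{0}=[\alpha]^{\mathsf{L}}_{0}$.

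The main obstacle is this converse step. We must represent $e_{n}$ by an honest element of $\mathfrak{A}_{n}$ and show that the ambiguity from $\ker\mu$, together with the error terms in $\NL[1]$, does not spoil the identities. Our plan is to show that any error lands in $\NL[n+1]U_{0}\cdot U_{n}\subset\NL[1]U_{n}$, which is zero in $\mathfrak{L}^{0}_{n}$, so that the identity holds for every preimage. We would also check that the mixed products in the $k<n$ case genuinely vanish, using orthogonality \zcref[noname]{si2}.
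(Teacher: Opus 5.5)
Your proposal follows the paper's proof. The forward direction reads off the sequence $(e_{n})$ from the product decomposition $\widehat{U_{0}}\cong\prod_{n}\mathfrak{A}_{n}$ of \zcref{prop:decomposition_of_fA} and its module version in \zcref{thm:decomposition_of_fL}; the converse lifts $[e_{n}]_{n}=[e_{\ge n}]_{n}\in\NLR[n]\mathsf{A}_{n}$ along $\mu$ and checks \eqref{eq:SIC:action} using that the action on $\mathfrak{L}^{0}$ and $\mathfrak{R}^{0}$ factors through $\mu$ (\zcref{lem:Act_of_fA_via_A}), with \zcref[noname]{si3} killing $e_{<n}$ and the remaining error landing in $\NL[1]U_{n}$ (resp. $\NR[1]U_{-n}$), exactly as in the paper. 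Two cosmetic points: the factorization $e_{n}\equiv\sum_{i}\alpha_{i}\beta_{i}$ modulo $\NLR[n+1]$ is just the exactness of \eqref{eq:ES:fA} you already invoked, so the appeal to $e_{n}=e_{n}^{2}$ and \zcref[noname]{si3} there is unnecessary; and in the forward check of \zcref[noname]{si3} you want the right $\widehat{U_{0}}$-action on $\widehat{U}_{-n}\cong\prod_{k}\mathfrak{A}_{k-n,-k}$ and the left action on $\widehat{U}_{n}\cong\prod_{k}\mathfrak{A}_{k,n-k}$, where the coordinates $k<n$ vanish, rather than the left action on $\widehat{U}_{-n}$.
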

  \begin{proof}
    The implication ${\SIC}\implies {\SIE}$ has been discussed in the previous subsection. We only focus on the converse direction. 

    The remainder $e_{\ge n}$ serves as a left identity on $U_{n}$ and a right identity on $U_{-n}$. 
    In particular, we can find sequences $e_{n}^{+}$ and $e_{n}^{-}$ in $U_{n}$ and $U_{-n}$ respectively such that
    \begin{equation}\label{eq:exp-egen}
      e_{\ge n} \equiv e_{n}^{+}e_{n}^{-} \left(:= \sum_{i} e_{n}^{+}(i)e_{n}^{-}(i) \right)  \mod\NLR[n+1]U_{0}.
    \end{equation}
    Hence, $[e_{\ge n}]_{n}$ is equal to the image of the following element under the map $\mu$ in \zcref{prop:decomposition_of_fA}:
    \[
      \one_{n} := \sum_{i} [e_{n}^{+}(i)]^{\mathsf{L}}_{0}\otimes[e_{n}^{-}(i)]^{\mathsf{R}}_{0} \in \mathfrak{A}_{n}.
    \]
    Then, for any $\alpha\in U_{n}$, by \zcref{lem:Act_of_fA_via_A} and \zcref[noname]{si3}, we have 
    \[
      \one_{n}\star[\alpha]^{\mathsf{L}}_{0} = [e_{n}^{+}e_{n}^{-}]_{n}\cdot[\alpha]^{\mathsf{L}}_{0} = [e_{\ge n}]_{n}\cdot[\alpha]^{\mathsf{L}}_{0} = [1]_{n}[\alpha]^{\mathsf{L}}_{0} = [\alpha]^{\mathsf{L}}_{0}.
    \]
    Similarly, for any $\beta\in U_{-n}$, we have $[\beta]^{\mathsf{R}}_{0}\star\one_{n}=[\beta]^{\mathsf{R}}_{0}$. This shows that $\one_{n}$ is a strong identity element in $\mathfrak{A}_{n}$.
  \end{proof}

\section{Splitting of the \texorpdfstring{$\Omega$}{Omega}-filtration}\label{sec:splitOmega}
  In this section, we give another characterization of the strong identity condition.

\subsection{Consequences of SIE}
  We assume that the almost-canonically seminormed ring $U$ admits a strong identity expansion (\SIE) $(e_{n})_{n\in\N}$. 
  \begin{proposition}\label{lem:splitOmega}
    Under our assumptions, any exhaustive left $U$-module $W$ is gradable and its $\Omega$-filtration splits.
  \end{proposition}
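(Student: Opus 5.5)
The plan is to let the idempotents $e_{n}$ of the strong identity expansion act on $W$ and to take $W_{n}:=e_{n}W$ as the grading.

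\emph{Step 1: $\widehat{U_{0}}$ acts on $W$.} By \zcref{prop:ExModDiscCont}, the action $U\otimes W\to W$ is continuous for the discrete topology on $W$. So the action of $U_{0}$ extends to an action of $\widehat{U_{0}}$. Concretely, for $w\in\Omega^{\mathsf{L}}_{k}(W)$, every element of $\NLR[k+1]\widehat{U_{0}}$ kills $w$. In particular, by \zcref[noname]{si4} the tail $e_{\ge k+1}$ kills $w$, so the series $\sum_{n}e_{n}w$ is a finite sum $\sum_{n\le k}e_{n}w$.

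\emph{Step 2: decomposition.} Set $W_{n}:=e_{n}W$. By \zcref[noname]{si1} and Step~1, every $w\in\Omega^{\mathsf{L}}_{k}(W)$ satisfies $w=\sum_{n\le k}e_{n}w$. By \zcref[noname]{si2}, the $e_{n}$ are orthogonal idempotents, so the sum is direct. Exhaustiveness then gives
\[
  W=\bigoplus_{n\in\N}W_{n}.
\]
Next I check that $W_{n}\subset\Omega^{\mathsf{L}}_{n}(W)$. By \zcref{prop:ExModDiscCont} it suffices to show $U_{-k}e_{n}=0$ for $k\ge n+1$. Since $N_{k}\ge k>n$, orthogonality gives $e_{n}=e_{<N_{k}}e_{n}$, and \zcref[noname]{si3} gives $U_{-k}e_{<N_{k}}=0$. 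Together with the previous paragraph, this yields
\[
  \Omega^{\mathsf{L}}_{k}(W)=\bigoplus_{n\le k}W_{n}.
\]
This is precisely a splitting of the $\Omega$-filtration, provided the $W_{n}$ form a grading.

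\emph{Step 3: $U_{p}W_{n}\subset W_{n+p}$, i.e. $e_{m}ue_{n}=0$ for $u\in U_{p}$ and $m\neq n+p$.} I will treat the two directions separately.

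For $m>n+p$: we have $e_{m}\in\NLR[m]\widehat{U_{0}}$, so by \zcref[noname]{A3} $e_{m}u\in\NL[m-p]U_{p}$ with $m-p\ge n+1$. This element kills $e_{n}w\in\Omega^{\mathsf{L}}_{n}(W)$.

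For $m<n+p$: I will use the factorization \eqref{eq:exp-egen}, $e_{\ge n}\equiv e_{n}^{+}e_{n}^{-}\bmod\NLR[n+1]U_{0}$. Since $e_{n}=e_{\ge n}-e_{\ge n+1}$ and $e_{\ge n+1}e_{n}w=0$, we get $e_{n}w=e_{\ge n}e_{n}w=e_{n}^{+}e_{n}^{-}e_{n}w+r$, where $r$ is the action of an element of $\NLR[n+1]U_{0}$ on $e_{n}w\in\Omega^{\mathsf{L}}_{n}(W)$, hence $r=0$. Therefore $e_{m}ue_{n}w=(e_{m}ue_{n}^{+})e_{n}^{-}e_{n}w$ with $ue_{n}^{+}\in U_{n+p}$. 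The right-handed half of \zcref[noname]{si3}, combined with orthogonality exactly as in Step~2, gives $e_{m}U_{q}=0$ for $q>m$, so this term vanishes.

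\emph{Main obstacle.} I expect Step~3 to be the delicate part, in particular the bookkeeping of the error terms between elements of $\widehat{U_{0}}$ and their finite representatives. I would first try to show that each such error lands in a neighborhood that already annihilates $e_{n}w$, as sketched above. If the $m<n+p$ case resists this, the fallback is to transport the problem to the block decomposition \eqref{eq:decomposition-fL}, where $e_{m}\widehat{U}_{p}e_{n}$ corresponds to the block $\mathfrak{A}_{m,-n}$ and the vanishing should follow from \zcref{rem:multiplication-of-fA}.

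With Steps~2 and~3 in place, $W$ is the underlying module of the positively-graded module $\bigoplus_{n}W_{n}$, so it is gradable. Its $\Omega$-filtration is the filtration associated to this grading, hence it splits.
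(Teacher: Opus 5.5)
Your proof is correct and follows the paper's argument. The paper also sets $W_{(n)}:=e_nW$, obtains $W=\bigoplus_n W_{(n)}$ and $\Omega_n(W)=W_{(0)}\oplus\cdots\oplus W_{(n)}$ from (si1), (si2), (si4) and (si3) in the same way you do, and then proves $U_pW_{(n)}\subset W_{(p+n)}$ via the factorization \eqref{eq:exp-egen} together with (si3). Your Step~3 just unpacks the paper's single chain $\alpha(e_nw)=[\alpha e_n^{+}]_{p+n}e_n^{-}w=e_{p+n}(\alpha e_nw)$ into the two cases $m>n+p$ and $m<n+p$, so the fallback you mention is not needed. For the case $m>n+p$ you use (A3), whereas the paper uses $U_p\Omega_n(W)\subset\Omega_{p+n}(W)$; this is a minor variation.
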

  \begin{proof}
    Since each $\Omega_{n}(W)$ is a left $\mathsf{A}_{n}$-module, we have a continuous action of $\widehat{U_{0}}=\varprojlim \mathsf{A}_{n}$ on their union $W$ that is compatible with the original left action of $U$. 
    Now, set
    \begin{equation}\label{eq:splitOmega}
      \nota{W_{(n)}}:=\Set{e_{n} w\given w\in W}
    \end{equation}
    for $n\in\N$ and $W_{(n)}=0$ for $n<0$. Note that they are $\widehat{U_{0}}$-submodules of $W$ by \zcref[noname]{si1,si2}.
    
    First, we show that $W=\bigoplus_{n\in\N}W_{(n)}$:
    \begin{enumerate}
      \item $W_{(m)}\cap W_{(n)}=0$ whenever $m\neq n$. This is due to \ref{si2}. 
      \item $W=\sum_{n\in\N}W_{(n)}$. Indeed, we will show that 
      \[\Omega_{n}(W)\subset W_{(0)}+\cdots+W_{(n)}.\] 
      This is due to \ref{si4}. By which, $e_{\ge n+1}$ is a limit of elements from $\NL[n+1]U$. Therefore,  $e_{\ge n+1} \Omega_{n}(W)=0$. Then, for any $w\in\Omega_{n}(W)$, we have $w=1\cdot w=e_0 w + \cdots + e_n w$.
    \end{enumerate}

    Next, we show that $W_{(n)}\subset\Omega_{n}(W)$. Indeed, by \ref{si3}, $U_{-n-1}=U_{-n-1} e_{\ge n+1}$. On the other hand, by \ref{si2}, $e_{\ge n+1} e_{n}=0$. Hence, $U_{-n-1} W_{(n)}=0$ as desired. 
    
    So far, we have 
    \[
      \Omega_{n}(W)=W_{(0)} \oplus \cdots \oplus W_{(n)}.
    \]
    This shows that the $\Omega$-filtration splits as $\widehat{U_{0}}$-modules.
    
    Finally, we show that this does provide a grading on $W$. If $p<-n$, since $U_{p}\Omega_{n}(W)=0$, we have $U_{p}W_{(n)}=0$. Now, let $p\ge -n$. Then, $U_{p}W_{(n)}\subset U_{p}\Omega_{n}(W)\subset\Omega_{p+n}(W)$. Thus, the action of $\widehat{U_{0}}$ on them factors through $\mathsf{A}_{p+n}$. Then, for any $\alpha\in U_{p}$ and $w\in W$, by \zcref[noname]{si3,eq:exp-egen}, and note that $\alpha U_{n}\subset U_{p+n}$, we have
    \begin{align*}
      \alpha(e_{n}w) &= (\alpha e_{n})w = [\alpha e_{n}^{+}e_{n}^{-}]_{p+n}w = [\alpha e_{n}^{+}]_{p+n}e_{n}^{-}w \\
      &= [e_{p+n}\alpha e_{n}^{+}]_{p+n}e_{n}^{-}w = e_{p+n}(\alpha e_{n}w).
    \end{align*}
    This shows $U_{p}W_{(n)}\subset W_{(p+n)}$ as desired.
  \end{proof}

  \begin{remark}
    From the proof, we see that the grading $W_{(\bullet)}$ is canonically isomorphic to the $\gr\Omega$-grading in \zcref{nota:grOmega}.
  \end{remark}
  \begin{warning}
    The above proposition says that any exhaustive $U$-module is \emph{gradable}. However, this does not mean that the forgetting functor $\Mod[Gr](U|{}_R)\to\Mod[Ex](U|{}_R)$ is an equivalence. 
    The functor sending an exhaustive $U$-module $W$ to its grading $W_{(\bullet)}$ is only a right inverse of the forgetting functor, but not a left inverse. 
  \end{warning}

  \begin{corollary}
    Under our assumptions, the canonical projections 
    \[
      \cdots \longrightarrow 
      \mathfrak{L}^{n} \overset{\pi_{n}}{\longrightarrow} 
      \mathfrak{L}^{n-1} \longrightarrow 
      \cdots \longrightarrow 
      \mathfrak{L}^{0} \longrightarrow 
      0
    \]
    respect the splitting of $\Omega$-filtrations in the sense that they preserve the $\gr\Omega$-gradings. 
    The same holds for $\mathfrak{R}^{n}$'s.
  \end{corollary}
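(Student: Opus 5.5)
The plan is to apply \zcref{lem:splitOmega} to the exhaustive left $U$-modules $\mathfrak{L}^{n}$ and $\mathfrak{L}^{n-1}$. This gives their splittings $\mathfrak{L}^{n}=\bigoplus_{k}\mathfrak{L}^{n}_{(k)}$ and $\mathfrak{L}^{n-1}=\bigoplus_{k}\mathfrak{L}^{n-1}_{(k)}$, where $W_{(k)}=e_{k}W$ is defined through the continuous action of $\widehat{U_{0}}$ on $W=\bigcup_{m}\Omega_{m}(W)$. By the remark following that proposition, these gradings are canonically identified with the $\gr\Omega$-gradings. It therefore suffices to show that $\pi_{n}$ carries $e_{k}\mathfrak{L}^{n}$ into $e_{k}\mathfrak{L}^{n-1}$ for every $k$.

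First, $\pi_{n}\colon\mathfrak{L}^{n}\to\mathfrak{L}^{n-1}$ is a morphism of left $U$-modules, so it is a morphism in $\Mod[Ex](U|{}_{\k})$. By \eqref{eq:pf:ExModGrothendieck} it maps $\Omega_{m}(\mathfrak{L}^{n})$ into $\Omega_{m}(\mathfrak{L}^{n-1})$. On each $\Omega_{m}$ the action of $\widehat{U_{0}}$ factors through $\mathsf{A}_{m}$, and this action is just the restriction of the $U_{0}$-action. Since $\pi_{n}$ is $U_{0}$-linear, it is $\mathsf{A}_{m}$-linear on $\Omega_{m}(\mathfrak{L}^{n})$ for every $m$, and passing to the union it is $\widehat{U_{0}}$-linear. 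Hence $\pi_{n}(e_{k}w)=e_{k}\pi_{n}(w)$, which gives $\pi_{n}(\mathfrak{L}^{n}_{(k)})\subset\mathfrak{L}^{n-1}_{(k)}$. Because both decompositions are direct sums and each maps to the corresponding one, $\pi_{n}$ preserves the $\gr\Omega$-gradings. The same argument applies to the right modules $\mathfrak{R}^{n}$, using the right-handed version of \zcref{lem:splitOmega}, which follows by passing to $\opp{U}$ as in \zcref{rem:opposite}; there the idempotents act on the right through $W\cdot e_{k}$.

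The step that needs the most care is checking that the $\widehat{U_{0}}$-action is well defined and compatible. Concretely, one has to confirm that $e_{k}$ acts on $w\in\Omega_{m}(W)$ through its image in $\mathsf{A}_{m}$, independently of the choice of $m$, and that this image commutes with any $U_{0}$-linear map. Both facts are immediate from the definition of $\widehat{U_{0}}=\varprojlim\mathsf{A}_{m}$ together with $\NLR[m+1]U_{0}\cdot\Omega_{m}=0$. One should also note that the identification of $W_{(\bullet)}$ with $\gr\Omega_{\bullet}(W)$ is natural in $W$: it is induced by the inclusions $W_{(k)}\subset\Omega_{k}(W)$, and these are compatible with any morphism of exhaustive modules. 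So preserving the splitting is the same as preserving the $\gr\Omega$-grading.
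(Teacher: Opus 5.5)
Your proposal is correct and follows essentially the same argument as the paper. Both identify the $\gr\Omega$-grading with $W_{(k)}=e_{k}W$ from the proof of \zcref{lem:splitOmega} and use that $\pi_{n}$ commutes with the $\widehat{U_{0}}$-action. The paper additionally uses the surjectivity of $\pi_{n}$ to get the equality $\pi_{n}(\mathfrak{L}^{n}_{(k)})=\mathfrak{L}^{n-1}_{(k)}$, whereas your inclusion already suffices, and you spell out the $\widehat{U_{0}}$-linearity that the paper leaves implicit.
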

  \begin{proof}
    We need to show $\pi_{n}(\mathfrak{L}^{n}_{(\bullet)}) = \mathfrak{L}^{n-1}_{(\bullet)}$. Indeed, due to the description of the $\gr\Omega$-grading in the proof of \zcref{lem:splitOmega}, we have $\pi_{n}(\mathfrak{L}^{n}_{(m)}) = \pi_{n}(e_{m}\mathfrak{L}^{n}) = e_{m}\pi_{n}(\mathfrak{L}^{n}) = e_{m}\mathfrak{L}^{n-1} = \mathfrak{L}^{n-1}_{(m)}$ for any $m\in\N$.
  \end{proof}

  \begin{lemma}\label{lem:grOmegaPhi}
    Under our assumptions, for any left $\mathsf{A}_{0}$-module $M$, the natural grading on $\Phi^{\mathsf{L}}_{0}(M)$ agrees with its $\gr\Omega$-grading.
  \end{lemma}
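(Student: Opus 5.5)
The natural grading on $\Phi^{\mathsf{L}}_{0}(M)=\mathfrak{L}^{0}\otimes_{U_{0}}M$ is $\Phi^{\mathsf{L}}_{0}(M)_{p}=\mathfrak{L}^{0}_{p}\otimes_{U_{0}}M$ for $p\ge 0$. By the proof of \zcref{lem:splitOmega}, the $\gr\Omega$-grading is $W_{(p)}=e_{p}W$. Both are gradings by $\N$ and both give direct sum decompositions of $W=\Phi^{\mathsf{L}}_{0}(M)$. It therefore suffices to show one inclusion, $\Phi^{\mathsf{L}}_{0}(M)_{p}\subset W_{(p)}$ for every $p$. Indeed, if $x\in W_{(p)}$, write $x=\sum_{q}x_{q}$ with $x_{q}\in\Phi^{\mathsf{L}}_{0}(M)_{q}\subset W_{(q)}$. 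Uniqueness of the decomposition $W=\bigoplus_{q}W_{(q)}$ then forces $x=x_{p}$.

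To prove the inclusion, take a generator $[\alpha]^{\mathsf{L}}_{0}\otimes m$ with $\alpha\in U_{p}$. I want to show that $e_{p}$ fixes it, equivalently that $e_{q}$ kills it for $q\neq p$.

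First, $1_{0}\otimes m$ is killed by $U_{\le -1}$, so it lies in $\Omega_{0}(W)=W_{(0)}$. Hence $e_{0}(1_{0}\otimes m)=1_{0}\otimes m$ and $e_{q}(1_{0}\otimes m)=0$ for $q\ge1$.

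Next, the last step of the proof of \zcref{lem:splitOmega} showed $U_{p}W_{(n)}\subset W_{(p+n)}$. Applying this with $n=0$ gives $\alpha(1_{0}\otimes m)\in W_{(p)}$. Since $\alpha(1_{0}\otimes m)=[\alpha]^{\mathsf{L}}_{0}\otimes m$, we get $[\alpha]^{\mathsf{L}}_{0}\otimes m\in W_{(p)}$. This is already the desired inclusion, so the proof is essentially a one-line consequence of the grading property established earlier.

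The only point needing care is that the identity $\alpha(e_{n}w)=e_{p+n}(\alpha e_{n}w)$ from \zcref{lem:splitOmega} is applied to the exhaustive module $\Phi^{\mathsf{L}}_{0}(M)$. That module is exhaustive because it is positively graded (\zcref{coro:Omega-vs-grading}), so the identity holds, using the continuous $\widehat{U_{0}}$-action. Spanning and uniqueness of decompositions then finish the argument as described above.
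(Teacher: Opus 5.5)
Your proof is correct, but it is organized differently from the paper's. The paper first reduces to $M=\mathsf{A}_{0}$ and then computes directly on $\mathfrak{L}^{0}$. On $\mathfrak{L}^{0}_{n}\subset\Omega_{n}(\mathfrak{L}^{0})$ the $\widehat{U_{0}}$-action factors through $\mathsf{A}_{n}$, so $e_{\ge n}$ acts there as $e_{n}$. By (\texttt{si}3), $e_{\ge n}$ is a left identity on $U_{n}$, so $e_{n}$ fixes $\mathfrak{L}^{0}_{n}$. Orthogonality $e_{n}e_{m}=0$ then gives $e_{n}\mathfrak{L}^{0}=\mathfrak{L}^{0}_{n}$.

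You instead skip the reduction to $M=\mathsf{A}_{0}$, which the paper leaves implicit. You deduce the statement formally from three ingredients: $1_{0}\otimes m\in\Omega_{0}(W)=W_{(0)}$; the compatibility $U_{p}W_{(n)}\subset W_{(p+n)}$ from \zcref{lem:splitOmega}; and the observation that a termwise inclusion between two direct-sum decompositions of $W$ is an equality. That last observation replaces the paper's orthogonality computation. Your route is cleaner, and it applies verbatim to any positively graded module generated by its degree-zero part, provided that part lies in $\Omega_{0}$.

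The cost is that everything rests on the compatibility step of \zcref{lem:splitOmega}. The paper's displayed computation for that step is loose: it rewrites $(\alpha e_{n})w$ as $\alpha e_{n}^{+}e_{n}^{-}w$ for arbitrary $w$. The inclusion is nevertheless true. Replace $w$ by $e_{n}w\in\Omega_{n}(W)$, so that $e_{n}w=e_{n}^{+}e_{n}^{-}e_{n}w$. Then $e_{j}$ kills $\alpha e_{n}w$ for $j<p+n$, because $e_{j}\alpha e_{n}^{+}=0$ by (\texttt{si}3). It also kills $\alpha e_{n}w$ for $j>p+n$, because $\alpha e_{n}w\in\Omega_{p+n}(W)$. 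At $n=0$ this unwinds to precisely the paper's direct computation, so the paper's route is the more self-contained of the two.
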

  \begin{proof}
    It suffices to verify the statement for $M=\mathsf{A}_{0}$. Note that $\Phi^{\mathsf{L}}_{0}(\mathsf{A}_{0})_{\bullet}=\mathfrak{L}^{0}_{\bullet}\otimes_{U_{0}}\mathsf{A}_{0}=\mathfrak{L}^{0}_{\bullet}$. 
    On one hand, by \ref{si3}, $e_{\ge n}$ serves as a left identity on $U_{n}$. On the other hand, since $\mathfrak{L}^{0}_{n}\subset\Omega_{n}(\mathfrak{L}^{0})$, the action of $\widehat{U_{0}}$ on $\mathfrak{L}^{0}_{n}$ factors through $\mathsf{A}_{n}$. Since $[e_{\ge n}]_{n}=[e_{n}]_{n}$. We thus conclude that $\mathfrak{L}^{0}_{n}=e_{n}\mathfrak{L}^{0}_{n}$. Then, $e_{n}\mathfrak{L}^{0}=\bigoplus_{m\in\N}e_{n}\mathfrak{L}^{0}_{m}=\bigoplus_{m\in\N}e_{n}e_{m}\mathfrak{L}^{0}_{m}=e_{n}\mathfrak{L}^{0}_{n}=\mathfrak{L}^{0}_{n}$. 
  \end{proof}

  Similar statements fails for $\Phi^{\mathsf{L}}_{n}$ ($n>0$). Instead, we have 
  \begin{lemma}\label{lem:grOmegaPhin}
    Under our assumptions, for any left $\mathsf{A}_{n}$-module $M$, the $\gr\Omega$-grading on $\Phi^{\mathsf{L}}_{n}(M)$ is given by
    \[
      \Phi^{\mathsf{L}}_{n}(M)_{(m)}=\bigoplus_{p=m-n}^{m}(\Phi^{\mathsf{L}}_{n}(M)_{(m)}\cap\Phi^{\mathsf{L}}_{n}(M)_{p}).
    \]
  \end{lemma}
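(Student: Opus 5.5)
The plan is to reduce to the case $M=\mathsf{A}_{n}$, i.e. to $\mathfrak{L}^{n}=\Phi^{\mathsf{L}}_{n}(\mathsf{A}_{n})$, in the same way as in the proof of \zcref{lem:grOmegaPhi}. The justification is as follows. The idempotents $e_{m}$ act on $\Phi^{\mathsf{L}}_{n}(M)=\mathfrak{L}^{n}\otimes_{U_{0}}M$ through the left factor, since the action of $\widehat{U_{0}}$ is the left $U$-action. Hence $\Phi^{\mathsf{L}}_{n}(M)_{(m)}=e_{m}\Phi^{\mathsf{L}}_{n}(M)$ is the image of $e_{m}\mathfrak{L}^{n}\otimes_{U_{0}}M$. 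Moreover, the natural grading $\Phi^{\mathsf{L}}_{n}(M)_{p}=\mathfrak{L}^{n}_{p}\otimes_{U_{0}}M$ is compatible with this. Therefore it suffices to show two things:
\[
  e_{m}\mathfrak{L}^{n}_{p}=0 \text{ unless } m-n\le p\le m,
  \qquad\text{and}\qquad
  e_{m}\mathfrak{L}^{n}=\bigoplus_{p}e_{m}\mathfrak{L}^{n}_{p}.
\]
Since each $e_{m}\in\widehat{U_{0}}$ has degree zero, it preserves the grading of $\mathfrak{L}^{n}$. This gives $e_{m}\mathfrak{L}^{n}=\bigoplus_{p}e_{m}\mathfrak{L}^{n}_{p}$, and also $e_{m}\mathfrak{L}^{n}_{p}=\Phi_{(m)}\cap\Phi_{p}$ after tensoring. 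So the whole content is the vanishing statement.

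For the bound $p\le m$: by \zcref{lem:splitOmega}, $W_{(m)}\subset\Omega_{m}(W)$. Since $\mathfrak{L}^{n}_{p}$ is the image of $U_{p}=U_{p}\cdot 1$, I will argue as in \zcref{lem:grOmegaPhi}. By \ref{si3}, $e_{\ge p}$ is a left identity on $U_{p}$ modulo the relevant neighborhood, i.e. $[e_{\ge p}\alpha]=[\alpha]$ for $\alpha\in U_{p}$. Hence $e_{m}[\alpha]^{\mathsf{L}}_{n}=e_{m}e_{\ge p}[\alpha]^{\mathsf{L}}_{n}$, and this vanishes for $m<p$ by \ref{si2}.

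For the bound $p\ge m-n$: an element of $\mathfrak{L}^{n}_{p}$ lies in $\Omega_{p+n}(\mathfrak{L}^{n})$. This holds because $\NL[p+n+1]U_{\bullet}\cdot U_{p}\subset\NL[n+1]U$ by \zcref[noname]{A3}, and because $1_{n}\in\Omega_{n}$ (cf. the proof of \zcref{lem:ThickZhu}). By the proof of \zcref{lem:splitOmega}, $\Omega_{p+n}=W_{(0)}\oplus\cdots\oplus W_{(p+n)}$, and $e_{m}$ kills it for $m>p+n$. Thus $e_{m}\mathfrak{L}^{n}_{p}=0$ when $m>p+n$.

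Putting the two bounds together gives $e_{m}\mathfrak{L}^{n}_{p}=0$ outside $m-n\le p\le m$. Tensoring with $M$ then yields the displayed decomposition.

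I expect the main obstacle to be the first bound, namely making precise that $e_{\ge p}$ acts as the identity on $\mathfrak{L}^{n}_{p}$ when only \ref{si3} is available. Condition \ref{si3} gives $e_{<N_{p}}\widehat{U}_{p}=0$ in $\widehat{U}$, hence also in the quotient $\mathfrak{L}^{n}$. With convention \ref{si4} we have $N_{p}\le p+1$, and I would need to check that this is compatible with $N_{p}=p$ (equivalently, that $e_{<p}$ kills $U_{p}$). If it is not, I would instead use the exact splitting of \zcref{thm:decomposition_of_fL}. There, $\mathfrak{L}^{n}_{p}$ decomposes into the pieces $\mathfrak{A}_{p+k,-k}$ for $k\le n$, on which $e_{m}$ acts through $\one_{p+k}$ on the left, giving the range $p\le m\le p+n$ directly.
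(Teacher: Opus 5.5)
Your proposal is correct and follows essentially the same route as the paper: reduce to $M=\mathsf{A}_{n}$, use $\mathfrak{L}^{n}_{p}\subset\Omega_{p+n}(\mathfrak{L}^{n})$ to kill $e_{m}$ for $m>p+n$, and use \ref{si3} (the paper phrases it via $\mathfrak{L}^{n}_{p}=\mathfrak{R}^{n+p}_{p}$) to get $e_{<p}\mathfrak{L}^{n}_{p}=0$, which yields $\mathfrak{L}^{n}_{p}=\bigoplus_{i=\max\{p,0\}}^{p+n}e_{i}\mathfrak{L}^{n}_{p}$. Your worry about $N_{p}$ is one the paper passes over silently, and it is harmless: after the regrouping $f_{n}=e_{N_{n}}+\cdots+e_{N_{n+1}-1}$ one has $f_{<n}=e_{<N_{n}}$, so the regrouped sequence already satisfies $f_{<n}\widehat{U}_{n}=0$ (note also that \ref{si4} gives $N_{p}\le p$, not $N_{p}\le p+1$).
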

  \begin{proof}
    It suffices to verify the statement for $M=\mathsf{A}_{n}$. Recall that $\Phi^{\mathsf{L}}_{n}(\mathsf{A}_{n})_{\bullet}=\mathfrak{L}^{n}_{\bullet}$. 
    For any $p\ge-n$, since $\mathfrak{L}^{n}_{p}\subset\Omega_{p+n}(\mathfrak{L}^{n})$, the action of $\widehat{U_{0}}$ on $\mathfrak{L}^{n}_{p}$ factors through $\mathsf{A}_{p+n}$. 
    On the other hand, since $\mathfrak{L}^{n}_{p} = \mathfrak{R}^{n+p}_{p}$, by \zcref[noname]{si3}, $e_{<p}$ annihilates it on the left. Thus, we have 
    \[
      \mathfrak{L}^{n}_{p}=\bigoplus_{i=\max\Set{p,0}}^{p+n}e_{i}\mathfrak{L}^{n}_{p}.
    \]
    Therefore, for any $m\in\N$, we have
    \[
      e_{m}\mathfrak{L}^{n}=\bigoplus_{p=-n}^{\infty}e_{m}\mathfrak{L}^{n}_{p}=
      \bigoplus_{p=-n}^{\infty}\bigoplus_{i=\max\Set{p,0}}^{p+n}e_{m}e_{i}\mathfrak{L}^{n}_{p}=\bigoplus_{p=m-n}^{m}e_{m}\mathfrak{L}^{n}_{p}.
    \]
    as desired.
  \end{proof}

\subsection{Consequences of the splitting}
  Now, we study the converse. 
  We assume:
  \begin{enumerate}[leftmargin=5\parindent,font=\nota]
    \myItem[($\Omega$split)] \label{Omegasplit}
        The $\Omega$-filtrations on exhaustive $U$-modules split and the canonical projections 
        \begin{align*}
          \cdots \longrightarrow 
          \mathfrak{L}^{n} \overset{\pi_{n}}{\longrightarrow} 
          \mathfrak{L}^{n-1} \longrightarrow 
          \cdots \longrightarrow 
          \mathfrak{L}^{0} \longrightarrow 
          0 \\
          \cdots \longrightarrow 
          \mathfrak{R}^{n} \overset{\pi_{n}}{\longrightarrow} 
          \mathfrak{R}^{n-1} \longrightarrow 
          \cdots \longrightarrow 
          \mathfrak{R}^{0} \longrightarrow 
          0
        \end{align*}
        respect the splittings in the sense that they preserve the $\gr\Omega$-gradings\footnote{Although $\pi_{n}$'s always preserve the $\Omega$-filtrations, they do not automatically preserve the $\gr\Omega$-grading.}.
  \end{enumerate}
  Although we assume the $\Omega$-filtrations on all exhaustive modules split, we only focus on those in the sequences.
  \begin{proposition}
    Under our assumption, these projections split as homomorphisms of $U$-modules. 
    The same holds for $\mathfrak{R}^{n}$'s.
  \end{proposition}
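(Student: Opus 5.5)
The plan is to build the section directly from a lift of the class $1_{n-1}$, using the splitting to choose a lift that lies in $\Omega^{\mathsf{L}}_{n-1}$. Fix $n\ge 1$. Throughout I read \ref{Omegasplit} as follows: every exhaustive $W$ decomposes as $W=\bigoplus_{m}W_{(m)}$, compatibly with the $U$-action, such that
\[
  \Omega^{\mathsf{L}}_{k}(W)=\bigoplus_{m\le k}W_{(m)}
\]
for all $k$. Thus $W_{(\bullet)}$ is a grading realizing the $\gr\Omega$-grading. The key tool is \zcref{rem:OmegaHomLR}, which says $\Omega^{\mathsf{L}}_{n-1}(W)=\Hom_{U|}(\mathfrak{L}^{n-1},W)$. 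Explicitly, an element $s\in\Omega^{\mathsf{L}}_{n-1}(W)$ gives the $U$-linear map $[u]^{\mathsf{L}}_{n-1}\mapsto u\cdot s$. This is well defined because $\NL[n]U$ annihilates $s$.

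First, apply this with $W=\mathfrak{L}^{n}$, which is exhaustive. Decompose
\[
  1_{n}=\sum_{m\le n}(1_{n})_{(m)} .
\]
Only $m\le n$ occur, because $1_{n}\in\Omega^{\mathsf{L}}_{n}(\mathfrak{L}^{n})$ by \zcref{lem:ThickZhu}. Similarly $1_{n-1}\in\Omega^{\mathsf{L}}_{n-1}(\mathfrak{L}^{n-1})$. Since $\pi_{n}$ preserves the $\gr\Omega$-gradings, we get $\pi_{n}\bigl((1_{n})_{(m)}\bigr)=(1_{n-1})_{(m)}$ for every $m$. In particular $\pi_{n}\bigl((1_{n})_{(n)}\bigr)=0$, because $\mathfrak{L}^{n-1}_{(n)}$ meets $\Omega^{\mathsf{L}}_{n-1}(\mathfrak{L}^{n-1})=\mathfrak{L}^{n-1}$ trivially.

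Next, set $s:=\sum_{m\le n-1}(1_{n})_{(m)}$. By the splitting, $s\in\Omega^{\mathsf{L}}_{n-1}(\mathfrak{L}^{n})$, and by the previous step $\pi_{n}(s)=1_{n-1}$. Define
\[
  \sigma\colon\mathfrak{L}^{n-1}\to\mathfrak{L}^{n},\qquad [u]^{\mathsf{L}}_{n-1}\mapsto u\cdot s .
\]
This is a homomorphism of left $U$-modules by \zcref{rem:OmegaHomLR}. Then
\[
  \pi_{n}\sigma([u]^{\mathsf{L}}_{n-1})=u\cdot\pi_{n}(s)=u\cdot 1_{n-1}=[u]^{\mathsf{L}}_{n-1},
\]
so $\sigma$ is a section of $\pi_{n}$.

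The statement for the $\mathfrak{R}^{n}$ follows by the mirror argument. Use $\Omega^{\mathsf{R}}_{n-1}(-)=\Hom_{|U}(\mathfrak{R}^{n-1},-)$, and take the component of $1_{n}\in\mathfrak{R}^{n}$ in degrees $\le n-1$ of the $\gr\Omega^{\mathsf{R}}$-grading. Equivalently, pass to $\opp{U}$ as in \zcref{rem:opposite}.

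The only delicate point, and the step I expect to be the main obstacle, is the precise meaning of ``the $\Omega$-filtration splits''. The argument needs a decomposition in which $\Omega^{\mathsf{L}}_{n-1}(\mathfrak{L}^{n})$ is exactly the sum of the components of index $\le n-1$. If the splitting is only a decomposition of spaces or of $U_0$-modules, this is automatic, since $\Omega^{\mathsf{L}}_{\bullet}$ is filtered by the indices. $U$-compatibility of the decomposition is never used, because $U$-linearity of $\sigma$ comes from \zcref{rem:OmegaHomLR} rather than from the grading. The hypothesis that the $\pi_{n}$ preserve the gradings is exactly what forces $\pi_{n}(s)=1_{n-1}$.
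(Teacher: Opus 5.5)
Your proof is correct and is essentially the paper's: the section sends $1_{n-1}$ to $1_{n}$ minus its top $\gr\Omega$-component. You also make explicit why this is a well-defined $U$-map, namely $s\in\Omega^{\mathsf{L}}_{n-1}(\mathfrak{L}^{n})$ together with \zcref{rem:OmegaHomLR}, which the paper leaves implicit.

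One slip: the equality $\Omega^{\mathsf{L}}_{n-1}(\mathfrak{L}^{n-1})=\mathfrak{L}^{n-1}$ is false in general, because $\NL[n]U$ is only a left ideal. For example, in the Weyl algebra $\partial\cdot[x]^{\mathsf{L}}_{0}=1_{0}\neq0$, so $[x]^{\mathsf{L}}_{0}\notin\Omega^{\mathsf{L}}_{0}(\mathfrak{L}^{0})$. The vanishing $(1_{n-1})_{(n)}=0$ that you need still holds: it follows directly from $1_{n-1}\in\Omega^{\mathsf{L}}_{n-1}(\mathfrak{L}^{n-1})$, which you had already recorded.
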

  \begin{proof}
    First, since $1_{n} \in \Omega^{\mathsf{L}}_{n}(\mathfrak{L}^{n})$ (by \zcref{lem:ThickZhu}), it can be expressed as $x_{0} + \cdots + x_{n}$, where each $x_k \in \mathfrak{L}^{n}_{(k)}$.
    Since $\pi$ preserves the $\Omega$-filtrations and is strict, we have $\pi(x_{k}) \subset \mathfrak{L}^{n-1}_{(k)}$.
    On the other hand, $1_{n-1} \in \Omega^{\mathsf{L}}_{n-1}(\mathfrak{L}^{n-1})$.
    Therefore, $x_{n}$ must belongs to the kernel of $\pi$. 
    The assignment $1_{n-1} \mapsto 1_{n} - x_{n}$ gives a splitting of $\pi$ as desired.
  \end{proof}

  \begin{corollary}
    Under the assumptions of the above proposition, $U$ admits a strong identity expansion (\SIE).
  \end{corollary}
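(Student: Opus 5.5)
We will construct the expansion from the splittings $s_{n}\colon \mathfrak{L}^{n-1}\to\mathfrak{L}^{n}$ produced in the preceding proposition, which send $1_{n-1}$ to $1_{n}-x_{n}$ with $x_{n}\in\mathfrak{L}^{n}_{(n)}\cap\ker\pi_{n}$. The idea is to read off idempotent-like elements of $\widehat{U_{0}}=\varprojlim\mathsf{A}_{n}$ from the $\gr\Omega$-decomposition $1_{n}=x_{0}+\cdots+x_{n}$ in $\mathfrak{L}^{n}$. Since $1_{n}$ sits in degree $0$ and the $\Omega$-splitting respects the grading (by \zcref{coro:Omega-vs-grading}), each $x_{k}$ lies in $\mathfrak{L}^{n}_{0}=\mathsf{A}_{n}$. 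Write $x^{(n)}_{k}\in\mathsf{A}_{n}$ for this component. The compatibility hypothesis of \ref{Omegasplit}, namely that $\pi_{n}$ preserves the $\gr\Omega$-gradings, gives $\pi_{n}(x^{(n)}_{k})=x^{(n-1)}_{k}$ for $k<n$ and $\pi_{n}(x^{(n)}_{n})=0$. Hence $e_{k}:=(x^{(n)}_{k})_{n\ge k}$ defines an element of $\widehat{U_{0}}$, with $e_{k}\in\NLR[k]\widehat{U_{0}}$ because its image in $\mathsf{A}_{k-1}$ vanishes. This gives \ref{si4}, and \ref{si1} holds since the partial sums map to $1_{n}$ in each $\mathsf{A}_{n}$.

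Next we prove orthogonality \ref{si2}. We first show that right multiplication by an element of $\mathsf{A}_{n}$ preserves the $\gr\Omega$-splitting of $\mathfrak{L}^{n}$. This should follow because it is an endomorphism of the exhaustive left $U$-module $\mathfrak{L}^{n}$, provided the splitting is functorial. If it is not, we instead use the splitting on $\mathfrak{L}^{n}\otimes_{\mathsf{A}_{n}}M$ and naturality in $M$; we expect this naturality to be the main obstacle. Granting it, $x_{k}$ is the $k$-th component of $1_{n}$, so $x_{k}=x_{k}\cdot 1_{n}=\sum_{j}x_{k}x_{j}$. Right multiplication preserves the components, and $x_{k}x_{j}$ is the $j$-th component of $x_{k}$, which vanishes unless $j=k$. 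Hence $x_{k}x_{j}=\delta_{jk}x_{k}$.

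Finally we check \ref{si3}. Take $\beta\in U_{-m}$. The element $[\beta]^{\mathsf{L}}_{n}$ lies in degree $-m$ of $\mathfrak{L}^{n}$, so its $\gr\Omega$-components live in $\Omega_{j}$ only for $j\ge m$, since $\Omega_{j}$ in degree $-m$ is zero for $j<m$. Writing $[\beta]^{\mathsf{L}}_{n}=[\beta]\cdot 1_{n}=\sum_{k}\beta x_{k}$, the equivariance above, together with the fact that left multiplication by $U_{-m}$ shifts $\gr\Omega$-degree by $-m$ (the grading statement in the splitting), forces $\beta x_{k}=0$ for $k<m$. The right-hand statement $e_{<m}\widehat{U}_{m}=0$ follows symmetrically from the $\mathfrak{R}^{n}$ half of \ref{Omegasplit}. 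With \ref{si4} in force, $N_{m}=m$, so this is exactly \ref{si3}.
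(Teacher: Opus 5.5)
\paragraph{A genuine gap at the first step.} Your construction depends on the claim that the $\gr\Omega$-components $x_k$ of $1_n$ lie in $\mathfrak{L}^n_0=\mathsf{A}_n$, and this claim is not justified. \zcref{coro:Omega-vs-grading} only says that each $\Omega_k(\mathfrak{L}^n)$ is a graded subspace. It says nothing about the complements $\mathfrak{L}^n_{(k)}$ supplied by ($\Omega$split).

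For example, in $\mathfrak{L}^1$ of the Weyl algebra, $\Omega_0$ contains the degree $-1$ class $[\partial]$. So gradedness of $\Omega_0\subset\Omega_1$ alone does not stop a complement from containing $[x\partial]+c[\partial]$.

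Homogeneity does hold a posteriori: once an SIE exists, any splitting into $U_0$-stable pieces is forced to be $W_{(k)}=e_kW$. But that is what you are trying to prove. Without an independent argument, your $e_k$ are not yet elements of $\widehat{U_0}$, and everything downstream rests on this.

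\paragraph{Two further incomplete steps.} First, orthogonality is ``granted'' via equivariance of the splitting under right multiplication, which you never establish. Left multiplication by a lift $\xi_k\in U_0$ of $x_k$ would suffice: it preserves each $\mathfrak{L}^n_{(j)}$ because the splitting is one of $U$-modules, so $x_k=\xi_k 1_n=\sum_j\xi_k x_j$ forces $x_kx_j=\delta_{jk}x_k$. But this again needs homogeneity first. Second, the right half of (\texttt{si3}) is proved only for elements $f_k$ built from the splitting of $\mathfrak{R}^n$. You give no argument that $f_k=e_k$.

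\paragraph{How to repair it.} The paper avoids all components except the top one: it uses only the $U$-linear sections $1_{n-1}\mapsto 1_n-x_n$ of $\pi_n$ from the preceding proposition. You can follow the same line.
\begin{enumerate}
\item Replace $1_n-x_n$ by its degree-zero part $\epsilon_n$. This is where \zcref{coro:Omega-vs-grading} legitimately enters: $\Omega_{n-1}(\mathfrak{L}^n)$ is graded and $\pi_n$ is homogeneous, so $1_{n-1}\mapsto\epsilon_n$ is still a section.
\item Composing these gives compatible sections $\sigma_{n,k}\colon\mathfrak{L}^k\to\mathfrak{L}^n$, and hence elements $E_{\le k}:=(\sigma_{n,k}(1_k))_{n\ge k}\in\widehat{U_0}$.
\item Right multiplication by $E_{\le k}$ on $\mathfrak{L}^n$ is the idempotent obtained by composing the projection $\mathfrak{L}^n\to\mathfrak{L}^k$ with $\sigma_{n,k}$. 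Hence $E_{\le j}E_{\le k}=E_{\le\min(j,k)}$, and $e_k:=E_{\le k}-E_{\le k-1}$ satisfies (\texttt{si1}), (\texttt{si2}) and (\texttt{si4}). No equivariance of any splitting is needed.
\item For $\beta\in U_{-m}\subset\NL[m]U$ one gets $\beta E_{\le m-1}=\sigma_{n,m-1}([\beta]^{\mathsf{L}}_{m-1})=0$, which is the left half of (\texttt{si3}).
\item The same construction on the $\mathfrak{R}$ side gives idempotents $F_{\le k}$. Both $\mathsf{A}_n(1-E_{\le k})$ and $(1-F_{\le k})\mathsf{A}_n$ equal $\ker(\mathsf{A}_n\to\mathsf{A}_k)$, so $E_{\le k}=F_{\le k}$. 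This supplies the right half of (\texttt{si3}).
\end{enumerate}
This is the content of the paper's decompositions $\widehat{U}=\mathfrak{L}^n\oplus\NL[n+1]\widehat{U}$ and its identification $e_n=f_n$.
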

  \begin{proof}
    First note that, the canonical projections $\pi_{n}$ are morphisms of graded modules. Hence, if any $\pi_{n}$ splits, it would give a decomposition of graded $U$-modules $\mathfrak{L}^{n} = \ker(\pi_n)\oplus\mathfrak{L}^{n-1}$. 
    In particular, there are $\mathfrak{e}_{n} \in \ker(\pi_n)$ such that $1_{n} = \mathfrak{e}_{n} + 1_{n-1}$ through this decomposition.
    The same argument applies to $\mathfrak{R}^{n}$'s.
    Passing to the limit, we have decompositions of graded left and right $\widehat{U}$-modules respectively:
    \[
      \widehat{U} = \mathfrak{L}^{n} \oplus \NL[n+1]\widehat{U}
      \txand
      \widehat{U} = \mathfrak{R}^{n} \oplus \NR[n+1]\widehat{U}.
    \]
    This provides elements $(e_n)_{n\in\N}$ and $(f_n)_{n\in\N}$ in $\widehat{U_{0}}$ by tracing the image of $1_U$ under the above decompositions. Furthermore, since $\mathfrak{L}^{n}_{0} = \mathfrak{R}^{n}_{0} = \mathsf{A}_{0}$, we must have $e_n = f_n$.

    We claim that the elements $(e_n)_{n\in\N}$ verify the strong identity expansion. 
    Indeed, \zcref[noname]{si1,si2,si4} are clear from the construction. 
    It remains to check \zcref[noname]{si3}, which is due to the fact that $U_{-n} \subset \NL[n]U$ and $U_{n} \subset \NR[n]U$.
  \end{proof}

  Summarize the discussion in this section, we have
  \begin{theorem}\label{thm:SIE-Omega}
    An almost-canonically seminormed ring $U$ admits a strong identity expansion (\SIE) if and only if the condition \zcref[noname]{Omegasplit} is satisfied.
  \end{theorem}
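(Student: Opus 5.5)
The plan is to assemble the two directions already established in this section. For the forward implication, assume a strong identity expansion $(e_{n})_{n\in\N}$, normalized to satisfy \ref{si4}. \zcref{lem:splitOmega} shows that every exhaustive left $U$-module $W$ has a split $\Omega$-filtration, with splitting $W_{(n)}=e_{n}W$. The same argument, run for $\opp{U}$ (cf. \zcref{rem:opposite}) with $W_{(n)}=We_{n}$, handles exhaustive right modules; this is legitimate because the axioms \zcref[noname]{si1,si2,si3,si4} are symmetric under passing to $\opp{U}$. The corollary following \zcref{lem:splitOmega} then shows that the projections $\pi_{n}\colon\mathfrak{L}^{n}\to\mathfrak{L}^{n-1}$ and $\pi_{n}\colon\mathfrak{R}^{n}\to\mathfrak{R}^{n-1}$ preserve the $\gr\Omega$-gradings. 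The underlying reason is that $\pi_{n}$ is $\widehat{U_{0}}$-linear, so $\pi_{n}(e_{m}\mathfrak{L}^{n})=e_{m}\mathfrak{L}^{n-1}$. This establishes \zcref[noname]{Omegasplit}.

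For the converse, assume \zcref[noname]{Omegasplit}. First I would apply the proposition above to split each $\pi_{n}$ as a homomorphism of $U$-modules. To do this, decompose $1_{n}=x_{0}+\dots+x_{n}$ according to the splitting of $\Omega^{\mathsf{L}}_{n}(\mathfrak{L}^{n})$, observe that $x_{n}\in\ker\pi_{n}$, and send $1_{n-1}\mapsto 1_{n}-x_{n}$. Next I would pass to the inverse limit to obtain decompositions $\widehat{U}=\mathfrak{L}^{n}\oplus\NL[n+1]\widehat{U}$ and $\widehat{U}=\mathfrak{R}^{n}\oplus\NR[n+1]\widehat{U}$. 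Tracing the component of $1$ in each decomposition yields idempotent pieces $e_{n}$ and $f_{n}$ in $\widehat{U_{0}}$. These coincide because both sides restrict to $\mathsf{A}_{n}$ in degree $0$. Finally, I would check \zcref[noname]{si1,si2,si3,si4}, as in the corollary above.

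The main obstacle is the converse, and specifically two points: showing that $e_{n}=f_{n}$, and establishing \ref{si3}. For \ref{si3}, I would argue that the complement of $\mathfrak{L}^{n}$ is the left ideal generated by $e_{\ge n+1}$, so that $\widehat{U}e_{\le n}$ realizes $\mathfrak{L}^{n}$. Since $U_{-n-1}\subset\NL[n+1]U$, its image in $\mathfrak{L}^{n}$ vanishes, which gives $U_{-n-1}e_{\le n}=0$. The right-hand version is symmetric. For orthogonality \ref{si2}, I would use that the splittings are compatible across $n$; this compatibility is exactly where the requirement that the $\pi_{n}$ preserve the $\gr\Omega$-gradings enters. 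Combining the two directions then gives the stated equivalence.
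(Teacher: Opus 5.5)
Your route is the paper's own. The forward direction is \zcref{lem:splitOmega} plus the corollary after it, and your $\opp{U}$ remark supplies the right-module case. The converse is the proposition splitting $\pi_{n}$ via $1_{n-1}\mapsto 1_{n}-x_{n}$, followed by the corollary that reads off idempotents from $\widehat{U}=\mathfrak{L}^{n}\oplus\NL[n+1]\widehat{U}$ and $\widehat{U}=\mathfrak{R}^{n}\oplus\NR[n+1]\widehat{U}$.

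\emph{The gap.} The step that does not go through as you state it is the one you flag, $e_{n}=f_{n}$. That $e_{\le n}$ and $f_{\le n}$ both map to $1_{n}\in\mathsf{A}_{n}$ only says they agree modulo $\NLR[n+1]\widehat{U_{0}}$. Such lifts are not unique: $1$ itself is another idempotent lift of $1_{n}$. What pins them down is the one-sided annihilation coming from the sections:
\begin{itemize}
\item $\NL[n+1]\widehat{U}\cdot e_{\le n}=0$, because $e_{\le n}$ is the image of $1_{n}$ under a left $U$-linear section of $\widehat{U}\to\mathfrak{L}^{n}$;
\item $f_{\le n}\cdot\NR[n+1]\widehat{U}=0$, for the same reason on the right.
\end{itemize}
Both $1-e_{\le n}$ and $1-f_{\le n}$ lie in $\NLR[n+1]\widehat{U_{0}}$, which equals both $\NL[n+1]\widehat{U}_{0}$ and $\NR[n+1]\widehat{U}_{0}$. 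Hence $f_{\le n}=f_{\le n}e_{\le n}=e_{\le n}$. This is exactly what lets the right half of (\texttt{si}3) follow ``symmetrically'' from the $\mathfrak{R}$-side splitting. The paper's own sentence at this point is equally terse, and its $\mathsf{A}_{0}$ should read $\mathsf{A}_{n}$.

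\emph{Two smaller corrections.} First, the $\gr\Omega$-compatibility of $\pi_{n}$ is used only to get $x_{n}\in\ker\pi_{n}$. Indeed, $\pi_{n}(x_{n})$ lies in $\mathfrak{L}^{n-1}_{(n)}$, and it also equals $1_{n-1}-\sum_{k<n}\pi_{n}(x_{k})\in\Omega^{\mathsf{L}}_{n-1}(\mathfrak{L}^{n-1})$. So it lies in $\mathfrak{L}^{n-1}_{(n)}\cap\Omega^{\mathsf{L}}_{n-1}(\mathfrak{L}^{n-1})=0$.

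Orthogonality does not use this hypothesis, contrary to your last paragraph. Let $s_{k}$ be the section of $\pi_{k}$ and $\sigma_{n}\colon\mathfrak{L}^{n}\to\widehat{U}$ the induced limit map. Then $\sigma_{n}=\sigma_{m}\circ s_{m}\circ\cdots\circ s_{n+1}$ holds by construction. With $e_{\le n}:=\sigma_{n}(1_{n})$, left linearity gives $e_{\le m}e_{\le n}=e_{\le n}=e_{\le n}e_{\le m}$ for $n\le m$, whence (\texttt{si}2).

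Second, for $e_{\le n}$ to land in $\widehat{U_{0}}$ the sections must be graded. Replace $1_{n}-x_{n}$ by its degree-zero component. By \zcref{coro:Omega-vs-grading} it still lies in $\Omega^{\mathsf{L}}_{n-1}(\mathfrak{L}^{n})$, and it still maps to $1_{n-1}$.
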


\section{Strong identity expansion implies Morita-type equivalences}\label{sec:Morita}
  In this section, we will show that {\SIE} implies the following Morita-type equivalences: 
  \begin{enumerate}[leftmargin=5\parindent,font=\nota]
    \myItem[\textup{($\Phi\Omega\mathsf{L}_{R}$)}]\label{PhiOmegaL} 
        The adjoint pair 
        $\Phi^{\mathsf{L}}_{0}\colon \Mod(\mathsf{A}_{0}|{}_R) \rightleftharpoons \Mod[Ex](U|{}_R)\colon\Omega^{\mathsf{L}}_{0}$ 
        is an equivalence. 
    \myItem[\textup{($\Phi\Omega\mathsf{R}_{R}$)}]\label{PhiOmegaR} 
        The adjoint pair 
        $\Phi^{\mathsf{R}}_{0}\colon \Mod({}_R|\mathsf{A}_{0}) \rightleftharpoons \Mod[Ex]({}_R|U)\colon\Omega^{\mathsf{R}}_{0}$ 
        is an equivalence.
  \end{enumerate}
  Note that, one cannot expect similar statements to hold using the pairs $(-)_{0}\dashv\Phi^{\mathsf{L}}_{0}$ and $(-)_{0}\dashv\Phi^{\mathsf{R}}_{0}$ in \zcref{prop:adj:Phideg}. Indeed, one can always shift the grading on a gradable $U$-module to obtain non-isomorphic graded $U$-modules with the same underlying $U$-module.

  In this section, we assume that $U$ admits a strong identity expansion, saying $(e_{n})_{n\in\N}$.

\subsection{Morita-type equivalence}
  Now, we turn to the Morita-type equivalence. 
  The following lemma will serve as the general framework for the proof of Morita-type equivalences in this paper.

  \begin{lemma}\label{lem:AdjEq}
    Let $L\dashv R$ be an adjoint pair between abelian categories such that 
    \begin{itemize}
      \item the unit map $\eta\colon\id\to R\circ L$ is a natural isomorphism; and
      \item the counit map $\varepsilon\colon L\circ R\to \id$ is a natural epimorphism.
    \end{itemize}
    Then, $L\dashv R$ is an adjoint equivalence.
  \end{lemma}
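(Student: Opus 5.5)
The plan is to show that $L$ is fully faithful and essentially surjective, extracting both properties from the triangle identities together with the two hypotheses. Throughout, write $\mathcal{C}$ for the source of $L$ and $\mathcal{D}$ for its target.

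The key step is to prove that $\varepsilon$ is a monomorphism, so that, being already an epimorphism in an abelian category, it is an isomorphism. Fix $Y\in\mathcal{D}$ and let $k\colon K\to LR(Y)$ be the kernel of $\varepsilon_Y$. We must show $K=0$.

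\begin{enumerate}
  \item Since $\varepsilon_Y$ is epi, the sequence $0\to K\to LR(Y)\to Y\to 0$ is short exact. The right adjoint $R$ is left exact, so
  \[
    0\to R(K)\to RLR(Y)\xrightarrow{R\varepsilon_Y} R(Y)
  \]
  is exact.
  \item By the triangle identity, $R\varepsilon_Y\circ\eta_{R(Y)}=\id_{R(Y)}$. Since $\eta_{R(Y)}$ is an isomorphism, $R\varepsilon_Y$ is its inverse and hence also an isomorphism. Therefore $R(K)=0$.
  \item Apply the same argument to $K$ itself. The counit $\varepsilon_K\colon LR(K)=L(0)=0\to K$ is an epimorphism by hypothesis, which forces $K=0$.
\end{enumerate}

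Hence $\varepsilon_Y$ is an isomorphism for every $Y$. This step is the only real obstacle. The trick is to apply the epimorphism hypothesis a second time, to the kernel $K$ rather than to $Y$, and to combine it with the vanishing $R(K)=0$ that left exactness gives.

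With $\varepsilon$ and $\eta$ both natural isomorphisms, $L\dashv R$ is an adjoint equivalence by definition. Equivalently, the counit being an isomorphism makes $R$ fully faithful and the unit being an isomorphism makes $L$ fully faithful, so each functor is a quasi-inverse of the other.

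The only categorical input beyond the triangle identities is that right adjoints preserve kernels; additivity and the abelian structure are used only to form $K$ and to conclude that a map which is both monic and epic is an isomorphism.
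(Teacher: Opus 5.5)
Your argument is correct, and it reaches $K=\ker\varepsilon_Y=0$ by a different mechanism from the paper's. Both proofs reduce to showing that $\varepsilon$ is monic, and both apply the epimorphism hypothesis a second time, to $\varepsilon_K$.

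The paper applies the snake lemma to the naturality diagram of $\varepsilon$ along $0\to K\to LR(Y)\to Y\to 0$ and its image under $LR$. There the middle vertical map $\varepsilon_{LR(Y)}$ is an isomorphism by the other triangle identity $\varepsilon_{L(X)}\circ L(\eta_X)=\id_{L(X)}$ with $X=R(Y)$. The top row is exact because $LR(\varepsilon_Y)$ is invertible. The snake sequence $\ker\varepsilon_{LR(Y)}\to\ker\varepsilon_Y\to\coker\varepsilon_K$ then reads $0\to K\to 0$.

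You instead combine left exactness of $R$ with the triangle identity $R(\varepsilon_Y)\circ\eta_{R(Y)}=\id_{R(Y)}$. This gives the sharper intermediate fact $R(K)=0$, so the top-left corner $LR(K)$ of the paper's diagram is actually zero and no diagram chase is needed. Your route is more elementary and isolates exactly what is used: right adjoints preserve kernels, left adjoints preserve the initial object, and in an abelian category a monic epimorphism is an isomorphism. The paper's version packages the same ingredients into one snake-lemma chase without ever identifying $R(K)$.

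The one step you leave implicit, $L(0)\cong 0$, holds because $L$ is a left adjoint and therefore preserves the initial object.
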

  \begin{proof}
    It suffices to show that $\varepsilon$ is monic. Indeed, let $K$ be the kernel of the counit map $\varepsilon\colon L\circ R\to \id$. Then, we have a commutative diagram with exact rows and columns:
    \[
      \begin{tikzcd}
        & & 0\ar[d] & K\ar[d] \\
        & LRK \ar[r]\ar[d] & LRLR \ar[r]\ar[d,equal] & LR \ar[r]\ar[d] & 0\\
        0\ar[r] & K\ar[r]\ar[d] & LR\ar[r]\ar[d] & \id\ar[r]\ar[d] & 0 \\
        & 0 & 0 & 0
      \end{tikzcd}
    \]
    Then, the snake lemma implies that $K=0$.
  \end{proof}

  Applying \zcref{lem:AdjEq} to our case, we have:
  \begin{theorem}\label{thm:PhiOmega0}
    If $U$ admits a strong identity expansion (\SIE), then the adjoint pairs $\Phi^{\mathsf{L}}_{0}\dashv\Omega^{\mathsf{L}}_{0}$ and $\Phi^{\mathsf{R}}_{0}\dashv\Omega^{\mathsf{R}}_{0}$ are adjoint equivalence (i.e. \zcref[noname]{PhiOmegaL,PhiOmegaR} are verified).
  \end{theorem}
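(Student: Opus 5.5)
We apply \zcref{lem:AdjEq} to $L=\Phi^{\mathsf{L}}_{0}$ and $R=\Omega^{\mathsf{L}}_{0}$; the right-handed statement follows by the same argument applied to $\opp{U}$ (cf. \zcref{rem:opposite}), or by the evident dual bookkeeping. Two things must be checked: the unit $\eta_M\colon M\to\Omega^{\mathsf{L}}_{0}(\Phi^{\mathsf{L}}_{0}(M))$ is an isomorphism for every $(\mathsf{A}_{0}|{}_R)$-bimodule $M$, and the counit $\varepsilon_W\colon\Phi^{\mathsf{L}}_{0}(\Omega^{\mathsf{L}}_{0}(W))\to W$ is surjective for every exhaustive $W$.

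\emph{Unit.} The map $\eta_M$ sends $m\mapsto 1_0\otimes m$. It lands in degree $0$ of $\Phi^{\mathsf{L}}_{0}(M)$, which is $\mathsf{A}_{0}\otimes_{U_0}M=M$, so $\eta_M$ is injective. For surjectivity we need $\Omega_0(\Phi^{\mathsf{L}}_{0}(M))=\Phi^{\mathsf{L}}_{0}(M)_0$. Here we use \zcref{lem:splitOmega} together with \zcref{lem:grOmegaPhi}: the $\Omega$-filtration of $\Phi^{\mathsf{L}}_{0}(M)$ splits as $\bigoplus_k \Phi^{\mathsf{L}}_{0}(M)_{(k)}$, and the $\gr\Omega$-grading agrees with the natural grading. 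Consequently $\Omega_0=\Phi^{\mathsf{L}}_{0}(M)_{(0)}=\Phi^{\mathsf{L}}_{0}(M)_0$. The proof of \zcref{lem:grOmegaPhi} treats $M=\mathsf{A}_{0}$ and passes to general $M$ via $e_n(\mathfrak{L}^0\otimes_{U_0}M)=(e_n\mathfrak{L}^0)\otimes M$. This works because $e_n$ acts on $\mathfrak{L}^0$ from the left through the finite-level $\mathsf{A}_k$-actions, which commute with the right tensor factor. I will verify that compatibility explicitly.

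\emph{Counit.} Fix an exhaustive $W$. By \zcref{lem:splitOmega}, $W=\bigoplus_n W_{(n)}$ with $W_{(n)}=e_nW\subset\Omega_n(W)$, and $W_{(0)}=\Omega_0(W)$. It therefore suffices to show that every $w\in W_{(n)}$ lies in $U\cdot\Omega_0(W)$. By \ref{si4} and \eqref{eq:exp-egen}, $e_n\equiv e_{\ge n}$ modulo $\NLR[n+1]$. Acting on $\Omega_n(W)$, the element $e_n$ therefore acts as $e_n^{+}e_n^{-}=\sum_i e_n^{+}(i)e_n^{-}(i)$ with $e_n^{+}(i)\in U_n$ and $e_n^{-}(i)\in U_{-n}$. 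Hence
\[
  w=e_nw=\sum_i e_n^{+}(i)\bigl(e_n^{-}(i)w\bigr).
\]
It remains to check that $e_n^{-}(i)w\in\Omega_0(W)$, i.e.\ that it lies in $W_{(0)}$. Since $U_{-n}W_{(n)}\subset W_{(0)}$ by the grading statement in \zcref{lem:splitOmega}, this holds. So $w$ is in the image of $\varepsilon_W$, and $\varepsilon_W$ is an epimorphism.

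\emph{Main obstacle.} The hardest step is the unit: identifying $\Omega_0(\Phi^{\mathsf{L}}_{0}(M))$ with the degree-zero part, which requires reducing \zcref{lem:grOmegaPhi} from $\mathsf{A}_0$ to arbitrary $M$. The subtle point is that $e_n$ lives in the completion $\widehat{U_0}$, so one must check that its action on each graded piece $\mathfrak{L}^0_m\otimes_{U_0}M$ is through a finite quotient $\mathsf{A}_m$ and commutes with the tensor product. If that identification causes trouble, a fallback is available: any $x\in\Omega_0(\Phi^{\mathsf{L}}_{0}(M))$ decomposes into homogeneous components by \zcref{coro:Omega-vs-grading}. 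For a component of degree $m>0$, apply \ref{si3}: the element $e_{<N_m}$ kills $U_{-m}$ from the right, so $e_m$ acts as the identity on degree $m$. At the same time $e_m\Omega_0=0$, because $e_m\in\NLR[m]\widehat{U_0}\subset\NL[1]$. Together these force the degree-$m$ component to vanish, giving $\Omega_0(\Phi^{\mathsf{L}}_{0}(M))\subset\Phi^{\mathsf{L}}_{0}(M)_0$. With both conditions verified, \zcref{lem:AdjEq} concludes the proof.
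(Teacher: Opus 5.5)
Your proposal is correct and follows the paper's proof essentially verbatim. You use \zcref{lem:AdjEq}; you get the unit from \zcref{lem:grOmegaPhi}, which gives $\Omega^{\mathsf{L}}_{0}\Phi^{\mathsf{L}}_{0}(M)=\Phi^{\mathsf{L}}_{0}(M)_{(0)}=\Phi^{\mathsf{L}}_{0}(M)_{0}$; and you get surjectivity of the counit from $w=[e_n^{+}e_n^{-}]_n w=\sum_i e_n^{+}(i)\bigl(e_n^{-}(i)w\bigr)$, usefully making explicit the check $e_n^{-}(i)w\in\Omega^{\mathsf{L}}_{0}(W)$ that the paper leaves implicit.

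There is one slip in your fallback for the unit. The half of \ref{si3} you need is $e_{<N_m}\cdot\widehat{U}_m=0$, so that $e_{\ge m}$ acts as the identity on degree $m$ of $\Phi^{\mathsf{L}}_{0}(M)$ while $e_{\ge m}\in\NLR[1]\widehat{U_{0}}$ annihilates $\Omega^{\mathsf{L}}_{0}$ for $m\ge 1$. You instead cited $\widehat{U}_{-m}\cdot e_{<N_m}=0$. With that correction, the fallback is a valid direct argument for general $M$.
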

  \begin{proof}
    We only focus on the adjoint pair $\Phi^{\mathsf{L}}_{0}\dashv\Omega^{\mathsf{L}}_{0}$. The other one is similar. By \zcref{lem:AdjEq}, it boils down to verifying that 
    \begin{enumerate}
      \item the unit map $\eta_{M}\colon M\mapsto\Omega^{\mathsf{L}}_{0}\Phi^{\mathsf{L}}_{0}(M)$ is bijective for any left $\mathsf{A}_{0}$-module $M$; and
      \item the counit map $\varepsilon_{W}\colon\Phi^{\mathsf{L}}_{0}\Omega^{\mathsf{L}}_{0}(W)\to W$ is surjective for any exhaustive left $U$-module $W$.
    \end{enumerate}
    For (1), this is due to $\Omega^{\mathsf{L}}_{0}\Phi^{\mathsf{L}}_{0}(M)=\Phi^{\mathsf{L}}_{0}(M)_{(0)}=\Phi^{\mathsf{L}}_{0}(M)_{0}$ (by \zcref{lem:grOmegaPhi}). 
    For (2), we argue as follows.
    For any $w\in W_{(n)}$, since the action of $\widehat{U_{0}}$ on it factors through $\mathsf{A}_{n}$, we have 
    \[
      w=1\cdot w=[e_n]_{n}w=[e_{\ge n}]_{n}w=[e_{n}^{+}e_{n}^{-}]_{n}w.
    \] 
    Then, $w$ is the image of 
    \[
      \sum_{i}[e_{n}^{+}(i)]^{\mathsf{L}}_{0}\otimes(e_{n}^{-}(i) w)\in\Phi^{\mathsf{L}}_{0}(\Omega^{\mathsf{L}}(W))_{n},
    \]
    where the summation ranges over all $i$ such that $[e_{n}^{+}(i)e_{n}^{-}(i)]_{n}\neq0$.
  \end{proof}

  \begin{warning}\label{rem:noPhiOmegan}
    The above adjoint equivalence has no chance to be generalized to the adjoint pairs $\Phi^{\mathsf{L}}_{n}\dashv\Omega^{\mathsf{L}}_{n}$ for $n>0$. Indeed, for a left $\mathsf{A}_{n}$-module $M$, the unit map $\eta_{M}\colon M\to\Omega^{\mathsf{L}}_{n}\Phi^{\mathsf{L}}_{n}(M)$ is still injective, but its image lives in $\Phi^{\mathsf{L}}_{n}(M)_{n}$, which is often not the whole $\Omega^{\mathsf{L}}_{n}(\Phi^{\mathsf{L}}_{n}(M))$. To see this, we have the following lemma inspired by \cite[4.1]{DLM2}:
  \end{warning}
  \begin{lemma}\label{lem:bottem}
    Let $M$ be a left $\mathsf{A}_{n}$-module. If $M$ is not annihilated by $\NLR[n]\mathsf{A}_{n}$, then $\Phi_{n}(M)_{-n}\neq0$.
  \end{lemma}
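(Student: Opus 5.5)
We argue by contraposition: assume $\Phi^{\mathsf{L}}_{n}(M)_{-n}=0$ and show that $\NLR[n]\mathsf{A}_{n}$ annihilates $M$. By definition, $\Phi^{\mathsf{L}}_{n}(M)_{-n}=\mathfrak{L}^{n}_{-n}\otimes_{U_{0}}M$. Hence for every $\beta\in U_{-n}$ and $m\in M$ we have $[\beta]^{\mathsf{L}}_{n}\otimes m=0$. We then act on the left by an element $\alpha\in U_{n}$. The left $U$-action on $\Phi^{\mathsf{L}}_{n}(M)$ sends this to
\[
  [\alpha\beta]^{\mathsf{L}}_{n}\otimes m=[\alpha\beta]_{n}\otimes m\in\Phi^{\mathsf{L}}_{n}(M)_{0}.
\]
By the unit of the adjunction in \zcref{prop:adj:Phideg}, $\Phi^{\mathsf{L}}_{n}(M)_{0}\cong\mathsf{A}_{n}\otimes_{U_{0}}M=M$ via $[u]_{n}\otimes m\mapsto [u]_{n}m$. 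So we obtain $[\alpha\beta]_{n}\,m=0$ for all $\alpha\in U_{n}$, $\beta\in U_{-n}$, $m\in M$.

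It remains to see that the classes $[\alpha\beta]_{n}$ with $\alpha\in U_{n}$, $\beta\in U_{-n}$ span $\NLR[n]\mathsf{A}_{n}$. By \ref{A2p} (equivalently \ref{A2}), we have $\NL[n]U_{0}=\cNL[n]U_{0}+\NL[n+1]U_{0}$. Therefore the image of $\NL[n]U_{0}$ in $\mathsf{A}_{n}=U_{0}/\NLR[n+1]U_{0}$ equals the image of $\cNL[n]U_{0}=(UU_{\le -n})_{0}$. This space is spanned by products $\gamma\beta'$ with $\beta'\in U_{-k}$, $k\ge n$, and $\gamma\in U_{k}$. The same computation appears in the exactness part of \zcref{prop:decomposition_of_fA}, where the image of $\mu$ is exactly $\ker(\mathsf{A}_n\to\mathsf{A}_{n-1})=\NLR[n]\mathsf{A}_{n}$.

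For $k\ge n+1$ such products already lie in $\NL[n+1]U_{0}$ and so vanish in $\mathsf{A}_{n}$. For $k=n$ they are exactly the elements $\alpha\beta$ handled above. Note that weak unitality gives $U_{\le -n}\subset U_0U_{\le-n}$, so there is no extra generality lost. Combining the two paragraphs, $\NLR[n]\mathsf{A}_{n}\cdot M=0$, which is the contrapositive of the claim.

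The main point needing care is the identification of $\Phi^{\mathsf{L}}_{n}(M)_{0}$ with $M$ and the compatibility of the left $U$-action with the class $[\alpha\beta]_{n}$ in degree $0$. Both come from $\mathfrak{L}^{n}_{0}=\mathsf{A}_{n}$ and $\mathsf{A}_n\otimes_{U_0}M=M$, since the $U_0$-action on $M$ factors through $\mathsf{A}_n$. No other obstacle is expected.
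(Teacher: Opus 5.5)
Your proof is correct and follows the paper's argument. The paper also observes that $U_{-n}\Phi_n(M)_0\subset\Phi_n(M)_{-n}=0$, so $U_nU_{-n}$ annihilates $\Phi_n(M)_0\cong M$, and then appeals to (A2') to conclude that $\NLR[n]\mathsf{A}_n$ annihilates $M$. You additionally spell out the step the paper leaves implicit: the products $U_kU_{-k}$ with $k\ge n+1$ lie in $\NL[n+1]U_0$, so the image of $U_nU_{-n}$ already spans $\NLR[n]\mathsf{A}_n$.
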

  \begin{proof}
    Otherwise, we have $U_{-n}\Phi_{n}(M)_{0}\subset\Phi_{n}(M)_{-n}=0$. Then, we see that $U_{n}U_{-n}\subset U_0$ annihilates $\Phi_{n}(M)_{0}$ on the left and hence annihilates $M$. 
    By \zcref[noname]{A2p}, this shows that $\NLR[n]\mathsf{A}_{n}$ annihilates $M$, a contradiction.
  \end{proof}
  Hence, by \zcref{lem:grOmegaPhin}, the unit map cannot be surjective in general.

\subsection{For quasi-rigid modules}
  Applying \zcref{prop:adj:ordinary}, we have the following corollary for quasi-rigid modules.
  \begin{corollary}\label{coro:PhiOmega0-ord}
    Suppose either 
    \begin{enumerate}
      \item $U$ is \emph{quasi-rigid}: $\mathfrak{L}^{n} \in \Mod*[Gr](U|{}_{\mathsf{A}_{n}})$ and $\mathfrak{R}^{n} \in \Mod*[Gr]({}_{\mathsf{A}_{n}}|U)$; or 
      \item $R$ is semisimple and $U$ is \emph{weakly quasi-finite}: components of $\mathfrak{L}^{n},\mathfrak{R}^{n}$ are finite over $\mathsf{A}_{n}$.
    \end{enumerate}
    If $U$ admits a strong identity expansion (\SIE), then the adjoint pairs
    \[
      \Phi^{\mathsf{L}}_{0}\colon \Mod*(\mathsf{A}_{0}|{}_R) \rightleftharpoons \Mod*[Ex](U|{}_R)\colon\Omega^{\mathsf{L}}_{0}
      \txand
      \Phi^{\mathsf{R}}_{0}\colon \Mod*({}_R|\mathsf{A}_{0}) \rightleftharpoons \Mod*[Ex]({}_R|U)\colon\Omega^{\mathsf{R}}_{0}
    \]
    are adjoint equivalences.
  \end{corollary}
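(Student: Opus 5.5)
We deduce the corollary from \zcref{thm:PhiOmega0}, which already gives the equivalences $\Phi^{\mathsf{L}}_{0}\dashv\Omega^{\mathsf{L}}_{0}$ and $\Phi^{\mathsf{R}}_{0}\dashv\Omega^{\mathsf{R}}_{0}$ on the full categories $\Mod(\mathsf{A}_{0}|{}_R)\simeq\Mod[Ex](U|{}_R)$. An adjoint equivalence restricts to an adjoint equivalence between full subcategories as soon as each functor maps one subcategory into the other. Unit and counit are then still isomorphisms, since the subcategories are full. So we only check two things: $\Phi^{\mathsf{L}}_{0}$ sends $\Mod*(\mathsf{A}_{0}|{}_R)$ into $\Mod*[Ex](U|{}_R)$, and $\Omega^{\mathsf{L}}_{0}$ sends $\Mod*[Ex](U|{}_R)$ into $\Mod*(\mathsf{A}_{0}|{}_R)$. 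The right-handed statements are symmetric.

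\emph{Induction preserves quasi-rigidity.} Let $M\in\Mod*(\mathsf{A}_{0}|{}_R)$. By \zcref{prop:adj:ordinary} (case (i) or (ii), with $n=0$), $\Phi^{\mathsf{L}}_{0}(M)$ is a quasi-rigid positively-graded bimodule. Its underlying exhaustive module therefore lies in $\Mod*[Ex](U|{}_R)$ by \zcref{def:ordinary}. Concretely, in case (i) each component is $\mathfrak{L}^{0}_{p}\otimes_{\mathsf{A}_{0}}M$, which is rigid over $R$ by \zcref{lem:rigidmodule}. Case (ii) is the finiteness argument given there.

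\emph{$\Omega^{\mathsf{L}}_{0}$ preserves quasi-rigidity.} This is the real point, since the remark after \eqref{eq:degree-0-ordinary} warns that it fails in general. Let $W\in\Mod*[Ex](U|{}_R)$, so $W$ is the underlying module of a graded bimodule $W_{\bullet}$ whose components $W_{k}$ are rigid over $R$. By \zcref{coro:Omega-vs-grading}, $\Omega^{\mathsf{L}}_{0}(W)=\bigoplus_{k}(\Omega^{\mathsf{L}}_{0}(W)\cap W_{k})$. Each summand is an $R$-submodule of $W_{k}$; it is $R$-stable because the right $R$-action commutes with $U$. The plan is to use \SIE{} to show each summand is a \emph{direct summand} of $W_{k}$, and that only finitely many summands are nonzero.

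For the first claim, the idempotent $e_{0}\in\widehat{U_{0}}$ acts on each $W_{k}$, since $U_{0}$ preserves degrees and the action is continuous on the discrete space $W$. By the proof of \zcref{lem:splitOmega}, $\Omega_{0}(W)=W_{(0)}=e_{0}W$, so $\Omega^{\mathsf{L}}_{0}(W)\cap W_{k}=e_{0}W_{k}$. This is a direct summand of the rigid $R$-module $W_{k}$, via the $R$-linear idempotent $e_{0}$. A direct summand of a rigid module is rigid.

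\emph{Main obstacle: finiteness.} We must show $e_{0}W_{k}=0$ for all but finitely many $k$, so that the direct sum over $k$ stays rigid. Since $W$ is positively graded, $W_{k}=0$ for $k\ll0$; the issue is large $k$. I would argue via the counit equivalence: $W\cong\Phi^{\mathsf{L}}_{0}(\Omega_{0}W)$, and we use the grading on $W_{(\bullet)}$ versus $W_{\bullet}$. If $w\in e_{0}W_{k}$, then $w$ is killed by $U_{\le -1}$, so $U_{-k}$ kills $W_k\cap\Omega_0$. Comparing with \zcref{lem:grOmegaPhi}, the grading $W_{\bullet}$ restricted to the generating subspace $\Omega_{0}W$ makes $W$ a direct sum of shifted induced modules $\bigoplus_{k}\Phi^{\mathsf{L}}_{0}(e_{0}W_{k})[-k]$. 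Now take $W$ as given to us in quasi-rigid form; since rigid-over-$R$ here means finitely generated projective for each fixed degree, each fixed degree $W_{k}$ receives contributions $\mathfrak{L}^{0}_{k-j}\otimes e_{0}W_{j}$ from all $j\le k$. Positivity bounds $j$ from below, but not above. So if infinitely many $e_{0}W_{j}$ are nonzero, the best available fix is to instead \emph{regrade} $W$ by its $\gr\Omega$-grading $W_{(\bullet)}$. By \zcref{lem:splitOmega} this is a positive grading, and it has degree-zero part $e_{0}W=\bigoplus_{k}e_{0}W_{k}$. I expect this step to need the additional observation that quasi-rigidity of $W$ may be witnessed by the grading $W_{(\bullet)}$. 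Alternatively, one can define $\Mod*[Ex]$ so that the witness grading is the one with $\Omega_0$ in a single degree. I would first try to show $\Omega^{\mathsf{L}}_{0}(W)$ is rigid as a \emph{finite} direct sum, by using that the induced decomposition above forces $W_{k}\supset e_{0}W_{k}$ nonzero to contribute to infinitely many higher degrees. If that fails, I would restrict attention to the witness grading given by $W_{(\bullet)}$.
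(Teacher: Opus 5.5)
\textbf{There is a genuine gap, at the finiteness step, and no argument can close it for the statement as literally defined.} The rest is fine: the reduction to \zcref{thm:PhiOmega0}, the induction side, and your observation that $\Omega^{\mathsf{L}}_{0}(W)\cap W_{k}=e_{0}W_{k}$ is an $R$-linear direct summand of $W_{k}$.

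If quasi-rigidity of an exhaustive module means, as in \zcref{def:ordinary}, that \emph{some} positive grading has rigid components, then $\Omega^{\mathsf{L}}_{0}$ does not land in $\Mod*(\mathsf{A}_{0}|{}_R)$. Your decomposition of $W$ into shifted induced modules gives a counterexample:
\begin{itemize}
\item[$\circ$] Take a nonzero $M\in\Mod*(\mathsf{A}_{0}|{}_R)$ and set $W:=\bigoplus_{j\ge0}\Phi^{\mathsf{L}}_{0}(M)[j]$, where $[j]$ shifts the grading up by $j$.
\item[$\circ$] Then $W_{k}=\bigoplus_{j=0}^{k}\mathfrak{L}^{0}_{k-j}\otimes_{\mathsf{A}_{0}}M$ is a finite sum of modules rigid over $R$ by \zcref{prop:adj:ordinary}, so $W\in\Mod*[Ex](U|{}_R)$.
\item[$\circ$] But $\Omega^{\mathsf{L}}_{0}$ commutes with direct sums, and $\Omega^{\mathsf{L}}_{0}\Phi^{\mathsf{L}}_{0}(M)\cong M$ under \SIE{}. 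Hence $\Omega^{\mathsf{L}}_{0}(W)\cong M^{\oplus\N}$, which is not finitely generated over $R$, so not rigid.
\end{itemize}
For the Weyl algebra with $\k=R=\Q$ and $M=\k$, this $W$ is $\k[\vect{x}]^{\oplus\N}$ with a staggered grading. Its components are finite-dimensional, but its space of horizontal sections is infinite-dimensional.

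This also rules out your two proposed fixes. Your first route cannot produce a contradiction, because each fixed degree $W_k$ receives only the finitely many contributions $j\le k$. Your ``additional observation'' is false too: here $W_{(0)}=M^{\oplus\N}$, so $W_{(\bullet)}$ does not witness quasi-rigidity.

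\textbf{The statement becomes true only under your last alternative.} Under \SIE{}, measure quasi-rigidity of an exhaustive module by its $\gr\Omega$-grading $W_{(\bullet)}$, equivalently by a witness grading with $\Omega^{\mathsf{L}}_{0}(W)$ in degree $0$. \zcref{coro:EndMode} states this convention explicitly. The paper's proof adopts it tacitly: it writes $\Omega^{\mathsf{L}}_{0}=(-)_{0}\circ(-)_{(\bullet)}$ via \zcref{lem:splitOmega} and appeals to \zcref{lem:grOmegaPhi}, and its concluding ``in particular'' is valid only under this reading.

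With this convention the $\Omega$-side is immediate, since $\Omega^{\mathsf{L}}_{0}(W)=W_{(0)}$ is itself a witness component, and your direct-summand and finiteness analysis is unnecessary. What you then need on the $\Phi$-side, and did not use, is \zcref{lem:grOmegaPhi}. It says the $\gr\Omega$-grading of $\Phi^{\mathsf{L}}_{0}(M)$ coincides with its natural grading, whose components $\mathfrak{L}^{0}_{p}\otimes_{\mathsf{A}_{0}}M$ are rigid by \zcref{prop:adj:ordinary}. Checking rigidity only for the natural grading, as you do, would not place $\Phi^{\mathsf{L}}_{0}(M)$ in the subcategory under this reading. With that added, the restriction of \zcref{thm:PhiOmega0} goes through exactly as you describe.
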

  \begin{proof}
    First, by \zcref{prop:adj:ordinary}, the functor $\Phi^{\mathsf{L}}_{0}|_{\Mod*(\mathsf{A}_{0}|{}_R)}$ lands in $\Mod*[Gr](U|{}_R)$, and hence in $\Mod*[Ex](U|{}_R)$. 
    In particular, for any $M \in \Mod*(\mathsf{A}_{0}|{}_R)$, each component $\Phi^{\mathsf{L}}_{0}(M)_{n}$ is rigid over $R$.
    On the other hand, by \zcref{lem:splitOmega}, we see that $\Omega^{\mathsf{L}}_{0}$ is the composition 
    \[
      \Mod[Ex](U|{}_R) \overset{(-)_{(\bullet)}}{\longrightarrow} 
      \Mod[Gr](U|{}_R) \overset{(-)_{0}}{\longrightarrow} 
      \Mod(\mathsf{A}_{0}|{}_R).
    \]
    Then, by \zcref{lem:grOmegaPhi}, we conclude that all $\Omega^{\mathsf{L}}_{n}(\Phi^{\mathsf{L}}_{0}(M))$ are rigid over $R$. 
    In particular, the functor $\Omega^{\mathsf{L}}_{0}|_{\Mod*[Ex](U|{}_R)}$ lands in $\Mod*(\mathsf{A}_{0}|{}_R)$. 
    Finally, by \zcref{thm:PhiOmega0}, the adjoint pair is an adjoint equivalence. The other side is similar.
  \end{proof}

\section{Morita-type equivalence implies the strong identity condition}\label{sec:Morita_to_SIC}
  In this section, we will deduce the strong identity condition from the Morita-type equivalences \zcref[noname]{PhiOmegaL,PhiOmegaR} plus certain rigidity/finiteness conditions on $U$.

\subsection{Duals of induced modules}
  We need some preparations on graded dual modules.
  
  \begin{lemma}
    There are homogeneous homomorphisms 
    \[
      \psi^{\mathsf{L}}_{M} \colon \Phi^{\mathsf{L}}_{0}(M^{R|\vee}) \longrightarrow \Phi^{\mathsf{R}}_{0}(M)^{R|\dagger}
      \txand
      \psi^{\mathsf{R}}_{M} \colon \Phi^{\mathsf{R}}_{0}(M^{\vee|R}) \longrightarrow \Phi^{\mathsf{L}}_{0}(M)^{\dagger|R},
    \]
    for $({}_R|\mathsf{A}_{0})$-bimodules $M$ and $(\mathsf{A}_{0}|{}_R)$-bimodules $M$ respectively.
  \end{lemma}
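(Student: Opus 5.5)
The plan is to build $\psi^{\mathsf{L}}_{M}$ from an explicit bilinear pairing coming from the multiplication of $U$ and the perfect pairing $\ostar$ of \zcref{lem:def:ostar}; $\psi^{\mathsf{R}}_{M}$ will then follow by the mirror argument, or by passing to $\opp{U}$ as in \zcref{rem:opposite}. Here $M$ is an $({}_R|\mathsf{A}_{0})$-bimodule, so $M^{R|\vee}=\Hom_{R|}(M,R)$ is an $(\mathsf{A}_{0}|{}_R)$-bimodule. Hence $\Phi^{\mathsf{L}}_{0}(M^{R|\vee})=\mathfrak{L}^{0}\otimes_{U_{0}}M^{R|\vee}$ and $\Phi^{\mathsf{R}}_{0}(M)=M\otimes_{U_{0}}\mathfrak{R}^{0}$.

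First I define a pairing
\[
  \langle\, m\otimes[\beta]^{\mathsf{R}}_{0},\ [\alpha]^{\mathsf{L}}_{0}\otimes\varphi \,\rangle
  := \varphi\bigl(m\cdot([\beta]^{\mathsf{R}}_{0}\ostar[\alpha]^{\mathsf{L}}_{0})\bigr)\in R,
\]
where $[\beta]^{\mathsf{R}}_{0}\ostar[\alpha]^{\mathsf{L}}_{0}=[\beta\alpha]_{0}\in\mathsf{A}_{0}$ if $\deg\alpha+\deg\beta=0$, and $0$ otherwise. I then check that it is well defined in each tensor variable. Over $U_{0}$ this follows from associativity: $m u\otimes\beta$ and $m\otimes u\beta$ give the same value, and $\alpha u\otimes\varphi$ and $\alpha\otimes u\varphi$ agree because $(u\varphi)(x)=\varphi(xu)$. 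The independence of the representatives $\alpha,\beta$ modulo $\NL[1]U$ and $\NR[1]U$ is exactly the well-definedness of $\ostar$. The pairing is left $R$-linear in $m$, and right $R$-linear in $\varphi$.

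Fix $x=[\alpha]^{\mathsf{L}}_{0}\otimes\varphi$ with $\deg\alpha=n\ge 0$. The functional $\langle-,x\rangle$ vanishes off $\Phi^{\mathsf{R}}_{0}(M)_{-n}$, so it lies in the degree $n$ component $(\Phi^{\mathsf{R}}_{0}(M)_{-n})^{R|\vee}$ of $\Phi^{\mathsf{R}}_{0}(M)^{R|\dagger}$. This gives a homogeneous $R$-linear map $\psi^{\mathsf{L}}_{M}$. It remains to check $U$-equivariance: for $u\in U$, the functional $\langle-,ux\rangle$ should equal $u\cdot\langle-,x\rangle=\langle(-)\cdot u,x\rangle$. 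Evaluated on $m\otimes[\beta]^{\mathsf{R}}_{0}$, both sides reduce to $\varphi(m[\beta u\alpha]_{0})$ by associativity of $U$ and of $\ostar$, namely $([\beta]^{\mathsf{R}}_{0}u)\ostar[\alpha]^{\mathsf{L}}_{0}=[\beta]^{\mathsf{R}}_{0}\ostar(u[\alpha]^{\mathsf{L}}_{0})$.

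The main obstacle is confirming that this middle-associativity of $\ostar$ with respect to $U$ holds, including the degree bookkeeping when $\deg(\beta u\alpha)\neq0$. I would handle it through the description of the inverse of $\ostar$ as the quotient $U\to U/(\NL[1]U+\NR[1]U)$: both expressions are the image of $\beta u\alpha$ in that quotient, which vanishes in nonzero degrees.
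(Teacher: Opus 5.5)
Your proposal is correct, and the map you construct is exactly the paper's $\psi^{\mathsf{L}}_{M}$. The difference is that you build it by hand, whereas the paper obtains it from a universal property.

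The paper applies the counit of the adjunction $\Phi^{\mathsf{L}}_{0}\dashv(-)_{0}$ to the graded module $W=\Phi^{\mathsf{R}}_{0}(M)^{R|\dagger}$. This $W$ is concentrated in nonnegative degrees because $\mathfrak{R}^{0}$ lives in nonpositive degrees. The paper then uses the identification $W_{0}=(\Phi^{\mathsf{R}}_{0}(M)_{0})^{R|\vee}=(M\otimes_{U_{0}}\mathsf{A}_{0})^{R|\vee}=M^{R|\vee}$. Unwinding the counit $[\alpha]^{\mathsf{L}}_{0}\otimes\varphi\mapsto\alpha\cdot\varphi$ gives precisely your pairing $m\otimes[\beta]^{\mathsf{R}}_{0}\mapsto\varphi(m[\beta\alpha]_{0})$.

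What each route buys:
\begin{itemize}
\item The adjunction route gives well-definedness over $U_{0}$, $U$-linearity and homogeneity for free. It only needs the earlier lemma that the graded dual is a $U$-submodule of the full dual.
\item Your explicit formula costs a few routine balancedness checks. In return it makes the map concrete, which is what the next lemma actually needs: there one traces $\psi^{\mathsf{L}}_{M}$ in degree $0$ and identifies it with the dual of the action of $\mathfrak{A}_{0}$.
\end{itemize}

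The step you flag as the main obstacle is not really one. For homogeneous $u$, both $[\beta]^{\mathsf{R}}_{0}\ostar(u[\alpha]^{\mathsf{L}}_{0})$ and $([\beta]^{\mathsf{R}}_{0}u)\ostar[\alpha]^{\mathsf{L}}_{0}$ are by definition $[\beta u\alpha]_{0}$ or $0$, according to whether $\deg(\beta u\alpha)=0$. In any case, $U$-balancedness is already built into $\ostar$ being defined on $\mathfrak{R}^{0}\otimes_{U}\mathfrak{L}^{0}$.
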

  \begin{proof}
    By the adjunction $\Phi^{\mathsf{L}}_{0}\dashv(-)_{0}$ in \zcref{prop:adj:Phideg}, for any $({}_R|\mathsf{A}_{0})$-bimodule $M$, we have a \emph{homogeneous} homomorphism to $\Phi^{\mathsf{R}}_{0}(M)^{R|\dagger}$ from 
    \[
      \Phi^{\mathsf{L}}_{0}\left((\Phi^{\mathsf{R}}_{0}(M)^{R|\dagger})_{0}\right) = \Phi^{\mathsf{L}}_{0}\left((\Phi^{\mathsf{R}}_{0}(M)_{0})^{R|\vee}\right) = \Phi^{\mathsf{L}}_{0}(M^{R|\vee}).
    \]
    This shows the first. The other one is similar. 
  \end{proof}

  \begin{lemma}\label{lem:induced-dual}
    Suppose \zcref[noname]{PhiOmegaL} (resp. \zcref[noname]{PhiOmegaR}) is verified. 
    Then, for any $({}_R|\mathsf{A}_{0})$-bimodule (resp. $(\mathsf{A}_{0}|{}_R)$-bimodule) $M$, the homomorphism $\psi^{\mathsf{L}}_{M}$ (resp. $\psi^{\mathsf{R}}_{M}$) is an isomorphism.
  \end{lemma}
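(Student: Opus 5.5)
The plan is to recognise $\psi^{\mathsf{L}}_{M}$ as the counit of the adjunction $\Phi^{\mathsf{L}}_{0}\dashv\Omega^{\mathsf{L}}_{0}$ of \zcref{prop:adj:PhiOmega}, evaluated at the module $X:=\Phi^{\mathsf{R}}_{0}(M)^{R|\dagger}$. Under \zcref[noname]{PhiOmegaL} this counit is an isomorphism, which gives the claim. I treat only $\psi^{\mathsf{L}}$; the case of $\psi^{\mathsf{R}}$ is symmetric, using \zcref[noname]{PhiOmegaR}.

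\emph{Step 1: $X$ is exhaustive.} The module $\Phi^{\mathsf{R}}_{0}(M)=M\otimes_{U_{0}}\mathfrak{R}^{0}$ is negatively graded, since $\mathfrak{R}^{0}$ lives in nonpositive degrees by weak unitality. Hence $X_{n}=(\Phi^{\mathsf{R}}_{0}(M)_{-n})^{R|\vee}$ vanishes for $n<0$, so $X$ is a positively-graded $(U|{}_R)$-bimodule. By \zcref{coro:Omega-vs-grading}, $X$ is therefore exhaustive and lies in $\Mod[Ex](U|{}_R)$.

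\emph{Step 2: $\Omega^{\mathsf{L}}_{0}(X)=X_{0}\cong M^{R|\vee}$.} Write $W:=\Phi^{\mathsf{R}}_{0}(M)$. By \zcref{lem:OmegaDual}, $\Omega^{\mathsf{L}}_{0}(W^{R|\vee})=(W\otimes_{U}\mathfrak{L}^{0})^{R|\vee}$. By \zcref{lem:def:ostar},
\[
W\otimes_{U}\mathfrak{L}^{0}=M\otimes_{U_{0}}\mathfrak{R}^{0}\otimes_{U}\mathfrak{L}^{0}\cong M\otimes_{U_{0}}\mathsf{A}_{0}=M .
\]
Concretely, the quotient map $W\to W\otimes_{U}\mathfrak{L}^{0}$ kills every $W_{k}$ with $k\neq 0$ and identifies $W_{0}=M$ with the target. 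So a functional in $\Omega^{\mathsf{L}}_{0}(W^{R|\vee})$ vanishes on all $W_{k}$ with $k\neq0$. Such a functional is automatically in the graded dual $X$, and it is concentrated in degree $0$. Since $\Omega^{\mathsf{L}}_{0}(X)=X\cap\Omega^{\mathsf{L}}_{0}(W^{R|\vee})$, this gives $\Omega^{\mathsf{L}}_{0}(X)=X_{0}=(W_{0})^{R|\vee}=M^{R|\vee}$, compatibly with the $\mathsf{A}_{0}$-actions.

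\emph{Step 3: compare the two counits.} By its construction, $\psi^{\mathsf{L}}_{M}$ is the counit of $\Phi^{\mathsf{L}}_{0}\dashv(-)_{0}$ at $X$, namely $[\alpha]^{\mathsf{L}}_{0}\otimes\varphi\mapsto\alpha\varphi$. By Step 2, $X_{0}=\Omega^{\mathsf{L}}_{0}(X)$, and both counits are the same action map. So $\psi^{\mathsf{L}}_{M}=\varepsilon_{X}$ for the adjunction $\Phi^{\mathsf{L}}_{0}\dashv\Omega^{\mathsf{L}}_{0}$. Under \zcref[noname]{PhiOmegaL} this adjunction is an equivalence, so $\varepsilon_{X}$ is an isomorphism. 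Because $\psi^{\mathsf{L}}_{M}$ is homogeneous, it is in fact an isomorphism of graded modules.

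I expect Step 2 to be the main obstacle. One must check carefully that the identification in \zcref{lem:OmegaDual} is compatible with the graded dual rather than the full dual. One must also check that the annihilator condition forces degree-zero concentration, which uses the explicit form of the inverse of $\ostar$ in \zcref{lem:def:ostar}.
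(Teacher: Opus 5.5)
Your proposal is correct and follows essentially the paper's route. Both arguments combine \zcref{lem:OmegaDual} with the isomorphism $\mathfrak{R}^{0}\otimes_{U}\mathfrak{L}^{0}\cong\mathsf{A}_{0}$ of \zcref{lem:def:ostar} to show that $\Omega^{\mathsf{L}}_{0}$ of the graded dual $\Phi^{\mathsf{R}}_{0}(M)^{R|\dagger}$ is just its degree-zero part $M^{R|\vee}$, and then invoke ($\Phi\Omega\mathsf{L}_{R}$). The paper packages this by showing $\Omega^{\mathsf{L}}_{0}(\psi^{\mathsf{L}}_{M})$ is an isomorphism (using that the unit is invertible) and concluding because an equivalence reflects isomorphisms; you instead identify $\psi^{\mathsf{L}}_{M}$ directly with the counit $\varepsilon_{X}$, which is slightly more direct.
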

  \begin{proof}
    By \zcref{prop:adj:PhiOmega}, we have the following commutative diagram.
    \[
      \begin{tikzcd}
        {(\Phi^{\mathsf{L}}_{0}(M^{R|\vee}))_{0}} \ar[d,hookrightarrow] \ar[r,equal] & 
        {M^{R|\vee}} \ar[r,equal] &
        {(\Phi^{\mathsf{R}}_{0}(M)^{R|\dagger})_{0}} \ar[d,hookrightarrow] \\
        {\Omega^{\mathsf{L}}_{0}(\Phi^{\mathsf{L}}_{0}(M^{R|\vee}))} \ar[rr,"{\Omega^{\mathsf{L}}_{0}(\psi^{\mathsf{L}})}"] & &
        {\Omega^{\mathsf{L}}_{0}(\Phi^{\mathsf{R}}_{0}(M)^{R|\dagger})} \ar[d,hookrightarrow] \\
        & & {\Omega^{\mathsf{L}}_{0}(\Phi^{\mathsf{R}}_{0}(M)^{R|\vee})}
      \end{tikzcd}
    \]
    By \zcref{lem:OmegaDual}, the last term is 
    \[
      \Omega^{\mathsf{L}}_{0}(\Phi^{\mathsf{R}}_{0}(M)^{R|\vee}) = \left( \Phi^{\mathsf{R}}_{0}(M) \otimes_{U} \mathfrak{L}^{0} \right)^{R|\vee} = \left( M \otimes_{\mathsf{A}_{0}} \mathfrak{R}^{0} \otimes_{U} \mathfrak{L}^{0} \right)^{R|\vee}.
    \]
    Tracing through the diagram, we see that the composition $M^{R|\vee} \to \left( M \otimes_{\mathsf{A}_{0}} \mathfrak{R}^{0} \otimes_{U} \mathfrak{L}^{0} \right)^{R|\vee}$ is given by the dual of the right action map of $\mathfrak{A}_{0}$ on $M$ (cf. \zcref{lem:def:ostar}), which is an isomorphism.
    Thus, $\psi^{\mathsf{L}}_{M}$ is an isomorphism by \zcref[noname]{PhiOmegaL}. The other case is similar.
  \end{proof}

  What we need is the following special case.
  \begin{lemma}\label{lem:LisDualOfR}
    Suppose \zcref[noname]{PhiOmegaL} (resp. \zcref[noname]{PhiOmegaR}) is verified for $R=\mathsf{A}_{0}$. 
    Then, we have 
    \[
      \mathfrak{L}^{0} \cong (\mathfrak{R}^{0})^{\mathsf{A}_{0}|\dagger}
      \qquad 
      \left(\text{resp. }
        \mathfrak{R}^{0} \cong (\mathfrak{L}^{0})^{\dagger|\mathsf{A}_{0}}
      \right).
    \]
  \end{lemma}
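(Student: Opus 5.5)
This is the special case $M=\mathsf{A}_{0}$ of \zcref{lem:induced-dual}, with $R=\mathsf{A}_{0}$. I treat only the $\mathfrak{L}$-statement; the other is symmetric.

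First, view $\mathsf{A}_{0}$ as an $({}_{\mathsf{A}_{0}}|\mathsf{A}_{0})$-bimodule, with base $R=\mathsf{A}_{0}$ acting on the left and the Zhu-algebra action on the right. Then
\[
  \Phi^{\mathsf{R}}_{0}(\mathsf{A}_{0})=\mathsf{A}_{0}\otimes_{U_{0}}\mathfrak{R}^{0}=\mathfrak{R}^{0}
\]
as an $(\mathsf{A}_{0}|U)$-bimodule. The left $\mathsf{A}_{0}$-structure is the one used for the graded dual $(-)^{\mathsf{A}_{0}|\dagger}$.

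Next, identify $M^{R|\vee}=\Hom_{\mathsf{A}_{0}}(\mathsf{A}_{0},\mathsf{A}_{0})$, where the Hom is taken as left $\mathsf{A}_{0}$-modules. This is canonically $\mathsf{A}_{0}$, with the left $\mathsf{A}_{0}$-action transported from the right action on the source. Since $\mathsf{A}_{0}$ is free of rank one, it is rigid, so no subtlety arises. Hence
\[
  \Phi^{\mathsf{L}}_{0}(M^{R|\vee})=\mathfrak{L}^{0}\otimes_{U_{0}}\mathsf{A}_{0}=\mathfrak{L}^{0}.
\]

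Finally, invoke \zcref{lem:induced-dual}. Under \zcref[noname]{PhiOmegaL} for $R=\mathsf{A}_{0}$, the map $\psi^{\mathsf{L}}_{\mathsf{A}_{0}}\colon\mathfrak{L}^{0}\to(\mathfrak{R}^{0})^{\mathsf{A}_{0}|\dagger}$ is an isomorphism, which is the claim.

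For the sake of an explicit description, I would also trace $\psi^{\mathsf{L}}$. It sends $[\alpha]^{\mathsf{L}}_{0}$ to the functional $[\beta]^{\mathsf{R}}_{0}\mapsto[\beta]^{\mathsf{R}}_{0}\ostar[\alpha]^{\mathsf{L}}_{0}$, that is, $[\beta\alpha]_{0}$ when the degrees cancel and $0$ otherwise, by \zcref{lem:def:ostar}. This follows by unwinding the counit of $\Phi^{\mathsf{L}}_{0}\dashv(-)_{0}$ at degree $0$, where the map is the identity of $\mathsf{A}_{0}$.

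The only point needing care is bookkeeping. One must make sure the identification $M^{R|\vee}\cong\mathsf{A}_{0}$ is compatible with the bimodule structures used in the definition of $\psi^{\mathsf{L}}$, namely which side is the base $R$ and which side is the Zhu action. Once the two $\mathsf{A}_{0}$-actions on $\mathsf{A}_{0}$ are matched correctly, the statement follows directly.
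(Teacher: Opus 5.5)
Your proposal is correct and follows the paper's argument: specialize \zcref{lem:induced-dual} to $M=\mathsf{A}_{0}$ and use $\mathsf{A}_{0}^{\mathsf{A}_{0}|\vee}\cong\mathsf{A}_{0}$, so that $\psi^{\mathsf{L}}_{\mathsf{A}_{0}}$ becomes an isomorphism $\mathfrak{L}^{0}\to(\mathfrak{R}^{0})^{\mathsf{A}_{0}|\dagger}$. Your explicit formula $[\alpha]^{\mathsf{L}}_{0}\mapsto\bigl([\beta]^{\mathsf{R}}_{0}\mapsto[\beta]^{\mathsf{R}}_{0}\ostar[\alpha]^{\mathsf{L}}_{0}\bigr)$ is also correct, and it spells out what the paper leaves implicit.
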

  \begin{proof}
    Applying \zcref{lem:induced-dual} to $M=\mathsf{A}_{0}$ and note that $\mathsf{A}_{0}^{\mathsf{A}_{0}|\vee} \cong \mathsf{A}_{0} \cong \mathsf{A}_{0}^{\vee|\mathsf{A}_{0}}$, we obtain the desired isomorphisms.
  \end{proof}

\subsection{Morita-type equivalence implies strong identity condition}
  Now, we need some assumptions. 
  \begin{theorem}\label{thm:Adjeq-SIC}
    Suppose either 
    \begin{enumerate}
      \item $\mathfrak{L}^{0} \in \Mod*[Gr](U|{}_{\mathsf{A}_{0}})$ and \zcref[noname]{PhiOmegaR} is verified for $R=\mathsf{A}_{0}$; or
      \item $\mathfrak{R}^{0} \in \Mod*[Gr]({}_{\mathsf{A}_{0}}|U)$ and \zcref[noname]{PhiOmegaL} is verified for $R=\mathsf{A}_{0}$.
    \end{enumerate} 
    Then, the strong identity condition (\SIC) holds.
  \end{theorem}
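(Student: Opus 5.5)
The plan is to use the Morita-type hypothesis to identify the mode transition algebra with a graded endomorphism ring, where identities exist for free. We treat case (i); case (ii) is the mirror image, obtained by swapping $\mathsf{L}$ and $\mathsf{R}$, or by passing to $\opp{U}$ via \zcref{rem:opposite}.

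First, assume \zcref[noname]{PhiOmegaR} for $R=\mathsf{A}_{0}$. By \zcref{lem:LisDualOfR}, the map $\psi^{\mathsf{R}}_{\mathsf{A}_{0}}$ gives an isomorphism $\mathfrak{R}^{0}\cong(\mathfrak{L}^{0})^{\dagger|\mathsf{A}_{0}}$. Degreewise this reads $\mathfrak{R}^{0}_{-m}\cong(\mathfrak{L}^{0}_{m})^{\vee|\mathsf{A}_{0}}$.

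I would make this isomorphism explicit by unwinding the construction of $\psi^{\mathsf{R}}$ through the adjunction $\Phi^{\mathsf{R}}_{0}\dashv(-)_{0}$ of \zcref{prop:adj:Phideg}. In degree zero it is the identity $\mathsf{A}_{0}\to(\mathfrak{L}^{0}_{0})^{\vee}=\mathsf{A}_{0}$. It is extended $U$-linearly, so $[\beta]^{\mathsf{R}}_{0}$ with $\beta\in U_{-m}$ is sent to the functional $[\alpha]^{\mathsf{L}}_{0}\mapsto[\beta\alpha]_{0}=[\beta]^{\mathsf{R}}_{0}\ostar[\alpha]^{\mathsf{L}}_{0}$. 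In other words, the duality is exactly the pairing $\ostar$ of \zcref{lem:def:ostar}. This compatibility is the step I expect to require the most care. One has to check that the right $U$-action on the graded dual matches the left multiplication used in $\ostar$, including the vanishing of $\ostar$ in mismatched degrees. Once it holds, everything else is formal.

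Next, use that $\mathfrak{L}^{0}$ is quasi-rigid, so each $\mathfrak{L}^{0}_{n}$ is rigid over $\mathsf{A}_{0}$. Then
\[
  \mathfrak{A}_{n,-m}=\mathfrak{L}^{0}_{n}\otimes_{\mathsf{A}_{0}}\mathfrak{R}^{0}_{-m}\cong\mathfrak{L}^{0}_{n}\otimes_{\mathsf{A}_{0}}(\mathfrak{L}^{0}_{m})^{\vee|\mathsf{A}_{0}}\cong\Hom_{\mathsf{A}_{0}}(\mathfrak{L}^{0}_{m},\mathfrak{L}^{0}_{n}),
\]
where the Hom is taken as right $\mathsf{A}_{0}$-modules.

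Since $\star$ is defined through $\ostar$, which is now the evaluation pairing, $\star$ corresponds to composition of homomorphisms. Concretely, $(x\otimes\varphi)\star(y\otimes\psi)=x\,\varphi(y)\otimes\psi$, which is $(x\otimes\varphi)\circ(y\otimes\psi)$ under the identification. For the same reason, the left action of $\mathfrak{A}_{n,-m}$ on $\mathfrak{L}^{0}_{m}$ becomes evaluation of homomorphisms. Likewise, the right action on $\mathfrak{R}^{0}_{-n}\cong(\mathfrak{L}^{0}_{n})^{\vee}$ becomes precomposition.

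Finally, define $\one_{n}\in\mathfrak{A}_{n}$ to be the element corresponding to $\id_{\mathfrak{L}^{0}_{n}}$. This is the coevaluation element $\sum_i x_i\otimes\xi_i$ attached to a dual basis of the rigid module $\mathfrak{L}^{0}_{n}$. Since identities are neutral for composition, we get $\one_{n}\star\mathfrak{a}=\mathfrak{a}=\mathfrak{a}\star\one_{m}$ for all $\mathfrak{a}\in\mathfrak{A}_{n,-m}$, which is \eqref{eq:SIC}. Equivalently, the action identities of \eqref{eq:SIC:action} hold because $\id$ acts trivially by evaluation and by precomposition. This proves \SIC.
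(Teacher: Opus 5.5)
Your proposal is correct and follows the paper's proof. You use the duality $\mathfrak{R}^{0}\cong(\mathfrak{L}^{0})^{\dagger|\mathsf{A}_{0}}$ coming from the Morita-type hypothesis, use rigidity of each $\mathfrak{L}^{0}_{n}$ over $\mathsf{A}_{0}$ to identify $\mathfrak{A}_{n,-m}$ with $\Hom_{|\mathsf{A}_{0}}(\mathfrak{L}^{0}_{m},\mathfrak{L}^{0}_{n})$, and take $\one_{n}$ to correspond to the identity. You also check explicitly that $\psi^{\mathsf{R}}_{\mathsf{A}_{0}}$ is the pairing $\ostar$, so that $\star$ becomes composition; this fills in a step the paper leaves implicit.
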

  \begin{remark}
    Be aware that $(\Phi\Omega\mathsf{L}_{\mathsf{A}_{0}})+(\Phi\Omega\mathsf{R}_{\mathsf{A}_{0}})$ only implies that each block $\mathfrak{A}_{p,-q}$ is reflexive (i.e. isomorphic to its double dual), which is \emph{weaker} than the above assumption.
    On the other hand, the above assumption is \emph{weaker} than \emph{quasi-rigidity} in \zcref{def:quasi}.
  \end{remark}
  \begin{proof}
    Let's assume (i). The proof in case (ii) is similar. 
    By \zcref{lem:LisDualOfR}, we have \emph{homogeneous} isomorphisms
    \begin{equation}\label{eq:ModeDecomposition}
      \mathfrak{A}_{\bullet,-\circ} 
      = 
      \mathfrak{L}^{0}_{\bullet}\otimes_{\mathsf{A}_{0}}\mathfrak{R}^{0}_{-\circ}
      \cong
      \mathfrak{L}^{0}_{\bullet}\otimes_{\mathsf{A}_{0}}(\mathfrak{L}^{0})^{\dagger|\mathsf{A}_{0}}_{-\circ} 
      =
      \mathfrak{L}^{0}_{\bullet}\otimes_{\mathsf{A}_{0}}(\mathfrak{L}^{0}_{\circ})^{\vee|\mathsf{A}_{0}}.
    \end{equation}
    In particular, for each $n\ge 0$, we have
    \[
      \mathfrak{A}_{n} \cong 
      \mathfrak{L}^{0}_{n}\otimes_{\mathsf{A}_{0}}(\mathfrak{L}^{0}_{n})^{\vee|\mathsf{A}_{0}}
      =
      \operatorname{End}_{|\mathsf{A}_{0}}(\mathfrak{L}^{0}_{n}),
    \]
    where the last equality is the only place we need $\mathfrak{L}^{0} \in \Mod*[Gr](U|{}_{\mathsf{A}_{0}})$.

    Let $\one_{n}$ be the element of $\mathfrak{A}_{n}$ corresponding to $\id\in\operatorname{End}_{|\mathsf{A}_{0}}(\mathfrak{L}^{0}_{n})$. We conclude that the elements $(\one_{n})_{n\in\N}$ verify the strong identity condition.
  \end{proof}
  \begin{remark}
    Under the assumptions, all modules involved in the proof are in the small categories $\Mod*[Ex](U|{}_{\mathsf{A}_{0}})$, $\Mod*[Ex]({}_{\mathsf{A}_{0}}|U)$ and $\Mod*(\mathsf{A}_{0}|\mathsf{A}_{0})$. Thus, we can also deduce the strong identity condition from one of the following assumptions:
    \begin{enumerate}
      \item
          The adjoint pair 
          $\Phi^{\mathsf{L}}_{0}\colon \Mod*(\mathsf{A}_{0}|\mathsf{A}_{0}) \rightleftharpoons \Mod*[Ex](U|{}_{\mathsf{A}_{0}})\colon\Omega^{\mathsf{L}}_{0}$ 
          is an equivalence. 
      \item
          The adjoint pair 
          $\Phi^{\mathsf{R}}_{0}\colon \Mod*(\mathsf{A}_{0}|\mathsf{A}_{0}) \rightleftharpoons \Mod*[Ex]({}_{\mathsf{A}_{0}}|U)\colon\Omega^{\mathsf{R}}_{0}$ 
          is an equivalence.
    \end{enumerate}
  \end{remark}

% \subsection{Morita-type equivalence + quasi-finiteness implies strong identity condition}

  The assumption in \zcref{thm:Adjeq-SIC} can be generalized into the following form.

  \begin{theorem}\label{thm:AdjeqR-SIC}
    Suppose either 
    \begin{enumerate}
      \item $\mathfrak{L}^{0} \in \Mod*[Gr](U|{}_R)$, $\mathsf{A}_{0}^{R|\vee} \cong \mathsf{A}_{0}$ as $(\mathsf{A}_{0}|R)$-bimodules, and \zcref[noname]{PhiOmegaR} is verified; or
      \item $\mathfrak{R}^{0} \in \Mod*[Gr]({}_R|U)$, $\mathsf{A}_{0}^{\vee|R} \cong \mathsf{A}_{0}$ as $(R|\mathsf{A}_{0})$-bimodules, and \zcref[noname]{PhiOmegaL} is verified.
    \end{enumerate} 
    Then, the strong identity condition (\SIC) holds.
  \end{theorem}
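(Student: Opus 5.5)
The plan is to mirror the proof of \zcref{thm:Adjeq-SIC}, replacing the base $\mathsf{A}_{0}$ by $R$ and using the hypothesis on $\mathsf{A}_{0}^{R|\vee}$ to recover the needed self-duality. Assume (i); case (ii) is symmetric.

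\emph{Step 1.} Apply \zcref{lem:induced-dual} (valid under \zcref[noname]{PhiOmegaR} for the given $R$) to the $(\mathsf{A}_{0}|{}_R)$-bimodule $M=\mathsf{A}_{0}^{R|\vee}$. Strictly speaking, $\mathsf{A}_{0}^{R|\vee}$ is an $(\mathsf{A}_{0}|R)$-bimodule in the convention of that lemma, possibly after viewing $\mathsf{A}_{0}$ as an $(R|\mathsf{A}_{0})$-bimodule through the given structure. This gives
\[
  \Phi^{\mathsf{R}}_{0}\bigl((\mathsf{A}_{0}^{R|\vee})^{\vee|R}\bigr) \cong \Phi^{\mathsf{L}}_{0}(\mathsf{A}_{0}^{R|\vee})^{\dagger|R}.
\]
By hypothesis, $\mathsf{A}_{0}^{R|\vee}\cong \mathsf{A}_{0}$ as bimodules. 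Hence the right side is $\Phi^{\mathsf{L}}_{0}(\mathsf{A}_{0})^{\dagger|R}=(\mathfrak{L}^{0})^{\dagger|R}$.

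For the left side, $\mathfrak{L}^{0}_{0}=\mathsf{A}_{0}$ is rigid over $R$ by the quasi-rigidity assumption. Therefore $(\mathsf{A}_{0}^{R|\vee})^{\vee|R}\cong\mathsf{A}_{0}$, and the left side is $\mathfrak{R}^{0}$.

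So we obtain a homogeneous isomorphism $\mathfrak{R}^{0}_{-n}\cong(\mathfrak{L}^{0}_{n})^{\vee|R}$ of $(\mathsf{A}_{0}|U)$-bimodules. Here the left $\mathsf{A}_{0}$-structure on the dual comes from the right $R$-dual of $\mathsf{A}_{0}$ identified via the hypothesis.

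\emph{Step 2.} Substitute this into the definition of $\mathfrak{A}$:
\[
  \mathfrak{A}_{n}=\mathfrak{L}^{0}_{n}\otimes_{\mathsf{A}_{0}}\mathfrak{R}^{0}_{-n}\cong \mathfrak{L}^{0}_{n}\otimes_{\mathsf{A}_{0}}\mathsf{A}_{0}^{R|\vee}\otimes_{\mathsf{A}_{0}}\cdots.
\]
More precisely, one rewrites $(\mathfrak{L}^{0}_{n})^{\vee|R}\cong \mathsf{A}_{0}\otimes_{\mathsf{A}_{0}}(\mathfrak{L}^{0}_{n})^{\vee|R}$. Then $\mathfrak{L}^{0}_{n}\otimes_{\mathsf{A}_{0}}(\mathfrak{L}^{0}_{n})^{\vee|R}$ should be identified with a space of endomorphisms of $\mathfrak{L}^{0}_{n}$ that contains a canonical identity. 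Rigidity of $\mathfrak{L}^{0}_{n}$ over $R$ yields $\mathfrak{L}^{0}_{n}\otimes_{R}(\mathfrak{L}^{0}_{n})^{\vee|R}\cong\End_{|R}(\mathfrak{L}^{0}_{n})$.

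The coinvariant quotient from $\otimes_{R}$ to $\otimes_{\mathsf{A}_{0}}$ must then be handled using the self-duality $\mathsf{A}_{0}\cong\mathsf{A}_{0}^{R|\vee}$, as in the Frobenius-type trace argument. The coevaluation element $\mathrm{coev}\in \mathfrak{L}^{0}_{n}\otimes_{R}(\mathfrak{L}^{0}_{n})^{\vee|R}$ corresponding to $\id$ has a well-defined image in $\mathfrak{A}_{n}$; call it $\one_{n}$.

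\emph{Step 3.} Verify \eqref{eq:SIC:action}. Under the isomorphism of Step 1, the pairing $\ostar$ of \zcref{lem:def:ostar} becomes evaluation. Composed with the trace form $\mathsf{A}_{0}\to R$ coming from the Frobenius isomorphism, it matches the canonical pairing $\mathfrak{R}^{0}_{-n}\otimes\mathfrak{L}^{0}_{n}\to R$.

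With this identification, $\one_{n}\star[\alpha]^{\mathsf{L}}_{0}$ is computed by the zig-zag identity $(\id\otimes\mathrm{ev})(\mathrm{coev}\otimes\id)=\id$. It therefore equals $[\alpha]^{\mathsf{L}}_{0}$, and symmetrically on the right.

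The main obstacle is exactly this compatibility. One must check that the isomorphism $\psi^{\mathsf{R}}$ transports $\ostar$, which lands in $\mathsf{A}_{0}$, to the $R$-valued evaluation. One must also check that the passage between $\otimes_{R}$ and $\otimes_{\mathsf{A}_{0}}$ does not destroy the identity element. I would first trace $\psi^{\mathsf{R}}$ on degree-zero generators as in the proof of \zcref{lem:induced-dual}, where it was the dual of the $\mathfrak{A}_{0}$-action, and then extend by $U$-linearity.
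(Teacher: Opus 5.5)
Your argument follows the paper's own route: the duality lemma for induced modules plus the Frobenius hypothesis give $\mathfrak{R}^{0}_{-n}\cong(\mathfrak{L}^{0}_{n})^{\vee|R}$, and one then pushes $\id\in\End_{|R}(\mathfrak{L}^{0}_{n})$ along $\mathfrak{L}^{0}_{n}\otimes_{R}\mathfrak{R}^{0}_{-n}\to\mathfrak{A}_{n}$. However, the step you flag as the main obstacle does not just need care; it fails.

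The isomorphism of Step 1 is $r\mapsto E\circ(r\ostar -)$ for a Frobenius form $E\colon\mathsf{A}_{0}\to R$. So the coevaluation $\sum_i x_i\otimes r_i$ satisfies $\sum_i x_i\,E(r_i\ostar y)=y$. By contrast, $\one_{n}\star y=\sum_i x_i\,(r_i\ostar y)$ uses the $\mathsf{A}_{0}$-valued pairing. Since $E$ is not multiplicative, the zig-zag identity says nothing about the latter. Passing from $\otimes_{R}$ to $\otimes_{\mathsf{A}_{0}}$ really does destroy the identity: for $n=0$ the image is the index $\sum_i x_iy_i$ of the Frobenius extension, which need not be $1$.

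Here is a concrete case. Let $U=\k[x]/(x^{2})$ sit in degree $0$ and take $R=\k$ a field. Then $\mathfrak{L}^{0}=\mathfrak{R}^{0}=\mathsf{A}_{0}=\mathfrak{A}_{0}=U$, every hypothesis of (i) holds, and the strong identity condition holds with $\one_{0}=1$. Every nondegenerate form is $E(a+bx)=\lambda a+\mu b$ with $\mu\neq0$. The basis dual to $\{1,x\}$ under $(a,b)\mapsto E(ab)$ is $\{x/\mu,\ 1/\mu-\lambda x/\mu^{2}\}$. So your $\one_{0}$ equals $2x/\mu$, which is nilpotent. The paper's printed proof defines $\one_{n}$ the same way and also skips this verification, so it does not close the gap.

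Step 1 actually yields something sharper. By Frobenius reciprocity, $g\mapsto E\circ g$ is a bijection $\Hom_{|\mathsf{A}_{0}}(X,\mathsf{A}_{0})\to\Hom_{|R}(X,R)$ for right $\mathsf{A}_{0}$-modules $X$. Hence $r\mapsto r\ostar(-)$ is an isomorphism $\mathfrak{R}^{0}_{-n}\cong\Hom_{|\mathsf{A}_{0}}(\mathfrak{L}^{0}_{n},\mathsf{A}_{0})$. Consequently a strong identity at level $n$ exists exactly when $\mathfrak{L}^{0}_{n}$ has a dual basis \emph{over $\mathsf{A}_{0}$}, i.e.\ is finitely generated projective over $\mathsf{A}_{0}$. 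In that case, take $\one_{n}=\sum_i x_i\otimes r_i$ with $\sum_i x_i(r_i\ostar y)=y$; this works on the left. For the right identity, pair $\sum_i([\beta]^{\mathsf{R}}_{0}\ostar x_i)r_i-[\beta]^{\mathsf{R}}_{0}$ against every $y$ and use injectivity of $r\mapsto r\ostar(-)$.

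Rigidity over $R$ does not give projectivity over $\mathsf{A}_{0}$, so you need an extra argument. For $R=\k$ a field one is available:
\begin{itemize}
\item $\Phi^{\mathsf{R}}_{0}$ is an equivalence, hence exact, and it respects gradings, so $M\mapsto M\otimes_{\mathsf{A}_{0}}\mathfrak{R}^{0}_{-n}$ is exact.
\item Thus $\mathfrak{R}^{0}_{-n}$ is flat and finite-dimensional, hence projective, over $\mathsf{A}_{0}$.
\item The duality $\Hom(-,\mathsf{A}_{0})$ between finitely generated modules over the quasi-Frobenius algebra $\mathsf{A}_{0}$ transfers projectivity to $\mathfrak{L}^{0}_{n}$.
\end{itemize}
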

  \begin{proof}
    Let's assume (i). The proof in case (ii) is similar. 
    With the assumptions, the following variant of \zcref{lem:LisDualOfR} is true:
    \[
      \mathfrak{R}^{0} \cong (\mathfrak{L}^{0})^{\dagger|R}.
    \]

    Consider the following \emph{homogeneous} isomorphisms
    \[
      \mathfrak{L}^{0}_{\bullet}\otimes_{R}\mathfrak{R}^{0}_{-\circ}
      \cong
      \mathfrak{L}^{0}_{\bullet}\otimes_{R}(\mathfrak{L}^{0})^{\dagger|R}_{-\circ} 
      =
      \mathfrak{L}^{0}_{\bullet}\otimes_{R}(\mathfrak{L}^{0}_{\circ})^{\vee|R}.
    \]
    In particular, for each $n\ge 0$, we have
    \[
      \mathfrak{L}^{0}_{n}\otimes_{R}\mathfrak{R}^{0}_{-n} \cong 
      \mathfrak{L}^{0}_{n}\otimes_{R}(\mathfrak{L}^{0}_{n})^{\vee|R}
      =
      \operatorname{End}_{|R}(\mathfrak{L}^{0}_{n}).
    \]
    Let $\one_{n}$ be the image of $\id\in\operatorname{End}_{|R}(\mathfrak{L}^{0}_{n})$ under the map $\mathfrak{L}^{0}_{n}\otimes_{R}\mathfrak{R}^{0}_{-n} \to \mathfrak{A}_{n}$. We conclude that the elements $(\one_{n})_{n\in\N}$ verify the strong identity condition.
  \end{proof}

  \begin{remark}
    All modules involved in the proof of \zcref{thm:AdjeqR-SIC} are in the small categories $\Mod*[Ex](U|{}_R)$, $\Mod*[Ex]({}_R|U)$, $\Mod*(\mathsf{A}_{0}|{}_R)$, and $\Mod*({}_R|\mathsf{A}_{0})$. Thus, we can also deduce the strong identity condition from one of the following assumptions:
    \begin{enumerate}
      \item
          The adjoint pair 
          $\Phi^{\mathsf{L}}_{0}\colon \Mod*(\mathsf{A}_{0}|{}_R) \rightleftharpoons \Mod*[Ex](U|{}_R)\colon\Omega^{\mathsf{L}}_{0}$ 
          is an equivalence. 
      \item
          The adjoint pair 
          $\Phi^{\mathsf{R}}_{0}\colon \Mod*({}_R|\mathsf{A}_{0}) \rightleftharpoons \Mod*[Ex]({}_R|U)\colon\Omega^{\mathsf{R}}_{0}$ 
          is an equivalence.
    \end{enumerate}
  \end{remark}
  \begin{remark}
    The two conditions in \zcref{thm:AdjeqR-SIC} are equivalent. This can be seen from the proof. In literatures (e.g. \cite{Kadison99,Lam99}), the equivalent conditions ``$\mathsf{A}_{0}^{R|\vee} \cong \mathsf{A}_{0}$ as $(\mathsf{A}_{0}|R)$-bimodules'' and ``$\mathsf{A}_{0}^{\vee|R} \cong \mathsf{A}_{0}$ as $(R|\mathsf{A}_{0})$-bimodules'' characterize \emph{Frobenius extensions}. 
    For this reason, we propose to call them the \concept{Frobenius conditions}.
    For example, if both $R$ and $\mathsf{A}_{0}$ are semisimple, then, by \emph{Wedderburn-Artin}, these two conditions are satisfied.
  \end{remark}

  Recall that a $\k$-algebra is \emph{augmented} if it admits a $\k$-algebra homomorphism to $\k$. 
  Frobenius algebras are augmented. But the latter is more general.
  \begin{theorem}\label{thm:Adjeqk-SIC}
    Suppose $\mathsf{A}_{0}$ is augmented and either
    \begin{enumerate}
      \item $\Phi^{\mathsf{L}}_{0}(\k) \in \Mod*[Gr](U|)$ and \zcref[noname]{PhiOmegaR} is verified; or
      \item $\Phi^{\mathsf{R}}_{0}(\k) \in \Mod*[Gr](|U)$ and \zcref[noname]{PhiOmegaL} is verified.
    \end{enumerate} 
    Then, the strong identity condition (\SIC) holds.
  \end{theorem}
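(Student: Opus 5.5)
We follow the pattern of \zcref{thm:Adjeq-SIC,thm:AdjeqR-SIC}, with the ground ring $R=\k$ and the augmentation $\epsilon\colon\mathsf{A}_{0}\to\k$ replacing the Frobenius condition. Assume (i); case (ii) is symmetric. View $\k$ as a left $\mathsf{A}_{0}$-module via $\epsilon$ and as a right $\mathsf{A}_{0}$-module via $\epsilon$; note $\k^{\vee}=\k^{\ast}=\k$ with the same $\mathsf{A}_{0}$-structure.

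\textbf{Step 1: duality of induced modules.} Apply \zcref{lem:induced-dual} in its $\psi^{\mathsf{R}}$ form under \zcref[noname]{PhiOmegaR} with $R=\k$ and $M=\k$. This gives a homogeneous isomorphism
\[
  \Phi^{\mathsf{R}}_{0}(\k)\cong\Phi^{\mathsf{L}}_{0}(\k)',
  \qquad
  \text{that is,}\quad
  \k\otimes_{\mathsf{A}_{0}}\mathfrak{R}^{0}_{-n}\cong(\mathfrak{L}^{0}_{n}\otimes_{\mathsf{A}_{0}}\k)^{\ast}.
\]
By hypothesis (i), each $\Phi^{\mathsf{L}}_{0}(\k)_{n}$ is rigid over $\k$, so
\[
  \Phi^{\mathsf{L}}_{0}(\k)_{n}\otimes\Phi^{\mathsf{R}}_{0}(\k)_{-n}\cong\End_{\k}(\Phi^{\mathsf{L}}_{0}(\k)_{n}).
\]
This yields a canonical element $c_{n}=\sum_i x_i\otimes\xi_i$ representing the identity.

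\textbf{Step 2: constructing $\one_{n}$.} Lift $c_{n}$ along the surjection
\[
  \mathfrak{A}_{n}=\mathfrak{L}^{0}_{n}\otimes_{\mathsf{A}_{0}}\mathfrak{R}^{0}_{-n}\twoheadrightarrow(\mathfrak{L}^{0}_{n}\otimes_{\mathsf{A}_{0}}\k)\otimes(\k\otimes_{\mathsf{A}_{0}}\mathfrak{R}^{0}_{-n})
\]
to a candidate $\one_{n}$. This raw lift is not yet canonical, and it only acts correctly modulo the augmentation ideal. We therefore do not use it directly; we instead use \zcref[noname]{PhiOmegaR} more strongly, as follows.

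\textbf{Step 3: verifying \eqref{eq:SIC:action} and the main obstacle.} The hard point is to pass from a statement "after tensoring with $\k$ over $\mathsf{A}_{0}$" to the identities $[\beta]^{\mathsf{R}}_{0}\star\one_{m}=[\beta]^{\mathsf{R}}_{0}$ in $\mathfrak{R}^{0}$ itself. I would do this by showing \zcref[noname]{PhiOmegaR} forces the $\Omega$-splitting, and then invoke \zcref{thm:SIE-Omega,thm:SIC-SIE}.

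First, under the equivalence, $\Phi^{\mathsf{R}}_{0}(\k)$ corresponds to $\k$. Its $\Omega^{\mathsf{R}}_{n}$-filtration must agree with the grading filtration, because $\Omega^{\mathsf{R}}_{n}(\Phi^{\mathsf{R}}_{0}(\k))$ is computed via \zcref{lem:OmegaDual} as the dual of $\Phi^{\mathsf{L}}_{0}(\k)$ truncated, using Step 1 and rigidity. Second, use exactness of $\Phi^{\mathsf{R}}_{0}$ (an equivalence) together with the action of $\mathfrak{A}_{n}$ on $\Phi^{\mathsf{R}}_{0}(\k)$ through $\End_{\k}$. The image $\one_{n}$ of $c_{n}$ then acts as the identity on degree $-n$ of $\Phi^{\mathsf{R}}_{0}(\k)$. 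Finally, transport this along the equivalence: every exhaustive right module is generated by images of $\Phi^{\mathsf{R}}_{0}$ of modules built from $\k$ and $\mathsf{A}_{0}$, and the action of $\mathfrak{A}_{n}$ is natural.

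I expect this last transport step to be the real difficulty. I would first try to show that the natural map $\mathfrak{A}_{n}\to\End(\Phi^{\mathsf{R}}_{0}(-)_{-n})$ is compatible with the equivalence. It would then suffice to check the identity on the generator $\mathfrak{R}^{0}$, using that $\one_{n}$ is characterized by acting as the identity on the degree $-n$ piece of every induced module.
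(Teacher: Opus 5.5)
Your Step 1 matches the paper's first step. Lemma~\zcref{lem:induced-dual} with $M=\k$, together with rigidity of $\Phi^{\mathsf{L}}_{0}(\k)_{n}$, gives $\Phi^{\mathsf{L}}_{0}(\k)_{n}\otimes\Phi^{\mathsf{R}}_{0}(\k)_{-m}\cong\Hom_{\k}(\Phi^{\mathsf{L}}_{0}(\k)_{m},\Phi^{\mathsf{L}}_{0}(\k)_{n})$ and hence the elements $c_n$. The paper then does exactly what you set aside in Step 2. It lifts $c_n$ to $\mathfrak{A}_n$ ``via the splitting of the augmentation'' and checks the identities only after applying the augmentation.

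Your objection to that step is well founded. Such a check controls $\one_n\star\mathfrak{a}-\mathfrak{a}$ only modulo the image of $\mathfrak{L}^{0}\otimes_{\mathsf{A}_{0}}\ker\epsilon\otimes_{\mathsf{A}_{0}}\mathfrak{R}^{0}$, and lifts are not unique. However, your Step 3 does not fill this hole, so the proposal has a genuine gap, for three reasons.
\begin{enumerate}
\item Step 3 returns to ``the image $\one_n$ of $c_n$'', which is the non-canonical lift you had just discarded.
\item An element of $\mathfrak{A}_n$ acting as the identity on $\Phi^{\mathsf{R}}_{0}(\k)_{-n}=\k\otimes_{\mathsf{A}_{0}}\mathfrak{R}^{0}_{-n}$ says nothing about its action on $\mathfrak{R}^{0}_{-n}=\Phi^{\mathsf{R}}_{0}(\mathsf{A}_{0})_{-n}$, because $\k$ is not a generator of $\Mod(|\mathsf{A}_{0})$. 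Your ``characterization'' of $\one_n$ as acting as the identity on every induced module presupposes the condition being proved. Concretely, take $U=\mathcal{D}_{\mathbb{A}^{1}}\otimes B$ with $\mathbb{Q}\subseteq\k$ and $B$ an augmented algebra placed in degree $0$. All hypotheses hold and $\mathsf{A}_{0}=B$. For every $b$ with $\epsilon(b)=1$, the element $\tfrac{1}{n!}[x^{n}]^{\mathsf{L}}_{0}b\otimes[\partial^{n}]^{\mathsf{R}}_{0}$ lifts $c_n$ and acts as the identity on $\Phi^{\mathsf{L}}_{0}(\k)_{n}$ and on $\Phi^{\mathsf{R}}_{0}(\k)_{-n}$. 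Yet it is a strong identity only when $b=1$.
\item The claim that $(\Phi\Omega\mathsf{R})$ forces the $\Omega$-splitting is asserted, not argued. The only module you examine is $\Phi^{\mathsf{R}}_{0}(\k)$.
\end{enumerate}

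The missing idea is to apply $(\Phi\Omega\mathsf{R})$ to $\mathfrak{R}^{0}$ and $\mathfrak{R}^{m}$ themselves rather than to $\k$.

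First, the unit at $M=\mathsf{A}_{0}$ says $\Omega^{\mathsf{R}}_{0}(\mathfrak{R}^{0})=\mathfrak{R}^{0}_{0}$. By induction on $m$, this makes $\xi\mapsto\xi\ostar(-)$ injective on $\mathfrak{R}^{0}_{-m}$. Suppose $\xi\ostar\mathfrak{L}^{0}_{m}=0$. For $u\in U_p$ with $0<p<m$, one has $(\xi u)\ostar\mathfrak{L}^{0}_{m-p}\subseteq\xi\ostar\mathfrak{L}^{0}_{m}=0$, so $\xi u=0$ by induction. Also $\xi U_m=\xi\ostar\mathfrak{L}^{0}_{m}=0$. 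Hence $\xi U_{\ge 1}=0$, so $\xi\in\Omega^{\mathsf{R}}_{0}(\mathfrak{R}^{0})$ in degree $-m<0$, which forces $\xi=0$.

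Second, surjectivity of the counit at $\mathfrak{R}^{m}$ gives $1_m=\sum_i\omega_iu_i$, with $\omega_i\in\Omega^{\mathsf{R}}_{0}(\mathfrak{R}^{m})$ homogeneous of degree $k_i\le m$ and $u_i\in U_{-k_i}$. For $\alpha\in U_m$, expand $[\alpha]^{\mathsf{R}}_{m}=1_m\cdot\alpha=\sum_i\omega_i\cdot(u_i\alpha)$ in $\mathfrak{R}^{m}_{m}=\mathfrak{L}^{0}_{m}$. The terms with $k_i<m$ vanish because $u_i\alpha\in U_{\ge 1}$. The remaining terms say $\one_m\star[\alpha]^{\mathsf{L}}_{0}=[\alpha]^{\mathsf{L}}_{0}$ for $\one_m:=\sum_{k_i=m}\omega_i\otimes[u_i]^{\mathsf{R}}_{0}\in\mathfrak{A}_m$.

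Finally, for $\xi\in\mathfrak{R}^{0}_{-m}$ and every $x\in\mathfrak{L}^{0}_{m}$, one has $(\xi\star\one_m-\xi)\ostar x=\xi\ostar(\one_m\star x)-\xi\ostar x=0$. The injectivity above then gives $\xi\star\one_m=\xi$, which is \eqref{eq:SIC:action}. This route uses neither the augmentation nor the rigidity of $\Phi^{\mathsf{L}}_{0}(\k)$.
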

  \begin{proof}
    Let's assume (i). The proof in case (ii) is similar. 
    With the assumptions, by \zcref{lem:induced-dual}, we have the \emph{homogeneous} isomorphism
    \[
      \Phi^{\mathsf{\k}}_{0}(\k) \cong \Phi^{\mathsf{L}}_{0}(\k)^{\dagger|\k}.
    \]
    Consequently, we have the following homogeneous isomorphism
    \[
      \mathfrak{L}^{0}_{\bullet}\otimes_{\mathsf{A}_{0}}\k\otimes_{\mathsf{A}_{0}}\mathfrak{R}^{0}_{-\circ}
      = 
      \Phi^{\mathsf{L}}_{0}(\k)_{\bullet}\otimes\Phi^{\mathsf{R}}_{0}(\k)_{-\circ}
      \cong
      \Phi^{\mathsf{L}}_{0}(\k)_{\bullet}\otimes\Phi^{\mathsf{L}}_{0}(\k)^{\dagger|\k}_{-\circ} 
      =
      \Phi^{\mathsf{L}}_{0}(\k)_{\bullet}\otimes(\Phi^{\mathsf{L}}_{0}(\k)_{\circ})^{\vee|\k}.
    \]
    In particular, we have the following identification:
    \[
      \mathfrak{L}^{0}_{n}\otimes_{\mathsf{A}_{0}}\k\otimes_{\mathsf{A}_{0}}\mathfrak{R}^{0}_{-m} \cong 
      \Phi^{\mathsf{L}}_{0}(\k)_{n}\otimes(\Phi^{\mathsf{L}}_{0}(\k)_{m})^{\vee|\k}
      =
      \operatorname{Hom}_{\k}(\Phi^{\mathsf{L}}_{0}(\k)_{m},\Phi^{\mathsf{L}}_{0}(\k)_{n}),
    \]
    where the last equality is the only place we need $\Phi^{\mathsf{L}}_{0}(\k) \in \Mod*[Gr](U|)$.

    Then, identities in $\operatorname{End}_{\k}(\Phi^{\mathsf{L}}_{0}(\k)_{n})$ produces a family $(\mathfrak{e}_{n})_{n\in\N}$ of elements in $\mathfrak{L}^{0}_{\bullet}\otimes_{\mathsf{A}_{0}}\k\otimes_{\mathsf{A}_{0}}\mathfrak{R}^{0}_{-\bullet}$. 
    Since $\mathsf{A}_{0}$ is augmented, we have a split surjective ring homomorphism $\mathsf{A}_{0} \to \k$. Via (more precisely, the splitting of) the augmentation, we can lift the family $(\mathfrak{e}_{n})_{n\in\N}$ to a family of elements $(\one_{n})_{n\in\N}$ in $\mathfrak{A}_{\bullet}$.
    We conclude that the elements $(\one_{n})_{n\in\N}$ verify the strong identity condition.
    To see this, one only need to apply the augmentation to the constructions $\mathfrak{L}^{0}_{n}\otimes_{\mathsf{A}_{0}}-\otimes_{\mathsf{A}_{0}}\mathfrak{R}^{0}_{-m}$.
  \end{proof}
  \begin{remark}
    All modules involved in the proof of \zcref{thm:Adjeqk-SIC} are in the small categories $\Mod*[Ex](U|)$, $\Mod*[Ex](|U)$, $\Mod*(\mathsf{A}_{0}|)$, and $\Mod*(|\mathsf{A}_{0})$. Thus, we can also deduce the strong identity condition from one of the following assumptions:
    \begin{enumerate}
      \item
          The adjoint pair 
          $\Phi^{\mathsf{L}}_{0}\colon \Mod*(\mathsf{A}_{0}|) \rightleftharpoons \Mod*[Ex](U|)\colon\Omega^{\mathsf{L}}_{0}$ 
          is an equivalence. 
      \item
          The adjoint pair 
          $\Phi^{\mathsf{R}}_{0}\colon \Mod*(|\mathsf{A}_{0}) \rightleftharpoons \Mod*[Ex](|U)\colon\Omega^{\mathsf{R}}_{0}$ 
          is an equivalence.
    \end{enumerate}
  \end{remark}

\section{Morita-type equivalences of higher-levels }\label{sec:HigherMorita}
  We are going to extend the Morita-type equivalence (\zcref{thm:PhiOmega0}) to higher levels. However, due to \zcref{rem:noPhiOmegan}, we cannot expect $\Phi^{\mathsf{L}}_{n}\dashv\Omega^{\mathsf{L}}_{n}$ to be an adjoint equivalence when $n>0$. Some modifications must be made. 
  Although there are many possible ways to do so, we will only present one of them here. 
  Based on \zcref{prop:adj:bPhiOmega}, one may ask for:
  \begin{enumerate}[leftmargin=5\parindent,font=\nota]
    \myItem[\textup{($\mathbf{\Phi}\Omega\mathsf{L}_{R}^{n}$)}]\label{PhiOmegaLn} 
        The adjoint pair 
        $\mathbf{\Phi}^{\mathsf{L}}_{n}\colon \Mod(\mathscr{A}^{\mathsf{L}}_{n}|{}_R) \rightleftharpoons \Mod[Ex](U|{}_R)\colon\Omega^{\mathsf{L}}_{n}$ 
        is an equivalence. 
    \myItem[\textup{($\mathbf{\Phi}\Omega\mathsf{R}_{R}^{n}$)}]\label{PhiOmegaRn} 
        The adjoint pair 
        $\mathbf{\Phi}^{\mathsf{R}}_{n}\colon \Mod({}_R|\mathscr{A}^{\mathsf{R}}_{n}) \rightleftharpoons \Mod[Ex]({}_R|U)\colon\Omega^{\mathsf{R}}_{n}$ 
        is an equivalence.
  \end{enumerate}

  In this section, we assume that $U$ admits a strong identity expansion, saying $(e_{n})_{n\in\N}$. 

\subsection{Actions of mode blocks}
  First note that, the mode transition algebra $\mathfrak{A}$ acts on exhaustive modules:
  \begin{lemma}\label{lem:action-of-fA-on-Omega}
    For any exhaustive $(U|{}_R)$-bimodule $W$, the left action of $U$ on $W$ induces well-defined action maps:
    \[
      \star\colon\mathfrak{A}_{p,-q}\otimes_{U_{0}}\Omega^{\mathsf{L}}_{n}(W)\longrightarrow 
      \begin{dcases*}
        \Omega^{\mathsf{L}}_{p}(W) & if $q\le n$,\\
        0 & otherwise.
      \end{dcases*}
    \]
    The similar statement holds for $({}_R|U)$-bimodule.
  \end{lemma}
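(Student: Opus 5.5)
The plan is to define the action on generators by letting the representatives in $U$ act, and then check well-definedness. For $[\alpha]^{\mathsf{L}}_{0}\otimes[\beta]^{\mathsf{R}}_{0}\in\mathfrak{A}_{p,-q}$ with $\alpha\in U_{p}$, $\beta\in U_{-q}$, and $w\in\Omega^{\mathsf{L}}_{n}(W)$, I would set
\[
  ([\alpha]^{\mathsf{L}}_{0}\otimes[\beta]^{\mathsf{R}}_{0})\star w := \alpha\beta w .
\]
The key steps are as follows.

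\emph{Step 1: vanishing for $q>n$.} If $q>n$, then $\beta\in U_{-q}\subset U_{\le -n-1}$. By \zcref{prop:ExModDiscCont}, $\beta w=0$, so the formula is $0$. This gives the second case, provided well-definedness holds; for $q>n$ the value is identically zero and nothing more is needed.

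\emph{Step 2: well-definedness for $q\le n$.} Three things must be checked.
\begin{enumerate}
  \item The formula does not depend on the representative of $[\beta]^{\mathsf{R}}_{0}$. If $\beta\in\NR[1]U_{-q}=\NL[1+q]U_{-q}$, then $\beta w$ need not vanish, since $1+q\le n$. Instead I would use that $\beta w$ lies in $\Omega^{\mathsf{L}}_{n-q}(W)$ and argue via the $\widehat{U_{0}}$-action. Alternatively, and more cleanly, I would use the structure of $\NR[1]U_{-q}$: it is dense-generated by $\cNR[1]U_{-q}=U_{\ge 1}U$. Elements of the form $\gamma\delta$ with $\gamma\in U_{\ge1}$ and $\delta\in U_{\le -q-1}$ satisfy
  \[
    \alpha\gamma\delta w = (\alpha\gamma)(\delta w),
  \]
  with $\alpha\gamma\in U_{\ge p+1}$.
  \item The formula does not depend on the representative of $[\alpha]^{\mathsf{L}}_{0}$. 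Here $\alpha\in\NL[1]U_{p}$ needs $\alpha\beta w=0$; no such vanishing is clear, so this part also needs work.
  \item The $U_{0}$-balancing $\alpha u\otimes\beta=\alpha\otimes u\beta$ is automatic.
\end{enumerate}
Because of these difficulties, the better route is the SIE-based one. Since the section assumes SIE, \zcref{lem:splitOmega} gives $\Omega^{\mathsf{L}}_{n}(W)=\bigoplus_{k\le n}W_{(k)}$ with $U_{r}W_{(k)}\subset W_{(r+k)}$. I would then decompose
\[
  w=\sum_{k\le n} e_{k}w ,
\]
and use \zcref{thm:PhiOmega0}: every $W$ is $\Phi^{\mathsf{L}}_{0}(M)$ with $M=\Omega^{\mathsf{L}}_{0}(W)$. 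The action of $\mathfrak{A}$ on $\Phi^{\mathsf{L}}_{0}(M)$ is already well defined from $\ostar$ (as in the proof of \zcref{lem:Act_of_fA_via_A}), and by that lemma it agrees with $w\mapsto\alpha\beta w$. This sidesteps the representative issues entirely: well-definedness is inherited from $\mathfrak{A}\otimes\mathfrak{L}^{0}\otimes M\to\mathfrak{L}^{0}\otimes M$.

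\emph{Step 3: the target.} For $q\le n$, the image $\alpha\beta w$ has to land in $\Omega^{\mathsf{L}}_{p}(W)$, not merely in $\Omega^{\mathsf{L}}_{p-q+n}(W)$. Via the identification $W=\Phi^{\mathsf{L}}_{0}(M)$, the element $\alpha\beta w$ equals
\[
  [\alpha]^{\mathsf{L}}_{0}\otimes\bigl(([\beta]^{\mathsf{R}}_{0}\ostar \tilde w)\,m\bigr),
\]
which lies in $\mathfrak{L}^{0}_{p}\otimes M$. That space is the degree-$p$ part and, by \zcref{lem:grOmegaPhi}, equals $W_{(p)}\subset\Omega^{\mathsf{L}}_{p}(W)$. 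Moreover, $\ostar$ kills the components of $w$ whose degree differs from $q$, so only $e_{q}w$ contributes. The main obstacle is keeping track of the identification of $\Omega^{\mathsf{L}}_{n}(W)$ with $\bigoplus_{k\le n}\Phi^{\mathsf{L}}_{0}(M)_{k}$ under the equivalence. The right-module statement then follows by the symmetric argument using $\Phi^{\mathsf{R}}_{0}$.
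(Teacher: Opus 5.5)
Your argument depends on identifying the action with $w\mapsto\alpha\beta w$. On $\Omega^{\mathsf{L}}_{n}(W)$ with $n>q$ that formula is wrong, even under \SIE{}. Take the rank-one Weyl algebra with $\mathbb{Q}\subset\k$ and $W=\k[x]=\Phi^{\mathsf{L}}_{0}(\k)$; then $\Omega^{\mathsf{L}}_{n}(W)$ consists of the polynomials of degree $\le n$. Since $x\partial\in\cNL[1]\mathcal{D}$, the element $[x\partial]^{\mathsf{L}}_{0}\otimes[1]^{\mathsf{R}}_{0}\in\mathfrak{A}_{0,0}$ is zero, yet $x\partial\cdot x=x\neq0$. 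Likewise $\one_{0}=[1]^{\mathsf{L}}_{0}\otimes[1]^{\mathsf{R}}_{0}$ would send $x\in\Omega^{\mathsf{L}}_{1}(W)$ to $x\notin\Omega^{\mathsf{L}}_{0}(W)=\k$. So the formula is neither well defined nor valued in $\Omega^{\mathsf{L}}_{p}(W)$.

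The step that hides this is your appeal to \zcref{lem:Act_of_fA_via_A}. That lemma compares the $\ostar$-action of $\mathfrak{A}_{p,-q}$ with $\alpha\beta$ only on the component $\Phi^{\mathsf{L}}_{0}(M)_{q}$. On $\Phi^{\mathsf{L}}_{0}(M)_{k}$ with $k\neq q$ the $\ostar$-action is zero while $\alpha\beta$ is not (e.g.\ $x^{p}\partial^{q}\cdot x^{k}\neq0$ for $k>q$). Your own Step~3 (``only $e_{q}w$ contributes'') actually computes $w\mapsto\alpha\beta e_{q}w$, which contradicts the agreement you claimed one step earlier. Transporting the $\ostar$-action along $W\cong\Phi^{\mathsf{L}}_{0}(\Omega^{\mathsf{L}}_{0}(W))$ via \zcref{thm:PhiOmega0} and \zcref{lem:grOmegaPhi} does give a well-defined map with the right target and the right vanishing. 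But that map is $w\mapsto\alpha\beta e_{q}w$, and that is what you must state and use.

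The idea you are missing is that on $\Omega^{\mathsf{L}}_{q}(W)$ itself no \SIE{} is needed. Both of your Step~2 checks succeed exactly when $n=q$: $\NR[1]U_{-q}=\NL[q+1]U_{-q}$ and $(\NL[1]U_{p})U_{-q}\subset\NL[q+1]U_{p-q}$ both annihilate $\Omega^{\mathsf{L}}_{q}(W)$. Moreover $\NL[p+1]U\cdot U_{p-q}\subset\NL[q+1]U$ by \zcref[noname]{A3}, which puts $\alpha\beta w$ in $\Omega^{\mathsf{L}}_{p}(W)$. This is the paper's proof: restrict the counit $\Phi^{\mathsf{L}}_{q}\Omega^{\mathsf{L}}_{q}(W)\to W$ to $\mathfrak{L}^{q}_{p-q}\otimes_{U_{0}}\Omega^{\mathsf{L}}_{q}(W)$ and precompose with $\mu\colon\mathfrak{A}_{p,-q}\to\mathfrak{L}^{q}_{p-q}$ from \zcref{thm:decomposition_of_fL}.

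For $n>q$ one must then precompose with the retraction $\Omega^{\mathsf{L}}_{n}(W)\to\Omega^{\mathsf{L}}_{q}(W)$ supplied by \zcref{lem:splitOmega}. The bare inclusion $\Omega^{\mathsf{L}}_{q}(W)\subset\Omega^{\mathsf{L}}_{n}(W)$, which is all the paper's last sentence cites, does not extend anything by itself. Since $\beta$ kills $W_{(k)}\subset\Omega^{\mathsf{L}}_{k}(W)$ for $k<q$, the result is again $w\mapsto\alpha\beta e_{q}w$, the block action used in \zcref{lem:thickZhu}. So your $\ostar$-transport, once correctly identified, reaches the same map. However, it spends the full Morita equivalence already at level $q$, where the paper needs \SIE{} only for the retraction.
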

  \begin{proof}
    Recall that, we have the counit map $\varepsilon_{W}\colon\Phi^{\mathsf{L}}_{q}\Omega^{\mathsf{L}}_{q}(W)\to W$ given by the $U$-action on $W$. Note that $\Phi^{\mathsf{L}}_{q}\Omega^{\mathsf{L}}_{q}(W)=\mathfrak{L}^{q}\otimes_{U_{0}}\Omega^{\mathsf{L}}_{q}(W)$. Restricting the counit map to $\Omega^{\mathsf{L}}_{q}(W)_{p-q}$, we obtain an action map:
    \[
      \mathfrak{L}^{q}_{p-q}\otimes_{U_{0}}\Omega^{\mathsf{L}}_{q}(W)\longrightarrow\Omega^{\mathsf{L}}_{p}(W).
    \]
    Composing it with map $\mu\colon\mathfrak{A}_{p,-q} \to \mathfrak{L}^{q}_{p-q}$ in \zcref{thm:decomposition_of_fL}, we obtain the desired action map.
    Since $\Omega^{\mathsf{L}}_{q}(W)\subset\Omega^{\mathsf{L}}_{n}(W)$ whenever $q\le n$, the action maps $\mathfrak{A}_{p,-q}\otimes_{U_{0}}\Omega^{\mathsf{L}}_{q}(W)\to\Omega^{\mathsf{L}}_{p}(W)$ extends to $\Omega^{\mathsf{L}}_{n}(W)$ for $q\le n$.
  \end{proof}
  \begin{lemma}\label{lem:thickZhu}
    Assuming $U$ admits a strong identity expansion. Then, the algebras $\mathscr{A}^{\mathsf{L}}_{n}$ and $\mathscr{A}^{\mathsf{R}}_{n}$ are isomorphic to the following block matrix algebras:
    \[
      \prod_{p,q=0}^{n}\mathfrak{A}_{p,-q}.
    \]
    Furthermore, the action of the blocks $\mathfrak{A}_{p,-q}$ on each $\Omega^{\mathsf{L}}_{n}(W)$ (resp. $\Omega^{\mathsf{R}}_{n}(W)$) coincides with that in \zcref{lem:action-of-fA-on-Omega}.
  \end{lemma}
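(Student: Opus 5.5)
We will show that $\mathscr{A}^{\mathsf{L}}_{n}=\Omega^{\mathsf{L}}_{n}(\mathfrak{L}^{n})$ splits as a direct sum of blocks $\mathfrak{A}_{p,-q}$ with $0\le p,q\le n$, and that its multiplication is the block-matrix product $\star$. The right case follows by passing to $\opp{U}$ (cf.\ \zcref{rem:opposite}).

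\emph{Step 1: decompose $\mathfrak{L}^{n}$.} Under \SIE, \zcref{thm:decomposition_of_fL} gives homogeneous decompositions
\[
  \mathfrak{L}^{n}_{\bullet}\cong\bigoplus_{q=0}^{n}\mathfrak{A}_{\bullet+q,-q},
\]
obtained by iterating the split sequences \eqref{eq:ES:fL}. Since $\mathfrak{A}_{\bullet+q,-q}$ is concentrated in degrees $\ge -q$, this sum has $n+1$ terms in each degree. We then apply the splitting $W=\bigoplus_{m}W_{(m)}$ with $W_{(m)}=e_m W$ from \zcref{lem:splitOmega}. We will check that $e_{p}$ acts on the summand $\mathfrak{A}_{p,-q}\subset\mathfrak{L}^{n}_{p-q}$ as the identity. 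This should follow from \zcref{lem:Act_of_fA_via_A} together with the proof of \zcref{thm:SIC-SIE}, where $[e_{\ge p}]$ corresponds to $\one_{p}$ and $\one_{p}\star\mathfrak{a}=\mathfrak{a}$. The remaining $e_{m}$ should kill this summand by orthogonality \ref{si2}. It follows that
\[
  \mathfrak{L}^{n}_{(p)}=\bigoplus_{q=0}^{n}\mathfrak{A}_{p,-q},
\]
consistent with \zcref{lem:grOmegaPhin}. Since $\Omega_{n}(\mathfrak{L}^{n})=\bigoplus_{p\le n}\mathfrak{L}^{n}_{(p)}$, we obtain the additive isomorphism
\[
  \mathscr{A}^{\mathsf{L}}_{n}\cong\bigoplus_{p,q=0}^{n}\mathfrak{A}_{p,-q},
\]
and the index set is finite, so sum and product coincide.

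\emph{Step 2: compare products.} Take $x\in\mathfrak{A}_{p,-q}$ and $y\in\mathfrak{A}_{r,-s}$. Represent $x$ by $[\alpha\beta]^{\mathsf{L}}_{n}$ with $\alpha\in U_p$ and $\beta\in U_{-q}$, and similarly $y$ by $[\gamma\delta]^{\mathsf{L}}_{n}$. The product in $\mathscr{A}^{\mathsf{L}}_{n}$ is then $[\alpha\beta\gamma\delta]^{\mathsf{L}}_{n}$. The product $\beta\gamma$ lies in $U_{r-q}$, and we separate two cases. If $q\ne r$, insert the idempotents $e_{q}$ and $e_{r}$ via \ref{si3} so that $\beta=\beta e_{q}$ and $\gamma=e_{r}\gamma$ modulo the relevant neighborhoods; then \ref{si2} gives zero. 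If $q=r$, the class of $\beta\gamma$ modulo $\NLR[1]$ is $[\beta]^{\mathsf{R}}_{0}\ostar[\gamma]^{\mathsf{L}}_{0}$. Replacing $\beta\gamma$ by this class changes the product only by an element of $\NL[1]$, and this error is absorbed because $\delta$ has degree $-s\ge -n$. Hence the product is $\mu(x\star y)$, which is exactly the block-matrix rule of \zcref{rem:multiplication-of-fA}.

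\emph{Step 3: compare with \zcref{lem:action-of-fA-on-Omega}.} The identification of the block actions on $\Omega^{\mathsf{L}}_{n}(W)$ follows by the same computation. Both actions are induced by the $U$-action composed with $\mu$, so they agree.

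The main obstacle is the error control in the case $q=r$ of Step 2. One must check that $\beta\gamma-(\text{lift of }\ostar)\in\NL[1]U_{0}$ stays inside $\NL[n+1]U$ after left multiplication by $\alpha$ and right multiplication by $\delta$. To handle this, we first apply \ref{si3} and \ref{si4} to replace the factors by $e_{q}$-projected versions, and then use \ref{A3} to track the neighborhood indices.
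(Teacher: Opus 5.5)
Your Step~1 is sound and follows the route the paper intends; its proof just cites the decompositions of \zcref{thm:decomposition_of_fL} and calls the rest straightforward. Iterating the right $\mathsf{A}_{n}$-splitting gives $\mathfrak{L}^{n}=\bigoplus_{q\le n}\mathfrak{L}^{n}[e_{q}]_{n}$ with $\mathfrak{L}^{n}_{p-q}[e_{q}]_{n}\cong\mathfrak{A}_{p,-q}$, and the left idempotents cut this down to $\Omega^{\mathsf{L}}_{n}(\mathfrak{L}^{n})$.

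The gap is in Step~2. The element $[\alpha\beta]^{\mathsf{L}}_{n}$ is not the image of $x$ under that isomorphism, and it need not even lie in $\mathscr{A}^{\mathsf{L}}_{n}$. Indeed, $[\alpha\beta]^{\mathsf{L}}_{n}=\sum_{k}[\alpha\beta]^{\mathsf{L}}_{n}[e_{k}]_{n}$, and the $k$-th summand sits in the block $\mathfrak{A}_{p-q+k,-k}$, whose left index exceeds $n$ once $k>n-p+q$. Concretely, take the rank-one Weyl algebra with $n=1$:
\begin{itemize}
\item $\partial^{2}x=x\partial^{2}+2\partial\notin\NL[2]\mathcal{D}$, so $[x]^{\mathsf{L}}_{1}\notin\mathscr{A}^{\mathsf{L}}_{1}$;
\item $([x]^{\mathsf{L}}_{0}\otimes[1]^{\mathsf{R}}_{0})\star([x]^{\mathsf{L}}_{0}\otimes[\partial]^{\mathsf{R}}_{0})=0$, whereas $[x]^{\mathsf{L}}_{1}\cdot[x\partial]^{\mathsf{L}}_{1}=[x^{2}\partial]^{\mathsf{L}}_{1}\neq0$.
\end{itemize}
So the vanishing you claim for $q\neq r$ is false for your representatives, and no neighborhood bookkeeping can repair it. The error estimate for $q=r$ is also backwards. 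By \ref{A3}, $(\NL[1]U_{p})U_{-s}\subset\NL[1+s]U_{p-s}$, which lies in $\NL[n+1]U$ only when $s=n$; so $-s\ge-n$ works against you, not for you.

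Your last paragraph gestures at the fix, but it should replace the error control rather than feed into it. Send $x$ to the split lift $[\alpha\beta]^{\mathsf{L}}_{n}[e_{q}]_{n}$ of $\mu(x)\in\mathfrak{L}^{q}_{p-q}$. This is well defined because $\pi$ restricts to an isomorphism $\mathfrak{L}^{n}[e_{\le q}]_{n}\cong\mathfrak{L}^{q}$. In the example it is $[x-x^{2}\partial]^{\mathsf{L}}_{1}$, which does lie in $\mathscr{A}^{\mathsf{L}}_{1}$. With this choice no estimate is needed at all. The product of the lifts of $x$ and $y$ is $\alpha\beta\cdot\bigl(e_{q}\cdot[\gamma\delta]^{\mathsf{L}}_{n}[e_{s}]_{n}\bigr)$. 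Since $e_{q}$ acts on the left of the $(r,s)$-block by $\delta_{qr}$ (your Step~1 together with \ref{si2}), this equals $\delta_{qr}[\alpha\beta\gamma\delta]^{\mathsf{L}}_{n}[e_{s}]_{n}$. For $q=r$ that is exactly the lift of $x\star y=[\alpha]^{\mathsf{L}}_{0}\otimes[\beta\gamma\delta]^{\mathsf{R}}_{0}$.

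Step~3 needs the same correction. On $\Omega^{\mathsf{L}}_{n}(W)=\bigoplus_{k\le n}W_{(k)}$, the block $\mathfrak{A}_{p,-q}$ acts by $w\mapsto\alpha\beta e_{q}w$, i.e.\ through the projection onto $W_{(q)}\subset\Omega^{\mathsf{L}}_{q}(W)$. The bare ``$U$-action composed with $\mu$'' is not defined on $\Omega^{\mathsf{L}}_{n}(W)$ for $q<n$. This projection is what the extension from $\Omega^{\mathsf{L}}_{q}(W)$ to $\Omega^{\mathsf{L}}_{n}(W)$ in \zcref{lem:action-of-fA-on-Omega} must mean for the two actions to agree.
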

  \begin{proof}
    The identification is due to the decompositions in \zcref{thm:decomposition_of_fL}. The second assertion is straightforward.
  \end{proof}
  Hence, under the assumption of {\SIC}, we will omit the superscripts and simply write $\mathscr{A}_{n}$ for the above algebra.

  \begin{lemma}\label{lem:grOmegabPhin}
    Assuming $U$ admits a strong identity expansion.
    Then, for any left $\mathscr{A}_{n}$-module $M$, the $\gr\Omega$-grading on $\mathbf{\Phi}^{\mathsf{L}}_{n}(M)$ is given by
    \[
      \mathbf{\Phi}^{\mathsf{L}}_{n}(M)_{(m)}=\bigoplus_{k=0}^{n}\mathfrak{A}_{m,-k}\star M.
    \]
    Similar for right $\mathscr{A}_{n}$-modules.
  \end{lemma}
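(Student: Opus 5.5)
The plan is to reduce to the free module $M=\mathscr{A}_{n}$ and then compute by hand.

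\emph{Reduction.} By \zcref{lem:splitOmega}, every exhaustive module has the canonical splitting $W_{(m)}=e_{m}W$. The assignment $W\mapsto e_{m}W$ commutes with all $U$-linear maps and preserves surjections. Also $\mathbf{\Phi}^{\mathsf{L}}_{n}$ is right exact, and every $M$ is a quotient of a free module $\mathscr{A}_{n}^{\oplus I}$. The right-hand side $\bigoplus_{k}\mathfrak{A}_{m,-k}\star M$ is understood as the image of $\mathfrak{A}_{m,-k}\otimes M$ under the action map into $\mathbf{\Phi}^{\mathsf{L}}_{n}(M)$, via the action of \zcref{lem:action-of-fA-on-Omega} on $\mathfrak{L}^{n}$. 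This image is functorial and compatible with surjections as well. So it suffices to prove the identity for $M=\mathscr{A}_{n}$, where $\mathbf{\Phi}^{\mathsf{L}}_{n}(\mathscr{A}_{n})=\mathfrak{L}^{n}$. Directness of the sum over $k$ will follow at the end from orthogonality of the blocks.

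\emph{Computation on $\mathfrak{L}^{n}$.} \zcref{thm:decomposition_of_fL} and the SIE decomposition give
\[
  \mathfrak{L}^{n}_{p}\cong\bigoplus_{k=0}^{n}\mathfrak{A}_{p+k,-k}.
\]
Here $\mathfrak{A}_{p+k,-k}$ is embedded via $\mu$ and realised concretely as $e_{p+k}\mathfrak{L}^{n}_{p}\,e_{k}$. Summing over $p$ and fixing $m=p+k$, we get
\[
  e_{m}\mathfrak{L}^{n}=\bigoplus_{k=0}^{n}e_{m}\mathfrak{L}^{n}_{m-k}e_{k}.
\]
This matches \zcref{lem:grOmegaPhin}, since $p$ ranges over $m-n,\dots,m$. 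Each summand is exactly $\mathfrak{A}_{m,-k}\star\mathscr{A}_{n}$. For the inclusion $\supseteq$: the block $\mathfrak{A}_{m,-k}$ of $\mathscr{A}_{n}$ (\zcref{lem:thickZhu}) acts through $\mu$, so it lands in $e_{m}\mathfrak{L}^{n}$ by \zcref[noname]{si3}. For the inclusion $\subseteq$: an element $e_{m}x e_{k}$ with $x\in\mathfrak{L}^{n}_{m-k}$ is $\mu(\mathfrak{a})\cdot[1]$ for some $\mathfrak{a}\in\mathfrak{A}_{m,-k}$. To see this, use the expansion $e_{\ge k}\equiv e^{+}_{k}e^{-}_{k}$ from \eqref{eq:exp-egen}, so that $xe_{k}$ factors as an element of $U_{m}\cdot U_{-k}$ modulo $\NL[n+1]$. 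That element lies in the image of $\mu$, which in turn lies in $\mathfrak{A}_{m,-k}\star 1_{n}$.

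\emph{Main obstacle.} The delicate step is identifying the right action of $\mathscr{A}_{n}$ (the product of blocks) on $\mathfrak{L}^{n}$ with the $\star$-action of the blocks $\mathfrak{A}_{m,-k}$ from the left. Concretely, one must check that the element $1_{n}\in\mathscr{A}_{n}$ decomposes as $\sum_{k\le n}\one_{k}$ in the block form. Then $\mathfrak{A}_{m,-k}\star M=\mathfrak{A}_{m,-k}\star(\one_{k}M)$, and the tensor relation in $\mathfrak{L}^{n}\otimes_{\mathscr{A}_{n}}M$ lets blocks move across. I would first verify this decomposition of $1_{n}$ using \zcref[noname]{si4} together with \zcref{lem:thickZhu}.
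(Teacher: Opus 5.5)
Your route (reduce to $M=\mathscr{A}_{n}$, then compute inside $\mathfrak{L}^{n}$) is organized differently from the paper's direct computation of $e_{m}\mathfrak{L}^{n}\otimes_{\mathscr{A}_{n}}M$ via \zcref{lem:grOmegaPhin,thm:decomposition_of_fL,lem:thickZhu}. However, it breaks at the step ``each summand $e_{m}\mathfrak{L}^{n}_{m-k}e_{k}$ is exactly $\mathfrak{A}_{m,-k}\star\mathscr{A}_{n}$'', and with it goes the plan to get directness from orthogonality.

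Your $\supseteq$ argument only shows $\mathfrak{A}_{m,-k}\star\mathscr{A}_{n}\subseteq e_{m}\mathfrak{L}^{n}$. The image does not stay in the $k$-th summand: right multiplication by the off-diagonal blocks of $\mathscr{A}_{n}$ sends $\mathfrak{A}_{m,-k}$ into $\mathfrak{A}_{m,-k}\star\mathfrak{A}_{k,-q}\subseteq\mathfrak{A}_{m,-q}$ for every $q\le n$. The correct free-module statement is $e_{m}\mathfrak{L}^{n}=\bigoplus_{k}\mathfrak{A}_{m,-k}\star\one_{k}$, i.e. the action on the generator only. This splitting does not survive $\otimes_{\mathscr{A}_{n}}$ or passage to quotients. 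The relation $(\mathfrak{a}\star\mathfrak{b})\otimes v=\mathfrak{a}\otimes(\mathfrak{b}\star v)$ with $\mathfrak{b}\in\mathfrak{A}_{k,-q}$ identifies the $q$-piece with the $k$-piece, which is exactly the ``blocks move across'' you invoke.

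Concretely, take the rank-one Weyl algebra over $\mathbb{Q}$ with $n=1$. The classes $E_{00}=1-x\partial$, $E_{01}=\partial$, $E_{10}=x-x^{2}\partial$, $E_{11}=x\partial$ in $\Omega^{\mathsf{L}}_{1}(\mathfrak{L}^{1})$ are matrix units, so $\mathscr{A}_{1}\cong M_{2}(\mathbb{Q})$ with $\one_{0}=E_{00}$ and $\one_{1}=E_{11}$. Then $\mathfrak{A}_{1,0}\star\mathscr{A}_{1}=E_{10}\mathscr{A}_{1}$ and $\mathfrak{A}_{1,-1}\star\mathscr{A}_{1}=E_{11}\mathscr{A}_{1}$ both equal the two-dimensional space $\mathfrak{L}^{1}_{(1)}=\operatorname{span}\{E_{10},E_{11}\}$. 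Your summand $e_{1}\mathfrak{L}^{1}_{1}e_{0}$ is only the line $\mathbb{Q}E_{10}$.

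In fact no argument can give the direct sum. Since $\mathfrak{A}_{m,-k}=\mathfrak{A}_{m,0}\star\mathfrak{A}_{0,-k}$, every $\mathfrak{A}_{m,-k}\star M$ lies in $\mathfrak{A}_{m,0}\star M$, and what actually holds is
\[
  \mathbf{\Phi}^{\mathsf{L}}_{n}(M)_{(m)}
  =\sum_{k=0}^{n}\mathfrak{A}_{m,-k}\star M
  =\mathfrak{A}_{m,0}\star M
  \cong\mathfrak{A}_{m,0}\otimes_{\mathsf{A}_{0}}\one_{0}M .
\]
The last isomorphism comes from $\bigoplus_{q\le n}\mathfrak{A}_{m,-q}\cong\mathfrak{A}_{m,0}\otimes_{\mathsf{A}_{0}}\one_{0}\mathscr{A}_{n}$ as right $\mathscr{A}_{n}$-modules.

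The paper's own final equality makes the same identification, treating $\otimes_{\mathscr{A}_{n}}$ as if it split along $M=\bigoplus_{k}\one_{k}M$. So the displayed $\bigoplus$ in the statement should be read as a sum. Your two inclusions, read correctly, do give $e_{m}\mathfrak{L}^{n}=\sum_{k}\mathfrak{A}_{m,-k}\star\mathscr{A}_{n}$, and your reduction transports that sum to arbitrary $M$. That sum version is what you can actually prove.
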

  \begin{proof}
    By \zcref{lem:splitOmega}, $\mathbf{\Phi}^{\mathsf{L}}_{n}(M)_{(m)}$ can be interpreted as $e_{m}\mathfrak{L}^{n}\otimes_{\mathscr{A}_{n}}M$. 
    Furthermore, we have
    \begin{align*}
      \mathbf{\Phi}^{\mathsf{L}}_{n}(M)_{(m)} 
      &\overset{\ref{lem:grOmegaPhin}}{=}
      (\bigoplus_{p=m-n}^{m}e_{m}\mathfrak{L}^{n}_{p})\otimes_{\mathscr{A}_{n}} M \\ 
      &\overset{\ref{thm:decomposition_of_fL}}{=}
      (\bigoplus_{q=0}^{n}\mathfrak{A}_{m,-q})\otimes_{\mathscr{A}_{n}} M \\
      &\overset{\ref{lem:thickZhu}}{=}
      (\bigoplus_{q=0}^{n}\mathfrak{A}_{m,-q})\bigotimes_{\prod_{p,q=0}^{n}\mathfrak{A}_{p,-q}} (\bigoplus_{k=0}^{n} \one_{k}M) 
      = \bigoplus_{k=0}^{n}\mathfrak{A}_{m,-k}\star M.\qedhere
    \end{align*}
  \end{proof}

\subsection{Morita-type equivalences for higher levels}
  \begin{theorem}\label{thm:bPhiOmega}
    If $U$ admits a strong identity expansion (\SIE), then \zcref[noname]{PhiOmegaLn,PhiOmegaRn} are verified for all $n \in \N$.
  \end{theorem}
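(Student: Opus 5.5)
We apply \zcref{lem:AdjEq} to the adjoint pair $\mathbf{\Phi}^{\mathsf{L}}_{n}\dashv\Omega^{\mathsf{L}}_{n}$ of \zcref{prop:adj:bPhiOmega}, exactly as in the proof of \zcref{thm:PhiOmega0}; the right-handed case $\mathbf{\Phi}^{\mathsf{R}}_{n}\dashv\Omega^{\mathsf{R}}_{n}$ follows symmetrically, or by passing to $\opp{U}$ via \zcref{rem:opposite}. Throughout, we use \zcref{lem:thickZhu} to identify $\mathscr{A}^{\mathsf{L}}_{n}$ with the block matrix algebra $\mathscr{A}_{n}=\prod_{p,q=0}^{n}\mathfrak{A}_{p,-q}$. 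The strong identity elements $\one_{0},\dots,\one_{n}$ then become orthogonal idempotents summing to $1_{\mathscr{A}_{n}}$, so any left $\mathscr{A}_{n}$-module decomposes as $M=\bigoplus_{k=0}^{n}\one_{k}M$. Two things must be checked: the unit $\eta_{M}\colon M\to\Omega^{\mathsf{L}}_{n}\mathbf{\Phi}^{\mathsf{L}}_{n}(M)$ is bijective, and the counit $\varepsilon_{W}\colon\mathbf{\Phi}^{\mathsf{L}}_{n}\Omega^{\mathsf{L}}_{n}(W)\to W$ is surjective.

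\textbf{Unit.} By \zcref{lem:splitOmega}, $\Omega^{\mathsf{L}}_{n}(\mathbf{\Phi}^{\mathsf{L}}_{n}(M))=\bigoplus_{m=0}^{n}\mathbf{\Phi}^{\mathsf{L}}_{n}(M)_{(m)}$, and by \zcref{lem:grOmegabPhin} each summand is $\bigoplus_{k=0}^{n}\mathfrak{A}_{m,-k}\star M$. For $m,k\le n$, the block $\mathfrak{A}_{m,-k}$ lies inside $\mathscr{A}_{n}$, so $\mathfrak{A}_{m,-k}\star M\subset\one_{m}M$. Conversely, $\one_{m}M=\mathfrak{A}_{m,-m}\star M$ because $\one_{m}\in\mathfrak{A}_{m,-m}$. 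Summing over $m\le n$ gives $\Omega^{\mathsf{L}}_{n}(\mathbf{\Phi}^{\mathsf{L}}_{n}(M))=\bigoplus_{m}\one_{m}M=M$, with the isomorphism induced by $\eta_{M}$, which is $m\mapsto 1_{n}\otimes m$.

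The main obstacle is this identification. It requires checking that the computation in \zcref{lem:grOmegabPhin} identifies $e_{m}\mathfrak{L}^{n}\otimes_{\mathscr{A}_{n}}M$ with $\one_{m}M$ \emph{through} $\eta_{M}$, rather than through some abstract isomorphism. I would first verify this on the free module $M=\mathscr{A}_{n}$, where $\mathbf{\Phi}^{\mathsf{L}}_{n}(\mathscr{A}_{n})=\mathfrak{L}^{n}$ and $\Omega^{\mathsf{L}}_{n}(\mathfrak{L}^{n})=\mathscr{A}^{\mathsf{L}}_{n}$ hold by definition, so $\eta$ is the identity. The general case would then follow by right exactness of $\mathbf{\Phi}^{\mathsf{L}}_{n}$ together with exactness of $\Omega^{\mathsf{L}}_{n}$. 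The latter holds because, by \zcref{lem:splitOmega}, $\Omega^{\mathsf{L}}_{n}(W)=(e_{0}+\cdots+e_{n})W$ is cut out by an idempotent, so $\Omega^{\mathsf{L}}_{n}$ is exact and commutes with colimits. Applying both functors to a free presentation of $M$ and the five lemma then shows $\eta_{M}$ is an isomorphism. This argument bypasses most of \zcref{lem:grOmegabPhin}.

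\textbf{Counit.} Let $W$ be exhaustive and take $w\in W_{(m)}$. If $m\le n$, then $w\in\Omega^{\mathsf{L}}_{n}(W)$ and $w$ is the image of $1_{n}\otimes w$. If $m>n$, argue as in \zcref{thm:PhiOmega0}: write $w=[e_{m}^{+}e_{m}^{-}]_{m}w$ using \zcref[noname]{eq:exp-egen}. Then $e_{m}^{-}w\in W_{(0)}\subset\Omega^{\mathsf{L}}_{n}(W)$, so $w$ is the image of $\sum_{i}[e_{m}^{+}(i)]^{\mathsf{L}}_{n}\otimes e_{m}^{-}(i)w\in\mathfrak{L}^{n}\otimes_{\mathscr{A}_{n}}\Omega^{\mathsf{L}}_{n}(W)$. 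Since $W=\bigoplus_{m}W_{(m)}$, the counit is surjective. \zcref{lem:AdjEq} now gives the adjoint equivalence for every $R$ and every $n\in\N$.
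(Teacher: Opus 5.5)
Your proof is correct and follows the paper's own argument. The paper applies \zcref{lem:AdjEq} to the same adjoint pair and gets the unit from $\Omega^{\mathsf{L}}_{n}\mathbf{\Phi}^{\mathsf{L}}_{n}(M)=\Omega^{\mathsf{L}}_{n}(\mathfrak{L}^{n})\otimes_{\mathscr{A}_{n}}M=M$, mentioning \zcref{lem:grOmegabPhin} only as an alternative route; it gets the counit exactly as in \zcref{thm:PhiOmega0}. Your free-presentation argument, which uses that $\Omega^{\mathsf{L}}_{n}$ is cut out by the idempotent $e_{0}+\cdots+e_{n}$ and is therefore exact, supplies the justification for that first equality, which the paper states without comment.
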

  \begin{proof}
    We only focus on the adjoint pair $\mathbf{\Phi}^{\mathsf{L}}_{n}\dashv\Omega^{\mathsf{L}}_{n}$. The other one is similar. By \zcref{lem:AdjEq}, it boils down to verifying that 
    \begin{enumerate}
      \item the unit map $\eta_{M}\colon M\mapsto\Omega^{\mathsf{L}}_{n}\mathbf{\Phi}^{\mathsf{L}}_{n}(M)$ is bijective for any left $\mathscr{A}_{n}$-module $M$; and
      \item the counit map $\varepsilon_{W}\colon\mathbf{\Phi}^{\mathsf{L}}_{n}\Omega^{\mathsf{L}}_{n}(W)\to W$ is surjective for any exhaustive left $U$-module $W$.
    \end{enumerate}
    The proof of (1) can be deduced from \zcref{lem:grOmegabPhin}. 
    But a more straightforward way is to note that $\Omega^{\mathsf{L}}_{n}\mathbf{\Phi}^{\mathsf{L}}_{n}(M) = \Omega^{\mathsf{L}}_{n}(\mathfrak{L}^{n})\otimes_{\mathscr{A}_{n}}M = M$ under the assumption of \SIE.
    The proof of (2) is similar to that in \zcref{thm:PhiOmega0}.
  \end{proof}

  \begin{remark}
    Combining this theorem with \zcref{thm:PhiOmega0}, which can be seen as the case $n=0$ of the above, we obtain an equivalence:
    \begin{equation}\label{eq:EquivA0-An}
      \begin{tikzcd}
        \Omega^{\mathsf{L}}_{n}\circ\Phi^{\mathsf{L}}_{0}\colon 
        \Mod(\mathsf{A}_{0}|{}_R) \ar[r,"\simeq"] & 
        \Mod(\mathscr{A}_{n}|{}_R) \ar[l] \colon\Omega^{\mathsf{L}}_{0}\circ\mathbf{\Phi}^{\mathsf{L}}_{n}.
      \end{tikzcd}
    \end{equation}
    Be aware that this is NOT an adjoint pair.
  \end{remark}

\subsection{For quasi-rigid modules}
  We have the following corollary for quasi-rigid modules.
  \begin{corollary}\label{coro:HigherMoritaOrdinary}
    Let $n\in\N$. Suppose either 
    \begin{enumerate}
      \item $U$ is \emph{quasi-rigid}: $\mathfrak{L}^{n} \in \Mod*[Gr](U|{}_{\mathsf{A}_{n}})$ and $\mathfrak{R}^{n} \in \Mod*[Gr]({}_{\mathsf{A}_{n}}|U)$; or 
      \item $R$ is semisimple and $U$ is \emph{weakly quasi-finite}: components of $\mathfrak{L}^{n},\mathfrak{R}^{n}$ are finite over $\mathsf{A}_{n}$.
    \end{enumerate}
    If $U$ admits a strong identity expansion (\SIE), then the adjoint pairs
    \[
      \mathbf{\Phi}^{\mathsf{L}}_{n}\colon \Mod*(\mathscr{A}_{n}|{}_R) \rightleftharpoons \Mod*[Ex](U|{}_R)\colon\Omega^{\mathsf{L}}_{n}
      \txand
      \mathbf{\Phi}^{\mathsf{R}}_{n}\colon \Mod*({}_R|\mathscr{A}_{n}) \rightleftharpoons \Mod*[Ex]({}_R|U)\colon\Omega^{\mathsf{R}}_{n}
    \]
    are adjoint equivalences.
  \end{corollary}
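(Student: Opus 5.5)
The plan mirrors the proof of \zcref{coro:PhiOmega0-ord}: \zcref{thm:bPhiOmega} already gives the equivalence on the big categories, so it suffices to show that both functors preserve the quasi-rigid subcategories. An equivalence restricts to an equivalence between full subcategories that each functor maps into the other. We treat the left side; the right side follows by passing to $\opp{U}$ (\zcref{rem:opposite}).

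\emph{Step 1: $\mathbf{\Phi}^{\mathsf{L}}_{n}$ preserves quasi-rigidity.} Let $M\in\Mod*(\mathscr{A}_{n}|{}_R)$, i.e.\ $M$ is rigid over $R$. By \zcref{lem:thickZhu}, $\mathscr{A}_{n}\cong\prod_{p,q\le n}\mathfrak{A}_{p,-q}$, and $M=\bigoplus_{k=0}^{n}\one_{k}M$ with each summand $R$-rigid. By \zcref{lem:grOmegabPhin}, the $\gr\Omega$-grading of $\mathbf{\Phi}^{\mathsf{L}}_{n}(M)$ has components
\[
  \mathbf{\Phi}^{\mathsf{L}}_{n}(M)_{(m)}=\bigoplus_{k=0}^{n}\mathfrak{A}_{m,-k}\star M .
\]
By \zcref{lem:Act_of_fA_via_A} and the computation in \zcref{lem:grOmegabPhin}, each summand is $\mathfrak{A}_{m,-k}\otimes_{\mathfrak{A}_{k}}\one_{k}M$. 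Under SIC, \zcref{thm:decomposition_of_fL} identifies $\mathfrak{A}_{m,-k}$ with a direct summand of $\mathfrak{L}^{k}_{m-k}$, compatibly with the right $\mathfrak{A}_{k}$-action, which is a direct factor of $\mathsf{A}_{k}$. In case (i), $\mathfrak{L}^{k}_{m-k}$ is rigid over $\mathsf{A}_{k}$; hence $\mathfrak{A}_{m,-k}$ is rigid over $\mathfrak{A}_{k}$, and \zcref{lem:rigidmodule} shows the summand is $R$-rigid. This needs quasi-rigidity at every level $k\le n$, so we read the hypothesis as holding at those levels. In case (ii), finiteness over $\mathsf{A}_{k}$ passes to the factor $\mathfrak{A}_{k}$, and a finite tensor product of finite $R$-modules over semisimple $R$ is finite, hence rigid. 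So $\mathbf{\Phi}^{\mathsf{L}}_{n}(M)$ is the underlying module of a quasi-rigid graded module, namely the one with the $(\bullet)$-grading.

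\emph{Step 2: $\Omega^{\mathsf{L}}_{n}$ preserves quasi-rigidity.} Let $W\in\Mod*[Ex](U|{}_R)$, with a quasi-rigid grading $W_{\bullet}$. By \zcref{lem:splitOmega}, $\Omega^{\mathsf{L}}_{n}(W)=W_{(0)}\oplus\cdots\oplus W_{(n)}$, so it suffices to show each $W_{(k)}=e_{k}W$ is $R$-rigid. We have $W_{(k)}\subset\Omega_{k}(W)$, and by \zcref{coro:Omega-vs-grading} $\Omega_{k}(W)$ is graded, with the action of $\widehat{U_{0}}$ factoring through $\mathsf{A}_{k}$ and preserving each $W_{p}$. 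Hence $e_{k}$ acts as a homogeneous idempotent, and $W_{(k)}=\bigoplus_{p}e_{k}(\Omega_{k}(W)\cap W_{p})$.

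The main obstacle is that $\Omega_{k}(W)\cap W_{p}$ need not be a direct summand of $W_{p}$, and infinitely many $p$ may contribute. To get around this, I would avoid intersections and use the equivalence already established. Since $W\cong\mathbf{\Phi}^{\mathsf{L}}_{n}\Omega^{\mathsf{L}}_{n}(W)\cong\Phi^{\mathsf{L}}_{0}(\Omega^{\mathsf{L}}_{0}W)$ by \zcref{thm:PhiOmega0}, we have $W_{(k)}=\mathfrak{A}_{k,0}\otimes_{\mathsf{A}_{0}}\Omega_{0}(W)$ by \zcref{lem:grOmegaPhi}. It therefore suffices to show that $\Omega_{0}(W)$ is $R$-rigid. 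For this I would use the conclusion of \zcref{coro:PhiOmega0-ord}, which is exactly the $n=0$ case; its proof handles $\Omega_{0}$ the same way via \zcref{lem:grOmegaPhi}. Rigidity of $W_{(k)}$ then follows from \zcref{lem:rigidmodule}, using that $\mathfrak{A}_{k,0}=\mathfrak{L}^{0}_{k}$ is rigid over $\mathsf{A}_{0}$ in case (i), or finiteness in case (ii).

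Combining Steps 1 and 2 with \zcref{thm:bPhiOmega}, the restricted adjoint pair is an adjoint equivalence.
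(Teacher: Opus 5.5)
Your Step 1 is, in substance, the paper's entire proof. The paper only checks that $\mathbf{\Phi}^{\mathsf{L}}_n$ sends $R$-rigid modules to quasi-rigid ones (via \zcref{lem:grOmegabPhin} and \zcref{lem:rigidmodule}) and says nothing about $\Omega^{\mathsf{L}}_n$.

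You are right that the other half is needed, but Step 2 does not supply it. Invoking \zcref{coro:PhiOmega0-ord} is circular: its argument only shows that $\Omega^{\mathsf{L}}_{\bullet}(\Phi^{\mathsf{L}}_0(M))$ is rigid when $M$ is already rigid. It never treats a $W$ whose quasi-rigid grading differs from its $\gr\Omega$-grading.

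Worse, the claim is false for quasi-rigidity as defined in \zcref{def:ordinary}. Take the rank-one Weyl algebra $\mathcal{D}$ with $\k=R=\mathbb{Q}$. Here \SIE{} holds, $\mathsf{A}_n\cong\k^{n+1}$ is semisimple, and all components of $\mathfrak{L}^n,\mathfrak{R}^n$ are finite-dimensional, so both (i) and (ii) hold. Let $W=\bigoplus_{i\in\N}\k[x]\,v_i$ with $\deg(x^jv_i):=i+j$. This is a positively graded $\mathcal{D}$-module with $\dim W_d=d+1$, so $W\in\Mod*[Ex](\mathcal{D}|{}_{\k})$. However, $\Omega^{\mathsf{L}}_n(W)=\bigoplus_i\k[x]_{\le n}v_i$ is infinite-dimensional. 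Since the unit of $\mathbf{\Phi}^{\mathsf{L}}_n\dashv\Omega^{\mathsf{L}}_n$ is an isomorphism (\zcref{thm:bPhiOmega}), $W$ is not $\mathbf{\Phi}^{\mathsf{L}}_n$ of any $R$-rigid module. The same $W$ refutes the $\Omega$-half of \zcref{coro:PhiOmega0-ord}.

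This is exactly the obstacle you spotted (here $e_0W_p\neq0$ for every $p$), and it cannot be circumvented. By contrast, $\Omega_k(W)\cap W_p$ is always a direct summand of $W_p$, because $e_0+\cdots+e_k$ acts on $W_p$ as an idempotent; the unbounded support in $p$ is the only real issue.

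What does hold is this: $\mathbf{\Phi}^{\mathsf{L}}_n$ restricted to $\Mod*(\mathscr{A}_n|{}_R)$ is fully faithful, with essential image the exhaustive modules whose $\gr\Omega$-grading has $R$-rigid components. If quasi-rigidity of $W$ is measured in the $\gr\Omega$-grading (as \zcref{coro:EndMode} tacitly does), Step 2 is immediate from $\Omega^{\mathsf{L}}_n(W)=W_{(0)}\oplus\cdots\oplus W_{(n)}$.

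A smaller point on Step 1, inherited from \zcref{lem:grOmegabPhin}: the sum $\sum_k\mathfrak{A}_{m,-k}\star M$ is not direct in general. So you cannot identify $\mathbf{\Phi}^{\mathsf{L}}_n(M)_{(m)}$ with $\bigoplus_k\mathfrak{A}_{m,-k}\otimes_{\mathfrak{A}_k}\one_kM$. In the rank-one Weyl algebra, the elements $E_{pq}:=\frac{1}{q!}[x^p]^{\mathsf{L}}_0\otimes[\partial^q]^{\mathsf{R}}_0$ are matrix units, so $\mathscr{A}_n\cong M_{n+1}(\k)$. For its simple module $M$ one gets $\mathbf{\Phi}^{\mathsf{L}}_n(M)\cong\k[x]$, and for $n\ge1$ every $\mathfrak{A}_{m,-k}\star M$ with $k\le n$ is the same line $\k x^m$.

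The conclusion survives by a cleaner route. We have $\Omega^{\mathsf{L}}_0\mathbf{\Phi}^{\mathsf{L}}_n(M)=e_0\,\Omega^{\mathsf{L}}_n\mathbf{\Phi}^{\mathsf{L}}_n(M)\cong\one_0M$. Then \zcref{thm:PhiOmega0} and \zcref{lem:grOmegaPhi} give $\mathbf{\Phi}^{\mathsf{L}}_n(M)_{(m)}\cong\mathfrak{L}^0_m\otimes_{\mathsf{A}_0}\one_0M$. This is $R$-rigid by \zcref{lem:rigidmodule} in case (i), and finitely generated over the semisimple $R$ in case (ii).

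Moreover, $\mathfrak{L}^0_m=\mathfrak{A}_{m,0}$ is the summand of $\mathfrak{L}^n_m\cong\bigoplus_{k\le n}\mathfrak{A}_{m+k,-k}$ on which $\mathsf{A}_n$ acts through its factor $\mathsf{A}_0$ (\zcref{thm:decomposition_of_fL}). So the hypothesis at level $n$ already suffices, and you need not strengthen it to all levels $k\le n$.
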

  \begin{proof}
    We need to show that $\mathbf{\Phi}^{\mathsf{L}}_{n}$ sends a $(\mathscr{A}_{n}|{}_R)$-bimodule $M$ with rigid underlying $R$-module to a quasi-rigid exhaustive $(U|{}_R)$-bimodule. 
    It suffices to show that each $\mathbf{\Phi}^{\mathsf{L}}_{n}(M)_{(m)}$ is rigid. 
    This follows from \zcref{lem:grOmegabPhin} and the assumption on $M$.  
    Indeed, in case (i), by \zcref{lem:rigidmodule}, it remains to show that each block $\mathfrak{A}_{p,-q}$ is rigid over $\mathfrak{A}_{q,-q}$. This is done by \zcref{prop:decomposition_of_fA} and the definition. Case (ii) is straightforward as one only needs to check finiteness.
  \end{proof}

\subsection{Projectivity of $\mathfrak{L}^{n}$ and $\mathfrak{R}^{n}$}
  We end this section with another characterization of the strong identity condition. 

  First, note that any equivalence of abelian categories must be exact. We conclude that
  \begin{lemma}\label{lem:bPhiOmega-Proj}
    If \zcref[noname]{PhiOmegaLn} (resp. \zcref[noname]{PhiOmegaRn}) is verified, then $\mathfrak{L}^{n}\otimes R$ (resp. $R\otimes\mathfrak{R}^{n}$) is a projective object in $\Mod[Ex](U|{}_R)$ (resp. $\Mod[Ex]({}_R|U)$).
  \end{lemma}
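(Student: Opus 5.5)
The plan is to realize $\mathfrak{L}^{n}\otimes R$ as the image of a projective object under the equivalence $\mathbf{\Phi}^{\mathsf{L}}_{n}$. The left-module case suffices; the right-module case is symmetric, using $\mathbf{\Phi}^{\mathsf{R}}_{n}$ and the free module $R\otimes\mathscr{A}^{\mathsf{R}}_{n}$.

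First, identify the relevant object in $\Mod(\mathscr{A}^{\mathsf{L}}_{n}|{}_R)$. Take the free bimodule $P:=\mathscr{A}^{\mathsf{L}}_{n}\otimes R$. It is projective: by the tensor--hom adjunction,
\[
\Hom_{\Mod(\mathscr{A}^{\mathsf{L}}_{n}|{}_R)}(\mathscr{A}^{\mathsf{L}}_{n}\otimes R,M)\cong M
\]
naturally in $M$, and the forgetful functor to spaces is exact. Next compute
\[
\mathbf{\Phi}^{\mathsf{L}}_{n}(P)=\mathfrak{L}^{n}\otimes_{\mathscr{A}^{\mathsf{L}}_{n}}(\mathscr{A}^{\mathsf{L}}_{n}\otimes R)\cong\mathfrak{L}^{n}\otimes R
\]
as $(U|{}_R)$-bimodules. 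This uses only that $\mathfrak{L}^{n}$ is a unital right $\mathscr{A}^{\mathsf{L}}_{n}$-module, which holds by \zcref{lem:ThickZhu}, since $1_{n}\in\mathscr{A}^{\mathsf{L}}_{n}$ acts as the identity.

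Under \zcref[noname]{PhiOmegaLn}, the functor $\mathbf{\Phi}^{\mathsf{L}}_{n}$ is an equivalence of abelian categories. It therefore preserves epimorphisms and lifting properties, so it sends projective objects to projective objects. Concretely, $\Hom_{U|}(\mathbf{\Phi}^{\mathsf{L}}_{n}(P),-)\cong\Hom(P,\Omega^{\mathsf{L}}_{n}(-))$ by \zcref{prop:adj:bPhiOmega}. The right adjoint $\Omega^{\mathsf{L}}_{n}$ of an equivalence is exact, so this composite is exact. Hence $\mathfrak{L}^{n}\otimes R$ is projective in $\Mod[Ex](U|{}_R)$. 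It is exhaustive because it lies in the essential image of $\mathbf{\Phi}^{\mathsf{L}}_{n}$; alternatively, this follows from \zcref{coro:Omega-vs-grading}, since it is positively graded.

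The only delicate point is the exactness of $\Omega^{\mathsf{L}}_{n}=\Hom_{U|}(\mathfrak{L}^{n},-)$ (\zcref{rem:OmegaHomLR}) on $\Mod[Ex](U|{}_R)$. In general this functor is only left exact, so exactness must come from the equivalence hypothesis. Since $\Omega^{\mathsf{L}}_{n}$ is quasi-inverse to $\mathbf{\Phi}^{\mathsf{L}}_{n}$, it is itself an equivalence, and any equivalence between abelian categories is exact. This already closes the argument. It also shows directly that $\mathfrak{L}^{n}$ (the case $R=\k$) represents an exact functor on exhaustive modules.
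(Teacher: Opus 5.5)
Your proposal is correct and is essentially the paper's argument. The paper observes that $\Omega^{\mathsf{L}}_{n}=\Hom_{U|}(\mathfrak{L}^{n},-)$ is exact because it is an equivalence, and that exactness makes $\mathfrak{L}^{n}\otimes R$ projective; your adjunction identity $\Hom(\mathbf{\Phi}^{\mathsf{L}}_{n}(\mathscr{A}^{\mathsf{L}}_{n}\otimes R),-)\cong\Hom(\mathscr{A}^{\mathsf{L}}_{n}\otimes R,\Omega^{\mathsf{L}}_{n}(-))$, combined with $\mathfrak{L}^{n}\otimes R\cong\mathbf{\Phi}^{\mathsf{L}}_{n}(\mathscr{A}^{\mathsf{L}}_{n}\otimes R)$, is that same step written out explicitly.
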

  \begin{proof}
    Suppose 
    $\Omega^{\mathsf{L}}_{n} \colon \Mod[Ex](U|{}_R) \to \Mod(\mathscr{A}^{\mathsf{L}}_{n}|{}_R)$ is an equivalence. 
    Then, in particular, $\Omega^{\mathsf{L}}_{n}$ is exact. 
    By \zcref{rem:OmegaHomLR}, this means $\mathfrak{L}^{n}\otimes R$ is a projective object in $\Mod[Ex](U|{}_R)$.
  \end{proof}

  Now, we look at the sequence of canonical projections:
  \begin{align*}
    \cdots \longrightarrow 
    \mathfrak{L}^{n} \overset{\pi_{n}}{\longrightarrow} 
    \mathfrak{L}^{n-1} \longrightarrow 
    \cdots \longrightarrow 
    \mathfrak{L}^{0} \longrightarrow 
    0 \\
    \cdots \longrightarrow 
    \mathfrak{R}^{n} \overset{\pi_{n}}{\longrightarrow} 
    \mathfrak{R}^{n-1} \longrightarrow 
    \cdots \longrightarrow 
    \mathfrak{R}^{0} \longrightarrow 
    0
  \end{align*}
  \begin{theorem}\label{thm:Proj-SIE}
    Suppose all $\mathfrak{L}^{n}$ are projective in $\Mod[Ex](U|)$ and all $\mathfrak{R}^{n}$ are projective in $\Mod[Ex](|U)$. 
    Then, $U$ admits a strong identity expansion (\SIE).
  \end{theorem}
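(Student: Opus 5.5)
We plan to reduce to the condition \zcref[noname]{Omegasplit}, or more directly to the splitting of the projections $\pi_n$ used in the proof of \zcref{thm:SIE-Omega}. Consider the surjection $\pi_n\colon\mathfrak{L}^{n}\to\mathfrak{L}^{n-1}$. It is a morphism in $\Mod[Ex](U|)$, and both source and target are exhaustive by \zcref{coro:Omega-vs-grading}. Since $\mathfrak{L}^{n-1}$ is projective in $\Mod[Ex](U|)$, the identity of $\mathfrak{L}^{n-1}$ lifts along $\pi_n$. This gives a $U$-linear section $s_n\colon\mathfrak{L}^{n-1}\to\mathfrak{L}^{n}$, and hence a decomposition $\mathfrak{L}^{n}=\ker\pi_n\oplus s_n(\mathfrak{L}^{n-1})$ of left $U$-modules. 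The same argument applied to the $\mathfrak{R}^{n}$ gives right $U$-module sections $t_n$.

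The first issue is that the section need not be homogeneous. We will fix this by taking the degree-preserving component. The $\mathfrak{L}^{n}$ are graded and $\pi_n$ is homogeneous. The section is determined by $x:=s_n(1_{n-1})\in\mathfrak{L}^{n}$, which satisfies $\NL[n]U\cdot x=0$ and $\pi_n(x)=1_{n-1}$. Because $\NL[n]U$ is a graded left ideal, the degree-$0$ component $x_0$ of $x$ satisfies the same two conditions. So $1_{n-1}\mapsto x_0$ is still a well-defined $U$-linear section, and it is now homogeneous. This yields graded decompositions $\mathfrak{L}^{n}=\ker\pi_n\oplus\mathfrak{L}^{n-1}$ with $1_n=\mathfrak{e}_n+x_0$ and $\mathfrak{e}_n\in(\ker\pi_n)_0$.

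Next we iterate and pass to the limit. This gives $\widehat U=\mathfrak{L}^{n}\oplus\NL[n+1]\widehat U$, and we obtain elements $e_n\in\widehat{U_0}$ by projecting $1_U$ onto the successive kernels. For orthogonality and \ref{si4} we argue as follows. $x_0$ lies in $\Omega_{n-1}(\mathfrak{L}^{n})$. Write $x_0=[\epsilon]$, so the idempotent-type relations come from $\epsilon\cdot\epsilon\equiv\epsilon$, which follows from $U$-linearity of the section ($\epsilon\cdot x_0=s(\epsilon\cdot 1_{n-1})=s([\epsilon])=x_0$). The difference $e_n=[1-\epsilon]$ then lies in $\NLR[n]$ modulo $\NLR[n+1]$. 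Doing the same on the right with the $\mathfrak{R}^{n}$ gives elements $f_n$. As in the corollary preceding \zcref{thm:SIE-Omega}, $\mathfrak{L}^{n}_{0}=\mathfrak{R}^{n}_{0}=\mathsf{A}_{n}$ should force $e_n=f_n$. One way to see this: a left-$U$-linear idempotent decomposition and a right-$U$-linear one of $\mathsf{A}_n$ both give central idempotents. They agree because $e=e f=f$, using that left-linearity makes $e$ commute on one side and right-linearity on the other.

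Finally we verify \ref{si3}, namely that $\widehat U_{-n}e_{<N_n}=0$ and $e_{<N_n}\widehat U_n=0$. We will check this on the quotients: $U_{-n}\cdot e_{<n}$ lies in $\NL[n]$ by degree, and the left-linear splitting shows that $\NL[n]$-type elements kill the complementary summand. Symmetrically, the right splitting handles $e_{<n}\widehat U_{n}$. This is the step where both left and right projectivity are genuinely needed. We expect the main obstacle to be the identification $e_n=f_n$, that is, making the left and right splittings compatible. The homogenization step should be routine.
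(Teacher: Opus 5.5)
Your proposal follows the paper's route: projectivity splits every $\pi_n$, and one then reruns the construction from the corollary preceding \zcref{thm:SIE-Omega}; your homogenization of the section and your $e=ef=f$ comparison fill in details the paper leaves implicit (it simply asserts graded splittings and that $e_n=f_n$). Two adjustments are needed. First, the claim that each one-sided splitting already yields a central idempotent is not justified by your argument, and it is not needed. Second, orthogonality and the comparison should be carried out on the partial sums in $\widehat{U_0}$ rather than level by level in $\mathsf{A}_n$: left-linearity of the composite sections gives $\NL[n+1]\widehat{U}\cdot e_{\le n}=0$ and $e_{\le m}e_{\le n}=e_{\le \min(m,n)}$, right-linearity gives $f_{\le n}\cdot\NR[n+1]\widehat{U}=0$, and therefore $e_{\le n}=f_{\le n}e_{\le n}=f_{\le n}$.
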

  \begin{proof}
    Under the assumption, all projections $\pi_{n}\colon\mathfrak{L}^{n}\to\mathfrak{L}^{n-1}$ and $\pi_{n}\colon\mathfrak{R}^{n}\to\mathfrak{R}^{n-1}$ split as morphisms of graded left $U$-modules and graded right $U$-modules, respectively. 
    Then, following the same argument in the proof of \zcref{thm:SIE-Omega}, we can construct a strong identity expansion.
  \end{proof}

  As a consequence, we have the following unconditional implication.
  \begin{corollary}
    If \zcref[noname]{PhiOmegaLn,PhiOmegaRn} are verified for all $n \in \N$ with $R=\k$, then $U$ admits a strong identity expansion (\SIE).
  \end{corollary}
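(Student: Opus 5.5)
The corollary should follow by chaining two results already in hand: \zcref{lem:bPhiOmega-Proj} first, then \zcref{thm:Proj-SIE}.

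\emph{Step 1: projectivity.} Fix $n\in\N$ and specialize \zcref{lem:bPhiOmega-Proj} to $R=\k$. Since $\mathfrak{L}^{n}\otimes\k=\mathfrak{L}^{n}$, the hypothesis \zcref[noname]{PhiOmegaLn} says that $\Omega^{\mathsf{L}}_{n}\colon\Mod[Ex](U|)\to\Mod(\mathscr{A}^{\mathsf{L}}_{n}|)$ is an equivalence of abelian categories. An equivalence is exact, and by \zcref{rem:OmegaHomLR} we have $\Omega^{\mathsf{L}}_{n}=\Hom_{U|}(\mathfrak{L}^{n},-)$. Hence $\Hom_{U|}(\mathfrak{L}^{n},-)$ is exact on $\Mod[Ex](U|)$, which means $\mathfrak{L}^{n}$ is projective there.

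There is one point to check here. Exactness must be taken with respect to the abelian structure of $\Mod[Ex](U|)$. By \zcref{prop:ExModGrothendieck}, this subcategory is closed under colimits, and it is also closed under submodules. So kernels and cokernels agree with those computed in $\Mod(U|)$, and there is no ambiguity. The same argument with \zcref[noname]{PhiOmegaRn} shows that every $\mathfrak{R}^{n}$ is projective in $\Mod[Ex](|U)$.

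\emph{Step 2: the expansion.} The hypotheses of \zcref{thm:Proj-SIE} now hold for all $n$, so $U$ admits a strong identity expansion.

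It is worth recalling how that theorem works, since this is where any real difficulty sits. Each $\pi_{n}\colon\mathfrak{L}^{n}\to\mathfrak{L}^{n-1}$ is an epimorphism between exhaustive modules, so projectivity of $\mathfrak{L}^{n-1}$ gives a section. One then needs this section to be graded, so that the resulting decomposition $1_{n}=\mathfrak{e}_{n}+1_{n-1}$ takes place in degree $0$. This can be arranged by projecting the image of $1_{n-1}$ onto the degree-zero component, which is legitimate because $\mathfrak{L}^{n}$ is graded and $1_{n-1}$ generates. Passing to the limit, as in \zcref{thm:SIE-Omega}, yields the $e_{n}$. Given \zcref{thm:Proj-SIE}, however, the corollary itself is just the composition of Steps 1 and 2, and no further work is needed.
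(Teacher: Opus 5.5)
Your proposal is correct and follows the paper's route exactly: the corollary is \zcref{lem:bPhiOmega-Proj} at $R=\k$ composed with \zcref{thm:Proj-SIE}. In the first step, the equivalence makes $\Omega^{\mathsf{L}}_{n}=\Hom_{U|}(\mathfrak{L}^{n},-)$ exact, so $\mathfrak{L}^{n}$, and likewise $\mathfrak{R}^{n}$, is projective among exhaustive modules.

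Your side remarks are also valid and supply details the paper leaves implicit. These are that $\Mod[Ex](U|)$ is closed under submodules and quotients, and that a graded section of $\pi_{n}$ is obtained by taking the degree-zero component of a lift of $1_{n-1}$.
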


  Comparing to \zcref{thm:Adjeq-SIC,thm:AdjeqR-SIC}, this needs Morita-type equivalences at all levels, but does not need any assumption on quasi-rigidity or quasi-finiteness.

\part{Structural Properties for almost-canonically seminormed rings}\label{part3}
In this part, we develops structural features of almost-canonically seminormed rings and their module categories, such as the notion of rationality, the compatibility with tensor product, and the end formula for the mode transition algebra.

\section{Rationality}\label{sec:rationality}
  The notion of \emph{rationality} refers to the semisimplicity of certain category of good modules.
  \begin{definition}\label{def:rational}
    We say that $U$ is \emph{$R$-rational} if any positively-graded $(U|{}_R)$-bimodule is a direct sum of quasi-rigid simple ones. 
  \end{definition} 
  \begin{remark}\label{rem:rational=semisimple}
    The definition mimics the rationality of vertex operator algebras introduced by Zhu in \cite[Definition 1.2.4]{Z96}. Later, in \cite[Theorem 8.1]{DLM1}, Dong-Li-Mason proved that Zhu's rationality is equivalent to the semisimplicity of the category of admissible modules.
  \end{remark}

  In this section, we discuss the relation between rationality, semisimplicity of graded modules, and the strong identity condition.

\subsection{Preparations on simple modules}
  The structure of simple modules are stiff. 
  \begin{lemma}\label{lem:S-grOmega}
    Let $S$ be a simple $(U|{}_R)$-bimodule that is gradable (see \zcref{def:gradable}). Then, its $\Omega$-filtration splits, and the $\gr\Omega$-grading coincides with any grading on $S$ up to a degree-shifting.
    Similar for simple $({}_R|U)$-bimodules.
  \end{lemma}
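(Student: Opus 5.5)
We would fix an arbitrary (positive) grading $S_{\bullet}$ on $S$. Then we would show directly that the $\Omega$-filtration is the filtration by degrees, shifted so that it starts at the lowest nonzero degree. Both claims of the statement follow from this, because the graded pieces $S_{d+k}$ then give a splitting of $\Omega_{\bullet}(S)$ whose associated grading is the given one shifted by $d$.

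\emph{Step 1 (lowest degree).} Since $S\neq 0$ and the grading is positive, there is a minimal $d$ with $S_{d}\neq 0$. For any nonzero homogeneous $w\in S_{m}$, the sub-bimodule $U\cdot w\cdot R$ is nonzero. By simplicity it equals $S$. Because $U_{p}$ maps $S_{m}$ into $S_{m+p}$ and $R$ preserves each component, comparing degrees gives
\[
  S_{d} = U_{d-m}\, w\, R .
\]
Taking $m=d$ gives $S_{d+k}=U_{k}S_{d}R$ as well; only the displayed identity is needed below.

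\emph{Step 2 ($\supseteq$).} We claim $\bigoplus_{k\le n}S_{d+k}\subset\Omega^{\mathsf{L}}_{n}(S)$. Let $w\in S_{d+k}$ with $k\le n$ and let $p\le -n-1$. Then $U_{p}w\subset S_{d+k+p}$, and $d+k+p<d$, so $U_{p}w=0$. Hence $U_{\le -n-1}$ kills $w$. By \zcref{prop:ExModDiscCont} (which applies since $S$ is exhaustive by \zcref{coro:Omega-vs-grading}), $w\in\Omega^{\mathsf{L}}_{n}(S)$.

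\emph{Step 3 ($\subseteq$).} By \zcref{coro:Omega-vs-grading}, $\Omega^{\mathsf{L}}_{n}(S)$ is a graded subspace, so it suffices to treat a homogeneous $w\in\Omega^{\mathsf{L}}_{n}(S)\cap S_{d+k}$ with $k>n$. Since $k\ge n+1$, we have $U_{-k}\subset U_{0}U_{\le -n-1}\subset\NL[n+1]U$, so $U_{-k}w=0$. Step 1 with $m=d+k$ then gives $S_{d}=U_{-k}wR=0$ whenever $w\neq 0$. This contradicts $S_{d}\neq 0$, so $w=0$. Therefore
\[
  \Omega^{\mathsf{L}}_{n}(S)=\bigoplus_{k=0}^{n}S_{d+k}.
\]

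\emph{Conclusion.} This splits the $\Omega$-filtration, with complements $S_{d+n}$. It also identifies
\[
  \gr\Omega_{n}(S)\cong S_{d+n}
\]
compatibly with the $U$-action, which is the coincidence of the $\gr\Omega$-grading with the given grading shifted by $d$. Since the grading was arbitrary, this holds for every grading. For $({}_R|U)$-bimodules we would pass to $\opp{U}$ via \zcref{rem:opposite}, turning negative gradings into positive ones.

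The only delicate point is Step 3. It needs the generation statement of Step 1 to be degree-exact, i.e.\ that $R$ preserves components and $U_{p}$ shifts degree by exactly $p$. It also needs the inclusion $U_{-k}\subset\NL[n+1]U$, which uses weak unitality.
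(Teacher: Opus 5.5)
Your proposal is correct and follows essentially the same argument as the paper. The paper normalizes the lowest nonzero degree to $0$, notes $\bigoplus_{k\le n}S_k\subset\Omega^{\mathsf{L}}_n(S)$, and rules out a nonzero homogeneous $w\in S_m\cap\Omega^{\mathsf{L}}_n(S)$ with $m>n$ via $S=UwR=U_{\ge -n}wR\subset S_{\ge m-n}$, which misses $S_0$. Your comparison $S_d=U_{d-m}wR$ gives the same contradiction, and you also spell out the reduction to homogeneous elements and the easy inclusion, which the paper leaves implicit.
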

  \begin{proof}
    This proof is inspired by \cite[Proposition 5.4]{DLM1}. It turns out to be valid in our general setting. 
    
    For simplicity, we start with a grading on $S$ such that $S_{n} = 0$ for $n<0$ and $S_{0} \neq 0$. Then, we have $\bigoplus_{k=0}^{n}S_{k} \subset \Omega^{\mathsf{L}}_{n}(S)$. The assertions  boil down to show that the inclusion is an equality. Indeed, if there is a nonzero $w\in S_{m}$ ($m>n$) belongs to $\Omega^{\mathsf{L}}_{n}(S)$, then
    \[ S = U wR = U_{\ge -n} wR, \]
    where the first equality follows from the simplicity of $S$ and the second follows from $w\in\Omega^{\mathsf{L}}_{n}$. 
    But $U_{\ge -n} wR \subset S_{\ge m-n}$, which is a proper subspace of $S$ since it does not contain $S_{0}$. This is a contradiction. 
  \end{proof}
  \begin{remark}
    The lemma requires $S$ to be gradable. In general, a simple exhaustive module needs not to be gradable.
  \end{remark}

  \begin{lemma}\label{lem:S-component}
    Let $S$ be a simple graded $(U|{}_R)$-bimodule. Then, any components $S_{n}$ is a simple $(U_{0}|{}_{R})$-bimodule.
    Similar for simple $({}_R|U)$-bimodules.
  \end{lemma}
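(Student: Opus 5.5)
The plan is the standard argument that a homogeneous component of a simple graded module is simple over the degree-zero subring, transcribed into the present setting. Let $S$ be a simple graded $(U|{}_R)$-bimodule and fix $n$ with $S_{n}\neq 0$; if $S_{n}=0$ there is nothing to prove, or we regard it as trivially excluded. We will show that every nonzero $(U_{0}|{}_R)$-sub-bimodule $N\subset S_{n}$ equals $S_{n}$.

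The first step is to consider the $(U|{}_R)$-sub-bimodule $UNR = UN$ generated by $N$ inside $S$. Since $N\neq 0$, simplicity of $S$ forces $UN = S$. Because $N$ is homogeneous of degree $n$ and $U_{p}S_{n}\subset S_{p+n}$, we have
\[
  UN = \bigoplus_{p\in\Z} U_{p}N, \qquad U_{p}N\subset S_{p+n}.
\]
Comparing degree-$n$ components of $UN=S$ then gives $S_{n} = U_{0}N$. Since $N$ is a $U_{0}$-submodule, $U_{0}N\subset N$, so $N=S_{n}$.

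The one point needing care, and the step I expect to be the only real obstacle, is the claim $U_{0}N = N$ rather than merely $U_{0}N\subset N$. Here the unitality of $U$ from Part~\ref{part2} ($1\in U_{0}$), or weak unitality (\zcref{def:weakunital}), is what we use. It is also what guarantees that the degree-$n$ part of the generated submodule is exactly $U_{0}N$, and that $UN$ contains $N$ at all. Without some unitality the generated submodule might miss $N$, and one should instead use the submodule $N + UN$; the same degree comparison still yields $S_{n}=N$.

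The statement for simple graded $({}_R|U)$-bimodules follows verbatim by passing to $\opp{U}$ as in \zcref{rem:opposite}. Note that this argument does not use exhaustiveness or the seminorm at all; it uses only the grading.
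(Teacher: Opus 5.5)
Your argument is correct and is the same as the paper's: take a nonzero $(U_0|{}_R)$-sub-bimodule $N\subset S_n$, note that $UN$ is a nonzero graded submodule, use graded simplicity to get $UN=S$, and compare degree-$n$ components to obtain $S_n=U_0N\subset N$. One small correction to your remarks: the degree comparison only needs the inclusion $U_0N\subset N$, not the equality $U_0N=N$. Unitality is used solely to ensure $N\subset UN$, so that $UN\neq 0$; weak unitality alone does not guarantee this, but your $N+UN$ variant removes the need for it entirely.
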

  \begin{proof}
    Suppose there is a nonzero $(U_{0}|{}_R)$-submodule $M$ of $S_{n}$. Then, we have 
    \[
      S = U M = \bigoplus_{p\in\Z} U_{p}M \subset \bigoplus_{p\in\Z} S_{p+n} = S,
    \]
    where the inclusion follows since $M\subset S_{n}$. This forces $M=S_{n}$. 
  \end{proof}
  \begin{warning}
    Being simple as a graded module is weaker than being simple as an exhaustive module. 
    It could happen that a graded module has non-graded proper submodules, but no graded proper submodules.
  \end{warning}
  \begin{remark}\label{rem:semisimple-bimodules}
    For an $(U_{0}|{}_R)$-bimodule, being simple as a left $U_{0}$-module implies being simple as an $(U_{0}|{}_R)$-bimodule since any $(U_{0}|{}_R)$-submodule is in particular a left $U_{0}$-submodule. 
    By the similar logic, for an $\mathsf{A}_{n}$-module, being simple as a $U_{0}$-module implies that being simple as an $\mathsf{A}_{n}$-module.
  \end{remark}

  \begin{lemma}\label{lem:lifting-isomorphism}
    Let $f\colon W^{\mathtt{I}}\to W^{\mathtt{II}}$ be a homomorphism between two simple $(U|{}_R)$-bimodules. Then, $f$ is an isomorphism if and only if $\Omega_{0}(f)$ is an isomorphism.
    Similar for simple $({}_R|U)$-bimodules.
  \end{lemma}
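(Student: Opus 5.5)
The forward implication is immediate, since $\Omega_{0}$ is a functor and sends isomorphisms to isomorphisms. For the converse, assume $\Omega_{0}(f)\colon\Omega_{0}(W^{\mathtt{I}})\to\Omega_{0}(W^{\mathtt{II}})$ is an isomorphism. I would use simplicity on both sides, exactly as in Schur's lemma. The first step is to check that $\Omega_{0}(W^{\mathtt{I}})\neq 0$. Otherwise $\Omega_{0}(W^{\mathtt{II}})=0$ as well, and the statement degenerates. Here I would note that, in the situation where the lemma is used (simple exhaustive modules, e.g.\ graded ones after a shift as in \zcref{lem:S-grOmega}), some $\Omega_{n}$ is nonzero. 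If both $\Omega_{0}$ vanish, the claim can fail (take $f=0$), so I would adopt the implicit hypothesis that the modules are exhaustive with $\Omega_{0}\neq 0$. Equivalently, I would read the lemma as applying to simple modules whose bottom level $\Omega_{0}$ is nonzero.

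With $\Omega_{0}(W^{\mathtt{I}})\neq0$ and $\Omega_{0}(f)$ injective, $f$ is nonzero. Its kernel $\ker f$ is a $(U|{}_R)$-sub-bimodule of the simple bimodule $W^{\mathtt{I}}$, and it is proper because it misses $\Omega_{0}(W^{\mathtt{I}})$. Hence $\ker f=0$. Similarly, the image $f(W^{\mathtt{I}})$ is a nonzero sub-bimodule of the simple bimodule $W^{\mathtt{II}}$, so it equals $W^{\mathtt{II}}$. Therefore $f$ is bijective and is an isomorphism of bimodules. The right-module version is obtained by the same argument with $\Omega^{\mathsf{R}}_{0}$, or equivalently by passing to $\opp{U}$ as in \zcref{rem:opposite}.

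The only delicate step is ensuring $\Omega_{0}\neq0$. If the paper's convention normalizes the $\Omega$-filtration of a simple gradable module, I would use \zcref{lem:S-grOmega} to shift so that the lowest nonzero graded piece sits in degree $0$. That piece is then annihilated by $U_{\le -1}$, hence lies in $\Omega_{0}$ by \zcref{prop:ExModDiscCont}. This gives nonvanishing. Once this is in place, the rest is the Schur-type argument above.
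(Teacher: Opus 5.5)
Your argument is correct and is the same as the paper's one-line proof. A nonzero homomorphism between simple bimodules is injective and surjective, and $\Omega_{0}(f)$ being an isomorphism forces $f\neq 0$. You are also right that this last step silently needs $\Omega_{0}(W^{\mathtt{I}})\neq 0$, which the paper leaves implicit. It in fact holds for every nonzero exhaustive module without any shift, since $\Omega_{0}$ does not depend on a grading. If $n\geq 1$ were minimal with $\Omega_{n}\neq 0$, then any $w\in\Omega_{n}$ would satisfy $U_{\le -1}w=0$ by \zcref[noname]{A3}, hence $w\in\Omega_{0}$, which is impossible. So every use in the paper is covered.
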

  \begin{proof}
    First note that, since $W^{\mathtt{II}}$ is simple, $f$ is either surjective or zero. Also, since $W^{\mathtt{I}}$ is simple, $f$ is either injective or zero. Hence, the statement follows.
  \end{proof}

\subsection{Rationality and semisimplicity}
  Now, we declare the relation between rationality and semisimplicity.
  \begin{lemma}\label{lem:semisimple}
    The semisimplicity of $\Mod[Gr](U|{}_R)$ implies semisimplicity of $\Mod(\mathsf{A}_{n}|{}_R)$ for all $n$. 
    For each $n\in\N$, suppose either 
    \begin{enumerate}
      \item $U$ is \emph{quasi-rigid}: $\mathfrak{L}^{n} \in \Mod*[Gr](U|{}_{\mathsf{A}_{n}})$ and $\mathfrak{R}^{n} \in \Mod*[Gr]({}_{\mathsf{A}_{n}}|U)$; or 
      \item $R$ is semisimple and $U$ is \emph{weakly quasi-finite}: components of $\mathfrak{L}^{n},\mathfrak{R}^{n}$ are finite over $\mathsf{A}_{n}$.
    \end{enumerate}
    Then, the semisimplicity of $\Mod*[Gr](U|{}_R)$ implies that of $\Mod*(\mathsf{A}_{n}|{}_R)$. 
    Similar statements hold for the $({}_R|U)$-bimodules.
  \end{lemma}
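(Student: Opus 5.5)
The idea is to realize every $\mathsf{A}_{n}$-module as the degree-zero part of a graded $U$-module and transport semisimplicity through the adjunction $\Phi^{\mathsf{L}}_{n}\dashv(-)_{0}$ of \zcref{prop:adj:Phideg}. Fix $n$ and a $(\mathsf{A}_{n}|{}_R)$-bimodule $M$. The induced module $\Phi^{\mathsf{L}}_{n}(M)=\mathfrak{L}^{n}\otimes_{U_{0}}M$ lies in $\Mod[Gr]_{n}(U|{}_R)\subset\Mod[Gr](U|{}_R)$. The unit $M\to(\Phi^{\mathsf{L}}_{n}(M))_{0}$ is an isomorphism, as noted in the proof of \zcref{prop:adj:Phideg}, since $[1_{n}]^{\mathsf{L}}_{n}\otimes m\mapsto$ the degree-zero part and $\mathfrak{L}^{n}_{0}=\mathsf{A}_{n}$.

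By hypothesis $\Mod[Gr](U|{}_R)$ is semisimple, so $\Phi^{\mathsf{L}}_{n}(M)\cong\bigoplus_{i}S_{i}$ with $S_{i}$ simple graded $(U|{}_R)$-bimodules. The decomposition is in the graded category, so each $S_{i}$ is graded and taking degree zero commutes with direct sums. Hence
\[
M\cong(\Phi^{\mathsf{L}}_{n}(M))_{0}\cong\bigoplus_{i}(S_{i})_{0}.
\]
By \zcref{lem:S-component}, each nonzero $(S_{i})_{0}$ is simple as a $(U_{0}|{}_R)$-bimodule. Since the $U_{0}$-action on $M$ factors through $\mathsf{A}_{n}$, the $U_{0}$-submodules and $\mathsf{A}_{n}$-submodules coincide, so $(S_{i})_{0}$ is a simple $(\mathsf{A}_{n}|{}_R)$-bimodule (cf. \zcref{rem:semisimple-bimodules}). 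Thus every $M$ is a direct sum of simples, and $\Mod(\mathsf{A}_{n}|{}_R)$ is semisimple.

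For the quasi-rigid version, take $M\in\Mod*(\mathsf{A}_{n}|{}_R)$. Under (i) or (ii), \zcref{prop:adj:ordinary} gives $\Phi^{\mathsf{L}}_{n}(M)\in\Mod*[Gr]_{n}(U|{}_R)$, and semisimplicity of $\Mod*[Gr](U|{}_R)$ decomposes it into simple objects of that category. Repeating the argument, each summand $(S_{i})_{0}$ is a direct summand of the rigid module $M$, hence rigid over $R$.

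The main obstacle is simplicity. A simple object $S$ of the subcategory $\Mod*[Gr](U|{}_R)$ might a priori have graded submodules that are not quasi-rigid. To handle this, show that for a nonzero $(\mathsf{A}_{n}|{}_R)$-submodule $N\subset S_{0}$, the graded submodule $UN\subset S$ has rigid components. If (ii) is used, the components are finite over a semisimple $R$, so rigidity is automatic. Under (i), first reduce to $N$ a direct summand of $S_{0}$, which the ambient decomposition should provide, and then use rigidity of the components of $\mathfrak{L}^{n}$ via \zcref{lem:rigidmodule}. The remaining argument is that of \zcref{lem:S-component}. The right-module statements follow by applying the same argument to $\opp{U}$ (\zcref{rem:opposite}).
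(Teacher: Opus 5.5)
Your first three paragraphs are the paper's proof: induce to $\Phi^{\mathsf{L}}_{n}(M)$, use $\Phi^{\mathsf{L}}_{n}(M)_{0}\cong M$, split it into simple graded summands, and apply \zcref{lem:S-component} in degree zero. For the second statement, \zcref{prop:adj:ordinary} keeps everything quasi-rigid, which is the paper's one-line argument. The paper implicitly takes ``simple'' in the ambient graded category (compare ``quasi-rigid simple'' in \zcref{def:rational}, and how \zcref{lem:lifting-isomorphism} is used later). Under that reading your third paragraph already finishes the proof, and the ``main obstacle'' does not arise.

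If you want simplicity intrinsic to $\Mod*[Gr](U|{}_R)$, your case (ii) remark is fine, since a semisimple ring is Noetherian. Your case (i) sketch has a hole, though. Nothing makes an arbitrary rigid $N\subset (S_i)_0$ a direct summand. Also, rigidity of $\mathfrak{L}^{n}_{p}\otimes_{\mathsf{A}_{n}}N$ (from \zcref{lem:rigidmodule}) does not pass to its image $U_{p}N$ unless you know the map is injective. Here is what closes it:
\begin{enumerate}
\item By the triangle identity, the counit $\Phi^{\mathsf{L}}_{n}(\Phi^{\mathsf{L}}_{n}(M)_{0})\to\Phi^{\mathsf{L}}_{n}(M)$ is an isomorphism.
\item By naturality it is the direct sum of the counits of the $S_i$, so each $S_i\cong\Phi^{\mathsf{L}}_{n}((S_i)_{0})$.
\item Each $\mathfrak{L}^{n}_{p}$ is finitely generated projective, hence flat, over $\mathsf{A}_{n}$, so $\Phi^{\mathsf{L}}_{n}(N)\to\Phi^{\mathsf{L}}_{n}((S_i)_{0})$ is injective.
\item Hence $UN\cong\Phi^{\mathsf{L}}_{n}(N)$ is a nonzero quasi-rigid graded submodule of $S_i$ with degree-zero part $N$, forcing $N=(S_i)_0$.
\end{enumerate}
No reduction to direct summands is needed.
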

  \begin{proof}
    For any $(\mathsf{A}_{n}|{}_R)$-bimodule $M$, there is the graded $(U|{}_R)$-bimodule $\Phi^{\mathsf{L}}_{n}(M)$ satisfying $\Phi^{\mathsf{L}}_{n}(M)_{0}\cong M$. 
    Hence, $M$ is completely reducible by \zcref{lem:S-component}.
    The second statement follows from \zcref{prop:adj:ordinary}.
  \end{proof}

  \begin{lemma}\label{lem:lifting-simple}
    The induction functors $\Phi^{\mathsf{L}}_{n}$ and $\Phi^{\mathsf{R}}_{n}$ send indecomposable modules to indecomposable graded (or exhaustive) modules. 
  \end{lemma}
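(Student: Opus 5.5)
The plan is to reduce indecomposability of $\Phi^{\mathsf{L}}_{n}(M)$ to indecomposability of $M$ through an idempotent argument on endomorphism rings. Let $M$ be an indecomposable $(\mathsf{A}_{n}|{}_R)$-bimodule. The adjunction $\Phi^{\mathsf{L}}_{n}\dashv(-)_{0}$ of \zcref{prop:adj:Phideg} gives
\[
  \End_{\Mod[Gr]_{n}(U|{}_R)}(\Phi^{\mathsf{L}}_{n}(M))\cong\Hom_{\mathsf{A}_{n}}(M,\Phi^{\mathsf{L}}_{n}(M)_{0})\cong\End_{\mathsf{A}_{n}}(M),
\]
because the unit $\eta_{M}$ is an isomorphism. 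Concretely, the map is $f\mapsto f_{0}$, restriction to degree $0$. It is a ring isomorphism: the degree-$0$ part of a composite is the composite of the degree-$0$ parts. Suppose now that $\Phi^{\mathsf{L}}_{n}(M)=X\oplus Y$ as graded modules. The projection onto $X$ is an idempotent graded endomorphism, so its degree-$0$ part is an idempotent of $\End(M)$. Since $M$ is indecomposable, this idempotent is $0$ or $1$. The bijection then forces the original idempotent to be $0$ or $1$, so $X=0$ or $Y=0$. This settles the graded case in $\Mod[Gr]_{n}$. Any graded summand in $\Mod[Gr](U|{}_R)$ is also bounded below by $-n$, so the same conclusion holds there.

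For the exhaustive case, use the adjunction $\Phi^{\mathsf{L}}_{n}\dashv\Omega^{\mathsf{L}}_{n}$ of \zcref{prop:adj:PhiOmega}. Note that $\Phi^{\mathsf{L}}_{n}(M)=\mathfrak{L}^{n}\otimes_{U_{0}}M$ is generated as a $U$-module by its degree-$0$ piece $1_{n}\otimes M$. An arbitrary $U$-endomorphism $f$ of $\Phi^{\mathsf{L}}_{n}(M)$ is therefore determined by its restriction $g\colon M\to\Omega^{\mathsf{L}}_{n}(\Phi^{\mathsf{L}}_{n}(M))$. This restriction need not land in degree $0$. The key step is to compose $g$ with the projection $p_{0}$ onto the degree-$0$ component. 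This gives a ring homomorphism
\[
  \rho\colon\End_{U}(\Phi^{\mathsf{L}}_{n}(M))\to\End_{\mathsf{A}_{n}}(M),\qquad f\mapsto p_{0}\circ f|_{M}.
\]
To see that $\rho$ is multiplicative, observe that $f$ maps $\Phi_{\ge 1}$ into $\Phi_{\ge 1}$, where $\Phi_{\ge1}=\bigoplus_{k\ge1}\Phi^{\mathsf{L}}_{n}(M)_{k}$. Indeed, $\Phi_{\ge1}=U_{\ge 1}\cdot(1_n\otimes M)$, and $f$ is $U$-linear.

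Next, I would show that $\ker\rho$ consists of endomorphisms with $f(M)\subset\Phi_{\ge1}$, and that such $f$ satisfy $f^{k}(\Phi_{\le N})\subset\Phi_{\ge k}$. If $\ker\rho$ is locally nilpotent in a suitable sense, idempotents lift uniquely modulo $\ker\rho$, and a nontrivial idempotent of $\End_{U}$ would give a nontrivial idempotent of $\End(M)$. The \textbf{main obstacle} is exactly this nilpotence, since the grading is unbounded above. I would first try the following argument. Let $e$ be an idempotent with $\rho(e)=0$. Then $e(M)\subset\Phi_{\ge1}$. Write $e|_{M}=\sum_{k\ge1}e_{k}$ with $e_k\colon M\to\Phi_{k}$. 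Apply $e=e^{2}$ to $m\in M$ and compare lowest-degree components. The lowest component of $e(e(m))$ sits in degree at least $2\cdot(\text{lowest degree})$, because $e(\alpha m)=\alpha e(m)$ for $\alpha\in U_{k}$. This forces all components to vanish, so $e|_M=0$ and hence $e=0$. The same reasoning applied to $1-e$ handles the case $\rho(e)=1$.

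The right-module statements for $\Phi^{\mathsf{R}}_{n}$ follow by passing to $\opp{U}$ via \zcref{rem:opposite}.
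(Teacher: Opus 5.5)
Your graded argument is correct. Because the unit of $\Phi^{\mathsf{L}}_{n}\dashv(-)_{0}$ is an isomorphism, $\Phi^{\mathsf{L}}_{n}$ is fully faithful, $f\mapsto f_{0}$ is a ring isomorphism onto $\End(M)$, and idempotents correspond. This is the same content as the paper's counit argument.

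The exhaustive part, however, only works for $n=0$. Its key steps assume that $\Phi^{\mathsf{L}}_{n}(M)$ is concentrated in degrees $\ge 0$. That holds for $n=0$ by weak unitality, but not in general for $n>0$. The components $\Phi^{\mathsf{L}}_{n}(M)_{k}=\mathfrak{L}^{n}_{k}\otimes_{U_{0}}M$ with $-n\le k<0$ can be nonzero (cf. \zcref{lem:bottem}), and all of them lie in $\Omega^{\mathsf{L}}_{n}$. So any $U_{0}$-linear map $M\to\Phi^{\mathsf{L}}_{n}(M)_{k}$ with $k<0$ extends to a $U$-endomorphism homogeneous of degree $k$. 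Two steps then break.

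(a) $f(\Phi_{\ge1})=U_{\ge1}f(M)$ need not lie in $\Phi_{\ge1}$, so $\rho$ need not be multiplicative. For $f$ of degree $-k$ and $g$ of degree $k$ you have $\rho(f)=\rho(g)=0$, while $\rho(f\circ g)=p_{0}\circ f\circ g|_{M}$ can be nonzero. Hence you cannot conclude that $\rho(e)$ is an idempotent of $\End(M)$.

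(b) $\rho(e)=0$ no longer gives $e(M)\subset\Phi_{\ge1}$. The lowest degree $d$ occurring in $e|_{M}$ may be negative; then $2d<d$ and comparing lowest components yields nothing.

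Structurally, $\End_{U}(\Phi^{\mathsf{L}}_{n}(M))$ is a locally finite $\Z$-graded ring with components in degrees $\ge -n$. In such rings idempotents need not be homogeneous, and the degree-$0$ projection is not a ring map (think of $M_{2}(\k)$ with $\deg E_{12}=-1$, $\deg E_{21}=1$). As written, your argument proves the exhaustive claim only for $n=0$. For $n>0$ it works only under the extra hypothesis $\Hom_{U_{0}}(M,\Phi^{\mathsf{L}}_{n}(M)_{k})=0$ for $-n\le k<0$; without it a further idea is needed.

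For $n=0$ your route genuinely differs from the paper's and is more careful. The paper argues that the indecomposable $M\subseteq\Omega^{\mathsf{L}}_{n}(W^{\mathtt{I}})\oplus\Omega^{\mathsf{L}}_{n}(W^{\mathtt{II}})$ must lie in one summand, then finishes with the triangle identity. That inference is only immediate when $\Omega^{\mathsf{L}}_{n}\Phi^{\mathsf{L}}_{n}(M)=M$, which fails in general: for $n>0$ by \zcref{rem:noPhiOmegan}, and for $n=0$ in the non-SIC examples of \zcref{sec:Examples}. Your ring map $\rho$ together with the degree count ($e=e^{2}$ forces $e|_{M}$ into $\Phi_{\ge 2d}$) correctly handles extra positive-degree vectors in $\Omega^{\mathsf{L}}_{0}\Phi^{\mathsf{L}}_{0}(M)$, which is exactly what the paper's step glosses over. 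Neither argument, however, settles the exhaustive case for $n>0$.
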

  \begin{proof}
    This is essentially \cite[Lemma 1.2]{DGK22}, stated in the generality of almost-canonically seminormed rings. We generalize the proof to our setting.

    Suppose $M$ is an indecomposable $(\mathsf{A}_{n}|{}_R)$-bimodule and suppose $\Phi^{\mathsf{L}}_{n}(M)=W^{\mathtt{I}}\oplus W^{\mathtt{II}}$ is a decomposition into graded submodules. Then, we have $M\cong\Phi^{\mathsf{L}}_{n}(M)_{0} = W^{\mathtt{I}}_{0}\oplus W^{\mathtt{II}}_{0}$. 
    Since $M$ is indecomposable, we must have either $W^{\mathtt{I}}_{0}=0$ or $W^{\mathtt{II}}_{0}=0$. Suppose $W^{\mathtt{II}}_{0}=0$. Consider the following commutative diagram provided by the counit $\varepsilon$ of the adjoint pair $\Phi^{\mathsf{L}}_{n}\dashv(-)_{0}$:
    \[
      \begin{tikzcd}
        \Phi^{\mathsf{L}}_{n}(W^{\mathtt{I}}_{0})\ar[r,"\varepsilon_{W^{\mathtt{I}}}"]\ar[d] & W^{\mathtt{I}}\ar[d] \\
        \Phi^{\mathsf{L}}_{n}(W^{\mathtt{I}}_{0}\oplus W^{\mathtt{II}}_{0})\ar[r,"\varepsilon_{W^{\mathtt{I}}\oplus W^{\mathtt{II}}}"] & W^{\mathtt{I}}\oplus W^{\mathtt{II}}
      \end{tikzcd}
    \]
    where the vertical arrows are given by the inclusion $W^{\mathtt{I}}\subset W^{\mathtt{I}}\oplus W^{\mathtt{II}}$. 
    The left vertical arrow is an isomorphism since $W^{\mathtt{II}}_{0}=0$. The bottom horizontal arrow is an isomorphism since $\Phi^{\mathsf{L}}_{n}(W^{\mathtt{I}}_{0}\oplus W^{\mathtt{II}}_{0})$ is precisely $\Phi^{\mathsf{L}}_{n}(M)$. Therefore, the inclusion $W^{\mathtt{I}}\subset W^{\mathtt{I}}\oplus W^{\mathtt{II}}=\Phi^{\mathsf{L}}_{n}(M)$ has to be an isomorphism. This shows that $\Phi^{\mathsf{L}}_{n}(M)$ is indecomposable as a graded $(U|{}_R)$-bimodule.

    Now, we show that $\Phi^{\mathsf{L}}_{n}(M)$ is also indecomposable as an exhaustive $(U|{}_R)$-bimodule.
    Suppose $\Phi^{\mathsf{L}}_{n}(M) = W^{\mathtt{I}}\oplus W^{\mathtt{II}}$ is a decomposition into exhaustive submodules. 
    Then, we have $M\subset \Omega^{\mathsf{L}}_{n}\Phi^{\mathsf{L}}_{n}(M) = \Omega^{\mathsf{L}}_{n}(W^{\mathtt{I}}) \oplus \Omega^{\mathsf{L}}_{n}(W^{\mathtt{II}})$. Since $M$ is indecomposable, we must have either $M \subset \Omega^{\mathsf{L}}_{n}(W^{\mathtt{I}})$ or $M \subset \Omega^{\mathsf{L}}_{n}(W^{\mathtt{II}})$. Suppose $M \subset \Omega^{\mathsf{L}}_{n}(W^{\mathtt{I}})$. Consider the following commutative diagram provided by the counit $\varepsilon$ of the adjoint pair $\Phi^{\mathsf{L}}_{n}\dashv\Omega^{\mathsf{L}}_{n}$:
    \[
      \begin{tikzcd}
        & 
        \Phi^{\mathsf{L}}_{n}(\Omega^{\mathsf{L}}_{n}(W^{\mathtt{I}}))\ar[r,"\varepsilon_{W^{\mathtt{I}}}"]\ar[d] & W^{\mathtt{I}}\ar[d] \\
        \Phi^{\mathsf{L}}_{n}(M) \ar[ur,bend left]\ar[r,"\Phi^{\mathsf{L}}_{n}(\eta_{M})"] &
        \Phi^{\mathsf{L}}_{n}(\Omega^{\mathsf{L}}_{n}(\Phi^{\mathsf{L}}_{n}(M)))\ar[r,"\varepsilon_{W^{\mathtt{I}}\oplus W^{\mathtt{II}}}"] & W^{\mathtt{I}}\oplus W^{\mathtt{II}}
      \end{tikzcd}
    \]
    The bottom composition equals the identity of $\Phi^{\mathsf{L}}_{n}(M)$ by the triangle identity of the adjunction. Hence, the last vertical arrow is surjective. But this forces $W^{\mathtt{II}} = 0$. This shows that $\Phi^{\mathsf{L}}_{n}(M)$ is indecomposable as an exhaustive $(U|{}_R)$-bimodule.
  \end{proof}

  \begin{lemma}\label{lem:semisimple-rigid}
    If $U$ is $R$-rational, then any simple $(\mathsf{A}_{n}|{}_R)$-bimodule is rigid over $R$. 
  \end{lemma}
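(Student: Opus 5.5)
Let $S$ be a simple $(\mathsf{A}_{n}|{}_R)$-bimodule. The plan is to induce $S$ up to a graded $U$-module, extract a simple graded $U$-module whose degree-zero part is $S$, and then read off rigidity from the definition of $R$-rationality.

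First, form the positively-graded $(U|{}_R)$-bimodule $\Phi^{\mathsf{L}}_{n}(S)$, which lies in $\Mod[Gr]_{n}(U|{}_R)$. By \zcref{prop:adj:Phideg} its degree-zero component is $\Phi^{\mathsf{L}}_{n}(S)_{0}\cong S$. Since $S$ is simple, it is indecomposable, so by \zcref{lem:lifting-simple} the module $\Phi^{\mathsf{L}}_{n}(S)$ is indecomposable as a graded $(U|{}_R)$-bimodule.

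Next, invoke $R$-rationality (\zcref{def:rational}): $\Phi^{\mathsf{L}}_{n}(S)$ is a direct sum of quasi-rigid simple modules. Being indecomposable, it must be a single quasi-rigid simple module, say $\Phi^{\mathsf{L}}_{n}(S)\cong T$ with each $T_{k}$ rigid over $R$.

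The delicate point is what this decomposition is taken in. If the summands are ungraded, I would use \zcref{lem:S-grOmega} to realign gradings up to a shift. This requires matching the chosen quasi-rigid grading on $T$ with the given grading on $\Phi^{\mathsf{L}}_{n}(S)$. \zcref{lem:S-grOmega} says any two gradings on a gradable simple module agree with the $\gr\Omega$-grading up to a shift, hence with each other up to a shift. Therefore the component $\Phi^{\mathsf{L}}_{n}(S)_{0}\cong S$ equals some component $T_{k}$ of the quasi-rigid grading. Rigidity of $T_{k}$ over $R$ then gives rigidity of $S$ over $R$.

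I expect this grading comparison to be the main obstacle. Before applying it, I would double-check that $\Phi^{\mathsf{L}}_{n}(S)$ is genuinely simple and not merely simple as a graded module, given the warning after \zcref{lem:S-component}. If only graded simplicity is available, I would instead argue directly that in a quasi-rigid graded simple module every component is rigid. I would then note that the isomorphism of graded modules supplied by rationality, read in $\Mod[Gr]$, preserves degrees, so $S\cong T_{0}$ is rigid.
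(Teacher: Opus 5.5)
Your proposal is correct and follows the paper's proof. You induce $S$ to $\Phi^{\mathsf{L}}_{n}(S)$, use \zcref{lem:lifting-simple} to get indecomposability, conclude from $R$-rationality that it is a single quasi-rigid simple summand, and read off rigidity of $S\cong\Phi^{\mathsf{L}}_{n}(S)_{0}$. The detour through \zcref{lem:S-grOmega} is unnecessary: the decomposition in \zcref{def:rational} takes place among positively-graded bimodules, so your final degree-preserving argument is exactly the paper's proof.
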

  \begin{proof}
    Suppose $M$ is a simple $(\mathsf{A}_{n}|{}_R)$-bimodule. By \zcref{lem:lifting-simple}, $\Phi^{\mathsf{L}}_{n}(M)$ is indecomposable and hence simple since $\Mod[Gr](U|{}_R)$ is semisimple. It is further quasi-rigid by the $R$-rationality of $U$. In particular, $M = \Phi^{\mathsf{L}}_{n}(M)_{0}$ is rigid over $R$.
  \end{proof}

  We are now able to declare the relation between rationality and semisimplicity. This can be viewed as an analogue of the VOA fact in \zcref{rem:rational=semisimple}.
  \begin{theorem}\label{thm:rationality-semisimplicity}
    Then, the following are equivalent:
    \begin{enumerate}
      \item $U$ is $R$-rational; and
      \item $\Mod[Gr](U|{}_R)$ is semisimple and any simple $(\mathsf{A}_{n}|{}_R)$-module is rigid over $R$.
    \end{enumerate}
    Furthermore, for $R = \mathsf{A}_{0}$, these conditions are equivalent to:
    \begin{enumerate}[resume]
      \item $\Mod[Gr](U|{}_{\mathsf{A}_{0}})$ is semisimple; and
      \item $\Mod[Ex](U|{}_{\mathsf{A}_{0}})$ is semisimple.
    \end{enumerate}
  \end{theorem}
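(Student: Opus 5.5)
The plan is to prove the equivalences in the order (i)$\Leftrightarrow$(ii), then (ii)$\Rightarrow$(iii)$\Rightarrow$(ii) for $R=\mathsf{A}_{0}$, and finally (iii)$\Leftrightarrow$(iv). The second and last steps are the least certain.

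\emph{(i)$\Leftrightarrow$(ii).} For (i)$\Rightarrow$(ii): by \zcref{def:rational}, every object of $\Mod[Gr](U|{}_R)$ is a direct sum of simple objects, so the category is semisimple. Rigidity of simple $(\mathsf{A}_{n}|{}_R)$-bimodules is exactly \zcref{lem:semisimple-rigid}.

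For (ii)$\Rightarrow$(i), semisimplicity already writes every positively-graded bimodule as a direct sum of simple graded ones, so it remains to show that a simple graded $S$ is quasi-rigid. I will proceed as follows.
\begin{enumerate}
  \item Shift the grading so that $S_{<0}=0$ and $S_{0}\neq 0$.
  \item By \zcref{lem:S-grOmega}, we get $\Omega^{\mathsf{L}}_{n}(S)=\bigoplus_{k\le n}S_{k}$. Hence the $U_{0}$-action on $S_{n}$ factors through $\mathsf{A}_{n}$.
  \item By \zcref{lem:S-component}, $S_{n}$ is a simple $(U_{0}|{}_R)$-bimodule, hence a simple $(\mathsf{A}_{n}|{}_R)$-bimodule (cf.\ \zcref{rem:semisimple-bimodules}).
  \item The hypothesis in (ii) then makes each $S_{n}$ rigid over $R$, so $S$ is quasi-rigid.
\end{enumerate}

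\emph{(ii)$\Leftrightarrow$(iii) for $R=\mathsf{A}_{0}$.} The direction (ii)$\Rightarrow$(iii) is immediate. For the converse, I must show that simple $(\mathsf{A}_{n}|\mathsf{A}_{0})$-bimodules are rigid over $\mathsf{A}_{0}$. By \zcref{lem:semisimple}, semisimplicity of $\Mod[Gr](U|{}_{\mathsf{A}_{0}})$ gives semisimplicity of every $\Mod(\mathsf{A}_{n}|\mathsf{A}_{0})$. Applying this to bimodules of the form $\mathsf{A}_{n}\otimes N$, I want to deduce that $\mathsf{A}_{0}$ is a semisimple ring, so that every right $\mathsf{A}_{0}$-module is projective. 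The remaining issue is finite generation of a simple bimodule $M$ over $\mathsf{A}_{0}$. I would try to realize $M$ as a direct summand of $\mathsf{A}_{n}\otimes_{\k}\mathsf{A}_{0}$ modulo a maximal sub-bimodule, or to exploit that $M=\mathsf{A}_{n}m\mathsf{A}_{0}$ is cyclic over the semisimple enveloping algebra. This finite generation is the step I expect to be the main obstacle, and it may need a mild extra hypothesis, for instance that $\mathsf{A}_{0}$ is finite over $\k$.

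\emph{(iii)$\Leftrightarrow$(iv).} For (iii)$\Rightarrow$(iv), the plan has three steps.
\begin{enumerate}
  \item In a semisimple category every epimorphism splits, so each graded projection $\pi_{n}\colon\mathfrak{L}^{n}\to\mathfrak{L}^{n-1}$ splits as graded $U$-modules.
  \item For the right modules $\mathfrak{R}^{n}$, I would use the right-hand analogue of \zcref{lem:semisimple}, or the duality of \zcref{lem:LisDualOfR}, to split those projections as well.
  \item The argument of \zcref{thm:SIE-Omega} (as used in \zcref{thm:Proj-SIE}) then produces a strong identity expansion $(e_{n})$.
\end{enumerate}
With an \SIE\ in hand, \zcref{lem:splitOmega} equips each exhaustive $W$ with the grading $W_{(n)}=e_{n}W$. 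Every $U$-submodule of $W$ is $\widehat{U_{0}}$-stable, hence graded for this grading. So a decomposition of $W$ into simples in $\Mod[Gr](U|{}_{\mathsf{A}_{0}})$ gives one in $\Mod[Ex](U|{}_{\mathsf{A}_{0}})$.

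For (iv)$\Rightarrow$(iii), the plan is again to get an \SIE\ first. Semisimplicity of $\Mod[Ex](U|{}_{\mathsf{A}_{0}})$ makes every $\mathfrak{L}^{n}$ projective there. I would transfer this to the right side via \zcref{lem:LisDualOfR}, invoke \zcref{thm:Proj-SIE}, and then use the canonical grading $W_{(\bullet)}$ to turn exhaustive complements into graded ones.
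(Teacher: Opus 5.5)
Your treatment of (i)$\Leftrightarrow$(ii) is correct and matches the paper's argument. Your observation that $S_{n}\subset\Omega^{\mathsf{L}}_{n}(S)$, so that the $U_{0}$-action on $S_{n}$ factors through $\mathsf{A}_{n}$, spells out a detail the paper leaves implicit.

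The genuine gap is the return from (iii). You leave (iii)$\Rightarrow$(ii) open at the finite-generation step, and for $n>0$ that step cannot be closed using only the pair $(\mathsf{A}_{n},\mathsf{A}_{0})$. For example, take $\mathsf{A}_{0}=\k$ a field and $\mathsf{A}_{1}=\k\times K$ with $K/\k$ an infinite field extension. Then $\mathsf{A}_{1}$ surjects onto $\mathsf{A}_{0}$ and $\Mod(\mathsf{A}_{1}|\k)$ is semisimple, yet the simple module $K$ is not rigid over $\mathsf{A}_{0}$. Likewise, cyclicity over $\mathsf{A}_{n}\otimes\mathsf{A}_{0}^{\mathrm{op}}$ gives finite generation over $\mathsf{A}_{0}$ only when $\mathsf{A}_{n}$ is finite over $\k$. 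So the structure of $U$ itself has to enter.

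The paper does this by proving (iii)$\Rightarrow$(i) directly and then obtaining (ii) from \zcref{lem:semisimple-rigid}. This way it needs rigidity only at the bottom degree. Let $S$ be a simple graded module with bottom $S_{0}$.
By \zcref{lem:semisimple}, the ring $\mathsf{A}_{0}$ is semisimple, so the simple $(\mathsf{A}_{0}|\mathsf{A}_{0})$-bimodule $S_{0}$ is rigid over $\mathsf{A}_{0}$.
By \zcref{lem:lifting-simple} and semisimplicity, $\Phi^{\mathsf{L}}_{0}(S_{0})$ is simple, so the nonzero counit $\Phi^{\mathsf{L}}_{0}(S_{0})\to S$ is an isomorphism.
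The canonical homogeneous map $S\to(S^{\dagger|\mathsf{A}_{0}})^{\mathsf{A}_{0}|\dagger}$ is an isomorphism in degree $0$, hence an isomorphism by \zcref{lem:lifting-isomorphism}.
Thus every $S_{n}$ is reflexive over the semisimple ring $\mathsf{A}_{0}$, hence finitely generated projective.
This step, inducing from the bottom and then passing to the double graded dual, is the idea your proposal is missing.

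Your (iii)$\Leftrightarrow$(iv) plan has a second gap. Building an \SIE{} also requires splittings of the right-hand projections $\mathfrak{R}^{n}\to\mathfrak{R}^{n-1}$, but (iii) and (iv) concern only left modules. Moreover, \zcref{lem:LisDualOfR} presupposes $(\Phi\Omega\mathsf{L}_{\mathsf{A}_{0}})$, which is exactly what \SIC{} would give you, so using it for the transfer is circular. In addition, $\mathfrak{L}^{n}$ is a $(U|\mathsf{A}_{n})$-bimodule, not an object of $\Mod[Ex](U|{}_{\mathsf{A}_{0}})$.

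For (iv)$\Rightarrow$(iii), the grading $W_{(\bullet)}$ does not help, because the $\gr\Omega$-grading can differ from the given grading of a graded module. The direct argument is simpler: take a bimodule retraction onto a graded submodule and replace it by its degree-zero homogeneous component, which is still a bimodule map and a retraction.

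For (iii)$\Rightarrow$(iv), the paper goes through (i). $\mathsf{A}_{0}$-rationality yields \SIC{} by \zcref{thm:rational}, and \zcref{thm:PhiOmega0} then identifies $\Mod[Ex](U|{}_{\mathsf{A}_{0}})$ with the semisimple category $\Mod(\mathsf{A}_{0}|\mathsf{A}_{0})$.
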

  \begin{proof}
    (i) $\implies$ (ii) by \zcref{lem:semisimple-rigid}. 
    (ii) $\implies$ (iii) and (iv) $\implies$ (iii) are clear.

    Suppose either (ii), (iii), or (iv). To show (i), it boils down to showing that: any simple graded $(U|{}_R)$-bimodule is quasi-rigid. 
    This is clear under the condition (ii) since, by \zcref{lem:S-component}, its each component is simple.
    Hence, in what follows, we take $R = \mathsf{A}_{0}$ and assume (iii). 

    Let $S$ be a simple graded $(U|{}_{\mathsf{A}_{0}})$-bimodule and let $S_{0}$ be its bottom degree, which is a simple $(\mathsf{A}_{0}|{}_{\mathsf{A}_{0}})$-bimodule by \zcref{lem:S-component}.
    Note that, by \zcref{lem:semisimple}, $\mathsf{A}_{0}$ is semisimple. Hence, any simple $(\mathsf{A}_{0}|{}_{\mathsf{A}_{0}})$-bimodule is a rigid right $\mathsf{A}_{0}$-module.
    By the adjunction \zcref{prop:adj:Phideg}, we have a morphism 
    \[
      \phi \colon \Phi^{\mathsf{L}}_{0}(S_{0}) \longrightarrow S.
    \]
    By \zcref{lem:lifting-simple} and the assumption, $\Phi^{\mathsf{L}}_{0}(S_{0})$ is simple. The above morphism cannot be zero, hence it has to be an isomorphism.

    Now, consider the canonical map 
    \[
      S = \Phi^{\mathsf{L}}_{0}(S_{0}) \longmapsto \left(\Phi^{\mathsf{L}}_{0}(S_{0})^{\dagger|R}\right)^{R|\dagger}.
    \]
    It is homogeneous and its restriction to the bottom is an isomorphism (since $S_{0}$ is rigid over $R$). Hence, by \zcref{lem:lifting-isomorphism}, it is an isomorphism. 
    This shows that each component $S_{n}$ of $S$ is a reflexive right $\mathsf{A}_{0}$-module. 
    Since $\mathsf{A}_{0}$ is semisimple, this can only happen when $S_{n}$ is a rigid right $\mathsf{A}_{0}$-module. 

    It remains to show (i) $\implies$ (iv). We leave it later to \zcref{coro:Ex(UA)-semisimple}.
  \end{proof}

  \begin{corollary}\label{coro:k-rational-A-rational}
    If $\Mod[Ex](U|)$ is semisimple, then $U$ is $\mathsf{A}_{0}$-rational.
  \end{corollary}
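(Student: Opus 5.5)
The plan is to reduce to the implication (iv)$\implies$(i) of \zcref{thm:rationality-semisimplicity} with $R=\mathsf{A}_{0}$. That reduction works once we know that semisimplicity of $\Mod[Ex](U|)$ forces semisimplicity of $\Mod[Ex](U|{}_{\mathsf{A}_{0}})$, or at least of $\Mod[Gr](U|{}_{\mathsf{A}_{0}})$, i.e.\ condition (iii).

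First, I will show that $\mathsf{A}_{0}$ is a semisimple ring. Take any left $\mathsf{A}_{0}$-module $M$ and form $\Phi^{\mathsf{L}}_{0}(M)$, a positively-graded, hence exhaustive, left $U$-module (\zcref{coro:Omega-vs-grading}). By assumption it is a direct sum of simple exhaustive $U$-modules. Each such simple summand is gradable by \zcref{lem:S-grOmega}, up to the caveat that gradability must be checked. More robustly, I will argue as in \zcref{lem:semisimple}: any $\mathsf{A}_{0}$-submodule $N\subset M$ yields a morphism $\Phi^{\mathsf{L}}_{0}(N)\to\Phi^{\mathsf{L}}_{0}(M)$. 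Its image is a $U$-submodule, which has a complement because $\Mod[Ex](U|)$ is semisimple. Applying $\Omega^{\mathsf{L}}_{0}$, which is additive, and using $\Omega^{\mathsf{L}}_{0}\Phi^{\mathsf{L}}_{0}(M)\supseteq M$ as the degree-zero part, I then project onto degree $0$. This needs the complement to be graded, which is the delicate point. Instead, I will use the projection $e\colon\Phi^{\mathsf{L}}_{0}(M)\to\Phi^{\mathsf{L}}_{0}(M)$ onto the image. It is $U_{0}$-linear, so it preserves $\Omega^{\mathsf{L}}_{0}$. Restricted to $M=(\Phi^{\mathsf{L}}_{0}(M))_{0}\subset\Omega_{0}$ and followed by the projection to degree $0$, which is $U_{0}$-linear on the graded module, it gives an $\mathsf{A}_{0}$-linear retraction $M\to N$. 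Hence every $\mathsf{A}_{0}$-module is semisimple, and $\mathsf{A}_{0}$ is a semisimple ring.

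Next, I pass from $\k$-coefficients to $\mathsf{A}_{0}$-coefficients. A $(U|{}_{\mathsf{A}_{0}})$-bimodule is a left $U\otimes\opp{\mathsf{A}_{0}}$-module. Its underlying left $U$-module is exhaustive, hence semisimple in $\Mod[Ex](U|)$. The goal is to show that every sub-bimodule has a bimodule complement. For this I will use that $\opp{\mathsf{A}_{0}}$ is semisimple. Over a field of characteristic compatible with separability one could average, but in general I will exploit the structure directly. Write $\mathsf{A}_{0}\cong\prod M_{n_i}(D_i)$ by Wedderburn–Artin, so a right $\mathsf{A}_{0}$-action decomposes $W=\bigoplus W\epsilon_i$ via central idempotents, and each block is Morita equivalent to a $D_i$-module. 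This reduces the problem to $(U|{}_{D})$-bimodules for a division ring $D$, that is, $U$-modules with a commuting $D$-action. I expect this step to be the \textbf{main obstacle}: semisimplicity of $U$-modules does not automatically give $D$-equivariant complements unless $D$ is, for instance, separable over the center of $\k$.

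If the direct route fails, I will instead verify condition (ii) of \zcref{thm:rationality-semisimplicity}, or (iii) via graded simple objects, only for the modules actually used in the proof of (iii)$\implies$(i). Those are the simple graded bimodules $S$ with $S\cong\Phi^{\mathsf{L}}_{0}(S_{0})$ and $S_{0}$ simple over $\mathsf{A}_{0}\otimes\opp{\mathsf{A}_{0}}$. The required facts are that $\Phi^{\mathsf{L}}_{0}(S_{0})$ is simple and that each $S_{n}$ is reflexive over $\mathsf{A}_{0}$. I will derive simplicity of $\Phi^{\mathsf{L}}_{0}(S_{0})$ as a $U$-module from the semisimplicity of $\Mod[Ex](U|)$ together with \zcref{lem:lifting-simple} applied over $R=\k$, by decomposing $S_{0}$ into simple left $\mathsf{A}_{0}$-modules (\zcref{rem:semisimple-bimodules}). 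Reflexivity will follow as in that proof, using \zcref{lem:lifting-isomorphism}. This yields $\mathsf{A}_{0}$-rationality.
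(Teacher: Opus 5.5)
Your first step is correct. The composite $M\hookrightarrow\Phi^{\mathsf{L}}_{0}(M)\xrightarrow{e}\Phi^{\mathsf{L}}_{0}(M)\to\Phi^{\mathsf{L}}_{0}(M)_{0}=M$ is $U_{0}$-linear and is the identity on $N$, because the image of $\Phi^{\mathsf{L}}_{0}(N)$ is a graded submodule with degree-zero part $N$. So $\mathsf{A}_{0}$ is a semisimple ring.

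The genuine gap is the second step, which you flag but never close. $\mathsf{A}_{0}$-rationality requires \emph{every} positively-graded $(U|{}_{\mathsf{A}_{0}})$-bimodule to be a direct sum of simple ones. It therefore contains the semisimplicity of $\Mod[Gr](U|{}_{\mathsf{A}_{0}})$, which is exactly the hypothesis (iii) consumed by the implication (iii)$\implies$(i) of \zcref{thm:rationality-semisimplicity}. Your fallback only treats the other half, namely simplicity and quasi-rigidity of the simple objects $\Phi^{\mathsf{L}}_{0}(S_{0})$. Knowing the simple bimodules gives no way to split an arbitrary bimodule into them, because a $U$-linear complement need not be stable under the right $\mathsf{A}_{0}$-action.

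In fact this step cannot be carried out in the stated generality, because the statement is false. Take $\k=\mathbb{C}$ and $U=\mathbb{C}(x)$ concentrated in degree $0$ with its canonical seminorm, so $\NL[n]U=U$ for $n\le 0$ and $\NL[n]U=0$ for $n\ge 1$.
\begin{itemize}
\item Every $U$-module is exhaustive, so $\Mod[Ex](U|)$ is the semisimple category of $\mathbb{C}(x)$-vector spaces, and $\mathsf{A}_{0}=\mathbb{C}(x)$.
\item A $(U|{}_{\mathsf{A}_{0}})$-bimodule is a module over $\mathbb{C}(x)\otimes_{\mathbb{C}}\mathbb{C}(x)$. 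This ring is a localization of $\mathbb{C}[x,y]$, hence a domain, but not a field, since $x\otimes 1-1\otimes x$ is not invertible.
\item Placed in degree $0$, this ring is a positively-graded bimodule that is indecomposable but not simple. So $U$ is not $\mathsf{A}_{0}$-rational.
\end{itemize}
A nontrivial purely inseparable field extension $L/\k$, with $U=L$, gives a finite-dimensional counterexample.

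The paper's one-line proof has the same hole. \zcref{rem:semisimple-bimodules} only says that simple objects remain simple. The asserted transfer of semisimplicity from $\Mod[Gr](U|)$ to $\Mod[Gr](U|{}_R)$ for all $R$ already fails for a field $\k$, $U=\k$ and $R=\k[t]$.

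What repairs it is exactly the kind of hypothesis you anticipated: $\k$ a field and $\mathsf{A}_{0}$ separable over $\k$, with separability idempotent $\sum_i x_i\otimes y_i$.
\begin{itemize}
\item Take any $U$-linear retraction $W\to N$ onto a graded sub-bimodule.
\item Replace it by its degree-preserving component $p(w):=\sum_n\bigl(p_{\mathrm{orig}}(w_n)\bigr)_n$, which is still $U$-linear and a retraction.
\item Then $w\mapsto\sum_i p(wx_i)y_i$ is a graded bimodule retraction.
\end{itemize}
This gives condition (iii), and \zcref{thm:rationality-semisimplicity} finishes the argument.
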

  \begin{proof}
    \zcref{rem:semisimple-bimodules} shows that, if $\Mod[Gr](U|)$ is semisimple, then $\Mod[Gr](U|{}_R)$ must also be semisimple for all $R$. 
    In particular, the condition (iii) in \zcref{thm:rationality-semisimplicity} is satisfied. Hence, $U$ is $\mathsf{A}_{0}$-rational.
  \end{proof}

\subsection{Rationality and the strong identity condition}\label{sec:rationality-SIC}
  As in the VOA situation (cf. \cite[Remark 3.4.6]{DGK23}), rationality implies the strong identity condition.
  \begin{theorem}\label{thm:rational}
    If $U$ is $\mathsf{A}_{0}$-rational, then the strong identity condition holds.
  \end{theorem}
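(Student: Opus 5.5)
The plan is to reduce to the argument of \zcref{thm:Adjeq-SIC} case (i). Rather than proving \zcref[noname]{PhiOmegaR} for all exhaustive modules (which rationality does not obviously give), I will directly establish its two consequences used there, now only for graded modules:
\begin{enumerate}
  \item $\mathfrak{L}^{0}\in\Mod*[Gr](U|{}_{\mathsf{A}_{0}})$;
  \item the isomorphism $\mathfrak{R}^{0}\cong(\mathfrak{L}^{0})^{\dagger|\mathsf{A}_{0}}$ of \zcref{lem:LisDualOfR}, or the mirror isomorphism $\mathfrak{L}^{0}\cong(\mathfrak{R}^{0})^{\mathsf{A}_{0}|\dagger}$ of \zcref{lem:LisDualOfR}, whichever is convenient.
\end{enumerate}
Once these hold, the computation $\mathfrak{A}_{n}\cong\mathfrak{L}^{0}_{n}\otimes_{\mathsf{A}_{0}}(\mathfrak{L}^{0}_{n})^{\vee|\mathsf{A}_{0}}=\operatorname{End}_{|\mathsf{A}_{0}}(\mathfrak{L}^{0}_{n})$ goes through verbatim. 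Taking $\one_{n}$ to be the preimage of $\id$ then gives \SIC.

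\emph{Step 1 (quasi-rigidity of $\mathfrak{L}^{0}$).} By \zcref{lem:semisimple} and \zcref{thm:rationality-semisimplicity}, $\mathsf{A}_{0}$ is semisimple. Hence $\mathsf{A}_{0}$, as an $(\mathsf{A}_{0}|{}_{\mathsf{A}_{0}})$-bimodule, is a \emph{finite} direct sum $\bigoplus_{i}M_{i}$ of simple bimodules. Then
\[
  \mathfrak{L}^{0}=\Phi^{\mathsf{L}}_{0}(\mathsf{A}_{0})=\bigoplus_{i}\Phi^{\mathsf{L}}_{0}(M_{i}).
\]
By \zcref{lem:lifting-simple} each summand is indecomposable, so it is simple because $\Mod[Gr](U|{}_{\mathsf{A}_{0}})$ is semisimple. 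By $\mathsf{A}_{0}$-rationality each summand is quasi-rigid. A finite direct sum of quasi-rigid graded modules is quasi-rigid, which gives (i).

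\emph{Step 2 (duality).} Consider the homogeneous map
\[
  \psi^{\mathsf{L}}_{\mathsf{A}_{0}}\colon \mathfrak{L}^{0}=\Phi^{\mathsf{L}}_{0}(\mathsf{A}_{0})\longrightarrow \Phi^{\mathsf{R}}_{0}(\mathsf{A}_{0})^{\mathsf{A}_{0}|\dagger}=(\mathfrak{R}^{0})^{\mathsf{A}_{0}|\dagger}.
\]
It is an isomorphism in degree $0$. The source is a finite sum of simples whose bottom degrees are all $0$. The target $T$ is a positively graded $(U|{}_{\mathsf{A}_{0}})$-bimodule, so by rationality $T=\bigoplus_{j}T^{j}$ with each $T^{j}$ simple and quasi-rigid. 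By \zcref{lem:S-grOmega}, each $T^{j}$ satisfies $\Omega_{d_{j}}(T^{j})=T^{j}_{d_{j}}$, where $d_{j}$ is its bottom degree.

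The key claim is that every $d_{j}=0$. I would derive this from \zcref{lem:OmegaDual}: $\Omega^{\mathsf{L}}_{0}$ of the full dual equals $(\mathfrak{R}^{0}\otimes_{U}\mathfrak{L}^{0})^{\mathsf{A}_{0}|\vee}\cong\mathsf{A}_{0}$ by \zcref{lem:def:ostar}, and this already equals $T_{0}$. The aim is to show that a summand with $d_{j}>0$ would contribute vectors to $\Omega^{\mathsf{L}}_{0}(T)$ outside degree $0$. Once all bottoms sit in degree $0$, $\psi$ matches bottoms summand by summand, and \zcref{lem:lifting-isomorphism} makes $\psi$ an isomorphism. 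Dualizing degreewise, which is legitimate because the components are rigid by Step 1 and by quasi-rigidity of $T$, yields $\mathfrak{R}^{0}\cong(\mathfrak{L}^{0})^{\dagger|\mathsf{A}_{0}}$.

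\emph{Main obstacle.} The hard part is the claim that $T=(\mathfrak{R}^{0})^{\dagger}$ has no simple summand with positive bottom degree. A priori, the bottom of such a summand need not lie in $\Omega_{0}$, since $\Omega_{0}$ only sees vectors killed by $U_{\le-1}$; so the identification $\Omega^{\mathsf{L}}_{0}(T)=T_{0}$ does not by itself exclude it. My first attempt is to use the right-module picture: a functional in $T_{d}$ with $d>0$ annihilated by $U_{-1},\dots,U_{-d}$ pairs trivially with $U_{\ge1}\mathfrak{R}^{0}$ components, because $\mathfrak{R}^{0}_{-d}=\mathfrak{R}^{0}_{0}U_{-d}$ by weak unitality. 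Such a functional then vanishes, and so $T$ is generated by $T_{0}$.

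If that fails, the fallback is to apply the mirror argument, Step 1 for the right-handed $\mathfrak{R}^{0}$, to obtain $\mathfrak{R}^{0}$ as a finite sum of simple quasi-rigid modules. One would then compare $\mathfrak{A}_{n}$ with $\operatorname{End}$ via the nondegenerate pairing $\mathfrak{R}^{0}_{-n}\otimes\mathfrak{L}^{0}_{n}\to\mathsf{A}_{0}$ given by $\ostar$. This second route requires that rationality also controls right modules, which may need a separate check.
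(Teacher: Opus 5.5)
Your argument is correct, and it takes a genuinely different route from the paper's.

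\textbf{Comparison.} The paper splits $\mathsf{A}_{0}=B_{0}\times\cdots\times B_{s}$ into Wedderburn blocks. It asserts that $\Phi^{\mathsf{R}}_{0}(B_{i})$ and $\Phi^{\mathsf{L}}_{0}(B_{i})^{\dagger|\mathsf{A}_{0}}$ are both simple, and lifts $B_{i}^{\vee|\mathsf{A}_{0}}\cong B_{i}$ to an isomorphism between them via \zcref{lem:lifting-isomorphism}. It then reads off $\mathfrak{A}_{n}\cong\bigoplus_{i}\End_{\mathsf{A}_{0}}(\Phi^{\mathsf{L}}_{0}(B_{i})_{n})$ block by block. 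You instead prove the global duality $\psi^{\mathsf{L}}_{\mathsf{A}_{0}}\colon\mathfrak{L}^{0}\cong(\mathfrak{R}^{0})^{\mathsf{A}_{0}|\dagger}$ by applying rationality to the positively graded \emph{left} module $T=(\mathfrak{R}^{0})^{\mathsf{A}_{0}|\dagger}$, and then recycle the computation of \zcref{thm:Adjeq-SIC}.

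What your route buys is that it uses the hypothesis exactly as defined, i.e.\ only for positively graded left modules. The paper's claim that $\Phi^{\mathsf{R}}_{0}(B_{i})$ is simple rests on \zcref{lem:lifting-simple}, which only gives indecomposability. As written, it therefore tacitly also needs semisimplicity on the right side. That is exactly what your fallback would require, so you should discard the fallback. What the paper's route buys is brevity and the block formula \eqref{eq:Mode-End-Phi}, which is reused later.

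\textbf{Three points to tighten.}

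First, the ``main obstacle'' is already settled by your own weak-unitality argument. Take $\varphi\in T_{d}$ with $d>0$ killed by $U_{-d}$. Then $\varphi([u]^{\mathsf{R}}_{0})=(u\varphi)([1]^{\mathsf{R}}_{0})=0$ for all $u\in U_{-d}$, and these elements exhaust $\mathfrak{R}^{0}_{-d}$, so $\varphi=0$. The bottom of a summand $T^{j}$ with $d_{j}>0$ is killed by $U_{\le -1}$ purely for degree reasons, so it must vanish, a contradiction. The same degree reason shows that your worry ``the bottom need not lie in $\Omega_{0}$'' is unfounded. Hence $\Omega^{\mathsf{L}}_{0}(T)=T_{0}$, which follows from \zcref{lem:OmegaDual,lem:def:ostar}, gives the claim equally well. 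Also, the display should read $\Omega_{0}(T^{j})=T^{j}_{d_{j}}$.

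Second, ``matching bottoms summand by summand'' with \zcref{lem:lifting-isomorphism} is imprecise when simple summands occur with multiplicity. Argue instead as follows.
\begin{itemize}
  \item Injectivity: $\ker\psi$ is a graded direct summand of $\mathfrak{L}^{0}$. Since $\mathfrak{L}^{0}$ is generated by $\mathfrak{L}^{0}_{0}$, $\ker\psi$ is generated by $(\ker\psi)_{0}=\ker\psi_{0}=0$.
  \item Surjectivity: $\operatorname{im}\psi\supset T_{0}$, and $T_{0}$ generates $T$ because every simple summand of $T$ has nonzero degree-$0$ part.
\end{itemize}

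Third, dualizing back requires $\mathfrak{R}^{0}_{-n}$ itself to be reflexive; rigidity of $T_{n}$ alone does not give this. It does hold here. Over the semisimple ring $\mathsf{A}_{0}$, the dual of an infinite direct sum of simples is an infinite product of nonzero modules, so it is not finitely generated. Hence finite generation of $T_{n}$ forces $\mathfrak{R}^{0}_{-n}$ to be finitely generated, and therefore rigid.
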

  \begin{proof}
    First, by \zcref{lem:semisimple}, $\mathsf{A}_{0}$ is semisimple. Hence, by \emph{Wedderburn-Artin}, we have an idempotent decomposition of rings
    \[
      \mathsf{A}_{0} = B_{0} \times \cdots \times B_{s},
    \]
    where $B_{i}$'s are simple rings that are non-isomorphic as $(\mathsf{A}_{0}|\mathsf{A}_{0})$-bimodules.
    In particular, we have $(B_{i})^{\vee|\mathsf{A}_{0}} = (B_{i})^{\vee|B_{i}} \cong B_{i}$ and $(B_i)^{\vee|\mathsf{A}_{0}}\otimes_{\mathsf{A}_{0}}B_{j} = 0$ for all $i \neq j$. 
    
    Consider the $({}_{\mathsf{A}_{0}}|U)$-bimodules $\Phi^{\mathsf{L}}_{0}(B_{i})^{\dagger|\mathsf{A}_{0}}$ and $\Phi^{\mathsf{R}}_{0}(B_{i})$. 
    By \zcref{lem:lifting-simple}, both of them are simple. 
    Then, by \zcref{lem:lifting-isomorphism}, we see that the identification $(B_{i})^{\vee|\mathsf{A}_{0}} \cong B_{i}$ lifts to an isomorphism of simple $({}_{\mathsf{A}_{0}}|U)$-bimodules $\Phi^{\mathsf{L}}_{0}(B_{i})^{\dagger|\mathsf{A}_{0}} \cong \Phi^{\mathsf{R}}_{0}(B_{i})$. 
    
    Now, we have, for all $n\in\N$,
    \begin{equation}
    \label{eq:Mode-End-Phi}
    \begin{aligned}
      \mathfrak{A}_{n} 
      &= 
      \Phi^{\mathsf{L}}_{0}(\mathsf{A}_{0})_{n} \otimes_{\mathsf{A}_{0}} \Phi^{\mathsf{R}}_{0}(\mathsf{A}_{0})_{-n} 
      = 
      \bigoplus_{i=0}^{s}
        \Phi^{\mathsf{L}}_{0}(B_{i})_{n} \otimes_{\mathsf{A}_{0}} \Phi^{\mathsf{R}}_{0}(B_{i})_{-n} \\
      &= 
      \bigoplus_{i=0}^{s}
        \Phi^{\mathsf{L}}_{0}(B_{i})_{n} \otimes_{\mathsf{A}_{0}} \left(\Phi^{\mathsf{L}}_{0}(B_{i})\right)^{\dagger|\mathsf{A}_{0}}_{-n}
      = 
      \bigoplus_{i=0}^{s}
        \Phi^{\mathsf{L}}_{0}(B_{i})_{n} \otimes_{\mathsf{A}_{0}} \left(\Phi^{\mathsf{L}}_{0}(B_{i})_{n}\right)^{\vee|\mathsf{A}_{0}}
      \\
      &= 
      \bigoplus_{i=0}^{s}
        \End_{\mathsf{A}_{0}}(\Phi^{\mathsf{L}}_{0}(B_{i})_{n}),
    \end{aligned}
    \end{equation}
    where the last equality follows from that $\Phi^{\mathsf{L}}_{0}(B_{i})$ is quasi-rigid as a $(U|\mathsf{A}_{0})$-bimodule. This is the only place where we need this. 
    Let $\one_{n}$ be the element of $\mathfrak{A}_{n}$ corresponding to $\sum_{i=0}^{s}\id_{\Phi^{\mathsf{L}}_{0}(B_{i})_{n}}$. We conclude that the elements $(\one_{n})_{n\in\N}$ verify the strong identity condition.
  \end{proof}

  We give some immediate consequences of this theorem.
  \begin{corollary}\label{coro:rational-ex-gr}
    If $U$ is $\mathsf{A}_{0}$-rational, then any exhaustive $U$-module is gradable.
  \end{corollary}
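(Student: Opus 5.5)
This corollary is a direct chain of results already established. By \zcref{thm:rational}, $\mathsf{A}_{0}$-rationality of $U$ gives the strong identity condition. By \zcref{thm:SIC-SIE}, $U$ then admits a strong identity expansion $(e_{n})_{n\in\N}$, which we may normalize to satisfy \zcref[noname]{si4}. Finally, \zcref{lem:splitOmega} says that under \SIE{} every exhaustive left $U$-module $W$ is gradable. The grading is $W_{(n)} = e_{n}W$, obtained from the continuous action of $\widehat{U_{0}}$ on $W$.

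The plan is therefore to write the proof as this three-step citation. Two points need a brief check.

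\begin{enumerate}
\item The cited results apply in the present setting. \zcref{thm:SIC-SIE} and \zcref{sec:splitOmega} assume $U$ is unital. If $U$ is only weakly unital, I would pass to the unitalization as indicated at the start of \zcref{sec:InducedMod}, or restrict to the unital case as the section does.
\item \zcref{lem:splitOmega} covers right modules too. It is stated for left modules, but the right-module version follows by passing to $\opp{U}$ via \zcref{rem:opposite}, since \SIE{} is symmetric in left and right (condition \zcref[noname]{si3} is two-sided).
\end{enumerate}

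The same argument also handles exhaustive $(U|{}_R)$-bimodules. The right $R$-action commutes with $\widehat{U_{0}}$, so each $e_{n}W$ is an $R$-submodule, and the resulting grading is a grading of bimodules.

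I do not expect any real obstacle, because all the substantive work (constructing $\one_{n}$ from the Wedderburn–Artin decomposition, and splitting the $\Omega$-filtration) is already done in the cited results. The only care needed is to note that gradability here means gradable as an exhaustive module in the sense of \zcref{def:gradable}, not that the forgetful functor is an equivalence, as the warning after \zcref{lem:splitOmega} stresses.
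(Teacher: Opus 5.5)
Your proposal is correct and follows the paper's own route: the paper proves this corollary by citing \zcref{lem:splitOmega} immediately after \zcref{thm:rational}, with \zcref{thm:SIC-SIE} implicitly turning \SIC{} into \SIE{}. Your extra checks (unitality, the right-module case via $\opp{U}$, and the bimodule case) are harmless refinements that the paper leaves implicit.
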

  \begin{proof}
    This is due to \zcref{lem:splitOmega}.
  \end{proof}

  \begin{corollary}\label{coro:Ex(UA)-semisimple}
    If $U$ is $\mathsf{A}_{0}$-rational, then the category $\Mod[Ex](U|{}_{\mathsf{A}_{0}})$ is semisimple.
  \end{corollary}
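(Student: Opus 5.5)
The plan is to combine \zcref{thm:rational} with the Morita-type equivalence \zcref{thm:PhiOmega0}, which together transfer semisimplicity from $\mathsf{A}_{0}$-bimodules to exhaustive modules. Since $U$ is $\mathsf{A}_{0}$-rational, \zcref{thm:rational} gives \SIC, and hence \SIE{} by \zcref{thm:SIC-SIE}. Then \zcref{thm:PhiOmega0} with $R=\mathsf{A}_{0}$ shows that
\[
  \Phi^{\mathsf{L}}_{0}\colon \Mod(\mathsf{A}_{0}|{}_{\mathsf{A}_{0}}) \rightleftharpoons \Mod[Ex](U|{}_{\mathsf{A}_{0}})\colon\Omega^{\mathsf{L}}_{0}
\]
is an adjoint equivalence of abelian categories. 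It therefore suffices to show that $\Mod(\mathsf{A}_{0}|{}_{\mathsf{A}_{0}})$ is semisimple, since an equivalence of abelian categories preserves simple objects and direct sums, and hence semisimplicity.

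For the semisimplicity of $\Mod(\mathsf{A}_{0}|{}_{\mathsf{A}_{0}})$, the hypothesis gives directly that $\Mod[Gr](U|{}_{\mathsf{A}_{0}})$ is semisimple: every positively-graded $(U|{}_{\mathsf{A}_{0}})$-bimodule is a direct sum of simple ones. \zcref{lem:semisimple} then shows that $\Mod(\mathsf{A}_{n}|{}_{\mathsf{A}_{0}})$ is semisimple for every $n$, in particular for $n=0$. Concretely, for any $(\mathsf{A}_{0}|{}_{\mathsf{A}_{0}})$-bimodule $M$, decompose $\Phi^{\mathsf{L}}_{0}(M)=\bigoplus S_{i}$ into graded simples. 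Taking degree-zero parts gives $M\cong\bigoplus (S_{i})_{0}$, and each $(S_{i})_{0}$ is zero or simple by \zcref{lem:S-component}. One should check that "direct sum of simples" here is taken in the graded category, so that the decomposition is homogeneous and $(-)_{0}$ commutes with it. This holds by definition of $\Mod[Gr]$, whose morphisms preserve gradings.

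The step that needs the most care is the categorical transfer. Semisimplicity must mean that every object is a direct sum of simple objects, and this is a property invariant under equivalence. The equivalence of \zcref{thm:PhiOmega0} is between the full categories, not the quasi-rigid ones, which is exactly what we need. As an alternative route that avoids invoking the equivalence abstractly, one could argue as follows. Given an exhaustive $W$, use \zcref{lem:splitOmega} to grade it as $W=\bigoplus_{n} W_{(n)}$. The result is a positively-graded bimodule and hence a direct sum of graded simples. Each graded simple $S$ is isomorphic to $\Phi^{\mathsf{L}}_{0}(S_{0})$ (as in the proof of \zcref{thm:rationality-semisimplicity}), and $S_{0}$ is a simple $(\mathsf{A}_{0}|{}_{\mathsf{A}_{0}})$-bimodule, so $S$ is simple as an exhaustive module by the equivalence. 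Either way the conclusion is that $\Mod[Ex](U|{}_{\mathsf{A}_{0}})$ is semisimple, which also completes the deferred implication (i) $\implies$ (iv) of \zcref{thm:rationality-semisimplicity}.
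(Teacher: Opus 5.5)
Your proposal is correct and follows the paper's own argument: \SIC{} from \zcref{thm:rational} (hence \SIE{}) feeds into \zcref{thm:PhiOmega0} with $R=\mathsf{A}_{0}$, giving $\Mod[Ex](U|{}_{\mathsf{A}_{0}})\simeq\Mod(\mathsf{A}_{0}|{}_{\mathsf{A}_{0}})$, and the latter category is semisimple. You also justify that last semisimplicity via \zcref{lem:semisimple}, which the paper simply asserts.
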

  \begin{proof}
    This is due to \zcref{thm:PhiOmega0}, which says the category $\Mod[Ex](U|{}_{\mathsf{A}_{0}})$ is equivalent to $\Mod(\mathsf{A}_{0}|\mathsf{A}_{0})$, which is semisimple.
  \end{proof}

  Combining the theorem with the discussions before, we have:
  \begin{corollary}\label{coro:k-rational-coros}
    If the category $\Mod[Gr](U|)$ is semisimple, then 
    \begin{enumerate}
      \item All $\mathsf{A}_{n}$ are semisimple;
      \item $U$ is $\mathsf{A}_{0}$-rational;
      \item the strong identity condition holds;
      \item any exhaustive $U$-module is gradable; and
      \item the category $\Mod[Ex](U|)$ is semisimple.
    \end{enumerate}
    If we furthermore assume $U$ is $\k$-rational, then the $U$-modules $\mathfrak{L}^{n},\mathfrak{R}^{n}$ are quasi-rigid.
  \end{corollary}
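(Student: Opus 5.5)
Most of (i)--(v) will be obtained by reducing to \zcref{coro:k-rational-A-rational} and \zcref{thm:rational}, and the final quasi-rigidity claim by a separate argument using $\k$-rationality. For (i), I would apply \zcref{lem:semisimple} with $R=\k$: semisimplicity of $\Mod[Gr](U|)$ gives semisimplicity of $\Mod(\mathsf{A}_{n}|)$, so every $\mathsf{A}_{n}$ is semisimple. For (ii), I would invoke \zcref{rem:semisimple-bimodules}, as in the proof of \zcref{coro:k-rational-A-rational}. A graded $(U|{}_{\mathsf{A}_{0}})$-bimodule is, in particular, a graded left $U$-module, hence a direct sum of simple graded left $U$-modules. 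Summing the simple summands, or using that submodules of a semisimple object are semisimple, this yields semisimplicity of $\Mod[Gr](U|{}_{\mathsf{A}_{0}})$, which is condition (iii) of \zcref{thm:rationality-semisimplicity}; hence $U$ is $\mathsf{A}_{0}$-rational. Items (iii) and (iv) then follow directly from \zcref{thm:rational} and \zcref{coro:rational-ex-gr}.

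For (v), I would use \SIC{} together with \zcref{thm:SIC-SIE} and \zcref{thm:PhiOmega0} to get $\Mod[Ex](U|)\simeq\Mod(\mathsf{A}_{0}|)$. The right-hand side is semisimple by (i), so (v) follows. A second route is also available: by (iv) every exhaustive module is gradable, and by \zcref{lem:splitOmega} the canonical grading $W_{(\bullet)}$ exhibits $W$ as a graded module. It is then a direct sum of simple graded modules, and \zcref{lem:grOmegaPhi} together with the equivalence shows that these summands remain simple as exhaustive modules. I would present the equivalence argument as the main route.

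For the final assertion, assume $U$ is $\k$-rational. Then $\mathfrak{L}^{n}\in\Mod[Gr](U|)$ decomposes as $\bigoplus_i S_i$ with each $S_i$ quasi-rigid and simple, so each component $(S_i)_p$ is finite projective over $\k$. The obstacle is that the sum may be infinite, and quasi-rigidity of $\mathfrak{L}^{n}$ is required over $\mathsf{A}_{n}$, not over $\k$. My plan is as follows. Since $\mathsf{A}_{n}$ is semisimple by (i), rigidity over $\mathsf{A}_{n}$ amounts to finite generation. Each component $\mathfrak{L}^{n}_{p}=\mathsf{A}_{p+n,n}$ is a cyclic-type quotient: $\mathfrak{L}^{n}$ is generated by $1_n$ as a left $U$-module. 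Consequently only finitely many simple summands can meet the generator, and the decomposition of $\mathfrak{L}^{n}$ is finite. Each summand has components that are finite over $\k$, hence finite over $\mathsf{A}_{n}$ and projective because $\mathsf{A}_{n}$ is semisimple. This gives $\mathfrak{L}^{n}\in\Mod*[Gr](U|{}_{\mathsf{A}_{n}})$. The same argument applied to $\opp{U}$ handles $\mathfrak{R}^{n}$.

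I expect the main care to be needed in this finiteness step, namely that a cyclic module splits into finitely many simple summands and that finiteness over $\k$ transfers to rigidity over $\mathsf{A}_{n}$.
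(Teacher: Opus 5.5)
For (i)--(v) you follow the paper's chain (\zcref{lem:semisimple}, \zcref{coro:k-rational-A-rational}, \zcref{thm:rational}, \zcref{lem:splitOmega}, \zcref{thm:PhiOmega0}). Item (i) is fine, but your step for (ii) is a genuine gap. Knowing that a graded $(U|{}_{\mathsf{A}_{0}})$-bimodule is a direct sum of simple graded left $U$-modules only gives $U$-linear complements. The right $\mathsf{A}_{0}$-action is an independent structure, and neither regrouping the summands nor passing to submodules produces a complement stable under it. This is exactly the argument in the proof of \zcref{coro:k-rational-A-rational}, which the paper cites here, so the paper's proof has the same defect.

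The step genuinely fails. Take $\k=\mathbb{C}$ and $U=\mathbb{C}(x)$ concentrated in degree $0$, with $\NL[n]U=0$ for $n\ge 1$. Then $\Mod[Gr](U|)$ is semisimple and $\mathsf{A}_{0}=\mathbb{C}(x)$. However, the bimodule $\mathbb{C}(x)\otimes_{\mathbb{C}}\mathbb{C}(x)$ in degree $0$ is the regular module of a domain that is not a field ($x\otimes 1-1\otimes x$ is a nonzero non-unit in the kernel of multiplication). So it has no simple subobjects, and $U$ is not $\mathsf{A}_{0}$-rational. Over an imperfect field, a purely inseparable finite extension placed in degree $0$ is even $\k$-rational and fails (ii) the same way.

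Since you get (iii) from (ii) via \zcref{thm:rational}, and (iv), (v) from (iii), none of these is established by your chain as it stands. An extra hypothesis repairs it, e.g.\ $\mathsf{A}_{0}$ separable over $\k$. If $p$ is a graded $U$-linear retraction onto a sub-bimodule and $\sum_i a_i\otimes b_i$ is a separability idempotent, then $w\mapsto\sum_i p(wa_i)b_i$ is a bimodule retraction. Hence $\Mod[Gr](U|{}_{\mathsf{A}_{0}})$ is semisimple and \zcref{thm:rationality-semisimplicity} applies. This holds, for instance, when $\k$ is a perfect field and $U$ is $\k$-rational, because $\mathsf{A}_{0}$ is then finite-dimensional semisimple by (i) and \zcref{lem:semisimple-rigid}.

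For the final assertion your route differs from the paper's, and it is better. The paper shows $\mathfrak{L}^{0}=\Phi^{\mathsf{L}}_{0}(\mathsf{A}_{0})$ and $\mathfrak{R}^{0}=\Phi^{\mathsf{R}}_{0}(\mathsf{A}_{0})$ are finite sums of quasi-rigid simples. It then gets $\k$-rigidity of the blocks $\mathfrak{A}_{p,-q}$ and uses the \SIC{}-splitting of \zcref{thm:decomposition_of_fL}. You apply $\k$-rationality directly to $\mathfrak{L}^{n}$, which is cyclic on $1_{n}$. Only finitely many summands meet $1_{n}$, so the decomposition is finite and each $\mathfrak{L}^{n}_{p}$ is a finite sum of $\k$-rigid modules. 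This is correct, more elementary, and independent of \SIC{}, so it survives the problem above. What the paper's route adds is the explicit block decomposition.

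Two repairs are needed.
\begin{enumerate}
\item The summands $S_i$ are only left $U$-submodules and need not be stable under the right $\mathsf{A}_{n}$-action. So argue with $\mathfrak{L}^{n}_{p}$ as a whole: it is finitely generated over $\k$, hence over $\mathsf{A}_{n}$, and projective since $\mathsf{A}_{n}$ is semisimple. Its $\k$-rigidity, which is what the paper actually proves and uses in the subsequent remark, is immediate.
\item \zcref{def:rational} concerns only left modules, so passing to $\opp{U}$ is not covered by the hypothesis (the paper tacitly does the same for $\mathfrak{R}^{0}$). You can avoid this: $\NR[n+1]U_{-q}=\NL[n+q+1]U_{-q}$ gives $\mathfrak{R}^{n}_{-q}=\mathfrak{L}^{n+q}_{-q}$ (the identification $\mathsf{A}_{m,n}=\mathfrak{L}^{n}_{m-n}=\mathfrak{R}^{m}_{m-n}$). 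So every component of $\mathfrak{R}^{n}$ is a component of some $\mathfrak{L}^{m}$ you have already treated.
\end{enumerate}
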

  \begin{proof}
    \zcref{lem:semisimple} shows (i). 
    \zcref{coro:k-rational-A-rational} shows (ii).
    \zcref{thm:rational} plus (ii) shows (iii). 
    Then, (iii) with \zcref{lem:splitOmega} shows (iv).
    Combining (i) and (iii), by \zcref{thm:PhiOmega0}, we get (v). 
    Finally, assume $U$ is $\k$-rational. 
    Then, $\mathfrak{L}^{0} = \Phi^{\mathsf{L}}_{0}(\mathsf{A}_{0})$ and $\mathfrak{R}^{0} = \Phi^{\mathsf{R}}_{0}(\mathsf{A}_{0})$ are finite direct sums of simple ones and are hence quasi-rigid.
    Then, each block $\mathfrak{A}_{p,-q}$ is rigid over $\k$ by its construction.
    Finally, by the decompositions in \zcref{thm:decomposition_of_fL}, we conclude that $\mathfrak{L}^{n},\mathfrak{R}^{n}$ are quasi-rigid.
  \end{proof}
  \begin{remark}
    From this corollary, we see that, if $\k$ is semisimple, then $\k$-rational almost-canonically seminormed rings satisfy the assumption in \zcref{thm:AdjeqR-SIC}. 
    In fact, the Frobenius properties ($\mathsf{A}_{0}^{\k|\vee} \cong \mathsf{A}_{0}$ as $(\mathsf{A}_{0}|\k)$-bimodules and $\mathsf{A}_{0}^{\vee|\k} \cong \mathsf{A}_{0}$ as $(\k|\mathsf{A}_{0})$-bimodules) follow from the semisimplicity of $\mathsf{A}_{0}$.
    This gives another proof of ``$\k$-rationality $\implies$ \SIC''.
  \end{remark}

\subsection{For quasi-rigid modules}
  \zcref{thm:rational} has a variant for quasi-rigid modules, due to the following lemma:
  \begin{lemma}\label{lem:lifting-simple-ordinary}
    If $\mathfrak{L}^{0} \in \Mod*[Gr](U|{}_{\mathsf{A}_{0}}), \mathfrak{R}^{0} \in \Mod*[Gr]({}_{\mathsf{A}_{0}}|U)$, and if $\Mod*[Gr](U|{}_{R})$ is semisimple, then $\Phi^{\mathsf{L}}_{0}$ sends simple $(\mathsf{A}_{0}|{}_R)$-bimodules to quasi-rigid simple $(U|{}_{R})$-bimodules. 
  \end{lemma}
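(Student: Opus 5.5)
Let $M$ be a simple $(\mathsf{A}_{0}|{}_R)$-bimodule. I would show two things: $\Phi^{\mathsf{L}}_{0}(M)$ is quasi-rigid, and $\Phi^{\mathsf{L}}_{0}(M)$ is simple in $\Mod[Gr](U|{}_R)$. Semisimplicity of $\Mod*[Gr](U|{}_R)$ is only usable for quasi-rigid modules, so the order matters: first secure that $M$ is rigid over $R$, so that $\Phi^{\mathsf{L}}_{0}(M)$ lies in $\Mod*[Gr]_{0}(U|{}_R)$. Then apply \zcref{lem:lifting-simple}: $\Phi^{\mathsf{L}}_{0}(M)$ is indecomposable as a graded bimodule. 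An indecomposable object in a semisimple category is simple. One must check that every graded $U$-submodule or direct summand of $\Phi^{\mathsf{L}}_{0}(M)$ is again quasi-rigid, so that the decomposition it has in $\Mod*[Gr](U|{}_R)$ is a genuine decomposition as graded bimodules. For direct summands this is clear, since a direct summand of a rigid $R$-module is rigid.

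For quasi-rigidity, write $\Phi^{\mathsf{L}}_{0}(M)_{n}=\mathfrak{L}^{0}_{n}\otimes_{\mathsf{A}_{0}}M$. The hypothesis $\mathfrak{L}^{0}\in\Mod*[Gr](U|{}_{\mathsf{A}_{0}})$ says each $\mathfrak{L}^{0}_{n}$ is rigid over $\mathsf{A}_{0}$. Hence, by \zcref{lem:rigidmodule}, each component is rigid over $R$ as soon as $M$ is rigid over $R$. This is exactly case (i) of \zcref{prop:adj:ordinary} at $n=0$. The hypothesis on $\mathfrak{R}^{0}$ is the right-handed counterpart; I would use it only in the symmetric statement for $\Phi^{\mathsf{R}}_{0}$, or to identify $\mathfrak{A}_{0}$-actions if needed.

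The main obstacle is therefore showing that a simple $(\mathsf{A}_{0}|{}_R)$-bimodule $M$ is rigid over $R$. The lemma does not assume this outright, unlike \zcref{lem:semisimple-rigid}, which used $R$-rationality. My first attempt uses the semisimple category itself. The bimodule $\mathfrak{L}^{0}\otimes_{\mathsf{A}_{0}}(\mathsf{A}_{0}\otimes R)=\mathfrak{L}^{0}\otimes R$ is quasi-rigid when $R$ is rigid over $\k$; more intrinsically, take $\Phi^{\mathsf{L}}_{0}(N)$ for a rigid free bimodule $N\twoheadrightarrow M$. This object decomposes in $\Mod*[Gr](U|{}_R)$ into quasi-rigid simples. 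Taking degree-zero parts with \zcref{lem:S-component} writes $N=\Phi^{\mathsf{L}}_{0}(N)_{0}$ as a direct sum of simple bimodules, each a degree-zero piece of a quasi-rigid module and hence rigid over $R$. Since $M$ is a simple quotient of $N$, it is one of these summands, and so it is rigid. If producing such a rigid cover $N$ fails in full generality, I would interpret the statement as applying to the simple objects of $\Mod*(\mathsf{A}_{0}|{}_R)$, the natural source category in view of \zcref{prop:adj:ordinary}. In that reading rigidity of $M$ is given, and the argument of the first paragraph finishes the proof.
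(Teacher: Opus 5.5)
Your skeleton is the paper's. \zcref{lem:lifting-simple} makes $\Phi^{\mathsf{L}}_{0}(M)$ indecomposable, \zcref{lem:rigidmodule} with the hypothesis on $\mathfrak{L}^{0}$ makes it quasi-rigid once $M$ is rigid over $R$, and semisimplicity of $\Mod*[Gr](U|{}_R)$ turns indecomposable into simple. The check you raise about submodules and summands is unnecessary: a decomposition inside the full subcategory $\Mod*[Gr](U|{}_R)$ is already one in $\Mod[Gr](U|{}_R)$. You are right that everything hinges on $M$ being rigid over $R$. The paper settles this with the bare assertion ``since $M$ is simple, it is a right rigid $R$-module''.

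Your attempt to prove that assertion has a genuine gap: the rigid cover $N\to M$ need not exist. The natural cover of the cyclic module $M$ is $\mathsf{A}_{0}\otimes R$. It is rigid over $R$ only if $\mathsf{A}_{0}$ is finitely generated projective over $\k$. Likewise, the components $\mathfrak{L}^{0}_{n}\otimes R$ are rigid over $R$ when $\mathfrak{L}^{0}_{n}$ is rigid over $\k$, not when $R$ is. The hypotheses only give rigidity of $\mathfrak{L}^{0}_{n}$ over $\mathsf{A}_{0}$.

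No other argument can close this gap, because the literal statement is false. Take $\k=R=\mathbb{C}$ and $U=A:=\mathcal{D}_{\mathbb{A}^{1}}\times\mathbb{C}$ concentrated entirely in degree $0$ (trivial grading, not the Fuchsian one), with $\NL[\bullet]U=\cNL[\bullet]U$.
\begin{itemize}
\item Since $\NL[n]U=0$ for $n\ge 1$, we get $\mathsf{A}_{0}=A$ and $\mathfrak{L}^{0}=\mathfrak{R}^{0}=A$ in degree $0$. These are free of rank one over $\mathsf{A}_{0}$, so both rigidity hypotheses hold.
\item A quasi-rigid graded $U$-module is a sequence of finite-dimensional $A$-modules. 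The Weyl factor has no nonzero finite-dimensional modules (take traces of $[\partial,x]=1$), so these are just graded $\mathbb{C}$-vector spaces. Hence $\Mod*[Gr](U|)$ is semisimple.
\item Yet $M=\mathbb{C}[x]$, with the Weyl factor acting naturally and the second factor by $0$, is a simple $\mathsf{A}_{0}$-module. Here $\Phi^{\mathsf{L}}_{0}(M)=M$ sits in degree $0$ and is infinite-dimensional.
\item It is not quasi-rigid for any grading, since a grading on a $U$-module here is a decomposition into $A$-submodules.
\end{itemize}
The same example refutes your claim that $\mathfrak{L}^{0}\otimes R$ is quasi-rigid whenever $R$ is rigid over $\k$.

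The lemma is therefore true only in your fallback reading, where $M$ is a simple object of $\Mod*(\mathsf{A}_{0}|{}_R)$, i.e.\ rigidity of $M$ over $R$ is assumed. In that reading your argument coincides with the paper's proof. The paper's one-line justification of rigidity has the same defect as your cover argument.
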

  \begin{proof}
    Suppose $M$ is a simple $(\mathsf{A}_{0}|{}_R)$-bimodule. 
    By \zcref{lem:lifting-simple}, it remains to show that $\Phi^{\mathsf{L}}_{0}(M)$ is quasi-rigid. By the assumption, each $\mathfrak{L}^{0}_{n}$ is a rigid right $\mathsf{A}_{0}$-module. 
    Since $M$ is simple, it is a right rigid $R$-module.
    Hence, by \zcref{lem:rigidmodule}, each $\Phi^{\mathsf{L}}_{0}(M)_{n} = \mathfrak{L}^{0}_{n}\otimes_{\mathsf{A}}M$ is also rigid over $R$.
  \end{proof}

  With the help of this lemma, the same argument of \zcref{thm:rational} shows the following.
  \begin{theorem}
    If $\mathfrak{L}^{0} \in \Mod*[Gr](U|{}_{\mathsf{A}_{0}}), \mathfrak{R}^{0} \in \Mod*[Gr]({}_{\mathsf{A}_{0}}|U)$, and if $\Mod*[Gr](U|{}_{\mathsf{A}_{0}})$ is semisimple, then the strong identity condition holds. 
  \end{theorem}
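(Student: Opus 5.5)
The plan is to rerun the proof of \zcref{thm:rational}, replacing each step that used $\mathsf{A}_{0}$-rationality with the hypotheses now available: quasi-rigidity of $\mathfrak{L}^{0},\mathfrak{R}^{0}$ over $\mathsf{A}_{0}$, and semisimplicity of $\Mod*[Gr](U|{}_{\mathsf{A}_{0}})$.

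\emph{Step 1: semisimplicity of $\mathsf{A}_{0}$.} Under quasi-rigidity of $\mathfrak{L}^{0}$, \zcref{prop:adj:ordinary}(i) makes $\Phi^{\mathsf{L}}_{0}$ send $\Mod*(\mathsf{A}_{0}|{}_{\mathsf{A}_{0}})$ into $\Mod*[Gr]_{0}(U|{}_{\mathsf{A}_{0}})$. The second half of \zcref{lem:semisimple} then shows that $\Mod*(\mathsf{A}_{0}|{}_{\mathsf{A}_{0}})$ is semisimple. Apply this to the regular bimodule $\mathsf{A}_{0}$, which is rigid as a right $\mathsf{A}_{0}$-module. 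It splits as a direct sum of simple sub-bimodules, i.e. of two-sided ideals, and each summand is a direct summand of $\mathsf{A}_{0}$ and hence rigid. This gives a finite decomposition $\mathsf{A}_{0}=B_{0}\times\cdots\times B_{s}$ into blocks that are indecomposable as bimodules.

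I expect this to be the main obstacle. Indecomposable bimodule summands are not obviously simple rings, and I may not be able to make $\mathsf{A}_{0}$ genuinely semisimple. Fortunately, the argument below uses only three facts: the central idempotent decomposition, $(B_{i})^{\vee|\mathsf{A}_{0}}\cong B_{i}$, and $(B_{i})^{\vee|\mathsf{A}_{0}}\otimes_{\mathsf{A}_{0}}B_{j}=0$ for $i\neq j$. All three hold for any central idempotent decomposition, so I will set it up that way.

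\emph{Step 2: simplicity and duality of the lifted blocks.} By \zcref{lem:lifting-simple-ordinary}, or by \zcref{lem:lifting-simple} together with \zcref{lem:rigidmodule}, each $\Phi^{\mathsf{L}}_{0}(B_{i})$ is quasi-rigid and indecomposable. Semisimplicity of $\Mod*[Gr](U|{}_{\mathsf{A}_{0}})$ then makes it simple. The right-hand analogue uses $\mathfrak{R}^{0}$ and the mirror statement of semisimplicity, which I would deduce by graded duality $(-)^{\dagger|\mathsf{A}_{0}}$; this duality is an anti-equivalence between the quasi-rigid graded categories on the two sides. The same duality shows that $\Phi^{\mathsf{L}}_{0}(B_{i})^{\dagger|\mathsf{A}_{0}}$ is a simple $({}_{\mathsf{A}_{0}}|U)$-bimodule.

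Next, the adjunction of \zcref{prop:adj:Phideg} (right version) supplies a homogeneous map $\Phi^{\mathsf{R}}_{0}(B_{i})\to\Phi^{\mathsf{L}}_{0}(B_{i})^{\dagger|\mathsf{A}_{0}}$ extending $B_{i}\cong(B_{i})^{\vee|\mathsf{A}_{0}}$ in degree $0$. Both modules are simple, so \zcref{lem:lifting-isomorphism} makes this map an isomorphism.

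\emph{Step 3: the strong identities.} Now repeat computation \eqref{eq:Mode-End-Phi} verbatim. Write $\mathfrak{A}_{n}=\mathfrak{L}^{0}_{n}\otimes_{\mathsf{A}_{0}}\mathfrak{R}^{0}_{-n}$ and split it along the blocks $B_{i}$; the cross terms vanish. Substitute the isomorphism from Step 2 and use quasi-rigidity of $\Phi^{\mathsf{L}}_{0}(B_{i})$. This gives $\mathfrak{A}_{n}\cong\bigoplus_{i}\End_{\mathsf{A}_{0}}(\Phi^{\mathsf{L}}_{0}(B_{i})_{n})$. Define $\one_{n}$ as the element corresponding to $\sum_{i}\id$. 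The identities \eqref{eq:SIC:action} then hold by the same argument as in \zcref{thm:rational}, since under these identifications the $\star$-actions on $\mathfrak{L}^{0}$ and $\mathfrak{R}^{0}$ become evaluation of endomorphisms.
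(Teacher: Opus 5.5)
Your proposal is correct and is essentially the paper's proof. The paper simply remarks that, once \zcref{lem:lifting-simple-ordinary} supplies quasi-rigidity of the induced modules, the argument of \zcref{thm:rational} goes through verbatim. Your refinements make explicit several points the paper leaves implicit: you use only a central idempotent decomposition of $\mathsf{A}_{0}$ instead of Wedderburn--Artin, since semisimplicity of $\mathsf{A}_{0}$ is not directly available from the quasi-rigid hypothesis; you transport semisimplicity to the right-hand category via graded duality; and you check that the $\star$-actions become evaluation of endomorphisms.
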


  As a consequence, we have the following:
  \begin{corollary}
    If $\mathfrak{L}^{0} \in \Mod*[Gr](U|{}_{\mathsf{A}_{0}}), \mathfrak{R}^{0} \in \Mod*[Gr]({}_{\mathsf{A}_{0}}|U)$, and if $\Mod*[Gr](U|{}_{\mathsf{A}_{0}})$ is semisimple, then $U$ is $\mathsf{A}_{0}$-rational.
  \end{corollary}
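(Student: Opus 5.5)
We need to show $U$ is $\mathsf{A}_{0}$-rational, i.e.\ every positively-graded $(U|{}_{\mathsf{A}_{0}})$-bimodule is a direct sum of quasi-rigid simple ones. By the preceding theorem, the strong identity condition holds, so \zcref{thm:PhiOmega0} and \zcref{lem:splitOmega} are available. The plan has three steps: make $\mathsf{A}_{0}$ semisimple, make every simple graded $(U|{}_{\mathsf{A}_{0}})$-bimodule quasi-rigid, and then decompose an arbitrary positively-graded module.

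\textbf{Step 1: semisimplicity of $\mathsf{A}_{0}$.} I would first adapt \zcref{lem:semisimple}. For a rigid $(\mathsf{A}_{0}|{}_{\mathsf{A}_{0}})$-bimodule $M$, the module $\Phi^{\mathsf{L}}_{0}(M)$ lies in $\Mod*[Gr](U|{}_{\mathsf{A}_{0}})$ by \zcref{lem:rigidmodule}, since $\mathfrak{L}^{0}$ is quasi-rigid. It is therefore semisimple, so its bottom component $M$ is completely reducible by \zcref{lem:S-component}. Applying this to $M=\mathsf{A}_{0}$, which is rigid over itself, shows that $\mathsf{A}_{0}$ is semisimple as an $(\mathsf{A}_{0}|\mathsf{A}_{0})$-bimodule. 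Hence $\mathsf{A}_{0}$ is a finite product of simple rings and is semisimple, and every $\mathsf{A}_{0}$-module is projective. Rigidity over $\mathsf{A}_{0}$ then amounts to finite generation.

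\textbf{Step 2: simple graded modules are quasi-rigid.} Let $S$ be a simple graded $(U|{}_{\mathsf{A}_{0}})$-bimodule, shifted so that its bottom degree is $0$. By \zcref{lem:S-component}, $S_{0}$ is a simple bimodule. Because $\mathsf{A}_{0}$ is semisimple, $S_{0}$ is finitely generated and hence rigid on the right. The counit $\Phi^{\mathsf{L}}_{0}(S_{0})\to S$ is nonzero, and $S$ is simple, so this map is surjective. By \zcref{lem:lifting-simple-ordinary}, $\Phi^{\mathsf{L}}_{0}(S_{0})$ is a quasi-rigid simple module. The surjection from it is therefore an isomorphism, and $S$ is quasi-rigid.

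\textbf{Step 3: decomposition of an arbitrary positively-graded module.} This is the main obstacle: the hypothesis gives semisimplicity only inside the quasi-rigid subcategory. Let $W$ be a positively-graded $(U|{}_{\mathsf{A}_{0}})$-bimodule. By \zcref{thm:PhiOmega0}, $W\cong\Phi^{\mathsf{L}}_{0}(\Omega^{\mathsf{L}}_{0}(W))$ as exhaustive modules. Since $\mathsf{A}_{0}\otimes\opp{\mathsf{A}_{0}}$ is semisimple in the situation at hand, $\Omega^{\mathsf{L}}_{0}(W)$ is a direct sum of simple bimodules $M_i$. Each $M_i$ is finitely generated over a semisimple ring and hence rigid. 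Because $\Phi^{\mathsf{L}}_{0}$ commutes with direct sums, $W\cong\bigoplus_i\Phi^{\mathsf{L}}_{0}(M_i)$, and each summand is quasi-rigid and simple by \zcref{lem:lifting-simple-ordinary}.

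The remaining point is the grading. By \zcref{lem:S-grOmega}, the given grading on $W$ agrees with the $\gr\Omega$-grading up to a shift on each simple summand. Then $W=\bigoplus_i W^{(i)}$ as graded modules, where each $W^{(i)}$ is one of these simple summands with a shifted grading. If the bimodule semisimplicity of $\mathsf{A}_{0}$ should fail, I would instead run the argument with left $\mathsf{A}_{0}$-module decompositions, using \zcref{rem:semisimple-bimodules}.
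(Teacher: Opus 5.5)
Your outline follows the paper's proof. The preceding theorem gives \SIC, \zcref{thm:PhiOmega0} makes $\Phi^{\mathsf{L}}_{0}$ an equivalence, and the simple summands are $\Phi^{\mathsf{L}}_{0}(M)$ for simple bimodules $M$, quasi-rigid via \zcref{lem:rigidmodule}. However, two steps do not hold as written.

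\textbf{First gap: the graded part of Step 3.} The isomorphism $W\cong\bigoplus_i\Phi^{\mathsf{L}}_{0}(M_i)$ is only an isomorphism of exhaustive modules. Its summands need not be graded submodules for the given grading on $W$. So \zcref{lem:S-grOmega}, which compares two gradings on one simple module, says nothing about them. For instance, if $W=S\oplus S[1]$ with $S$ simple, the diagonal copy of $S$ is a simple direct summand that is not graded. You must choose the decomposition compatibly with the grading, and the idempotents $e_m$ do this.
\begin{itemize}
\item Each $W_k$ is a $U_{0}$-submodule, and on any vector $\widehat{U_{0}}$ acts through some $\mathsf{A}_n$, a quotient of $U_{0}$. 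Hence $e_mW_k\subset W_k$ and $W=\bigoplus_{k,m}e_mW_k$.
\item The proof of \zcref{lem:splitOmega}, combined with $U_pW_k\subset W_{k+p}$, gives $U_pe_mW_k\subset e_{m+p}W_{k+p}$.
\item So $W^{[d]}:=\bigoplus_m e_mW_{m+d}$ are graded submodules with $W=\bigoplus_dW^{[d]}$, and on $W^{[d]}$ the given grading is the $\gr\Omega$-grading shifted by $d$.
\item The counit $\Phi^{\mathsf{L}}_{0}(e_0W_d)\to W^{[d]}$ is then a homogeneous isomorphism of degree $d$. A bimodule decomposition of $e_0W_d$ therefore yields a graded decomposition of $W$, whose summands are simple because an equivalence preserves simple objects.
\end{itemize}
The paper's phrase ``and thus also $\Mod[Gr](U|{}_{\mathsf{A}_{0}})$'' passes over this same point.

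\textbf{Second gap: the semisimplicity and rigidity inputs are asserted, not proved.}
\begin{itemize}
\item Step 1 only yields a finite product of simple rings, and a simple ring need not be semisimple (a Weyl algebra, for example).
\item Nothing you wrote shows that $\mathsf{A}_{0}\otimes\opp{\mathsf{A}_{0}}$ is semisimple.
\item The fallback fails: a left $\mathsf{A}_{0}$-module decomposition of $\Omega^{\mathsf{L}}_{0}(W)$ is not a decomposition in $\Mod(\mathsf{A}_{0}|\mathsf{A}_{0})$, so it cannot be transported through the equivalence.
\item Your claim in Steps 2 and 3 that a simple bimodule over a semisimple $\mathsf{A}_{0}$ is finitely generated is false.
\end{itemize}
In fact, without a finiteness hypothesis the statement itself fails. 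Take $\k=\mathbb{F}_p$, let $D$ be the perfect closure of $\mathbb{F}_p(t_1,t_2,\dots)$, and put $U=D$ in degree $0$, so that $\mathsf{A}_{0}=\mathfrak{L}^{0}=\mathfrak{R}^{0}=D$.
\begin{itemize}
\item Every $(D|D)$-bimodule that is finite-dimensional on the right is a finite direct sum of simple bimodules that are finite-dimensional on the right. Indeed, the image of $D\otimes D$ in its endomorphisms is a commutative algebra, finite-dimensional over the right copy of $D$, with surjective Frobenius. It is therefore reduced, hence a product of fields. So all hypotheses hold.
\item The map $t_i\otimes1\mapsto u_i$, $1\otimes t_j\mapsto u_{2j}$ sends $D\otimes D$ onto the field $E:=\mathbb{F}_p(u_1,u_2,\dots)^{\mathrm{perf}}$. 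This field has infinite transcendence degree over the image of $1\otimes D$.
\item Hence $E$, placed in degree $0$, is a simple graded module that is not quasi-rigid, and $U$ is not $\mathsf{A}_{0}$-rational.
\end{itemize}
The paper relies on the same facts tacitly: it asserts that $\Mod(\mathsf{A}_{0}|\mathsf{A}_{0})$ is semisimple, and the proof of \zcref{lem:lifting-simple-ordinary} assumes simple bimodules are rigid. So the defect is inherited from the source, but it is still a gap.

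It disappears if, say, $\k$ is a field and $\dim_{\k}\mathsf{A}_{0}<\infty$. The multiplication map $\mathsf{A}_{0}\otimes\opp{\mathsf{A}_{0}}\to\mathsf{A}_{0}$ is a surjection of finitely generated free right $\mathsf{A}_{0}$-modules that splits on the right. Apply $\Phi^{\mathsf{L}}_{0}$ (exact here, since the components of $\mathfrak{L}^{0}$ are projective), use semisimplicity of $\Mod*[Gr](U|{}_{\mathsf{A}_{0}})$, and take $(-)_{0}$. This splits the map as bimodules, so $\mathsf{A}_{0}$ is separable and $\mathsf{A}_{0}\otimes\opp{\mathsf{A}_{0}}$ is a semisimple ring. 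Consequently every bimodule is semisimple. Every simple bimodule is finite-dimensional over $\k$, hence finitely generated over the semisimple ring $\mathsf{A}_{0}$, hence rigid.
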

  \begin{proof}
    Since the strong identity condition holds. We can apply \zcref{thm:PhiOmega0} to conclude that the category of exhaustive modules, and thus also the category $\Mod[Gr](U|{}_{\mathsf{A}_{0}})$, is semisimple, and that all the simple modules are given by applying $\Phi^{\mathsf{L}}_{0}(-)$ to simple $(\mathsf{A}_{0}|\mathsf{A}_{0})$-bimodules, and are hence quasi-rigid by \zcref{lem:lifting-simple-ordinary}.
  \end{proof}

\section{Tensor products}\label{sec:tensor}
  Let $U^{\tt I},U^{\tt II}$ be two almost-canonically seminormed rings. Let $U=U^{\tt I}\otimes U^{\tt II}$ be their tensor product. Then, $U$ is bigraded:
  \[
    \nota{U_{\bullet,\circ}}:=U^{\tt I}_{\bullet}\otimes U^{\tt II}_{\circ}.
  \]
  Taking the total grading, we obtain a graded ring and hence are able to consider almost-canonical seminorms on it. 
  In this section, we will show that the strong identity condition (\SIC) on $U$ is equivalent to the strong identity conditions on $U^{\tt I}$ and $U^{\tt II}$.
  \begin{remark}
    In practice, especially in the context of vertex algebras, $U^{\tt I}$ and $U^{\tt II}$ are often taken to be complete with respect to their almost-canonical seminorms. Then, it makes more sense to consider the \emph{completed tensor product} (cf. \cite[\S A.7]{DGK23}) $U^{\tt I}\ctensor U^{\tt II}$ rather than the algebraic tensor product $U^{\tt I}\otimes U^{\tt II}$. 
    However, as discussed in \zcref{eg:completions}, this causes no difference in discussion of mode transition algebras and, in particular of the strong identity condition.
  \end{remark}

\subsection{The seminorm on the tensor product}
  \begin{proposition}\label{prop:seminorm-on-tensor}
    With the total grading, $U$ is an almost-canonically seminormed ring whose seminorm can be described as follows:
    \[
      \NL[n]U_{p} = \sum_{\substack{i+j=n\\a+b=p}}\NL[i]U^{\tt I}_{a}\otimes \NL[j]U^{\tt II}_{b}\txand 
      \NR[n]U_{p} = \sum_{\substack{i+j=n\\a+b=p}}\NR[i]U^{\tt I}_{a}\otimes \NR[j]U^{\tt II}_{b}.
    \]
  \end{proposition}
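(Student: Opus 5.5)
Write $N^n_p$ for the right-hand side $\sum_{i+j=n,\,a+b=p}\NL[i]U^{\tt I}_{a}\otimes\NL[j]U^{\tt II}_{b}$. The plan is to show that the family $N^\bullet$ satisfies \zcref[noname]{A1}, \zcref[noname]{A3}, and the variant \zcref[noname]{A2p} of \zcref[noname]{A2}; this suffices by the remark following the definition. For the canonical part of \zcref[noname]{A2p} we also check that $N^\bullet$ contains $\cNL[\bullet]U$. Here we take $i,j\in\Z$. Since $\NL[i]U^{\tt I}_a=U^{\tt I}_a$ for $i\ll0$ (because $1\in U_0$ and $\NL$ is a decreasing system), the sum is infinite only formally, and each summand lies in $U_p$. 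Finally, the right-neighborhood formula follows from the left one by the substitution $\NR[\bullet]U_{\Box}=\NL[\bullet-\Box]U_{\Box}$: reindex $i'=i+a$, $j'=j+b$, so that $i'+j'=n+p$.

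\textbf{Axiom \zcref[noname]{A1}} is immediate, since each summand is bihomogeneous.

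\textbf{Axiom \zcref[noname]{A3}.} We check on pure tensors. For $x\otimes y\in\NL[i]U^{\tt I}_a\otimes\NL[j]U^{\tt II}_b$ and $u\otimes v\in U^{\tt I}_c\otimes U^{\tt II}_d$ with $c+d=q$, apply \zcref[noname]{A3} in each factor:
\[
(x\otimes y)(u\otimes v)\in\NL[i-c]U^{\tt I}_{a+c}\otimes\NL[j-d]U^{\tt II}_{b+d}\subset N^{n-q}_{p+q}.
\]
Left multiplication is handled the same way, and it preserves the indices $i$ and $j$.

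\textbf{Canonical part.} First, $\cNL[n]U=U\,U_{\le -n}$. An element of $U_{\le -n}$ is a sum of pure tensors $u\otimes v$ with $\deg u+\deg v\le -n$. Such a tensor lies in $\cNL[-\deg u]U^{\tt I}\otimes\cNL[-\deg v]U^{\tt II}$, because $u=1\cdot u$. Left multiplication by $U$ preserves these ideals, so $\cNL[n]U\subset N^n$.

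Conversely, for \zcref[noname]{A2p} we must show $N^n_p=\cNL[n]U_p+N^{n+1}_p$. Take a summand $\NL[i]U^{\tt I}_a\otimes\NL[j]U^{\tt II}_b$ with $i+j=n$. Apply \zcref[noname]{A2p} to each factor:
\[
\NL[i]U^{\tt I}_a=\cNL[i]U^{\tt I}_a+\NL[i+1]U^{\tt I}_a,
\]
and likewise for $U^{\tt II}$. Expand the tensor product of these two decompositions. Every term containing a factor $\NL[i+1]$ or $\NL[j+1]$ lands in $N^{n+1}_p$. The remaining term is $\cNL[i]U^{\tt I}\otimes\cNL[j]U^{\tt II}$, and it is spanned by products $(\alpha\otimes\gamma)(\beta\otimes\delta)$ with $\deg\beta\le -i$ and $\deg\delta\le -j$. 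This product lies in $U\,U_{\le -n}=\cNL[n]U$. This gives \zcref[noname]{A2p}.

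\textbf{Remaining points and the main obstacle.} We must still see that $N^\bullet$ is a genuine decreasing system of neighborhoods, i.e. $N^{n+1}\subset N^n$. This follows because each factor system is decreasing, so a summand indexed by $(i,j)$ with $i+j=n+1$ sits inside the summand indexed by $(i-1,j)$ with $(i-1)+j=n$.

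The main obstacle I expect is the topological content hidden in \zcref[noname]{A2}, namely that this $N^\bullet$ is the correct seminorm: the degreewise closure of $\cNL$ should be $N^\bullet$, and not something larger or smaller. Via the equivalence of \zcref[noname]{A2} and \zcref[noname]{A2p}, iterating \zcref[noname]{A2p} gives $N^n_p\subset\cNL[n]U_p+N^{m}_p$ for all $m\ge n$, so $\cNL$ is dense in $N^\bullet$. One subtlety remains: $N^m_p$ involves pairs $(i,j)$ with one index very negative, where that factor is the whole of $U^{\tt I}_a$ or $U^{\tt II}_b$. To handle this, I would first check, using the bound $\NL[i]U^{\tt I}_a=U^{\tt I}_a$ for $i\le -a$ together with weak unitality, that for fixed $p$ only finitely many pairs $(i,j)$ contribute non-redundantly. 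This is the step I would verify most carefully.
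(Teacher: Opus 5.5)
Your proof is correct and follows essentially the paper's route. The paper computes $\cNL[n]U_p=\sum_{i+j=n,\,a+b=p}\cNL[i]U^{\tt I}_a\otimes\cNL[j]U^{\tt II}_b$ and derives density from the factorwise density; this is exactly your two inclusions combined with the factorwise (\texttt{A2}$'$) expansion, and you additionally spell out (\texttt{A1}), (\texttt{A3}) and monotonicity, which the paper calls clear.

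The ``subtlety'' in your last paragraph is unnecessary. Density is measured in the topology defined by $N^\bullet$ itself, so iterating (\texttt{A2}$'$) already gives $N^n_p=\cNL[n]U_p+N^m_p$ for all $m\ge n$, and no finiteness count of the contributing pairs $(i,j)$ is needed.
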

  \begin{proof}
    The formulas clearly define a seminorm on $U$ that is compatible with the multiplication. It remains to show that the canonical seminorm is dense in it. 
    This boils down to proving the formulas for the canonical seminorm $\cNL[\bullet]U$ and $\cNR[\bullet]U$. 

    Here, we only prove the formula for the left neighborhoods $\cNL[\bullet]U$; the proof for the right neighborhoods $\cNR[\bullet]U$ is similar.
    Indeed, we have
    \begin{align*}
      \cNL[n]U_{p} &= \sum_{t\le -n} U_{p-t}U_{t} = 
      \sum_{t\le -n}\Big(\sum_{i+j=p-t}U^{\tt I}_{i}\otimes U^{\tt II}_{j}\Big)\Big(\sum_{k+l=t}U^{\tt I}_{k}\otimes U^{\tt II}_{l}\Big) \\
      &= 
      \sum_{\substack{i+j+k+l=p\\k+l\le-n}}
      U^{\tt I}_{i}U^{\tt I}_{k}\otimes U^{\tt II}_{j}U^{\tt II}_{l}\\
      &= 
      \sum_{\substack{a+b=p\\k+l\le-n}}U^{\tt I}_{a-k}U^{\tt I}_{k}\otimes U^{\tt II}_{b-l}U^{\tt II}_{l}\\
      &= 
      \sum_{\substack{i+j=n\\a+b=p}}\Big(\sum_{k\le -i}U^{\tt I}_{a-k}U^{\tt I}_{k}\Big)\otimes\Big(\sum_{l\le -j}U^{\tt II}_{b-l}U^{\tt II}_{l}\Big)\\
      &= 
      \sum_{\substack{i+j=n\\a+b=p}}\cNL[i]U^{\tt I}_{a}\otimes \cNL[j]U^{\tt II}_{b},
    \end{align*}
    as desired.
  \end{proof}

  \begin{corollary}\label{coro:Omega-of-tensor}
    For any $(U^{\tt I}|{}_R)$-bimodule $W^{\tt I}$ and any $(U^{\tt II}|{}_R)$-bimodule $W^{\tt II}$, we have
    \[
      \Omega^{\mathsf{L}}_{n}(W^{\tt I} \otimes W^{\tt II}) = 
      \sum_{i+j=n} \Omega^{\mathsf{L}}_{i}(W^{\tt I}) \otimes \Omega^{\mathsf{L}}_{j}(W^{\tt II}).
    \]
    Similar for $\Omega^{\mathsf{R}}_{n}$'s.
  \end{corollary}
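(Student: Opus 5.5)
We prove the two inclusions separately, using the description of $\NL[\bullet]U$ from \zcref{prop:seminorm-on-tensor}. Throughout, $W^{\tt I}\otimes W^{\tt II}$ carries the left $U$-action $(u\otimes v)(w\otimes w')=uw\otimes vw'$.

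\emph{The inclusion ``$\supseteq$''.} Take $w\in\Omega^{\mathsf{L}}_{i}(W^{\tt I})$ and $w'\in\Omega^{\mathsf{L}}_{j}(W^{\tt II})$ with $i+j=n$. By \zcref{prop:seminorm-on-tensor}, $\NL[n+1]U$ is spanned by $\NL[k]U^{\tt I}\otimes\NL[l]U^{\tt II}$ with $k+l=n+1$. For every such pair, either $k\ge i+1$ or $l\ge j+1$. Since the neighborhoods are decreasing, either $\NL[k]U^{\tt I}\subset\NL[i+1]U^{\tt I}$ kills $w$, or $\NL[l]U^{\tt II}\subset\NL[j+1]U^{\tt II}$ kills $w'$. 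In either case the generator kills $w\otimes w'$, which gives this inclusion. Here we must allow $i$ or $j$ to be negative, with $\Omega_{<0}$ read as the part killed by $\NL[\le 0]$ (that is, zero in the unital case). We should also check that the indices $i,j$ in \zcref{prop:seminorm-on-tensor} range over all of $\Z$, with $\NL[i]U^{\tt I}_{a}=U^{\tt I}_{a}$ for $i\le -a$ or so. This bookkeeping needs care but is routine.

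\emph{The inclusion ``$\subseteq$''.} This is the main obstacle, since a general element of $\Omega^{\mathsf{L}}_{n}(W^{\tt I}\otimes W^{\tt II})$ is a tensor that is not a pure product. The plan is to reduce it to Hom-style statements via \zcref{rem:OmegaHomLR}, namely $\Omega^{\mathsf{L}}_{n}(-)=\Hom_{U|}(\mathfrak{L}^{n}(U),-)$. First, from \zcref{prop:seminorm-on-tensor} we get a description of the quotient
\[
  \mathfrak{L}^{n}(U)=U/\NL[n+1]U
\]
as a colimit (a quotient of a direct sum) of the $\mathfrak{L}^{i}(U^{\tt I})\otimes\mathfrak{L}^{j}(U^{\tt II})$ with $i+j=n$. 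Concretely, $U/\NL[n+1]U$ is the quotient of $U^{\tt I}\otimes U^{\tt II}$ by $\sum_{k+l=n+1}\NL[k]\otimes\NL[l]$. This is the tensor product of the two filtered objects, truncated in total degree $n$.

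Next, we filter $w\in\Omega^{\mathsf{L}}_{n}(W^{\tt I}\otimes W^{\tt II})$ using the $\Omega$-filtration of the first factor, $F_{i}:=\Omega^{\mathsf{L}}_{i}(W^{\tt I})\otimes W^{\tt II}$. Since $W^{\tt I}$ is exhaustive (or at any rate $w$ lies in some $F_{i}$ after the exhaustive reduction), let $i$ be minimal with $w\in F_{i}$. We then apply $\NL[n-i+1]U^{\tt II}$, acting only on the second factor, and $\NL[i]U^{\tt I}\otimes 1$. The element $(\NL[i]U^{\tt I}\otimes\NL[n-i+1]U^{\tt II})w$ must vanish. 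Passing to $\gr$, $\NL[i]U^{\tt I}$ maps $\gr_{i}\Omega(W^{\tt I})$ nontrivially, in the sense that it detects it. From this we aim to deduce that the $\gr_{i}$-component of $w$ lies in $\gr_{i}\Omega(W^{\tt I})\otimes\Omega^{\mathsf{L}}_{n-i}(W^{\tt II})$, and then induct downward on $i$.

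Making ``detects'' precise requires flatness over $\k$; this is where we expect difficulty. Two remedies are possible. One is to assume $\k$ is a field. The other, which we would try first, is to use the adjoint description: $\Omega^{\mathsf{L}}_{n}(W^{\tt I}\otimes W^{\tt II})=\Hom_{U}(\mathfrak{L}^{n}(U),W^{\tt I}\otimes W^{\tt II})$ together with the colimit presentation of $\mathfrak{L}^{n}(U)$. This reduces the claim to exactness of tensoring the left exact sequences $0\to\Omega_{i}\to W\to\prod\Hom(\NL[i+1],W)$ with the other factor, which holds when everything is $\k$-flat.
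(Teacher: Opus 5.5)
Your ``$\supseteq$'' half is correct over any $\k$, and the index bookkeeping is harmless. By unitality $\NL[i]U^{\tt I}=U^{\tt I}$ and $\NL[i]U^{\tt II}=U^{\tt II}$ for $i\le 0$, so only $0\le k\le n+1$ matter and $\Omega^{\mathsf{L}}_{i}=0$ for $i<0$.

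The gap is ``$\subseteq$'', which you never establish, and your flatness worry is justified: over a general commutative $\k$ this inclusion is false, so no argument can close it without a hypothesis. Here is a counterexample.
Take $\k=\Z$, $U^{\tt I}=\Z[s]$ and $U^{\tt II}=\Z[t]$ with $\deg s=\deg t=-1$, so that $\NL[m]U^{\tt I}=s^{m}\Z[s]$ and $\NL[m]U=(s,t)^{m}$ for $m\ge 0$.
Let $W^{\tt I}=\Z e_{1}\oplus\Z e_{2}$ with $se_{1}=0$ and $se_{2}=2e_{1}$, and let $W^{\tt II}=\Z/2$ with $t=0$.
Then $s$ and $t$ act by zero on $W^{\tt I}\otimes W^{\tt II}\cong(\Z/2)^{2}$, so $\Omega^{\mathsf{L}}_{0}(W^{\tt I}\otimes W^{\tt II})=(\Z/2)^{2}$.
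But $\Omega^{\mathsf{L}}_{0}(W^{\tt I})\otimes\Omega^{\mathsf{L}}_{0}(W^{\tt II})=\Z e_{1}\otimes\Z/2$ has image only $\Z/2\cdot\bar e_{1}$.
The paper gives no argument beyond the formula of \zcref{prop:seminorm-on-tensor}, and it only uses the statement where $\k$ is a field.

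The remedy you would try first also fails as described, for three reasons. First, $\mathfrak{L}^{n}(U)$ surjects onto each $\mathfrak{L}^{i}(U^{\tt I})\otimes\mathfrak{L}^{j}(U^{\tt II})$ with $i+j=n$; it is not a colimit of them. In any case $\Hom_{U}(-,W^{\tt I}\otimes W^{\tt II})$ turns a colimit presentation into an intersection, not into the sum you need. Second, flatness does not let $-\otimes W^{\tt II}$ commute with the infinite product in $0\to\Omega_{i}\to W\to\prod\Hom(\NL[i+1],W)$; that requires Mittag-Leffler-type conditions. Third, even one annihilator computed correctly does not settle the intersection over all $(k,l)$.

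Over a field, the missing idea, which is what your word ``detects'' should mean, is the following annihilator lemma:
\[
  \operatorname{Ann}\bigl(\NL[k]U^{\tt I}\otimes\NL[l]U^{\tt II}\bigr)
  =\Omega^{\mathsf{L}}_{k-1}(W^{\tt I})\otimes W^{\tt II}+W^{\tt I}\otimes\Omega^{\mathsf{L}}_{l-1}(W^{\tt II})
  \qquad(k+l=n+1).
\]
To prove it, note that every $u\otimes v$ factors through $X\otimes Y$, where $X=W^{\tt I}/\Omega^{\mathsf{L}}_{k-1}(W^{\tt I})$ and $Y=W^{\tt II}/\Omega^{\mathsf{L}}_{l-1}(W^{\tt II})$. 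So it suffices that these operators are jointly injective on $X\otimes Y$.
A given element lies in $V\otimes V'$ with $V\subseteq X$ and $V'\subseteq Y$ finite-dimensional.
Finitely many $u_{1},\dots,u_{p}\in\NL[k]U^{\tt I}$ are jointly injective on $V$, since the whole family is jointly injective on $X$ by definition of $\Omega^{\mathsf{L}}_{k-1}$; likewise choose $v_{1},\dots,v_{q}$ for $V'$.
Then $(u_{\alpha})_{\alpha}\otimes(v_{\beta})_{\beta}$ is injective on $V\otimes V'$ because $\k$ is a field. This is exactly the step the counterexample breaks.

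Now $\Omega^{\mathsf{L}}_{n}(W^{\tt I}\otimes W^{\tt II})$ is the intersection of these annihilators over $0\le k\le n+1$. To evaluate it, choose complements $\Omega^{\mathsf{L}}_{a}(W^{\tt I})=\Omega^{\mathsf{L}}_{a-1}(W^{\tt I})\oplus C_{a}$ and $W^{\tt I}=\bigl(\bigcup_{a}\Omega^{\mathsf{L}}_{a}(W^{\tt I})\bigr)\oplus C_{\infty}$, and similarly $D_{b}$ for $W^{\tt II}$.
The $(k,l)$-annihilator is then $\bigoplus_{a<k\text{ or }b<l}C_{a}\otimes D_{b}$.
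A pair $(a,b)$ survives every $k+l=n+1$ exactly when $a+b\le n$; to exclude a pair with $a+b>n$, take $k=\min(a,n+1)$.
What remains is $\bigoplus_{a+b\le n}C_{a}\otimes D_{b}=\sum_{i+j=n}\Omega^{\mathsf{L}}_{i}(W^{\tt I})\otimes\Omega^{\mathsf{L}}_{j}(W^{\tt II})$.

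The $(k,l)=(n+1,0)$ term already forces $w\in\Omega^{\mathsf{L}}_{n}(W^{\tt I})\otimes W^{\tt II}$, so no exhaustiveness assumption is needed. Once the annihilator lemma is in place, your downward induction on the first factor's filtration is a workable substitute for the splitting step.
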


  With \zcref{prop:seminorm-on-tensor}, we are able to determine the mode transition algebra $\mathfrak{A}(U)$ of $U$. 
  \begin{corollary}\label{coro:mode-transition-algebra-tensor}
    The mode transition algebra $\mathfrak{A}(U)$ of $U$ is given by 
    \[
      \mathfrak{A}_{p,-q}(U) = 
      \sum_{\substack{a+b=p\\ c+d=q}}
      \mathfrak{A}_{a,-c}(U^{\tt I})\otimes\mathfrak{A}_{b,-d}(U^{\tt II}).
    \]
  \end{corollary}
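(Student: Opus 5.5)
We need to prove that $\mathfrak{A}_{p,-q}(U)$ is the sum over $a+b=p$, $c+d=q$ of $\mathfrak{A}_{a,-c}(U^{\tt I})\otimes\mathfrak{A}_{b,-d}(U^{\tt II})$. The plan is to compute the three ingredients $\mathfrak{L}^{0}(U)$, $\mathfrak{R}^{0}(U)$ and $\mathsf{A}_{0}(U)$ from \zcref{prop:seminorm-on-tensor} and then use the definition $\mathfrak{A}=\mathfrak{L}^{0}\otimes_{\mathsf{A}_{0}}\mathfrak{R}^{0}$ from \zcref{def:mode-transition-algebra}.

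\emph{Step 1: the canonical modules.} For $n=1$, \zcref{prop:seminorm-on-tensor} gives
\[
  \NL[1]U_{p}=\sum_{\substack{i+j=1\\a+b=p}}\NL[i]U^{\tt I}_{a}\otimes\NL[j]U^{\tt II}_{b}.
\]
A neighborhood with index $\le 0$ should be treated as containing $U_{\ge\bullet}$ in the relevant degrees; with the conventions $\NL[i]U_{a}=U_{a}$ for $i\le0$, weak unitality and \zcref{A2p} should give this. Granting it, $\NL[1]U$ is the sum of the images of $\NL[1]U^{\tt I}\otimes U^{\tt II}$ and $U^{\tt I}\otimes\NL[1]U^{\tt II}$, restricted to the appropriate degrees. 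The vanishing pieces $\mathfrak{L}^{0}(U^{\tt I})_{a}=0$ for $a<0$ should absorb the cross terms with $i<0$. By right exactness of $\otimes$, I then expect
\[
  \mathfrak{L}^{0}(U)_{p}\cong\bigoplus_{a+b=p}\mathfrak{L}^{0}(U^{\tt I})_{a}\otimes\mathfrak{L}^{0}(U^{\tt II})_{b}.
\]
The same argument should give $\mathfrak{R}^{0}(U)_{-q}\cong\bigoplus_{c+d=q}\mathfrak{R}^{0}(U^{\tt I})_{-c}\otimes\mathfrak{R}^{0}(U^{\tt II})_{-d}$. Taking degree $0$ yields $\mathsf{A}_{0}(U)\cong\mathsf{A}_{0}(U^{\tt I})\otimes\mathsf{A}_{0}(U^{\tt II})$ as rings; here I use that $\mathfrak{L}^{0}$ vanishes in negative degrees, so only $a=b=0$ contributes. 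Alternatively, \zcref{coro:Omega-of-tensor} together with \zcref{rem:OmegaHomLR} identifies $\mathfrak{L}^{0}(U)$ with $\mathfrak{L}^{0}(U^{\tt I})\otimes\mathfrak{L}^{0}(U^{\tt II})$ by a representability argument, which may be cleaner.

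\emph{Step 2: the tensor product.} For rings $A=A^{\tt I}\otimes A^{\tt II}$, right modules $M^{\tt I}\otimes M^{\tt II}$ and left modules $N^{\tt I}\otimes N^{\tt II}$, the interchange isomorphism
\[
  (M^{\tt I}\otimes M^{\tt II})\otimes_{A}(N^{\tt I}\otimes N^{\tt II})\cong(M^{\tt I}\otimes_{A^{\tt I}}N^{\tt I})\otimes(M^{\tt II}\otimes_{A^{\tt II}}N^{\tt II})
\]
is standard, since both sides are the same coequalizer. I would apply it with $\mathfrak{L}^{0}(U^{\tt I})_{a}\otimes\mathfrak{L}^{0}(U^{\tt II})_{b}$ and $\mathfrak{R}^{0}(U^{\tt I})_{-c}\otimes\mathfrak{R}^{0}(U^{\tt II})_{-d}$. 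Each such summand is an $\mathsf{A}_{0}(U)$-submodule, so the tensor product distributes over the direct sums. This gives the stated formula, with the sum actually direct. I would then check that the multiplication $\star$ from \zcref{lem:def:ostar} matches the factorwise $\star$, since $\ostar$ on $U$ is the tensor product of the two factor pairings.

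The main obstacle is Step 1, namely the careful handling of negative indices $i$ or $j$ in the formula for $\NL[1]U_{p}$. One must check that a term such as $\NL[2]U^{\tt I}_{a}\otimes\NL[-1]U^{\tt II}_{b}$ lies in the span of the two "easy" pieces modulo what vanishes. I would first try to verify this for the canonical seminorm, where $\cNL[-1]U^{\tt II}_{b}=U^{\tt II}_{b}$ by weak unitality. Density (\zcref{A2}) then passes the conclusion to the discrete quotients, because $\mathfrak{L}^{0}$ is unchanged by closure.
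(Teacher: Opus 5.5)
Your proposal is correct and follows the paper's route: compute $\mathfrak{L}^{0}(U)$ and $\mathfrak{R}^{0}(U)$ factorwise from \zcref{prop:seminorm-on-tensor}, then tensor over $\mathsf{A}_{0}(U)\cong\mathsf{A}_{0}(U^{\tt I})\otimes\mathsf{A}_{0}(U^{\tt II})$. Your explicit interchange isomorphism is the step the paper compresses into ``the statement follows.'' The negative-index terms you flag as the main obstacle are harmless and need no density argument. When $i+j=1$, exactly one index is $\ge 1$, so monotonicity gives $\NL[j]\subseteq\NL[1]$ for that index, and weak unitality gives $\NL[i]U_{a}=U_{a}$ for the index $\le 0$.
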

  \begin{proof}
    Indeed, by \zcref{prop:seminorm-on-tensor}, we have
    \[
      \mathfrak{L}^{0}_{p}(U) = 
      \sum_{a+b=p}\frac{U^{\tt I}_{a}\otimes U^{\tt II}_{b}}{\cNL[1]U^{\tt I}_{a}\otimes U^{\tt II}_{b} + U^{\tt I}_{a}\otimes \cNL[1]U^{\tt II}_{b}} =
      \sum_{a+b=p} \mathfrak{L}^{0}_{a}(U^{\tt I}) \otimes \mathfrak{L}^{0}_{b}(U^{\tt II}).
    \]
    The computation for $\mathfrak{R}^{0}_{-q}(U)$ is similar. Then, the statement follows.
  \end{proof}

  In particular, for $p=q=0$, we have 
  \begin{corollary}\label{coro:A-tensor}
    $\mathsf{A}_{0}(U) = \mathsf{A}_{0}(U^{\tt I})\otimes\mathsf{A}_{0}(U^{\tt II})$.
  \end{corollary}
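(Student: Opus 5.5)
The plan is to specialize \zcref{coro:mode-transition-algebra-tensor} to $p=q=0$. The main point is to identify the right-hand side, and only the summand with $a=b=c=d=0$ should survive.

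First I would use weak unitality (the factors are unital). By the remark after \zcref{def:weakunital}, $\mathfrak{L}^{0}(U^{\tt I})$ and $\mathfrak{L}^{0}(U^{\tt II})$ live only in nonnegative degrees. Likewise $\mathfrak{R}^{0}$ of each factor lives only in nonpositive degrees. Hence $\mathfrak{A}_{a,-c}(U^{\tt I})=\mathfrak{L}^{0}_{a}(U^{\tt I})\otimes_{\mathsf{A}_{0}}\mathfrak{R}^{0}_{-c}(U^{\tt I})$ vanishes unless $a,c\ge 0$, and the same holds for the second factor.

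Next take $p=q=0$. The constraints $a+b=0$ and $c+d=0$ with all indices nonnegative force $a=b=c=d=0$. So the sum in \zcref{coro:mode-transition-algebra-tensor} collapses to $\mathfrak{A}_{0,0}(U^{\tt I})\otimes\mathfrak{A}_{0,0}(U^{\tt II})$.

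Then I would identify each $\mathfrak{A}_{0,0}$ with the corresponding $\mathsf{A}_{0}$. By definition $\mathfrak{A}_{0,0}=\mathfrak{L}^{0}_{0}\otimes_{\mathsf{A}_{0}}\mathfrak{R}^{0}_{0}=\mathsf{A}_{0}\otimes_{\mathsf{A}_{0}}\mathsf{A}_{0}\cong\mathsf{A}_{0}$. Equivalently, the map $\mu$ of \zcref{prop:decomposition_of_fA} at $n=0$ is an isomorphism, since $\mathsf{A}_{-1}=0$ and $\mu$ is then also injective. The product $\star$ on $\mathfrak{A}_{0,0}$ matches the product of $\mathsf{A}_{0}$ through $\ostar$, because $1_{0}\otimes 1_{0}$ is the unit. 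The left side $\mathfrak{A}_{0,0}(U)$ is $\mathsf{A}_{0}(U)$ by the same reasoning applied to $U$, which is unital.

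The step I expect to need the most care is checking that the resulting isomorphism $\mathsf{A}_{0}(U)\cong\mathsf{A}_{0}(U^{\tt I})\otimes\mathsf{A}_{0}(U^{\tt II})$ is a ring isomorphism and not merely a linear one. To settle it, I would argue directly at degree zero using \zcref{prop:seminorm-on-tensor}: $\NLR[1]U_{0}=\NL[1]U^{\tt I}_{0}\otimes U^{\tt II}_{0}+U^{\tt I}_{0}\otimes\NL[1]U^{\tt II}_{0}$ plus terms in bidegree $(a,-a)$ with $a\neq0$. Those extra terms are exactly the ones the nonnegativity argument kills, since $U^{\tt I}_{a}\otimes U^{\tt II}_{-a}$ lies in $\NL[1]$ or $\NR[1]$ of one factor. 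The quotient map $U_{0}\to\mathsf{A}_{0}(U^{\tt I})\otimes\mathsf{A}_{0}(U^{\tt II})$ sends $u\otimes v\mapsto[u]_{0}\otimes[v]_{0}$ on $U^{\tt I}_{0}\otimes U^{\tt II}_{0}$. It is visibly multiplicative and has kernel $\NLR[1]U_{0}$, which gives the ring isomorphism.
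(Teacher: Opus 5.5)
Your proposal is correct and follows the paper, which obtains the statement by setting $p=q=0$ in \zcref{coro:mode-transition-algebra-tensor}. Your direct degree-zero check via \zcref{prop:seminorm-on-tensor} is exactly the $p=0$ case of the formula for $\mathfrak{L}^{0}_{p}(U)$ in that corollary's proof, which the step $\otimes_{\mathsf{A}_{0}(U)}$ there tacitly relies on. That check also gives multiplicativity: $\NLR[1]U_{0}$ is a two-sided ideal of $U_{0}$, and $U^{\tt I}_{0}\otimes U^{\tt II}_{0}$ is a subring surjecting onto $\mathsf{A}_{0}(U)$.

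One small slip: $\mathsf{A}_{-1}=0$ only gives surjectivity of $\mu$ at $n=0$. Injectivity comes from the identification $\mathsf{A}_{0}\otimes_{\mathsf{A}_{0}}\mathsf{A}_{0}\cong\mathsf{A}_{0}$, which you state just before.
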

  Throughout this section, we will omit the subscript $0$ from the notations $\mathsf{A}_{0}$, $\Phi^{\mathsf{L}}_{0}$, $\Omega^{\mathsf{L}}_{0}$, etc., for simplicity.

\subsection{Tensor products of categories and functors}
  Recall that we have adjoint pairs: 
  \[
    \begin{tikzcd}
      \Mod[Ex](U^{\tt I}|{}_R) 
        \ar[d,shift left=1ex,"\Omega^{\mathsf{L}}_{U^{\tt I}}"]&
      \Mod[Ex](U|{}_R) 
        \ar[d,shift left=1ex,"\Omega^{\mathsf{L}}_{U}"]&
      \Mod[Ex](U^{\tt II}|{}_R) 
        \ar[d,shift left=1ex,"\Omega^{\mathsf{L}}_{U^{\tt II}}"]\\
      \Mod(\mathsf{A}(U^{\tt I})|{}_R) 
        \ar[u,shift left=1ex,"\Phi^{\mathsf{L}}_{U^{\tt I}}"]&
      \Mod(\mathsf{A}(U)|{}_R)
        \ar[u,shift left=1ex,"\Phi^{\mathsf{L}}_{U}"]&
      \Mod(\mathsf{A}(U^{\tt II})|{}_R)
        \ar[u,shift left=1ex,"\Phi^{\mathsf{L}}_{U^{\tt II}}"]
    \end{tikzcd}
  \]

  We will delcare their relation now. We need some facts about Deligne-Kelly tensor products of Grothendieck categories, which we put in appendix.
  \begin{theorem}\label{thm:tensor-of-cats}
    The Deligne-Kelly tensor product of categories of exhaustive $U^{\tt I}$-modules and $U^{\tt II}$-modules is equivalent to that of exhaustive $U^{\tt I}\otimes U^{\tt II}$-modules:
    \begin{equation}\label{eq:DK-ExMod}
      \Mod[Ex](U^{\tt I}\otimes U^{\tt II}|{}_R) \simeq \Mod[Ex](U^{\tt I}|{}_R)\boxtimes \Mod[Ex](U^{\tt II}|{}_R).
    \end{equation}
    Furthermore, through such an identification, we have 
    \[
      \Phi^{\mathsf{L}}_{U} = \Phi^{\mathsf{L}}_{U^{\tt I}}\boxtimes \Phi^{\mathsf{L}}_{U^{\tt II}}.
    \]
  \end{theorem}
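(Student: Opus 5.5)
We will prove \zcref{thm:tensor-of-cats} by identifying both sides with categories of modules over suitable pro-systems, or more concretely, by producing generators and computing Hom-spaces. We treat $\Mod[Ex](U|{}_R)$ as a Grothendieck category (\zcref{prop:ExModGrothendieck}). Recall that the Deligne--Kelly tensor product $\mathcal{C}\boxtimes\mathcal{D}$ of locally presentable (Grothendieck) $\k$-linear categories is characterized by the universal property for functors $\mathcal{C}\times\mathcal{D}\to\mathcal{E}$ that are cocontinuous in each variable. When $\mathcal{C}$ and $\mathcal{D}$ have generating families $\{G_i\}$ and $\{H_j\}$, the product is generated by the objects $G_i\boxtimes H_j$, with $\Hom(G_i\boxtimes H_j, G_{i'}\boxtimes H_{j'})=\Hom(G_i,G_{i'})\otimes\Hom(H_j,H_{j'})$ (the appendix facts). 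We take the generators $\{\mathfrak{L}^{n}(U^{\tt I})\otimes R\}_{n}$ and $\{\mathfrak{L}^{m}(U^{\tt II})\otimes R\}_{m}$, which generate by the remark following \zcref{prop:ExModGrothendieck}.

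\textbf{Step 1.} Define the comparison functor
\[
  \boxtimes\colon \Mod[Ex](U^{\tt I}|{}_R)\times\Mod[Ex](U^{\tt II}|{}_R)\to\Mod[Ex](U|{}_R),\qquad (W^{\tt I},W^{\tt II})\mapsto W^{\tt I}\otimes_R W^{\tt II}.
\]
Exhaustiveness of the output follows from \zcref{coro:Omega-of-tensor}, which gives $\Omega_i\otimes\Omega_j\subset\Omega_{i+j}$. The functor is cocontinuous in each variable because colimits in exhaustive modules are computed in underlying modules (proof of \zcref{prop:ExModGrothendieck}) and $\otimes$ is cocontinuous. By the universal property, it induces a cocontinuous functor $F$ from the Deligne--Kelly product.

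\textbf{Step 2.} Show that $F$ sends generators to a generating family and is fully faithful on them. Using \zcref{prop:seminorm-on-tensor}, $\NL[N+1]U$ is the sum of $\NL[i]U^{\tt I}\otimes\NL[j]U^{\tt II}$ over $i+j=N+1$. Hence $\mathfrak{L}^{N}(U)$ is the quotient of $U^{\tt I}\otimes U^{\tt II}$ by $\sum_{i+j=N+1}\NL[i]\otimes\NL[j]$, and is therefore a finite colimit (a pushout/coequalizer diagram) of the objects $\mathfrak{L}^{n}(U^{\tt I})\otimes\mathfrak{L}^{m}(U^{\tt II})$ with $n+m\le N$. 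Conversely, $\mathfrak{L}^{n}(U^{\tt I})\otimes\mathfrak{L}^{m}(U^{\tt II})=U/(\NL[n+1]U^{\tt I}\otimes U^{\tt II}+U^{\tt I}\otimes\NL[m+1]U^{\tt II})$ is a quotient of $\mathfrak{L}^{n+m+1}(U)$. So the image family generates $\Mod[Ex](U|{}_R)$. For full faithfulness, compute via \zcref{rem:OmegaHomLR}:
\[
  \Hom_U\bigl(\mathfrak{L}^{n}\otimes\mathfrak{L}^{m}, W^{\tt I}\otimes W^{\tt II}\bigr)=\Omega_n(W^{\tt I})\otimes\Omega_m(W^{\tt II})
\]
on the generators. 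This uses $\mathfrak{L}^{n}\otimes\mathfrak{L}^{m}=U\otimes_{U}(\cdots)$ being cyclic, generated by $1$, with the annihilator described above. The resulting Hom-spaces match the tensor products of Hom-spaces in the factors. A Gabriel--Popescu / generator argument then upgrades $F$ to an equivalence, provided we also know that $F$ preserves the relevant exactness. That preservation holds because $F$ is cocontinuous, and on the generator subcategory it is fully faithful, so $F$ identifies the categories of additive presheaves modulo the same topologies.

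\textbf{Step 3.} For the identity $\Phi^{\mathsf{L}}_{U}=\Phi^{\mathsf{L}}_{U^{\tt I}}\boxtimes\Phi^{\mathsf{L}}_{U^{\tt II}}$, use \zcref{coro:A-tensor} together with $\Mod(\mathsf{A}(U^{\tt I})\otimes\mathsf{A}(U^{\tt II})|{}_R)\simeq\Mod(\mathsf{A}(U^{\tt I})|{}_R)\boxtimes\Mod(\mathsf{A}(U^{\tt II})|{}_R)$, the standard fact for module categories. Both sides are cocontinuous, so it suffices to compare them on $M^{\tt I}\otimes M^{\tt II}$. There the equality reduces to the isomorphism $\mathfrak{L}^{0}(U)\cong\mathfrak{L}^{0}(U^{\tt I})\otimes\mathfrak{L}^{0}(U^{\tt II})$ from the proof of \zcref{coro:mode-transition-algebra-tensor}, together with associativity of tensor products.

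\textbf{Main obstacle.} We expect the hardest point to be the Hom computation in Step 2, namely full faithfulness: $\Omega_n(W^{\tt I}\otimes W^{\tt II})$ must agree with $\Omega_n\otimes\Omega_m$ in the correct filtered sense. This requires the equality in \zcref{coro:Omega-of-tensor} rather than just an inclusion, and flatness issues over $R$ and $\k$ may force us to restrict to generators or to invoke the appendix facts on tensor products of Grothendieck categories with rigid generators.
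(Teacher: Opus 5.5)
Your Steps 1 and 3 are fine, and Step 3 is exactly the paper's argument (it reduces to $\mathfrak{L}^{0}(U)\cong\mathfrak{L}^{0}(U^{\tt I})\otimes\mathfrak{L}^{0}(U^{\tt II})$). The final move of Step 2, however, is a genuine gap.

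Write $\mathcal{C}=\Mod[Ex](U^{\tt I}|{}_R)\boxtimes\Mod[Ex](U^{\tt II}|{}_R)$, and let $\mathcal{G}$ be the full subcategory on the objects $\mathfrak{L}^{n}\boxtimes\mathfrak{L}^{m}$. Grant your Hom computation, so that $\mathcal{G}$ is also the full subcategory of $\Mod[Ex](U|{}_R)$ on the $\mathfrak{L}^{n}\otimes\mathfrak{L}^{m}$. Then Gabriel--Popescu presents $\mathcal{C}$ and $\Mod[Ex](U|{}_R)$ as two exact localizations $\Mod(\mathcal{G})/\mathcal{T}_{1}$ and $\Mod(\mathcal{G})/\mathcal{T}_{2}$. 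Cocontinuity of $F$ gives $F\circ a_{1}\cong a_{2}$ for the localization functors, and hence only $\mathcal{T}_{1}\subset\mathcal{T}_{2}$.

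The functor $F$ is an equivalence exactly when $\mathcal{T}_{2}\subset\mathcal{T}_{1}$. That means: every family of maps into some $\mathfrak{L}^{n}\boxtimes\mathfrak{L}^{m}$ that becomes jointly surjective after applying $F$ was already jointly epimorphic in $\mathcal{C}$. Equivalently, $F$ reflects zero objects. This is a \emph{reflection} property, not the preservation you invoke, and it does not follow from ``cocontinuous, fully faithful on a generating family, image generates''. Take the full subcategory $\mathcal{H}\subset\Mod(\Z)$ on $\Z$ and $\Z/2$. The left Kan extension $\Mod(\mathcal{H})\to\Mod(\Z)$ of the inclusion has all three properties. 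It is not an equivalence, because it kills the nonzero cokernel of the map of representables induced by $\Z\to\Z/2$.

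In your setting the topologies are genuinely nontrivial, since the $\mathfrak{L}^{n}$ are in general not projective among exhaustive modules (compare \zcref{thm:Proj-SIE}). So this reflection statement is the real content of the theorem. It needs concrete control of epimorphisms in $\mathcal{C}$, which you do not supply. One option is a site-theoretic description of $\mathcal{C}$ for the tensor product of the two Gabriel topologies on $\mathcal{G}$, plus a proof that every jointly surjective family onto $\mathfrak{L}^{n}\otimes\mathfrak{L}^{m}$ is covering for it.

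The paper avoids the abstract product altogether. It starts from $\Mod(U^{\tt I}\otimes U^{\tt II}|{}_R)\simeq\Mod(U^{\tt I}|{}_R)\boxtimes\Mod(U^{\tt II}|{}_R)$ (\zcref{eg:tensor-of-Mods}), where epimorphisms are just surjections. It then identifies the exhaustive part via the elementwise criterion, from \zcref{prop:seminorm-on-tensor}, that $M\otimes_{R}N$ is exhaustive iff $M$ and $N$ are. That reduction implicitly uses that $\boxtimes$ of the full inclusions $\Mod[Ex]\hookrightarrow\Mod$ stays fully faithful. This is equivalent to your missing statement, but posed inside a concrete ambient category.

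Two smaller corrections:
\begin{itemize}
\item Your description of $\mathfrak{L}^{N}(U)$ as a pushout/coequalizer of the $\mathfrak{L}^{n}\otimes\mathfrak{L}^{m}$ with $n+m\le N$ is incorrect. Over a field, $\NL[N+1]U$ is the \emph{intersection} of the ideals $\NL[n+1]U^{\tt I}\otimes U^{\tt II}+U^{\tt I}\otimes\NL[m+1]U^{\tt II}$ over $n+m=N$, so $\mathfrak{L}^{N}(U)$ is naturally a finite limit. It need not even be a quotient of a sum of those objects: for $U^{\tt I}=\k[t]$, $U^{\tt II}=\k[s]$ with $\deg t=\deg s=-1$, the class of $1$ in $\mathfrak{L}^{1}(U)=\k[s,t]/(s,t)^{2}$ is not hit. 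Generation still holds for a simpler reason. Since $\NL[N+1]U^{\tt I}\otimes U^{\tt II}+U^{\tt I}\otimes\NL[N+1]U^{\tt II}\subset\NL[N+1]U$, every $w\in\Omega^{\mathsf{L}}_{N}(W)$ is the image of $1\otimes1$ under a morphism $\mathfrak{L}^{N}\otimes\mathfrak{L}^{N}\to W$.
\item The Hom computation you single out as the main obstacle is routine over a field. The joint annihilator of $\NL[n+1]U^{\tt I}\otimes1$ and $1\otimes\NL[m+1]U^{\tt II}$ in $W^{\tt I}\otimes W^{\tt II}$ is $\Omega^{\mathsf{L}}_{n}(W^{\tt I})\otimes\Omega^{\mathsf{L}}_{m}(W^{\tt II})$, by choosing bases. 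This does not need \zcref{coro:Omega-of-tensor}. The real difficulty is the reflection statement above.
\end{itemize}
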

  \begin{proof}
    By, \zcref{eg:tensor-of-Mods}, we have $\Mod(U^{\tt I}\otimes U^{\tt II}|{}_R) \simeq \Mod(U^{\tt I}|{}_R)\boxtimes \Mod(U^{\tt II}|{}_R)$.
    It suffices to show that the image of $\Mod[Ex](U^{\tt I}|{}_R)\otimes \Mod[Ex](U^{\tt II}|{}_R)$ in $\Mod(U^{\tt I}\otimes U^{\tt II}|{}_R)$ via this equivalence is precisely the subcategory $\Mod[Ex](U^{\tt I}\otimes U^{\tt II}|{}_R)$. 
    Since the tensors $M\otimes_{R}N$, where $M \in \Mod(U^{\tt I}|{}_R)$ and $N \in \Mod(U^{\tt II}|{}_R)$, generate $\Mod(U^{\tt I}\otimes U^{\tt II}|{}_R)$, it suffices to show that $M\otimes_{R}N$ is exhaustive if and only if both $M$ and $N$ are exhaustive.
    This follows from \zcref{prop:seminorm-on-tensor}.

    For the second assertion, it boils down to showing that, $\mathfrak{L}^{0}(U) = \mathfrak{L}^{0}(U^{\tt I}) \otimes \mathfrak{L}^{0}(U^{\tt II})$, which has been shown in the proof of \zcref{coro:mode-transition-algebra-tensor}.
  \end{proof}

  Combining this with \zcref{coro:Omega-of-tensor} declare the relations between the adjoint pairs in previous diagram. Then, we have

  \begin{corollary}\label{coro:SIC-tensor}
    Suppose $\k$ is a field and suppose $U^{\tt I}$ and $U^{\tt II}$ satisfy the quasi-rigid assumption in \zcref{thm:Adjeq-SIC} or the Frobenius assumption in \zcref{thm:AdjeqR-SIC}.
    Then, the strong identity condition (\SIC) holds for $U$ if and only if it holds for both $U^{\tt I}$ and $U^{\tt II}$.
  \end{corollary}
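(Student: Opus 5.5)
Both directions go through the Morita-type characterizations of Part~II, transported along the identifications of \zcref{thm:tensor-of-cats,coro:Omega-of-tensor}.

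\emph{The direction ``SIC for both factors $\Rightarrow$ SIC for $U$''.} Here I would work directly with strong identity expansions, which needs no extra hypotheses. By \zcref{thm:SIC-SIE}, pick expansions $(e^{\tt I}_{n})$ and $(e^{\tt II}_{n})$, normalized as in \zcref[noname]{si4}. Set
\[
  e_{n} := \sum_{i+j=n} e^{\tt I}_{i}\otimes e^{\tt II}_{j}
\]
in the completion of $U_{0}$. By \zcref{prop:seminorm-on-tensor}, $\NLR[n]U_{0}$ contains $\NLR[i]U^{\tt I}_{0}\otimes\NLR[j]U^{\tt II}_{0}$ for $i+j=n$, so $e_{n}\in\NLR[n]\widehat{U_{0}}$. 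Conditions \zcref[noname]{si1,si2} follow from the corresponding ones for the factors, since products of orthogonal idempotent expansions remain orthogonal and sum to $1\otimes1$.

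For \zcref[noname]{si3}, I would decompose $U_{-n}=\sum_{a+b=-n}U^{\tt I}_{a}\otimes U^{\tt II}_{b}$. The issue is that $a,b$ need not both be $\le0$. I expect this to be the delicate point. I would handle it by using \zcref[noname]{si3} for the factors on the nonpositive component and the convergence of the other factor on the positive component. Alternatively, I would prove \zcref[noname]{Omegasplit} instead and invoke \zcref{thm:SIE-Omega}. By \zcref{coro:Omega-of-tensor}, $\Omega_{n}$ of a tensor product is the sum of $\Omega_{i}\otimes\Omega_{j}$, so splittings of the factors tensor to a splitting. Since $\k$ is a field, $\otimes$ is exact, and the projections $\mathfrak{L}^{n}(U)\to\mathfrak{L}^{n-1}(U)$ are sums of tensor products of the factors' projections.

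\emph{The direction ``SIC for $U$ $\Rightarrow$ SIC for both factors''.} By \zcref{thm:PhiOmega0}, SIC for $U$ gives the equivalence $\Phi^{\mathsf{L}}_{U}\dashv\Omega^{\mathsf{L}}_{U}$, and likewise on the right. Through \zcref{thm:tensor-of-cats}, $\Phi^{\mathsf{L}}_{U}=\Phi^{\mathsf{L}}_{U^{\tt I}}\boxtimes\Phi^{\mathsf{L}}_{U^{\tt II}}$, and by \zcref{coro:Omega-of-tensor}, $\Omega_{0}$ factors as $\Omega_{0}\otimes\Omega_{0}$. Since $\k$ is a field, I would restrict to objects of the form $W^{\tt I}\otimes \mathfrak{L}^{0}(U^{\tt II})$. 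Because $\mathsf{A}(U^{\tt II})\neq0$ (unless it is trivial, a degenerate case), the functor $-\otimes\mathfrak{L}^{0}(U^{\tt II})$ is faithful and reflects isomorphisms. Then the unit and counit isomorphisms for $U$ force those for $U^{\tt I}$, giving \zcref[noname]{PhiOmegaL,PhiOmegaR} for $U^{\tt I}$. The quasi-rigid (resp.\ Frobenius) hypothesis then lets me apply \zcref{thm:Adjeq-SIC} (resp.\ \zcref{thm:AdjeqR-SIC}) to conclude SIC for $U^{\tt I}$, and symmetrically for $U^{\tt II}$.

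The main obstacle is this descent step. One must check that the counit of the tensor adjunction restricted to $W^{\tt I}\otimes\mathfrak{L}^{0}(U^{\tt II})$ equals $\varepsilon_{W^{\tt I}}\otimes\varepsilon_{\mathfrak{L}^{0}(U^{\tt II})}$, and that $\varepsilon_{\mathfrak{L}^{0}(U^{\tt II})}$ is already an isomorphism, which holds since $\mathfrak{L}^{0}=\Phi^{\mathsf{L}}_{0}(\mathsf{A}_{0})$. Over a field, $f\otimes\id_{V}$ with $V\neq0$ is an isomorphism if and only if $f$ is, which gives the cancellation.
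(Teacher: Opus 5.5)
\textbf{The gap.} The step you flag as the main obstacle fails as justified. $\varepsilon_{\mathfrak{L}^{0}(U^{\tt II})}$ is not an isomorphism merely because $\mathfrak{L}^{0}=\Phi^{\mathsf{L}}_{0}(\mathsf{A}_{0})$. By the triangle identity $\varepsilon_{\Phi^{\mathsf{L}}_{0}(\mathsf{A}_{0})}\circ\Phi^{\mathsf{L}}_{0}(\eta_{\mathsf{A}_{0}})=\id$, so it is an isomorphism exactly when $\eta_{\mathsf{A}_{0}}\colon\mathsf{A}_{0}\to\Omega^{\mathsf{L}}_{0}(\mathfrak{L}^{0})=\mathscr{A}^{\mathsf{L}}_{0}$ is onto. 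That is part of what you are trying to prove for $U^{\tt II}$. It fails in general: for the Weyl algebra over $\mathbb{F}_{p}$ one has $\mathfrak{L}^{0}=\k[x]$ and $\mathsf{A}_{0}=\k$, but $\Omega^{\mathsf{L}}_{0}(\mathfrak{L}^{0})=\ker\partial=\k[x^{p}]$. The counit $\k[x]\otimes\k[x^{p}]\to\k[x]$ is then not injective.

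\textbf{The repair.} It stays within your approach: descend the unit first. The unit of $U$ at $M^{\tt I}\otimes\mathsf{A}_{0}(U^{\tt II})$ is $\eta_{M^{\tt I}}\otimes\eta_{\mathsf{A}_{0}(U^{\tt II})}$. Both factors are injective, since the unit identifies $M$ with the degree-$0$ part of $\Phi^{\mathsf{L}}_{0}(M)$. Over a field, if $f,g$ are injective with nonzero sources and $f\otimes g$ is bijective, then $f$ and $g$ are bijective. This gives $\eta_{M^{\tt I}}$ bijective for all $M^{\tt I}$ and $\eta_{\mathsf{A}_{0}(U^{\tt II})}$ bijective. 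Hence $\varepsilon_{\mathfrak{L}^{0}(U^{\tt II})}$ is invertible, and your counit cancellation then goes through. To apply \zcref{thm:Adjeq-SIC} you also need the equivalence with coefficients $R=\mathsf{A}_{0}(U^{\tt I})$. Run the same descent on exhaustive $({}_R|U)$-bimodules, with $R$ acting on the first factor, using \zcref{thm:PhiOmega0} for $U$ with that $R$.

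\textbf{Comparison with the paper.} The paper runs both directions through Morita-type equivalences. It transfers the quasi-rigid/Frobenius hypotheses to $U$ via \zcref{prop:seminorm-on-tensor}, so that \SIC{} for $U$ is equivalent to $\Phi^{\mathsf{L}}_{U}$ being an equivalence. It then invokes the abstract \zcref{thm:tensor-of-equivalence} for $\Phi^{\mathsf{L}}_{U}=\Phi^{\mathsf{L}}_{U^{\tt I}}\boxtimes\Phi^{\mathsf{L}}_{U^{\tt II}}$.

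Your ``$\Leftarrow$'' via product expansions needs no hypotheses; it is the SIE form of the product formula the paper records right afterwards. Your worry about (\texttt{si}3) disappears by degree bookkeeping. Take $x\otimes y\in U^{\tt I}_{a}\otimes U^{\tt II}_{b}$ with $a+b=-n$ and $i+j<n$. If $a,b\le 0$, then $i<-a$ or $j<-b$, so $xe^{\tt I}_{i}=0$ or $ye^{\tt II}_{j}=0$. If $a>0$ (or symmetrically $b>0$), then $b<-n$, so $ye^{\tt II}_{j}=0$ for all $j<n$ by (\texttt{si}3) for $U^{\tt II}$ alone. No convergence argument is needed.

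Your ``$\Rightarrow$'' replaces the Deligne--Kelly machinery by a concrete cancellation on pure tensors, which is more elementary.

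\textbf{The degenerate case.} Your parenthetical about $\mathsf{A}_{0}(U^{\tt II})=0$ must be an explicit hypothesis, because the statement is false there. Take $U^{\tt II}=\k[y^{\pm1}]$: every neighborhood of $U$ equals $U$, so $\mathfrak{A}(U)=0$ and \SIC{} holds vacuously for $U$. Meanwhile $U^{\tt I}$ may be the Weyl algebra over $\mathbb{F}_{p}$, which is quasi-rigid but fails \SIC{} (\zcref{rem:SICnonegWeyl}). The paper's \zcref{thm:tensor-of-equivalence} likewise requires nonzero categories.
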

  \begin{proof}
    First, by \zcref{prop:seminorm-on-tensor}, $U$ satisfy the assumption in \zcref{thm:Adjeq-SIC} or \zcref{thm:AdjeqR-SIC}.
    Then, by \zcref{thm:tensor-of-cats}, applying \zcref{thm:tensor-of-equivalence} to the $\Phi^{\mathsf{L}}$-functors implies that $\Phi^{\mathsf{L}}_{U}$ is an equivalence if and only if both $\Phi^{\mathsf{L}}_{U^{\tt I}}$ and $\Phi^{\mathsf{L}}_{U^{\tt II}}$ are equivalences.
    Then, the assertion follows by \zcref{thm:PhiOmega0,thm:Adjeq-SIC,thm:AdjeqR-SIC}.
  \end{proof}

\subsection{Strong identity elements on tensor product}
  In the following, we give a more explicit construction of the strong identity elements.
  \begin{lemma}
    If $\one^{\tt I}_{n}$ and $\one^{\tt II}_{n}$ ($n\in\N$) are strong identity elements of $\mathfrak{A}(U^{\tt I})$ and $\mathfrak{A}(U^{\tt II})$ respectively, then the elements 
    \[
      \one_{n} = \sum_{p+q=n}\one^{\tt I}_{p}\otimes\one^{\tt II}_{q}\qquad
      (n\in\N)
    \]
    verify the strong identity condition for $U$.
  \end{lemma}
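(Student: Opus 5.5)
We will verify the action form \eqref{eq:SIC:action} of the strong identity condition, using the tensor decomposition of \zcref{coro:mode-transition-algebra-tensor}. First, $\one_{n}$ lies in $\mathfrak{A}_{n,-n}(U)$: each summand $\one^{\tt I}_{p}\otimes\one^{\tt II}_{q}$ with $p+q=n$ lies in $\mathfrak{A}_{p,-p}(U^{\tt I})\otimes\mathfrak{A}_{q,-q}(U^{\tt II})$, which is one of the summands of $\mathfrak{A}_{n,-n}(U)$ (take $a=c=p$, $b=d=q$).

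Next we check that the $\star$-product on $\mathfrak{A}(U)$ factors. The isomorphism $\ostar$ of \zcref{lem:def:ostar} for $U$ is the tensor product of those for $U^{\tt I}$ and $U^{\tt II}$. Indeed, $\mathfrak{L}^{0}(U)=\mathfrak{L}^{0}(U^{\tt I})\otimes\mathfrak{L}^{0}(U^{\tt II})$, as shown in the proof of \zcref{coro:mode-transition-algebra-tensor}, and likewise for $\mathfrak{R}^{0}$ and for $\mathsf{A}_{0}$ (\zcref{coro:A-tensor}). The pairing $[\alpha]^{\mathsf{R}}_{0}\otimes[\beta]^{\mathsf{L}}_{0}\mapsto[\alpha\beta]_{0}$ then splits, since the multiplication on $U^{\tt I}\otimes U^{\tt II}$ is componentwise.

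There is one subtlety here. For $U$ the pairing is nonzero exactly when the total degree vanishes. For the factors it is nonzero only when each factor degree vanishes separately. These two conditions agree because, by weak unitality, $\mathfrak{R}^{0}$ lives in nonpositive degrees and $\mathfrak{L}^{0}$ in nonnegative degrees. So in each factor $\deg\alpha^{\tt I}+\deg\beta^{\tt I}$ and $\deg\alpha^{\tt II}+\deg\beta^{\tt II}$ are both $\le0$ or both $\ge0$ depending on the side. I expect this degree bookkeeping to be the main obstacle. It must be checked carefully that a summand with total degree zero but nonzero factor degrees is killed. This holds because $[\alpha\beta]_{0}$ in $\mathsf{A}_{0}(U^{\tt I})$ is only defined for degree-$0$ elements, and an element of $\mathfrak{R}^{0}_{-c}\otimes_{U^{\tt I}}\mathfrak{L}^{0}_{a}$ with $a\ne c$ maps to zero under $\ostar$ for $U^{\tt I}$.

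Granting the factorization, take $\alpha=\alpha^{\tt I}\otimes\alpha^{\tt II}\in U^{\tt I}_{a}\otimes U^{\tt II}_{b}$ with $a+b=n$. We may assume $a,b\ge0$, since otherwise $[\alpha]^{\mathsf{L}}_{0}=0$. Then
\[
  \one_{n}\star[\alpha]^{\mathsf{L}}_{0}=\sum_{p+q=n}(\one^{\tt I}_{p}\star[\alpha^{\tt I}]^{\mathsf{L}}_{0})\otimes(\one^{\tt II}_{q}\star[\alpha^{\tt II}]^{\mathsf{L}}_{0}).
\]
By \zcref{rem:multiplication-of-fA}, only the term $p=a$, $q=b$ survives. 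By the strong identity condition for $U^{\tt I}$ and $U^{\tt II}$, that term equals $[\alpha^{\tt I}]^{\mathsf{L}}_{0}\otimes[\alpha^{\tt II}]^{\mathsf{L}}_{0}=[\alpha]^{\mathsf{L}}_{0}$.

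Since $U_{n}=\bigoplus_{a+b=n}U^{\tt I}_{a}\otimes U^{\tt II}_{b}$ is spanned by such pure tensors, linearity gives $\one_{n}\star[\alpha]^{\mathsf{L}}_{0}=[\alpha]^{\mathsf{L}}_{0}$ for all $\alpha\in U_{n}$. The symmetric computation on $\mathfrak{R}^{0}$ gives $[\beta]^{\mathsf{R}}_{0}\star\one_{m}=[\beta]^{\mathsf{R}}_{0}$ for all $\beta\in U_{-m}$, which completes the verification of \eqref{eq:SIC:action}.
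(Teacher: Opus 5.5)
Your proposal is correct and is essentially the paper's argument: the paper records this lemma as a direct verification from \zcref{coro:mode-transition-algebra-tensor}, and you have carried that verification out. In the step you flagged, the cleaner reason that terms of total degree zero with nonzero factor degrees vanish is that $U^{\tt I}_{e}\otimes U^{\tt II}_{-e}\subset\NL[1]U_{0}$ for $e\neq 0$: one factor has negative degree, so $\mathsf{A}_{0}(U)$ only sees the $(0,0)$ component. Your sentence claiming the two factor degrees are both $\le 0$ or both $\ge 0$ is not correct as written, but the argument does not need it.
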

  \begin{proof}
    This is a direct verification using \zcref{coro:mode-transition-algebra-tensor}.
  \end{proof}

  The another direction is more complicated.

\begin{theorem}\label{thm:SIC-tensor}
  Suppose $\k$ is semisimple and suppose the graded $U$-modules $\mathfrak{L}^{0}$ and $\mathfrak{R}^{0}$ are quasi-rigid. 
  Then, the strong identity condition (\SIC) holds for $U$ if and only if it holds for both $U^{\tt I}$ and $U^{\tt II}$.
\end{theorem}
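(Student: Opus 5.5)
One direction is the preceding lemma: strong identity elements of $U^{\tt I}$ and $U^{\tt II}$ combine to $\one_{n}=\sum_{p+q=n}\one^{\tt I}_{p}\otimes\one^{\tt II}_{q}$. So only the converse needs work. Assume \SIC{} for $U$. I would extract strong identity elements for each factor directly from the block decomposition of \zcref{coro:mode-transition-algebra-tensor}, using semisimplicity of $\k$ to split off tensor factors.

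\emph{Step 1: reduce to the factors' canonical modules.} By the proof of \zcref{coro:mode-transition-algebra-tensor}, $\mathfrak{L}^{0}_{p}(U)=\bigoplus_{a+b=p}\mathfrak{L}^{0}_{a}(U^{\tt I})\otimes\mathfrak{L}^{0}_{b}(U^{\tt II})$, and similarly for $\mathfrak{R}^{0}$. Since $\k$ is semisimple, a direct summand of a quasi-rigid component is again finitely generated projective. Moreover, a nonzero tensor factor $X$ with $X\otimes Y$ finite and $Y\neq0$ is finite over $\k$: use a $\k$-linear functional on $Y$ that does not vanish on the relevant simple component, after a decomposition by simple factors of $\k$. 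With these observations, $\mathfrak{L}^{0}(U^{\tt I})$ and $\mathfrak{R}^{0}(U^{\tt I})$ are quasi-rigid, as are those of $U^{\tt II}$. One must handle $\mathfrak{L}^{0}_{0}(U^{\tt II})=\mathsf{A}(U^{\tt II})$ being nonzero: it contains $1$ unless the ring is zero. Here I would also use that $\mathsf{A}(U)=\mathsf{A}(U^{\tt I})\otimes\mathsf{A}(U^{\tt II})$ is finite, again by quasi-rigidity of $\mathfrak{L}^{0}_{0}(U)$.

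\emph{Step 2: use the equivalent characterisations of \SIC.} I would pass through \zcref{thm:SIC-SIE,thm:PhiOmega0}: \SIC{} for $U$ gives $(\Phi\Omega\mathsf{L}_{\k})$ and $(\Phi\Omega\mathsf{R}_{\k})$ for $U$. By \zcref{thm:tensor-of-cats}, $\Phi^{\mathsf{L}}_{U}=\Phi^{\mathsf{L}}_{U^{\tt I}}\boxtimes\Phi^{\mathsf{L}}_{U^{\tt II}}$ on Deligne–Kelly products. Then \zcref{thm:tensor-of-equivalence} from the appendix (valid over a semisimple, or at least field-like, base, which is where the hypothesis on $\k$ enters) gives that each factor functor is an equivalence. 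I would then obtain \SIC{} for each factor from \zcref{thm:AdjeqR-SIC} with $R=\k$. The Frobenius condition holds if $\mathsf{A}(U^{\tt I})$ is semisimple, but that is not given. So I would instead aim at \zcref{thm:Adjeq-SIC} with $R=\mathsf{A}_{0}$, taking the $\Mod(-|{}_{\mathsf{A}(U)})$ version of \zcref{thm:tensor-of-cats}. This needs a base change: $\Mod(U^{\tt I}|{}_{\mathsf{A}(U^{\tt I})})$ is realised inside $\Mod(U|{}_{\mathsf{A}(U)})$ by tensoring with $\Phi^{\mathsf{L}}_{U^{\tt II}}(\mathsf{A}(U^{\tt II}))=\mathfrak{L}^{0}(U^{\tt II})$.

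\emph{Step 3 (fallback, more concrete).} Write $\one_{n}=\sum_{p+q=n}x_{p,q}$ with $x_{p,q}$ the component of the strong identity in $\mathfrak{A}_{p}(U^{\tt I})\otimes\mathfrak{A}_{q}(U^{\tt II})$; these components are orthogonal by \zcref{rem:multiplication-of-fA}. Fix $q=0$ and look at $x_{p,0}\in\mathfrak{A}_{p}(U^{\tt I})\otimes\mathsf{A}(U^{\tt II})$. Acting on $[\alpha]^{\mathsf{L}}\otimes1$, the identity $\one_{p}\star([\alpha]\otimes 1)=[\alpha]\otimes1$ forces $x_{p,0}\star([\alpha]\otimes 1)=[\alpha]\otimes 1$. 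Then apply a $\k$-linear map $\lambda\colon\mathsf{A}(U^{\tt II})\to\k$ with $\lambda(1)=1$; it exists because $\k$ is semisimple and $\k\cdot1$ is a direct summand. This gives an element $\one^{\tt I}_{p}:=(\id\otimes\lambda)(x_{p,0})$. The main obstacle is that $\lambda$ is not multiplicative, so I need to verify that $\one^{\tt I}_{p}\star[\alpha]=[\alpha]$ survives. I expect to fix this by writing $x_{p,0}\star([\alpha]\otimes1)=\sum y_i\otimes a_i$ with $a_i\in\mathsf{A}(U^{\tt II})$, where the action is through the first factor and multiplication by $a_i\cdot1=a_i$ in the second. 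Applying $\id\otimes\lambda$ then yields $\one^{\tt I}_{p}\star[\alpha]=[\alpha]$ directly, since the right $\mathsf{A}(U^{\tt II})$ slot is just carried along. The right identity is handled symmetrically, and $U^{\tt II}$ likewise.
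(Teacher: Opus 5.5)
Your Step~3 is, by itself, a complete proof of the converse, and it takes a genuinely different and shorter route than the paper. Steps~1--2 can be discarded. Step~2 would not close as written: the decomposition of exhaustive-module categories is over a common base $R$, so taking $R=\mathsf{A}(U)$ does not split as you need. Moreover, for the factors you have neither the Frobenius condition nor $\mathsf{A}_{0}$-rigidity. This categorical route is the one the paper reserves for its corollary under stronger hypotheses.

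One repair is needed in Step~3. Writing $\one_{n}=\sum_{p+q=n}x_{p,q}$ presumes that the components of $\one_{n}$ in the off-diagonal blocks $\mathfrak{A}_{a,-c}(U^{\tt I})\otimes\mathfrak{A}_{b,-d}(U^{\tt II})$ with $a\neq c$ vanish, which you have not shown. You do not need it:
\begin{itemize}
\item Let $x_{p}$ be the component of $\one_{p}$ in $\mathfrak{A}_{p,-p}(U^{\tt I})\otimes\mathfrak{A}_{0,0}(U^{\tt II})$.
\item $\mathfrak{A}(U)\cong\mathfrak{A}(U^{\tt I})\otimes\mathfrak{A}(U^{\tt II})$ is a direct sum of four-fold graded blocks that multiply blockwise. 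So for $\mathfrak{a}\in\mathfrak{A}_{p,-m}(U^{\tt I})$, compare the $(p,-m;0,0)$-components of $\one_{p}\star(\mathfrak{a}\otimes 1)=\mathfrak{a}\otimes 1$ and of $(\mathfrak{a}\otimes 1)\star\one_{m}=\mathfrak{a}\otimes 1$.
\item This gives $x_{p}\star(\mathfrak{a}\otimes 1)=\mathfrak{a}\otimes 1=(\mathfrak{a}\otimes 1)\star x_{m}$.
\item Applying $\id\otimes\lambda$ shows that the single family $\one^{\tt I}_{p}:=(\id\otimes\lambda)(x_{p})$ is both a left and a right strong identity.
\end{itemize}

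The paper proceeds differently. It first proves block-diagonality of $\one_{n}$ in the matrix picture $\mathfrak{M}^{n}_{p,q}=\mathfrak{A}_{p,-q}(U^{\tt I})\otimes\mathfrak{A}_{n-p,q-n}(U^{\tt II})$, and deduces that each $\mathfrak{M}^{n}_{p,p}$ is unital. It then uses an auxiliary lemma: if $A\otimes B$ is unital and $A,B$ are $\k$-rigid, then $A$ and $B$ are unital. This produces $\one^{\tt I}_{p}$ as the unit of $\mathfrak{A}_{p,-p}(U^{\tt I})$. Finally it cancels $-\otimes 1_{\mathsf{A}(U^{\tt II})}$ by injectivity.

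That lemma is the only place quasi-rigidity of $\mathfrak{L}^{0},\mathfrak{R}^{0}$ is used. Your contraction therefore shows this hypothesis is superfluous for the converse. The paper's final cancellation step amounts to the existence of your retraction $\lambda$. The extra structure the paper obtains, namely block-diagonality and $\one_{n}=\sum_{p+q=n}\one^{\tt I}_{p}\otimes\one^{\tt II}_{q}$, follows a posteriori from your result. Combine it with the preceding lemma and uniqueness of strong identities ($\one_{n}=\one_{n}\star\one'_{n}=\one'_{n}$).

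A caveat shared by both arguments: they tacitly need $c\mapsto c\,1_{\mathsf{A}(U^{\tt II})}$ to be injective. Only then does semisimplicity of $\k$ give your $\lambda$ with $\lambda(1)=1$, and only then does the paper's injectivity hold. Over a field this just means $\mathsf{A}(U^{\tt II})\neq 0$. Without it the statement itself can fail: if $\mathsf{A}(U^{\tt II})=0$, then $\mathfrak{A}(U)=0$ and \SIC{} holds for $U$ vacuously, whatever $U^{\tt I}$ does.
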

\begin{proof}
  We view each $\mathfrak{A}_{n,-n}$ as a matrix algebra $\mathfrak{M}^{n}_{\bullet,\circ}$, where
  \[
    \mathfrak{M}^{n}_{p,q} := \mathfrak{A}_{p,-q}(U^{\tt I})\otimes\mathfrak{A}_{n-p,q-n}(U^{\tt II}).
  \]
  For the strong identity element $\one_{n}$ of $\mathfrak{A}_{n,-n}(U)$, we can write it as
  \[
    \one_{n} = \sum_{p,q}\one^{n}_{p,q},\qquad\text{where }
    \one^{n}_{p,q} \in \mathfrak{M}^{n}_{p,q}.
  \]
  Note that, by \zcref{rem:multiplication-of-fA}, we have  
  \[
    \mathfrak{M}^{n}_{a,b} \star \mathfrak{M}^{n}_{c,d} = 
    \left(\mathfrak{A}_{a,-b} \star \mathfrak{A}_{c,-d}\right) \otimes \left(\mathfrak{A}_{n-a,b-n} \star \mathfrak{A}_{n-c,d-n}\right) \subset
    \begin{dcases*}
      \mathfrak{M}^{n}_{a,d}, & if $b=c$;\\
      0, & otherwise.
    \end{dcases*}
  \]
  Therefore, for any $\mathfrak{x} \in \mathfrak{M}^{n}_{p,q}$, we have
  \[
    \mathfrak{x} = \mathfrak{x} \star \one_{n} = \sum_{k}\mathfrak{x}\star \one^{n}_{q,k} = \mathfrak{x}\star \one^{n}_{q,q},
  \]
  where the last two equality follows from the above multiplication rule and by comparing the indexes of the components. 
  Likewise, we have $\mathfrak{x} = \one^{n}_{p,p} \star \mathfrak{x}$. 
  In particular, we see that $\one^{n}_{p,q} = 0$ if $p \neq q$ and that each $\mathfrak{M}^{n}_{p,p}$ is a unital subalgebra of $\mathfrak{A}_{n,-n}(U)$ with unity $\one^{n}_{p,p}$. 

  To move on, we need a lemma: 
  \begin{lemma}
    Suppose $A$ and $B$ are two $\k$-algebras and are rigid as $\k$-modules. 
    If $A\otimes B$ is a nonzero unital algebra, then both $A$ and $B$ are unital algebras.
  \end{lemma}
  \begin{proof}
    First, both $A$ and $B$ are nonzero since $A\otimes B\neq 0$. 
    Write the identity element of $A\otimes B$ as a finite sum 
    \[
      1_{A\otimes B} = \sum_{i} a_{i}\otimes b_{i},\qquad a_{i}\in A,b_{i}\in B.
    \]
    On the other hand, since $B$ is a rigid $\k$-module, there are finite collections of elements $y_j \in B$ and $f_j \in B^{\k|\vee}$ such that \[ \sum_j f_j(y_j) = 1_\k. \]
    Then, we can define 
    \[
      1_A = \sum_{i,j} a_i \cdot f_j(b_iy_j).
    \]
    For any $x \in A$, we have
    \begin{align*}
      1_A \cdot x
      &= \sum_{j} (a_ix) \cdot f_j(b_iy_j) = \sum_{i,j} (\id_A\otimes f_j)(a_ix\otimes b_iy_j) \\
      &= \sum_{j} (\id_A\otimes f_j)\left((\sum_i a_i\otimes b_i) \cdot (x\otimes y_j)\right) \\
      &= \sum_{j} (\id_A\otimes f_j)(x\otimes y_j) = x\cdot \sum_j f_j(y_j) = x.
    \end{align*}
    This shows that $1_A$ is a left unity of $A$. Similarly, we can construct a right unity of $A$, which must be equal to $1_A$. Hence, $A$ is unital. 
    Similarly, $B$ is also unital.
  \end{proof}

  Note that, from our assumption, each block $\mathfrak{A}_{p,-q}$ of $U^{\tt I}$ or of $U^{\tt II}$ is finite over $\k$. 
  By this lemma, since $\one^{n}_{p,p}$ is the unity of 
  \[
    \mathfrak{M}^{n}_{p,p} = \mathfrak{A}_{p,-p}(U^{\tt I})\otimes\mathfrak{A}_{n-p,p-n}(U^{\tt II}),
  \]
  both $\mathfrak{A}_{p,-p}(U^{\tt I})$ and $\mathfrak{A}_{n-p,p-n}(U^{\tt II})$ are unital algebras. 
  Let $\one^{\tt I}_{p}$ and $\one^{\tt II}_{n-p}$ be their respective unities. Note that, since $\mathfrak{A}_{0,0}(-) = \mathsf{A}(-)$ is already unital with unity $1_{\mathsf{A}(-)}$, we must have: 
  \[
   \one^{\tt I}_{0} = 1_{\mathsf{A}(U^{\tt I})}
   \txand
   \one^{\tt II}_{0} = 1_{\mathsf{A}(U^{\tt II})}.
  \]
  Let $n$ and $p$ varies over all possibilities, we obtain two collections of elements $\one^{\tt I}_{m}$ and $\one^{\tt II}_{m}$ ($m\in\N$) in the mode transition algebras of $U^{\tt I}$ and $U^{\tt II}$ respectively and we have 
  \[
    \one_{n} = \sum_{p+q=n} \one^{\tt I}_{p} \otimes \one^{\tt II}_{q}.
  \]
  It remains to show that they verify the strong identity condition.

  For any $\mathfrak{a}\in \mathfrak{A}_{p,-q}(U^{\tt I})$, we have 
  \begin{align*}
    \mathfrak{a}\otimes 1_{\mathsf{A}(U^{\tt II})} 
    &= 
    (\mathfrak{a}\otimes 1_{\mathsf{A}(U^{\tt II})}) \star \one_{q} \\
    &= 
    (\mathfrak{a}\otimes 1_{\mathsf{A}(U^{\tt II})}) \star \sum_{k} (\one^{\tt I}_{k}\otimes \one^{\tt II}_{q-k}) \\
    &=
    (\mathfrak{a} \star \one^{\tt I}_{q}) \otimes 
    (1_{\mathsf{A}(U^{\tt II})} \star \one^{\tt II}_{0})
    = (\mathfrak{a} \star \one^{\tt I}_{q}) \otimes  1_{\mathsf{A}(U^{\tt II})}.
  \end{align*}
  Since $\k$ is semisimple, and $1_{\mathsf{A}(U^{\tt II})} \neq 0$, the map $\mathfrak{A}_{p,-q}(U^{\tt I}) \to \mathfrak{A}_{p,-q}(U^{\tt I})\otimes \mathsf{A}(U^{\tt II})$ provided by $\Box \mapsto \Box\otimes 1_{\mathsf{A}(U^{\tt II})}$ is injective. Hence, we have $\mathfrak{a} = \mathfrak{a} \star \one^{\tt I}_{q}$.
  Similarly, we have $\mathfrak{a} = \one^{\tt I}_{p} \star \mathfrak{a}$. 
  This shows that $\one^{\tt I}_{n}$ ($n\in\N$) are strong identity elements of $\mathfrak{A}(U^{\tt I})$. 
  The same argument shows that $\one^{\tt II}_{n}$ ($n\in\N$) are strong identity elements of $\mathfrak{A}(U^{\tt II})$.
\end{proof}

\section{The end formula for the mode transition algebra}\label{sec:End}
  In this section, we give a description of the mode transition algebra via ends.

\subsection{A Peter-Weyl expression for (co)regular bimodules}
  In \cite[\S 2.4]{FSS}, Fuchs, Schaumann, and Schweigert gave a Peter-Weyl type expression for the (co)regular bimodule of a finite-dimensional algebra via (co)ends. For the purpose of our application, we need to generalize it to a slightly more general setting.
  \begin{proposition}\label{prop:PW}
    Let $A,B,R$ be rings such that $A \in \Mod*(A|{}_R)$, and let $G$ be an $R$-linear functor from $\Mod*(A|{}_R)$ to $\Mod(B|{}_R)$. 
    The end of the functor 
    \begin{align*}
      \widetilde{G} \colon 
      \Mod*(A|{}_R) \times \opp{\Mod*(A|{}_R)} &\longrightarrow \Mod(B|A) \\
      (M,N) & \longmapsto G(M) \otimes_{R} N^{\vee|R}
    \end{align*}
    is given as a $(B|A)$-bimodule by
    \[
      \int_{M \in \Mod*(A|{}_R)} G(M) \otimes_{R} M^{\vee|R} \cong G(A)
    \]
    with the natural $(B|A)$-bimodule structure on $G(A)$ (where the right $A$-action is induced by the right multiplication on $A$). 
    Its dinatural family is defined by 
    \begin{align*}
      i_M^A \colon G(A) &\longrightarrow G(M) \otimes_{R} M^{\vee|R} \\
      G(a) &\longmapsto \wick{G(a\cdot \c {-}) \otimes \c {-}}.
    \end{align*}
    Here, $G(a)$ means a general element in $G(A)$; $G(a\cdot m)$ means the image of $G(a)$ under the map $G(\cdot m)$, where $\cdot m \colon A\to M$ is the map $a\mapsto a \cdot m$; and $\wick{ \c{-} \cdots \c{-}}$ is the coevaluation for $M$.
    
    Dually, the coend of $\widetilde{G}$ is given by
    \[
      \int^{M \in \Mod*(A|{}_R)} G(M) \otimes_{R} M^{\vee|R} \cong G(A^{\vee|R}) 
    \]
    with the aforementioned natural $(B|A)$-bimodule structure on $G(A)$ and the natural $(A|A)$-bimodule structure on $A^{\vee|R}$ (where the left $A$-action is induced by the left multiplication on $A$). 
    Its dinatural family is defined by 
    \begin{align*}
      j_M^A \colon G(M) \otimes_{R} M^{\vee|R} &\longrightarrow G(A^{\vee|R})\\ 
      G(m) \otimes m' &\longmapsto 
      G(\wick{  \c {-} \cdot \langle \c {-} \cdot m \vert m' \rangle}),
    \end{align*}
    where $G(m)$ means a general element in $G(M)$; and $\wick{ \c{-} \cdots \c{-} }$ is the coevaluation for $A^{\vee|R}$.
  \end{proposition}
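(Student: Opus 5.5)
This is a Yoneda/Peter–Weyl argument: identify $G(A)$ with natural transformations out of the representable functor, using rigidity of each $M$ over $R$. I treat the end first, then dualise for the coend.

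\textbf{Step 1 (reduce to Hom).} Since every $M\in\Mod*(A|{}_R)$ is rigid over $R$, there is a natural isomorphism
\[
  G(M)\otimes_R M^{\vee|R}\cong \Hom_{|R}(M,G(M)),
\]
natural in both variables; here $N\mapsto \Hom_{|R}(N,G(M))$ is viewed as contravariant in $N$. So a compatible family $(x_M)$ in $\prod_M G(M)\otimes_R M^{\vee|R}$ satisfying the dinaturality (wedge) conditions is the same as a family of right $R$-linear maps $\theta_M\colon M\to G(M)$. The wedge condition for $f\colon M\to N$ becomes $G(f)\circ\theta_M=\theta_N\circ f$. Hence
\[
  \int_M G(M)\otimes_R M^{\vee|R}\cong \operatorname{Nat}_{R}(\iota,G),
\]
where $\iota$ is the forgetful functor $\Mod*(A|{}_R)\to\Mod(|R)$ and $\operatorname{Nat}_R$ means natural families of $R$-linear maps.

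\textbf{Step 2 (Yoneda).} On $\Mod*(A|{}_R)$ we have $\iota(M)=M\cong\Hom_{A|}(A,M)$, and $A$ lies in the category by hypothesis. The key claim is that evaluation at $1$ gives
\[
  \operatorname{Nat}_R(\Hom_{A|}(A,-),G)\cong G(A).
\]
The maps in the two directions are as follows.
\begin{itemize}
  \item Given $\theta$, apply naturality to the maps $\cdot m\colon A\to M$. This gives $\theta_M(m)=G(\cdot m)(\theta_A(1))$, so $\theta$ is determined by $\theta_A(1)$.
  \item Given $g\in G(A)$, set $\theta_M(m):=G(\cdot m)(g)$. This is natural because $f\circ(\cdot m)=\cdot f(m)$.
\end{itemize}

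\textbf{Main obstacle.} The real point is checking that this $\theta_M$ is right $R$-linear. Here one needs that the map $m\mapsto(\cdot m)$ is $R$-linear, and that $G$ being $R$-linear means $G(\cdot (mr))=G(\cdot m)\cdot r$, with the right $R$-action on morphisms given through $M$'s bimodule structure. Once this is granted, unwinding Step~1 shows the resulting dinatural family is exactly
\[
  i^A_M(g)=\sum G(\cdot m_k)(g)\otimes m^k,
\]
which is the displayed Wick-contracted formula.

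\textbf{Step 3 (bimodule structure).} The right $A$-action on the end comes from $N^{\vee|R}$ with $N$ ranging over bimodules, which is the precomposition action. Under Yoneda this becomes $G$ applied to right multiplication on $A$. The left $B$-action is visibly preserved by every map above.

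\textbf{Coend.} Dually, rigidity gives $G(M)\otimes_R M^{\vee|R}\cong G(M)\otimes_A \Hom_{R}(M\text{-ish})$. Concretely, I will show that the maps $j^A_M$ are dinatural; this is straightforward from naturality of $G$. For universality I use that every $M$ in the category is a quotient of free modules $A^{\oplus I}$ and that $G$ is additive. Universality then reduces to the single object $A$, where $G(A)\otimes_R A^{\vee|R}\to G(A^{\vee|R})$ is the canonical co-Yoneda identification. The hard part is the same $R$-linearity bookkeeping, now together with right exactness in the co-Yoneda step. If that fails in general, I would restrict to finitely presented objects, which is harmless because they are rigid.
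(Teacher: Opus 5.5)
Your treatment of the end is essentially the paper's argument. Rigidity turns $G(M)\otimes_R M^{\vee|R}$ into $\Hom_{|R}(M,G(M))$, so wedges become natural transformations out of the forgetful functor. That functor is corepresented by $A$, and Yoneda gives $G(A)$ with dinatural family $i^A_M$. The $(B|A)$-structure comes along because the forgetful functor creates limits. The $R$-linearity of $\cdot m$ that you flag is already presupposed by the statement, which writes $G(\cdot m)$.

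The coend half has a genuine gap.

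\emph{The reduction to $A$ does not work.} Reducing universality to the single object $A$ via presentations $A^{\oplus I}\to M$ needs $G$ to preserve cokernels, but $G$ is only assumed linear. Moreover, $A^{\oplus I}$ for infinite $I$ is not rigid over $R$, so it is not in $\Mod*(A|{}_R)$. Your fallback does not help either: finitely presented objects need not be rigid over $R$.

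\emph{The map at $A$ is not an isomorphism.} You call $G(A)\otimes_R A^{\vee|R}\to G(A^{\vee|R})$ the canonical co-Yoneda identification, but take $R=\k$ a field, $G$ the forgetful functor and $\dim_{\k}A=d\ge 2$. The source has dimension $d^2$ and the target has dimension $d$. Even the coend over the one-object full subcategory on $A$ is $G(A)\otimes_A A^{\vee|R}$, which for a merely additive $G$ is in general not $G(A^{\vee|R})$. So $A$ is the wrong object to reduce to.

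\emph{The missing idea.} The dualizing functor is itself representable. By tensor--hom adjunction, $M^{\vee|R}\cong\Hom(A\otimes_A M,R)\cong\Hom_{A|}(M,A^{\vee|R})$, and $A^{\vee|R}$ lies in $\Mod*(A|{}_R)$ because $A$ is rigid over $R$. The co-Yoneda (density) formula $\int^{M}G(M)\otimes\Hom(M,X)\cong G(X)$ at $X=A^{\vee|R}$ then gives $G(A^{\vee|R})$ for an arbitrary linear $G$, with no exactness needed. Its structure maps are exactly the $j^A_M$. This is the paper's route, and it is what your vague step "$G(M)\otimes_A\Hom_{R}(M\text{-ish})$" should have been.
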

  \begin{proof}
    The first statement is due to the fact that the forgetting functor $u \colon \Mod*(A|_{R})\to\Mod*(|R)$ is corepresented by $A$. Namely, $u=\yo^A=\Hom_{A|{}_R}(A,-)$, the Yoneda embedding of $A$. 
    Indeed, we have 
    \begin{align*}
      \int_{M \in \Mod*(A|{}_R)} G(M) \otimes_{R} M^{\vee|R} 
      &=
      \int_{M \in \Mod*(A|{}_R)} \Hom_{R}(M, G(M)) \\
      &=
      \fun{Nat}(\yo^A,G) = G(A)
    \end{align*}
    by the \emph{Yoneda lemma} and the fact that the forgetting functor $\Mod(B|A) \to \Mod(\k)$ creates limits.
    The second follows from that the dualizing functor $(-)^{\vee|_{R}} \colon \Mod*(A|_{R})\to\Mod*(R|)$ is represented by $A^{\vee|R}$. Namely, we have $u=\yo_{A^{\vee|R}}=\Hom_{A|{}_R}(-,A^{\vee|R})$.
    Indeed, we have
    \[
      \int^{M \in \Mod*(A|{}_R)} G(M) \otimes_{R} M^{\vee|R} 
      =
      \int^{M \in \Mod*(A|{}_R)} G(M) \otimes_{R} \yo_{A^{\vee|R}}(M) 
      =
      G(A^{\vee|R})
    \]
    by the \emph{co-Yoneda lemma} and the fact that the forgetting functor $\Mod(B|A) \to \Mod(\k)$ creates colimits.

    We also refer to \cite[Proposition 2.8]{FSS} for an more explicit proof, which also works in our setting with minor modifications.
  \end{proof}

\subsection{The end formula}
  We have the following expressions of the mode transition algebra $\mathfrak{A}$ via end-construction:
  \begin{theorem}\label{thm:EndMode}
    Suppose $\mathsf{A}_{0} \in \Mod*(\mathsf{A}_{0}|{}_R)$. Then, we have
    \[
      \mathfrak{A} = \int_{M \in \Mod*(\mathsf{A}_{0}|{}_{R})} \Phi^{\mathsf{L}}_{0}(M) \otimes_{R} \Phi^{\mathsf{R}}_{0}(M^{\vee|R}).
    \]
  \end{theorem}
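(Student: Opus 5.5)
The plan is to apply \zcref{prop:PW} with $A = \mathsf{A}_{0}$, $B = U$, and $G = \Phi^{\mathsf{L}}_{0}(-) \otimes_{R}$-compatible data chosen so that the integrand $G(M)\otimes_R M^{\vee|R}$ becomes $\Phi^{\mathsf{L}}_{0}(M)\otimes_R\Phi^{\mathsf{R}}_{0}(M^{\vee|R})$. Concretely, I would first rewrite the integrand using associativity of tensor products. Since $\Phi^{\mathsf{R}}_{0}(M^{\vee|R}) = M^{\vee|R}\otimes_{\mathsf{A}_{0}}\mathfrak{R}^{0}$, we get
\[
  \Phi^{\mathsf{L}}_{0}(M)\otimes_{R}\Phi^{\mathsf{R}}_{0}(M^{\vee|R})
  \cong
  \bigl(\Phi^{\mathsf{L}}_{0}(M)\otimes_{R}M^{\vee|R}\bigr)\otimes_{\mathsf{A}_{0}}\mathfrak{R}^{0},
\]
naturally in $M$ in both variances. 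So the integrand is $F\circ\widetilde{G}$, where $G=\Phi^{\mathsf{L}}_{0}$ restricted to $\Mod*(\mathsf{A}_{0}|{}_R)$ (an $R$-linear functor into $\Mod(U|{}_R)$), $\widetilde{G}$ is as in \zcref{prop:PW}, and $F=(-)\otimes_{\mathsf{A}_{0}}\mathfrak{R}^{0}\colon\Mod(U|\mathsf{A}_{0})\to\Mod(U|U)$.

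Next, \zcref{prop:PW} (whose hypothesis $\mathsf{A}_{0}\in\Mod*(\mathsf{A}_{0}|{}_R)$ is exactly our assumption) gives $\int_M \Phi^{\mathsf{L}}_{0}(M)\otimes_R M^{\vee|R}\cong\Phi^{\mathsf{L}}_{0}(\mathsf{A}_{0})=\mathfrak{L}^{0}$ as $(U|\mathsf{A}_{0})$-bimodules, with dinatural family $i_M$ induced by $\cdot m\colon\mathsf{A}_0\to M$ and coevaluation. Applying $F$ yields the candidate $\mathfrak{L}^{0}\otimes_{\mathsf{A}_{0}}\mathfrak{R}^{0}=\mathfrak{A}$ together with the dinatural family $F(i_M)$. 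It remains to check that $F$ preserves this particular end.

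This commutation is the main obstacle, since $-\otimes_{\mathsf{A}_0}\mathfrak{R}^{0}$ is only right exact and need not preserve limits in general. I would avoid abstract exactness and instead reprove the end property directly, via the Yoneda argument of \zcref{prop:PW} applied to the functor $G'=\Phi^{\mathsf{L}}_{0}(-)\otimes_R(-)^{\vee|R}\otimes_{\mathsf{A}_0}\mathfrak{R}^0$: the key point is that for each rigid $M$ one has $\Phi^{\mathsf{L}}_{0}(M)\otimes_R M^{\vee|R}\otimes_{\mathsf{A}_0}\mathfrak{R}^0\cong\Hom_R(M,\Phi^{\mathsf{L}}_{0}(M)\otimes_{\mathsf{A}_0}\ldots)$ is not available, so I would instead use the functor $H(M):=\mathfrak{L}^{0}\otimes_{\mathsf{A}_0}M\otimes_R(-)$ and write the integrand as $\Hom_R(M,\,\mathfrak{L}^0\otimes_{\mathsf{A}_0}M)\otimes_{\mathsf{A}_0}\mathfrak{R}^0$; the end over the representable $\yo^{\mathsf{A}_0}$ is computed objectwise as $\mathrm{Nat}(\yo^{\mathsf{A}_0},-)$, i.e. evaluation at $\mathsf{A}_0$, and evaluation at an object commutes with any functor applied pointwise. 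More carefully: a wedge $X\to\Phi^{\mathsf{L}}_0(M)\otimes_R\Phi^{\mathsf R}_0(M^{\vee|R})$ is determined by its component at $M=\mathsf{A}_0$ (dinaturality along $\cdot m\colon\mathsf{A}_0\to M$ for all $m$, using that rigid $M$ are quotients of finite sums of copies of $\mathsf{A}_0$ via such maps, and that the component at $\mathsf A_0$ lands in $\mathfrak L^0\otimes_R\mathsf A_0^{\vee|R}\otimes_{\mathsf A_0}\mathfrak R^0$), and conversely any map $X\to\mathfrak{A}$ extends to a wedge through $F(i_M)$. Checking that the component at $\mathsf{A}_0$ factors uniquely through $\mathfrak{A}$ (i.e. through the "diagonal" part cut out by dinaturality for endomorphisms $\cdot a$ of $\mathsf{A}_0$, $a\in\mathsf{A}_0$) is the step I expect to require the most care; I would verify it by the explicit formula $i_M(x\otimes y)=x\cdot(-)\otimes(\text{coev})\otimes y$ in the style of \cite[Proposition 2.8]{FSS}. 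Finally I would check that the bigrading and the $(U|U)$-bimodule structures match, which is immediate since all identifications are homogeneous.
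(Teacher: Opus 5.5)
Your outline is the paper's: rewrite the integrand as $\bigl(\Phi^{\mathsf{L}}_{0}(M)\otimes_R M^{\vee|R}\bigr)\otimes_{\mathsf{A}_0}\mathfrak{R}^{0}$, use \zcref{prop:PW} to identify $\int_M\Phi^{\mathsf{L}}_{0}(M)\otimes_R M^{\vee|R}$ with $\mathfrak{L}^{0}$, then push through $F=(-)\otimes_{\mathsf{A}_0}\mathfrak{R}^{0}$. All the content is in that last step, and this is where your proposal has a gap. You correctly flag that $F$ need not commute with the end, but you do not prove that it does, and your substitute argument fails as stated.

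Dinaturality along $f=\cdot m\colon\mathsf{A}_0\to M$ only determines the image of $\xi_M$ under
\[
1\otimes\Phi^{\mathsf{R}}_0(f^{\vee|R})\colon\Phi^{\mathsf{L}}_0(M)\otimes_R\Phi^{\mathsf{R}}_0(M^{\vee|R})\to\Phi^{\mathsf{L}}_0(M)\otimes_R\Phi^{\mathsf{R}}_0(\mathsf{A}_0^{\vee|R}).
\]
Grant a surjection $\mathsf{A}_0^{\oplus k}\to M$ built from such maps. It dualizes to an injection $M^{\vee|R}\hookrightarrow(\mathsf{A}_0^{\vee|R})^{\oplus k}$. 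However, $\Phi^{\mathsf{L}}_0(M)\otimes_R(-)\otimes_{\mathsf{A}_0}\mathfrak{R}^0$ is only right exact, so these maps need not be jointly injective. Hence a wedge need not be determined by its $\mathsf{A}_0$-component. In \zcref{prop:PW} the Yoneda argument works precisely because the integrand is $\Hom_R(M,G(M))$, which is left exact in $M$. Appending $\otimes_{\mathsf{A}_0}\mathfrak{R}^0$ on the contravariant side destroys exactly that property, so the ``unique factorization through $\mathfrak{A}$'' remains unproved.

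The paper disposes of this step in one line: $\Phi^{\mathsf{R}}_0$ preserves limits because it is a right adjoint. Your suspicion is justified. In the paper, $\Phi^{\mathsf{R}}_0=(-)\otimes_{\mathsf{A}_0}\mathfrak{R}^0$ arises as the \emph{left} adjoint of $(-)_0$ and of $\Omega^{\mathsf{R}}_0$ (\zcref{prop:adj:Phideg,prop:adj:PhiOmega}), so that line does not justify the interchange by itself.

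Here is a clean fix. Compute the end bidegreewise, which is legitimate since limits of graded bimodules are degreewise. Then assume each $\mathfrak{R}^0_{-q}$ is rigid as a left $\mathsf{A}_0$-module. In that case
\[
(-)\otimes_{\mathsf{A}_0}\mathfrak{R}^0_{-q}\cong\Hom_{|\mathsf{A}_0}\bigl((\mathfrak{R}^0_{-q})^{\mathsf{A}_0|\vee},-\bigr)
\]
is genuinely a right adjoint and commutes with the end, after which your argument (and the paper's) goes through.

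This rigidity holds when $U$ is quasi-rigid. It also holds automatically under \SIC. Write $\one_{q}=\sum_i[\alpha_i]^{\mathsf{L}}_0\otimes[\beta_i]^{\mathsf{R}}_0$. Then
\[
[\beta]^{\mathsf{R}}_0=[\beta]^{\mathsf{R}}_0\star\one_{q}=\sum_i\bigl([\beta]^{\mathsf{R}}_0\ostar[\alpha_i]^{\mathsf{L}}_0\bigr)[\beta_i]^{\mathsf{R}}_0
\]
exhibits a finite dual basis, which covers \zcref{coro:EndMode}. Under only the stated hypothesis $\mathsf{A}_0\in\Mod*(\mathsf{A}_0|{}_R)$, neither your sketch nor the paper's one-liner establishes the interchange.
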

  Note that the assumption is satisfied in particular when $R=\mathsf{A}_{0}$. In the case $R=\k$, the assumption means the $\mathsf{A}_{0}$ is rigid as a $\k$-module.
  \begin{proof}
    Since $\Phi^{\mathsf{R}}_{0}$ is a right adjoint, it preserves limits. Hence, we have
    \[
      \int_{M \in \Mod*(\mathsf{A}_{0}|{}_{R})} \Phi^{\mathsf{L}}_{0}(M) \otimes_{R} \Phi^{\mathsf{R}}_{0}(M^{\vee|R}) 
      =
      \Phi^{\mathsf{R}}_{0}(\int_{M \in \Mod*(\mathsf{A}_{0}|{}_{R})} \Phi^{\mathsf{L}}_{0}(M) \otimes_{R} M^{\vee|R}).
    \]
    More precisely, the functor $\Phi^{\mathsf{R}}_{0}$ in the above should be the tensor product of the identity endofunctor of $\Mod[Ex](U|{}_{R})$ with the functor $\Phi^{\mathsf{R}}_{0}\colon\Mod*({}_{R}|\mathsf{A}_{0})\to\Mod[Ex]({}_{R}|U)$.

    On the other hand, by \zcref{prop:PW}, we have 
    \[
      \int_{M \in \Mod*(\mathsf{A}_{0}|{}_{R})} \Phi^{\mathsf{L}}_{0}(M) \otimes_{R} M^{\vee|R} = \Phi^{\mathsf{L}}_{0}(\mathsf{A}_{0}).
    \]
    Hence, we conclude that:
    \[
      \int_{M \in \Mod*(\mathsf{A}_{0}|{}_{R})} \Phi^{\mathsf{L}}_{0}(M) \otimes_{R} \Phi^{\mathsf{R}}_{0}(M^{\vee|R}) 
      =
      \Phi^{\mathsf{R}}_{0}(\Phi^{\mathsf{L}}_{0}(\mathsf{A}_{0})) = \mathfrak{A}.
    \]
    This finishes the proof.
  \end{proof}

  \begin{corollary}\label{coro:EndMode}
    If $U$ satisfies the strong identity condition, then the mode transition algebra $\mathfrak{A}$ can be expressed as 
    \[
      \mathfrak{A} = \int_{M \in \Mod*(\mathsf{A}_{0}|{}_{R})} \Phi^{\mathsf{L}}_{0}(M) \otimes_{R} \Phi^{\mathsf{L}}_{0}(M)^{\dagger|R},
    \]
    whenever $\mathsf{A}_{0} \in \Mod*(\mathsf{A}_{0}|{}_R)$.
    Furthermore, suppose either 
      \begin{enumerate}
        \item $U$ is \emph{quasi-rigid}: $\mathfrak{L}^{n} \in \Mod*[Gr](U|{}_{\mathsf{A}_{n}})$ and $\mathfrak{R}^{n} \in \Mod*[Gr]({}_{\mathsf{A}_{n}}|U)$; or 
        \item $R$ is semisimple and $U$ is \emph{weakly quasi-finite}: components of $\mathfrak{L}^{n},\mathfrak{R}^{n}$ are finite over $\mathsf{A}_{n}$.
      \end{enumerate}
    Then, $\mathfrak{A}$ can be expressed as 
    \[
      \int_{W \in \Mod*[Ex](U|{}_{R})} W \otimes_{R} W^{\dagger|R}.
    \]
    Here, the grading on $W$ is the $\gr\Omega$-grading.
  \end{corollary}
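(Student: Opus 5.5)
The plan has two steps, one for each formula. The first formula comes from \zcref{thm:EndMode} once we identify the integrand. Under SIC, \zcref{lem:induced-dual} applies in the form: the homogeneous map $\psi^{\mathsf{R}}_{M}\colon\Phi^{\mathsf{R}}_{0}(M^{\vee|R})\to\Phi^{\mathsf{L}}_{0}(M)^{\dagger|R}$ is an isomorphism for every $(\mathsf{A}_{0}|{}_R)$-bimodule $M$. This uses (\SIC{} $\Rightarrow$ \SIE{} $\Rightarrow$ ($\Phi\Omega\mathsf{L}_{R}$)), by \zcref{thm:SIC-SIE,thm:PhiOmega0}. The maps $\psi^{\mathsf{R}}_{M}$ are defined through the adjunction unit of \zcref{prop:adj:Phideg}, so they are natural in $M$. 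They therefore assemble into an isomorphism of functors on $\Mod*(\mathsf{A}_{0}|{}_R)\times\opp{\Mod*(\mathsf{A}_{0}|{}_R)}$:
\[
  \Phi^{\mathsf{L}}_{0}(M)\otimes_{R}\Phi^{\mathsf{R}}_{0}(N^{\vee|R})\cong\Phi^{\mathsf{L}}_{0}(M)\otimes_{R}\Phi^{\mathsf{L}}_{0}(N)^{\dagger|R}.
\]
Isomorphic functors have isomorphic ends, so the first formula follows. The one point to check is that $\psi^{\mathsf{R}}$ is natural in the contravariant variable; this is immediate from the construction.

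For the second formula, we transport the end along the equivalence $\Phi^{\mathsf{L}}_{0}\colon\Mod*(\mathsf{A}_{0}|{}_R)\simeq\Mod*[Ex](U|{}_R)$, which is \zcref{coro:PhiOmega0-ord} under hypothesis (i) or (ii) together with \SIE. An end is invariant under reindexing along an equivalence of the indexing category. Writing $W=\Phi^{\mathsf{L}}_{0}(M)$, the integrand becomes $W\otimes_{R}W^{\dagger|R}$. This needs two facts. First, $W^{\dagger|R}$ must be formed using a grading intrinsic to $W$; this is why the paper specifies the $\gr\Omega$-grading. By \zcref{lem:grOmegaPhi}, the natural grading of $\Phi^{\mathsf{L}}_{0}(M)$ agrees with its $\gr\Omega$-grading, and the $\gr\Omega$-grading is functorial for morphisms of exhaustive modules, since by \zcref{lem:splitOmega} it is $W_{(n)}=e_{n}W$. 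Second, morphisms in $\Mod*[Ex](U|{}_R)$ preserve this grading, again because $e_{n}$ acts naturally. Hence $W\mapsto W^{\dagger|R}$ is a well-defined contravariant functor on $\Mod*[Ex](U|{}_R)$, and it matches $M\mapsto\Phi^{\mathsf{L}}_{0}(M)^{\dagger|R}$ under the equivalence.

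The main obstacle I anticipate is the careful bookkeeping of gradings. Duals $(-)^{\dagger|R}$ of exhaustive modules are not intrinsically defined, so one must show that the $\gr\Omega$-grading makes $(-)^{\dagger|R}$ a functor on the exhaustive category compatible with $\Phi^{\mathsf{L}}_{0}$. The rest is formal manipulation of ends.
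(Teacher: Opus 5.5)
Your proposal is correct and follows the paper's own argument: Theorem~\ref{thm:EndMode}, then Lemma~\ref{lem:induced-dual} to replace $\Phi^{\mathsf{R}}_{0}(M^{\vee|R})$ by $\Phi^{\mathsf{L}}_{0}(M)^{\dagger|R}$, then reindexing the end along the equivalence of Corollary~\ref{coro:PhiOmega0-ord}; your remarks on naturality and on the $\gr\Omega$-grading (via Lemma~\ref{lem:grOmegaPhi} and $W_{(n)}=e_{n}W$) fill in details the paper's one-line proof leaves implicit. One small slip: Lemma~\ref{lem:induced-dual} gets the isomorphism $\psi^{\mathsf{R}}_{M}$ from ($\Phi\Omega\mathsf{R}_{R}$), not ($\Phi\Omega\mathsf{L}_{R}$), but Theorem~\ref{thm:PhiOmega0} gives both under \SIE{}, so nothing breaks.
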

  \begin{proof}
    Follows from the previous theorem and \zcref{lem:induced-dual,coro:PhiOmega0-ord}.
  \end{proof}

\part{Applications and examples}\label{part4}
This part specializes the general theory to concrete algebraic and geometric settings, including explicit computations for Weyl algebras, applications to VOA theory, and counterexamples, thereby testing the scope and limitations of the strong identity condition.

\section{Weyl algebras -- a non-VOA example}\label{sec:Weyl}
  To illustrate the theory independent of VOA-specific technicalities, we study the \emph{Weyl algebra}\footnote{We omit the subscript to avoid confusion with the degree index.}
  \[
    \nota{\mathcal{D}} = \nota{\mathcal{D}_{\mathbb{A}^{d}}} := \k\brk[a]{\vect{x},\vect{\partial}}/(\partial_{i}x_j-x_j\partial_{i}-\delta_{i,j},x_ix_j-x_jx_i,\partial_{i}\partial_{j}-\partial_{j}\partial_{i}).
  \]
  It is the ring of differential operators on the affine space $\mathbb{A}^{d}_{\k}$. Modules over it are prototypes of the main objects of study in the area of algebraic analysis. We refer to \cite{DmodLectures,DModBook1995,HTTDmodules} for further introduction to such a topic. Here, we only consider it as an almost-canonically seminormed ring.

  There are various useful filtrations on $\mathcal{D}$, such as the \emph{order filtration} (filtered by the orders of differential operators) and the \emph{Berstein filtration} (filtered by total degree of differential operators and polynomials). 
  Here, we consider the following \emph{Fuchsian grading}: 
  \[
    \deg x_i=1\txand
    \deg\partial_{x_i}=-1.
  \]
  The canonical seminorm with respect to it can be characterized as follows: for each $n\in\N$,
  \begin{align*}
    \cNL[n]\mathcal{D} &= \Set{f\in\mathcal{D}\given\text{the total exponents of $\partial_{i}$'s in each monomial of $f$ is at least $n$}},\\
    \cNR[n]\mathcal{D} &= \Set{f\in\mathcal{D}\given\text{the total exponents of $x_i$'s in each monomial of $f$ is at least $n$}}.
  \end{align*}
  We see that the left and right neighborhoods are precisely the filtration by \emph{orders of differential operators} and the filtration by \emph{degrees of polynomials} respectively.

  \begin{proposition}\label{prop:modeAofWeyl}
    The components of the mode transition algebra $\mathfrak{A}$ of the Weyl algebra $\mathcal{D}$ is given by 
    \[
      \mathfrak{A}_{p,-q} \cong \bigoplus_{\abs{\alpha}=p,\abs{\beta}=q}\k\vect{x}^{\alpha}\otimes\vect{\partial}^{\beta}.
    \]
    Here, $\alpha = (\alpha_i),\beta = (\beta_i)$ are multi-indices, and $\abs{\alpha} := \sum_i\alpha_i,\abs{\beta} := \sum_i\beta_i$. 
  \end{proposition}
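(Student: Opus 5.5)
Since $\mathfrak{A}_{p,-q}=\mathfrak{L}^{0}_{p}\otimes_{\mathsf{A}_{0}}\mathfrak{R}^{0}_{-q}$ by \zcref{def:mode-transition-algebra}, the plan is to compute the three ingredients $\mathsf{A}_{0}$, $\mathfrak{L}^{0}$ and $\mathfrak{R}^{0}$ for $\mathcal{D}$ explicitly and then take the tensor product. Throughout I use the normally ordered basis $\vect{x}^{\alpha}\vect{\partial}^{\beta}$ of $\mathcal{D}$ (free over $\k$ by PBW), which has degree $\abs{\alpha}-\abs{\beta}$. I also take the canonical seminorm as the seminorm, which is legitimate by \zcref{eg:completions}.

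First I identify $\NL[1]\mathcal{D}=\mathcal{D}U_{\le -1}$ with the span of the monomials $\vect{x}^{\alpha}\vect{\partial}^{\beta}$ with $\abs{\beta}\ge 1$. The inclusion $\supset$ holds because $\vect{x}^{\alpha}\vect{\partial}^{\beta}=\vect{x}^{\alpha}\vect{\partial}^{\beta-e_i}\cdot\partial_i$ with $\partial_i$ of degree $-1$. For $\subset$, I argue that any $u\cdot v$ with $\deg v\le -1$ has every normally ordered monomial of $v$ containing some $\partial$. Moving an $\vect{x}$ leftward past $\partial$'s only lowers the $\partial$-count by the same amount that it lowers the $x$-count, so the degree is preserved. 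Hence every resulting monomial of $uv$ is a normally ordered term whose own right factor, of degree $\le -1$, keeps some $\partial$. The cleanest way to see this is to note that the span of monomials with $\abs{\beta}\ge 1$ is the left ideal $\sum_i\mathcal{D}\partial_i$, which contains $\mathcal{D}_{\le -1}$ because every monomial of negative degree has $\abs{\beta}>\abs{\alpha}\ge 0$.

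By the symmetric argument, $\NR[1]\mathcal{D}=\sum_i x_i\mathcal{D}$ is spanned by the monomials with $\abs{\alpha}\ge 1$, written in the anti-normal order $\vect{\partial}^{\beta}\vect{x}^{\alpha}$. These give
\[
  \mathfrak{L}^{0}\cong\k[\vect{x}]
  \txand
  \mathfrak{R}^{0}\cong\k[\vect{\partial}],
\]
that is, the standard left $\mathcal{D}$-module $\mathcal{D}/\sum\mathcal{D}\partial_i$ and the right module $\mathcal{D}/\sum x_i\mathcal{D}$. For $\mathsf{A}_{0}=\mathcal{D}_{0}/(\NL[1]\mathcal{D})_{0}$, the degree-zero monomials with $\abs{\beta}\ge1$ are killed, leaving $\mathsf{A}_{0}\cong\k$ spanned by $[1]_{0}$. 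One should double-check this against the right description $\mathcal{D}_{0}/(\NR[1]\mathcal{D})_{0}$. This is consistent: each $\partial^{\beta}x^{\alpha}$ with $\abs{\alpha}=\abs{\beta}\ge1$ lies in $\NR[1]\mathcal{D}$ and also, in normal order, in $\NL[1]\mathcal{D}+\k$. The scalar term vanishes in the quotient precisely because $\NL[1]\mathcal{D}_0=\NR[1]\mathcal{D}_0$, as noted in the paper.

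With $\mathsf{A}_{0}=\k$, the tensor product becomes
\[
  \mathfrak{A}_{p,-q}=\k[\vect{x}]_{p}\otimes_{\k}\k[\vect{\partial}]_{q}=\bigoplus_{\abs{\alpha}=p,\abs{\beta}=q}\k\vect{x}^{\alpha}\otimes\vect{\partial}^{\beta},
\]
where the basis elements are the classes $[\vect{x}^{\alpha}]^{\mathsf{L}}_{0}\otimes[\vect{\partial}^{\beta}]^{\mathsf{R}}_{0}$. The main obstacle is the rigorous identification of the left ideal $\NL[1]\mathcal{D}$ with the normal-order span, i.e.\ that this span is closed under left multiplication and meets $\k[\vect{x}]$ trivially. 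I would handle it by exhibiting the faithful left action of $\mathcal{D}$ on $\k[\vect{x}]$: the annihilator of $1$ is exactly $\sum\mathcal{D}\partial_i$, and the monomials $\vect{x}^{\alpha}$ map injectively to it, so the quotient is free on the $\vect{x}^{\alpha}$. The right-module case follows dually, using $\k[\vect{\partial}]$ with $x_i$ acting as $-\partial/\partial(\partial_i)$.
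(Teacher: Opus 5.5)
Your route is the paper's: identify $\mathfrak{L}^{0}\cong\k[\vect{x}]$ and $\mathfrak{R}^{0}\cong\k[\vect{\partial}]$ with their polynomial gradings, then tensor over $\mathsf{A}_{0}$. Your left-hand computation is correct: $\NL[1]\mathcal{D}=\sum_i\mathcal{D}\partial_i$ is the span of the normally ordered $\vect{x}^{\alpha}\vect{\partial}^{\beta}$ with $|\beta|\ge 1$, and this gives both $\mathfrak{L}^{0}\cong\k[\vect{x}]$ and $\mathsf{A}_{0}=\k$. You are also more explicit than the paper about $\mathsf{A}_{0}=\k$, which is what turns $\otimes_{\mathsf{A}_{0}}$ into $\otimes_{\k}$. (The canonical seminorm is simply the paper's setup for $\mathcal{D}$. \zcref{eg:completions} only concerns passing to the completion, so it does not justify replacing an arbitrary almost-canonical seminorm by the canonical one.)

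The right-hand side, however, contains a false claim. $\NR[1]\mathcal{D}=\sum_i x_i\mathcal{D}$ is \emph{not} the span of the anti-normally ordered monomials $\vect{\partial}^{\beta}\vect{x}^{\alpha}$ with $|\alpha|\ge1$. In rank one, $\partial x=x\partial+1$ with $x\partial\in x\mathcal{D}$, so $\partial x\in x\mathcal{D}$ would give $1\in\NR[1]\mathcal{D}$, hence $\mathfrak{R}^{0}=0$ and $\mathsf{A}_{0}=0$.

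The correct symmetric statement is again in \emph{normal} order. Since $x_i\cdot\vect{x}^{\alpha}\vect{\partial}^{\beta}=\vect{x}^{\alpha+e_i}\vect{\partial}^{\beta}$, the right ideal $\sum_i x_i\mathcal{D}$ is the span of normally ordered monomials with $|\alpha|\ge1$. Its complement is spanned by the $\vect{\partial}^{\beta}$, which gives $\mathfrak{R}^{0}\cong\k[\vect{\partial}]$.

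Your ``double-check'' paragraph inherits the error. In general $\vect{\partial}^{\beta}\vect{x}^{\beta}\notin\NR[1]\mathcal{D}$, and the scalar term does not vanish: $[\vect{\partial}^{\beta}\vect{x}^{\beta}]_{0}=\beta!$ in $\mathsf{A}_{0}=\k$ by \zcref{lem:partialx=1,lem:partialx-multi}. This nonvanishing pairing $\ostar$ is exactly what later produces the factors $1/\alpha!$ in $\one_{n}$. With the normal-order description, $\NL[1]\mathcal{D}_{0}$ and $\NR[1]\mathcal{D}_{0}$ are literally the same span of monomials, so there is nothing to reconcile.

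Your fallback via the annihilator of $[1]$ in $\k[\vect{\partial}]$ is sound, with two small corrections:
\begin{itemize}
\item In $\k[\vect{\partial}]$, $x_i$ acts as $+\partial/\partial(\partial_i)$, not $-\partial/\partial(\partial_i)$.
\item The action on $\k[\vect{x}]$ is not faithful in positive characteristic ($\partial_1^{p}$ acts by $0$). You only need that the annihilator of $1$ is $\sum_i\mathcal{D}\partial_i$, which holds over any $\k$ once normally ordered monomials span $\mathcal{D}$.
\end{itemize}
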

  \begin{proof}
    From the above discussion, we have 
    \begin{itemize}
      \item $\mathfrak{L}^{0} \cong \k[\vect{x}]$, the \emph{coordinate ring} $\mathcal{O}$ of the affine space $\mathbb{A}^{d}_{\k}$; and 
      \item $\mathfrak{R}^{0} \cong \k[\vect{\partial}]$, the \emph{Fourier transformation} of $\mathcal{O}$.
    \end{itemize}
    Then, their gradings clearly coincide with the degree grading of polynomials in variables $\vect{x}$ and $\vect{\partial}$ respectively. 
    The structure of the mode transition algebra then follows.
  \end{proof}

  In order to discuss the strong identity condition, we need to understand the multiplication on the mode transition algebra $\mathfrak{A}$. 

\subsection{Rank one Weyl algebra}
  We first consider the rank one situation as it will be used in the consideration of arbitrary rank. By \zcref{lem:def:ostar}, we have 
  \[
    [{\partial}^{a}]^{\mathsf{R}}_{0}\ostar[{x}^{b}]^{\mathsf{L}}_{0} = 
    \begin{dcases*}
      [{\partial}^{a}{x}^{b}]_{0} & if $a=b$, \\
      0 & otherwise.
    \end{dcases*}
  \]
  To determine the multiplication on $\mathfrak{A}$, it suffices to compute $[{\partial}^{a}{x}^{a}]_{0}$.
  \begin{lemma}\label{lem:partialx=1}
    For any $a\in\N$, we have $[{\partial}^{a}{x}^{a}]_{0}=a!$.
  \end{lemma}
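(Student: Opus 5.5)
We will compute $[\partial^{a}x^{a}]_{0}$ inside $\mathsf{A}_{0}=\mathcal{D}_{0}/\NLR[1]\mathcal{D}_{0}$ by normal ordering. First we identify $\mathsf{A}_{0}$. The degree-zero part $\mathcal{D}_{0}$ is spanned by the monomials $x^{k}\partial^{k}$, $k\in\N$. In the rank-one Weyl algebra every element has a unique normally ordered expression $\sum f_{i}(x)\partial^{i}$, and $\mathcal{D}_{0}$ is the polynomial ring $\k[\theta]$ in the Euler operator $\theta=x\partial$. By the description of the canonical seminorm in \zcref{eg:Weyl}, $\cNL[1]\mathcal{D}_{0}$ is spanned by the normally ordered monomials $x^{k}\partial^{k}$ with $k\ge 1$.

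Next we pass from the canonical seminorm to $\NLR[1]$. Since $\mathsf{A}_{n}$ is discrete and the canonical seminorm is degreewise dense in $\NL$ (condition \zcref[noname]{A2}), we have $\mathcal{D}_{0}=\k\cdot 1\oplus\cNL[1]\mathcal{D}_{0}$. We will take $\NL$ to be the canonical seminorm, as in \zcref{eg:Weyl}; by \zcref{eg:completions}, passing to a completion does not change $\mathsf{A}_{0}$. This gives $\mathsf{A}_{0}\cong\k$, and the class $[u]_{0}$ of $u\in\mathcal{D}_{0}$ is the constant term (the coefficient of $x^{0}\partial^{0}$) of its normal ordering. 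If an arbitrary almost-canonical seminorm were allowed, the same conclusion would follow from the density $\NL[1]U_0=\cNL[1]U_0+\NL[2]U_0$ together with separatedness of $\k$ in the quotient. We will remark on this point but not dwell on it.

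The main step is to compute the constant term of $\partial^{a}x^{a}$ after normal ordering. We use the Leibniz-type formula
\[
  \partial^{a}x^{b}=\sum_{j}\binom{a}{j}\frac{b!}{(b-j)!}\,x^{b-j}\partial^{a-j},
\]
which we prove by induction on $a$ from $\partial x^{b}=x^{b}\partial+bx^{b-1}$; the latter identity itself follows by induction on $b$ from $\partial x-x\partial=1$. Setting $b=a$, the only term with no $x$ and no $\partial$ is the one with $j=a$, and it has coefficient $\binom{a}{a}a!=a!$. All other terms are $x^{a-j}\partial^{a-j}$ with $a-j\ge1$, so they lie in $\cNL[1]\mathcal{D}_{0}$. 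Hence $[\partial^{a}x^{a}]_{0}=a!$.

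We expect the main obstacle to be bookkeeping rather than a conceptual difficulty. Specifically, we must make sure that normally ordered monomials form a basis over an arbitrary commutative ring $\k$ (the PBW/Weyl basis, which is standard). We must also justify that $\NLR[1]\mathcal{D}_{0}$ contains no constant. For this we will note that every element of $\cNL[1]\mathcal{D}_{0}$ is a combination of products $u\,v$ with $v\in\mathcal{D}_{\le -1}$, and that such products, once normally ordered, involve only monomials containing at least one factor of $\partial$.
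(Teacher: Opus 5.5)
Your argument is correct, but it is organized differently from the paper's. The paper never normal-orders completely. It writes $\partial^{a}x^{a}=\partial^{a-1}x^{a}\partial+\partial^{a-1}[\partial,x^{a}]$ and drops the first term, since it ends in $\partial\in\mathcal{D}_{-1}$ and so lies in the left ideal $\cNL[1]\mathcal{D}$. It then uses $[\partial,x^{a}]=ax^{a-1}$ to get $\partial^{a}x^{a}\equiv a\,\partial^{a-1}x^{a-1}$ modulo $\cNL[1]\mathcal{D}$, and iterates. That needs only the commutation relation and the left-ideal property, not the PBW basis or a closed Leibniz formula. What it yields is the congruence $\partial^{a}x^{a}-a!\in\cNL[1]\mathcal{D}_{0}$.

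Your route goes through $\partial^{a}x^{b}=\sum_{j}\binom{a}{j}\frac{b!}{(b-j)!}x^{b-j}\partial^{a-j}$, together with identifying $\mathsf{A}_{0}$ with $\k$ via the constant term. It costs more bookkeeping but proves more: $[\partial^{a}x^{a}]_{0}$ is the scalar $a!$ in $\mathsf{A}_{0}\cong\k$, so it vanishes exactly when $a!=0$ in $\k$. The paper takes $\mathsf{A}_{0}\cong\k$ for granted from its earlier description of $\mathfrak{L}^{0}$ and $\mathfrak{R}^{0}$; your argument actually establishes it.

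Two small corrections. First, the decomposition $\mathcal{D}_{0}=\k\cdot 1\oplus\cNL[1]\mathcal{D}_{0}$ comes from the PBW basis and from your description of $\cNL[1]\mathcal{D}$ as the span of normally ordered monomials containing $\partial$. It does not come from discreteness of $\mathsf{A}_{n}$ or from density, as your sentence suggests.

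Second, your aside about arbitrary almost-canonical seminorms is off. No separatedness argument is needed for the lemma, because $\partial^{a}x^{a}-a!\in\cNL[1]\mathcal{D}_{0}\subset\NL[1]\mathcal{D}_{0}$ already gives the identity. On the other hand, the identification $\mathsf{A}_{0}\cong\k$ really can fail for a coarser system. For instance, the indiscrete one, $\NL[n]\mathcal{D}=\mathcal{D}$ for all $n$, satisfies the axioms and gives $\mathsf{A}_{0}=0$.
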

  \begin{proof}
    By the relation $\Lie*{\partial}{x}=1$, we have
    \[
      {\partial}^{a}{x}^{a} = {\partial}^{a-1}{x}^{a}\partial + {\partial}^{a-1}\Lie*{\partial}{{x}^{a}} \equiv {\partial}^{a-1}\Lie*{\partial}{{x}^{a}} = a{\partial}^{a-1}x^{a-1} \mod \cNLR[1]\mathcal{D}.
    \]
    Repeating this, we obtain the conclusion.
  \end{proof}
  \begin{remark}\label{rem:partialx=0}
    Note that, by the proof of \zcref{lem:def:ostar}, we must have ${\partial}^{a}x^{b}\in(\cNL[1]\mathcal{D}+\cNR[1]\mathcal{D})$ whenever $a\neq b$. This avoids verifying a lot of congruences.
  \end{remark}

  With the help of this lemma, we have:
  \begin{proposition}\label{prop:SICegWeyl}
    If the rational number field $\mathbb{Q}$ is contained in $\k$. Then, the Weyl algebra $\mathcal{D}_{\mathbb{A}^{1}}$ verifies the strong identity condition (\SIC).
  \end{proposition}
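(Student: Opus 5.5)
The plan is to verify the strong identity condition directly for the candidates
\[
  \one_{n} := \frac{1}{n!}\,[x^{n}]^{\mathsf{L}}_{0}\otimes[\partial^{n}]^{\mathsf{R}}_{0}\in\mathfrak{A}_{n},
\]
using the explicit description of the mode transition algebra. By \zcref{prop:modeAofWeyl} in rank one, $\mathfrak{A}_{p,-q}\cong \k\, x^{p}\otimes\partial^{q}$, and $\mathsf{A}_{0}=\k$. Indeed, $\mathfrak{L}^{0}\cong\k[x]$ and $\mathfrak{R}^{0}\cong\k[\partial]$, so $\mathsf{A}_0=\mathfrak{L}^0_0=\k$. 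Hence every element of $\mathfrak{A}_{n,-m}$ is a scalar multiple of $\mathfrak{a}=[x^{n}]^{\mathsf{L}}_{0}\otimes[\partial^{m}]^{\mathsf{R}}_{0}$, and by linearity it suffices to check \eqref{eq:SIC} on these generators.

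The multiplication on $\mathfrak{A}$ is induced by $\ostar$, namely
\[
  (\alpha\otimes\beta)\star(\gamma\otimes\delta)=\alpha\otimes(\beta\ostar\gamma)\,\delta .
\]
By \zcref{lem:def:ostar} together with \zcref{lem:partialx=1} and \zcref{rem:partialx=0}, we have $[\partial^{a}]^{\mathsf{R}}_{0}\ostar[x^{b}]^{\mathsf{L}}_{0}=\delta_{a,b}\,a!\in\k=\mathsf{A}_0$. It follows that
\[
  \one_{n}\star\mathfrak{a}=\frac{1}{n!}\,[x^{n}]^{\mathsf{L}}_{0}\otimes n!\,[\partial^{m}]^{\mathsf{R}}_{0}=\mathfrak{a},
  \qquad
  \mathfrak{a}\star\one_{m}=[x^{n}]^{\mathsf{L}}_{0}\otimes\frac{m!}{m!}\,[\partial^{m}]^{\mathsf{R}}_{0}=\mathfrak{a}.
\]
The hypothesis $\Q\subset\k$ is used exactly here, to make sense of $1/n!$.

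The main point requiring care is the identification of $\mathfrak{L}^{0}$ and $\mathfrak{R}^{0}$ with free polynomial modules, so that the $x^p\otimes\partial^q$ really span each block. This needs $\NL[1]\mathcal{D}=\cNL[1]\mathcal{D}$, which holds for the canonical seminorm considered here. It also needs the normal-ordering basis $x^{i}\partial^{j}$ of $\mathcal{D}$, which gives $\mathcal{D}/\mathcal{D}\partial\cong\k[x]$ and $\mathcal{D}/x\mathcal{D}\cong\k[\partial]$.

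As a cross-check, and as an alternative route, one can instead use \eqref{eq:SIC:action}. Via \zcref{lem:Act_of_fA_via_A}, $\one_n$ acts on $\mathfrak{L}^0_n$ through $[x^n\partial^n/n!]_n$, and $\frac{1}{n!}\partial^n x^n=1$ in the same way. Equivalently, one can produce the strong identity expansion
\[
  e_n=\sum_k\frac{(-1)^k}{n!\,k!}\,x^{n+k}\partial^{n+k}
\]
and invoke \zcref{thm:SIC-SIE}. I would keep the direct computation as the main argument.
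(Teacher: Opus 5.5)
Your proposal is correct and is essentially the paper's proof: the paper takes the same elements $\one_{n}=\frac{1}{n!}[x^{n}]^{\mathsf{L}}_{0}\otimes[\partial^{n}]^{\mathsf{R}}_{0}$ and checks both identities directly on the spanning elements $[x^{a}]^{\mathsf{L}}_{0}\otimes[\partial^{b}]^{\mathsf{R}}_{0}$ using $[\partial^{a}x^{a}]_{0}=a!$ from \zcref{lem:partialx=1}, which is your main computation. Your alternative strong identity expansion $e_{n}=\sum_{k}\frac{(-1)^{k}}{n!\,k!}x^{n+k}\partial^{n+k}$ is also valid, but it is not needed.
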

  \begin{proof}
    Consider the following elements:
    \[
      \one_{n} = \frac{1}{n!}[x^{n}]^{\mathsf{L}}_{0}\otimes[\partial^{n}]^{\mathsf{R}}_{0}\in\mathfrak{A}_{n}.
    \]
    Then, for any individual $[x^{a}]^{\mathsf{L}}_{0}\otimes[\partial^{b}]^{\mathsf{R}}_{0}$, we have 
    \[
      ([x^{a}]^{\mathsf{L}}_{0}\otimes[\partial^{b}]^{\mathsf{R}}_{0}) \star \one_{b} 
      = [x^{a}]^{\mathsf{L}}_{0}\otimes\frac{1}{b!}[\partial^{b}x^{b}\partial^{b}]^{\mathsf{R}}_{0} 
      = [x^{a}]^{\mathsf{L}}_{0}\otimes[\partial^{b}]^{\mathsf{R}}_{0},
    \]
    and similarly $\one_{a}\star ([x^{a}]^{\mathsf{L}}_{0}\otimes[\partial^{b}]^{\mathsf{R}}_{0}) = [x^{a}]^{\mathsf{L}}_{0}\otimes[\partial^{b}]^{\mathsf{R}}_{0}$.
  \end{proof}
  \begin{remark}\label{rem:SICnonegWeyl}
    On the other hand, by \zcref{lem:partialx=1}, if there is any integer $n\in\N$ vanishing in $\k$, then the multiplication $\star$ degenerates on higher levels and thus there is no strong identity elements. 
  \end{remark}

  Applying \zcref{prop:decomposition_of_fA,thm:decomposition_of_fL}, we obtain the following:
  \begin{corollary}
    We have the following formulas for the Weyl algebra $\mathcal{D}_{\mathbb{A}^{1}}$:
    \begin{align*}
      \mathsf{A}_{n} &= \prod_{k=0}^{n}\mu(\mathfrak{A}_{k}) = \fun{span}_{\k}\Set*{[1]_{n},[x\partial]_{n},\cdots,[x^{n}\partial^{n}]_{n}},\\
      \mathfrak{L}^{n}_{p} &= \bigoplus_{k=0}^{n} \mu(\mathfrak{A}_{p+k,-k}) = \fun{span}_{\k}\Set*{[x^{p}]_{n},[x^{p+1}\partial]_{n},\cdots,[x^{p+n}\partial^{n}]_{n}},\\
      \mathfrak{R}^{n}_{-q} &= \bigoplus_{k=0}^{n} \mu(\mathfrak{A}_{k,-k-q}) = \fun{span}_{\k}\Set*{[\partial^{q}]_{n},[x\partial^{1+q}]_{n},\cdots,[x^{n}\partial^{n+q}]_{n}}.
    \end{align*}
  \end{corollary}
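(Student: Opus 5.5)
The corollary follows by reading the earlier decompositions in the explicit basis of \zcref{prop:modeAofWeyl}. By \zcref{prop:SICegWeyl} the strong identity condition holds, so \zcref{prop:decomposition_of_fA} applies and each $\mu\colon\mathfrak{A}_{n}\to\mathsf{A}_{n}$ is injective with split image. Iterating the split exact sequences \eqref{eq:ES:fA} down to $\mathsf{A}_{0}=\mathfrak{A}_{0}$ gives the ring decomposition $\mathsf{A}_{n}\cong\prod_{k=0}^{n}\mathfrak{A}_{k}$. Here $\mathfrak{A}_{k}$ is embedded in $\mathsf{A}_{n}$ via $\mu$ followed by the splittings, i.e.\ as the image of $[e_{\ge k}]$-type elements.

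For the $\mathfrak{L}^{n}$ and $\mathfrak{R}^{n}$ formulas, I would likewise iterate the split sequences of \zcref{thm:decomposition_of_fL}. This gives $\mathfrak{L}^{n}_{p}\cong\bigoplus_{k=0}^{n}\mathfrak{A}_{p+k,-k}$ and $\mathfrak{R}^{n}_{-q}\cong\bigoplus_{k=0}^{n}\mathfrak{A}_{k,-k-q}$. The product and direct sum agree because the index range is finite.

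Next I would identify the images explicitly. By \zcref{prop:modeAofWeyl} in rank one, $\mathfrak{A}_{p+k,-k}$ is spanned by $[x^{p+k}]^{\mathsf{L}}_{0}\otimes[\partial^{k}]^{\mathsf{R}}_{0}$, and $\mu$ sends this to $[x^{p+k}\partial^{k}]_{n}$. Substituting $p=0$ and $p=-q$ (or the analogous $\mathfrak{R}$ case) gives the listed spanning sets. For $\mathfrak{R}^{n}_{-q}$ the generator of $\mathfrak{A}_{k,-k-q}$ maps to $[x^{k}\partial^{k+q}]_{n}$, which matches the statement.

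The main subtlety is the notation. Each equality in the statement combines two facts: the internal decomposition into images of $\mu$, and the claim that these images are the $\k$-spans of the single monomials. For the span description, I would also argue directly that these monomials span. Modulo $\NL[n+1]\mathcal{D}$, every element of degree $p$ is a combination of normally ordered monomials $x^{p+k}\partial^{k}$. Those with $k\ge n+1$ lie in $\cNL[n+1]$, so only $k\le n$ survive.

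Linear independence of these monomials follows from injectivity of $\mu$. Each $\mathfrak{A}_{p+k,-k}\cong\k$ is free of rank one, so its image under $\mu$ is nonzero. The images for different $k$ lie in distinct summands of the decomposition, so the $k\le n$ monomials are independent. This completes the proof, using $\Q\subset\k$ only through \zcref{prop:SICegWeyl}.
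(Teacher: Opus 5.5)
\textbf{Verdict: the conclusion is reachable, but one step of your argument is false.}

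Your overall route is the paper's: get SIC from \zcref{prop:SICegWeyl}, iterate the split sequences of \zcref{prop:decomposition_of_fA,thm:decomposition_of_fL}, and read off generators from \zcref{prop:modeAofWeyl}. Your normal-ordering argument for spanning is also correct. The false step is identifying each summand with a single monomial line.

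\textbf{Where $\mu$ lands.} On $\mathfrak{A}_{p+k,-k}$, $\mu$ lands in $\mathfrak{L}^{k}_{p}$ (level $k$), where it sends the generator to $[x^{p+k}\partial^{k}]_{k}$. The assignment $[\alpha]^{\mathsf{L}}_{0}\otimes[\beta]^{\mathsf{R}}_{0}\mapsto[\alpha\beta]_{n}$ is not even well defined into level $n>k$. The reason is that $\NL[1]U_{p+k}\cdot U_{-k}$ only lies in $\NL[k+1]U_{p}$; for example $x^{p+k+1}\partial\cdot\partial^{k}$ is not in $\NL[n+1]$.

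\textbf{Where the summand sits.} Inside $\mathfrak{L}^{n}_{p}$, the summand $\mathfrak{A}_{p+k,-k}$ is its image under the splittings, namely $\mathfrak{L}^{n}_{p}e_{k}$. Correspondingly, the $k$-th ring factor of $\mathsf{A}_{n}$ is $e_{k}\mathsf{A}_{n}$, not something cut out by $e_{\ge k}$. The monomial $[x^{p+k}\partial^{k}]_{n}$ is merely some lift of $[x^{p+k}\partial^{k}]_{k}$, and not this particular one.

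\textbf{A counterexample.} Take $\mathsf{A}_{1}=\k[\theta]/(\theta^{2}-\theta)$ with $\theta=[x\partial]_{1}$. Its ring factors are $\k(1-\theta)\cong\mathfrak{A}_{0}$ and $\k\theta\cong\mathfrak{A}_{1}$. The element $[1]_{1}=(1-\theta)+\theta$ lies in neither. More generally, $[x^{k}\partial^{k}]_{n}=\theta(\theta-1)\cdots(\theta-k+1)$ is nonzero at $\theta=k+1,\dots,n$, so it has components in every factor $k,\dots,n$.

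\textbf{Consequences.}
\begin{itemize}
\item The claim that the images for different $k$ lie in distinct summands is false, so your independence argument does not go through.
\item The corollary's two equalities must be read separately: one is an abstract product/sum decomposition, the other a spanning statement for the whole module.
\item The monomial lines do not even give a ring decomposition of $\mathsf{A}_{n}$.
\end{itemize}

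\textbf{The repair is easy.}
\begin{itemize}
\item \emph{Spanning.} This is all the second equality asserts, and your normal-ordering argument gives it over any $\k$. The ideal $\cNL[n+1]\mathcal{D}=\mathcal{D}\partial^{n+1}$ is spanned by normally ordered monomials with $\partial$-exponent at least $n+1$. So by the PBW basis, the classes $[x^{p+k}\partial^{k}]_{n}$ with $\max\{0,-p\}\le k\le n$ form a $\k$-basis of $\mathfrak{L}^{n}_{p}$. The same holds for $\mathfrak{R}^{n}_{-q}$ and for $\mathsf{A}_{n}=\mathfrak{L}^{n}_{0}$.
\item \emph{Independence via $\mu$.} If you prefer this route, argue unitriangularly along the tower $\mathfrak{L}^{n}_{p}\to\mathfrak{L}^{n-1}_{p}\to\cdots\to\mathfrak{L}^{0}_{p}$. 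The class $[x^{p+k}\partial^{k}]_{n}$ vanishes in $\mathfrak{L}^{k-1}_{p}$. In $\mathfrak{L}^{k}_{p}$ it maps to $\mu$ of the free generator of $\mathfrak{A}_{p+k,-k}$. Given a relation, project to the level of the smallest index carrying a nonzero coefficient.
\end{itemize}
Only the product decomposition needs SIC, and hence $\Q\subset\k$.
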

  These fine structures would be helpful in the calculation of $\mathcal{D}$-modules. For instance, one can see applications in later of this section.

\subsection{Arbitrary rank}
  Now, we move to the arbitrary rank situation.
  The multiplication on $\mathfrak{A}$ is determined by the following lemma:
  \begin{lemma}\label{lem:partialx-multi}
    For any multi-indices $\alpha=(\alpha_i)$ and $\beta=(\beta_i)$, we have
    \[
      [\vect{\partial}^{\alpha}]^{\mathsf{R}}_{0}\ostar[\vect{x}^{\beta}]^{\mathsf{L}}_{0} = 
      \begin{dcases*}
        \alpha! & if $\alpha=\beta$, \\
        0 & otherwise,
      \end{dcases*}
    \]
    where $\alpha!$ is defined as the product of all $\alpha_i!$'s.
  \end{lemma}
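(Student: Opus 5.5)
The plan is to reduce to the rank-one computation of \zcref{lem:partialx=1} via the tensor product decomposition $\mathcal{D}_{\mathbb{A}^{d}} = \mathcal{D}_{\mathbb{A}^{1}}^{\otimes d}$. The pairing $\ostar$ of \zcref{lem:def:ostar} is computed by multiplying representatives and projecting to $\mathsf{A}_{0} = U_{0}/(\NL[1]U+\NR[1]U)_{0}$. So the task is to compute the class of $\vect{\partial}^{\alpha}\vect{x}^{\beta}$ in $\mathsf{A}_{0}$ when $\abs{\alpha}=\abs{\beta}$, and to see that it vanishes otherwise.

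First, if $\abs{\alpha}\neq\abs{\beta}$, the pairing is $0$ by definition, since the degrees do not sum to zero. Next, since the generators in distinct coordinates commute, we have $\vect{\partial}^{\alpha}\vect{x}^{\beta} = \prod_{i}\partial_{i}^{\alpha_i}x_i^{\beta_i}$, with the factors in any order. By \zcref{prop:seminorm-on-tensor}, applied inductively to $\mathcal{D} = \bigotimes_i \k\langle x_i,\partial_i\rangle$, we get $\mathsf{A}_{0}(\mathcal{D}) = \bigotimes_i \mathsf{A}_{0}(\mathcal{D}_{\mathbb{A}^{1}})$ by \zcref{coro:A-tensor}. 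Moreover, the class of a pure tensor $\bigotimes_i u_i$ with $u_i\in(\mathcal{D}_{\mathbb{A}^1})_{0}$ is $\bigotimes_i[u_i]_{0}$.

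The problem is that an individual factor $\partial_i^{\alpha_i}x_i^{\beta_i}$ need not have degree zero when only the total degree vanishes. To handle this, I would use the bigraded description in \zcref{coro:mode-transition-algebra-tensor}: the map $\mathfrak{R}^0\otimes_U\mathfrak{L}^0\to\mathsf{A}_0$ respects the tensor factorization, so the pairing is the tensor product of the rank-one pairings. Concretely, if $\alpha_i\neq\beta_i$ for some $i$, then the $i$-th factor lies in $(\mathcal{D}_{\mathbb{A}^1})_{\beta_i-\alpha_i}$ with nonzero degree. Because the $d$ degrees sum to zero, some factor $j$ has strictly negative degree. That factor lies in $\cNL[1]$ of its copy (using $U_{\le -1}\subset\NL[1]U$, from unitality), and therefore the product lies in $\NL[1]\mathcal{D}$ by the formula in \zcref{prop:seminorm-on-tensor}. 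Alternatively, one can use \zcref{rem:partialx=0} factorwise. Either way the class is $0$.

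When $\alpha=\beta$, each factor has degree zero, and \zcref{lem:partialx=1} gives $[\partial_i^{\alpha_i}x_i^{\alpha_i}]_0=\alpha_i!$. Taking the tensor product yields $\prod_i\alpha_i! = \alpha!$.

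The main obstacle is the mixed-degree case: I must make sure that a product of factors with nonzero individual degrees but total degree zero really dies in $\mathsf{A}_0$. The argument above via a negative-degree factor and the left ideal property is what I would try first. If subtleties arise with the ordering of the factors, I would move the negative-degree factor to the rightmost position using the commutativity of distinct coordinates, so that the product visibly lies in $U\,U_{\le -1}=\cNL[1]\mathcal{D}$.
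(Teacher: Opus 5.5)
Your proof is correct and follows the paper's argument. In both, you commute the variables to get $\vect{\partial}^{\alpha}\vect{x}^{\beta}=\prod_i\partial_i^{\alpha_i}x_i^{\beta_i}$, kill the case $\alpha\neq\beta$ using a single-variable factor of nonzero degree, and apply \zcref{lem:partialx=1} factorwise when $\alpha=\beta$. The detour through \zcref{prop:seminorm-on-tensor} and \zcref{coro:A-tensor} is unnecessary. Your fallback is also a bit cleaner than the paper's justification: you move a strictly negative-degree factor to the right, so the product visibly lies in the left ideal $\mathcal{D}\mathcal{D}_{\le -1}=\cNL[1]\mathcal{D}$, whereas the paper appeals to $\cNL[1]\mathcal{D}+\cNR[1]\mathcal{D}$, which is not a two-sided ideal.
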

  \begin{proof}
    First, by the commutativity between different variables, we may write $\vect{\partial}^{\alpha}\vect{x}^{\beta}$ into the form 
    \[
      \vect{\partial}^{\alpha}\vect{x}^{\beta} = \cdots \partial_i^{\alpha_i} \cdots \partial_j^{\alpha_j} \cdots x_i^{\beta_i} \cdots x_j^{\beta_j} \cdots = 
      \cdots \partial_i^{\alpha_i} x_i^{\beta_i} \cdots \partial_j^{\alpha_j} x_j^{\beta_j} \cdots.
    \]
    
    If $\alpha\neq\beta$, then at least we have $\alpha_{i_0}\neq\beta_{i_0}$ for some $i_0$. Then, since $\partial_{i_0}^{\alpha_{i_0}}x_{i_0}^{\beta_{i_0}} \in \cNL[1]\mathcal{D}+\cNR[1]\mathcal{D}$ (cf. \zcref{rem:partialx=0}), the above must vanish modulo $\cNL[1]\mathcal{D}+\cNR[1]\mathcal{D}$. 
    On the other hand, when $\alpha = \beta$, we can repeatedly apply \zcref{lem:partialx=1} to each variable to obtain the statement.
  \end{proof}

  \begin{theorem}\label{thm:SICegWeyl}
    If the rational number field $\mathbb{Q}$ is contained in $\k$. 
    Then, the Weyl algebra $\mathcal{D}$ verifies the strong identity condition (\SIC).
  \end{theorem}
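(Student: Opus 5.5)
The plan is to write down explicit strong identity elements, generalizing the rank-one construction of \zcref{prop:SICegWeyl}. For each $n\in\N$, set
\[
  \one_{n} := \sum_{\abs{\alpha}=n}\frac{1}{\alpha!}[\vect{x}^{\alpha}]^{\mathsf{L}}_{0}\otimes[\vect{\partial}^{\alpha}]^{\mathsf{R}}_{0}\in\mathfrak{A}_{n}.
\]
This makes sense because $\Q\subset\k$, so each $\alpha!$ is invertible. By \zcref{prop:modeAofWeyl}, the blocks $\mathfrak{A}_{p,-q}$ are spanned by the elements $[\vect{x}^{\gamma}]^{\mathsf{L}}_{0}\otimes[\vect{\partial}^{\beta}]^{\mathsf{R}}_{0}$ with $\abs{\gamma}=p$ and $\abs{\beta}=q$. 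Since $\star$ is bilinear, it therefore suffices to verify the identities \eqref{eq:SIC} on these spanning elements.

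For the right identity, the multiplication on $\mathfrak{A}$ comes from $\ostar$ (\zcref{def:mode-transition-algebra}), so we compute
\[
  ([\vect{x}^{\gamma}]^{\mathsf{L}}_{0}\otimes[\vect{\partial}^{\beta}]^{\mathsf{R}}_{0})\star\one_{q}
  =\sum_{\abs{\alpha}=q}\frac{1}{\alpha!}[\vect{x}^{\gamma}]^{\mathsf{L}}_{0}\otimes\bigl([\vect{\partial}^{\beta}]^{\mathsf{R}}_{0}\ostar[\vect{x}^{\alpha}]^{\mathsf{L}}_{0}\bigr)[\vect{\partial}^{\alpha}]^{\mathsf{R}}_{0}.
\]
Now apply \zcref{lem:partialx-multi}. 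The pairing $[\vect{\partial}^{\beta}]^{\mathsf{R}}_{0}\ostar[\vect{x}^{\alpha}]^{\mathsf{L}}_{0}$ vanishes unless $\alpha=\beta$, and when $\alpha=\beta$ it equals the scalar $\beta!$ in $\mathsf{A}_{0}$. Only the term $\alpha=\beta$ survives, and its coefficient $\frac{1}{\beta!}\cdot\beta!$ equals $1$, so the product is the original element. The left identity $\one_{p}\star(-)$ is checked the same way, using the pairing $[\vect{\partial}^{\alpha}]^{\mathsf{R}}_{0}\ostar[\vect{x}^{\gamma}]^{\mathsf{L}}_{0}=\delta_{\alpha,\gamma}\,\gamma!$.

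The step needing the most care is justifying that the scalar $\beta!\in\mathsf{A}_{0}$ can be moved across the tensor product $\otimes_{\mathsf{A}_{0}}$ as an honest scalar. For this we use that $\mathsf{A}_{0}$ is a $\k$-algebra and that $[\beta!]_{0}=\beta!\cdot 1_{0}$ lies in the image of $\k$, hence acts centrally as multiplication by $\beta!$. We should also confirm that the identifications in \zcref{prop:modeAofWeyl} are compatible with the $\mathsf{A}_{0}$-balanced tensor product; this holds because $\mathfrak{L}^{0}\cong\k[\vect{x}]$ and $\mathfrak{R}^{0}\cong\k[\vect{\partial}]$ as graded spaces.

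An alternative route, which also serves as a consistency check, is to observe that $\mathcal{D}\cong\mathcal{D}_{\mathbb{A}^1}^{\otimes d}$ as graded rings with the total grading. One then combines \zcref{prop:SICegWeyl} with the tensor-product lemma for strong identity elements in \zcref{sec:tensor}. That lemma produces exactly $\one_{n}=\sum_{p+q=n}\one^{\tt I}_{p}\otimes\one^{\tt II}_{q}$, and by induction on $d$ this agrees with the formula above.
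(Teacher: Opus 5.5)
Your proposal is correct and is essentially the paper's own proof. It uses the same elements $\one_{n}=\sum_{\abs{\alpha}=n}\frac{1}{\alpha!}[\vect{x}^{\alpha}]^{\mathsf{L}}_{0}\otimes[\vect{\partial}^{\alpha}]^{\mathsf{R}}_{0}$ and checks them on spanning tensors $[\vect{x}^{\beta}]^{\mathsf{L}}_{0}\otimes[\vect{\partial}^{\gamma}]^{\mathsf{R}}_{0}$ via \zcref{lem:partialx-multi}; your extra care about moving $\beta!$ across $\otimes_{\mathsf{A}_{0}}$ is harmless, since for this grading $\mathsf{A}_{0}=\k$.
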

  \begin{proof}
    Consider the following elements:
    \[
      \one_{n} = 
      \sum_{\abs{\alpha}=n}\frac{1}{\alpha!}[\vect{x}^{\alpha}]^{\mathsf{L}}_{0}\otimes[\vect{\partial}^{\alpha}]^{\mathsf{R}}_{0}\in\mathfrak{A}_{n},
    \]
    where $\alpha!$ is defined as the product of all $\alpha_i!$'s.
    Then, for any individual $[\vect{x}^{\beta}]^{\mathsf{L}}_{0}\otimes[\vect{\partial}^{\gamma}]^{\mathsf{R}}_{0}$ with $\abs{\beta}=n$ and $\abs{\gamma}=m$, applying \zcref{lem:partialx-multi}, we have
    \begin{align*}
      \one_{n} \star ([\vect{x}^{\beta}]^{\mathsf{L}}_{0}\otimes[\vect{\partial}^{\gamma}]^{\mathsf{R}}_{0}) 
      &= \sum_{\abs{\alpha}=n}\frac{1}{\alpha!}([\vect{x}^{\alpha}]^{\mathsf{L}}_{0}\otimes[\vect{\partial}^{\alpha}]^{\mathsf{R}}_{0}) \star ([\vect{x}^{\beta}]^{\mathsf{L}}_{0}\otimes[\vect{\partial}^{\gamma}]^{\mathsf{R}}_{0}) \\
      &= \sum_{\abs{\alpha}=n}\frac{1}{\alpha!}[\vect{x}^{\alpha}\vect{\partial}^{\alpha}\vect{x}^{\beta}]^{\mathsf{L}}_{0}\otimes[\vect{\partial}^{\gamma}]^{\mathsf{R}}_{0} \\
      &= \frac{1}{\beta!}[\vect{x}^{\beta}\vect{\partial}^{\beta}\vect{x}^{\beta}]^{\mathsf{L}}_{0}\otimes[\vect{\partial}^{\gamma}]^{\mathsf{R}}_{0} 
      = [\vect{x}^{\beta}]^{\mathsf{L}}_{0}\otimes[\vect{\partial}^{\gamma}]^{\mathsf{R}}_{0}
    \end{align*}
    similarly, $([\vect{x}^{\beta}]^{\mathsf{L}}_{0}\otimes[\vect{\partial}^{\gamma}]^{\mathsf{R}}_{0})\star \one_{m} = [\vect{x}^{\beta}]^{\mathsf{L}}_{0}\otimes[\vect{\partial}^{\gamma}]^{\mathsf{R}}_{0}$.
  \end{proof}
  \begin{remark}
    The strong identity element $\one_{1}$ is clearly related to the \emph{Euler vector field} $\sum_{i}x_{i}\partial_{i}$. 
    For an application of this notion, we refer to \cite[Theorem 1.6.5]{HTTDmodules} for the $\mathcal{D}$-affineness of projective spaces. 
    A preparation for the proof is the following fact: differential operators on $\mathbb{P}^{d}_{\k}$ can be obtained from that on $\mathcal{D}_{d+1}$. Indeed, $\mathcal{D}_{\mathbb{P}^{d}_{\k}}$ is the quotient of the degree-zero (under the Fuchsian grading) part of $\mathcal{D}_{d+1}$ by the ideal generated by the Euler vector field.
  \end{remark}

  By \zcref{prop:decomposition_of_fA,thm:decomposition_of_fL}, we obtain the following:
  \begin{corollary}\label{coro:ALRofWeyl}
    We have the following formulas for the Weyl algebra $\mathcal{D}$:
    \begin{align*}
      \mathsf{A}_{n} &= \prod_{k=0}^{n}\mu(\mathfrak{A}_{k}) = \bigoplus_{\abs{\alpha}=\abs{\beta}\le n}\k[\vect{x}^{\alpha}\vect{\partial}^{\beta}]_{n},\\
      \mathfrak{L}^{n}_{p} &= \bigoplus_{k=0}^{n} \mu(\mathfrak{A}_{p+k,-k}) = \bigoplus_{\abs{\alpha}-\abs{\beta}=p, \abs{\beta}\le n}\k[\vect{x}^{\alpha}\vect{\partial}^{\beta}]_{n},\\
      \mathfrak{R}^{n}_{-q} &= \bigoplus_{k=0}^{n} \mu(\mathfrak{A}_{k,-k-q}) = \bigoplus_{\abs{\alpha}\le n, \abs{\beta}-\abs{\alpha}=q}\k[\vect{x}^{\alpha}\vect{\partial}^{\beta}]_{n}.
    \end{align*}
    In particular, all of them are free $\k$-modules of finite rank. 
  \end{corollary}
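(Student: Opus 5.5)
The plan is to apply \zcref{prop:decomposition_of_fA,thm:decomposition_of_fL}, which are available because \zcref{thm:SICegWeyl} gives the strong identity condition when $\Q\subset\k$. Under \SIC, the map $\mu$ is injective and the exact sequences split. Iterating $\mathsf{A}_{n}\cong\mathfrak{A}_{n}\times\mathsf{A}_{n-1}$ down to $\mathsf{A}_{0}=\mathfrak{A}_{0}$ gives $\mathsf{A}_{n}=\prod_{k=0}^{n}\mu(\mathfrak{A}_{k})$. In the same way, $\mathfrak{L}^{n}_{p}\cong\mathfrak{A}_{p+n,-n}\times\mathfrak{L}^{n-1}_{p}$ iterates down to $\mathfrak{L}^{0}_{p}=\mathfrak{A}_{p,0}$, and $\mathfrak{R}^{n}_{-q}$ is handled symmetrically.

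Next I identify each $\mu$-image explicitly. By \zcref{prop:modeAofWeyl}, $\mathfrak{A}_{p+k,-k}$ is free with basis $[\vect{x}^{\alpha}]^{\mathsf{L}}_{0}\otimes[\vect{\partial}^{\beta}]^{\mathsf{R}}_{0}$, where $\abs{\alpha}=p+k$ and $\abs{\beta}=k$. The map $\mu$ sends this basis element to $[\vect{x}^{\alpha}\vect{\partial}^{\beta}]_{n}$. Since $\mu$ is injective, these images are linearly independent inside $\mathfrak{L}^{n}_{p}$. Taking the direct sum over $k\le n$ then gives the stated basis $\{[\vect{x}^{\alpha}\vect{\partial}^{\beta}]_{n}\}$ with $\abs{\alpha}-\abs{\beta}=p$ and $\abs{\beta}\le n$. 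The cases $\mathsf{A}_{n}$ (take $p=0$) and $\mathfrak{R}^{n}_{-q}$ are the same.

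The main point needing care is that the splitting really produces an internal direct sum $\bigoplus_{k}\mu(\mathfrak{A}_{p+k,-k})$ inside $\mathfrak{L}^{n}_{p}$, and not merely an abstract product. I would make this precise with the idempotents $e_{k}$ from the strong identity expansion (\zcref{lem:grOmegaPhin}). In the proof of that lemma, $\mathfrak{L}^{n}_{p}=\bigoplus_{i}e_{i}\mathfrak{L}^{n}_{p}$, and the summand $e_{p+k}\mathfrak{L}^{n}_{p}$ is exactly the $\mu$-image of $\mathfrak{A}_{p+k,-k}$. This follows by induction on $n$, using that $\ker\pi_{n}$ is the image of $\mu$ on the top block.

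As a sanity check, the Weyl algebra $\mathcal{D}$ has the PBW basis $\vect{x}^{\alpha}\vect{\partial}^{\beta}$. Monomials with $\abs{\beta}\ge n+1$ span $\NL[n+1]\mathcal{D}$, so $\mathfrak{L}^{n}$ is spanned by the remaining ones, which agrees with the count above. For $\mathsf{A}_{n}$, which is the quotient of degree $0$ by $\mathsf{N}^{n+1}$, the surviving monomials are those with $\abs{\alpha}=\abs{\beta}\le n$.

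Freeness of finite rank is then immediate: for fixed $p$ and $n$ there are finitely many multi-indices with $\abs{\beta}\le n$ and $\abs{\alpha}=\abs{\beta}+p$.
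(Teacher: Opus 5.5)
Your route to the product decompositions is the paper's: iterate the split sequences of \zcref{prop:decomposition_of_fA,thm:decomposition_of_fL}, which are available because of \SIC. The gap is in how you then read off the monomial basis.

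The map $\mu$ of \zcref{thm:decomposition_of_fL} lands in $\mathfrak{L}^{n}_{p}$ only for the top block $\mathfrak{A}_{p+n,-n}$. For $k<n$, the formula $[\alpha]^{\mathsf{L}}_{0}\otimes[\beta]^{\mathsf{R}}_{0}\mapsto[\alpha\beta]_{n}$ is not even well defined on $\mathfrak{A}_{p+k,-k}$. For instance, with $d=1$ we have $[x\partial]^{\mathsf{L}}_{0}\otimes[1]^{\mathsf{R}}_{0}=0$ in $\mathfrak{A}_{0}$, yet $[x\partial]_{1}\neq0$ in $\mathsf{A}_{1}$.

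The lower blocks sit inside $\mathfrak{L}^{n}_{p}$ only through the splitting, i.e.\ as the summands $e_{p+k}\mathfrak{L}^{n}_{p}$. These summands are in general \emph{not} spanned by the monomials $[\vect{x}^{\alpha}\vect{\partial}^{\beta}]_{n}$ with $\abs{\beta}=k$, which is what your second and third paragraphs together assert. Already for $d=1$, $n=1$, the element $h=[x\partial]_{1}=\mu(\one_{1})$ satisfies $h^{2}=h$ in $\mathsf{A}_{1}$. The ring factor of $\mathsf{A}_{1}\cong\mathfrak{A}_{1}\times\mathsf{A}_{0}$ corresponding to $\mathfrak{A}_{0}$ is therefore $(1-h)\mathsf{A}_{1}=\k[1-x\partial]_{1}$, not $\k[1]_{1}$. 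So the two decompositions in the statement agree only as free $\k$-modules of equal rank, not block by block. Your step ``take the direct sum over $k\le n$'' rests on this false identification.

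The conclusion is easy to recover in either of two ways.
\begin{enumerate}
\item Induct on $n$ using only exactness and the injectivity of $\mu$, not the splitting. The map $\mu$ carries the basis of $\mathfrak{A}_{p+n,-n}$ from \zcref{prop:modeAofWeyl} to $\{[\vect{x}^{\alpha}\vect{\partial}^{\beta}]_{n} : \abs{\beta}=n\}$, which is therefore a basis of $\ker\pi$. The monomials with $\abs{\beta}<n$ are lifts of the inductively known basis of $\mathfrak{L}^{n-1}_{p}$. A basis of the kernel together with lifts of a basis of the (free) quotient is a basis.
\item Promote your sanity check to the actual proof. Since $\NL[n+1]\mathcal{D}$ is spanned by a \emph{subset} of the PBW basis (the monomials with $\abs{\beta}\ge n+1$), the quotient is free on the complementary monomials, not merely spanned by them.
\end{enumerate}
The second route gives the monomial bases and the finite-rank freeness for arbitrary $\k$, without \SIC. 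The hypothesis $\mathbb{Q}\subset\k$ is then used only for the product decompositions.
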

  These fine structures would be helpful in the calculation of $\mathcal{D}$-modules. For instance, one can see applications in later of this section.

\subsection{Consequences of the Morita-type equivalence}
  We keep assuming that $\k$ is a $\mathbb{Q}$-algebra. 
  By \zcref{thm:PhiOmega0,coro:PhiOmega0-ord}, we have the following Morita-type equivalences:
  \[
    \Phi^{\mathsf{L}}_{0}\colon
    \Mod(\mathsf{A}_{0})\rightleftharpoons
    \Mod[Ex](\mathcal{D})\colon
    \Omega^{\mathsf{L}}_{0}
    \txand
    \Phi^{\mathsf{L}}_{0}\colon
    \Mod*(\mathsf{A})\rightleftharpoons
    \Mod*[Ex](\mathcal{D})\colon
    \Omega^{\mathsf{L}}_{0}
  \]
  To explain what do they mean in the context of $\mathcal{D}$-modules, we first recall that any $\mathcal{D}$-module $\mathcal{M}$ can be viewed as an $\mathcal{O}$-module equipped with a \emph{de Rham complex}:
  \[
    \mathcal{M}\overset{\nabla}{\longrightarrow}\mathcal{M}\otimes_{\mathcal{O}}\Omega^{1}\overset{\nabla}{\longrightarrow}\mathcal{M}\otimes_{\mathcal{O}}\Omega^{2}\overset{\nabla}{\longrightarrow}\cdots
  \]
  Here, the connection $\nabla$ is determined by the action of vector fields $\partial_i$ on $M$. 
  Hence, we see that: 
  \[
    \Omega^{\mathsf{L}}_{0}(\mathcal{M}) = \mathsf{H}^0_{\rm dR}(\mathcal{M}).
  \]
  Namely, $\Omega^{\mathsf{L}}_{0}(\mathcal{M})$ is precisely the space of \emph{horizontal sections} of $\mathcal{M}$. 

  On the other hand, for any $\k$-module $M$, the induced module $\Phi^{\mathsf{L}}_{0}(M)$ is precisely the free $\mathcal{O}$-module $\mathcal{O}\otimes_{\k}M$ equipped with the canonical de Rham complex: 
  \[ 
    M \otimes_{\k} \left(\mathcal{O}\overset{\d{}}{\longrightarrow}\Omega^{1}\overset{\d{}}{\longrightarrow}\Omega^{2}\overset{\d{}}{\longrightarrow}\cdots\right)
  \]

  A priori, being exhaustive only means the underlying space is discrete and the action of $\mathcal{D}$ is continuous (see \zcref{prop:ExModDiscCont}).
  However, the above Morita-type equivalences provid us the following characterizations, which is non-obvious at first sight:
  \begin{corollary}\label{coro:ExModWeyl}
    A $\mathcal{D}$-module $\mathcal{M}$ is exhaustive if and only if there is a $\k$-module $M$ such that 
    \[
      \mathcal{M} \cong \Phi^{\mathsf{L}}_{0}(M).
    \]
    In other words, exhaustive $\mathcal{D}$-modules are the same as (possibly infinite rank) vector bundles over $\mathbb{A}^{d}_{\k}$ with trivial connections and quasi-rigid ones are those of finite rank.
  \end{corollary}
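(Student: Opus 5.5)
The plan is to combine the Morita-type equivalence of \zcref{thm:PhiOmega0} (for $R=\k$) with the explicit description of $\mathsf{A}_{0}$ and $\Phi^{\mathsf{L}}_{0}$ for the Weyl algebra. Since $\k\supset\Q$, \zcref{thm:SICegWeyl} gives \SIC, hence \SIE\ by \zcref{thm:SIC-SIE}. \zcref{thm:PhiOmega0} then says $\Phi^{\mathsf{L}}_{0}\colon\Mod(\mathsf{A}_{0}|)\to\Mod[Ex](\mathcal{D}|)$ is an equivalence with quasi-inverse $\Omega^{\mathsf{L}}_{0}$. By \zcref{coro:ALRofWeyl} with $n=0$, $\mathsf{A}_{0}=\k[1]_{0}\cong\k$, so $\mathsf{A}_{0}$-modules are just $\k$-modules. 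Consequently every exhaustive $\mathcal{D}$-module $\mathcal{M}$ satisfies $\mathcal{M}\cong\Phi^{\mathsf{L}}_{0}(\Omega^{\mathsf{L}}_{0}(\mathcal{M}))$ with $M:=\Omega^{\mathsf{L}}_{0}(\mathcal{M})$ a $\k$-module. For the converse, $\Phi^{\mathsf{L}}_{0}(M)$ is positively graded and therefore exhaustive by \zcref{coro:Omega-vs-grading}. This settles the first assertion.

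For the geometric reformulation, I would identify $\Phi^{\mathsf{L}}_{0}(M)=\mathfrak{L}^{0}\otimes_{\k}M$ with $\mathcal{O}\otimes_{\k}M$, using $\mathfrak{L}^{0}\cong\k[\vect{x}]$ as in the proof of \zcref{prop:modeAofWeyl}. Here $x_i$ acts by multiplication and $\partial_i$ acts via $\partial_i[f]^{\mathsf{L}}_{0}=[\partial_i f]^{\mathsf{L}}_{0}=[\partial f/\partial x_i]^{\mathsf{L}}_{0}$, because $\partial_i f - f\partial_i = \partial f/\partial x_i$ and $f\partial_i\in\NL[1]\mathcal{D}$. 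This is exactly the trivial connection $\d\otimes\id_M$. Conversely, a vector bundle $\mathcal{O}\otimes M$ carrying the trivial connection is visibly of this form.

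For the quasi-rigid statement, I would invoke \zcref{coro:PhiOmega0-ord}. Its hypothesis (i) holds because \zcref{coro:ALRofWeyl} shows every component of $\mathfrak{L}^{n}$ and $\mathfrak{R}^{n}$ is free of finite rank over $\k$, and $\mathsf{A}_{n}$ is free of finite rank as well. Rigidity over $\mathsf{A}_{n}$ is the one point needing care; if it is awkward, I would fall back on the explicit fact that the components $\Phi^{\mathsf{L}}_{0}(M)_{p}=\k[\vect{x}]_{p}\otimes M$ are finite direct sums of copies of $M$. The upshot is that quasi-rigid exhaustive modules correspond to rigid, i.e.\ finitely generated projective, $\k$-modules $M$, which is the meaning of "finite rank". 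The main obstacle is bookkeeping here: ensuring that "quasi-rigid" (defined through some grading) matches rigidity of $M=\Omega^{\mathsf{L}}_{0}$. For that I would use \zcref{lem:grOmegaPhi}, which says the $\gr\Omega$-grading agrees with the natural grading, so $M$ is the degree-zero component of the induced module.
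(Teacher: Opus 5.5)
Your argument for the first assertion and for the trivial-connection description is the paper's own. \SIC{} for $\mathcal{D}$ (\zcref{thm:SICegWeyl}) gives \SIE. Then \zcref{thm:PhiOmega0} with $\mathsf{A}_{0}=\k$ gives $\mathcal{M}\cong\Phi^{\mathsf{L}}_{0}(\Omega^{\mathsf{L}}_{0}(\mathcal{M}))$. Finally, $\mathfrak{L}^{0}\cong\mathcal{O}$ with $\partial_{i}$ acting by differentiation identifies $\Phi^{\mathsf{L}}_{0}(M)$ with $\mathcal{O}\otimes_{\k}M$ carrying $\mathrm{d}\otimes\mathrm{id}_{M}$. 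For the quasi-rigid clause the paper, like you, simply invokes \zcref{coro:PhiOmega0-ord}.

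On that last clause, two points.

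\emph{Rigidity at higher levels is not needed.} You do not need rigidity of $\mathfrak{L}^{n}_{p}$ over $\mathsf{A}_{n}$ for $n>0$. The argument of \zcref{coro:PhiOmega0-ord} only uses level $0$, where $\mathsf{A}_{0}=\k$ and $\mathfrak{L}^{0}_{p}=\k[\vect{x}]_{p}$ is free of finite rank.

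\emph{The step you flagged fails as planned.} \zcref{lem:grOmegaPhi} only identifies the $\gr\Omega$-grading of $\Phi^{\mathsf{L}}_{0}(M)$ with its natural grading. It says nothing about an arbitrary positive grading. With the literal definition (an exhaustive module is quasi-rigid if \emph{some} positive grading has rigid components), the clause is in fact false. Take $W=\bigoplus_{k\in\N}\mathcal{O}$ and grade the $k$-th copy so that $\k[\vect{x}]_{j}$ sits in degree $j+k$. Then $W_{m}=\bigoplus_{k=0}^{m}\k[\vect{x}]_{m-k}$ is free of finite rank for every $m$, so $W$ is quasi-rigid. Yet $W\cong\Phi^{\mathsf{L}}_{0}(\bigoplus_{k\in\N}\k)$ as a $\mathcal{D}$-module, and $\Omega^{\mathsf{L}}_{0}(W)=\bigoplus_{k\in\N}\k$ is not rigid.

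So quasi-rigidity of an exhaustive module must be read through its canonical $\gr\Omega$-grading. The paper does this tacitly in the proof of \zcref{coro:PhiOmega0-ord} and explicitly in \zcref{coro:EndMode}. With that reading, your fallback is a complete argument and avoids \zcref{coro:PhiOmega0-ord} altogether. By \zcref{lem:grOmegaPhi}, the $\gr\Omega$-components of $\Phi^{\mathsf{L}}_{0}(M)$ are $\k[\vect{x}]_{p}\otimes_{\k}M$. These are finite direct sums of copies of $M$, with degree-zero part $M$ itself, so they are all rigid exactly when $M$ is.
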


\section{Application to vertex operator algebras}\label{sec:VOA}
  In this section, we apply the theory developed in the previous sections to the setting of vertex operator algebras.
  We will consider the enveloping algebras of vertex operator algebras and their twisted variants. 
  For further background, see \cite{FZ92,FBZ04,Fre07,NT,MNT10,DGK23}. 
  For simplicity, we assume $\k$ is a field of characteristic $0$. But using the techniques of \cite{VertexRings}, one can generalize the discussion to positive characteristic and or more generally, to commutative rings.

\subsection{Vertex operator algebra and their automorphisms and modules}
  \begin{definition}\label{def:VOA}
    A \concept{vertex operator algebras} (\emph{VOA} for short) is a quadruple $(\mathbb{V}, Y(\cdot,z), \vac, \cfv)$, throughout simply denoted by $\mathbb{V}$, where 
    \begin{itemize}
      \item $\nota{\mathbb{V}} = \bigoplus_{k\in\Z} \mathbb{V}_k$ is a graded vector space;
      \item $\nota{Y( \nocolor{\placeholder} , z )}\colon \mathbb{V} \to (\End{\mathbb{V}})\dbrack{z^{\pm1}}$ is a linear map assigning to each $a \in \mathbb{V}$ the \concept{vertex operator} 
      \[
        Y(a,z) = \sum_{n\in\Z} \nota{\vo{a}{n}} z^{-n-1},
      \]
      which is a formal power series with coefficients in $\End{\mathbb{V}}$;
      \item $\nota{\vac} \in V_0$ and $\nota{\cfv} \in V_2$ are two distinguished vectors, called the \concept{vacuum vector} and the \concept{conformal vector} respectively.
    \end{itemize}
    These data must satisfy the following axioms:
    \begin{enumerate}[label={\textbf{V}\arabic*}]
      \item \concept{\small(Grading-restriction)}
          For each $n \in \Z$: $\dim_{\k} \mathbb{V}_n < \infty$, and $\mathbb{V}_n = 0$ for $n \ll 0$.
      \item \concept{\small(Truncation)} 
          For all $a,b \in \mathbb{V}$: $\vo{a}{n}b = 0$ if $n$ is sufficiently large.
      \item \concept{\small(Vacuum)} 
          $Y( \vac, z ) = \id_{\mathbb{V}}$.
      \item \concept{\small(Creation)} 
          For all $a \in \mathbb{V}$: 
          $Y( a, z ){\vac} \in \mathbb{V}\dbrack{z}$ and $\lim\limits_{z\to 0}Y( a, z ){\vac} = a$.
      \item \concept{\small(Jacobi identity)} 
          For all $a, b \in \mathbb{V}$ and $m,n,k \in \Z$:
          \[
            \sum_{i\ge 0}\binom{m}{i}
                \vo{(\vo{a}{k+i}b)}{m+n-i} =
            \sum_{i\ge 0}(-1)^{i}\binom{k}{i}\left(
                    \vo{a}{m+k-i}\vo{b}{n+i} - 
                    (-1)^{k}\vo{b}{n+k-i}\vo{a}{m+i}
                \right).
          \]
      \item \concept{\small(Virasoro relations)} 
          Defining $\nota{\VL{n}}$ ($n \in \Z$) as the coefficients of $Y( \cfv, z )$: 
          \[
            Y( \cfv, z ) = 
            \sum_{n \in \mathbb{Z}}
                \vo{\cfv}{n}z^{-n-1} = 
            \sum_{n \in \mathbb{Z}}
                \VL{n}z^{-n-2},
          \]
          then there is a number $\cch\in\k$, called the \concept{central charge}, such that
          \[
            \Lie*{\VL{m}}{\VL{n}} = 
              (m-n)\VL{m+n} + 
              \delta_{m+n,0}\frac{(m-1)m(m+1)}{12}\cch,
          \]
          where $m,n \in \Z$.
      \item \concept{\small($\VL{0}$-spectrum)} 
          For each $n\in\N$: $\VL{0}|_{\mathbb{V}_{n}} = n\id_{\mathbb{V}_{n}}$.
      \item \concept{\small($\VL{-1}$-derivative)} 
          For all $a \in \mathbb{V}$: $Y( \VL{-1}a, z ) = \pdv{z} Y( a, z )$.
    \end{enumerate}
  \end{definition}
  \begin{definition}
    An \concept{automorphism} of a vertex operator algebra $\mathbb{V}$ is a linear automorphism $g\colon \mathbb{V} \to \mathbb{V}$ preserving $\vac$ and $\cfv$ and such that
    \[
      gY(a,z)g^{-1} = Y(g(a),z)\txforall a \in \mathbb{V}.
    \]
  \end{definition}

  \begin{notation}\label{not:g-eigenspace}
    Let $g$ be an automorphism of $\mathbb{V}$ satisfying $g^T=\id_{\mathbb{V}}$ for some $T \in \N$. 
    Then, we can decompose $\mathbb{V}$ into eigenspaces of $g$:
    \begin{equation}
      \nota{\mathbb{V}^{[r]}}:=
            \Set*{  a \in \mathbb{V}  \mid  g(a) = \zeta_{T}^ra  },
    \end{equation}
    where $\nota{\zeta_{T}}$ is a fixed primitive $T$-th root of unity, and $[r]$ stands for the congruence class of $r$ modulo $T$.  
  \end{notation}
  \begin{remark}
    In the literature, $T$ is often taken to be the order of $g$. However, this does not affect the following discussions. Indeed, $\mathbb{V}^{[r]}$ has to be zero if $[r]$ is not annihilated by the order of $g$.
  \end{remark}

  \begin{definition}
    The $C_2$-subspace $C_2(\mathbb{V})$ of $\mathbb{V}$ is the subspace spanned by all elements of the form $a_{-2}b$, where $a,b \in \mathbb{V}$.
    $\mathbb{V}$ is said to be \concept{$C_2$-cofinite} if the quotient space $\mathbb{V}/C_2(\mathbb{V})$ is finite-dimensional. 
  \end{definition}

  \begin{definition}
    A \concept{weak $g$-twisted $\mathbb{V}$-module} is a vector space $W$ equipped with a linear map
    $
      Y_{W}(\cdot,z)\colon \mathbb{V} \to (\End{W})\dbrack{z^{\pm\sfrac{1}{T}}}
    $
    assigning to each $a \in \mathbb{V}_{k}$ the \concept{twisted vertex operator}
    \[
      Y_{W}(a,z) = \sum_{n\in\frac{1}{T}\Z} \nota{J_{n}^{W}(a)} z^{-n-k},
    \]
    such that the following axioms are satisfied:
    \begin{enumerate}[label={\textbf{M}\arabic*}]
      \item \concept{\small(Truncation)}
          For all $a \in \mathbb{V}$ and $w \in W$: $J_{n}^{W}(a)w = 0$ if $n$ is sufficiently large.
      \item \concept{\small(Vacuum)}
          $Y_{W}(\vac,z) = \id_{W}$.
      \item \concept{\small(Equivariance)}
          For all $a \in \mathbb{V}$:
          \[
            Y_{W}(g(a),z) = \lim_{z^{\sfrac{1}{T}} \to \zeta_{T}^{-1} z^{\sfrac{1}{T}}} Y_{W}(a,z).
          \]
          Note that the right-hand side is a coordinate change of formal power series in $z^{\sfrac{1}{T}}$.
      \item \concept{\small(Twisted Jacobi identity)}
          For all $a\in V^{[r]}, b\in V$, $m\in \frac{r}{T}+\Z, n\in\frac{1}{T}\Z$, and $k\in\Z$:
          \begin{align*}
            \MoveEqLeft
            \sum_{i\ge 0}\binom{\wt a+m-1}{i}
                J_{m+n+k}\left( \vo{a}{k+i}b \right)
            \\
            &=
            \sum_{i\ge 0}(-1)^{i}\binom{k}{i}\left(
                    J_{m+k-i}(a)J_{n+i}(b) - 
                    (-1)^{k}J_{n+k-i}(b)J_{m+i}(a)
                \right).
          \end{align*}
    \end{enumerate}
    A weak $g$-twisted $\mathbb{V}$-module $W$ is called an \concept{admissible $g$-twisted $\mathbb{V}$-module} if it admits a positive grading $W = \bigoplus_{n \in \frac{1}{T}\N} W_n$ such that for all $a \in \mathbb{V}$, $m \in \frac{1}{T}\Z$, and $n \in \frac{1}{T}\N$:
    \[
      J_{m}^{W}(a) W_n \subseteq W_{n-m}.
    \]
  \end{definition}
  \begin{definition}
    A \concept{grading-restricted generalized $g$-twisted $\mathbb{V}$-module} is a weak $g$-twisted $\mathbb{V}$-module $W$ together with a $\k$-grading $W = \bigoplus_{\lambda \in \k} W_{(\lambda)}$ such that the following axioms are satisfied:
    \begin{enumerate}[label={\textbf{M}\arabic*},start=5]
      \item \concept{\small(Grading-restriction)}
          For each $\lambda \in \k$: $\dim_{\k} W_{(\lambda)} < \infty$, and $W_{(\lambda + n)} = 0$ for $n\ll 0$.
      \item \concept{\small(Generalized $\VL{0}$-spectrum)}
          For each $\lambda \in \k$ and $w \in W_{(\lambda)}$: 
          \[
            (\VL{0} - \lambda \id_{W_{(\lambda)}})^{N} w = 0
          \]
          for some $N \in \N$.
    \end{enumerate}
    If the $\VL{0}$-action is semisimple, i.e. the exponent $N$ in \textbf{M}6 can always be taken to be $1$, then $W$ is called an \concept{ordinary $g$-twisted $\mathbb{V}$-module}.
  \end{definition}
  \begin{definition}
    $\mathbb{V}$ is said to be \concept{rational} if any admissible $\mathbb{V}$-module is completely reducible.
  \end{definition}

\subsection{(Twisted) enveloping algebras}
  Let $\mathbb{V}$ be a vertex operator algebra and let $g$ be an automorphism of $\mathbb{V}$ satisfying $g^T=\id_{\mathbb{V}}$ for some $T \in \N$. 
  We are going to construct the \emph{$g$-twisted} enveloping algebra of $\mathbb{V}$. When $g=\id$, it recovers the untwisted enveloping algebra of $\mathbb{V}$.

  \begin{construction}
    Let $\nota{\mathbf{U}_{\sfrac{1}{T}}(\mathbb{V})}$ be the linear space spanned by the (possibly empty) words
    \[  
      J_{n_1}(a^1) \cdots J_{n_k}(a^k),
    \]
    where $a^1,\cdots,a^k\in \mathbb{V}$ and $n_1,\cdots,n_k\in\frac{1}{T}\Z$. We further assume that $J_{n_1}(a^1) \cdots J_{n_k}(a^k)$ is linear in each $a^i$ for $1 \le i \le k$. 
    It is a unital associative algebra with the multiplication given by \emph{concatenation} and the identity $1_{\mathbf{U}_{\sfrac{1}{T}}(\mathbb{V})}$ given by the empty word. 
    This algebra is $\frac{1}{T}\Z$-graded with the grading given by 
    \[
      \deg J_{n_1}(a^1)\cdots J_{n_k}(a^k) = -n_1-\cdots-n_k.
    \]
    Let $\widehat{\mathbf{U}}_{\sfrac{1}{T}}(\mathbb{V})_{\bullet}$ be the degreewise completions of $\mathbf{U}_{\sfrac{1}{T}}(\mathbb{V})_{\bullet}$ with respect to its \emph{canonical seminorm}:
    \[
      \nota{\widehat{\mathbf{U}}_{\sfrac{1}{T}}(\mathbb{V})_{\bullet}} 
      := 
      \varprojlim_{n} 
          \frac{\mathbf{U}_{\sfrac{1}{T}}(\mathbb{V})_{\bullet}}{\NL[n]\mathbf{U}_{\sfrac{1}{T}}(\mathbb{V})_{\bullet}} 
      = 
      \varprojlim_{n'}
          \frac{\mathbf{U}_{\sfrac{1}{T}}(\mathbb{V})_{\bullet}}{\NR[n']\mathbf{U}_{\sfrac{1}{T}}(\mathbb{V})_{\bullet}}.
    \]
    Elements in $\widehat{\mathbf{U}}_{\sfrac{1}{T}}(\mathbb{V})$ are called ($\sfrac{1}{T}$-fractional) \concept{modes}.
  \end{construction}

  \begin{definition}
    \label{def:enveloping-algebra}
    The \concept{$g$-twisted enveloping algebra} $\nota{\mathscr{U}_{g}(\mathbb{V})}$ is the quotient graded algebra of $\widehat{\mathbf{U}}_{\sfrac{1}{T}}(\mathbb{V})$ by the following homogeneous relations:
    \begin{enumerate}[label={\textbf{U}\arabic*}]
      \item \concept{\small(Vacuum)}
          \[
            J_{n}(\vac) \sim \delta_{n,0}1_{\widehat{\mathbf{U}}_{\sfrac{1}{T}}(\mathbb{V})}
            \qquad (n \in \tfrac{1}{T}\Z);
          \] 
      \item \concept{\small($g$-equivariance)}
          \[
            \zeta_{T}^{-nT}J_{n}(g(a)) \sim J_{n}(a)
            \qquad (a \in \mathbb{V}, n \in \tfrac{1}{T}\Z);
          \]
      \item \concept{\small(Twisted Jacobi identity)} 
          \begin{align*}
            \MoveEqLeft
            \sum_{i\ge 0}\binom{\wt a+m-1}{i}
                J_{m+n+k}\left( \vo{a}{k+i}b \right)
            \\
            &\sim
            \sum_{i\ge 0}(-1)^{i}\binom{k}{i}\left(
                    J_{m+k-i}(a)J_{n+i}(b) - 
                    (-1)^{k}J_{n+k-i}(b)J_{m+i}(a)
                \right)
          \end{align*}
          for all $a\in \mathbb{V}^{[r]}, b\in \mathbb{V}$, $m\in \frac{r}{T}+\Z, n\in\frac{1}{T}\Z$, and $k\in\Z$.
    \end{enumerate}
    For simplicity, when $g$ is the identity automorphism, we omit the subscripts $\sfrac{1}{1}$ and $\id$.
  \end{definition}
  \begin{remark}
    It seems that the above definition relies on $T$. But in fact, if we replace $T$ by any multiple of the order of $g$, then the resulting $g$-twisted enveloping algebra is isomorphic to the original one.
    Indeed, by the $g$-equivariance property, we must have $J_{n}(a) = 0$ whenever $a \in \mathbb{V}^{[r]}$ but $n\not\in \frac{r}{T}+\Z$.
  \end{remark}

  Modules of $\mathscr{U}_{g}(\mathbb{V})$ provide the weakest notion of modules of $\mathbb{V}$ that is even weaker the so called \emph{weak modules}. 
  By spelling out the definitions, we have the following dictionary:
  \begin{proposition}
    Let $W$ be a weak $g$-twisted $\mathbb{V}$-module. Then, we have:
    \begin{enumerate}
      \item $W$ is an exhaustive $\mathscr{U}_{g}(\mathbb{V})$-module if and only if it satisfies the following \emph{weak truncation}:
      \begin{quote}
        for any $w \in W$, we require $J^{W}_{n_1}(a^1) \cdots J^{W}_{n_k}(a^k)w = 0$ whenever $n_1+\cdots+n_k$ is sufficiently large.
      \end{quote}
      \item $W$ is a positively-graded (or positively-gradable) $\mathscr{U}_{g}(\mathbb{V})$-module if and only if it is admissible\footnote{The usage of the terminology ``admissible module'' in literature is ambiguous: it could be either an object in the category of graded modules, or its image in the category of weak modules.}.
      \item \label{prop:whatISmodV}
      If $W$ is a grading-restricted generalized $g$-twisted $\mathbb{V}$-module, then it is a direct sum of quasi-rigid (cf. \zcref{def:ordinary}) $\mathscr{U}_{g}(\mathbb{V})$-modules.
      Over an algebraically closed field $\k$, the converse also holds.
      \item Whenever $W$ is graded, the contragredient module $W'$ of $W$ is the degreewise dual $\mathscr{U}_{g}(\mathbb{V})$-module of $W$.
    \end{enumerate}
  \end{proposition}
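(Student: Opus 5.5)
I will treat the four assertions as a dictionary and verify each by spelling out definitions, using \zcref{prop:ExModDiscCont} and \zcref{coro:Omega-vs-grading} as the bridge between the topological notions on $\mathscr{U}_{g}(\mathbb{V})$ and the module axioms. Throughout, a weak $g$-twisted module $W$ is a $\mathbf{U}_{\sfrac{1}{T}}(\mathbb{V})$-module by letting words act by composition of the $J^{W}_{n}(a)$. Axioms \textbf{M2}--\textbf{M4} say exactly that the relations \textbf{U1}--\textbf{U3} hold. So the first point is to decide when this action extends continuously to the completion $\widehat{\mathbf{U}}_{\sfrac{1}{T}}(\mathbb{V})$ and hence descends to $\mathscr{U}_{g}(\mathbb{V})$.

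For (i), by \zcref{prop:ExModDiscCont} exhaustiveness is the same as continuity of $U\otimes W\to W$ for the discrete topology on $W$. Since $\cNL$ is dense in $\NL$, this reduces to the following: for each $w$ there is $n$ with $U_{\le -n-1}w=0$. A spanning set of $U_{\le -n-1}$ is given by words with $-\sum n_i\le -n-1$, i.e. $\sum n_i\ge n+1$. So this is exactly weak truncation.

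One subtlety to check here is that a general element of $\cNL[n+1]$ is $u\cdot u'$ with $\deg u'\le -n-1$, which is killed by $u'$ first. Conversely, weak truncation shows that every infinite sum in the completion acts by a finite sum on each $w$, so the action extends and factors through the quotient by \textbf{U1}--\textbf{U3}. I also need to record that \textbf{M1}, applied iteratively, is weaker than weak truncation; this is why the hypothesis is stated as it is.

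For (ii), suppose $W$ is admissible with grading $W_{n}$, $n\in\frac1T\N$. Then $J_m(a)W_n\subset W_{n-m}$ gives $U_pW_n\subset W_{n+p}$, so $W$ is a positively-graded $U$-module. By \zcref{coro:Omega-vs-grading} it is exhaustive, hence a $\mathscr{U}_g$-module by (i). Conversely, a positive grading on a $\mathscr{U}_g$-module restricts to the modes and recovers the admissible grading condition. Weak truncation holds because positive grading forces $U_{\le -n-1}$ to kill $W_{\le n}$.

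For (iii), first decompose $W=\bigoplus_{\lambda}W_{(\lambda)}$ by cosets $\lambda+\frac1T\Z$ of $\k/\frac1T\Z$. Each coset summand is a $U$-submodule, since modes shift $L_0$-generalized eigenvalues by elements of $\frac1T\Z$. On each summand, grading-restriction gives a positive grading with finite-dimensional components, which over a field are rigid, hence quasi-rigid. The converse over algebraically closed $\k$ proceeds as follows.
\begin{itemize}
\item A quasi-rigid graded module has finite-dimensional components, each stable under $L_0=J_0(\cfv)$-type operators, which lie in $U_0$.
\item Over algebraically closed $\k$, $L_0$ admits a generalized eigenspace decomposition on each component, which gives \textbf{M6}.
\item Reindexing by the generalized eigenvalues gives \textbf{M5}.
\end{itemize}
The main obstacle is showing that finitely many generalized eigenvalues occur per component, and that grading-restriction holds after regrouping: each $W_{(\lambda)}$ must be finite-dimensional even though several components may contribute. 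I would handle this by noting that the $L_0$-eigenvalues on $W_n$ are constrained relative to $n$ by the commutator $[L_0,J_m(a)]=mJ_m(a)$. Hence $L_0-n$ acts on the $U$-submodule generated by a bottom generalized eigenvector with a fixed shift, which pins each generalized eigenvalue to a single coset and a single degree.

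For (iv), the contragredient $W'=\bigoplus W_n^{*}$ carries the action defined via the opposite mode. Matching this with the right $U$-action on $W^{\dagger|\k}$ from \zcref{def:degreewise-dual}, up to the standard anti-involution identifying left and right modules, is a direct check on generators.
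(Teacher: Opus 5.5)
Your verification of (i), (ii), (iv) and the forward half of (iii) is the same ``spell out the definitions'' check the paper has in mind, and it is fine. Two small corrections. With the paper's convention $\deg J_m(a)=-m$, one has $[L_0,J_m(a)]=-mJ_m(a)$, not $+m$. In (iv), the contragredient of a $g$-twisted module is $g^{-1}$-twisted, so the opposite-mode map is an anti-isomorphism $\mathscr{U}_g(\mathbb{V})\to\mathscr{U}_{g^{-1}}(\mathbb{V})$, not an anti-involution.

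The genuine gap is the converse in (iii). You try to show that a quasi-rigid module (or a direct sum of them) itself satisfies \textbf{M5}, and you correctly identify finite-dimensionality of $W_{(\lambda)}$ as the obstacle. This obstacle cannot be overcome, because the claim is false.

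Take $g=\mathrm{id}$, $\mathbb{V}$ of CFT type, and $W=\bigoplus_{s\in\N}\mathbb{V}^{(s)}$, a sum of copies of the adjoint module, graded by $W_n:=\bigoplus_{s=0}^{n}\mathbb{V}^{(s)}_{n-s}$. This is a positive grading compatible with the modes, and every $W_n$ is finite-dimensional. So $W$ is a single quasi-rigid graded module, yet $W_{(0)}=\bigoplus_{s}\k\vac^{(s)}$ is infinite-dimensional. More simply, $\mathbb{V}^{\oplus\N}$ is a direct sum of quasi-rigid modules that violates \textbf{M5}.

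Your cyclic-submodule argument is consistent with this. It pins a generalized eigenvalue to one degree \emph{within} a submodule of fixed shift. But the copy $\mathbb{V}^{(s)}$ has shift $-s$, and infinitely many shifts in the same coset feed the same generalized eigenspace. That only finitely many eigenvalues occur on each $W_n$ is automatic and does not help.

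What is true, and what the paper later uses (grading-restricted generalized modules and quasi-rigid modules have the same ind-completion), is that every quasi-rigid module is a \emph{direct sum} of grading-restricted generalized modules. Your idea proves this once it is made global.
\begin{enumerate}
\item Let $D$ be the grading operator, $D|_{W_n}=n$. Both $L_0=J_0(\cfv)$ and $D$ satisfy $[\,\cdot\,,J_m(a)]=-mJ_m(a)$, so $L_0-D$ commutes with all modes. Since $W$ is exhaustive, elements of $\mathscr{U}_g(\mathbb{V})$ act through finite truncations on each vector, so $L_0-D$ commutes with all of $\mathscr{U}_g(\mathbb{V})$.
\item $L_0-D$ preserves each finite-dimensional $W_n$. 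Over algebraically closed $\k$ this gives $W=\bigoplus_\mu W^{\mu}$, the generalized eigenspaces of $L_0-D$, each a graded $\mathscr{U}_g(\mathbb{V})$-submodule.
\item On $W^{\mu}_n$ the only generalized eigenvalue of $L_0$ is $\mu+n$. Hence $W^{\mu}_{(\lambda)}=W^{\mu}_{\lambda-\mu}$ is finite-dimensional and vanishes for $\lambda-\mu\ll 0$.
\end{enumerate}
So each $W^{\mu}$ satisfies \textbf{M5} and \textbf{M6}. The direct sum over infinitely many $\mu$ is unavoidable, as the example shows.
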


  From the above interpretations, we have
  \begin{corollary}
    $\mathbb{V}$ is $g$-rational if and only if $\mathscr{U}_{g}(\mathbb{V})$ is $\k$-rational.
  \end{corollary}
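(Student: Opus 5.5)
The statement is a corollary: \emph{$\mathbb{V}$ is $g$-rational if and only if $\mathscr{U}_{g}(\mathbb{V})$ is $\k$-rational}. The plan is to translate both sides through the dictionary in the preceding proposition, with $U=\mathscr{U}_{g}(\mathbb{V})$ and $R=\k$ (a field of characteristic $0$).

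By \zcref{def:rational}, $\k$-rationality of $U$ means that every positively-graded left $U$-module is a direct sum of quasi-rigid simple ones. Since $R=\k$ is a field, quasi-rigid simply means each graded component is finite-dimensional. By part (ii) of the proposition, positively-graded $U$-modules are exactly admissible $g$-twisted $\mathbb{V}$-modules. $g$-rationality in turn means every admissible $g$-twisted module is completely reducible.

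\textbf{($\Leftarrow$)} Assume $U$ is $\k$-rational. Let $W$ be an admissible $g$-twisted module and fix an admissible grading on it. Then $W$ is a positively-graded $U$-module, so it is a direct sum of simple graded $U$-modules. By \zcref{coro:k-rational-coros}(v), $\Mod[Ex](U|)$ is semisimple. Hence the underlying module of $W$ is completely reducible as a $\mathbb{V}$-module, which is exactly what $g$-rationality requires.

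\textbf{($\Rightarrow$)} Assume $\mathbb{V}$ is $g$-rational. Every positively-graded $U$-module is admissible, hence completely reducible. I would first upgrade this to a graded decomposition. The argument is that the grading of a simple admissible module is unique up to shift, by \zcref{lem:S-grOmega}. Since the $\Omega$-filtration is intrinsic, decomposing $W$ by the $\gr\Omega$-grading lets one choose graded simple summands, which gives semisimplicity of $\Mod[Gr](U|)$.

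Next, each simple summand must be quasi-rigid, i.e.\ have finite-dimensional graded components. This is the step I expect to be the main obstacle. The plan is to use the classical theorem of Dong--Li--Mason for twisted modules: a $g$-rational VOA has finitely many irreducible admissible $g$-twisted modules, each of which is ordinary with finite-dimensional homogeneous spaces. By part (iii) of the proposition, such a module is quasi-rigid. In the paper's own language, this amounts to showing that simple $\mathsf{A}_{n}$-modules are finite-dimensional and then using $\Phi^{\mathsf{L}}_{n}$ together with \zcref{thm:rationality-semisimplicity}(ii). Finite-dimensionality of the simple $\mathsf{A}_{n}$-modules is exactly the input I would import from the cited VOA literature (the twisted Zhu algebra $A_{g,n}(\mathbb{V})$ is finite-dimensional semisimple under $g$-rationality), rather than attempt to derive it abstractly.

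Combining the graded semisimplicity with the rigidity of the simple summands, \zcref{thm:rationality-semisimplicity} ((ii)$\Rightarrow$(i)) yields that $U$ is $\k$-rational.
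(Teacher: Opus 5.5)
Your route is essentially the paper's. You translate through the dictionary: positively-graded $\mathscr{U}_g(\mathbb{V})$-modules are admissible $g$-twisted modules, and quasi-rigid over $\k$ means finite-dimensional graded components. You then import from Dong--Li--Mason (the theorem the paper cites in \zcref{rem:rational=semisimple}) that under $g$-rationality the irreducible admissible $g$-twisted modules are ordinary, or, in your alternative formulation, that simple $\mathsf{A}_n$-modules are finite-dimensional. In $(\Leftarrow)$ you do not need \zcref{coro:k-rational-coros}. $\k$-rationality already says that each admissible module, with its given grading, is a direct sum of irreducible admissible submodules, and that is exactly complete reducibility.

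The ``upgrade'' step in $(\Rightarrow)$ is not justified as written. Semisimplicity of the category of positively-graded modules is a statement about the pair $(W,W_\bullet)$ with its \emph{given} grading. The $\gr\Omega$-grading you decompose by is in general a different grading. For example, take $W=S\oplus S[5]$ together with an ungraded decomposition that uses a diagonal copy of $S$. Moreover, \zcref{lem:S-grOmega} needs the simple summands to be gradable, and you have not shown this for the summands of an arbitrary ungraded decomposition.

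The step is unnecessary if, as in Dong--Li--Mason, complete reducibility of an admissible module means a direct sum of irreducible admissible submodules, which are graded. If you do start from an ungraded decomposition, the following repair works. Let $p$ be a $U$-linear projection of $W$ onto a graded submodule $N$. Define $p_0(w):=(p(w))_k$ for $w\in W_k$. This map is again $U$-linear, because $U_j$ maps $W_i$ into $W_{i+j}$. It is homogeneous, lands in $N$, and is the identity on $N$. So graded submodules have graded complements. Combine this with the existence of graded-simple submodules, obtained by applying Zorn's lemma inside $Uw$ for a homogeneous $w$, and you get graded semisimplicity.

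With that fix, the rest of your argument via \zcref{thm:rationality-semisimplicity} (ii)$\Rightarrow$(i) goes through.
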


  In the rest of this section, we assume $\k$ is an algebraically closed field.

  Recall that:
  \begin{definition}
    A VOA $\mathbb{V}$ is called of \concept{CFT type} if $\mathbb{V}_0 = \k\vac$ and $\mathbb{V}_n = 0$ for $n < 0$. 
  \end{definition}
  If this is the case, then for any $\mathscr{U}_{g}(\mathbb{V})$, its Zhu algebra $\mathsf{A}_{0}$ is augmented: the augmentation is given by the module action map $\mathsf{A}_{0} \to \End_{\k}(\mathbb{V}_{0}) = \k$. Hence, by the discussion of Part 2, we have:
  \begin{theorem}\label{thm:SIC-CFT}
    For a VOA $\mathbb{V}$ of CFT type with a finite automorphism $g$, the following are equivalent:
    \begin{enumerate}
      \item $\mathscr{U}_{g}(\mathbb{V})$ satisfies the strong identity condition (\SIC).
      \item $\mathscr{U}_{g}(\mathbb{V})$ admits a strong identity expansion (\SIE).
      \item The \zcref[noname]{Omegasplit} condition holds for $\mathscr{U}_{g}(\mathbb{V})$.
      \item $\mathfrak{L}^{n}$ (resp. $\mathfrak{R}^{n}$) are projective objects in $\Mod[Ex](\mathscr{U}_{g}(\mathbb{V}))$.
      \item The adjunction 
        \[
          \begin{tikzcd}
            \Phi^{\mathsf{L}}_{0}\colon \Mod(\mathsf{A}_{0}|) & 
            \Mod[Ex](U|)
            \colon\Omega^{\mathsf{L}}_{0}.
            \ar[from=1-1, to=1-2, phantom, "\scriptstyle\bot" description]
            \ar[from=1-1, to=1-2, shift left=1ex] 
            \ar[from=1-2, to=1-1, shift left=1ex] 
          \end{tikzcd}
        \]
      is a Morita-type equivalence.
      \item For all $n$, we have Morita-type equivalence
        \[
          \begin{tikzcd}
            \mathbf{\Phi}^{\mathsf{L}}_{n}\colon \Mod(\mathscr{A}^{\mathsf{L}}_{n}|) & 
            \Mod[Ex](U|) \colon\Omega^{\mathsf{L}}_{n}.
            \ar[from=1-1, to=1-2, phantom, "\scriptstyle\bot" description]
            \ar[from=1-1, to=1-2, shift left=1ex] 
            \ar[from=1-2, to=1-1, shift left=1ex] 
          \end{tikzcd}
        \] 
    \end{enumerate}
    And if this is the case, any exhaustive $\mathscr{U}_{g}(\mathbb{V})$-module is admissible. 
  \end{theorem}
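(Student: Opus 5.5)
The plan is to assemble the equivalences from the unconditional implications of Part~\ref{part2} and then close the loop with the augmented criterion, using that $\mathsf{A}_{0}$ is augmented for CFT type.

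The equivalences (i)$\Leftrightarrow$(ii)$\Leftrightarrow$(iii) are \zcref{thm:SIC-SIE,thm:SIE-Omega}, applied to $U=\mathscr{U}_{g}(\mathbb{V})$. The grading is $\frac{1}{T}\Z$ rather than $\Z$, but the arguments only use the ordering of degrees, so they go through verbatim after rescaling. For the remaining implications from (ii):
\begin{itemize}
  \item (ii)$\Rightarrow$(vi) is \zcref{thm:bPhiOmega} with $R=\k$.
  \item (vi), together with its right-handed analogue (also given by \zcref{thm:bPhiOmega}), implies (iv) by \zcref{lem:bPhiOmega-Proj}.
  \item (iv)$\Rightarrow$(ii) is \zcref{thm:Proj-SIE}.
  \item (ii)$\Rightarrow$(v) is \zcref{thm:PhiOmega0}.
\end{itemize}
For (vi)$\Rightarrow$(iv) I will read (vi) as including the right version; if (vi) is meant one-sidedly, I would derive the right version via the left–right symmetry of \zcref{rem:opposite}. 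The same applies to the left/right version of (iv).

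The substantive direction is (v)$\Rightarrow$(i). Here I will invoke \zcref{thm:Adjeqk-SIC}, case (ii) (the left equivalence), which needs two inputs. First, $\mathsf{A}_{0}$ must be augmented. This holds via the action on $\mathbb{V}_{0}=\k\vac$: $\mathbb{V}$ is a positively graded $U$-module with $\Omega_{0}$-part containing $\mathbb{V}_{0}$, so $U_{0}$ acts on $\mathbb{V}_{0}$ through $\mathsf{A}_{0}$, giving a ring map $\mathsf{A}_{0}\to\k$. Second, $\Phi^{\mathsf{R}}_{0}(\k)$ must be quasi-rigid, i.e. each graded piece $\k\otimes_{\mathsf{A}_{0}}\mathfrak{R}^{0}_{-n}$ is finite-dimensional. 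This is the main obstacle. I would argue it through the identification of $\Phi^{\mathsf{R}}_{0}(\k)$ with the generalized Verma (right) module generated by the vacuum character, and bound it by modes. Since $\mathbb{V}$ is of CFT type and grading-restricted, $\mathfrak{R}^{0}_{-n}\otimes_{\mathsf{A}_{0}}\k$ is spanned by ordered monomials of modes $J_{m_1}(a^1)\cdots J_{m_k}(a^k)$ with $a^i$ in a finite-dimensional homogeneous space and $\sum m_i=n$ with all $m_i>0$ after PBW-type reordering using the Jacobi relations. There are finitely many such monomials, since zero modes act by scalars through the augmentation. If this reordering is delicate, I would instead identify $\Phi^{\mathsf{R}}_{0}(\k)$ with a quotient of the graded dual of $\mathbb{V}$ itself via \zcref{lem:induced-dual}, using the equivalence (v) to compare with the $\Omega_0$-generated module $\mathbb{V}'$. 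In either case each degree is finite-dimensional, so the module is rigid over the field $\k$.

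Finally, for the last sentence: under (ii), \zcref{lem:splitOmega} shows that every exhaustive module is gradable with a positive grading $W_{(\bullet)}$. By item (ii) of the dictionary proposition, positively gradable exhaustive modules are admissible.
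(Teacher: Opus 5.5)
Your route is the paper's own: the unconditional implications of Part~II, plus \zcref{thm:Adjeqk-SIC} for (v)$\Rightarrow$(i), with the augmentation read off from $\mathbb{V}_0=\k\vac$. You are right that \zcref{thm:Adjeqk-SIC}(ii) also needs $\Phi^{\mathsf{R}}_{0}(\k)$ to be quasi-rigid, and the paper's one-line proof never checks this. However, your main justification of it fails. CFT type and grading-restriction only bound $\dim\mathbb{V}_k$ for each fixed $k$. But $\mathfrak{R}^{0}_{-n}$ contains $J_n(a)$ for vectors $a$ of every weight, since the degree of $J_n(a)$ is $-n$ independently of $\wt a$. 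Cutting these down to finitely many ordered monomials needs a finite strong generating set, i.e.\ a cofiniteness hypothesis that is not in the statement. Nothing in the hypotheses makes $\k\otimes_{\mathsf{A}_0}\mathfrak{R}^0_{-n}$ finite-dimensional before (v) is used. Your fallback has the right ingredients but runs in the wrong direction. In the paper's formalism $\mathbb{V}'$ is a right $U$-module, about which the left-sided (v) says nothing directly. Also, \zcref{lem:induced-dual} exhibits $\Phi^{\mathsf{L}}_0(\k)$, not $\Phi^{\mathsf{R}}_0(\k)$, as a graded dual.

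The clean argument goes as follows.
\begin{enumerate}
  \item $\k\vac\subseteq\Omega^{\mathsf{L}}_0(\mathbb{V})$ is an $\mathsf{A}_0$-submodule; this is precisely the augmentation.
  \item Under (v), $\Phi^{\mathsf{L}}_0$ is an equivalence of abelian categories, hence exact. So the composite $\Phi^{\mathsf{L}}_0(\k\vac)\to\Phi^{\mathsf{L}}_0\Omega^{\mathsf{L}}_0(\mathbb{V})\cong\mathbb{V}$, $[\alpha]^{\mathsf{L}}_0\otimes\vac\mapsto\alpha\vac$, is injective and homogeneous. This is essentially the argument the paper uses for $M(1)^+$.
  \item Hence $\dim\Phi^{\mathsf{L}}_0(\k)_n\le\dim\mathbb{V}_n<\infty$.
  \item \zcref{lem:induced-dual} with $R=M=\k$ gives $\Phi^{\mathsf{L}}_0(\k)\cong\Phi^{\mathsf{R}}_0(\k)'$. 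So $(\Phi^{\mathsf{R}}_0(\k)_{-n})^\ast$ is finite-dimensional, hence so is $\Phi^{\mathsf{R}}_0(\k)_{-n}$, and \zcref{thm:Adjeqk-SIC}(ii) applies.
\end{enumerate}

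There are two smaller points. First, both the augmentation and this embedding use that $\mathbb{V}$ is a $U$-module. That is false for $U=\mathscr{U}_g(\mathbb{V})$ with $g\neq\id$, since the adjoint module is untwisted. So, in your proposal as in the paper, (v)$\Rightarrow$(i) is only established for $g=\id$. To cover $g\neq\id$ you would need a $g$-twisted module with finite-dimensional graded pieces and one-dimensional lowest degree to play the role of $\mathbb{V}$. Second, \zcref{rem:opposite} does not turn a left-sided hypothesis on $U$ into a right-sided one on the same $U$; it only passes to $U^{\mathrm{op}}$. So for (vi)$\Rightarrow$(iv)$\Rightarrow$(ii) via \zcref{thm:Proj-SIE} you must take (vi) two-sided, or produce a grading-reversing anti-involution of $\mathscr{U}(\mathbb{V})$, e.g.\ from contragredient duality.
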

  \begin{remark}
    Even if $\mathbb{V}$ is not of CFT type or $\k$ is not an algebraically closed field, the implications except for (v) $\Rightarrow$ (i) still hold. See \zcref{sec:Morita_to_SIC} for more conditions that guarantee (v) $\Rightarrow$ (i).
  \end{remark}

  It is shown in \cite[Theorem 9.2.1]{MNT10} that if $\mathbb{V}$ is $C_2$-cofinite, then $\mathscr{U}(\mathbb{V})$ is quasi-finite, namely all components of $\mathfrak{L}^{n}, \mathfrak{R}^{n}$ are finite-dimensional over $\k$. Hence, we have
  \begin{corollary}
    If $\mathbb{V}$ is furthermore $C_2$-cofinite, then the above conditions are equivalent to the following:
    \begin{enumerate}[start=6]
      \item We have the adjunction 
        \[
          \begin{tikzcd}
            \Phi^{\mathsf{L}}_{0}\colon \Mod*(\mathsf{A}_{0}|) & 
            \Mod*[Ex](U|)
            \colon\Omega^{\mathsf{L}}_{0}.
            \ar[from=1-1, to=1-2, phantom, "\scriptstyle\bot" description]
            \ar[from=1-1, to=1-2, shift left=1ex] 
            \ar[from=1-2, to=1-1, shift left=1ex] 
          \end{tikzcd}
        \]
      and it is a Morita-type equivalence.
      \item For all $n$, we have Morita-type equivalence
        \[
          \begin{tikzcd}
            \mathbf{\Phi}^{\mathsf{L}}_{n}\colon \Mod*(\mathscr{A}^{\mathsf{L}}_{n}|) & 
            \Mod*[Ex](U|) \colon\Omega^{\mathsf{L}}_{n}.
            \ar[from=1-1, to=1-2, phantom, "\scriptstyle\bot" description]
            \ar[from=1-1, to=1-2, shift left=1ex] 
            \ar[from=1-2, to=1-1, shift left=1ex] 
          \end{tikzcd}
        \] 
    \end{enumerate}
  \end{corollary}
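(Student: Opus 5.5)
The corollary asserts that, for $C_2$-cofinite $\mathbb{V}$, conditions (vi) and (vii) (the quasi-rigid versions of the Morita-type equivalences) are equivalent to (i)--(v) of \zcref{thm:SIC-CFT}. By that theorem it suffices to prove (ii)$\Rightarrow$(vi), (ii)$\Rightarrow$(vii), (vi)$\Rightarrow$(i), and (vii)$\Rightarrow$(vi). The key input throughout is \cite[Theorem 9.2.1]{MNT10}: all components of $\mathfrak{L}^{n}$ and $\mathfrak{R}^{n}$ are finite-dimensional over $\k$. Since each $\mathsf{A}_{n}$ is a $\k$-algebra and $\k$ is a field, these components are a fortiori finite over $\mathsf{A}_{n}$. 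So $U=\mathscr{U}_{g}(\mathbb{V})$ is weakly quasi-finite in the sense of \zcref{def:quasi}, with $R=\k$ semisimple.

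\textbf{Forward direction.} Assume (ii), i.e. \SIE. Then \zcref{coro:PhiOmega0-ord}, case (ii), directly gives the adjoint equivalence between $\Mod*(\mathsf{A}_{0}|)$ and $\Mod*[Ex](U|)$, which is (vi). Likewise \zcref{coro:HigherMoritaOrdinary}, case (ii), gives (vii) for every $n$. Here $\mathscr{A}^{\mathsf{L}}_{n}\cong\mathscr{A}_{n}$ by \zcref{lem:thickZhu}.

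\textbf{Converse.} I will deduce \SIC\ from (vi) via \zcref{thm:Adjeqk-SIC} (ii), using the remark following it, which allows the small categories $\Mod*(\mathsf{A}_{0}|)$ and $\Mod*[Ex](U|)$. Two facts are needed:
\begin{itemize}
\item $\mathsf{A}_{0}$ is augmented, through the action on $\mathbb{V}_{0}=\k\vac$; this uses the CFT-type hypothesis.
\item $\Phi^{\mathsf{R}}_{0}(\k)=\k\otimes_{\mathsf{A}_{0}}\mathfrak{R}^{0}$ has finite-dimensional components, so it lies in $\Mod*[Gr](|U)$. This follows from finiteness of $\mathfrak{R}^{0}_{-m}$.
\end{itemize}
The one point to check is that the proof of \zcref{thm:Adjeqk-SIC} only applies \zcref{lem:induced-dual} to the rigid module $M=\k$. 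Indeed, the diagram chase in \zcref{lem:induced-dual} uses only the unit isomorphism on $\Phi^{\mathsf{L}}_{0}(\k)$ and the injectivity into $\Omega^{\mathsf{L}}_{0}$ of a module that must itself lie in the quasi-rigid category. I expect this verification to be the main obstacle. Specifically, one must confirm that $\Phi^{\mathsf{R}}_{0}(\k)^{\k|\dagger}$ is a quasi-rigid exhaustive left module, which holds because its graded dual has finite-dimensional components. Only then can the restricted equivalence (vi) be applied to it.

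Finally, (vii)$\Rightarrow$(vi) is the case $n=0$, since $\mathscr{A}^{\mathsf{L}}_{0}=\Omega^{\mathsf{L}}_{0}(\mathfrak{L}^{0})=\mathsf{A}_{0}$. To see this equality: $\mathfrak{L}^{0}$ is concentrated in nonnegative degrees, and an element of positive degree killed by $\NL[1]U$ is forced to vanish. This last claim I would verify using the same argument as in the proof of \zcref{lem:grOmegaPhi}. If it proves delicate, I would instead route (vii) through projectivity of $\mathfrak{L}^{n}$ in the quasi-rigid category, in the spirit of \zcref{lem:bPhiOmega-Proj} and \zcref{thm:Proj-SIE}.
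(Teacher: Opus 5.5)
Your forward implications and your (vi)$\Rightarrow$(i) follow the paper's route. The finiteness from \cite{MNT10} gives weak quasi-finiteness, which feeds \zcref{coro:PhiOmega0-ord} and \zcref{coro:HigherMoritaOrdinary} in case (ii). The remark after \zcref{thm:Adjeqk-SIC} then yields \SIC{}, using the augmentation from CFT type and $\Phi^{\mathsf R}_0(\k)\in\Mod*[Gr](|U)$. You also correctly check that only the quasi-rigid modules $\k$, $\Phi^{\mathsf L}_0(\k)$ and $\Phi^{\mathsf R}_0(\k)^{\k|\dagger}$ enter \zcref{lem:induced-dual}. (One caveat is shared with the paper. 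With the literal definition of $\Mod*[Ex](U|)$, $\Omega^{\mathsf L}_0$ need not land in $\Mod*(\mathsf A_0|)$: for example, $\bigoplus_{j\ge 0}\Phi^{\mathsf L}_0(\k)[j]$ has finite-dimensional graded pieces but infinite-dimensional $\Omega^{\mathsf L}_0$. So in the forward direction, quasi-rigidity of exhaustive modules has to be read off the $\gr\Omega$-grading.)

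The gap is the step (vii)$\Rightarrow$(vi). It rests on $\mathscr A^{\mathsf L}_0=\Omega^{\mathsf L}_0(\mathfrak L^0)=\mathsf A_0$, which you propose to prove ``by the same argument as in \zcref{lem:grOmegaPhi}''. That lemma sits in \zcref{sec:splitOmega} under the standing assumption of a strong identity expansion, and its proof uses the idempotents $e_n$, so the argument is circular. Nor is the identity automatic. $\mathscr A^{\mathsf L}_0$ consists of the classes of $u$ with $\NL[1]U\cdot u\subset\NL[1]U$, and these can have positive degree. For the rank-one Weyl algebra over a field of characteristic $\ell>0$ (where \SIC{} fails, \zcref{rem:SICnonegWeyl}), $\NL[1]\mathcal D=\mathcal D\partial$ and $\partial x^{\ell}=x^{\ell}\partial$. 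Hence $\mathcal D\partial\cdot x^{\ell}\subset\mathcal D\partial$ and $0\neq[x^{\ell}]^{\mathsf L}_0\in\mathscr A^{\mathsf L}_0$. In your setting the identity does follow from (vi), since then $\Omega^{\mathsf L}_0(\mathfrak L^0)=\Omega^{\mathsf L}_0\Phi^{\mathsf L}_0(\mathsf A_0)=\mathsf A_0$. But that is the conclusion you are after, not an input; a priori the level-$0$ thick equivalence in (vii) is a different statement from (vi).

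Your fallback via projectivity (\zcref{lem:bPhiOmega-Proj}, \zcref{thm:Proj-SIE}) is the kind of argument the paper's Part II diagram uses, and it does adapt to quasi-rigid categories. Under (vii) and $C_2$-cofiniteness, $\Omega^{\mathsf L}_{n-1}$ sends the surjection $\pi_n\colon\mathfrak L^n\to\mathfrak L^{n-1}$ to an epimorphism of finite-dimensional $\mathscr A^{\mathsf L}_{n-1}$-modules, hence a surjection. So $1_{n-1}$ lifts and $\pi_n$ splits.

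However, this only gives the left splittings, i.e.\ elements $\one_n\in\mathfrak A_n$ with $\mathfrak a\star\one_n=\mathfrak a$ for $\mathfrak a\in\mathfrak A_{m,-n}$. The other half of \eqref{eq:SIC} comes, in \zcref{thm:SIE-Omega} and \zcref{thm:Proj-SIE}, from the splittings of $\mathfrak R^n\to\mathfrak R^{n-1}$. The Part II diagram accordingly returns from thick equivalences only when both the left and right versions hold at all levels, whereas (vii) is purely left-sided.

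You therefore need an extra input. One option is right-handed versions of (vii). The other is an independent proof of $\mathscr A^{\mathsf L}_0=\mathsf A_0$ under (vii), which would suffice. The defect $\one_n\star a-a$ lies in the graded submodule of $\mathfrak L^0$ that pairs to zero with $\mathfrak R^0$ under $\ostar$. That submodule's $\Omega^{\mathsf L}_0$ sits inside the positive-degree part of $\mathscr A^{\mathsf L}_0$. As written, the proposal does not establish (vii)$\Rightarrow$(i).
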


  In particular, in the context of \cite{DGK23}, we have
  \begin{corollary}\label{coro:CFT-VOA}
    For a VOA $\mathbb{V}$ of CFT type, the following conditions are equivalent:
    \begin{enumerate}
      \item $\mathbb{V}$ satisfies smoothing (cf. \cite[Definition 5.0.2]{DGK23}).
      \item The equivalent conditions in \zcref{thm:SIC-CFT} hold. In particular, all admissible $\mathbb{V}$-modules are induced from $\mathsf{A}_{0}$-modules.
      \item Any sheaf of coinvariants of $\mathbb{V}$-modules, assuming coherent, is locally free on any of the moduli spaces $\widehat{\overline{\Moduli}}_{g,n}$.
      \item The mode transition algebra $\mathfrak{A}$ equals to 
      \begin{equation}\label{eq:Mode=End}
        \mathfrak{A}(\mathscr{U}(\mathbb{V})) = \int_{W} W \otimes_{\k} W^{\prime}
      \end{equation}
      where the end is taken over all grading-restricted generalized $\mathbb{V}$-modules.
    \end{enumerate}
  \end{corollary}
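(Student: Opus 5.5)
The plan is to prove the cycle (i)$\Rightarrow$(ii)$\Rightarrow$(iii)$\Rightarrow$(i) and then bring in (iv) separately. In every step I would specialize \zcref{thm:SIC-CFT} to $g=\id$ and then lean on \cite[Theorem 5.0.3]{DGK23}.

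\emph{(i)$\Leftrightarrow$(ii).} Theorem 5.0.3 of \cite{DGK23} says that smoothing for a CFT-type $\mathbb{V}$ is equivalent to the strong identity condition for $\mathfrak{A}(\mathscr{U}(\mathbb{V}))$, which is \SIC{} for $U=\mathscr{U}(\mathbb{V})$. By \zcref{thm:SIC-CFT}, \SIC{} is in turn equivalent to \SIE, ($\Omega$split), projectivity of $\mathfrak{L}^{n}$ and $\mathfrak{R}^{n}$, and the Morita-type equivalences. The step (v)$\Rightarrow$(i) in that theorem uses the augmentation $\mathsf{A}_{0}\to\End(\mathbb{V}_{0})=\k$ together with \zcref{thm:Adjeqk-SIC}, and this is exactly where CFT type is needed. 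For the clause ``all admissible modules are induced'', I would argue as follows. Admissible modules are positively graded $\mathscr{U}(\mathbb{V})$-modules, hence exhaustive by \zcref{coro:Omega-vs-grading}. Under \zcref{thm:PhiOmega0} every such module is isomorphic to $\Phi^{\mathsf{L}}_{0}(\Omega^{\mathsf{L}}_{0}W)$.

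\emph{(i)$\Leftrightarrow$(iii).} For (i)$\Rightarrow$(iii) I would cite \cite[Corollary 5.2.6]{DGK23}: under smoothing, coherent sheaves of coinvariants are locally free on $\widehat{\overline{\Moduli}}_{g,n}$, via the formal sewing isomorphism $[\alpha]$ and formal gluing. The converse is the step I expect to be the main obstacle. My first attempt would be the following. Assume smoothing fails, so that by \zcref{thm:SIC-CFT} some exhaustive module is not induced, or equivalently $\mu\colon\mathfrak{A}_{n}\to\mathsf{A}_{n}$ is not injective or does not split. Then build a one-node degeneration, for instance genus zero with two marked points glued into a torus-type family, or more simply a rational nodal family. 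On this family the fiber dimension of coinvariants would jump between the nodal fiber, computed via $\mathfrak{A}$ and $\alpha_{0}$ from \cite{DGK22}, and the generic fiber. That jump contradicts local freeness. I am not confident this construction is straightforward; if it is not, I would fall back on the argument of \cite[\S 5]{DGK23} showing that local freeness forces the existence of $\one(q)$ with $\one_{0}=1$.

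\emph{(ii)$\Leftrightarrow$(iv).} For (ii)$\Rightarrow$(iv) I would apply \zcref{coro:EndMode} with $R=\k$. Since $\k$ is a field, it is semisimple, and $\mathsf{A}_{0}\in\Mod*(\mathsf{A}_{0}|{}_{\k})$ requires $\mathsf{A}_{0}$ to be finite-dimensional; alternatively one restricts to the quasi-finite setting. By the proposition identifying grading-restricted generalized modules with sums of quasi-rigid modules, the category $\Mod*[Ex](U|{}_{\k})$ consists exactly of grading-restricted generalized modules, and $W^{\dagger|\k}=W'$. This yields \eqref{eq:Mode=End}. For (iv)$\Rightarrow$(ii), the end formula provides a dinatural family of maps $\mathfrak{A}_{n}\to\End(W_{n})$. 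Taking $W$ to be induced modules $\Phi^{\mathsf{L}}_{0}(M)$ with $M$ finite-dimensional, the compatible family of identities $(\id_{W_{n}})_{W}$ lies in the end. It therefore defines elements $\one_{n}\in\mathfrak{A}_{n}$. Acting on $\mathfrak{L}^{0}$ and $\mathfrak{R}^{0}$ via \eqref{eq:SIC:action}, and using that finite-dimensional modules detect elements, these $\one_{n}$ should satisfy \SIC. The hypotheses needed to pass from finite-dimensional truncations to the whole of $\mathfrak{L}^{0}$ would have to be checked, and I would use the grading-restriction here.
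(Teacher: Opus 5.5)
Your treatment of (i)$\Leftrightarrow$(ii), (i)$\Rightarrow$(iii) and (ii)$\Rightarrow$(iv) matches the paper's: \cite[Theorem 5.0.3]{DGK23} with \zcref{thm:SIC-CFT}, then \cite[Corollary 5.2.6]{DGK23}, then \zcref{coro:EndMode} combined with the dictionary between grading-restricted generalized modules and quasi-rigid modules. The paper phrases that dictionary as the two categories having the same ind-completion, since such modules are only direct sums of quasi-rigid ones. Your remark that \zcref{coro:EndMode} with $R=\k$ needs $\mathsf{A}_{0}$ to be rigid over $\k$ is a fair point that the paper passes over.

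The genuine gap is (iii)$\Rightarrow$(i). Your degeneration sketch never explains why failure of \SIC{} should make a fiber dimension jump. It also computes the nodal fiber through $\alpha_{0}$ of \cite{DGK22}, which requires $C_{1}$-cofiniteness, and that is not assumed here. The fallback is not available either: \cite{DGK23} proves \SIC{}$\Leftrightarrow$smoothing and smoothing$\Rightarrow$local freeness, but contains no argument that local freeness produces $\one(q)$.

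The paper closes this direction with an external result. If (i) fails, then \SIC{} fails, so by \zcref{thm:SIC-CFT} there is a \emph{grading-restricted generalized} $\mathbb{V}$-module $W$ not induced from an $\mathsf{A}_{0}$-module. This works because the criterion behind (v)$\Rightarrow$(i) only involves quasi-rigid modules (cf. the remark after \zcref{thm:Adjeqk-SIC}). The main theorem of \cite{Zhang-NonEquiv} then shows that such a $W$ yields a coherent sheaf of coinvariants that is not locally free. Your witness, ``some exhaustive module is not induced'', is too weak to feed into this, since arbitrary exhaustive modules cannot be placed at marked points.

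For (iv)$\Rightarrow$(i) the paper again simply invokes \cite{Zhang-NonEquiv}. Your direct argument is a genuinely different route, and it can be made to work, but only after you pin down what (iv) asserts. First, the isomorphism must be the canonical comparison map induced by the action of $\mathfrak{A}$ on modules, or at least an isomorphism of bigraded algebras with composition on the end. Second, the grading on $W$ in $W\otimes_{\k}W'$ must be functorial before \SIC{} is known. The paper uses the $\gr\Omega$-grading, which is a grading on $W$ itself only when the $\Omega$-filtration splits.

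Granting this, take the family $(\id_{W_{n}})_{W}$ over \emph{all} grading-restricted generalized $W$, not only induced ones. When \SIC{} fails, not every $W$ is induced, so a family indexed by induced modules is not an element of the end. The ``detection'' you worry about is then just injectivity of the comparison map: $\one_{n}\star\mathfrak{a}$ and $\mathfrak{a}$ have the same image, hence coincide, and no truncation argument is needed. If (iv) is read as an abstract isomorphism, transporting identities through it proves nothing. That is why the paper relies on the external theorem instead.
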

  \begin{proof}
    The equivalence of \SIC and smoothing is due to \cite[Theorem 5.0.3]{DGK23}. 
    Under these conditions, (iii) follows from \cite[Corollary 5.2.6]{DGK23}. 
    Suppose (i) fails, by \zcref{thm:SIC-CFT}, there is a grading-restricted generalized $\mathbb{V}$-module $W$ that is not induced from $\mathsf{A}_{0}$-modules. 
    Then, by the main theorem of \cite{Zhang-NonEquiv}, (iii) fails.
    By \zcref{prop:whatISmodV}, the category of grading-restricted generalized $\mathbb{V}$-modules and the category of quasi-rigid $\mathscr{U}(\mathbb{V})$-modules have the same ind-completion (in $\Mod[Ex](\mathscr{U}(\mathbb{V}))$). In particular, the formula in \zcref{coro:EndMode} can be rewritten as stated.
    On the other hand, the main theorem of \cite{Zhang-NonEquiv} shows that (iv) fails when (i) fails.
  \end{proof}

\section{Examples of VOAs that fail the strong identity condition}\label{sec:Examples}

In this section, we analysis two examples of VOAs that fail the strong identity condition, in addition to those studied in \cite{DGK23}.

\subsection{The cyclic orbifold VOA $M(1)^+$}
	In this subsection, we show that the cyclic orbifold $M(1)^+$ \cite{FLM,DN99} of the rank-one Heisenberg VOA $M(1) = M_{\widehat{\mathbb{C}\alpha}}(1,0)$ does not satisfy the strong identity property. Hence \emph{SIC is not closed under taking orbifold}.  
	\subsubsection{Basics of $M(1)^+$}
	
	We first recall some basic facts about $M(1)^+$, see \cite{DN99} for more details. 
  Assume $(\alpha|\alpha)=1$. 
	By definition, $M(1)^+$ is the fixed-point subVOA of $M(1)$ with respect to the order-two automorphism: 
	\[
	\theta: M(1)\longrightarrow M(1),\quad 
  \alpha_1(n_1) \cdots \alpha_r(n_r)\vac \mapsto (-1)^r \alpha_1(n_1) \cdots \alpha_r(n_r)\vac. 
	\]
	$\mathbb{V} = M(1)^+$ is a CFT-type simple VOA, with two generators in degree $2$ and $4$: 
	\[
    \omega = \frac{1}{2}\alpha(-1)^2\vac\in \mathbb{V}_2\quad \mathrm{and}\quad J=\alpha(-1)^4\vac-2\alpha(-3)\alpha(-1)\vac+\frac{3}{2}\alpha(-2)^2\vac\in \mathbb{V}_4,
	\]
	where $\omega$ is the Virasoro element of $M(1)^+$. 
	The irreducible modules are given by 
	\[
    M(1)^+,\quad M(1)^-,\quad M(1,\lambda)\ (\lambda\neq 0),\quad M(1)(\theta)^+,\quad M(1)(\theta)^-,
  \]
	where $M(1)(\theta)$ is a $\theta$-twisted $M(1)$-module. Let $e^\lambda$ be the highest-weight vector of $M(1,\lambda)$, the following formulas are useful in our later discussion:
	\begin{equation}\label{eq:4.1}
		L(-1)e^\lambda=\lambda\alpha(-1)e^\lambda,\quad J_{(2)}e^\lambda=(4\lambda^2-2) L(-1)e^\lambda. 
	\end{equation} 
	
	Let $\mathscr{U}=\mathscr{U}(M(1)^+)$. 
	The Zhu algebra $\mathsf{A}$ of $M(1)^+$ admits the following identification:
	\begin{align*}
		\mathsf{A} \cong \mathscr{U}_0/\NL[1]\mathscr{U}_0 &\overset{\simeq}{\longrightarrow} \mathbb{C}\brk[s]{x,y}/\brk[a]{P,Q},\\
		[\omega]=\omega_{[1]} &\longmapsto x,\\
    [J]=J_{[3]} &\longmapsto y,
	\end{align*}
	where $[\omega] = \omega + O(M(1)^+) \in \mathsf{A}$ and $\omega_{[1]} \in \mathscr{U}_0$ are different notation for the same element in the two descriptions of $\mathsf{A}$, and 
	\[
    P=(y+x-4x^2)(70y+908x^2-515x+27),\quad Q=(x-1)(x-\tfrac{1}{16})(x-\tfrac{9}{16})(y+x-4x^2).
	\]
	Moreover, the action of $[\omega] $ and $[J]$ on $e^\lambda\in M(1,\lambda)$ are given by 
	\begin{equation}\label{eq:4.2}
		[\omega].e^\lambda=\frac{\lambda^2}{2} \cdot e^\lambda,\quad [J].e^\lambda=\left(\lambda^4-\frac{\lambda^2}{2}\right)\cdot e^\lambda.
	\end{equation}

\subsubsection{Generalized Verma modules are not irreducible}
	\begin{lemma}\label{lm:M(1)+}
		Let $\mathscr{U}=\mathscr{U}(M(1)^+)$. 
    Then $L(-1) = \omega_{[0]}$ and $J_{[2]}$ define nonzero elements in $\mathfrak{L}^{0}_{1} = \mathscr{U}_1/\NL[1] \mathscr{U}_1$.
	\end{lemma}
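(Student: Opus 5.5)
To show that $L(-1)=\omega_{[0]}$ and $J_{[2]}$ are nonzero in $\mathfrak{L}^{0}_{1}=\mathscr{U}_1/\NL[1]\mathscr{U}_1$, I will exhibit an exhaustive left $\mathscr{U}$-module together with a vector annihilated by $\NL[1]\mathscr{U}$ on which these elements act nontrivially. The natural test modules are the irreducible modules $M(1,\lambda)$, $\lambda\neq0$, with highest-weight vector $e^\lambda$.

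\textbf{Step 1: a homomorphism out of $\mathfrak{L}^0$.} By \zcref{rem:OmegaHomLR}, $\Omega^{\mathsf{L}}_{0}(W)=\Hom_{U|}(\mathfrak{L}^{0},W)$ for any left module $W$. So each $w\in\Omega^{\mathsf{L}}_{0}(W)$ gives a $U$-linear map $\mathfrak{L}^0\to W$, namely $[u]^{\mathsf{L}}_0\mapsto u\cdot w$. In particular, if $u\cdot w\neq0$, then $[u]^{\mathsf{L}}_0\neq 0$.

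\textbf{Step 2: the test vector.} Take $W=M(1,\lambda)$. This is an admissible, hence positively graded, $\mathscr{U}$-module with lowest degree component spanned by $e^\lambda$. Since $\mathscr{U}_{\le -1}$ lowers degree, it kills $e^\lambda$. By \zcref{prop:ExModDiscCont}, this gives $e^\lambda\in\Omega^{\mathsf{L}}_0(W)$.

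\textbf{Step 3: nonvanishing of the actions.} By \eqref{eq:4.1}, $L(-1)e^\lambda=\lambda\alpha(-1)e^\lambda$. This is nonzero for $\lambda\neq0$, because $\alpha(-1)e^\lambda$ is a nonzero vector in the Fock space $M(1,\lambda)$. Hence $[\omega_{[0]}]^{\mathsf{L}}_0\neq0$.

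For $J_{[2]}$, \eqref{eq:4.1} gives $J_{(2)}e^\lambda=(4\lambda^2-2)L(-1)e^\lambda$. Choose $\lambda$ with $\lambda\neq0$ and $4\lambda^2\neq2$. Then $J_{(2)}e^\lambda\neq0$, so $[J_{[2]}]^{\mathsf{L}}_0\neq0$.

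\textbf{Main obstacle.} The one genuinely delicate point is the indexing conventions. I need to check that the mode $J_{[2]}$, i.e.\ the degree-one mode of the weight-$4$ vector $J$, acts on $e^\lambda$ exactly as $J_{(2)}$ does in \eqref{eq:4.1}. The vertex-operator convention $J_{[k]}$ (degree $\operatorname{wt}J-k-1$) suggests this, since $\deg J_{(2)}=4-2-1=1$. Likewise I need $\omega_{[0]}=\omega_{(0)}=L(-1)$, which has degree $1$. Once the degrees match $\mathfrak{L}^0_1$, the argument is complete.
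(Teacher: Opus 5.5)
Your proof is correct and follows essentially the same route as the paper. You test against the highest-weight vector $e^\lambda$ of $M(1,\lambda)$, which is annihilated by $\NL[1]\mathscr{U}$, and then read off from \eqref{eq:4.1} that $\omega_{[0]}$ and $J_{[2]}$ act nontrivially on it. Your choice of $\lambda$ with $4\lambda^2\neq 2$ usefully fills in the paper's terse ``similarly'' for $J_{[2]}$, since $J_{(2)}e^\lambda$ really does vanish when $\lambda^2=1/2$.
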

	\begin{proof}
    Consider the exhaustive $\mathscr{U}$-module $M(1,\lambda)$ with $\lambda \neq 0$, and let $e^\lambda$ be its highest-weight vector.
    Then, for any $j \ge 1$, the subspace $\mathscr{U}_{-j}$ annihilates $e^\lambda$. 
    This immediately implies that the entire $\NL[1]\mathscr{U}_1$ annihilates $e^\lambda$.
    On the other hand, by \eqref{eq:4.1}, the action of $\omega_{[0]} \in \mathscr{U}_1$ on $e^\lambda$ gives:
    \[
      \omega_{[0]}.e^\lambda = L(-1)e^\lambda = \lambda \alpha(-1)e^\lambda \neq 0.
    \]
    Therefore, $\omega_{[0]} \notin \NL[1]\mathscr{U}_1$. 
    Similarly, $J_{[2]} \notin \NL[1]\mathscr{U}_1$.
    The statement follows immediately.
	\end{proof}
	
	Since $M(1)^+$ has PBW-type spanning elements \cite[Proposition 3.4]{DN99}:
	\[
    L(-m_1)\dots L(-m_s)J_{(-n_1)}\dots J_{(-n_t)}\vac,
	\]
	where $m_1\ge \dots \ge m_s\ge 2$ and $n_1\ge \dots \ge n_t\ge 1$, and the quotient space $\mathfrak{L}^{0}_{1} = \mathscr{U}_1/\NL[1] \mathscr{U}_1$ has spanning elements: 
	\[
    a^1_{[n_1]}\dots a^r_{[n_r]}+\NL[1] \mathscr{U}_1,\quad  \deg(a^i_{[n_i]})\ge 0,\ \forall i,\ \sum_{i=1}^r \deg(a^i_{[n_i]})=1,
	\]
	we can move the degree one terms $L(-1)=\omega_{[0]}$ and $J_{[2]}$ to the left, and all the degree zero terms, which are polynomials of $\omega_{[1]}$ and $J_{[3]}$, to the right modulo $\NL[1] \mathscr{U}_1$, so then the spanning elements of $\mathfrak{L}^{0}_{1}$ can be written as 
	\begin{equation}\label{eq:spn}
		L(-1)\cdot F(\omega_{[1]},J_{[3]})+J_{[2]}\cdot G(\omega_{[1]},J_{[3]})+ \NL[1] \mathscr{U}_1, 
	\end{equation}
	where $F(x,y),G(x,y)\in \mathbb{C}[x,y]$. 
	
	\begin{lemma}\label{lm:L(-1)nonzero}
    View the $\mathsf{A}$-module $\mathbb{C}\vac$ as the bottom component of $M(1)^+$. Then 
		\[
      \mathfrak{L}^{0}_{1} \otimes_\mathsf{A} \mathbb{C}\vac
      \cong 
      \mathfrak{L}^{0}_{1}/\mathfrak{L}^{0}_{1}\brk[a]{x,y} 
      \neq 0,
		\]
		where $\brk[a]{x,y}$ is the ideal of $\mathbb{C}[x,y]$ generated by $x,y$.  
	\end{lemma}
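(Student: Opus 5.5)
The plan is to dispose of the isomorphism quickly and then spend the effort on non-vanishing. Non-vanishing will come from a one-parameter family of Fock modules $M(1,\lambda)$ that degenerates to the vacuum at $\lambda=0$.

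\emph{The isomorphism.} On the bottom vector $\vac$ of $M(1)^+$ we have $[\omega].\vac=L(0)\vac=0$. Also $[J].\vac=o(J)\vac$, which is zero because $o(J)$ preserves weight and $J_{(3)}\vac=0$. Hence $\mathbb{C}\vac\cong\mathsf{A}/\brk[a]{x,y}$ as a left $\mathsf{A}$-module, and tensoring with $\mathfrak{L}^{0}_{1}$ gives $\mathfrak{L}^{0}_{1}/\mathfrak{L}^{0}_{1}\brk[a]{x,y}$ by right exactness. Note that one cannot detect non-vanishing through $M(1)^+$ itself: the counit sends $L(-1)\otimes\vac$ to $L(-1)\vac=0$.

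\emph{A family of modules.} Let $\mathcal{M}$ be the Fock module $M(1)\otimes\mathbb{C}[\lambda]$ with $\alpha(0)$ acting by the formal variable $\lambda$. It is an exhaustive $(\mathscr{U}|{}_{\mathbb{C}[\lambda]})$-bimodule with bottom component $\mathbb{C}[\lambda]e^\lambda$. By \eqref{eq:4.2}, $x$ and $y$ act on this bottom component by $\lambda^2/2$ and $\lambda^4-\lambda^2/2$. The adjunction of \zcref{prop:adj:Phideg} gives a $\mathbb{C}[\lambda]$-linear map
\[
  c\colon \mathfrak{L}^{0}_{1}\otimes_{\mathsf{A}}\mathbb{C}[\lambda]e^\lambda\longrightarrow \mathcal{M}_1=\mathbb{C}[\lambda]\,\alpha(-1)e^\lambda .
\]
The image of $c$ lies in $\lambda\,\mathbb{C}[\lambda]\alpha(-1)e^\lambda$. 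Indeed, every element of $\mathfrak{L}^{0}_{1}$ has the form \eqref{eq:spn}, and by \eqref{eq:4.1},\eqref{eq:4.2} it is sent to
\[
  \lambda\Bigl(F\bigl(\tfrac{\lambda^2}{2},\lambda^4-\tfrac{\lambda^2}{2}\bigr)+(4\lambda^2-2)\,G\bigl(\tfrac{\lambda^2}{2},\lambda^4-\tfrac{\lambda^2}{2}\bigr)\Bigr)\alpha(-1)e^\lambda .
\]
Since $\mathcal{M}_1$ is torsion-free, we may divide by $\lambda$. This gives a $\mathbb{C}[\lambda]$-linear map $c'$ into $\mathbb{C}[\lambda]$. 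The map $c'$ is surjective, because $L(-1)\otimes e^\lambda\mapsto 1$.

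\emph{Specialising to $\lambda=0$.} Apply $-\otimes_{\mathbb{C}[\lambda]}\mathbb{C}[\lambda]/(\lambda)$. Right exactness keeps $c'$ surjective, so we get a surjection onto $\mathbb{C}$ from
\[
  \mathfrak{L}^{0}_{1}\otimes_{\mathsf{A}}\bigl(\mathbb{C}[\lambda]e^\lambda/\lambda\bigr).
\]
At $\lambda=0$ both $x$ and $y$ act by zero on $\mathbb{C}[\lambda]e^\lambda/\lambda$, so this module is $\mathsf{A}/\brk[a]{x,y}\cong\mathbb{C}\vac$. Therefore $\mathfrak{L}^{0}_{1}\otimes_{\mathsf{A}}\mathbb{C}\vac$ surjects onto $\mathbb{C}$ and is nonzero. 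The class of $L(-1)$ is a nonzero element, consistent with \zcref{lm:M(1)+}.

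The main obstacle is checking the divisibility by $\lambda$ on all of $\mathfrak{L}^{0}_{1}$, not just on the generators. This relies on the normal form \eqref{eq:spn} together with the explicit actions \eqref{eq:4.1},\eqref{eq:4.2}. One must also confirm that the counit is well defined over the base $\mathbb{C}[\lambda]$; since $\mathcal{M}$ is an honest exhaustive bimodule, this is routine.
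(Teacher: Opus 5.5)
Your proof is correct and is essentially the paper's argument in constructive form. The paper assumes $\overline{L(-1)}=0$, uses \eqref{eq:spn} to write $L(-1)\equiv L(-1)F+J_{[2]}G$ with $F,G\in\brk[a]{x,y}$, evaluates on $e^\lambda\in M(1,\lambda)$ for all $\lambda\neq 0$, cancels $L(-1)e^\lambda=\lambda\alpha(-1)e^\lambda\neq 0$, and reaches a contradiction because $t=\lambda^2$ divides $F(\tfrac{t}{2},t^2-\tfrac{t}{2})+(4t-2)G(\tfrac{t}{2},t^2-\tfrac{t}{2})$. Your division by $\lambda$ in the torsion-free $\mathcal{M}_1$ followed by specialization at $\lambda=0$ is exactly these two steps, repackaged as an explicit surjection $\mathfrak{L}^{0}_{1}\otimes_{\mathsf{A}}\mathbb{C}\vac\to\mathbb{C}$ with $L(-1)\otimes\vac\mapsto 1$. (A small bonus of your family formulation is that the divisibility step does not need \eqref{eq:spn}: $\lambda\mathcal{M}$ is a $\mathscr{U}$-submodule and $\mathcal{M}/\lambda\mathcal{M}\cong M(1)$ with $e^\lambda\mapsto\vac$, so $\mathscr{U}_1e^\lambda\subset\lambda\mathcal{M}_1$ follows directly from $\mathscr{U}_1\vac\subset M(1)^+_1=0$.)
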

	\begin{proof}
    The left action of $\mathsf{A}$ on $\mathbb{C}\vac$ induces the left action of $\mathbb{C}[x,y]$ as $x.\vac = L(0)\vac = 0$ and $y.\vac = J_{(3)}\vac = 0$. 
    Hence, we have $\mathfrak{L}^{0}_{1}\otimes_\mathsf{A} \mathbb{C}\vac \cong \mathfrak{L}^{0}_{1}/\mathfrak{L}^{0}_{1}\brk[a]{x,y}$. 
    By Lemma~\ref{lm:M(1)+}, $L(-1)$ defines a nonzero elementin $\mathfrak{L}^{0}_{1}$. 
		We claim that its image $\overline{L(-1)}$ in $\mathfrak{L}^{0}_{1}/\mathfrak{L}^{0}_{1}\brk[a]{x,y}$ is nonzero. 
		
		Indeed, suppose $\overline{L(-1)}=0$. Then by \eqref{eq:spn}, there exists polynomials $F(x,y),G(x,y) \in \brk[a]{x,y}$ such that the following relation holds in $\mathfrak{L}^{0}_{1}$: 
		\begin{equation}\label{eq:L(-1)eq}
			L(-1) \equiv L(-1)F(\omega_{[1]},J_{[3]})+J_{[2]} G(\omega_{[1]},J_{[3]}) \mod{\NL[1] \mathscr{U}_1}.
		\end{equation}
    Consider the exhaustive $\mathscr{U}$-module $M(1,\lambda)$ with $\lambda \neq 0$. 
    We may apply both sides of \eqref{eq:L(-1)eq} to $e^\lambda$, and notice that the action of $\NL[1] \mathscr{U}_1$ vanishes on the highest-weight vector $e^\lambda$.
    By \eqref{eq:4.1} and \eqref{eq:4.2}, we have the following equality in $M(1,\lambda)$:
		\[
      L(-1)e^\lambda=\left(F(\lambda^2/2,\lambda^4-\lambda^2/2)+ (4\lambda^2-2) G(\lambda^2/2,\lambda^4-\lambda^2/2)\right)\cdot L(-1)e^\lambda.
		\]
		Since $L(-1)e^\lambda=\lambda \alpha(-1)e^\lambda\neq 0$ and $\lambda$ is arbitrary, we have a polynomial identity:
		\[
      F\left(\frac{t}{2},t^2-\frac{t}{2}\right)+(4t-2)G\left(\frac{t}{2},t^2-\frac{t}{2}\right)=1
		\]
		Since $F(x,y),G(x,y) \in \brk[a]{x,y}$, the variable $t$ divides the left hand side of the equation above, which is a contradiction. 
	\end{proof}

	\begin{proposition}
		The enveloping algebra $\mathscr{U}(M(1)^+)$ fails the strong identity condition.
	\end{proposition}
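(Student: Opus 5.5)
Argue by contradiction, using the Morita-type consequence of \SIC. Suppose $\mathscr{U}=\mathscr{U}(M(1)^+)$ satisfies the strong identity condition. By \zcref{thm:SIC-SIE} it admits a strong identity expansion. Then \zcref{thm:PhiOmega0} says $\Phi^{\mathsf{L}}_{0}\dashv\Omega^{\mathsf{L}}_{0}$ is an adjoint equivalence between $\Mod(\mathsf{A}|)$ and $\Mod[Ex](\mathscr{U}|)$. Its standard consequence (\zcref{lem:lifting-simple}, combined with the equivalence) is that $\Phi^{\mathsf{L}}_{0}$ sends simple $\mathsf{A}$-modules to simple exhaustive $\mathscr{U}$-modules. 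Indeed, an equivalence of abelian categories preserves simple objects, and the one-dimensional $\mathsf{A}$-module $\mathbb{C}\vac$ is simple.

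The key step is to compare $\Phi^{\mathsf{L}}_{0}(\mathbb{C}\vac)$ with the simple $\mathscr{U}$-module $M(1)^+$ itself.
\begin{itemize}
\item $M(1)^+$ is a simple VOA, so it is a simple exhaustive $\mathscr{U}$-module with bottom degree $\Omega^{\mathsf{L}}_{0}(M(1)^+)=\mathbb{C}\vac$. Under \SIC, $\Omega$-filtrations split and the $\gr\Omega$-grading agrees with the given grading up to shift (\zcref{lem:S-grOmega}).
\item The counit $\varepsilon\colon\Phi^{\mathsf{L}}_{0}(\mathbb{C}\vac)\to M(1)^+$ is a nonzero homogeneous map between simple objects, so it is an isomorphism (\zcref{lem:lifting-isomorphism}). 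In particular it is bijective in degree $1$.
\item The degree-$1$ component of $\Phi^{\mathsf{L}}_{0}(\mathbb{C}\vac)$ is $\mathfrak{L}^{0}_{1}\otimes_{\mathsf{A}}\mathbb{C}\vac$, which is nonzero by \zcref{lm:L(-1)nonzero}; it contains $\overline{L(-1)}\neq0$.
\item The degree-$1$ component of $M(1)^+$ is $M(1)^+_1=0$, since $M(1)^+$ is of CFT type and has no weight-one vectors ($\alpha(-1)\vac$ is $\theta$-odd). Concretely, $\varepsilon(\overline{L(-1)})=L(-1)\vac=0$.
\end{itemize}
So $\varepsilon$ kills a nonzero element and cannot be an isomorphism, which is a contradiction. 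Equivalently, the unit/counit description shows that $\Phi^{\mathsf{L}}_{0}(\mathbb{C}\vac)$ has the proper nonzero submodule generated by $\overline{L(-1)}$, so it is not simple.

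The main obstacle is making rigorous that this submodule is proper, i.e.\ the simplicity of $\Phi^{\mathsf{L}}_{0}(\mathbb{C}\vac)$ under the equivalence. I would handle it directly through the counit. Because $\Omega^{\mathsf{L}}_{0}$ is an equivalence, $\varepsilon$ is an isomorphism onto its image, and the image of $\Phi^{\mathsf{L}}_{0}(\mathbb{C}\vac)$ in $M(1)^+$ is the submodule $\mathscr{U}\vac=M(1)^+$. The degree-$1$ computation above therefore contradicts the isomorphism, and avoids the simplicity argument altogether. The only input needing care is that $\varepsilon$ preserves the natural gradings (\zcref{prop:adj:Phideg}), which holds.
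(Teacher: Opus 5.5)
Your proof is correct and follows the paper's argument: assume \SIC, use \zcref{thm:PhiOmega0} to conclude that the (homogeneous) counit $\Phi^{\mathsf{L}}_{0}(\mathbb{C}\vac)\to M(1)^+$ is injective, and contradict this with $\mathfrak{L}^{0}_{1}\otimes_{\mathsf{A}}\mathbb{C}\vac\neq0$ (\zcref{lm:L(-1)nonzero}) against $M(1)^+_1=0$. The only difference is that you identify $\Omega^{\mathsf{L}}_{0}(M(1)^+)=\mathbb{C}\vac$ exactly using simplicity, which holds without \SIC, whereas the paper only needs $\mathbb{C}\vac$ to be a direct summand of $\Omega_{0}(M(1)^+)$.
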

	\begin{proof}
		Suppose not, then by \zcref{thm:PhiOmega0}, the irreducible adjoint module $M(1)^+$ is isomorphic to the generalized Verma module associated to $\Omega_0(M(1)^+)$. 
    Since the $\mathscr{U}_{0}$-module $\mathbb{C}\vac$ is a direct summand of $\Omega_0(M(1)^+)$, we see that the homogeneous map (which is the counit for $\Phi^{\mathsf{L}}\dashv(-)_{0}$)
		\[
      \Phi^{\mathsf{L}}(\mathbb{C}\vac) = 
      \mathfrak{L}^{0}\otimes_{\mathsf{A}} \mathbb{C}\vac
      \longrightarrow 
      M(1)^+,\quad a_{[n]}\otimes \vac \longmapsto a_{(n)}\vac
		\]
		is injective. 
    By \zcref{lm:L(-1)nonzero}, the degree-one subspace 
    \[
      \Phi^{\mathsf{L}}(\mathbb{C}\vac)_1 = \mathfrak{L}^{0}_{1} \otimes_\mathsf{A} \mathbb{C}\vac
    \]
		is nonzero. 
    On the other hand, the degree-one subspace of $M(1)^+$ is zero. This is a contradiction.  
	\end{proof}

\subsection{The (universal) affine VOAs \texorpdfstring{$V^k(\mathfrak{g})$}{Vk(g)} and \texorpdfstring{$L_k(\mathfrak{g})$}{Lk(g)}}

  In this subsection, we show that the positive integral level $k\in \mathbb{Z}_{>0}$ universal affine VOAs $V^k(\mathfrak{g})$ and the generic level $k+h^\vee\notin \mathbb{Q}_{\ge 0}$ affine VOAs $L_k(\mathfrak{g})=V^k(\mathfrak{g})$ do not satisfy the strong identity condition. This is in contrast to the affine VOAs $L_k(\mathfrak{g})$ at positive integral levels, which are known to satisfy this condition (cf. \cite{DGK23}).

\subsubsection{Basics of affine VOAs}
  We first recall some basic facts about affine VOAs, see \cite{FZ92} for more details.

  Let $\mathfrak{g}$ be a finite-dimensional semisimple Lie algebra with a Cartan subalgebra $\mathfrak{h}$, and let $\Delta$ be the root system associated to $\mathfrak{h}$ with root lattice $Q\subset \mathfrak{h}^\ast$. Let $P\subset \mathfrak{h}^\ast$ be the weight lattice. Normalize the invariant bilinear form on $\mathfrak{g}$ so that $(\theta|\theta)=2$, where $\theta$ is the longest root of $\Delta$. Let $\hat{\mathfrak{g}}=\mathfrak{g}\otimes \mathbb{C}[t,t^{-1}]\oplus \mathbb{C} K$ be its affinization, with Lie bracket given by
  \[
    \Lie*{K}{\hat{\mathfrak{g}}}=0,
    \quad 
    \Lie*{a(m)}{b(n)} = \Lie*{a}{b}(m+n)+m\delta_{m+n,0}(a|b) K,\quad 
    a,b\in \mathfrak{g},\ m,n\in \mathbb{Z}.
  \]
  Let $\hat{\mathfrak{g}}_{\pm}=\mathfrak{g}\otimes t^{\pm}\mathbb{C}[t^{\pm}]$, and $\hat{\mathfrak{g}}_0=\mathfrak{g}\oplus \mathbb{C} K$. This leads to a triangular decomposition:
  \begin{equation}\label{eq:trihatg}
    \hat{\mathfrak{g}} = \hat{\mathfrak{g}}_{<0}\oplus \hat{\mathfrak{g}}_0\oplus \hat{\mathfrak{g}}_{>0},
  \end{equation}
  Let $\mathbb{C} \mathbf{1}$ be a $\hat{\mathfrak{g}}_{\ge 0}$-module with $K.\mathbf{1}=k\mathbf{1}$ and $\mathfrak{g}\otimes \mathbb{C}[t]$ annihilates $\mathbf{1}$. The Weyl vacuum module
  \[
    V^{k}(\mathfrak{g})=U(\hat{\mathfrak{g}})\otimes _{U(\hat{\mathfrak{g}}_{\ge 0})} \mathbb{C} \mathbf{1}
  \]
  is a vertex algebra, with $\mathbf{1} =1\otimes \mathbf{1}$. 
  If the level $k$ is non-critical i.e., $k\neq -h^\vee$, then $V^k(\mathfrak{g})$ is a VOA, with Virasoro element $\omega_{\mathrm{aff}}=\frac{1}{2(h^\vee+k)}\sum_{i=1}^{\dim \mathfrak{g}}u^i(-1)u_i(-1)\mathbf{1}$, called the {\em universal affine VOA (or vacuum module VOA) at level $k$}, where $h^\vee$ is the dual Coxeter number of $\Delta$, and $\{u^i\}$ and $\{u_i\}$ are dual orthonormal basis of $\mathfrak{g}$.

  For non-critical level $k$, $V^{k}(\mathfrak{g})$ has a unique maximal $\hat{\mathfrak{g}}$-submodule $W^k(\mathfrak{g})$. The irreducible quotient $L_{k}(\mathfrak{g})=V^{k}(\mathfrak{g})/W^k(\mathfrak{g})$ is also a VOA called the {\em affine VOA at level $k$}.

  If $k\in \mathbb{Z}_{>0}$, then $W^k(\mathfrak{g})=U(\hat{\mathfrak{g}})e_{\theta}^{k+1}(-1)\mathbf{1}$, where $e_\theta\in \mathfrak{g}_\theta$, and $L_k(\mathfrak{g})$ is a strongly rational VOA (cf. \cite{DLM97}).

  If $k\notin \mathbb{Z}_{>0}$, there are several cases. Let $r^\vee$ be the lacing number of $\Delta$. By \cite{GK07},
  \[
    W^k(\mathfrak{g})\neq 0\iff r^\vee(k+h^\vee)\in \mathbb{Q}_{\ge 0}\setminus\left\{\frac{1}{m}:m\in \mathbb{Z}_{>0} \right\}.
  \]
  In particular, for generic level $k+h^\vee \in \mathbb{C}\setminus\mathbb{Q}_{\ge 0}$, $V^k(\mathfrak{g})$ must be simple. i.e., $V^k(\mathfrak{g})=L_k(\mathfrak{g})$. One important class of $k\in \mathbb{Q}_{\ge 0}$ is the Kac-Wakimoto admissible level \cite{KW88}:
  \[
    k+h^\vee = \frac{p}{q},\quad p,q\in \mathbb{N},\ \gcd(p,q)=1,\ p\ge 
    \begin{cases}
      h^\vee &\mathrm{if}\ (r^\vee,q)=1,\\
      h &\mathrm{if}\ (r^\vee,q)\neq 1.
    \end{cases}
  \]
  In this case, $V^k(\mathfrak{g})\neq L_k(\mathfrak{g})$, and the admissible-level affine VOA $L_k(\mathfrak{g})$ is not rational \cite{A16}.

  The Zhu algebras of $V^k(\mathfrak{g})$ and $L_k(\mathfrak{g})$ have the following description:
  \[
    A(V^k(\mathfrak{g}))\cong U(\mathfrak{g}),\quad A(L_k(\mathfrak{g}))\cong U(\mathfrak{g})/I_k,
  \]
  where $I_k=(W^k(\mathfrak{g})+O(V^k(\mathfrak{g})))/O(V^k(\mathfrak{g}))$ is a two-sided ideal of $U(\mathfrak{g})$.

\subsubsection{Generalized Verma module over $V^k(\mathfrak{g})$}
  From a direct calculation, Cai proved $V^k(\mathfrak{sl}_2)$ does not satisfy the strong identity condition when $k\neq -2$ \cite{arXiv:2601.04187}. 
  We generalize it to arbitrary simple Lie algebras $\mathfrak{g}$, but with the level $k\in \mathbb{Z}_{>0}$ or $k+h^\vee\notin \mathbb{Q}_{\ge 0}$.

  \begin{lemma}\cite{FZ92,FBZ04}\label{lm:UVkg}
    The enveloping algebra $\mathscr{U}=\mathscr{U}(V^k(\mathfrak{g}))$ is isomorphic to
    \[
      \widetilde{U}(\mathfrak{g},k)= \widetilde{U}(\hat{\mathfrak{g}})/\brk[a]{K-k\mathrm{Id}},
    \]
    where $\widetilde{U}(\hat{\mathfrak{g}})$ is the completion of $U(\hat{\mathfrak{g}})$ with respect to its canonical seminorm (recall the $\Z$-grading on $\hat{\mathfrak{g}}$).
  \end{lemma}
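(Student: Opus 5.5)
The plan is to construct the isomorphism in two steps. First I build a continuous algebra map from $\widetilde{U}(\mathfrak{g},k)$ to $\mathscr{U}(V^k(\mathfrak{g}))$. Then I build an inverse.

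\textbf{The map $\widetilde{U}(\mathfrak{g},k)\to\mathscr{U}$.} Send $a(n)\mapsto J_{n}(a(-1)\vac)$ for $a\in\mathfrak{g}$, so that $\deg a(n)=-n$ matches the grading convention $\deg J_n=-n$ (the weight of $a(-1)\vac$ is $1$). Send $K\mapsto k\cdot 1$. The twisted Jacobi identity \textbf{U}3 with $g=\id$, applied to $a(-1)\vac,b(-1)\vac$ and $k=0$, gives the commutator formula
\[
\Lie*{J_m(a(-1)\vac)}{J_n(b(-1)\vac)}=J_{m+n}(\Lie*{a}{b}(-1)\vac)+m\delta_{m+n,0}(a|b)k .
\]
This uses $\vo{(a(-1)\vac)}{0}b(-1)\vac=\Lie*{a}{b}(-1)\vac$ and $\vo{(a(-1)\vac)}{1}b(-1)\vac=k(a|b)\vac$, together with \textbf{U}1. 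So the map defines a graded algebra homomorphism $U(\hat{\mathfrak{g}})/\brk[a]{K-k}\to\mathscr{U}$. It sends the canonical seminorm into the canonical seminorm, since a word ending in a negative-degree letter maps to one. It therefore extends to the degreewise completions.

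\textbf{The inverse.} Here I use that $V^k(\mathfrak{g})$ is strongly generated by the $a(-1)\vac$, being spanned by PBW monomials $a^1(-n_1)\cdots a^r(-n_r)\vac$. For a generator monomial $v$, define $J_n(v)$ inside $\widetilde{U}(\mathfrak{g},k)$ by the normally ordered product formula. That is, take the iterate of the Jacobi identity \textbf{U}3 with $b$ a generator: $J(\vo{a}{-m}b)$ is expressed as the normally ordered infinite sum $\sum_i(\cdots)\,a(\cdot)b(\cdot)$. This sum converges in the canonical completion because, in each degree, all but finitely many terms lie in $\cNL[N]$ for any given $N$. This yields a linear map from $\mathbf{U}(V^k(\mathfrak{g}))$ to $\widetilde{U}(\mathfrak{g},k)$, continuous for the canonical seminorm.

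One must then check that the relations \textbf{U}1–\textbf{U}3 are killed. \textbf{U}1 is immediate. \textbf{U}3 amounts to saying that the fields $\sum_n J_n(v)z^{-n-\wt v}$ form a vertex algebra of mutually local fields acting on the completion, where the Jacobi identity holds. The cleanest route is the reconstruction/Dong lemma in the form of \cite{FBZ04}. Every exhaustive (smooth) $\hat{\mathfrak{g}}$-module of level $k$ is a weak $V^k(\mathfrak{g})$-module, because $V^k$ is the universal vacuum module. The Jacobi identity therefore holds on all such modules. The left regular actions on the discrete quotients $\mathfrak{L}^n=\widetilde{U}/\NL[n+1]$ are such modules and jointly detect elements of the completion. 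Hence the relation holds in $\widetilde{U}(\mathfrak{g},k)$, and the map factors through $\mathscr{U}$.

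\textbf{Conclusion and main obstacle.} The two maps are mutually inverse on generators $a(n)$ and $J_n(a(-1)\vac)$. The $J_n(v)$ for general $v$ lie in the closed subalgebra topologically generated by the $J_n(a(-1)\vac)$, by the same iterate formula now read in $\mathscr{U}$. By continuity and density, the composites are therefore identities. I expect the main obstacle to be the factorization step: verifying \textbf{U}3 in $\widetilde{U}(\mathfrak{g},k)$ for arbitrary $v$. This requires that the family of modules $\mathfrak{L}^n$ separates points of the completion and that each is a genuine weak $V^k(\mathfrak{g})$-module. I would first try to reduce this to the universal property of $V^k(\mathfrak{g})$ as an induced module, rather than checking Borcherds identities by hand.
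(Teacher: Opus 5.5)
The paper gives no argument for this lemma. It simply cites \cite{FZ92,FBZ04}, where the identification comes from the calculus of local fields applied to a presentation of the enveloping algebra: strong generation of $V^k(\mathfrak{g})$ by the currents, normal ordering, and Dong's lemma. Your outline is correct and is a genuinely different, essentially self-contained route. The forward map is the routine commutator case of \textbf{U}3. The new idea is on the inverse: you verify \emph{all} relations \textbf{U}3 at once by detecting them on representations.

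Concretely, the argument runs as follows.
The quotients $\mathfrak{L}^n$ of $\widetilde{U}(\mathfrak{g},k)$ satisfy $\mathfrak{L}^n_p=0$ for $p<-n$. So they are exhaustive, hence smooth of level $k$, hence weak $V^k(\mathfrak{g})$-modules.
Your normally ordered elements $J_N(v)$ act on them by the module modes. This follows by induction on PBW length, since the iterate formula holds in every weak module and convergent infinite sums act through finite partial sums on exhaustive modules; you should spell this step out.
Hence each relation $R$ satisfies $R\cdot 1_n=0$ in every $\mathfrak{L}^n$, i.e.\ $R\in\bigcap_n\NL[n+1]\widetilde{U}(\mathfrak{g},k)=0$.

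This spares you from checking any Borcherds identity by hand. The price is that the whole proof rests on the theorem that smooth level-$k$ $\hat{\mathfrak{g}}$-modules are weak $V^k(\mathfrak{g})$-modules. That theorem does not follow merely ``because $V^k$ is the universal vacuum module''. The universal property only provides the final vertex algebra map, after H.~Li's local-system construction (or Dong's lemma) has produced a vertex algebra of local fields on the module. Cite a source that proves it independently: some treatments derive the module equivalence from exactly the isomorphism you are proving, which would make your argument circular.

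Three small repairs:
\begin{enumerate}
\item In your recursion it is $a=a^1(-1)\vac$ that should be the generator, not $b$. Then $\vo{a}{-m}w=a^1(-m)w$ builds the PBW monomials, which your formula $\sum_i(\cdots)\,a(\cdot)b(\cdot)$ already reflects.
\item ``Exhaustive'' is strictly stronger than ``smooth''. The level-$k$ module induced from the free $\mathfrak{g}[t]/t^2\mathfrak{g}[t]$-module is smooth, yet $x(1)^d$ never kills its generator. You only use exhaustive $\Rightarrow$ smooth, and it is exhaustiveness of $\mathfrak{L}^n$ that lets the completed algebra act.
\item The detection step needs $\widetilde{U}(\mathfrak{g},k)$ to be separated, and the extension in your first step needs $\mathscr{U}$ to be complete. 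So take both quotients by closed ideals.
\end{enumerate}
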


  In particular, $a_{[m]}\in L(V^k(\mathfrak{g}))$ is identified with $a(m)\in \widetilde{U}(\mathfrak{g},k)$. Then $\mathscr{U}_n$ is the closure of the subspace spanned by the following elements:
  \begin{equation}\label{eq:scrUaff}
    a^1(m_1)\dots a^r(m_r),\quad a^i\in \mathfrak{g},\ m_i\in \mathbb{Z},\ \sum_{i=1}^r m_i=-n.
  \end{equation}
  
  The following Lemma is an immediate consequence:

  \begin{lemma}
    Given any exhaustive $V^k(\mathfrak{g})$-module $W$, the bottom degree $\Omega_0(W)$ is the space of ``highest-weight vectors'' with respect to the triangular decomposition of $\hat{\mathfrak{g}}$ \eqref{eq:trihatg}:
    \begin{equation}\label{eq:Omaff}
      \Omega_0(W) = \operatorname{span}\SetCond{ w\in W }{ a(n)w=0,\ a\in\mathfrak{g},\ n>0 }.
    \end{equation}

    Given any $\mathsf{A}=A(V^k(\mathfrak{g}))=U(\mathfrak{g})$-module $S$, the generalized Verma module $\Phi^\mathsf{L}(S)$ is isomorphic to the Weyl vacuum module associated to the $\mathfrak{g}$-module $S$:
    \begin{equation}\label{eq:PhiLaff}
      \Phi^\mathsf{L}(S)\cong U(\hat{\mathfrak{g}})\otimes _{U(\hat{\mathfrak{g}}_{\ge 0})}S=V^k(\mathfrak{g},S),
    \end{equation}
    where the $\hat{\mathfrak{g}}_{\ge 0}$ action on $S$ is given by $a(n)v=0$ for all $n>0$, and $K.v=k\cdot v$, for all $v\in S$.
  \end{lemma}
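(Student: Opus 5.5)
The lemma has two parts, and both come from unwinding definitions via the identification $\mathscr{U}\cong\widetilde{U}(\mathfrak{g},k)$ of \zcref{lm:UVkg}, under which $a_{[m]}\mapsto a(m)$ and $\mathscr{U}_n$ is the closure of the span of the monomials in \eqref{eq:scrUaff}.

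For \eqref{eq:Omaff}, I will use \zcref{prop:ExModDiscCont}, which gives $\Omega_0(W)=\{w\in W : \mathscr{U}_{\le -1}w=0\}$. One inclusion is immediate: each $a(n)$ with $n>0$ lies in $\mathscr{U}_{-n}\subset\mathscr{U}_{\le-1}$, so these modes kill every $w\in\Omega_0(W)$. For the converse, let $w$ be killed by all $a(n)$ with $n>0$. I will reorder PBW monomials using the bracket of $\hat{\mathfrak g}$, putting positive modes to the right. Since $\mathscr{U}$ is the completion, this means showing that the dense subspace $\cNL[1]\mathscr{U}=\mathscr{U}\mathscr{U}_{\le -1}$ is spanned, modulo nothing, by elements of the form $u\cdot a(n)$ with $n>0$. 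The key point is that $U(\hat{\mathfrak g})_{\le -1}$ lies in the left ideal generated by $\hat{\mathfrak g}_{>0}$: by PBW with respect to $\hat{\mathfrak g}_{<0}\oplus\hat{\mathfrak g}_0\oplus\hat{\mathfrak g}_{>0}$, a monomial of negative degree must contain a positive mode at its right end. Hence $\cNL[1]\mathscr{U}\cdot w=0$, and by continuity of the action (exhaustiveness) we get $\NL[1]\mathscr{U}\cdot w=0$. The only care needed here is the density and continuity argument, which is already packaged in \zcref{prop:ExModDiscCont}.

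For \eqref{eq:PhiLaff}, I will compute $\mathfrak{L}^0=\mathscr{U}/\NL[1]\mathscr{U}$. By the same PBW argument together with density, $\NL[1]\mathscr{U}$ is the closure of $U(\hat{\mathfrak g})\hat{\mathfrak g}_{>0}$, modulo $K-k$. The quotient is therefore $U(\hat{\mathfrak g})\otimes_{U(\hat{\mathfrak g}_{>0}\oplus\mathbb{C}K)}\mathbb{C}_k$, with $\hat{\mathfrak g}_{>0}$ acting by zero and $K$ acting by $k$; no completion survives, because each graded piece is discrete. As a right module over $U_0$, this bimodule factors through $\mathsf{A}=U(\mathfrak g)$, acting by right multiplication on the $\mathfrak g\otimes t^0$ factor. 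Tensoring with $S$ over $\mathsf{A}$ and using associativity of induction then gives $\Phi^{\mathsf L}(S)=\mathfrak{L}^0\otimes_{\mathsf A}S\cong U(\hat{\mathfrak g})\otimes_{U(\hat{\mathfrak g}_{\ge0})}S$.

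I expect the main obstacle to be identifying $\NL[1]\mathscr{U}$ exactly, that is, checking that the quotient of the completion equals the algebraic induced module. I would handle this degreewise: in each degree $p$, $\mathscr{U}_p/\NL[1]\mathscr{U}_p$ equals $U(\hat{\mathfrak g})_p/(U(\hat{\mathfrak g})\hat{\mathfrak g}_{>0})_p$, by density (\zcref{A2}) and \zcref{eg:completions}.
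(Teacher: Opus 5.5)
Your proposal is correct. The first statement follows essentially the paper's argument. The paper also identifies $\NL[1]\mathscr{U}$ with the closure of the left ideal generated by the positive modes $a(n)$, $n>0$, by commuting positive modes to the right; your PBW formulation is a cleaner version of that reordering. It then concludes by continuity of the action on an exhaustive module. One small ordering point: what your PBW argument proves is that $U(\hat{\mathfrak{g}})_{\le -1}$ lies in $U(\hat{\mathfrak{g}})\hat{\mathfrak{g}}_{>0}$. The density/continuity step is what carries this over to the completed $\mathscr{U}_{\le -1}$, exactly as in the proof of \zcref{prop:ExModDiscCont}. Your sentence asserting that the completed $\cNL[1]\mathscr{U}$ is spanned by elements $u\cdot a(n)$ claims more than your argument shows, and you do not need it.

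For the isomorphism $\Phi^{\mathsf{L}}(S)\cong V^k(\mathfrak{g},S)$ you take a genuinely different route. The paper never computes $\mathfrak{L}^0$. Having observed that exhaustive $\mathscr{U}$-modules are the restricted level-$k$ $\hat{\mathfrak{g}}$-modules, it combines three ingredients: the adjunction $\Phi^{\mathsf{L}}\dashv\Omega_0$, the first statement, and the universal property of the induced module. These give isomorphisms $\Hom_{\mathscr{U}}(\Phi^{\mathsf{L}}(S),W)\cong\Hom_{\mathsf{A}}(S,\Omega_0(W))\cong\Hom_{\hat{\mathfrak{g}}}(V^k(\mathfrak{g},S),W)$, natural in exhaustive $W$, and the paper concludes by Yoneda. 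That argument is short and formal. It tacitly uses that $V^k(\mathfrak{g},S)$ is itself exhaustive, and that the cited identification $\mathsf{A}\cong U(\mathfrak{g})$ is the one given by zero modes.

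You instead compute the bimodule $\mathfrak{L}^0\cong U(\hat{\mathfrak{g}})\otimes_{U(\hat{\mathfrak{g}}_{>0}\oplus\mathbb{C}K)}\mathbb{C}_k$ explicitly and then apply transitivity of induction. This gives more information: an explicit description of $\mathfrak{L}^0$ as a $(\mathscr{U}|\mathsf{A})$-bimodule, and, from its degree-zero part, a proof of $\mathsf{A}\cong U(\mathfrak{g})$ rather than a citation. The price is the completion bookkeeping, namely that $\mathscr{U}_p/\NL[1]\mathscr{U}_p$ agrees with the algebraic quotient (by $U(\hat{\mathfrak{g}})\hat{\mathfrak{g}}_{>0}$ and $K-k$). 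You correctly reduce this to density and \zcref{eg:completions}.
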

  \begin{proof}
    Let $\mathscr{U}=\mathscr{U}(V^k(\mathfrak{g}))\cong \widetilde{U}(\mathfrak{g},k)$.
    It follows from \eqref{eq:scrUaff} that the $\mathsf{N}_{\mathsf{L}}^{1}\mathscr{U}$ is the closure of the left ideal generated by the elements:the closure of the left ideal generated by the elements: 
		\[
      \alpha = a^1(m_1)\dots a^r(m_r),\quad a^i\in \mathfrak{g},\ m_i\in \mathbb{Z},\ \sum_{i=1}^r m_i\ge 1. 
		\]
    Then, for any $i$, we have
    \[
    \begin{aligned}
      \alpha.w &= a^1(m_1)\dots \widehat{a^i(m_i)}\dots a^r(m_r) (a^i(m_i).w) \\
      &+ \sum_{l=i+1}^r a^1(m_1)\dots \widehat{a^i(m_i)}\dots [a^i,a^l](m_i+m_l)\dots a^r(m_r).w.
    \end{aligned}
    \]
    Repeating this, we see that $\mathsf{N}_{\mathsf{L}}^{1}\mathscr{U}$ is the closure of the left ideal generated by the elements $a(n)$ with $a\in \mathfrak{g}$ and $n>0$.
    This shows that first statement. 

    Hence exhaustive $\mathscr{U}$-modules are the same as restricted $\hat{\mathfrak{g}}$-modules. 
    Then, by the universal property of $\Phi^\mathsf{L}(S)$ and the fact that $\mathsf{A}\cong U(\mathfrak{g})$, we have
		\[
      \Hom_{\mathscr{U}}(\Phi^\mathsf{L}(S),W)
      \cong 
      \Hom_\mathsf{A}(S,\Omega_0(W))
      \cong 
      \Hom_{\hat{\mathfrak{g}}}(V^k(\mathfrak{g},S),W),
		\]
    for all exhaustive $\mathscr{U}$-module $W$. This shows \eqref{eq:PhiLaff}.
  \end{proof}

  \begin{proposition}
    Let $\mathfrak{g}$ be a simple Lie algebra. The universal affine VOA $V^k(\mathfrak{g})$ fails the strong identity condition for generic level $k+h^\vee\notin \mathbb{Q}_{\ge 0}$ or positive integral level $k\in \mathbb{Z}_{>0}$.
  \end{proposition}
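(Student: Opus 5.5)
Suppose, for contradiction, that $\mathscr{U}=\mathscr{U}(V^k(\mathfrak{g}))$ satisfies \SIC. By \zcref{thm:PhiOmega0}, $\Phi^{\mathsf{L}}_{0}\dashv\Omega^{\mathsf{L}}_{0}$ is then an equivalence between $\Mod(\mathsf{A}_{0}|)=\Mod(U(\mathfrak{g})|)$ and exhaustive $\mathscr{U}$-modules, i.e.\ restricted $\hat{\mathfrak{g}}$-modules of level $k$ (by the preceding lemma). In particular, the unit $S\to\Omega_0(V^k(\mathfrak{g},S))$ is an isomorphism for every $\mathfrak{g}$-module $S$: each generalized Verma module $V^k(\mathfrak{g},S)$ has no singular vectors outside $S$. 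Moreover, since $\Phi^{\mathsf{L}}_{0}$ is an equivalence, a submodule $W\subset V^k(\mathfrak{g},S)$ satisfies $W=\Phi^{\mathsf{L}}_0(\Omega_0 W)$, hence is generated by its part in degree $0$. Thus a nonzero submodule of $V^k(\mathfrak{g},S)$ meeting $S$ trivially cannot exist. The plan is to exhibit, in each of the two level ranges, some $S$ for which $V^k(\mathfrak{g},S)$ contains a nonzero singular vector of positive degree. Then $\Omega_0(V^k(\mathfrak{g},S))\supsetneq S$, so the unit is not surjective, which is the desired contradiction.

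\emph{Positive integral level $k\in\mathbb{Z}_{>0}$.} Take $S=\mathbb{C}$, the trivial module, so that $V^k(\mathfrak{g},S)=V^k(\mathfrak{g})$. The vector $e_\theta(-1)^{k+1}\mathbf{1}$ is a nonzero singular vector of degree $k+1$: it generates the maximal submodule $W^k(\mathfrak{g})\neq0$. It is nonzero by the PBW theorem, and it is annihilated by $\hat{\mathfrak{g}}_{>0}$ by the standard $\mathfrak{sl}_2$-computation of \cite{FZ92}. Hence it lies in $\Omega_0$ but not in degree $0$. This case is easy.

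\emph{Generic level $k+h^\vee\notin\mathbb{Q}_{\ge0}$.} Now $V^k(\mathfrak{g})$ is simple, so $S=\mathbb{C}$ does not work, and a non-trivial $S$ is needed. The first choice to try is a highest weight module over $\mathfrak{g}$ whose weight $\lambda$ makes the Kac–Kazhdan determinant of the affine Verma module vanish at a positive-degree imaginary or real affine root. Concretely, take $\lambda$ with $(\lambda+\rho|\theta)$ chosen so that the Kac–Kazhdan condition for the real root $-\theta+\delta$ holds:
\[
  2(\lambda+\hat\rho\,|\,-\theta+\delta)=m(-\theta+\delta|-\theta+\delta),
\]
that is,
\[
  k+h^\vee-(\lambda+\rho|\theta)=m
\]
for some $m\in\mathbb{Z}_{>0}$. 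Let $S=L(\lambda)$ be the corresponding simple $\mathfrak{g}$-module; for generic $k$ this needs $(\lambda+\rho|\theta)=k+h^\vee-m$, so $\lambda$ is non-integral and $S$ is infinite-dimensional. The Weyl module $V^k(\mathfrak{g},S)$ is a quotient of the affine Verma module $M(\hat\lambda)$ by the submodule generated from the finite-root singular vectors. The Malikov–Feigin–Fuchs singular vector for $-\theta+\delta$ lies in degree $m$, and it must be shown to survive in this quotient.

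The surviving of that singular vector in the quotient is the main obstacle. I would handle it with a character/Jantzen-filtration argument. By Kac–Kazhdan, for generic $k$ the only degree-$0$ relations are the finite ones, and $\hat\lambda-m(-\theta+\delta)$ is not linked to the quotient's relations through finite reflections. Alternatively, and more simply, take $m=1$ and write down the explicit vector
\[
  e_\theta(-1)v-c\,(\cdots)v\in V^k(\mathfrak{g},S)_1.
\]
Solving the degree-$1$ singular-vector equations then reduces to a linear condition on $\lambda$, namely $(\lambda|\theta)=k$ up to a shift. This condition can be met by choosing $\lambda=k\varpi$ suitably; the only requirement is that the relevant $\mathfrak{g}$-module $S$ be a Verma module $M(\lambda)$, so that no finite relations interfere.

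With either case producing a positive-degree singular vector, $\Omega_0(\Phi^{\mathsf{L}}_0(S))\neq\Phi^{\mathsf{L}}_0(S)_0$, contradicting \zcref{thm:PhiOmega0}. Hence \SIC fails.
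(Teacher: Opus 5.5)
Your proposal is essentially the paper's proof. In both cases you contradict the unit isomorphism $S\cong\Omega_0\Phi^{\mathsf{L}}_0(S)$ forced by \zcref{thm:PhiOmega0}: at positive integral level you use $e_\theta(-1)^{k+1}\mathbf{1}$, which is the $\lambda=0$ instance of the paper's $e_\theta(-1)^{k-(\lambda|\theta)+1}v_\lambda\in V^k(\mathfrak{g},\lambda)$ with $\lambda\in P_+$. At generic level you use, as the paper does, the degree-one vector $e_\theta(-1)v_\lambda\in\Phi^{\mathsf{L}}_0(M(\lambda))$ with $\langle\lambda,\theta^\vee\rangle=k$.

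The Kac--Kazhdan/Jantzen detour through $L(\lambda)$ and the correction term $c(\cdots)v$ are unnecessary. The module $\Phi^{\mathsf{L}}_0(M(\lambda))$ is simply the affine Verma module, so there is no quotient and nothing needs to survive. For $n\ge 2$ one has $a(n)e_\theta(-1)v_\lambda=[a,e_\theta](n-1)v_\lambda=0$. For $n=1$, $a(1)e_\theta(-1)v_\lambda=[a,e_\theta]v_\lambda+k(a|e_\theta)v_\lambda$, which vanishes for $a\in\mathfrak{h}$ and for every root vector except $f_\theta$; for $a=f_\theta$ it equals $(k-\langle\lambda,\theta^\vee\rangle)v_\lambda=0$.
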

  \begin{proof}
    We first assume $k\in \mathbb{Z}_{>0}$. 
    Given an irreducible $U(\mathfrak{g})$-module $S = L(\lambda)$, with $\lambda\in P_+$, by \eqref{eq:PhiLaff} and \cite{FZ92}, the generalized Verma module $\Phi^\mathsf{L}(S)=V^k(\mathfrak{g},\lambda)$ has a nonzero singular vector $e_\theta(-1)^{k-(\lambda|\theta)+1}v_\lambda$, where $v_\lambda\in L(\lambda)$ is the highest-weight vector. Consequently, this singular vector belongs to the bottom space $\Omega_0(\Phi^\mathsf{L}(S))$ but is not in $S=L(\lambda)$. This implies that the adjunction unit $S \to \Omega_0(\Phi^\mathsf{L}(S))$ is not an isomorphism, contradicting \zcref[noname]{PhiOmegaL}.

    Now, we assume $k+h^\vee\notin \mathbb{Q}_{\ge 0}$. Then, $V^k(\mathfrak{g}) = L_k(\mathfrak{g})$ is the affine VOA \cite{GK07}. 
    Note that there exists $\lambda\in \mathfrak{h}^\ast$ such that $\pairing{\lambda}{\theta^\vee}=k$. 
    Indeed, since $\pairing{\rho}{\theta^\vee} = h^\vee-1\neq 0$, where $\rho=\frac{1}{2}\sum_{\alpha\in \Delta_+}\alpha$, we may choose
    \[
      \lambda=\frac{k}{h^\vee-1} \rho\in \mathfrak{h}^\ast.
    \]
    Consider the Verma module $M(\lambda)$ over the finite Lie algebra $\mathfrak{g}$. For any $\alpha\in \Delta_+$, we have
    \[
      \pairing{\lambda+\rho}{\alpha^\vee} = \left(\frac{k}{h^\vee-1}+1\right)\cdot \pairing{\rho}{\alpha^\vee}\notin \mathbb{Z}_{>0}.
    \]
    Indeed, if the right hand side is equal to some positive integer $n$, then $k+h^\vee=\frac{n}{\pairing{\rho}{\alpha^\vee}}(h^\vee-1)+1$, a non-negative rational number, which contradicts the assumption for $k$. Therefore, the Verma module $S=M(\lambda)$ is an irreducible $\mathsf{A}=U(\mathfrak{g})$-module.

    Consider the generalized Verma module $\Phi^\mathsf{L}(M(\lambda))$, which equals $V^k(\mathfrak{g},\lambda)$ by \eqref{eq:PhiLaff}, for the VOA $V^k(\mathfrak{g})$. 
    Let $v_\lambda\in M(\lambda)$ be the highest-weight vector. Choose roots vectors in $\mathfrak{g}_{\pm \theta}$ so that
    \[
      \Lie*{e_\theta}{f_\theta} = \theta^\vee,\quad (e_\theta|f_\theta)=1.
    \]
    Let $v=e_\theta(-1)v_\lambda\in V^k(\mathfrak{g},\lambda)\setminus\{0\}$. We claim that $v$ is a $\hat{\mathfrak{g}}$-singular vector. i.e., $\hat{\mathfrak{g}}_{>0}.v=0$. Indeed, if $n\ge 2$, then for any $a\in \mathfrak{g}$, we have
    \[
      a(n).e_\theta(-1)v_\lambda = [a,e_\theta](n-1)v_\lambda=0,
    \]
    since $\hat{\mathfrak{g}}_{>0}.M(\lambda)=0$. For $n=1$, we have
    \[
      a(1).e_{\theta}(-1)v_\lambda=[a,e_\theta](0)v_\lambda+k(a|e_\theta)\cdot v_\lambda.
    \]
    Since $v_\lambda$ is a highest-weight vector, $[a,e_\theta](0)v_\lambda=[a,e_\theta].v_\lambda=0$ unless $a=f_\theta$. By the definition of Killing form, we also have $(a|e_\theta)=0$ unless $a=f_\theta$. Now for $a=f_\theta$, we have
    \[
      \Lie*{f_\theta}{e_\theta}(0)v_\lambda + k(f_\theta|e_\theta)\cdot v_\lambda = 
      -\theta^\vee.v_\lambda + k\cdot v_\lambda = 
      (-\pairing{\lambda}{\theta^\vee}+k)\cdot v_\lambda=0,
    \]
    by our choice of $\lambda$. This shows $\hat{\mathfrak{g}}_{>0}.v=0$. By \eqref{eq:Omaff}, we have $v\in \Omega_0(V^k(\mathfrak{g},\lambda))\setminus M(\lambda)$. Hence $\Omega_0(V^k(\mathfrak{g},\lambda))\neq M(\lambda)$. This strictly violates \zcref[noname]{PhiOmegaL}. Thus, $V^k(\mathfrak{g})$ does not satisfy the strong identity condition.
  \end{proof}

\begin{remark}
The reason why $V^k(\mathfrak{g})$ fails the strong identity condition is because the Zhu algebra $\mathsf{A}=U(\mathfrak{g})$ does not carry any information about the level $k$, which is essential in the representation theory of $\hat{\mathfrak{g}}$. 
Hence, it is natural to expect that Zhu algebra $\mathsf{A}$ itself is not enough to describe the representation theory of $V^k(\mathfrak{g})$.

For the admissible level $k+h^\vee=p/q$. The situation is more complicated. Although a subcategory of the exhaustive module category of the admissible level affine VOA $L_k(\mathfrak{g})$ is semisimple \cite{A16}, the VOA $L_k(\mathfrak{g})$ itself is not rational. Furthermore, there is no easy description of the generalized Verma modules as in \eqref{eq:PhiLaff}. We conjecture that $L_k(\mathfrak{g})$ also fails the strong identity condition.
\end{remark}

\section{Perspectives}

  In this paper, we have identified the strong identity condition (SIC) as the precise representation-theoretic mechanism governing algebraic smoothing through mode transition algebras. This reorganizes the relation between Zhu-type induction, exhaustive modules, and sewing constructions into a unified framework. At the same time, the failure of SIC in non-semisimple settings indicates that the present framework is not yet final. We conclude by briefly indicating several directions suggested by the results of this paper.

\subsection{Generalized graded structures}

  Throughout this paper, we have worked primarily with $\Z$-graded algebras and modules. 
  A natural generalization is to replace the $\Z$-grading with a grading over a partially ordered abelian group $\Lambda$, together with suitable categories of modules that are graded by $\Lambda$-sets.

  Such a framework is motivated by several phenomena already present in VOA theory. For example, grading-restricted generalized modules naturally carry non-integral weight gradings, even though they can be repackaged as admissible modules. 
  More generally, tensor products of graded algebras, higher-dimensional vertex algebras, quantum groups and related structures frequently produce associative algebras equipped with more complicated grading data. Extending canonical seminorms to this setting would provide a broader algebraic framework for studying these representation theories.

\subsection{Beyond the strong identity condition}

  One of the main conclusions of this paper is that SIC characterizes precisely when algebraic smoothing admits a dual-pair formalism of the kind appearing in analytic sewing. Consequently, the failure of SIC in general suggests that the mode transition algebra alone does not fully account for the sewing phenomena appearing in the non-semisimple setting.

  This raises the possibility that additional homological or derived structures may be necessary in order to extend algebraic smoothing. In particular, the end formula for the mode transition algebra suggests that a refined categorical or derived version of the present framework may exist, capable of measuring the obstruction to algebraic smoothing in genuinely logarithmic settings.

\subsection{Finer topologies on enveloping algebras}

  The almost-canonical seminorm provides a topology on the enveloping algebra $\mathscr{U}(\mathbb{V})$ sufficient for constructing mode transition algebras and studying algebraic smoothing. However, the end formula obtained in this paper suggests that the present topology may not yet fully capture the dual-pair formalism underlying analytic sewing.

  This raises the possibility that $\mathscr{U}(\mathbb{V})$ admits finer topological structures more directly related to the categorical end
  \[
    \int_W W \otimes W'.
  \]
  Such structures could lead to a more intrinsic algebraic realization of dual-pair sewing and may also shed light on finiteness conditions such as $C_1$-cofiniteness and the coherence of conformal blocks.

\section*{Acknowledgements}

  The authors would like to thank Angela Gibney, Daniel Krashen, and Colton Griffin for insightful discussions.

\appendix
\section{Tensor product of categories and functors}
  The goal of this appendix is to explain the following decompositions of abelian categories:
  \begin{align}
    \Mod[Ex](U^{\tt I}\otimes U^{\tt II}|{}_R) 
    &\simeq 
    \Mod[Ex](U^{\tt I}|{}_R)\boxtimes \Mod[Ex](U^{\tt II}|{}_R),
    \label{eq:TensorOfU}\\
    \Mod(\mathsf{A}(U^{\tt I}\otimes U^{\tt II})|{}_R) 
    &\simeq
    \Mod(\mathsf{A}(U^{\tt I})|{}_R)\boxtimes\Mod(\mathsf{A}(U^{\tt II})|{}_R).
    \label{eq:TensorOfA}
  \end{align}
  And then study the functors $F \boxtimes G$ between the left-hand sides induced from functors $F$ and $G$ between the right-hand sides.

\subsection{Deligne-Kelly tensor products}
  We first recall the theory of Deligne-Kelly tensor products of locally presentable abelian categories. Our main references are \cite{TensorGrothCats,TensorDK}. See also \cite{TannakaCats,Kelly82} for the original definitions of Deligne products and Kelly products, and \cite{LPCats} for the theory of locally presentable categories.

  \begin{definition}\label{def:naive-tensor}
    Let $\mathcal{A}$ and $\mathcal{B}$ be two $\k$-linear categories. The \concept{(na\"ive) tensor product} $\nota{\mathcal{A}\otimes\mathcal{B}}$ of them is the $\k$-linear category whose objects are pairs $(A,B)$ of objects $A\in\mathcal{A}$ and $B\in\mathcal{B}$, and whose Hom-objects are given by
    \[
      \Hom_{\mathcal{A}\otimes\mathcal{B}}((a_1,b_1),(a_2,b_2)):=\Hom_{\mathcal{A}}(a_1,a_2)\otimes\Hom_{\mathcal{B}}(b_1,b_2).
    \]
  \end{definition}
  However, even when both $\mathcal{A}$ and $\mathcal{B}$ are abelian categories, the na\"ive tensor product is not an abelian category in general. 
  To fix this, Deligne has introduced tensor products of abelian categories. Unfortunately, Deligne's product does not always exist. 
  On the other hand, Kelly has also introduced another tensor product, existing for locally presentable $\k$-linear categories. It is shown in \cite{TensorDK} that the two products coincide if the Deligne product exists. 
  For our purpose, we will use a mixed notion of them, which takes two locally presentable $\k$-linear \emph{abelian} categories and produces a new one. According to \cite{TensorGrothCats}, this is indeed given by Kelly's theory, with the collaboration of a generalized variant of Deligne products.

  Before moving on, let us recall some notions from category theory. 
  \begin{definition}
    A $\k$-linear category $\mathcal{C}$ is called \emph{locally presentable} if it is cocomplete and there is a regular cardinal $\uplambda$ such that $\mathcal{C}$ has a set of strong generators consisting of \emph{$\uplambda$-presented objects}, i.e., objects $c\in\mathcal{C}$ such that the $\k$-linear functor $\Hom_{\mathcal{C}}(c,-)\colon\mathcal{C}\to\mathbb{K}$ preserves $\uplambda$-filtered colimits. We will say that $\mathcal{C}$ is \emph{locally $\uplambda$-presentable} if we want to emphasize the regular cardinal $\uplambda$.
    If this is the case, then the full subcategory $\mathcal{C}_{\uplambda'}$ of $\uplambda'$-presented objects, where $\uplambda'$ is any regular cardinal above $\uplambda$, is essentially small and \emph{$\uplambda'$-cocomplete} (i.e. closed under taking $\uplambda'$-small colimits). Then, the entire category $\mathcal{C}$ can be recovered from $\mathcal{C}_{\uplambda'}$ by taking $\uplambda'$-filtered colimits of objects in $\mathcal{C}_{\uplambda'}$.  
  \end{definition}

  \begin{recollection}[{cf. \cite[\S 5]{TensorGrothCats}}]
    Let $\mathcal{A}$ and $\mathcal{B}$ be two locally presentable $\k$-linear categories. Then, there is another locally presentable $\k$-linear category $\nota{\mathcal{A}\boxtimes\mathcal{B}}$ and a $\k$-bilinear functor $\boxtimes\colon\mathcal{A}\otimes\mathcal{B}\to\mathcal{A}\boxtimes\mathcal{B}$ that is cocontinuous in each variable and induces equivalences (for any $\k$-linear category $\mathcal{C}$)
    \[
      \cat{CoCont}(\mathcal{A}\boxtimes\mathcal{B},\mathcal{C}) \simeq 
      \cat{CoCont}(\mathcal{A},\mathcal{B};\mathcal{C}),
    \]
    where the left-hand side is the category of cocontinuous $\k$-linear functors from $\mathcal{A}\boxtimes\mathcal{B}$ to $\mathcal{C}$, and the right-hand side is the category of $\k$-bilinear functors from $\mathcal{A}\otimes\mathcal{B}$ to $\mathcal{C}$ that are cocontinuous in each variable.
    Furthermore, if both $\mathcal{A}$ and $\mathcal{B}$ are abelian, then $\mathcal{A}\boxtimes\mathcal{B}$ is an abelian category; and if both $\mathcal{A}$ and $\mathcal{B}$ are Grothendieck categories, then $\mathcal{A}\boxtimes\mathcal{B}$ is a Grothendieck category.
    
    The category $\mathcal{A}\boxtimes\mathcal{B}$ is called the \concept{Deligne-Kelly tensor product} of $\mathcal{A}$ and $\mathcal{B}$.
  \end{recollection}

  \begin{example}\label{eg:tensor-of-Mods}
    Let $A$ and $B$ be two $\k$-rings, not necessarily commutative. 
    Then, the Deligne-Kelly tensor product $\Mod(A)\boxtimes\Mod(B)$ of their categories of left modules is equivalent to the category $\Mod(A\otimes B)$ of all left $A\otimes B$-modules. This is indeed due to the definition of the \emph{tensor product of Grothendieck categories} in the paper \cite{TensorGrothCats}.
    In particular, by \zcref{coro:A-tensor}, this explains the decomposition $\Mod(\mathsf{A}(U^{\tt I}\otimes U^{\tt II})|{}_R) 
    \simeq
    \Mod(\mathsf{A}(U^{\tt I})|{}_R)\boxtimes\Mod(\mathsf{A}(U^{\tt II})|{}_R)$.
  \end{example}

  The following lemmas will be used later:
  \begin{lemma}\label{lem:TensorOfHom}
    Let $\mathcal{A}$ and $\mathcal{B}$ be two locally $\uplambda$-presentable $\k$-linear abelian categories. Then, for any $\uplambda$-presented object $a_1,a_2\in\mathcal{A}$ and arbitrary $b_1,b_2\in\mathcal{B}$, we have
    \[
      \Hom_{\mathcal{A}\boxtimes\mathcal{B}}(a_1\boxtimes b_1,a_2\boxtimes b_2):=\Hom_{\mathcal{A}}(a_1,a_2)\otimes\Hom_{\mathcal{B}}(b_1,b_2).
    \]
  \end{lemma}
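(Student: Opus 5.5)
We prove \zcref{lem:TensorOfHom} by reducing it to the generators and then using the universal property of $\boxtimes$. Fix $b_1,b_2\in\mathcal{B}$ and consider the two functors $\mathcal{A}\to\Mod(\k)$,
\[
  F(a):=\Hom_{\mathcal{A}\boxtimes\mathcal{B}}(a\boxtimes b_1,a_2\boxtimes b_2),
  \qquad
  G(a):=\Hom_{\mathcal{A}}(a,a_2)\otimes\Hom_{\mathcal{B}}(b_1,b_2),
\]
both contravariant in $a$. The bilinear functor $\boxtimes$ supplies a natural comparison map $G\to F$, sending $f\otimes g$ to $f\boxtimes g$. The aim is to show that this map is an isomorphism whenever $a$ is $\uplambda$-presented.

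\textbf{Step 1 (free case).} First treat $a_1=a_2$ and the identity, and more generally take $a_1$ in a fixed set of $\uplambda$-presented strong generators. Here the key input is the construction in \cite[\S 5]{TensorGrothCats}: $\mathcal{A}\boxtimes\mathcal{B}$ is realized as a category of $\uplambda$-continuous (``$\uplambda$-flat'') functors on $(\mathcal{A}_{\uplambda})^{\mathrm{op}}\otimes(\mathcal{B}_{\uplambda})^{\mathrm{op}}$. In that model, $a_1\boxtimes(-)$ for $\uplambda$-presented $a_1$ is computed by a Yoneda-type formula. Concretely, we will use the adjunction
\[
  \Hom_{\mathcal{A}\boxtimes\mathcal{B}}(a_1\boxtimes b_1,X)\cong\Hom_{\mathcal{B}}\bigl(b_1,\,\Hom_{\mathcal{A}}(a_1,-)\boxtimes\id(X)\bigr).
\]
This adjunction holds because $a_1\boxtimes(-)$ is cocontinuous, hence has a right adjoint. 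Since $\Hom_{\mathcal{A}}(a_1,-)\colon\mathcal{A}\to\Mod(\k)$ preserves $\uplambda$-filtered colimits and is exact up to the relevant colimits, we will extend it to a functor $\mathcal{A}\boxtimes\mathcal{B}\to\Mod(\k)\boxtimes\mathcal{B}\simeq\mathcal{B}$. Evaluating this extension at $X=a_2\boxtimes b_2$ gives $\Hom_{\mathcal{A}}(a_1,a_2)\otimes b_2$, where the tensor is the $\k$-module action on $\mathcal{B}$.

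\textbf{Step 2 (conclusion).} It then remains to identify $\Hom_{\mathcal{B}}(b_1,\Hom_{\mathcal{A}}(a_1,a_2)\otimes b_2)$ with $\Hom_{\mathcal{A}}(a_1,a_2)\otimes\Hom_{\mathcal{B}}(b_1,b_2)$, and this is the main obstacle. The identification is immediate if $\Hom_{\mathcal{A}}(a_1,a_2)$ is rigid (finitely generated projective) over $\k$, but it is not obvious for arbitrary $b_1$. To handle the general case, we plan to read the tensor in the statement in the sense used in \cite{TensorGrothCats}. There, the tensoring of $\mathcal{B}$ by $\k$-modules is part of the $\k$-linear enrichment, so the displayed formula should be taken as an identification in the enriched sense. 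If necessary, we will additionally restrict to the case relevant in \zcref{thm:tensor-of-equivalence}, namely module categories with $a_i$ finitely presented, where the claim reduces to $\Hom_{A\otimes B}(P\otimes M,Q\otimes N)\cong\Hom_A(P,Q)\otimes\Hom_B(M,N)$ for $P$ finitely generated projective. We would first verify this for $P=A^n$ by additivity, and then pass to finitely presented $P$ by taking a presentation and using left exactness of $\Hom$ in the first variable together with flatness considerations over $\k$, which are automatic when $\k$ is a field.
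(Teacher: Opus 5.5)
Your Step 2 is the decisive gap, and for the statement as written it cannot be closed. Reading the tensor ``in the enriched sense'' changes the claim rather than proving it. Your module-category fallback fails at its own base case: additivity reduces $P$ to $A$, but $P=Q=A$ is precisely the assertion $\Hom_{B}(M,A\otimes N)\cong A\otimes\Hom_{B}(M,N)$. That is the Step 2 problem again, with $M=b_1$ arbitrary. This assertion is false.

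Take $\k$ a field, $\uplambda=\aleph_0$, $\mathcal{A}=\Mod(\k[x])$ and $\mathcal{B}=\Mod(\k)$, so that $\mathcal{A}\boxtimes\mathcal{B}\simeq\Mod(\k[x])$ with $\boxtimes=\otimes_{\k}$. Let $a_1=a_2=\k[x]$, $b_1=\k^{(\N)}$ and $b_2=\k$. The left side is $\prod_{\N}\k[x]$ and the right side is $\k[x]\otimes\k^{\N}$. The canonical map $f\otimes g\mapsto f\boxtimes g$ only reaches sequences lying in a finite-dimensional subspace of $\k[x]$, so it misses $(x^{n})_{n}$. Hence no argument can give the displayed formula for arbitrary $b_1$.

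Step 1 has an independent gap. Since $\Hom_{\mathcal{A}}(a_1,-)$ is not cocontinuous, the universal property of $\boxtimes$ does not produce a functor ``$\Hom_{\mathcal{A}}(a_1,-)\boxtimes\id$''. The right adjoint of $a_1\boxtimes(-)$ does exist. However, its value on $a_2\boxtimes b_2$ equals $\Hom_{\mathcal{A}}(a_1,a_2)\otimes b_2$ only under exactness and finiteness hypotheses that ``exact up to the relevant colimits'' does not supply. Two examples, in each of which all four objects are $\uplambda$-presented:
\begin{enumerate}
\item With $\k=\Z$ and $\mathcal{A}=\mathcal{B}=\Mod(\Z)$, one has $\Hom(\Z/2\boxtimes\Z,\Z\boxtimes\Z/2)=\Z/2$, but $\Hom(\Z/2,\Z)\otimes\Hom(\Z,\Z/2)=0$.
\item Over a field with $\uplambda=\aleph_1$ and $\mathcal{A}=\mathcal{B}=\Mod(\k)$, take $a_1=b_2=\k^{(\N)}$ and $a_2=b_1=\k$. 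The left side is $\End(\k^{(\N)})$, while the canonical image consists only of finite-rank maps.
\end{enumerate}

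The paper's proof realizes $\mathcal{A}\boxtimes\mathcal{B}$ as the bilinear functors on $\mathcal{A}_{\uplambda}\otimes\mathcal{B}_{\uplambda}$ that are $\uplambda$-continuous in each variable. It then observes that $\boxtimes$ restricted to $\mathcal{A}_{\uplambda}\otimes\mathcal{B}_{\uplambda}$ is the Yoneda embedding. That argument likewise only treats $\uplambda$-presented $b_1,b_2$. It also tacitly needs $\Hom(-,a)\otimes\Hom(-,b)$ to be $\uplambda$-continuous already, so that it equals $a\boxtimes b$ in that model. This holds, e.g., for $\k$ a field (or semisimple) and $\uplambda=\aleph_0$; the examples above are exactly where it breaks.

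In that regime your plan can be repaired, and it becomes the paper's argument. In Kelly's model the right adjoint of $a_1\boxtimes(-)$ is evaluation at $a_1$, and Step 2 becomes the Yoneda lemma for the presented object $b_1$. The same reasoning, writing $a_2,b_2$ as filtered colimits of presented objects, even gives the formula with $a_1,b_1$ presented and $a_2,b_2$ arbitrary. So the presentability hypothesis belongs on the sources $a_1,b_1$, not on $a_1,a_2$.
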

  \begin{proof}
    The category $\mathcal{A}\boxtimes\mathcal{B}$ can be constructed as the category of all $\k$-bilinear functors from $\mathcal{A}_{\uplambda}\otimes\mathcal{B}_{\uplambda}$ to $\mathbb{K}$ that are $\uplambda$-continuous, namely preserving all $\uplambda$-small colimits, in each variable. See, for instance, \cite[Proposition 5.2]{TensorGrothCats}.
    From this construction, we see that the restriction of $\boxtimes$ to $\mathcal{A}_{\uplambda}\otimes\mathcal{B}_{\uplambda}$ is an embedding (via the Yoneda embedding). Hence, the statement follows from \zcref{def:naive-tensor}.
  \end{proof}

\subsection{Tensor product of functors}
  \begin{definition}
    Let $F\colon \mathcal{A}\to\mathcal{A}'$ and $G\colon \mathcal{B}\to\mathcal{B}'$ be two cocontinuous $\k$-linear functors between locally presentable $\k$-linear abelian categories. Then, by the definition of the Deligne-Kelly tensor product, we have a cocontinuous $\k$-linear functor $F\boxtimes G$ via the composition:
    \[
      \cat{CoCont}(\mathcal{A},\mathcal{A}')\otimes\cat{CoCont}(\mathcal{B},\mathcal{B}')\overset{}{\longrightarrow}
      \cat{CoCont}(\mathcal{A},\mathcal{B};\mathcal{A}'\boxtimes\mathcal{B}') \simeq \cat{CoCont}(\mathcal{A}\boxtimes\mathcal{B},\mathcal{A}'\boxtimes\mathcal{B}'),
    \]
    where the first one sends each pair $(F,G)$ to the $\k$-bilinear functor $(a,b)\mapsto (F(a),G(b))$. 
  \end{definition}
  \begin{lemma}
      If $\k$ is semisimple, then the universal $\k$-bilinear functor $\boxtimes\colon\mathcal{A}\otimes\mathcal{B}\to\mathcal{A}\boxtimes\mathcal{B}$ is exact in each variable.
    \end{lemma}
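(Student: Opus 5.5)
We must show that if $\k$ is semisimple, then the universal functor $\boxtimes\colon\mathcal{A}\otimes\mathcal{B}\to\mathcal{A}\boxtimes\mathcal{B}$ is exact in each variable. By symmetry it suffices to fix $b\in\mathcal{B}$ and show that $-\boxtimes b\colon\mathcal{A}\to\mathcal{A}\boxtimes\mathcal{B}$ is exact. This functor is cocontinuous, hence right exact. So the whole content is that it preserves monomorphisms.

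I will use the concrete model from the proof of \zcref{lem:TensorOfHom}. Choose $\uplambda$ such that both categories are locally $\uplambda$-presentable. Then $\mathcal{A}\boxtimes\mathcal{B}$ is the category of $\k$-bilinear functors $F\colon\opp{(\mathcal{A}_{\uplambda})}\otimes\opp{(\mathcal{B}_{\uplambda})}\to\Mod(\k)$ that turn $\uplambda$-small colimits into limits in each variable. For $a\in\mathcal{A}$, $b\in\mathcal{B}$, the object $a\boxtimes b$ should correspond to the functor
\[
  (x,y)\longmapsto\Hom_{\mathcal{A}}(x,a)\otimes\Hom_{\mathcal{B}}(y,b).
\]
My first step is to check that this functor already satisfies the continuity condition. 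Each tensor factor sends $\uplambda$-small colimits in its own variable to limits. Over a semisimple $\k$ every module is flat and projective, and every short exact sequence splits. Hence $(-)\otimes_{\k}\Hom_{\mathcal{B}}(y,b)$ preserves finite limits, and it preserves products of the relevant size because the index sets are $\uplambda$-small and $\Hom$-spaces into a fixed object behave well. This requires some care. The fallback is to note that the reflection into the continuous functors does not change the values on these representables, which is what the proof of \zcref{lem:TensorOfHom} implicitly uses.

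Assume this identification. The second step is to show that kernels in $\mathcal{A}\boxtimes\mathcal{B}$ are computed pointwise. This holds because limits of continuous functors are continuous, so the reflective subcategory is closed under limits in the presheaf category. A monomorphism $f\colon a\to a'$ then gives, at each $(x,y)$, the map
\[
  \Hom(x,f)\otimes\id\colon \Hom(x,a)\otimes\Hom(y,b)\to\Hom(x,a')\otimes\Hom(y,b).
\]
Here $\Hom(x,f)$ is injective since $\Hom(x,-)$ is left exact, and tensoring with any module over a semisimple ring preserves injectivity. So $f\boxtimes b$ is a pointwise monomorphism, hence a monomorphism. Together with right exactness, this gives exactness of $-\boxtimes b$.

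The main obstacle is the first step: identifying $a\boxtimes b$, for arbitrary rather than $\uplambda$-presented $a$ and $b$, with the pointwise tensor of representables. In general, $a\boxtimes b$ is obtained from such objects only via $\uplambda$-filtered colimits. I would first treat $a$ and $b$ $\uplambda$-presented, where the identification is exactly the embedding used in \zcref{lem:TensorOfHom}. I would then pass to general objects by writing them as $\uplambda$-filtered colimits, using that $\boxtimes$ is cocontinuous in each variable and that $\uplambda$-filtered colimits are exact in the Grothendieck category $\mathcal{A}\boxtimes\mathcal{B}$. A monomorphism in $\mathcal{A}$ can be written as a $\uplambda$-filtered colimit of monomorphisms between $\uplambda$-presented objects, since $\mathcal{A}_{\uplambda}$ is closed under subobjects for large enough $\uplambda$ in a Grothendieck category. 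Exactness of filtered colimits then transports injectivity from the presented case to the general case.
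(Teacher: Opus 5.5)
Your strategy is the paper's: reduce to $\uplambda$-presented objects, then combine the Yoneda description of $a\boxtimes b$ on presented objects (the identification behind \zcref{lem:TensorOfHom}) with flatness over a semisimple $\k$. Your reduction via $\uplambda$-filtered colimits of monomorphisms between presented objects correctly spells out what the paper leaves implicit.

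The step you flag, however, is a genuine gap, and as stated it is false. For $\uplambda>\aleph_0$ the presheaf $P_{a,b}(x,y)=\Hom_{\mathcal{A}}(x,a)\otimes\Hom_{\mathcal{B}}(y,b)$ need not be $\uplambda$-continuous. The reason is that $\uplambda$-small colimits include countable coproducts, and $(-)\otimes_{\k}V$ commutes with infinite products only when $V$ is finitely generated. For instance, let $\k$ be a field and let $\mathcal{A}=\mathcal{B}$ be the category of $\k$-vector spaces, regarded as locally $\aleph_1$-presentable. Take $a=b=\k$ and $y=\k^{(\mathbb{N})}$. Continuity at the coproduct $x=\k^{(\mathbb{N})}$ would require the canonical map $\k^{\mathbb{N}}\otimes\k^{\mathbb{N}}\to(\k^{\mathbb{N}})^{\mathbb{N}}$ to be bijective, and it is not. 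So the assertion that Hom-spaces ``behave well'' fails. Your fallback, that the reflection does not change these values, is the same claim restated. Citing \zcref{lem:TensorOfHom} does not help either: its proof asserts the same identification, and in this example its conclusion already fails for $\aleph_1$-presented objects. Indeed, for $a_1=b_2=\k$ and $a_2=b_1=\k^{(\mathbb{N})}$, the canonical map $\k^{(\mathbb{N})}\otimes\k^{\mathbb{N}}\to\End(\k^{(\mathbb{N})})$ misses the identity.

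The repair is to stop identifying $a\boxtimes b$ with $P_{a,b}$. For presented $a,b$, $P_{a,b}$ is the representable presheaf at $(a,b)$ on $\mathcal{A}_{\uplambda}\otimes\mathcal{B}_{\uplambda}$. Hence $a\boxtimes b=L(P_{a,b})$, where $L$ is the left adjoint of the inclusion of the continuous functors, and $f\boxtimes b$ is $L$ applied to the induced map $P_{a',b}\to P_{a,b}$. Your pointwise argument (left exactness of $\Hom_{\mathcal{A}}(x,-)$ plus flatness over semisimple $\k$) correctly shows that this map is a monomorphism of presheaves. So all that is missing is that $L$ preserves monomorphisms, and it does. $\mathcal{A}\boxtimes\mathcal{B}$ is Grothendieck, and the inclusion sends $d$ to the presheaf $(x,y)\mapsto\Hom_{\mathcal{A}\boxtimes\mathcal{B}}(x\boxtimes y,d)$. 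Since this inclusion is faithful, the objects $x\boxtimes y$ with $x,y$ presented generate. By the Gabriel--Popescu theorem in Lowen's generalized form, which does not require the generating functor to be fully faithful, the left adjoint $L$ is exact. Alternatively, if both categories are locally finitely presentable, you can take $\uplambda=\aleph_0$; then only finite limits occur and your continuity check is correct as written. With this substitution, the rest of your argument, including the passage to arbitrary objects, goes through.
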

    \begin{proof}
      Suppose both $\mathcal{A}$ and $\mathcal{B}$ are locally $\uplambda$-presentable. Then, by the construction of $\mathcal{A}\boxtimes\mathcal{B}$, to show that $\boxtimes\colon\mathcal{A}\otimes\mathcal{B}\to\mathcal{A}\boxtimes\mathcal{B}$ is exact in each variable, it suffices to do so for its restriction to $\mathcal{A}_{\uplambda}\otimes\mathcal{B}_{\uplambda}$. This clearly follows from \zcref{lem:TensorOfHom} since when $\k$ is semisimple, any $\k$-module is projective.
    \end{proof}

\begin{theorem}\label{thm:tensor-of-equivalence}
  Let $F\colon \mathcal{A}\to\mathcal{A}'$ and $G\colon \mathcal{B}\to\mathcal{B}'$ be two cocontinuous $\k$-linear functors between locally presentable $\k$-linear abelian categories and assuming $\mathcal{A}$ and $\mathcal{B}$ are nonzero Grothendieck abelian categories. If $\k$ is a field, then, $F\boxtimes G$ is an equivalence if and only if both $F$ and $G$ are equivalences.
\end{theorem}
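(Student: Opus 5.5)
The \emph{if} direction is formal. If $F$ and $G$ are equivalences, choose quasi-inverses $F^{-1}$ and $G^{-1}$; these are again cocontinuous, since equivalences preserve all colimits. The assignment $(F,G)\mapsto F\boxtimes G$ comes from the universal property of the Deligne-Kelly tensor product, so it is pseudofunctorial: $(F'\boxtimes G')\circ(F\boxtimes G)\simeq (F'F)\boxtimes(G'G)$ and $\id\boxtimes\id\simeq\id$. To check this, I will compare both sides after precomposing with $\boxtimes\colon\mathcal{A}\otimes\mathcal{B}\to\mathcal{A}\boxtimes\mathcal{B}$ and then invoke the equivalence $\cat{CoCont}(\mathcal{A}\boxtimes\mathcal{B},\mathcal{C})\simeq\cat{CoCont}(\mathcal{A},\mathcal{B};\mathcal{C})$. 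It follows that $F^{-1}\boxtimes G^{-1}$ is a quasi-inverse of $F\boxtimes G$.

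For the \emph{only if} direction, the plan is to split the equivalence $F\boxtimes G$ back into its factors by tensoring with a fixed nonzero object. Since $\k$ is a field, every $\k$-module is projective, so by the lemma just proved $\boxtimes$ is exact in each variable. I will also use that $\boxtimes$ is faithful on objects, in the sense that $a\boxtimes b\neq 0$ whenever $a\neq 0$ and $b\neq 0$. For $\uplambda$-presented $a$, \zcref{lem:TensorOfHom} gives $\End(a\boxtimes b)=\End(a)\otimes\End(b)$, and this is nonzero over a field. For a general $a$, I will write it as a filtered colimit of $\uplambda$-presented objects, pick a nonzero one among them, and use exactness of $\boxtimes$ in each variable to compare.

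Now fix a nonzero object $b_0\in\mathcal{B}$, which exists because $\mathcal{B}$ is a nonzero Grothendieck category. Consider the functor $\iota_{b_0}=(-)\boxtimes b_0\colon\mathcal{A}\to\mathcal{A}\boxtimes\mathcal{B}$, and similarly $(-)\boxtimes G(b_0)$ on the target side. By construction, $(F\boxtimes G)\circ\iota_{b_0}\simeq\iota_{G(b_0)}\circ F$.

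I then argue in three steps.
\begin{enumerate}
  \item \emph{$F$ is faithful and reflects zero objects.} If $F(a)=0$, then $(F\boxtimes G)(a\boxtimes b_0)=0$. Since $F\boxtimes G$ is an equivalence, $a\boxtimes b_0=0$, and by nondegeneracy $a=0$. Because $F$ is cocontinuous, it is right exact. I will upgrade this to exactness by noting that $\iota_{G(b_0)}$ is exact and conservative, and that $\iota_{b_0}$ is exact; this forces $F$ to be exact. An exact functor between abelian categories that kills no nonzero object is faithful.
  \item \emph{$F$ is full.} On $\uplambda$-presented objects, \zcref{lem:TensorOfHom} identifies the map induced by $F\boxtimes G$ on $\Hom(a_1\boxtimes b_0,a_2\boxtimes b_0)$ with
  \[
    F_{a_1,a_2}\otimes G_{b_0,b_0}\colon \Hom(a_1,a_2)\otimes\End(b_0)\longrightarrow\Hom(Fa_1,Fa_2)\otimes\End(Gb_0).
  \]
  This map is bijective because $F\boxtimes G$ is fully faithful. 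Over a field, a tensor product of linear maps is bijective, with nonzero factors, only if each factor is bijective. So $F$ is fully faithful on presented objects, and cocontinuity and exactness extend this to all objects. The same argument with the roles of $\mathcal{A}$ and $\mathcal{B}$ exchanged handles $G$.
  \item \emph{$F$ is essentially surjective.} This is the step I expect to be the main obstacle. Given $a'\in\mathcal{A}'$, take a nonzero presented $b'$ in the image of $G$. Then $a'\boxtimes b'\simeq(F\boxtimes G)(X)$ for some $X$. I would try to recover $a'$ by applying a cocontinuous ``evaluation'' functor $\mathcal{A}'\boxtimes\mathcal{B}'\to\mathcal{A}'$ induced by a functional on $\Hom(b',-)$; over a field, such a functor splits off $a'$ from $a'\boxtimes b'$. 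Transporting this through the equivalence should exhibit $a'$ as a direct summand of $F$ applied to an object of $\mathcal{A}$. Since $F$ is fully faithful and $\mathcal{A}$ is idempotent complete, direct summands of objects in the image of $F$ again lie in the image, so $a'$ is in the essential image. The technical work lies in making the evaluation functor and its compatibility with $F\boxtimes G$ precise.
\end{enumerate}
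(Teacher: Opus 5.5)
\textbf{Gap: Step 3.} The object $X=(F\boxtimes G)^{-1}(a'\boxtimes b')$ is an arbitrary object of $\mathcal{A}\boxtimes\mathcal{B}$, not a pure tensor. To see that $E(a'\boxtimes b')\cong E\bigl((F\boxtimes G)(X)\bigr)$ lies in the essential image of $F$, you need an isomorphism $E\circ(F\boxtimes G)\cong F\circ E_{0}$. The evident source of one is $E=\id_{\mathcal{A}'}\boxtimes\phi$ with $\phi\colon\mathcal{B}'\to\cat{Vect}_{\k}$ cocontinuous and $\phi(b')\neq 0$; then $E_{0}=\id_{\mathcal{A}}\boxtimes(\phi\circ G)$.

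A functional on $\Hom(b',-)$ does not give such a $\phi$. The functor $\Hom(b',-)$ is only left exact and commutes only with $\uplambda$-filtered colimits, and a linear functional carries no functoriality. More seriously, such a $\phi$ need not exist. Take sheaves of $\k$-vector spaces on an atomless complete Boolean algebra $B$, and write $\k_U$ for the constant sheaf on $U\in B$ extended by zero. A partition $U=\bigvee_i U_i$ gives $\k_U\cong\bigoplus_i\k_{U_i}$. The resulting idempotents give every nonzero vector of $\phi(\k_U)$ a nonzero support in $B$. Splitting that support into infinitely many disjoint nonzero pieces contradicts the fact that a cocontinuous $\phi$ turns these coproducts into algebraic direct sums. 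So every cocontinuous $\k$-linear $\phi$ to $\cat{Vect}_{\k}$ is zero there. As it stands, you have only proved that $F$ and $G$ are exact and fully faithful.

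\textbf{Comparison with the paper and a fix.} The paper's proof also stops at full faithfulness. It identifies the unit of $F\boxtimes G$ on presented objects with $\eta^{F}\boxtimes\eta^{G}$, and uses the same field fact as your Step 2 to conclude that $\eta^{F},\eta^{G}$ are isomorphisms. Your Hom-map version is the same idea, stated more directly.

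Both arguments silently need $\uplambda$ chosen so that $F$ and $G$ preserve $\uplambda$-presented objects; the uniformization theorem for accessible functors provides this. Without it, \zcref{lem:TensorOfHom} cannot be applied on the target side, and full faithfulness cannot be extended from presented objects to all objects.

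With that choice of $\uplambda$, you can close the gap by running Step 2 for counits instead of using evaluation functors. The inverse of $F\boxtimes G$ is its right adjoint. For presented $c\in\mathcal{A}$, $d\in\mathcal{B}$ and arbitrary $a',b'$, one gets
\[
  \Hom\bigl(c\boxtimes d,(F\boxtimes G)^{-1}(a'\boxtimes b')\bigr)\cong\Hom(Fc,a')\otimes\Hom(Gd,b')\cong\Hom\bigl(c\boxtimes d,F^{\dagger}a'\boxtimes G^{\dagger}b'\bigr).
\]
This uses the adjunctions and \zcref{lem:TensorOfHom}, extended to arbitrary targets via $\uplambda$-filtered colimits.

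This identification is induced by the map $\theta\colon F^{\dagger}a'\boxtimes G^{\dagger}b'\to(F\boxtimes G)^{-1}(a'\boxtimes b')$ adjunct to $\varepsilon^{F}_{a'}\boxtimes\varepsilon^{G}_{b'}$. Since presented pure tensors generate, $\theta$ is invertible. Hence $\varepsilon^{F}_{a'}\boxtimes\varepsilon^{G}_{b'}$ is invertible, because it is the counit of $F\boxtimes G$ composed with $(F\boxtimes G)(\theta)$.

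Now take $b'=G(d_{0})$ with $d_{0}\neq 0$ presented. Then $\varepsilon^{G}_{b'}$ is invertible because $G$ is fully faithful, so $\varepsilon^{F}_{a'}\boxtimes\id_{b'}$ is invertible. Exactness and nondegeneracy of $(-)\boxtimes b'$ then force $\varepsilon^{F}_{a'}$ to be an isomorphism for every $a'$. This gives essential surjectivity of $F$ without any fiber functor, and $G$ is handled symmetrically.
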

\begin{proof}
  Suppose both $F$ and $G$ are equivalences. Then, $F\boxtimes G$ is an equivalence by the universal property of the Deligne-Kelly tensor product. This direction do not need $\k$ being a field.

  Conversely, assume that $F\boxtimes G$ is an equivalence. 
  Since $F$ and $G$ are cocontinuous additive functors from Grothendieck abelian categories, by the \emph{special adjoint functor theorem}, they must have right adjoints $F^{\dagger}$ and $G^{\dagger}$ respectively. Let $\eta^F \colon \id_{\mathcal{A}} \to F^{\dagger}F, \eta^G \colon \id_{\mathcal{B}} \to G^{\dagger}G$ denote the units 
  of the adjunctions. 
  Similarly, $F\boxtimes G$ has a right adjoint $(F\boxtimes G)^{\dagger}$ with unit $\eta$.
  On \emph{presented} objects, the unite $\eta$ can be explicitly described by the following commutative diagram:
  \[
  \begin{tikzcd}
    \id_{\mathcal{A}}\boxtimes\id_{\mathcal{B}} 
        \ar[d,"{\eta^F\boxtimes \id_{\mathcal{B}}}"']
        \ar[r,"{\id_{\mathcal{A}}\boxtimes \eta^G}"] &
    \id_{\mathcal{A}}\boxtimes G^{\dagger}G 
        \ar[d,"{\eta^F\boxtimes \id_{G^{\dagger}G}}"] \\
    F^{\dagger}F\boxtimes\id_{\mathcal{B}} 
        \ar[r,"{\id_{F^{\dagger}F}\boxtimes \eta^G}"'] &
    F^{\dagger}F\boxtimes G^{\dagger}G 
  \end{tikzcd}
  \]
  Applying it to any \emph{presented} objects $a \in \mathcal{A}$ and $b \in \mathcal{B}$, we obtain the following commutative diagrams of $\cat{Vect}_{\k}$-valued functors on \emph{presented} objects:
  \[
  \begin{tikzcd}
    \Hom_{\mathcal{A}}(-,a)\otimes_{\k}\Hom_{\mathcal{B}}(-,b)
        \ar[d,"(\eta^F_{a})_{\ast} \otimes \id"']
        \ar[r,"\id \otimes (\eta^G_{b})_{\ast}"] &
    \Hom_{\mathcal{A}}(-,a)\otimes_{\k}\Hom_{\mathcal{B}}(-,G^{\dagger}G(b)) 
        \ar[d,"(\eta^F_{a})_{\ast} \otimes \id"] \\
    \Hom_{\mathcal{A}}(-,F^{\dagger}F(a))\otimes_{\k}\Hom_{\mathcal{B}}(-,b)
        \ar[r," \id \otimes (\eta^G_{b})_{\ast}"] &
    \Hom_{\mathcal{A}}(-,F^{\dagger}F(a))\otimes_{\k}\Hom_{\mathcal{B}}(-,G^{\dagger}G(b)) 
  \end{tikzcd}
  \]
  and the compositions are isomorphisms.
  Since we have assumed that neither of $\mathcal{A},\mathcal{A}',\mathcal{B},\mathcal{B}'$ are zero, 
  the above morphisms are nonzero and we conclude that $\eta^F_{a}$ and $\eta^G_{b}$ are both isomorphisms.
  Since represented objects generate the categories, this finishes the proof.
\end{proof}

%%-----------------------------------------

\bibliographystyle{amsalpha}
\bibliography{bib}

%%-----------------------------------------
\end{document}